\def\input@path{{figures/}}\makeatother
\newtheorem{theorem}{Theorem}[part]
\newtheorem{corollary}[theorem]{Corollary}
\newtheorem{proposition}[theorem]{Proposition}
\newtheorem{lemma}[theorem]{Lemma}
\newtheorem*{theorem*}{Theorem}
\theoremstyle{definition}
\newtheorem{definition}[theorem]{Definition}
\newtheorem{example}[theorem]{Example}
\newtheorem{remark}[theorem]{Remark}
\newtheorem{question}[theorem]{Question}
\newtheorem{notation}[theorem]{Notation}
\newcommand{\R}{\mathbb{R}} 
\newcommand{\N}{\mathbb{N}} 
\newcommand{\Z}{\mathbb{Z}} 
\newcommand{\fS}{\mathfrak{S}} 
\newcommand{\cS}{\mathbb{S}} 
\renewcommand{\b}[1]{\mathbf{#1}} 
\newcommand{\set}[2]{\left\{ #1 \;\middle|\; #2 \right\}} 
\newcommand{\bigset}[2]{\big\{ #1 \;\big|\; #2 \big\}} 
\newcommand{\ssm}{\smallsetminus} 
\newcommand{\dotprod}[2]{\left\langle \, #1 \; \middle| \; #2 \, \right\rangle} 
\newcommand{\symdif}{\,\triangle\,} 
\newcommand{\one}{{1\!\!1}} 
\newcommand{\eqdef}{\mbox{\,\raisebox{0.2ex}{\scriptsize\ensuremath{\mathrm:}}\ensuremath{=}\,}} 
\DeclareMathOperator{\ascents}{asc} 
\DeclareMathOperator{\descents}{des} 
\newcommand{\fref}[1]{Figure~\ref{#1}} 
\newcommand{\ie}{\textit{i.e.}~} 
\newcommand{\eg}{\textit{e.g.}~} 
\newcommand{\apriori}{\textit{a priori}} 
\newcommand{\viceversa}{\textit{vice versa}} 
\definecolor{darkblue}{rgb}{0,0,0.7} 
\definecolor{green}{RGB}{57,181,74} 
\definecolor{violet}{RGB}{147,39,143} 
\newcommand{\darkblue}{\color{darkblue}} 
\newcommand{\defn}[1]{\textsl{\darkblue #1}} 
\newcommand{\para}[1]{\medskip\noindent\textbf{#1.}} 
\DeclareRobustCommand{\exmAn}{\overrightarrow{A_n}} 
\newcommand{\blossom}{^\text{\ding{96}}} 
\newcommand{\blinkers}[1]{_{\LEFTCIRCLE \!\! #1 \!\! \RIGHTCIRCLE}} 
\newcommand{\indeg}{\mathrm{indeg}} 
\newcommand{\outdeg}{\mathrm{outdeg}} 
\newcommand{\strings}{\mathcal{S}} 
\newcommand{\distinguishableStrings}{\mathcal{S}_\mathrm{dist}} 
\newcommand{\walks}{\mathcal{W}} 
\newcommand{\straightWalks}{\mathcal{W}_\mathrm{str}} 
\newcommand{\bendingWalks}{\mathcal{W}_\mathrm{bend}} 
\newcommand{\NKWalks}{\mathcal{W}_\mathrm{nk}} 
\newcommand{\peaks}[1]{\mathsf{peaks}(#1)} 
\newcommand{\deeps}[1]{\mathsf{deeps}(#1)} 
\newcommand{\distinguishedWalk}[2]{\mathsf{dw}(#1,#2)} 
\newcommand{\distinguishedArrows}[2]{\mathsf{da}(#1,#2)} 
\newcommand{\distinguishedString}[2]{\mathsf{ds}(#1,#2)} 
\newcommand{\distinguishedSign}[2]{\varepsilon(#1,#2)} 
\newcommand{\kn}{\kappa} 
\newcommand{\KN}{\textsc{kn}} 
\newcommandx{\NKC}[1][1=\bar Q]{\mathcal{K}_{\mathrm{nk}}(#1)} 
\newcommandx{\RNKC}[1][1=\bar Q]{\mathcal{C}_{\mathrm{nk}}(#1)} 
\newcommandx{\NKL}[1][1=\bar Q]{\mathcal{L}_{\mathrm{nk}}(#1)} 
\newcommandx{\NKG}[1][1=\bar Q]{\mathcal{G}_{\mathrm{nk}}(#1)} 
\newcommandx{\NFC}[1][1=\bar Q]{\mathcal{C}_{\mathrm{nf}}(#1)} 
\newcommand{\peak}{\mathrm{peak}} 
\newcommand{\deep}{\mathrm{deep}} 
\newcommand{\reversed}[1]{#1^{\mathrm{rev}}} 
\renewcommand{\top}{\mathrm{top}} 
\newcommand{\bottom}{\mathrm{bot}} 
\newcommand{\meet}{\wedge} 
\newcommand{\join}{\vee} 
\newcommand{\bigMeet}{\bigwedge} 
\newcommand{\bigJoin}{\bigvee} 
\newcommand{\closure}[1]{#1^{\mathrm{cl}}} 
\newcommand{\coclosure}[1]{#1^{\mathrm{cocl}}} 
\newcommand{\Bicl}[1]{\mathsf{Bic}(#1)} 
\newcommand{\projDown}{\pi_\downarrow} 
\newcommand{\projUp}{\pi^\uparrow} 
\newcommand{\JI}{\mathsf{JI}} 
\newcommand{\MI}{\mathsf{MI}} 
\newcommand{\Cong}{\mathsf{Cong}} 
\newcommand{\con}{\mathrm{con}} 
\newcommand{\ji}{\mathsf{ji}} 
\newcommand{\mi}{\mathsf{mi}} 
\newcommand{\gvector}[1]{\mathbf{g}(#1)} 
\newcommand{\gvectors}[1]{\mathbf{g}(#1)} 
\newcommandx{\gvectorFan}[1][1=\bar Q]{\mathcal{F}^\mathbf{g}(#1)} 
\newcommand{\cvector}[2]{\mathbf{c}(#1 \in #2)} 
\newcommand{\cvectors}[1]{\mathbf{c}(#1)} 
\newcommandx{\allcvectors}[1][1=\bar Q]{\mathbf{C}(#1)} 
\newcommandx{\cvectorFan}[1][1=\bar Q]{\mathcal{F}^\mathbf{c}(#1)} 
\newcommand{\point}[1]{\mathbf{p}(#1)} 
\newcommand{\HS}[1]{\mathbf{H}^{\le}(#1)} 
\newcommand{\Hyp}[1]{\mathbf{H}^{=}(#1)} 
\newcommandx{\Asso}[2][1=\bar Q,2={}]{\mathsf{Asso}^{#2}(#1)} 
\newcommandx{\Zono}[2][1=\bar Q,2={}]{\mathsf{Zono}^{#2}(#1)} 
\newcommand{\Fan}{\mathcal{F}} 
\newcommand{\multiplicityVector}{\b{m}} 
\newcommand{\Hom}[1]{\operatorname{{\rm Hom}}_{#1}}
\newcommand{\Ext}[1]{\operatorname{{\rm Ext}}_{#1}}
\newcommandx{\AR}[1][1=\bar Q]{\mathrm{AR}(#1)} 
\newcommandx{\tTC}[1][1=\bar Q]{\mathcal{K}^{\textrm{s$\tau$-tilt}}(#1)} 
\newcommand{\rep}{\operatorname{{\rm rep}}}
\newcommand{\proj}{\operatorname{{\rm proj}}}
\def\part{\@startsection{part}{1}%
\z@{.7\linespacing\@plus\linespacing}{.8\linespacing}%
{\LARGE\sffamily\centering}}
\def\l@section{\@tocline{1}{2pt}{0pc}{}{}}
\let\oldtocpart=\tocpart
\renewcommand{\tocpart}[2]{\bf\large\oldtocpart{#1}{#2}}
\let\oldtocsection=\tocsection
\renewcommand{\tocsection}[2]{\bf\oldtocsection{#1}{#2}}
\title{Non-kissing complexes and tau-tilting for gentle algebras}
\thanks{The three authors were partially supported by the French ANR grant SC3A~(15\,CE40\,0004\,01).}
\author{Yann Palu}
\address[Yann Palu]{LAMFA, Universit\'e Picardie Jules Verne, Amiens}
\email{yann.palu@u-picardie.fr}
\urladdr{\url{http://www.lamfa.u-picardie.fr/palu/}}
\author{Vincent Pilaud}
\address[Vincent Pilaud]{CNRS \& LIX, \'Ecole Polytechnique, Palaiseau}
\email{vincent.pilaud@lix.polytechnique.fr}
\urladdr{\url{http://www.lix.polytechnique.fr/~pilaud/}}
\author{Pierre-Guy Plamondon}
\address[Pierre-Guy Plamondon]{Universit\'e Paris-Saclay, UVSQ, CNRS, Laboratoire de Math\'ematiques de Versailles}
\email{pierre-guy.plamondon@uvsq.fr}
\urladdr{\url{https://www.imo.universite-paris-saclay.fr/~plamondon/}}
\begin{document}

\begin{abstract}
We interpret the support $\tau$-tilting complex of any gentle bound quiver as the non-kissing complex of walks on its blossoming quiver.
Particularly relevant examples were previously studied for quivers defined by a subset of the grid or by a dissection of a polygon.
We then focus on the case when the non-kissing complex is finite.
We show that the graph of increasing flips on its facets is the Hasse diagram of a congruence-uniform lattice.
Finally, we study its $\b{g}$-vector fan and prove that it is the normal fan of a non-kissing associahedron.
\end{abstract}


\maketitle

\vspace*{.3cm}
\tableofcontents

\vspace*{-.9cm}
\enlargethispage{.4cm}


\newpage
\section*{Introduction}

\subsection*{Non-kissing complex and support $\tau$-tilting complex}

The non-kissing complex is the simplicial complex of a compatibility relation, called non-kissing, on paths in a fixed shape of a grid.
It was introduced by T.~K.~Petersen, P.~Pylyavskyy and D.~Speyer in~\cite{PetersenPylyavskyySpeyer} for a staircase shape, studied by F.~Santos, C.~Stump and V.~Welker~\cite{SantosStumpWelker} for rectangular shapes, and extended by T.~McConville in~\cite{McConville} for arbitrary shapes.
This complex is known to be a simplicial sphere, and it was actually realized as a polytope using successive edge stellations and suspensions in~\cite[Sect.~4]{McConville}.
Moreover, the dual graph of the non-kissing complex has a natural orientation which equips its facets with a lattice structure~\cite[Thm. 1.1, Sect.~5--8]{McConville}.
Further lattice-theoretic and geometric aspects of this complex were recently developed by A.~Garver and T.~McConville in~\cite{GarverMcConville-grid}.

The interest in non-kissing complexes is motivated by relevant instances arising from particular shapes.
As already observed in~\cite[Sect.~10]{McConville}, when the shape is a ribbon, the non-kissing complex is an associahedron (the simplicial complex of dissections of a polygon), and the non-kissing lattice is a type~$A$ Cambrian lattice of N.~Reading~\cite{Reading-CambrianLattices}.
In particular, the straight ribbon corresponds to the Tamari lattice, an object at the heart of a deep research area~\cite{TamariFestschrift}.
When the shape is a rectangle (or even a staircase), the non-kissing complex was studied in~\cite{PetersenPylyavskyySpeyer, SantosStumpWelker} as the Grassmann associahedron, in connection to non-crossing subsets of~$[n]$.

Other instances of such complexes arise naturally from the representation theory of associative algebras.  
The notion of support $\tau$-tilting module over an algebra was introduced by T.~Adachi, O.~Iyama and I.~Reiten in \cite{AdachiIyamaReiten}, and has proved to be a successful generalization of tilting and cluster-tilting theory.  
Over a given algebra, indecomposable $\tau$-rigid modules form a complex. 
For an account of the various algebraic interpretations of this complex, we refer the reader to \cite{BrustleYang}.  
For example, in the case of the path algebra of a quiver which is a straight line, the support $\tau$-tilting complex is, again, an associahedron.

The aim of this paper is twofold.
On the one hand, we provide a realization of any non-kissing complex as the support $\tau$-tilting complex of a well-chosen associative algebra.
The algebras that occur are certain gentle algebras, a special case of the well-studied string algebras of M.~C.~R.~Butler and C.~Ringel~\cite{ButlerRingel}.
On the other hand, we extend the notion of non-kissing complex and show that the support $\tau$-tilting complex of any gentle algebra can be interpreted as an extended non-kissing complex.
More precisely, starting from any gentle bound quiver~$\bar Q = (Q,I)$, we attach additional incoming and outgoing arrows, called blossoms, to make each initial vertex \mbox{$4$-valent}. The resulting gentle bound quiver~$\bar Q\blossom = (Q\blossom, I\blossom)$ is called the blossoming quiver of~$\bar Q$ (see Section~\ref{subsec:blossomingQuiver}). 
We then define a combinatorial notion of compatibility on walks in $\bar Q \blossom$, called the non-kissing relation (see Section~\ref{sec:nonKissingComplex}).
The motivating result of this paper is Theorem \ref{thm:nkc/sttiltc}.

\begin{theorem*}
For any gentle bound quiver~$\bar Q = (Q,I)$, the non-kissing complex of walks in the blossoming quiver~$\bar Q \blossom$ is isomorphic to the support $\tau$-tilting complex of the gentle algebra~$kQ/I$.
\end{theorem*}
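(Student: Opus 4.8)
The plan is to set up an explicit bijection between the combinatorial objects on the blossoming quiver and the algebraic objects over $kQ/I$, and then to check that this bijection is compatible with the two compatibility relations defining the respective simplicial complexes. Since $kQ/I$ is a gentle algebra, it is a string algebra, so its indecomposable modules are classified by M.~C.~R.~Butler and C.~Ringel~\cite{ButlerRingel}: they are the string modules $M(w)$ indexed by reduced walks $w$ in $\bar Q$ (up to inversion), together with band modules, but only the string modules can be $\tau$-rigid. First I would recall that a support $\tau$-tilting object corresponds, via the bijections of T.~Adachi, O.~Iyama and I.~Reiten~\cite{AdachiIyamaReiten}, to a $2$-term silting complex, or equivalently to a collection of compatible bricks; the vertices of the support $\tau$-tilting complex are thus the $\tau$-rigid indecomposable modules and the shifted indecomposable projectives $P_i[1]$. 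The key observation is that a walk $w$ in $\bar Q$ (a vertex of the non-kissing complex on $\bar Q\blossom$ that is not ``at a blossom'') should correspond to the string module $M(\bar w)$ obtained by deleting the two blossom arrows at the ends of $w$, while a walk whose ``straight part'' is trivial and which just runs through a blossom at vertex $i$ should correspond to $P_i[1]$. This handles the vertices.

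The heart of the proof is then to identify the non-kissing relation between two walks with the $\tau$-rigidity/compatibility condition on the corresponding modules. For this I would use the description of $\operatorname{Hom}$ and $\operatorname{Ext}^1$ spaces between string modules in terms of ``graph maps'' and overlaps of strings, going back to work on string algebras (Crawley-Boevey, Schr\"oer, and the explicit Auslander--Reiten theory for gentle algebras). The crucial translation is: a nonzero morphism $M(v)\to M(w)$ that factors through a map creating an extension — equivalently a nonzero element of $\operatorname{Ext}^1$ or a failure of $\operatorname{Hom}(v,\tau w)=0$ — corresponds precisely to a ``kissing'' configuration of the two walks in the blossoming quiver, i.e. a common (oriented) subpath from which one walk bends one way and the other bends the other way, so that one walk lies ``above'' the other along a maximal shared segment. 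The blossoms are exactly what is needed to make this bookkeeping uniform: by forcing every vertex to be $4$-valent, the two possible continuations at each end of a string are always available, so ``kissing'' becomes a purely local combinatorial condition at the turning points (the peaks and deeps, in the notation $\peak$, $\deep$ of the paper), matching the support condition ($P_i[1]$ incompatible with $M(w)$ iff $w$ passes through $i$ in a way that obstructs $\tau$-rigidity) with the kissing-at-a-blossom condition.

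Concretely I would proceed in the following steps. \emph{Step 1:} recall the classification of $\tau$-rigid indecomposable $kQ/I$-modules as string modules, and describe $\tau M(w)$ combinatorially (for gentle algebras the AR-translate of a string module is again a string module, obtained by a local move at each end of the string, adding/removing hooks and cohooks — this is exactly where the blossom arrows encode the operation). \emph{Step 2:} define the bijection $\varphi$ from vertices of the non-kissing complex $\NKC[\bar Q\blossom]$ to $\tau$-rigid indecomposables and shifted projectives, as sketched above, and check it is well-defined and bijective. \emph{Step 3:} show $\varphi(v)$ and $\varphi(w)$ are ``compatible'' in the support $\tau$-tilting sense — meaning $\operatorname{Hom}(M(v),\tau M(w)) = 0 = \operatorname{Hom}(M(w),\tau M(v))$ and the relevant conditions with the projectives — if and only if $v$ and $w$ do not kiss. \emph{Step 4:} invoke the fact (also due to Adachi--Iyama--Reiten, or Brüstle--Yang~\cite{BrustleYang}) that a collection of $\tau$-rigid indecomposables plus shifted projectives is the summand-set of a support $\tau$-tilting object if and only if its members are pairwise compatible in this sense, so that the two simplicial complexes have the same faces; together with Step 3 this gives the isomorphism of complexes.

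The main obstacle I expect is Step 3 in the ``only if'' direction combined with the careful treatment of the blossoms: one must show that \emph{every} obstruction to $\tau$-rigidity (every nonzero map to the AR-translate, coming from a graph map between strings of the appropriate overlap type) is detected by a kissing pattern, and conversely that a genuine kiss always produces such a map — and this requires matching the two kinds of overlaps of strings (the ones producing factor/submodule maps) with the two orientations in which walks can bend away from a shared segment, while making sure that the artificially added blossom arrows do not create spurious kisses (they do not, because blossom arrows carry no relations among themselves and a walk uses at most one blossom at each end). A secondary subtlety is bookkeeping the identification up to the inversion $w \mapsto \reversed{w}$ of walks and the self-dual nature of the non-kissing relation, and verifying that the trivial walks through blossoms really do match the $P_i[1]$ rather than some string module; this is where one uses that a blossoming vertex becomes $4$-valent so that the ``straight walk'' through a blossom at $i$ is forced and corresponds to the negative simple $g$-vector $-e_i$, i.e.\ to $P_i[1]$.
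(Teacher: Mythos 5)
Your plan follows essentially the same outline as the paper: bijection between walks and almost positive strings, combinatorial formula for $\tau$ on string modules via hooks/cohooks, the overlap description of $\operatorname{Hom}$ between string modules, and the equivalence between non-kissing and the two-sided vanishing $\operatorname{Hom}(M,\tau N)=0=\operatorname{Hom}(N,\tau M)$. The paper organizes Step~3 by first expressing $\operatorname{Hom}(M(\rho),\tau M(\rho'))\neq 0$ as a purely string-level relation (``attracts or reaches for'') and only then translating that relation into kissing under the bijection, whereas you propose to run the equivalence in one pass; that is a packaging difference, not a mathematical one.

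There is, however, a concrete error in your description of the bijection in Step~2, and it would bite you if you executed the plan. You say the string underlying a walk $w$ is obtained by ``deleting the two blossom arrows at the ends of $w$.'' That is not the inverse of the passage from strings to walks: the correct operation (Lemma~\ref{lem:bijections-STTC-NKC} in the paper) is to remove the full \emph{cohook} at each end, not just the single blossom arrow. Indeed the cohook ${}_c\rho=\alpha_r\cdots\alpha_1\alpha^{-1}\rho$ reaching a blossom may have arbitrarily many arrows; stripping only the terminal blossom arrow $\alpha_r$ leaves the rest of the cohook attached and does not return the original string, so the maps you describe are not mutually inverse. Relatedly, the walks corresponding to the shifted projectives $P(v)[1]$ are exactly the \emph{deep walks} $v_\deep$ (those that change orientation only once, at a deep at $v$), which is a more specific description than ``a walk whose straight part is trivial.'' Both of these are fixable by consulting the hook/cohook calculus you already invoke in Step~1, but as written the bijection you would feed into Step~3 is the wrong map.
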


In short, to any walk in $\bar Q \blossom$ corresponds a representation of $\bar Q$, and this correspondence takes non-kissing walks to $\tau$-compatible representations.
This connection between the combinatorially-flavored non-kissing complex and the algebraically-flavoured support $\tau$-tilting complex opens a bridge to go back and forth between the two worlds.
It allows us, for instance, to combinatorially define mutation of support $\tau$-tilting modules (see Section \ref{subsec:flips}).
This seems worthwhile, as the mutation of support $\tau$-tilting modules over an arbitrary algebra is generally difficult to carry out~explicitly.
Table~\ref{table:dictionary} provides a dictionary to translate between the algebraic notions on the support $\tau$-tilting complex of a gentle algebra and the combinatorial notions on the non-kissing complex of its blossoming quiver.

\newcommandx{\multiLinesBox}[2][2=6cm]{
\begin{minipage}{#2}\vspace{.1cm}\begin{center}
#1
\end{center}\vspace{0cm}\end{minipage}
}

\begin{table}
	\capstart{}
    \centerline{
    \renewcommand{\arraystretch}{1.2}
    \begin{tabular}{|c|c|c|}
    \hline
    String modules on~$\bar Q$ &
    Walks on~$\bar Q\blossom$  &
    Reference
    \\ \hline \hline
    almost positive strings (Def.~\ref{def:stringsBands}) &
    bending walks (Def.~\ref{def:straightBended}) &
    Fig.~\ref{exm:exmBijectionStringsWalks1} \& \ref{fig:exmAuslanderReitenQuiver}
    \\ \hline
    ?\footnotemark[1] &
    straight walks (Def.~\ref{def:straightBended}) &
    Fig.~\ref{exm:exmBijectionStringsWalks1} \& \ref{fig:exmAuslanderReitenQuiver}
    \\ \hline
    \multiLinesBox{$P(v)$ and $P(v)[1]$ \\ $A=kQ/I$ and~$A[1]$ \\ ~} &
    \multiLinesBox{$v_\peak$ and $v_\deep$ \\ $F_\peak$ and~$F_\deep$ \\ (Exm.~\ref{example: Fdeep Fpeak} and Def.~\ref{def: deep walk})} &
    Lem.~\ref{lem:bijections-STTC-NKC}
    \\ \hline
    ?\footnotemark[2] &
    countercurrent order~$\prec_\alpha$ &
    ---
    \\ \hline
    $\tau$-compatibility (Def.~\ref{def: tau-compatibility}) &
    non-kissing (Def.~\ref{def: kissing}) &
    Thm.~\ref{thm:nkc/sttiltc}
    \\ \hline
    support~$\tau$-tilting object (Def.~\ref{def: tau-rigid and stautilt}) &
    maximal non-kissing collection of walks &
    Thm.~\ref{thm:nkc/sttiltc}
    \\ \hline
    support~$\tau$-tilting complex~$\tTC$ (Def.~\ref{def: stautilt complex}) &
    reduced non-kissing complex~$\RNKC$ (Def.~\ref{def: nKc}) &
    Thm.~\ref{thm:nkc/sttiltc}
    \\ \hline
    \multiLinesBox{(left) mutation of support \\ $\tau$-tilting objects (Thm.~\ref{thm:mutationstautilts}~\cite{AdachiIyamaReiten})} &
    \multiLinesBox{(increasing) flip \\ (Prop.~\ref{prop:flip} and Def.~\ref{def:flip} \& \ref{def:increasingFlip})} &
    Thm.~\ref{thm:nkc/sttiltc}
    \\ \hline
    lattice of torsion classes &
    non-kissing lattice~$\NKL$ (Thm.~\ref{thm:lattice}) &
    Thm.~\ref{thm:nkc/sttiltc}
    \\ \hline
    bricks &
    \multiLinesBox{distinguishable strings (Def.~\ref{def:distinguishedSubstring})} &
    Prop.~\ref{prop:characterizationDistinguishableStrings}
    \\ \hline
    collections of pairwise Hom-orthogonal bricks &
    non-friendly complex (Def.~\ref{def: friendly}) &
    Prop.~\ref{prop:characterizationDistinguishableStrings}
    \\ \hline
    \multiLinesBox{bijection between indecomposable~$\tau$-rigid representations and bricks (Thm. 6.1 in~\cite{DemonetIyamaJasso})} &
    \multiLinesBox{bijection between non-kissing walks that are not peak walks and distinguishable strings (Prop.~\ref{prop:bijectionDistinguishableStringsWalks})} &
    \multiLinesBox{Conjectural \\ cf.~Prop.~\ref{prop:bijectionDistinguishableStringsJI}}[2cm]
    \\ \hline
    \multiLinesBox{Bongartz cocompletion of a~$\tau$-rigid indecomposable representation $N=M(\rho)$ whose associated brick is~$M(\sigma)$. Moreover $\sigma = \distinguishedWalk{\omega(\rho)}{F}$} &
    \multiLinesBox{$F=\eta\left(\closure{\Sigma_\bottom(\sigma)}\right)$, where $\sigma$ is a distinguishable string} &
    Rem.~\ref{rem: Bongartz}
    \\ \hline
    Bongartz completion &
    $\eta\left(\closure{\Sigma_\top(\sigma)}\right)$ &
    Rem.~\ref{rem: Bongartz}
    \\ \hline
    $\b{g}$-vector or index of a module (Def.~\ref{definition: g-vector of representation}) &
    $\b{g}$-vector of a walk (Def.~\ref{def: g-vectors for walks}) &
    Rem.~\ref{rem: g-vectors coincide}
    \\ \hline
    $\dim \Hom{A} \big( M(\sigma), \tau M(\sigma') \big)$ (Sec.~\ref{subsec:tautilting}) &
    kissing number~$\kappa \big( \omega(\sigma), \omega(\sigma') \big)$ (Def.~\ref{def: kissing number}) &
    Rem.~\ref{rem:KNvsTau}
    \\ \hline
    \end{tabular}
	\renewcommand{\arraystretch}{1}
    }
    \caption{Dictionary between algebraic notions on the support $\tau$-tilting complex and combinatorial notions on the non-kissing complex.}
    \label{table:dictionary}    
\end{table}

\subsection*{The non-kissing lattice and the non-kissing associahedron}

Besides this algebraic connection, we study the non-kissing complex on any gentle bound quiver with a triple perspective.

On the combinatorial side, we provide a purely combinatorial proof that the non-kissing complex over a gentle bound quiver is a pseudomanifold.
This involves in particular a precise description of flips inspired from~\cite[Sect.~3]{McConville}.

On the lattice-theoretical side, we consider the graph of increasing flips between non-kissing facets.
When the quiver is a directed path, this increasing flip graph is the Hasse diagram of the classical Tamari lattice.
In general, when $\bar Q$ is $\tau$-tilting finite, the transitive closure of the increasing flip graph is isomorphic to the lattice of torsion classes, and we therefore call it the non-kissing lattice.
Our main lattice-theoretic result is the following extension of a remarkable result of T.~McConville~\cite{McConville}.

\begin{theorem*}
For any gentle bound quiver~$\bar Q$ whose non-kissing complex is finite, the non-kissing lattice is congruence-uniform.
In fact, the non-kissing lattice is both a lattice quotient and a sublattice of a lattice of biclosed sets on strings in~$\bar Q$.
\end{theorem*}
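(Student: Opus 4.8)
The plan is to follow the blueprint of McConville's proof for grid shapes, but carried out in the generality of walks on a gentle blossoming quiver. The key structural tool will be a \emph{lattice of biclosed sets}: one first identifies the correct notion of "string segments" or "arrows" in $\bar Q\blossom$ that walks are built from, defines a closure operator on sets of these objects, and declares a set \emph{biclosed} if both it and its complement are closed. I would then prove that the collection $\Bicl{\bar Q}$ of biclosed sets, ordered by inclusion, forms a lattice --- this is typically done by showing the closed sets form a closure system whose closure operator is "graded" or has the exchange-like property that makes $\Bicl{\cdot}$ closed under the appropriate meet/join (intersection then closure, union then co-closure). The map sending a non-kissing facet to the set of (distinguished) arrows or strings appearing along its walks should be the candidate map realizing the non-kissing lattice inside $\Bicl{\bar Q}$.

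The core of the argument then splits into three parts. First, establish that the increasing-flip graph on non-kissing facets is indeed the Hasse diagram of a lattice: for this I would use the general machinery (Reading's theory of congruence-uniform/semidistributive lattices, or the criterion that a finite graded poset with the right polygonal/flip structure is a lattice) together with the combinatorial description of flips from the pseudomanifold result cited in the excerpt. Second, construct the two maps --- a projection $\projDown\colon \Bicl{\bar Q}\to \NKL$ and a section --- and verify that $\projDown$ is a surjective lattice morphism (so $\NKL$ is a lattice quotient of $\Bicl{\bar Q}$) and that its restriction along the section is injective and join/meet preserving (so $\NKL$ is also a sublattice). The usual way to package "quotient $+$ sublattice of a congruence-uniform lattice" into "congruence-uniform" is via Day's doubling: one exhibits $\Bicl{\bar Q}$ itself as congruence-uniform (built from a one-element lattice by a sequence of interval doublings, using the biclosed-sets combinatorics), and then invokes the fact that lattice quotients of congruence-uniform lattices that are also sublattices remain congruence-uniform --- or, more robustly, directly show $\NKL$ is both \emph{congruence-normal} (the quotient side, via forcing order on flips / join-irreducible labels) and \emph{semidistributive} (from the sublattice embedding into the semidistributive $\Bicl{\bar Q}$), and use that congruence-normal $+$ semidistributive $=$ congruence-uniform for finite lattices.

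Concretely I would proceed in this order: (1) define arrows/strings and the closure operator, prove $\Bicl{\bar Q}$ is a lattice and in fact congruence-uniform by an explicit doubling sequence indexed by, say, the strings ordered by some linear extension of a "forcing" order; (2) define the forward map $\eta$ (as already hinted in the dictionary, via $\closure{\Sigma_\bottom(\sigma)}$-type constructions) from facets to biclosed sets and show it is injective with image an interval-like or convex subset; (3) show $\eta$ is a lattice embedding, giving the sublattice claim; (4) build the retraction $\projDown$ and show it is a surjective lattice homomorphism whose fibers are exactly the congruence classes of a lattice congruence, giving the quotient claim; (5) conclude congruence-uniformity of $\NKL$ from that of $\Bicl{\bar Q}$ together with (3) and (4), using the transfer of semidistributivity along sublattices and of congruence-normality along quotients.

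The main obstacle, I expect, is step (1) combined with the bookkeeping in (3)--(4) in the infinite/general gentle setting: McConville's grid arguments rely heavily on the planar geometry of the grid (well-defined "regions", left/right of a path, the interval structure of shadows), and replaying these for walks in an arbitrary gentle blossoming quiver --- where walks can be highly non-planar and the combinatorics of peaks/deeps and the countercurrent order $\prec_\alpha$ must substitute for geometric intuition --- requires establishing the right finiteness and acyclicity of the closure operator and a clean correspondence between "adding a string to a biclosed set" and "performing an increasing flip." Getting the doubling sequence to genuinely witness congruence-uniformity (rather than merely congruence-normality or semidistributivity) is the delicate point, and I anticipate that the bulk of the technical work is in showing that the closure operator on strings is such that every biclosed set is obtained from $\emptyset$ by adjoining one string at a time through biclosed sets, which is exactly what powers both the lattice property and the doubling construction.
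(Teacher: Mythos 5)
Your outline matches the paper's strategy very closely: define a closure operator on the undirected strings of $\bar Q$, prove $\Bicl{\bar Q}$ is a congruence-uniform lattice via a CN-labelling-type criterion adapted from McConville, define a projection $\projDown$ and a map $\eta$ from biclosed sets to non-kissing facets whose fibers are the congruence classes, and conclude congruence-uniformity of $\NKL$ as a quotient; the sublattice claim follows by showing $\projDown(\Bicl{\bar Q})$ is closed under both meet and join. Three remarks that would sharpen your execution. First, the ground set of the closure operator is the set of undirected strings $\strings^\pm(\bar Q)$, not arrows or string segments --- the operation is concatenation of strings through arrows of $Q_1$. Second, and more substantively, you anticipate that every biclosed set should be reachable from $\varnothing$ by adjoining one string at a time through biclosed sets; this fails as stated whenever $Q$ has loops, because then $\closure{\{\sigma\}}$ need not equal $\{\sigma\}$, and the paper has to adapt McConville's criterion accordingly (one adjoins $\closure{\{\tau\}}$ rather than $\{\tau\}$ at each cover); this is precisely the kind of wrinkle your ``main obstacle'' paragraph worries about, and it is real. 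Third, a logical simplification: you do not need the ``quotient plus sublattice'' package nor the ``congruence-normal plus semidistributive'' detour. Since congruence-uniformity is preserved under lattice quotients, once $\Bicl{\bar Q}$ is shown congruence-uniform and $\NKL$ is exhibited as a lattice quotient of it, congruence-uniformity of $\NKL$ follows immediately; the sublattice statement is an independent fact (established via the reversal symmetry $\reversed{\bar Q}$) and not logically required for the first assertion of the theorem.
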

\footnotetext[1]{The straight walks should morally correspond to some projective-injective objects in an exact category. We plan to further investigate the situation depicted in Figure~\ref{fig:exmAuslanderReitenQuiver}.}
\footnotetext[2]{We do not know of an algebraic interpretation of the countercurrent order yet. We nonetheless included it in the table above since we think it might be of interest to find one.}
\noindent
It is then natural to study the canonical join complex of the non-kissing lattice, which we call the non-friendly complex following~\cite{GarverMcConville-grid}.
This complex corresponds to the classical non-crossing partitions in Catalan combinatorics.

Finally, on the geometric side, we study realizations of non-kissing complexes as polyhedral fans and polytopes.
Geometric prototypes are given by the type~$A$ Cambrian fans of N.~Reading and D.~Speyer~\cite{ReadingSpeyer} and their polytopal realization by C.~Hohlweg and C.~Lange~\cite{HohlwegLange}.
We define $\b{g}$- and $\b{c}$-vectors for walks in the non-kissing complex of any gentle bound quiver and provide a simple combinatorial proof of the following geometric statement.

\begin{theorem*}
Consider a gentle bound quiver~$\bar Q$ whose non-kissing complex is finite.
Then the \mbox{$\b{g}$-vectors} of the walks in the blossoming quiver~$\bar Q\blossom$ support a complete simplicial fan which realizes the non-kissing complex of~$\bar Q$.
Moreover, this fan is the normal fan of a polytope, called the non-kissing associahedron.
\end{theorem*}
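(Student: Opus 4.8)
The statement has two halves --- that the $\b g$-vectors support a complete simplicial fan realizing $\RNKC$, and that this fan is polytopal --- and I would attack each by a local-to-global argument anchored at flips. Since the combinatorial part of the paper already shows that the non-kissing complex is a pseudomanifold with connected flip graph, the plan for the first half is to invoke the standard local criterion for fans: the cones $\cone(\gvectors{F})$ over the non-kissing facets~$F$, together with all their faces, form a complete simplicial fan as soon as \emph{(i)} for each facet~$F$ the set $\gvectors{F} = \set{\gvector{\omega}}{\omega\in F}$ is a linear basis of~$\R^{Q_0}$, and \emph{(ii)} across every interior wall the two adjacent maximal cones lie on opposite sides of the hyperplane that the wall spans. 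For~\emph{(i)} I would prove that for each facet~$F$ the families $\big(\gvector{\omega}\big)_{\omega\in F}$ and $\big(\cvector{\omega'}{F}\big)_{\omega'\in F}$ are dual bases up to signs, that is $\dotprod{\gvector{\omega}}{\cvector{\omega'}{F}} = \pm\,\delta_{\omega,\omega'}$; unwinding the definitions of the $\b g$- and $\b c$-vectors of walks, this reduces to an elementary incidence count between the arrows crossed by~$\omega$ and the data recording $\cvector{\omega'}{F}$, and it simultaneously shows that the $\b c$-vectors are sign-coherent on each facet.

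For~\emph{(ii)}, consider a flip exchanging $\omega\in F$ for $\omega'$, so that $F' = (F\ssm\{\omega\})\cup\{\omega'\}$ and the ridge $F\cap F'$ spans the wall. Using the explicit description of flips from the earlier section --- $\omega$ and $\omega'$ agree outside a neighbourhood of a shared peak or deep and differ only by a rerouting there --- I would read off the $\b g$-vectors on the two sides of the flip and establish an exchange relation
\[
  \lambda\,\gvector{\omega} + \lambda'\,\gvector{\omega'} \;=\; \sum_{\upsilon\in F\cap F'} c_\upsilon\,\gvector{\upsilon},
  \qquad \lambda,\lambda'>0, \quad c_\upsilon\ge 0,
\]
where $c_\upsilon$ is the multiplicity with which the walk~$\upsilon$ passes through the rerouted region --- this is precisely the $\b g$-vector mutation rule transported through the dictionary of Table~\ref{table:dictionary}. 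The positivity of $\lambda$ and $\lambda'$ is exactly condition~\emph{(ii)}, so the criterion yields that $\gvectorFan$ is a complete simplicial fan whose cones are the cones over the faces of~$\RNKC$; in particular it realizes the non-kissing complex. I expect this exchange relation to be the main obstacle: extracting it demands a careful analysis of how the $\b g$-vector of a walk changes under the combinatorial surgery of a flip, and of which walks of $F\cap F'$ contribute and with what nonnegative coefficients.

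For polytopality I would use that a complete simplicial fan is the normal fan of a polytope precisely when it carries a strictly convex piecewise-linear support function, i.e.\ when heights $h_\omega\in\R$ can be chosen, one per walk~$\omega$ of~$\bar Q\blossom$, so that every flip satisfies the strict wall-crossing inequality $\lambda\,h_\omega + \lambda'\,h_{\omega'} > \sum_{\upsilon\in F\cap F'} c_\upsilon\,h_\upsilon$ with the coefficients of the exchange relation above. Granting such heights, I would set
\[
  \Asso \;=\; \set{\b{x}\in\R^{Q_0}}{\dotprod{\gvector{\omega}}{\b{x}}\le h_\omega \text{ for every walk }\omega\text{ of }\bar Q\blossom}
\]
and check, by induction along the connected flip graph, that for each facet~$F$ the point~$\point{F}$ solving $\dotprod{\gvector{\omega}}{\point{F}} = h_\omega$ for all $\omega\in F$ satisfies every remaining inequality strictly: crossing one wall turns a single inequality into exactly the strict wall-crossing inequality, and all the other walls then follow because $\gvectorFan$ is complete. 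Thus $\Asso$ is a polytope --- bounded, since $\gvectorFan$ is complete --- whose vertices are the points~$\point{F}$ with normal cones $\cone(\gvectors{F})$, so $\gvectorFan$ is its normal fan. It then only remains to exhibit admissible heights; here I would take $h_\omega$ to be a combinatorial statistic on walks in the spirit of those used for grid shapes in~\cite{McConville} and for Cambrian associahedra in~\cite{HohlwegLange} --- for instance a weighted count of the strings dominated by~$\omega$ --- or, equivalently, realize $\Asso$ as a deformation of the graphical zonotope~$\Zono$ obtained by deleting a suitable subset of its facets, the wall-crossing inequalities then reducing, once more via the explicit flip description, to a finite list of local checks.
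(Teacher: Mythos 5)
Your overall architecture matches the paper's: check the local criterion of Proposition~\ref{prop:characterizationFan} to get the fan, pairing each facet's $\b{g}$-vectors with its $\b{c}$-vectors to see that they form a basis, and then use Proposition~\ref{prop:characterizationPolytopalFan} with a strictly submodular height function on walks to get polytopality. The exchange relation you flag as ``the main obstacle'' is, once you unwind Proposition~\ref{prop:flip}, much simpler than your general form suggests: the four walks in a flip decompose as $\omega = \rho\sigma\tau$, $\omega' = \rho'\sigma\tau'$, $\mu = \rho'\sigma\tau$, $\nu = \rho\sigma\tau'$, and matching corners piece by piece gives exactly $\gvector{\omega}+\gvector{\omega'}=\gvector{\mu}+\gvector{\nu}$, so $\lambda=\lambda'=1$ and only the two other walks $\mu,\nu$ from the flip contribute, each with coefficient one (two if $\mu=\nu$). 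Your ``dual bases up to sign'' is also more cautious than necessary: Proposition~\ref{prop:gvectorscvectorsDualBases} shows $\dotprod{\gvector{\omega}}{\cvector{\omega'}{F}}=\delta_{\omega,\omega'}$ exactly, because the sign already lives inside the $\b{c}$-vector, and this exactness is what allows the clean vertex formula $\point{F}=\sum_{\omega\in F}\KN(\omega)\,\cvector{\omega}{F}$.

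The genuine gap is in your choice of heights. Your ``or, equivalently'' linking a combinatorial height statistic to deleting facets of the zonotope $\Zono$ is not an equivalence, and the second option provably fails in general: the paper raises precisely this as an open Question in Section~\ref{subsec:zonotope} and exhibits small counterexamples (Figures~\ref{fig:exmAssociahedra} and~\ref{fig:allAssociahedra3}); Proposition~\ref{prop:zonotope} shows it holds only under the extra hypothesis that no two walks are mutually kissing. Your first suggestion, ``a weighted count of the strings dominated by $\omega$'', is closer but unspecified, and the whole difficulty lies in pinning it down and proving the strict inequality. The paper takes $h(\omega)=\KN(\omega)$, the total number of kisses of $\omega$ with all other walks counted with multiplicity and in both directions, and the wall-crossing inequality $\KN(\omega)+\KN(\omega')>\KN(\mu)+\KN(\nu)$ is Lemma~\ref{lem:submodular}, proved by a short local observation about \fref{fig:flip}: any kiss of a third walk $\lambda$ with $\mu$ or $\nu$ is also a kiss with $\omega$ or $\omega'$, while $\omega$ and $\omega'$ kiss each other but do not kiss $\mu$ or $\nu$. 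Without this specific choice and its submodularity proof, your polytopality argument remains an outline rather than a proof.
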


\noindent
The non-kissing complex provides an instrumental combinatorial model to understand the linear dependences among $\b{g}$-vectors of adjacent $\b{g}$-vector cones, which leads to simple polytopal realizations.
In fact, this understanding even extends to arbitrary $\tau$-tilting finite algebras.


\subsection*{Grid and dissection bound quivers}

Particular instances of gentle bound quivers enable us to connect previous results and to answer open conjectures from previous works:
\begin{enumerate}[(i)]
\item \textbf{Grid quivers}: Our combinatorial and lattice-theoretic results are directly inspired from that of~\cite{PetersenPylyavskyySpeyer, SantosStumpWelker, McConville, GarverMcConville-grid} for quivers arising from the $\Z^2$ grid. Our approach provides an interpretation of the non-kissing complex of a grid in terms of $\tau$-tilting theory of gentle algebras and answers open questions of~\cite{GarverMcConville-grid} concerning its geometry.
\item \textbf{Dissection quivers}: Each dissection of a polygon is classically associated with a gentle quiver. Its non-kissing complex is isomorphic to the accordion complex of the dissection, defined and developed in the works of Y.~Baryshnikov~\cite{Baryshnikov}, F.~Chapoton~\cite{Chapoton-quadrangulations}, A.~Garver and T.~McConville~\cite{GarverMcConville} and T.~Manneville and V.~Pilaud~\cite{MannevillePilaud-accordion}. As far as we know, this provides the first connection between non-kissing complexes and accordion complexes.
\item \textbf{Path quivers}: When the quiver is a path, the non-kissing complex is a simplicial associahedron, the non-kissing lattice is a type~$A$ Cambrian lattice of N.~Reading~\cite{Reading-CambrianLattices} (including the classical Tamari lattice), and the non-kissing associahedron is an associahedron of C.~Hohlweg and C.~Lange~\cite{HohlwegLange} (including the classical associahedron of J.-L.~Loday~\cite{Loday}).
\end{enumerate}

\subsection*{Organization}

The paper is organized as follows.
Part~\ref{part:algebra} gathers the algebraic aspects of the paper.
In Section~\ref{sec:recollectionsStringAlgebras}, we recall elements of the representation theory of bound quiver algebras in general and of string algebras in particular.
The definition and essential properties of support \mbox{$\tau$-tilting} modules are given in Section~\ref{subsec:tautilting}.  
The combinatorics of strings and bands is also introduced in Sections~\ref{subsec:stringsBands} and~\ref{subsec:stringBandModules}, and used in \ref{sec:stringModules} to derive combinatorial descriptions of $\b{g}$-vectors and $\tau$-compatibility for string modules.

Part~\ref{part:combinatorics} connects the support $\tau$-tilting complex to the non-kissing complex.
Sections~\ref{sec:blossomingQuivers} and \ref{sec:nonKissingComplex} are devoted to our main definitions: those of the blossoming quiver of a gentle bound quiver, and of its non-kissing complex.  
Theorem \ref{thm:nkc/sttiltc} linking the non-kissing complex to the support $\tau$-tilting complex is then proved in Section \ref{sec:nkcvsttc}.

Part~\ref{part:lattice} is devoted to the non-kissing lattice.
After a brief reminder on lattice congruences and congruence-uniform lattices in Section~\ref{sec:latticeTheory}, we define a closure operator on strings of~$\bar Q$ and study the lattice of biclosed sets in Section~\ref{sec:biclosedStrings}.
We then introduce a lattice congruence on biclosed sets in Section~\ref{sec:latticeCongruence} and show that the non-kissing lattice is isomorphic to the lattice quotient of this congruence in Section~\ref{sec:biclosedSetsToNKF}.
Finally, we describe the join-irreducible elements and the canonical join complex of the non-kissing lattice in Section~\ref{sec:nonFriendlyComplex}.

We construct geometric realizations of the non-kissing complex in Part~\ref{part:geometry}.
After recalling classical definitions and characterizations of simplicial fans and polytopal realizations in Section~\ref{sec:polyhedralGeometry}, we define the $\b{g}$- and $\b{c}$-vectors in Section~\ref{subsec:gcvectors}, construct the $\b{g}$-vector fan in Section~\ref{subsec:gvectorFan} and the non-kissing associahedron in Section~\ref{subsec:associahedron}.
Further geometric topics are discussed in Sections~\ref{subsec:geomLattice}~and~\ref{subsec:zonotope}.

\subsection*{Connections to recent or ongoing projects}

In an advanced stage of our project, we became aware of recent projects of other research groups which partially intersect or complete our work:
\begin{enumerate}[(i)]
\item First and foremost, T.~Br\"ustle, G.~Douville, K.~Mousavand, H.~Thomas and E.~Y\i{}ld\i{}r\i{}m also observed independently in~\cite{BrustleDouvilleMousavandThomasYildirim} that the support $\tau$-tilting complex of a gentle bound quiver is isomorphic to the non-kissing complex of its blossoming quiver (that they call fringed quiver). Their work is more oriented on the algebraic aspects of this connection.
\item Our paper is largely inspired from the paper of T.~McConville~\cite{McConville} and a preliminary version of the recent preprint of A.~Garver and T.~McConville~\cite{GarverMcConville-grid}. Although we follow the same construction for the $\b{g}$-vector fan (but in the general context of gentle bound quivers), our proof is independent and instrumental in the construction of the non-kissing associahedron.
\item Recently, L.~Demonet, O.~Iyama, N.~Reading, I.~Reiten and H.~Thomas proved that the lattice of torsion classes of any $\tau$-tilting finite algebra is congruence-uniform~\cite{DemonetIyamaReadingReitenThomas}. It seems however that a general notion of biclosed sets still remains to be found.
\end{enumerate}
We are grateful to all our colleagues for sharing their ideas with us at different preliminary stages.

\subsection*{Conventions}

We conclude by outlining certain conventions that might be unusual for the reader.
We let~$[n] \eqdef \{1, \dots, n\}$.
If~$Q$ is a quiver and~$k$ is a field, we denote by~$kQ$ the path algebra of~$Q$.
We try to denote vertices by~$u,v,w$, arrows by~$\alpha, \beta, \gamma$, strings by~$\rho, \sigma, \tau$, and walks by~$\omega, \lambda, \mu, \nu$.
Arrows are composed from left to right: if \raisebox{-.04cm}{\includegraphics[scale=.3]{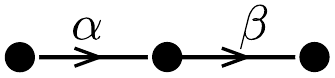}} is a quiver, then~$\alpha\beta$ is a path while~$\beta\alpha$ is not.
Modules over an algebra are assumed to be right modules.
By these conventions, a module over~$kQ$ is equivalent to a representation of~$Q$.
Finally, we also stick to the following picture conventions.
The Hasse diagram of a poset is represented bottom-up and a $3$-dimensional fan is represented by the stereographic projection of its trace on the unit sphere.
Many pictures are difficult to visualize without colors; we refer the reader to the online~version.


\newpage
\part{String modules}
\label{part:algebra}

In this section, we first fix our notations (\ref{subsec:pathAlgebra}) on quivers, path algebras, their representations, and their support $\tau$-tilting complexes (\ref{subsec:tautilting}).
We then focus on string algebras (\ref{subsec:stringGentleAlgebra} to \ref{subsec:morphismsStringModules}): following~\cite{ButlerRingel}, we present a combinatorial description of the indecomposable representations of a string algebra and of the Auslander-Reiten translation~$\tau$.
We then provide a convenient combinatorial description of the $\tau$-compatibility between string modules (\ref{sec:stringModules}).

\section{Recollections on string algebras}
\label{sec:recollectionsStringAlgebras}

\subsection{Path algebras}
\label{subsec:pathAlgebra}

We start by recalling standard notions on quivers, paths, and path algebras.
We refer to the textbooks of I.~Assem, D.~Simson and A.~Skowro\'nski~\cite{AssemSimsonSkowronski} and R.~Schiffler~\cite{Schiffler} for more details.

A \defn{quiver}~$Q$ is an oriented graph, represented by a quadruple~$(Q_0, Q_1, s, t)$ where~$Q_0$ is the set of \defn{vertices}, $Q_1$ is the set of \defn{arrows}, and~$s, t : Q_1 \to Q_0$ are the \defn{source map} and the \defn{target map} respectively.
Throughout, we always assume that~$Q$ is \defn{finite}, that is, it has only finitely many vertices and arrows.
We usually denote the vertices of~$Q$ by~$u,v,w$ and the arrows of~$Q$~by~$\alpha, \beta, \gamma$.

A \defn{path} in~$Q$ is a sequence~$\pi = \alpha_1 \dots \alpha_\ell$ of~$Q_1$ such that~$t(\alpha_k) = s(\alpha_{k+1})$ for all~${k \in [\ell-1]}$ (note that we compose arrows from left to right).
The \defn{source} (resp.~\defn{target}) of~$\pi$ is~$s(\pi) \eqdef s(\alpha_1)$ (resp.~$t(\pi) \eqdef t(\alpha_\ell)$), and the \defn{length} of~$\pi$ is~$\ell(\pi) \eqdef \ell$.
Note that for each vertex~$v \in Q_0$, we also consider the \defn{path~$\varepsilon_v$ of length zero} with source and target~$s(\varepsilon_v) = t(\varepsilon_v) = v$.

Let~$k$ denote a field.
The \defn{path algebra} of~$Q$ is the $k$-algebra~$kQ$ generated by all paths in~$Q$, and where the product of two paths is defined by~$p \cdot q = pq$ (concatenation) if~$s(p) = t(q)$ and~$p \cdot q = 0$ if~$s(p) \ne t(q)$.
Note that~$kQ$ is graded by~$kQ = \bigoplus_{\ell \ge 0} kQ_\ell$ where~$kQ_\ell$ is the subvector space generated by paths of length~$\ell$.
Observe also that~$kQ$ is finite dimensional if and only if~$Q$ is acyclic (meaning that it has no oriented cycle).

The \defn{arrow ideal} of~$kQ$ is the ideal~$R \eqdef \bigoplus_{\ell \ge 1} kQ_\ell$ generated by the arrows of~$Q$.
A two-sided ideal~$I$ of~$kQ$ is \defn{admissible} if there exists~$m \ge 2$ such that~$R^m \subseteq I \subseteq R^2$ (the condition~$R^m \subseteq I$ ensures that~$kQ/I$ is finite dimensional, and the condition~$I \subseteq R^2$ ensures that~$Q$ can be recovered from~$kQ/I$).
The pair~$\bar Q \eqdef (Q,I)$ is then called a \defn{bound quiver} and the quotient~$kQ/I$ is a \defn{bound quiver algebra} and is finite dimensional.

Two quivers~$Q = (Q_0, Q_1, s, t)$ and~$Q' = (Q_0', Q_1', s', t')$ are said to be \defn{isomorphic} if there exists $\phi \eqdef (\phi_0, \phi_1)$ where~$\phi_0 : Q_0 \to Q_0'$ and~$\phi_1 : Q_1 \to Q_1'$ are bijections such that~${s'(\phi_1(\alpha)) = \phi_0(s(\alpha))}$ and~${t'(\phi_1(\alpha)) = \phi_0(t(\alpha))}$ for all~$\alpha \in Q_1$.
The bijection~$\phi_1 : Q_1 \to Q_1'$ naturally extends to a bijection on paths by~$\phi_1(\alpha_1 \cdots \alpha_\ell) = \phi_1(\alpha_1) \cdots \phi_1(\alpha_\ell)$, and thus to a morphism~$\phi_1 : kQ \to kQ'$ of path algebras by linearity.
We say that two bound quivers~$\bar Q = (Q,I)$ and~$\bar Q' = (Q',I')$ are \defn{isomorphic} if there there is an isomorphism~$\phi = (\phi_0, \phi_1)$ between the quivers~$Q$ and~$Q'$ such that~$\phi_1(I) = I'$.

For a quiver~$Q = (Q_0, Q_1, s, t)$, define its \defn{reversed quiver} by~$\reversed{Q} \eqdef (Q_0, Q_1, t, s)$.
For a path~$\pi$ on~$Q$, define its \defn{reversed path} on~$\reversed{Q}$ by~$\reversed{(\alpha_1 \cdots \alpha_\ell)} = \alpha_\ell \cdots \alpha_1$, and extend it by linearity to an anti-morphism~$kQ \to k\reversed{Q}$ of path algebras.
We denote by~$\reversed{I}$ the direct image of~$I$ under this morphism.
The \defn{reversed bound quiver} of a bound quiver~$\bar Q = (Q,I)$ is the bound quiver~$\reversed{\bar Q} \eqdef (\reversed{Q},\reversed{I})$.

\subsection{Auslander-Reiten theory and $\tau$-tilting theory}
\label{subsec:tautilting}

This short section is mainly meant to fix notations and recall some definitions.  The interested reader is referred to \cite{AssemSimsonSkowronski, Schiffler} for detailed accounts of Auslander-Reiten theory, and to \cite{AdachiIyamaReiten} for $\tau$-tilting theory.

\begin{definition}
Let~$k$ be a field.
A \defn{representation} of a bound quiver~$\bar Q = (Q,I)$ is a pair ${M = \big( (M_v)_{v\in Q_0}, (M_\alpha)_{\alpha\in Q_1} \big)}$, where
\begin{itemize}
\item $M_v$ is a~$k$-vector space for all~$v\in Q_0$, and
\item $M_\alpha: M_{s(\alpha)} \to M_{t(\alpha)}$ is a linear map for all~$\alpha \in Q_1$, such that $M_\pi := \sum M_{\alpha_{\ell}} \circ \cdots \circ M_{\alpha_1} = 0$  for any element~$\pi =\sum \alpha_1 \dots \alpha_\ell$ in~$I$.
\end{itemize}
We will always assume that representations are finite-dimensional, that is, that all the spaces~$M_v$ are finite-dimensional.

A \defn{morphism of representations}~$f:M\to N$ from a representation~$M$ to a representation~$N$ is a tuple~$f=(f_v)_{v\in Q_0}$, where
\begin{itemize}
\item $f_v:M_v\to N_v$ is a linear map for all~$v\in Q_0$, and
\item for all~$\alpha\in Q_1$, the equality~$N_\alpha \circ f_{s(\alpha)} = f_{t(\alpha)} \circ M_\alpha$ holds.
\end{itemize}
\end{definition}

Representations of a bound quiver~$\bar Q$, together with morphisms of representations, form a category~$\rep \bar Q$.
This category is equivalent to the category of finite-dimensional modules over the algebra~$kQ/I$.
As consequences of this,~$\rep \bar Q$ is abelian, has enough projective and injective objects, and thus every object of $\rep \bar Q$ admits a projective and an injective resolution.
Projective and injective representations can be explicitly computed (see, for instance, \cite[Chapter III.2]{AssemSimsonSkowronski}).
In particular, there is a canonical bijection~$v\mapsto P(v)$ from~$Q_0$ to a set of representatives for the isoclasses of indecomposable projective representations of~$\bar Q$.
Another consequence is that any representation can be written in a unique way as a direct sum of indecomposable representations, up to isomorphism.
In other words, the category~$\rep \bar Q$ satisfies the \mbox{Azuyama\,--\,Krull\,--\,Remak\,--\,Schmidt} Theorem.
In view of the above, we will often use the words ``module'' and ``representation'' to mean the same thing.

An important operation on representations is the Auslander-Reiten translation (see \cite[Chap.~IV]{AssemSimsonSkowronski}).  For an indecomposable representation~$M$, let 
\[
 P_1^M \stackrel{f^M}{\to} P_0^M \to M \to 0 
\]
be a minimal projective presentation.  Applying the \defn{Nakayama functor}~$\nu = D\circ \Hom{kQ/I}(-, kQ/I)$, where~$D = \Hom{k}(-, k)$ is the standard vector space duality, one gets a morphism between injective representations
\[
\nu P_1^M \stackrel{\nu f^M}{\longrightarrow} \nu P_0^M.
\]
\begin{definition}
 \begin{enumerate}
  \item Using the above notation, the \defn{Auslander-Reiten translation}~$\tau M$ of~$M$ is defined to be the kernel of~$\nu f^M$.
  \item \cite{AdachiIyamaReiten} A representation~$M$ is \defn{$\tau$-rigid} if~$\Hom{\bar Q}(M, \tau M) = 0$.
 \end{enumerate}
\end{definition} 

It is convenient to introduce the following notation and terminology.  If~$P$ is a projective representation of~$\bar Q$, then the associated \defn{shifted projective} is the symbol~$P[1]$.  We treat~$P[1]$ as an object, and allow ourselves to form direct sums of shifted projectives with other shifted projectives and with ordinary representations of~$\bar Q$.  By convention, the shifted projectives are $\tau$-rigid.

\begin{definition}
\label{def: tau-rigid and stautilt}
With this notation, and following \cite{AdachiIyamaReiten}, we say that~$M\oplus P[1]$ is
\begin{enumerate}
 \item a \defn{$\tau$-rigid representation} if
   \begin{itemize}
     \item $M$ is a representation of~$\bar Q$ and~$P$ is a projective representation of~$\bar Q$,
     \item $M$ is~$\tau$-rigid, and
     \item $\Hom{\bar Q}(P,M) = 0$.
   \end{itemize}
 \item a \defn{support~$\tau$-tilting representation} if moreover the number of pairwise non-isomorphic indecomposable summands of $M\oplus P$ is the number of vertices of~$Q$.
\end{enumerate}
Note that~$A = kQ/I$ and~$A[1]$ are always support~$\tau$-tilting modules.
\end{definition}

\begin{definition}\label{def: tau-compatibility}
 Two representations or shifted projectives~$M, N$ of~$\bar Q$ are said to be \defn{$\tau$-compatible} precisely when $M\oplus N$ is a~$\tau$-rigid representation of~$\bar Q$.
\end{definition}

\begin{remark}
In \cite{AdachiIyamaReiten}, the notation~$(M,P)$ is used instead of~$M\oplus P[1]$.  Our choice of notation is motivated by the link between~$\tau$-rigid representations and so-called $2$-terms silting complexes.
\end{remark}

\begin{remark}
Let~$v\in Q_0$, then~$\Hom{\bar Q} \big( P(v), M \big) = 0$ if and only if~$M_v=0$. More generally, if~$v_1, \ldots, v_r$ are vertices of~$Q$, then~$\Hom{\bar Q}\big( {\oplus_{j \in [r]} P(v_j)}, M \big) = 0$ if and only if~$M_{v_j}=0$ for all~$j \in [r]$. 
\end{remark}

One of the most important results on support~$\tau$-tilting objects is the existence and uniqueness of a \defn{mutation}.  

\begin{theorem}[\cite{AdachiIyamaReiten}]\label{thm:mutationstautilts}
Let~$N = \bigoplus_{i \in [n]} N_i$ be a basic support~$\tau$-tilting object, where each~$N_i$ is either an indecomposable representation or a shifted indecomposable projective representation.
Then for any~${i \in [n]}$, there exists a unique indecomposable representation or shifted indecomposable projective representation~$N'_i$ not isomorphic to~$N_i$ and such that
\[
\mu_i(N) \eqdef N'_i \oplus \bigoplus_{j\neq i} N_j
\]
is a support~$\tau$-tilting object (called mutation of~$N$ at~$N_i$).
\end{theorem}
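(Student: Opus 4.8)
The statement to prove is Theorem~\ref{thm:mutationstautilts}, the existence and uniqueness of a mutation at each indecomposable summand of a support $\tau$-tilting object. Since this is exactly the main theorem of~\cite{AdachiIyamaReiten}, my plan is to reduce it to their framework rather than reprove it from scratch; but let me sketch how the argument should go so that it is self-contained for the gentle-algebra reader. The key idea is to pass from support $\tau$-tilting objects to $2$-term silting complexes in the bounded homotopy category $K^b(\proj A)$, where $A = kQ/I$. Under this correspondence (which sends $M \oplus P[1]$ to a $2$-term complex with $H^0 = M$ and the shifted projective summands encoded in degree $-1$), support $\tau$-tilting objects correspond to basic $2$-term silting complexes, and the indecomposable summands match up. So the first step is to set up this dictionary carefully and check that indecomposable summands correspond.

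The second step is the local structure: given a silting complex $T = T_i \oplus \bigoplus_{j \neq i} T_j$, consider the "almost complete" object $\bigoplus_{j \neq i} T_j$, which has $n-1$ indecomposable summands. One shows there are exactly two ways to complete it to a silting object, via the two exchange triangles
\[
T_i \to E \to T_i' \to T_i[1]
\qquad\text{and}\qquad
T_i' \to E' \to T_i \to T_i'[1],
\]
where $E$, $E'$ are built from $\bigoplus_{j \neq i} T_j$ using minimal left/right approximations in the additive subcategory it generates. The heart of the matter is to prove that exactly one of these two completions is again a $2$-term silting complex (the other lands outside the $2$-term window in general), which pins down $T_i'$ uniquely. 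This uses the characterization of silting as: $\Hom{K^b(\proj A)}(T, T[k]) = 0$ for $k > 0$ together with $T$ generating $K^b(\proj A)$ as a thick subcategory.

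The third step is to translate the exchange triangle back into the language of representations and shifted projectives, so that the conclusion reads as in the statement: $\mu_i(N) = N_i' \oplus \bigoplus_{j \neq i} N_j$ is again support $\tau$-tilting, with $N_i'$ the unique indecomposable (representation or shifted projective) completing $\bigoplus_{j\neq i} N_j$ and not isomorphic to $N_i$. Uniqueness is immediate from the fact that there are only two completions in the silting world and only one of them is $2$-term.

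The main obstacle is the last part of step two: showing that precisely one of the two silting completions of an almost complete $2$-term silting object stays within the $2$-term subcategory. This is the technically delicate point in~\cite{AdachiIyamaReiten}; it relies on controlling the degrees appearing in $E$ and $E'$ via the approximation triangles and on the fact that $A$ has finite global dimension issues circumvented by working in $K^b(\proj A)$ rather than $D^b(\rep \bar Q)$. In the interest of not rederiving~\cite{AdachiIyamaReiten} in full, I would state this as a direct citation and only indicate the silting-theoretic reformulation, since for our purposes (gentle algebras) we will later give an explicit combinatorial model of $\mu_i$ via increasing and decreasing flips of non-kissing walks (Proposition~\ref{prop:flip}), which makes the existence and uniqueness transparent in that special case.
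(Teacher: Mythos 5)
The paper does not prove this statement: it is attributed to \cite{AdachiIyamaReiten} in the theorem header and used as a black box, so there is no in-paper proof to compare your sketch against. Your plan is therefore aligned with the paper's actual practice, and you are right to say at the end that the honest thing to do is cite \cite{AdachiIyamaReiten} and, for gentle algebras, point to the explicit combinatorial model of mutation via flips (Proposition~\ref{prop:flip}) once the isomorphism $\tTC \cong \RNKC$ has been established.

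One remark on the route you sketch, for accuracy. You describe the proof as going through the correspondence with $2$-term silting complexes in $K^b(\proj A)$ and then arguing that, of the two silting mutations (left and right) of a given $2$-term silting object at a fixed summand, exactly one stays $2$-term. That is a correct and standard formulation, but it is not quite how \cite{AdachiIyamaReiten} phrase and prove their mutation theorem: they establish the exchange property directly at the level of support $\tau$-tilting pairs, by showing that a basic almost complete support $\tau$-tilting pair is a direct summand of exactly two basic support $\tau$-tilting pairs, using Bongartz-type (co)completion and the bijection with functorially finite torsion classes; the equivalence with $2$-term silting is set up as a parallel result. Both perspectives yield the same theorem and the same uniqueness of $N_i'$, so your sketch is a genuinely different (though equally standard) route to the same result. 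The silting picture you describe is the one that feeds most naturally into the $\b{g}$-vector considerations used later in the paper (cf.\ Remarks~\ref{rem:linDep} and~\ref{rem:polytope}), so it is a reasonable choice to emphasize it. There is no gap beyond what you already flag and defer to the citation.
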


\begin{definition}\label{def: stautilt complex}
Let~$\bar Q = (Q,I)$ be a bound quiver and $A = kQ/I$ be its bound algebra.
\begin{itemize}
\item The \defn{support $\tau$-tilting complex}~$\tTC$ is the simplicial complex whose vertices are the isomorphism classes of indecomposable $\tau$-rigid or shifted projective representations of~$A$ and whose faces are the collections of representations~$\{M_1, \dots, M_k\}$ such that $M_1 \oplus \dots \oplus M_k$ is $\tau$-rigid.
\item The \defn{support $\tau$-tilting graph}~$\tTC^*$ is the dual graph of~$\tTC$ with a vertex for each isomorphism class of support $\tau$-tilting modules over~$A$ and an edge between two vertices~$T$ and~$U$ if and only if~$U$ is a mutation of~$T$.
\end{itemize}
\end{definition}

\begin{example}
 Two examples of support $\tau$-tilting graphs and support $\tau$-tilting complexes are given in Figure~\ref{fig:exmtTC}.
\end{example}

\subsection{String and gentle algebras}
\label{subsec:stringGentleAlgebra}

String algebras are a class of path algebras defined by generators and particularly nice relations.
The study of their representations goes back to \cite{GelfandPonomarev}, see also \cite{DonovanFreislich} and \cite{WaldWaschbusch}.
We will follow the general framework established in \cite{ButlerRingel}.

\begin{definition}[\cite{ButlerRingel}]
A \defn{string bound quiver}~$\bar Q \eqdef (Q,I)$ is a bound quiver such that
\begin{itemize}
\item each vertex~$v \in Q_0$ has at most two incoming and two outgoing arrows,
\item the ideal~$I$ is generated by paths,
\item for any arrow~$\beta \in Q_1$, there is at most one arrow~$\alpha \in Q_1$ such that~$t(\alpha) = s(\beta)$ and~$\alpha\beta\notin I$,
\item for any arrow~$\beta \in Q_1$, there is at most one arrow~$\gamma \in Q_1$ such that~$t(\beta) = s(\gamma)$ and~$\beta\gamma\notin I$.
\end{itemize}
The algebra~$kQ/I$ associated to a string bound quiver~$\bar Q$ is called a \defn{string algebra}.
\end{definition}

\begin{example}
\label{exm:stringAlgebras}
We have represented in \fref{fig:exmQuiver} four examples of string bound quivers.
Throughout the paper, we draw \raisebox{-.04cm}{\includegraphics[scale=.3]{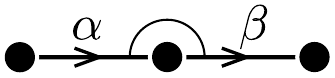}} to indicate that~$\alpha\beta \in I$.

\begin{figure}[b]
	\capstart
	\centerline{
		\begin{tabular}{c@{\quad}@{\quad}c@{\quad}@{\quad}c@{\quad}@{\quad}c}
			$Q = \vcenter{\hbox{\includegraphics[scale=.5]{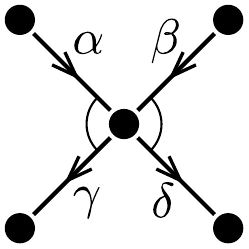}}}$ &
			$Q = \vcenter{\hbox{\includegraphics[scale=.5]{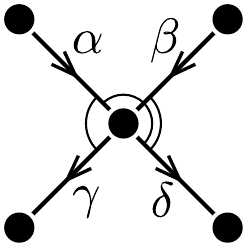}}}$ &
			$Q = \vcenter{\hbox{\includegraphics[scale=.5]{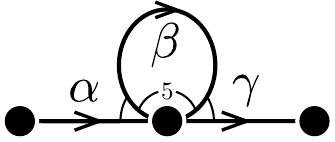}}}$ &
			$Q = \vcenter{\hbox{\includegraphics[scale=.5]{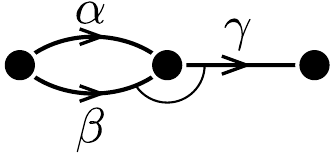}}}$ \\[1.1cm]
			$I = (\alpha\beta, \gamma\delta)$ & 
			$I = (\alpha\beta, \gamma\delta, \alpha\delta)$ & 
			$I = (\alpha\beta, \beta\gamma, \beta^5)$ & 
			$I = (\beta\gamma)$
		\end{tabular}
	}
	\caption{Four string bound quivers. The leftmost and rightmost are gentle bound quivers.}
	\label{fig:exmQuiver}
\end{figure}
\end{example}

\begin{definition}[\cite{ButlerRingel}]
\label{def:gentleQuiver}
A \defn{gentle bound quiver}~$\bar Q \eqdef (Q,I)$ is a string bound quiver such that
\begin{itemize}
\item $I$ is generated by paths of length exactly two,
\item for any arrow~$\beta \in Q_1$, there is at most one arrow~$\alpha \in Q_1$ such that~$t(\alpha) = s(\beta)$ and~$\alpha\beta \in I$,
\item for any arrow~$\beta \in Q_1$, there is at most one arrow~$\gamma \in Q_1$ such that~$t(\beta) = s(\gamma)$ and~$\beta\gamma \in I$.
\end{itemize}
The algebra~$kQ/I$ associated to a gentle bound quiver~$\bar Q$ is called a \defn{gentle algebra}.
\end{definition}

\begin{example}
In Example~\ref{exm:stringAlgebras}, only the first and last bound quivers are gentle.
\end{example}

String algebras enjoy a particularly nice representation theory: they are tame algebras (as proved in~\cite{ButlerRingel}) and their indecomposable representations are completely classified.  We will make heavy use of this classification, which we recall in the next two sections.

\subsection{Strings and bands}
\label{subsec:stringsBands}

For any arrow~$\alpha$ of any quiver~$Q$, define a formal inverse~$\alpha^{-1}$ with the properties that~$s(\alpha^{-1}) = t(\alpha)$, $t(\alpha^{-1}) = s(\alpha)$, $\alpha^{-1}\alpha = \varepsilon_{t(\alpha)}$ and~$\alpha\alpha^{-1} = \varepsilon_{s(\alpha)}$, where~$\varepsilon_v$ is the path of length zero starting and ending at the vertex~$v \in Q_0$.  Furthermore, let~$(\alpha^{-1})^{-1}=\alpha$; thus~$(-)^{-1}$ is an involution on the set of arrows and formal inverses of arrows of~$Q$.  We extend this involution to the set of all paths involving arrows or their formal inverses by further setting~$(\varepsilon_v)^{-1} = \varepsilon_v$ for all vertices~$v \in Q_0$.

\begin{definition}
\label{def:stringsBands}
Let~$\bar Q = (Q,I)$ be a string bound quiver.
\begin{enumerate}
\item A \defn{string} for~$\bar Q$ is a word of the form
\(
\rho = \alpha_1^{\varepsilon_1}\alpha_2^{\varepsilon_2}\cdots \alpha_\ell^{\varepsilon_\ell},
\)
where
	\begin{itemize}
	\item $\alpha_i \in Q_1$ and~$\varepsilon_i \in \{-1,1\}$ for all~$i \in [\ell]$,
	\item $t(\alpha_i^{\varepsilon_i}) = s(\alpha_{i+1}^{\varepsilon_{i+1}})$ for all~$i \in [\ell-1]$,
	\item there is no path~$\pi \in I$ such that~$\pi$ or~$\pi^{-1}$ appears as a factor of~$\rho$, and
	\item $\rho$ is reduced, in the sense that no factor~$\alpha\alpha^{-1}$ or~$\alpha^{-1}\alpha$ appears in~$\rho$, for~$\alpha \in Q_1$.
	\end{itemize}
The integer~$\ell$ is called the \defn{length} of the string~$\rho$.
Moreover, we still denote by~$s(\rho) \eqdef s(\alpha_1^{\varepsilon_1})$ and~$t(\rho) \eqdef t(\alpha_\ell^{\varepsilon_\ell})$ the source and target of~$\rho$.
For each vertex~$v \in Q_0$, there is also a \defn{string of length zero}, denoted by~$\varepsilon_v$, that starts and ends at~$v$.
We denote by~$\strings(\bar Q)$ the set of strings on~$\bar Q$.
We usually denote strings by~$\rho, \sigma, \tau$.
For reasons that will be explicit later, we often implicitly identify the two inverse strings~$\rho$ and~$\rho^{-1}$.
More formally, an \defn{undirected string} is a pair~$\{\rho, \rho^{-1}\}$ and we let~$\strings^\pm(\bar Q) \eqdef \set{\{\rho, \rho^{-1}\}}{\rho \in \strings(\bar Q)}$ denote the collection of all undirected strings of~$\bar Q$.

\item We call \defn{negative simple string} a formal word of length zero of the form $-v$, where $v$ is any vertex of $Q_0$. We denote by
\(
\strings^\pm_{\ge -1}(\bar Q) \eqdef \strings^\pm(\bar Q) \sqcup \set{-v}{v \in Q_0}
\)
the set of \defn{almost positive unoriented strings}.
Note that~$\varepsilon_v$ and~$-v$ are different almost positive strings.

\item A \defn{band} for~$\bar Q$ is a string~$\varpi$ of length at least one such that
	\begin{itemize}
	\item $s(\varpi) = t(\varpi)$,
	\item all powers of~$\varpi$ are strings, and
	\item $\varpi$ is not itself a power of a strictly smaller string.
	\end{itemize}
      
\end{enumerate}
\end{definition}

\begin{example}
\label{exm:aStringAlgebra}
For the algebra
\[
\vcenter{\hbox{\includegraphics[scale=.5]{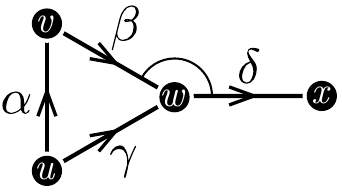}}} \qquad I = (\beta\delta)
\]
some almost positive strings are~$-u, -v, -w, -x, \varepsilon_u$, $\varepsilon_v$, $\varepsilon_w$, $\varepsilon_x$, $\alpha^\pm$, $\beta^\pm$, $\gamma^\pm$, $\delta^\pm$, $\gamma\delta$, $\beta\gamma^{-1}$, etc. The only bands are~$\beta\gamma^{-1}\alpha$, $\alpha\beta\gamma^{-1}$, $\gamma^{-1}\alpha\beta$ and their inverses.
\end{example}

\begin{notation}
The following notation, taken from \cite{ButlerRingel}, will be useful for dealing with strings.
For a given string~$\rho = \alpha_1^{\varepsilon_1}\alpha_2^{\varepsilon_2}\cdots \alpha_\ell^{\varepsilon_\ell}$, we draw~$\rho$ as follows:
  \begin{itemize}
    \item draw all arrows~$\alpha_1, \ldots, \alpha_\ell$ from left to right;
    \item draw all arrows pointing downwards.
  \end{itemize}
Moreover, the strings~$\varepsilon_v$ of length zero are depicted simply as~$v$.
Be aware that even if the string is depicted linearly from left to right, it might have cycles since some substrings can be repeated along the string.
\end{notation}

\begin{example}
\label{exm:drawingString}
For the algebra defined in Example~\ref{exm:aStringAlgebra}, the string~$\rho = \gamma\beta^{-1}\alpha^{-1}\gamma\beta^{-1}\alpha^{-1}\gamma\delta$ is drawn as follows:
\[
\rho = \;\vcenter{\hbox{\includegraphics[scale=0.3]{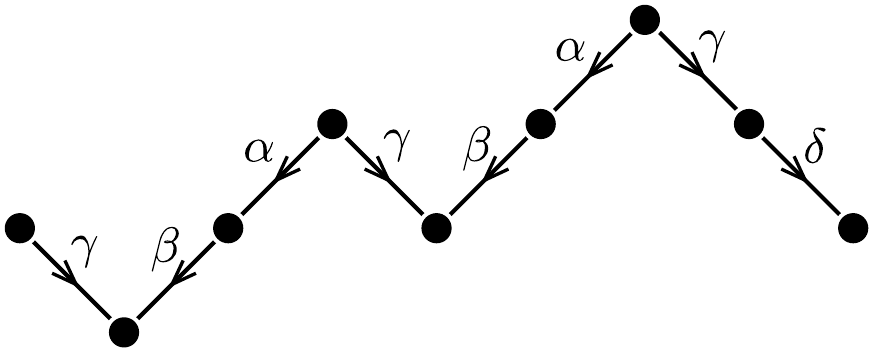}}}
\]
\end{example}

\begin{definition}
A \defn{substring} of a string~$\rho$ is a factor of~$\rho$.
Note that the position of the substring in~$\rho$ is important and that a word can appear as distinct substrings of~$\rho$ in different positions.
Observe also that we allow substrings of length~$0$, which we will abusively call \defn{vertices} of~$\rho$.
If~$u,v$ are two vertices of~$\rho$, we denote by~$\rho[u,v]$ the substring of~$\rho$ between~$u$ and~$v$.
We also abbreviate~$\rho[s(\rho),v]$ by~$\rho[\,\cdot\,,v]$ and~$\rho[u,t(\rho)]$ by~$\rho[u,\cdot\,]$.
A substring of~$\rho$ is \defn{strict} if it is distinct from~$\rho$ itself.
Finally, we denote by~$\Sigma(\rho)$ the set of all substrings~of~$\rho$.
\end{definition}

\begin{definition}
\label{def:topBottom}
A substring~$\rho = \alpha_i^{\varepsilon_i} \cdots \alpha_j^{\varepsilon_j}$ of a string~$\sigma = \alpha_1^{\varepsilon_1} \cdots \alpha_\ell^{\varepsilon_\ell}$ of~$\bar Q$ is said to be:

\begin{itemize}    
\item \defn{on top of~$\sigma$} (or a \defn{top substring of~$\sigma$}) if~$\sigma$ either ends or has an outgoing arrow at each endpoint of~$\rho$, \ie if $i = 1$ or~$\varepsilon_{i-1} = -1$, and $j = \ell$ or~$\varepsilon_{j+1} = 1$.
\item \defn{at the bottom of~$\rho$} (or a \defn{bottom substring of~$\rho$}) if~$\sigma$ either ends or has an incoming arrow at each endpoint of~$\rho$, \ie if $i = 1$ or~$\varepsilon_{i-1} = 1$, and $j = \ell$ or~$\varepsilon_{j+1} = -1$.
\end{itemize}
We denote by $\Sigma_\top(\rho)$ and~$\Sigma_\bottom(\rho)$ the sets of top and bottom substrings of~$\rho$ respectively.
Note that~$\rho$ is both in~$\Sigma_\top(\rho)$ and in~$\Sigma_\bottom(\rho)$.
\end{definition}

\begin{example}
\fref{fig:exmTopBottom} represents examples of top and bottom substrings.

\begin{figure}[t]
	\capstart
	\centerline{\includegraphics[scale=.3]{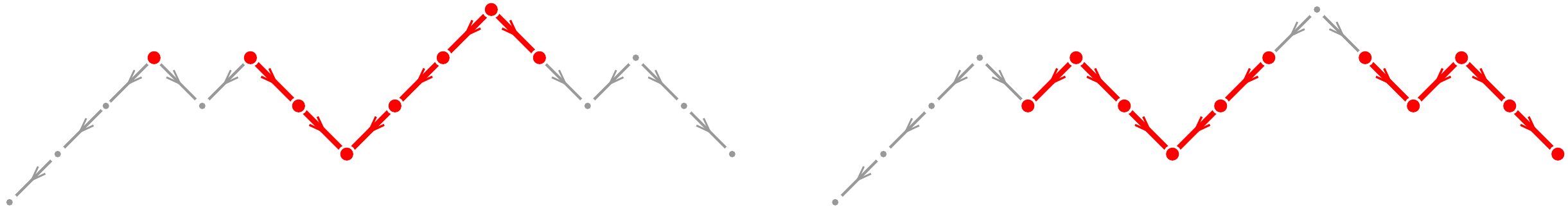}}
	\caption{Two top substrings (left) and two bottom substrings (right). Note that substrings of~$\rho$ can be reduced to a vertex and can end at an endpoint of~$\rho$.}
	\label{fig:exmTopBottom}
\end{figure}
\end{example}

\begin{definition}
A \defn{peak} (resp.~\defn{deep}) of a string~$\rho$ is a vertex~$v$ of~$\rho$ (substring of length~$0$) which is on top (resp.~at the bottom) of~$\rho$.
We call \defn{corners} of~$\rho$ the peaks and the deeps of~$\rho$.
A corner is \defn{strict} if it is not an endpoint of~$\rho$.
In other words, $v$ is a strict peak (resp.~deep) if the two incident arrows of~$\rho$ at~$v$ are both outgoing (resp.~incoming).
\end{definition}

\begin{example}
The string of \fref{fig:exmTopBottom} has~$4$ peaks and~$5$ deeps ($3$ of which are strict).
\end{example}

\begin{remark}
\label{rem:reverseStrings}
Note that reversing the quiver preserves the set of strings and their substrings but exchanges top with bottom:
$\Sigma(\reversed{\rho}) = \Sigma(\reversed{\rho})$ but~$\Sigma_\bottom(\reversed{\rho}) = \Sigma_\top(\rho)$ and~$\Sigma_\top(\reversed{\rho}) = \Sigma_\bottom(\rho)$.
\end{remark}

\subsection{String and band modules}
\label{subsec:stringBandModules}

Following~\cite{ButlerRingel}, we now define $A$-modules corresponding to the strings and bands of a string algebra~$A = kQ/I$.

\begin{definition}[\cite{ButlerRingel}]
Let~$\rho = \alpha_1^{\varepsilon_1}\alpha_2^{\varepsilon_2}\cdots \alpha_\ell^{\varepsilon_\ell}$ be a string for a string algebra~$A=kQ/I$.  The \defn{string module}~$M(\rho)$ is the $A$-module defined as a representation of~$Q$ as follows.

  \begin{itemize}
    \item Let~$v_0 = s(\alpha_1^{\varepsilon_1})$, and~$v_m = t(\alpha_m^{\varepsilon_m})$ for each~$m \in [\ell]$.
    
    \item For each vertex~$u \in Q_0$, let~$M(\rho)_u$ be the vector space with basis given by~$\{x_m \ | \ v_m = u\}$.
    
    \item For each arrow~$\beta$ of~$Q$, the linear map~$M(\rho)_\beta : M(\rho)_{s(\beta)} \to M(\rho)_{t(\beta)}$ is defined on the basis of~$M(\rho)_{s(\beta)}$ by
     \[
        M(\rho)_\beta(x_m) =
        \begin{cases}
			x_{m-1} & \text{if~$\alpha_{m} = \beta$ and~$\varepsilon_{m} = -1$}, \\
			x_{m+1} & \text{if~$\alpha_{m+1} = \beta$ and~$\varepsilon_{m+1} = 1$}, \\
            0 & \text{otherwise}.
       \end{cases}
     \]
  \end{itemize}
\end{definition}

It follows from the definition that for any string~$\rho$, the string modules~$M(\rho)$ and~$M(\rho^{-1})$ are isomorphic.
This explains why we will often consider undirected strings.

\begin{example}
For the algebra given in Example~\ref{exm:aStringAlgebra}, let~$\rho = \gamma\beta^{-1}\alpha^{-1}\gamma\beta^{-1}\alpha^{-1}\gamma\delta$.
Then~$M(\rho)$ is given by

\[
	\begin{overpic}[scale=0.6]{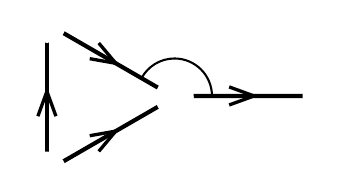}
	\put(8,2){$k^3$}
	\put(8,47){$k^2$}
	\put(47,25){$k^3$}
	\put(90,25){$k$}
	\put(-20,23){$\big[\!\begin{smallmatrix} 0 & 1 & 0 \\ 0 & 0 & 1 \end{smallmatrix}\!\big]$}
	\put(30,50){$\left[\!\begin{smallmatrix} 1 & 0 \\ 0 & 1 \\ 0 & 0 \end{smallmatrix}\!\right]$}
	\put(30,0){$\left[\!\begin{smallmatrix} 1 & 0 & 0 \\ 0 & 1 & 0 \\ 0 & 0 & 1 \end{smallmatrix}\!\right]$}
	\put(60,17){$[\!\begin{smallmatrix} 0 & 0 & 1 \end{smallmatrix}\!]$}
	\end{overpic}
\]
\vspace{-.2cm}
\end{example}

\begin{notation}
 \begin{itemize}
  \item For convenience, even though~$0$ is not a string by our definition, we will define~$M(0)$ to be the zero module.
  \item For any vertex~$v \in Q_0$, the projective~$P(v)$ is $M(v_\peak)$ where~$v_\peak$ is the maximal string of~$\bar Q$ with a single peak at~$v$.
  \item We will also make use of the following convention: for any~$v\in Q_0$, we let~$M(-v)$ be the shifted projective~$P(v)[1]$.
 \end{itemize}
\end{notation}

\begin{definition}[\cite{ButlerRingel}]
Let~$\varpi = \alpha_1^{\varepsilon_1}\alpha_2^{\varepsilon_2}\cdots \alpha_\ell^{\varepsilon_\ell}$ be a band for a string algebra~$A=kQ/I$.
Moreover, let~$\lambda\in k^*$, and let~$d \in \N_{>0}$.
The \defn{band module}~$M(\varpi, \lambda, d)$ is defined as follows.
\begin{itemize}
\item For each vertex~$v \in Q_0$, the vector space~$M(\varpi, \lambda, d)_v$ as a copy of $k^d$ for each $0 \le m \le \ell$ such that $v_m = v$.
\item For each $m \in [\ell-1]$, the linear map $M(\varpi, \lambda, d)_{\alpha_m}$ sends the copy of $k^d$ associated to $s(\alpha_m)$ to that associated to $t(\alpha_m)$ via the identity map.
\item The linear map~$M(\varpi, \lambda, d)_{\alpha_\ell}$ sends the copy of $k^d$ associated to $s(\alpha_{\ell})$ to that associated to $t(\alpha_\ell)$ via the matrix~$J_d(\lambda^{\varepsilon_\ell})$, where
     \[
        J_d(\lambda) =
        \begin{pmatrix} 
        \lambda & 0 & \cdots & \cdots & 0 \\
        1 & \lambda & \ddots & & \vdots \\
        0 & 1 & \ddots & \ddots & \vdots \\
        \vdots & \ddots & \ddots & \lambda & 0 \\
        0 & \cdots & 0 & 1 & \lambda                                        
        \end{pmatrix}
     \]
    is the~$d\times d$ Jordan block of type~$\lambda$, where we identify $t(\alpha_{\ell}^{\varepsilon_{\ell}})$ with $s(\alpha_{1}^{\varepsilon_1})$.
  \end{itemize}
\end{definition}

It follows from the definition that the band modules~$M(\varpi, \lambda, d)$ and~$M(\varpi^{-1}, \lambda^{-1}, d)$ are isomorphic.  Moreover, if~$\varpi$ and~$\varpi'$ are two bands that are \defn{cyclically equivalent}, in the sense that one is obtained from the other by cyclically permuting the arrows that constitute it, then we also have that~$M(\varpi, \lambda, d)$ and~$M(\varpi', \lambda, d)$ are isomorphic.

\begin{example}
For the algebra defined in Example~\ref{exm:aStringAlgebra}, let~$\varpi=\alpha\beta\gamma^{-1}$.  Then~$\varpi$ is a band, and the following are band modules

\[
	M(\varpi, \lambda, 1) = \vcenter{\hbox{
	\begin{overpic}[scale=0.6]{exmQuiver5bis}
	\put(10,2){$k$}
	\put(10,47){$k$}
	\put(49,25){$k$}
	\put(90,25){$0$}
	\put(2,23){$1$}
	\put(30,46){$1$}
	\put(30,4){$\lambda^{-1}$}
	\put(68,17){$0$}
	\end{overpic}}}
	\hspace{1cm}
	M(\varpi, \lambda, 3) = \vcenter{\hbox{
	\begin{overpic}[scale=0.6]{exmQuiver5bis}
	\put(8,2){$k^3$}
	\put(8,47){$k^3$}
	\put(47,25){$k^3$}
	\put(90,25){$0$}
	\put(0,23){$\mathrm{Id}$}
	\put(30,46){$\mathrm{Id}$}
	\put(23,-5){$\left[\!\begin{smallmatrix} \lambda^{-1} & 0 & 0 \\ 1 & \lambda^{-1} & 0 \\ 0 & 1 & \lambda^{-1} \end{smallmatrix}\!\right]$}
	\put(68,17){$0$}
	\end{overpic}}}
\]
\vspace{.5cm}
\end{example}

\begin{theorem}[{\cite[p.\,161]{ButlerRingel}}]
Assume that~$k$ is algebraically closed and consider a string algebra~$A$.
Then the string and band modules over~$A$ form a complete list of indecomposable $A$-modules, up to isomorphism.
Moreover,
  \begin{itemize}
    \item A string module is never isomorphic to a band module.
    \item Two string modules~$M(\rho)$ and~$M(\rho')$ are isomorphic if and only if~$\rho' = \rho^{\pm 1}$.
    \item Two band modules~$M(\varpi,\lambda, d)$ and~$M(\varpi', \lambda', d')$ are isomorphic if and only if~$d=d'$, and  
       \begin{itemize}
         \item either $\varpi$ is cyclically equivalent to~$\varpi'$, and~$\lambda = \lambda'$,
         \item or~$\varpi^{-1}$ is cyclically equivalent to~$\varpi'$, and~$\lambda^{-1} = \lambda'$.
       \end{itemize}
  \end{itemize}
\end{theorem}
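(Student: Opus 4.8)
The plan is to separate the classification into a \emph{completeness} assertion --- every indecomposable $A$-module is a string or a band module --- and the accompanying \emph{rigidity} assertions --- that these modules are indecomposable, pairwise distinct, and repeated only by the stated symmetries. I would treat the rigidity part first and more quickly: the string-algebra axioms force the structure maps of a string module $M(\rho)$ to be governed by a ``path-shaped'' basis $\{x_m\}$, and a short computation shows that any idempotent endomorphism, read off on this basis, must be $0$ or $\mathrm{id}$; hence $M(\rho)$ is indecomposable, and likewise $\Hom{A}(M(\varpi,\lambda,d),M(\varpi,\lambda,d))\cong k[t]/(t^d)$ is local. The isomorphism criteria then come from the explicit description of homomorphisms between string and band modules (the graph maps of~\cite{ButlerRingel}, which will be recalled later): an isomorphism of string modules must be a graph map with a graph-map inverse, forcing $\rho'=\rho^{\pm1}$; for band modules the dimension vector pins down $d$, the cyclic pattern of which structure maps are identities and which is the distinguished Jordan block pins down $\varpi$ up to cyclic rotation and inversion, and the eigenvalue of $M(\varpi,\lambda,d)_{\alpha_\ell}$ recovers $\lambda$ --- up to the $\lambda\leftrightarrow\lambda^{-1}$ interchange already present in $M(\varpi,\lambda,d)\cong M(\varpi^{-1},\lambda^{-1},d)$. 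Finally, that a string module is never a band module can be read off from the same bookkeeping (they have incompatible ``coefficient quivers'': a path versus a cycle).

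For the completeness assertion, which is the real content, I would run the functorial filtration method of Gelfand--Ponomarev~\cite{GelfandPonomarev}, in the form adapted to string algebras by Butler and Ringel~\cite{ButlerRingel}. Fix an indecomposable $M\in\rep\bar Q$. The defining axioms of a string bound quiver --- at most two incoming and two outgoing arrows at each vertex, and on each side a \emph{unique} relation-avoiding continuation of any path --- mean that, near each vertex, $M$ is assembled from one-dimensional pieces glued along these canonical continuations. Concretely, for each arrow $\alpha$ I would define canonical subspaces of the spaces $M_v$ --- suitable intersections of images and kernels of the admissible paths beginning or ending with $\alpha$ --- and verify, vertex by vertex, that these subspaces generate a \emph{distributive} lattice, so that one may choose a basis of each $M_v$ adapted simultaneously to all of them. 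Tracking how the structure maps act on such an adapted basis exhibits $M$ as filtered by string and band modules, each labelled by an ``admissible word'' supported on $M$. The last step uses indecomposability of $M$: a mismatch in the gluing data between two words occurring in this filtration would split off a proper direct summand, so a single word survives. If that word is finite, $M$ is the corresponding string module; if it closes up into a primitive cyclic word $\varpi$, the one remaining degree of freedom is a single endomorphism of the repeated layer, which --- since $k$ is algebraically closed --- can be brought into Jordan normal form, producing exactly the data $(\lambda,d)$ and the isomorphism $M\cong M(\varpi,\lambda,d)$.

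The step I expect to be the main obstacle is the distributivity of the lattice of canonical subspaces at each vertex: this is what licenses the choice of a simultaneously adapted basis, and therefore the whole filtration, and it is the one place where the full strength of the string-bound-quiver axioms (in particular the uniqueness of relation-avoiding continuations) is used. Once distributivity is in hand, constructing the filtration, splitting off the indecomposable summand, and performing the Jordan reduction for bands are comparatively routine, and the rigidity statements above are essentially bookkeeping.
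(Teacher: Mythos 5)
The paper does not prove this theorem: it appears as a bare citation to Butler and Ringel~\cite{ButlerRingel}, and later sections simply invoke the classification. There is therefore no proof in the paper to compare against; I assess your outline on its own terms.

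Your sketch is a faithful summary of the Gelfand--Ponomarev~\cite{GelfandPonomarev} functorial-filtration strategy as reworked by Butler and Ringel, and you correctly single out the genuine crux: distributivity of the lattice of canonical subspaces at each vertex, which licenses the choice of a simultaneously adapted basis and hence the filtration by words. One caution on the rigidity part, which you dispatch too quickly. For a string module $M(\rho)$, it is not true that every idempotent endomorphism is visibly $0$ or $\mathrm{id}$ on the string basis: when $\rho$ contains repeated substrings, $M(\rho)$ carries genuine non-scalar endomorphisms (graph maps factoring through a substring that is both on top and at the bottom of $\rho$), and the correct assertion is that every such map is nilpotent, so that $\Hom{A}\big(M(\rho),M(\rho)\big)$ is \emph{local}. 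For a band module the analogous clean statement is again locality; the isomorphism with $k[t]/(t^d)$ requires $\varpi$ to carry no extra symmetry (otherwise $\varpi$ can be cyclically equivalent to its inverse, producing additional graph endomorphisms). You also lean on the graph-map description of $\Hom$ (the paper's Proposition~\ref{prop:morphismsStringModules}) inside the rigidity argument; this is fine because that description is a direct computation with projective presentations and does not presuppose the classification, but the independence is worth a remark to preempt any appearance of circularity. None of this is a fatal gap: the plan is sound and matches the cited reference.
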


\subsection{Auslander-Reiten translation of string modules}
\label{subsec:ARtranslationStringModules}

Our goal in this section is to describe the Auslander-Reiten translation in a string algebra.
This description requires some additional definitions and notations.

\begin{definition}
\label{def:peaksDeeps}
We say that a string~$\rho$
  \begin{itemize}
    \item \defn{starts} (resp.~\defn{ends}) \defn{on a peak} if there is no arrow~$\alpha$ such that~$\alpha \rho$ (resp.~$\rho\alpha^{-1}$) is a string,
    \item \defn{starts} (resp.~\defn{ends}) \defn{in a deep} if there is no arrow~$\alpha$ such that~$\alpha^{-1} \rho$ (resp.~$\rho \alpha$) is a string.
  \end{itemize}
\end{definition} 

\begin{remark}
Definition~\ref{def:peaksDeeps} is best understood by drawing strings.  Starting (or ending) on a peak means that one cannot add an arrow at the start (or the end) of~$\rho$ such that the starting point (or ending point) of the new string would be higher in the picture than that of~$\rho$.  The same applies when replacing ``on a peak'' by ``in a deep'' and  ``higher'' by ``lower''.
\end{remark}

\begin{example}
The string in Example~\ref{exm:drawingString} starts and ends on a peak, and also ends in a deep.
\end{example}

\begin{definition}
\label{def:addingRemovingHooksCohooks}
  \begin{enumerate}
    \item Let~$\rho$ be a string that does not start on a peak. Let~$\alpha, \alpha_1, \ldots, \alpha_r$ be arrows such that~$_h \rho \eqdef \alpha_r^{-1}\cdots \alpha_1^{-1} \alpha \rho$ starts in a deep.  We say that~$_h \rho$ is obtained from~$\rho$ by \defn{adding a hook at the start of~$\rho$}.

    \item Dually, let~$\rho$ be a string that does not end on a peak.  Define $\rho_h \eqdef (_h ( \rho )^{-1} )^{-1}$ to be the string obtained by \defn{adding a hook at the end of~$\rho$}.


    \item Let~$\rho$ be a string that does not start in a deep. Let~$\alpha, \alpha_1, \ldots, \alpha_r$ be arrows such that ${_c \rho =  \alpha_r \cdots \alpha_1 \alpha^{-1} \rho}$ starts on a peak. We say that~$_c \rho$ is obtained from~$\rho$ by \defn{adding a cohook at the start of~$\rho$}.

    \item Dually, let~$\rho$ be a string that does not end in a deep.  Define $\rho_c \eqdef (_c ( \rho )^{-1} )^{-1}$ to be the string obtained by \defn{adding a cohook at the end of~$\rho$}.

  \end{enumerate}
\end{definition}

\begin{remark}
Note that in Definition~\ref{def:addingRemovingHooksCohooks}:
  \begin{enumerate}
    \item Unless~$\rho$ has length zero, the strings~$_h\rho$, $\rho_h$, $_c\rho$ and~$\rho_c$, when they are defined, are uniquely determined. If~$\rho$ has length zero, then one can exchange~$_h\rho$ with $\rho_h$ and~$_c\rho$ with~$\rho_c$.
    \item In all $4$ cases, it is possible that~$r=0$, meaning that the added hook (resp.~cohook) is restricted to a single incoming (resp.~outgoing) arrow.
  \end{enumerate}    
\end{remark}

\begin{remark}
In pictures, hooks and cohooks always look like

\bigskip
\centerline{
\begin{overpic}[scale=0.3]{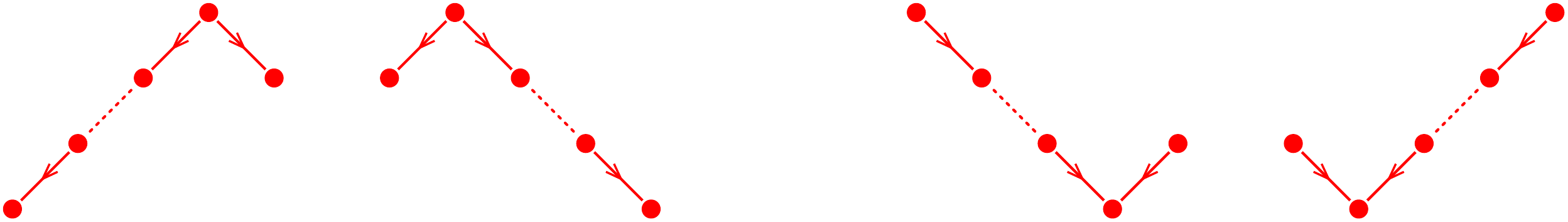}
\put(18,-5){hooks}
\put(74,-5){cohooks}
\end{overpic}
}
\vspace{.8cm}
\end{remark}

\begin{example}
In the quiver of Example~\ref{exm:aStringAlgebra}, the string~$\rho = \gamma\beta^{-1}\alpha^{-1}\gamma\beta^{-1}\alpha^{-1}\gamma\delta$ of Example~\ref{exm:drawingString} does not start on a deep, and we have
\[
\rho = \;\vcenter{\hbox{\includegraphics[scale=0.3]{exmString1}}}
\hspace{1.3cm}
_c\rho = \;\vcenter{\hbox{\includegraphics[scale=0.3]{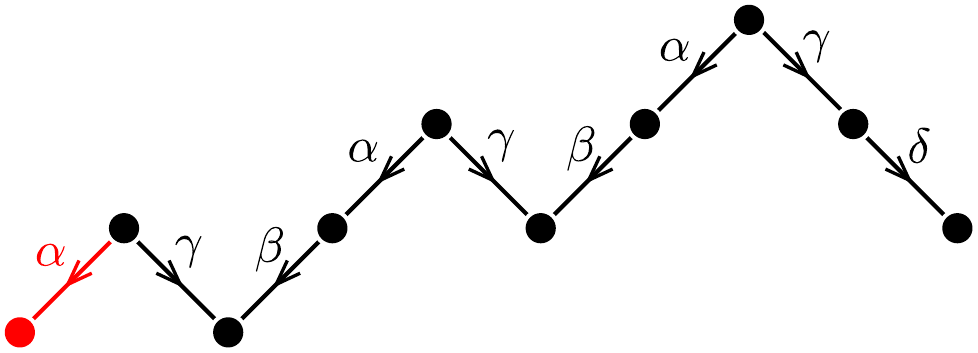}}}
\]
Still in the same quiver of Example~\ref{exm:aStringAlgebra}, the string~$\rho = \delta$ does not start on a peak, and we have
\[
\rho = \;\vcenter{\hbox{\includegraphics[scale=0.3]{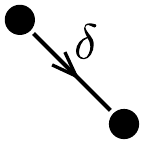}}}
\hspace{1.3cm}
_h\rho = \;\vcenter{\hbox{\includegraphics[scale=0.3]{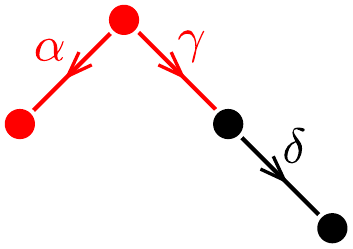}}}
\]
\end{example}

\begin{lemma}[and Definition]
\enlargethispage{.1cm}
Let~$\rho$ be a string for a string bound quiver~$\bar Q$.

  \begin{enumerate}
    \item If~$\rho$ starts on a peak and is not a path in~$\bar Q$, then there exists a unique string~$\rho'$ such that~$\rho'$ does not start in a deep, and~$\rho = \phantom{}_c\rho'$. We say that~$\rho'$ is obtained by \defn{removing a cohook at the start of~$\rho$} and denote it by~$_{c^{-1}}\rho$. If~$\rho$ is a path in~$\bar Q$, then we put~$_{c^{-1}}\rho =0$.
    
    \item Dually, if~$\rho$ ends on a peak and is not the inverse of a path in~$\bar Q$, then define the string obtained by \defn{removing a cohook at the end of~$\rho$} as~${\rho_{c^{-1}} \eqdef (_{c^{-1}} (\rho^{-1}) )^{-1}}$.
    
    
    \item If~$\rho$ starts in a deep and is not the inverse of a path in~$\bar Q$, then there exists a unique string~$\rho'$ such that~$\rho'$ does not start on a peak, and~$\rho = \phantom{} _h\rho'$. We say that~$\rho'$ is obtained by \defn{removing a hook at the start of~$\rho$} and denote it by~$_{h^{-1}}\rho$. If~$\rho$ is a the inverse of a path in~$\bar Q$, then we put~$_{h^{-1}}\rho =0$.
    
    \item Dually, if~$\rho$ ends in a deep and is not a path in~$\bar Q$, then define the string obtained by \defn{removing a hook at the end of~$\rho$} as~${\rho_{h^{-1}} \eqdef (_{h^{-1}} (\rho^{-1}) )^{-1}}$.
    
  \end{enumerate}
\end{lemma}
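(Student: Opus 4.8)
The plan is to prove part~(1) by an explicit construction and to deduce parts~(2)--(4) from it by symmetry, so that all the content lies in~(1).

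For part~(1), suppose $\rho$ starts on a peak. If $\rho$ is a path in $\bar Q$ (in particular if $\rho=\varepsilon_v$), there is nothing to prove: ${}_{c^{-1}}\rho=0$ by convention, and no string $\rho'$ can satisfy $\rho={}_c\rho'$, since by Definition~\ref{def:addingRemovingHooksCohooks}(3) the string ${}_c\rho'$ always contains the formal inverse letter $\alpha^{-1}$ and hence is never a path. Otherwise $\rho$ has length at least one and at least one letter of exponent $-1$, and I would write
\[
\rho \;=\; \alpha_r\cdots\alpha_1\,\alpha^{-1}\,\rho',
\]
where $\alpha^{-1}$ is the \emph{first} letter of $\rho$ of exponent $-1$ (so $\alpha,\alpha_1,\dots,\alpha_r$ are arrows, $r\ge 0$, and $\rho'$ is the, possibly trivial, remaining substring), and claim that $\rho'$ is the desired string. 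First, $\alpha^{-1}\rho'$ is a right factor of $\rho$, hence a string --- or, when $\rho'$ is trivial, simply the one-letter string $\alpha^{-1}$, still a string since $t(\alpha^{-1})=s(\alpha)=s(\rho')$ --- so there is an arrow, namely $\alpha$, with $\alpha^{-1}\rho'$ a string, which is precisely the negation of ``$\rho'$ starts in a deep''. Second, $\rho=\alpha_r\cdots\alpha_1\alpha^{-1}\rho'$ is a string that starts on a peak, so by Definition~\ref{def:addingRemovingHooksCohooks}(3) it is obtained from $\rho'$ by adding the cohook $\alpha_r\cdots\alpha_1\alpha^{-1}$ at its start; that is, $\rho={}_c\rho'$. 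Finally, for uniqueness, if $\rho={}_c\sigma$ with $\sigma$ not starting in a deep, then the cohook prepended to $\sigma$ is a (possibly empty) block of forward arrows followed by exactly one backward arrow, so it has length one more than the initial forward run of $\rho$, hence ends precisely at the first exponent-$(-1)$ letter of $\rho$; therefore $\sigma$ must be the suffix of $\rho$ following that letter, so $\sigma=\rho'$. This settles~(1), and one sets ${}_{c^{-1}}\rho \eqdef \rho'$.

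I would obtain part~(3) by applying~(1) to the reversed bound quiver $\reversed{\bar Q}$ (which is again a string bound quiver): passing to $\reversed{\bar Q}$ puts the strings of $\bar Q$ in bijection with those of $\reversed{\bar Q}$, exchanges peaks with deeps (Remark~\ref{rem:reverseStrings}) --- hence ``starting on a peak'' with ``starting in a deep'' and ``not starting in a deep'' with ``not starting on a peak'' --- carries the operation ${}_h$ to the operation ${}_c$, and carries ``inverse of a path'' to ``path''; so part~(3) becomes exactly part~(1) for $\reversed{\bar Q}$. (Alternatively, one may repeat the argument of~(1) verbatim, interchanging the roles of $+1$ and $-1$, of peaks and deeps, and of paths and their inverses.) Parts~(2) and~(4) are then purely formal: the involution $(-)^{-1}$ exchanges ``starts'' with ``ends'' while preserving ``on a peak'' and ``in a deep'', exchanges ``path'' with ``inverse of a path'', and conjugates the start operations ${}_c,{}_h$ to the end operations $(-)_c,(-)_h$; hence $\rho_{c^{-1}}\eqdef({}_{c^{-1}}(\rho^{-1}))^{-1}$ and $\rho_{h^{-1}}\eqdef({}_{h^{-1}}(\rho^{-1}))^{-1}$ are well defined and have the stated properties.

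I expect the only genuine obstacle to be the careful unfolding of the definitions of ``starts on a peak'' and ``starts in a deep'', together with the bookkeeping around degenerate length-zero strings, where, as noted right after Definition~\ref{def:addingRemovingHooksCohooks}, the operations ${}_c$ and $(-)_c$ must be identified (and ${}_c\rho'$ can a priori be described in more than one way). None of this is deep, but it is where all the care is needed.
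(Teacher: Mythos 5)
The paper does not actually provide a proof of this statement: it is labeled ``Lemma (and Definition)'' and is treated as an immediate, routine consequence of Definition~\ref{def:addingRemovingHooksCohooks}, so there is no proof in the text to compare against. Your argument correctly supplies the missing details. The heart of part~(1) is the observation that any cohook prepended at the start consists of a (possibly empty) block of forward arrows followed by exactly one inverse arrow, so that if $\rho = {}_c\sigma$ then the cohook must terminate exactly at the first exponent-$(-1)$ letter of $\rho$; this pins down $\sigma$ and gives uniqueness. Your verification that the tail $\rho'$ does not start in a deep (via the arrow $\alpha$ witnessing $\alpha^{-1}\rho'$ as a string) and that $\rho$ not being a path forces the presence of at least one $-1$ letter are both correct, as is the treatment of the degenerate case where $\rho'$ has length zero. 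Deriving part~(3) from part~(1) via the reversed bound quiver $\reversed{\bar Q}$ is legitimate, since reversing is an involution on string bound quivers that preserves strings as undirected words while swapping peaks with deeps, hooks with cohooks, and paths with inverses of paths; the alternative of redoing the argument with $+1$ and $-1$ interchanged is equally fine. Parts~(2) and~(4) are indeed purely formal via the involution $(-)^{-1}$. The only point where the paper is more explicit than you are is the remark following Definition~\ref{def:addingRemovingHooksCohooks}, which already records that ${}_c\rho'$ is uniquely determined when $\rho'$ has positive length (with the length-zero ambiguity between ${}_c\rho'$ and $\rho'_c$ noted there); your decomposition-based argument sidesteps this smoothly by working directly with the letter pattern of $\rho$.
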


\begin{example}
For the algebra defined in Example~\ref{exm:aStringAlgebra}, the string
\[
\rho = \gamma\beta^{-1}\alpha^{-1}\gamma\beta^{-1}\alpha^{-1}\gamma\delta =  \;\vcenter{\hbox{\includegraphics[scale=0.3]{exmString1}}}
\]
starts and ends on a peak, and also ends in a deep. Moreover, we have
\[
_{c^{-1}}\rho = \;\vcenter{\hbox{\includegraphics[scale=0.3]{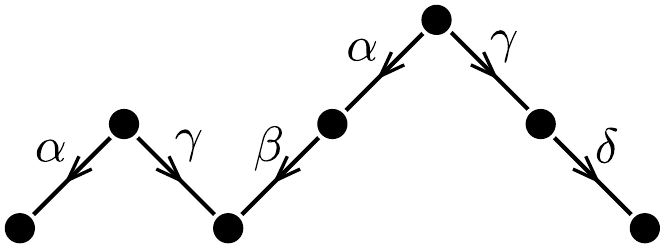}}}
\qquad
\rho_{h^{-1}} = \;\vcenter{\hbox{\includegraphics[scale=0.3]{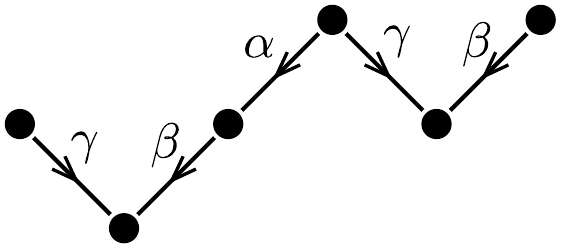}}}
\qquad
\rho_{c^{-1}} = \;\vcenter{\hbox{\includegraphics[scale=0.3]{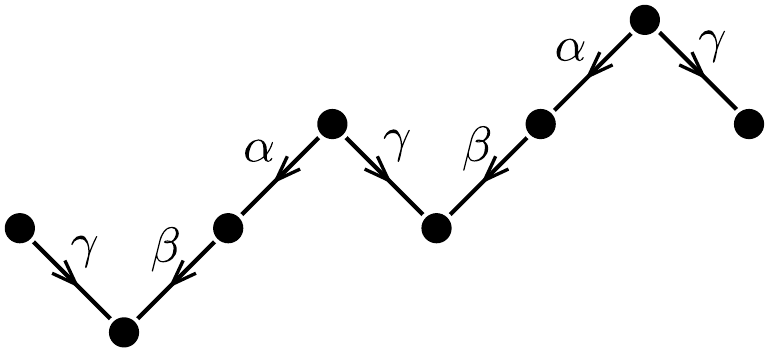}}}
\]
\end{example}

We now have all definitions to describe the Auslander-Reiten translate of a string module.

\begin{theorem}[\cite{ButlerRingel}]
\label{theorem: tau for strings}
Consider a string algebra~$A = kQ/I$.
The Auslander-Reiten translation transforms string modules to string modules: for a string~$\rho$ of~$\bar Q$, we have~${\tau \big( M(\rho) \big) = M(\rho')}$, where the string~$\rho'$ is obtained by following the table below.
\\[.3cm]
	\centerline{
	\begin{tabular}{|c|c|c|} 
      \hline 
      & $\rho$ does not start in a deep & $\rho$ starts in a deep \\
      \hline
      $\rho$ does not end in a deep & $\rho' = {}_c\rho_c$ & $\rho' = {}_{h^{-1}}(\rho_c)$ \\
      \hline
      $\rho$ ends in a deep & $\rho' = (_c\rho)_{h^{-1}}$ & $\rho' = {}_{h^{-1}}\rho_{h^{-1}}$ \\
      \hline
    \end{tabular}
    }
\\[.3cm]
Dually, we have~$\tau^{-1} \big( M(\rho) \big) = M(\rho'')$, where~$\rho''$ is the string given in the table below.
\\[.3cm]
	\centerline{
    \begin{tabular}{|c|c|c|} 
      \hline 
      & $\rho$ does not start on a peak & $\rho$ starts on a peak \\
      \hline
      $\rho$ does not end on a peak & $\rho'' = {}_h\rho_h$ & $\rho'' = {}_{c^{-1}}(\rho_h)$ \\
      \hline
      $\rho$ ends on a peak & $\rho'' = (_h\rho)_{c^{-1}}$ & $\rho'' = {}_{c^{-1}}\rho_{c^{-1}}$ \\
      \hline
    \end{tabular}
	}
\\[.3cm]
Finally, both the Auslander-Reiten translation~$\tau$ and its inverse~$\tau^{-1}$ preserve the band modules: for a band~$\varpi$ of~$\bar Q$, we have~$\tau \big( M(\varpi, \lambda, d) \big) = M(\varpi, \lambda, d)$ and $\tau^{-1} \big( M(\varpi, \lambda, d) \big)=M(\varpi, \lambda, d)$.
\end{theorem}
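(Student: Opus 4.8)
The plan is to follow Butler--Ringel \cite{ButlerRingel} and compute a minimal projective presentation of the string module $M(\rho)$ explicitly in terms of the combinatorics of $\rho$, then apply the Nakayama functor $\nu$ and identify the kernel of $\nu f^{M(\rho)}$ with the claimed string module. First I would recall that, since $A = kQ/I$ is a string algebra, every indecomposable projective $P(v)$ is itself a string module, namely $P(v) = M(v_\peak)$ where $v_\peak$ is the maximal string with a single peak at $v$ (this is already recorded in the notation after Example~\ref{exm:aStringAlgebra}), and dually every indecomposable injective is a string module $M(v_\deep)$. The key structural input is the decomposition of a string $\rho$ at its \emph{deeps}: writing the strict deeps of $\rho$ as $v_1, \dots, v_{k}$ (reading left to right), the top part of $\rho$ breaks into maximal \emph{top substrings} $\rho^{(0)}, \dots, \rho^{(k)}$ (in the sense of Definition~\ref{def:topBottom}), each of which is a direct summand $P$-shaped piece; more precisely each maximal top substring of $\rho$ that is a path gives a projective summand $P(u)$ of $P_0^{M(\rho)}$, where $u$ is the source of that path read as pointing down. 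This is exactly the standard fact that the projective cover of a string module is the direct sum of the indecomposable projectives at the peaks of $\rho$.

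The core computation then runs as follows. The minimal projective presentation $P_1^{M(\rho)} \xrightarrow{f} P_0^{M(\rho)} \to M(\rho) \to 0$ has $P_0^{M(\rho)} = \bigoplus_{p \text{ peak of } \rho} P(p)$, and $P_1^{M(\rho)} = \bigoplus_{d \text{ deep of } \rho} P(d)$ together with contributions $P(s(\rho))$ and/or $P(t(\rho))$ at the two ends, \emph{unless} $\rho$ already starts (resp.\ ends) on a peak, in which case there is no end contribution on that side — and this is precisely the case distinction ``$\rho$ (does not) start/end in a deep'' appearing in the two tables. One checks that the map $f$ is, componentwise, given by the obvious paths joining a deep (or an end) to the adjacent peaks along $\rho$. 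Applying $\nu = D \circ \Hom{A}(-, A)$ turns each $P(v)$ into the injective $I(v) = M(v_\deep)$ and turns $f$ into the corresponding map $\nu f$ between injective string modules; computing $\ker(\nu f)$ amounts to a local analysis at each deep and at each end of $\rho$. At a strict deep, the local picture forces a ``cohook'' to be removed and re-grafted — this produces the $_c(\cdot)$ and $(\cdot)_c$ operations; at an end where $\rho$ does not start/end in a deep, the end gets extended by a cohook (operations $_c\rho$, $\rho_c$), whereas at an end where $\rho$ does start/end in a deep, the injective contribution is absent and instead a hook is removed (operations $_{h^{-1}}\rho$, $\rho_{h^{-1}}$). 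Assembling the four cases gives exactly the four entries of the first table; the second table follows by applying the first to $\reversed{\bar Q}$ and using that $\tau^{-1}$ on $\rep\bar Q$ corresponds to $\tau$ on $\rep\reversed{\bar Q}$ together with the string-level duality of Remark~\ref{rem:reverseStrings}, which swaps peaks with deeps and hooks with cohooks. For band modules, one notes that $M(\varpi, \lambda, d)$ has a periodic (self-reproducing) minimal projective presentation — the projective cover and the first syzygy are built from the same cyclic data — so $\nu$ reproduces it and the kernel is $M(\varpi, \lambda, d)$ again; alternatively this is immediate from $M(\varpi,\lambda,d)$ lying on a homogeneous tube, hence being $\tau$-periodic of period one.

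The main obstacle is the bookkeeping of the boundary cases: getting the minimal projective presentation exactly right at the two endpoints of $\rho$ (distinguishing ``ends on a peak'' from ``ends in a deep'' from the generic case), and verifying that removing a cohook/hook is well defined there — i.e.\ that $\rho$ is not (the inverse of) a path in the relevant subcase, so that the operations $_{h^{-1}}(\cdot)$, $(\cdot)_{h^{-1}}$ of the preceding Lemma genuinely apply. One must also check minimality of the presentation (no summand of $P_1$ maps isomorphically onto a summand of $P_0$), which is where the string/gentle axioms — at most one non-zero composable arrow on each side — are used. Since all of this is carried out in detail in \cite{ButlerRingel}, I would present the argument as a guided recollection: set up the presentation, do the local kernel computation at a generic strict deep and at a generic endpoint, tabulate the four cases, and invoke the reversed-quiver duality for the $\tau^{-1}$ table and tube theory (or a direct periodicity check) for the bands.
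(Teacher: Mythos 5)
The paper does not prove Theorem~\ref{theorem: tau for strings}; it is quoted directly from Butler--Ringel~\cite{ButlerRingel}, and no argument is given in the text. So there is no internal proof to compare against. That said, your sketch is a faithful reconstruction of the Butler--Ringel strategy and it aligns with the infrastructure the paper does set up: the minimal projective presentation $R \oplus P(B) \to P(A) \to M(\rho) \to 0$ with $A$ the peaks and $B$ the strict deeps of $\rho$ (this is literally the paper's Proposition in the $\b{g}$-vector section), then $\nu$, then a kernel computation. The appeal to the reversed-quiver duality of Remark~\ref{rem:reverseStrings} for the $\tau^{-1}$ table is exactly the right move, and either justification you give for bands (periodic presentation, or $\tau$-period one on a homogeneous tube) is standard and correct.

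One point in your narrative is garbled and would mislead a reader if left as written: you attribute the operations $_c(\cdot)$ and $(\cdot)_c$ to ``a cohook being removed and re-grafted at a strict deep.'' In fact the four entries of the table are purely endpoint operations: cohooks are \emph{added} at an endpoint where $\rho$ does not start (resp.\ end) in a deep, and hooks are \emph{removed} at an endpoint where it does. What happens at a strict interior deep is not a cohook operation at all; it is the local cancellation in the kernel of $\nu f$ that makes the middle of $\rho$ survive unchanged inside $\rho'$. If you keep that sentence, rephrase it so that the role of the interior deeps is ``the local kernel computation shows the interior of $\rho$ is preserved,'' and reserve the hook/cohook operations for the two endpoints. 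You should also make explicit that minimality of the presentation --- no summand of $P_1$ mapping isomorphically onto a summand of $P_0$ --- is what uses the string-algebra axioms, and that this is precisely what guarantees $\tau M(\rho) = \ker(\nu f)$ rather than merely containing it; you gesture at this but it deserves a displayed check in any actual write-up.
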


\subsection{Morphisms between string modules}
\label{subsec:morphismsStringModules}

Morphisms between string modules can be read directly from the combinatorics of strings.  The following result is folklore in the theory.

\begin{proposition}
\label{prop:morphismsStringModules}
Let~$\rho$ and~$\rho'$ be two strings for a string algebra~$A$.
Then the dimension of the $k$-vector space~$\Hom{A} \big( M(\rho), M(\rho') \big)$ is equal to the number of pairs~$(\sigma, \sigma')$, where~$\sigma$ and~$\sigma'$ are substrings of~$\rho$ and~$\rho'$, respectively, such that~$\sigma$ is on top of~$\rho$,~$\sigma'$ is at the bottom of~$\rho'$ and there is an equality of strings~$\sigma' = \sigma^{\pm 1}$.

\end{proposition}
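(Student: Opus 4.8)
The plan is to construct an explicit bijection between a basis of $\Hom{A}\big(M(\rho),M(\rho')\big)$ and the set of pairs $(\sigma,\sigma')$ described in the statement, by exhibiting the ``graph maps'' (or ``canonical morphisms'') associated to such pairs and proving they span the Hom-space. Recall from the construction of string modules that $M(\rho)$ has a distinguished basis $\{x_0,\dots,x_\ell\}$ indexed by the vertices $v_0,\dots,v_\ell$ of $\rho$, and likewise $M(\rho')$ has a basis $\{y_0,\dots,y_{\ell'}\}$. Given a pair $(\sigma,\sigma')$ with $\sigma$ a top substring of $\rho$, $\sigma'$ a bottom substring of $\rho'$, and $\sigma' = \sigma^{\pm 1}$, the common word identifies the consecutive vertices of $\sigma$ with those of $\sigma'$ in order (up to reversal); this identification defines a linear map $f_{\sigma,\sigma'}$ sending each basis vector $x_m$ lying in $\sigma$ to the corresponding basis vector $y_{m'}$ in $\sigma'$, and all other $x_m$ to $0$.

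**First I would** check that each $f_{\sigma,\sigma'}$ is a morphism of representations, i.e.\ that it commutes with the action of every arrow $\beta\in Q_1$. The key point is exactly the top/bottom hypothesis: because $\sigma$ sits on top of $\rho$, at each endpoint of $\sigma$ the string $\rho$ either stops or has an \emph{outgoing} arrow, so no arrow of $Q$ maps a basis vector outside $\sigma$ into $\sigma$ under $M(\rho)$; dually, because $\sigma'$ sits at the bottom of $\rho'$, at each endpoint of $\sigma'$ the string $\rho'$ either stops or has an \emph{incoming} arrow, so no arrow maps a basis vector of $\sigma'$ to one outside $\sigma'$ under $M(\rho')$. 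Combined with the fact that inside $\sigma$ (resp.\ $\sigma'$) the arrow actions are dictated by the same word $\sigma = (\sigma')^{\pm1}$, the square $M(\rho')_\beta\circ f_{\sigma,\sigma',s(\beta)} = f_{\sigma,\sigma',t(\beta)}\circ M(\rho)_\beta$ commutes. This is the step I expect to require the most careful bookkeeping: one must treat separately the positions where $\beta = \alpha_m$ with $\varepsilon_m = \pm1$, and handle the boundary cases $m=0$, $m=\ell$, plus the two orientations of the equality $\sigma'=\sigma^{\pm1}$; drawing the strings as in the paper's picture conventions makes each case routine but there are several of them.

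**Next I would** prove that the morphisms $f_{\sigma,\sigma'}$ form a basis of $\Hom{A}\big(M(\rho),M(\rho')\big)$. Linear independence is immediate: distinct pairs $(\sigma,\sigma')$ involve distinct basis vectors $x_m$ (because the position of a substring in $\rho$ is part of the data), so the matrices of the $f_{\sigma,\sigma'}$ in the distinguished bases have disjoint supports of nonzero columns. For spanning, take an arbitrary $f\in\Hom{A}\big(M(\rho),M(\rho')\big)$ and analyze its matrix: for each basis vector $x_m$, write $f(x_m)=\sum_{m'} c_{m,m'}\,y_{m'}$. Using that $f$ intertwines the arrow actions and that the maps $M(\rho)_\beta$, $M(\rho')_\beta$ shift indices by $\pm1$ along the respective strings, one shows that whenever $c_{m,m'}\neq 0$, the index $m$ must lie in a top substring of $\rho$ and $m'$ in a bottom substring of $\rho'$, that these two substrings carry the same word, and that $f$ is constant along such a matched substring; subtracting off the appropriate multiple of the corresponding $f_{\sigma,\sigma'}$ and inducting on the number of nonzero entries finishes the argument. (Here one invokes the standard fact, implicit in the string-algebra formalism, that the only common subwords of $\rho$ and $\rho'$ along which an intertwiner can be supported are precisely the matched top/bottom pairs — equivalently one can cite that this is folklore, as the paper does, and present the bijection as the substance of the proof.) Assembling the two halves gives $\dim\Hom{A}\big(M(\rho),M(\rho')\big)$ equal to the number of such pairs, as claimed.
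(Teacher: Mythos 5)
The paper itself does not prove this proposition: it labels it folklore (it is a special case of the graph-map basis theorem for string algebras, originally due to Crawley-Boevey). So there is no proof to compare against; what you have written is a supplementary argument, and it follows the standard route of constructing graph maps $f_{\sigma,\sigma'}$ and showing they form a basis of $\Hom{A}\big(M(\rho),M(\rho')\big)$. The skeleton is sound, and the verification that each $f_{\sigma,\sigma'}$ is a morphism is exactly where the top/bottom hypotheses enter, as you say.

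However, your linear-independence step contains a concrete error. You claim that distinct pairs $(\sigma,\sigma')$ involve distinct basis vectors $x_m$, hence that the corresponding matrices have disjoint supports of nonzero columns. This is false: the same top substring $\sigma$ of $\rho$ (same positions $m$) can be matched to several distinct bottom substrings of $\rho'$. For instance, take $Q$ with two vertices $v,w$, two arrows $a,b\colon w\to v$, and $I=\varnothing$; with $\rho=\varepsilon_v$ and $\rho'=a^{-1}b$, the string $\varepsilon_v$ is a bottom substring of $\rho'$ at two different positions, giving two pairs whose graph maps both have nonzero column $m=0$. (Even within $\End(M(\alpha))$ for a loop $\alpha$ with $\alpha^2\in I$, the identity and the nilpotent graph map have column supports $\{0,1\}$ and $\{0\}$, which overlap.) What is actually true, and what you should argue, is that distinct graph maps have disjoint sets of nonzero \emph{matrix entries} $(m,m')$: if two graph maps agreed at a single entry $(m,m')$, the identification of the two strings would propagate uniquely in both directions from that matched vertex (the arrows incident to $x_m$ in $\rho$ and to $y_{m'}$ in $\rho'$ determine the extension and its orientation, because strings are reduced), and the maximality of the matched substrings would force the two pairs to coincide. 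That disjointness of entry supports gives linear independence.

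The spanning step is where the real combinatorial content lies; your sketch correctly identifies the propagation argument (nonzero entries of an arbitrary intertwiner $f$ spread along matched blocks which must terminate at top/bottom boundaries, and $f$ is constant on each block, so one subtracts a scalar multiple of a graph map and inducts on the number of nonzero entries). As written it remains a sketch — the case analysis at the ends of a block, and the possibility of the two orientations $\sigma'=\sigma^{\pm1}$, deserve to be made explicit — but this is the expected shape of a complete proof, and it is plausibly why the paper elects to cite folklore rather than reprove it.
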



\section{$\b{g}$-vectors and $\tau$-compatibility for string modules}
\label{sec:stringModules}

\subsection{$\b{g}$-vectors}
\label{subsec:gvectors}

The following definition takes its roots in the additive categorification of cluster algebras, see \cite{DehyKeller};
it also appears naturally in the theory of $\tau$-tilting, see \cite[Sect.~5.1]{AdachiIyamaReiten}.

\begin{definition}
\label{definition: g-vector of representation}
 Let $M$ be a representation of a bound quiver $\bar Q$, and let
 \[
  P^M_1 \to P^M_0 \to M \to 0
 \]
 be a minimal projective presentation of $M$.  
 The \defn{$\b{g}$-vector} of $M$ is the element 
 \[
  \b{g}(M) = [P_0^M­] - [P_1^M]
 \]
 of the Grothendieck group $K_0(\proj \bar Q)$ of the additive category $\proj \bar Q$ of projective representations of $\bar Q$. 
 If $P$ is a projective representation, then the $\b{g}$-vector of the shifted projective $P[1]$ is $\b{g}(P[1]) \eqdef -[P]$.
\end{definition}

\begin{remark}
 A more practical way to think of $\b{g}$-vectors is by noting that $K_0(\proj \bar Q)$ is isomorphic to the free abelian group
 $\bigoplus_{v\in Q_0} \Z [P(v)] \cong \Z^{Q_0}$.  Thus, in the notation of Definition \ref{definition: g-vector of representation},
 if we put $P_0^M = \bigoplus_{v\in Q_0}P(v)^{\oplus a_v}$ and $P_1^M = \bigoplus_{v\in Q_0}P(v)^{\oplus b_v}$, 
 then
 \[
  \b{g}(M) = (a_v - b_v)_{v\in Q_0} \in \Z^{Q_0}.
 \] 
 Moreover, if $P = \bigoplus_{v\in Q_0}P(v)^{\oplus n_v}$, then $\b{g}(P[1]) = (-n_v)_{v\in Q_0}$.
\end{remark}

An application of the combinatorics of strings allows us to compute the $\b{g}$-vector of any string module.

\begin{proposition}
 Let $\bar Q$ be a string bound quiver, and let $\rho$ be a string for $\bar Q$.  
 Let $A$ be the set of vertices on top of $\rho$, and let $P(A) = \bigoplus_{v\in A} P(v)$.
 Let $B$ be the set of vertices at the bottom of $\rho$ which are not at the start or the end of $\rho$,
 and let $P(B) = \bigoplus_{v\in B} P(v)$.  Finally, let 
 \[
  R = \begin{cases}
          0 & \textrm{if $\rho$ starts and ends in a deep,} \\
          P({t(\alpha)})   & \textrm{if $\rho$ ends in a deep and $\alpha^{-1}\rho$ is a string, with $\alpha\in Q_1$,} \\
          P({t(\beta)})    & \textrm{if $\rho$ starts in a deep and $\rho\beta$ is a string, with $\beta\in Q_1$,} \\
          P({t(\alpha)}) \oplus P({t(\beta)})  & \textrm{if $\alpha^{-1}\rho\beta$ is a string, with $\alpha, \beta\in Q_1$.}
        \end{cases}
 \]
 Then a minimal projective presentation of the string module $M(\rho)$ has the form
 \[
  R\oplus P(B) \to P(A) \to M(\rho) \to 0.
 \]
\end{proposition}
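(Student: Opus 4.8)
The plan is to compute directly a minimal projective presentation of $M(\rho)$ by identifying the projective cover $P(A)\twoheadrightarrow M(\rho)$ and then analyzing its kernel. First I would recall that for any representation~$N$ of a bound quiver, the top (radical quotient) is $N/\rad N = \bigoplus_{v} S(v)^{\dim(N/\rad N)_v}$, so the projective cover is $\bigoplus_v P(v)^{\dim(N/\rad N)_v}$. For a string module $M(\rho)$ with basis $x_0,\dots,x_\ell$ indexed by the vertices $v_0,\dots,v_\ell$ along $\rho$, a basis vector $x_m$ lies in $\rad M(\rho)$ exactly when it is the image of some arrow action, i.e.\ when $\rho$ has an incoming arrow of $\rho$ at $v_m$ on (at least) one side. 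Hence $x_m$ survives in the top precisely when $\rho$ has an \emph{outgoing} arrow (or an endpoint) at $v_m$ on \emph{both} sides — that is, exactly when $v_m$ is a vertex on top of $\rho$ in the sense of Definition~\ref{def:topBottom}. This identifies the top of $M(\rho)$ with $\bigoplus_{v\in A} S(v)$ and gives the projective cover $P(A)=\bigoplus_{v\in A}P(v)\to M(\rho)\to 0$.

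Next I would identify the kernel $K$ of $P(A)\to M(\rho)$ and show $K$ is generated (minimally) by the summand $R\oplus P(B)$. Writing $\rho$ as a concatenation of maximal directed paths alternating in orientation, the peaks of $\rho$ are the sources of $P(A)$'s generators; between two consecutive peaks of $\rho$ there is a deep, and the two directed subpaths of $\rho$ descending into that deep give, inside $P(A)$, two paths landing on the same basis vector $x_m$ of $M(\rho)$. Their difference is a generator of $K$; this is precisely a generator $P(t(\text{arrow into the deep}))$ — and there is one such generator for each \emph{strict} deep of $\rho$, i.e.\ for each vertex in $B$. For the endpoints: if $\rho$ ends (say at the right) in a deep, the covering path there is not ``completed'' by a second descending path inside $M(\rho)$, but in $P(A)$ the generator corresponding to the last peak continues past $v_\ell$; the part of that projective lying beyond $v_\ell$, which is $P(t(\alpha))$ where $\alpha^{-1}\rho$ is a string (so $\alpha$ is the arrow one could append at the end), also lies in $K$. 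If instead $\rho$ ends on a peak — equivalently starts or ends \emph{not} in a deep on that side — the covering path stops exactly at $v_\ell$ and contributes nothing extra; this is the case-split producing $R$. Thus $K = R\oplus P(B)$ as a submodule of $P(A)$.

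Then I would check two things: that the map $R\oplus P(B)\to P(A)$ just described is well-defined (its image is indeed killed in $M(\rho)$ — immediate from the relation structure of the string, using that $I$ is generated by paths so the relevant compositions vanish), and that the presentation is \emph{minimal}, i.e.\ the map $R\oplus P(B)\to P(A)$ has image contained in $\rad P(A)$ (so $P(A)$ is the projective cover) and $R\oplus P(B)\to\rad P(A)$ is a projective cover of $K$, equivalently no generator of $R\oplus P(B)$ maps to a generator of $P(A)$. Minimality on the $P(A)$ side is the top computation above. For the $R\oplus P(B)$ side one verifies that each generator (a ``difference of two descending paths'' for $B$, or a ``tail beyond an endpoint'' for $R$) is a non-unit combination, which follows because each such generator is visibly supported in $\rad P(A)$: the difference-of-paths generator is a sum of two elements of $\rad P(A)$, and the tail generator sits strictly below a peak. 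Counting generators confirms $|B|+\operatorname{rk}R$ equals the number of strict deeps plus the boundary contributions, matching the minimal number of generators of $K$.

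The main obstacle I expect is the careful bookkeeping at the two ends of $\rho$ — getting the four-way case-split for $R$ exactly right, and in particular checking that when $\rho$ ends on a peak the covering path genuinely terminates at the endpoint (so nothing is added), while when $\rho$ ends in a deep the ``overhang'' projective $P(t(\alpha))$ appears with the correct vertex and is genuinely a \emph{direct} summand of the kernel (not merely a subquotient), and that it does not accidentally overlap with a $P(B)$-summand. This is essentially the same phenomenon that governs the hook/cohook description of $\tau$ in Theorem~\ref{theorem: tau for strings}, so one could alternatively derive the proposition from that theorem together with the Auslander--Reiten formula relating $\tau$ to the minimal projective presentation; but a direct string-combinatorial verification is cleaner and self-contained, and is the route I would take.
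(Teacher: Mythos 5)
Your overall plan --- identify the projective cover $P(A)$ via the top of $M(\rho)$, then find the projective cover of its kernel --- is the standard and correct approach, and the paper states the proposition without proof, so there is no paper argument to compare against. Your first step (the top of $M(\rho)$ is supported exactly on the vertices on top of $\rho$, giving the cover $P(A)\to M(\rho)$) is correct. However, there are two genuine errors in the kernel analysis.

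First, your endpoint case-split is inverted and conflates two independent conditions. By Definition~\ref{def:peaksDeeps}, ``$\rho$ ends in a deep'' means there is \emph{no} arrow $\beta$ with $\rho\beta$ a string; in that case the covering projective from the last peak stops exactly at $v_\ell$ and there is no overhang at the end. The overhang $P(t(\beta))$ appears precisely when $\rho$ does \emph{not} end in a deep (so $\rho\beta$ is a string). You assert the opposite, and you also label the end-overhang as ``$P(t(\alpha))$ where $\alpha^{-1}\rho$ is a string'', but $\alpha^{-1}\rho$ governs the \emph{start} of $\rho$, not the end. Moreover ``$\rho$ ends on a peak'' is not equivalent to ``$\rho$ does not end in a deep'': a string can end on a peak and in a deep simultaneously, or end neither on a peak nor in a deep. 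The $R$-summand at each end is controlled solely by whether $\rho$ ends/starts in a deep, irrespective of peaks.

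Second, the assertion ``$K = R\oplus P(B)$ as a submodule of $P(A)$'', and that each overhang summand ``is genuinely a direct summand of the kernel (not merely a subquotient)'', is false in general. $R\oplus P(B)$ is only the projective cover of the syzygy $\Omega M(\rho)=\ker\big(P(A)\to M(\rho)\big)$ and need not embed into $P(A)$. For instance, take $v_1\xrightarrow{\alpha}v_2\xrightarrow{\beta}v_3$ with $I=(\alpha\beta)$ and $\rho=\varepsilon_{v_1}$: the minimal presentation of $S(v_1)$ is $P(v_2)\to P(v_1)\to S(v_1)\to 0$, where $\dim_k P(v_2)=2$ exceeds $\dim_k\ker\big(P(v_1)\to S(v_1)\big)=1$, so $R=P(v_2)$ does not inject into $P(A)=P(v_1)$. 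What the argument must establish is that the prescribed elements (the ``difference of descending paths'' at each strict deep and the ``overhang'' at each non-deep endpoint) form a basis of $\Omega M(\rho)/\rad\Omega M(\rho)$ --- that is the content of minimality --- not that they generate free copies of projectives inside $P(A)$.
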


\begin{corollary}
 The $\b{g}$-vector of a string module $M(\rho)$ is given by $\b{g}(M) = \b{a} - \b{b} - \b{r}$, where
 \begin{itemize}
  \item $\b{a} = (a_v)_{v\in Q_0}$, where $a_v$ is the number of times the vertex $v$ is found at the top of $\rho$;
  \item $\b{b} = (b_v)_{v\in Q_0}$, where $b_v$ is the number of times the vertex $v$ is found at the bottom of $\rho$,
        but not at its start or end;
        \medskip
  \item 
  		\(
         \b{r} = \begin{cases}
                   0 & \textrm{if $\rho$ starts and ends in a deep,} \\
                   \b{e}_{t(\alpha)}   & \textrm{if $\rho$ ends in a deep and $\alpha^{-1}\rho$ is a string, with $\alpha\in Q_1$,} \\
                   \b{e}_{t(\beta)}    & \textrm{if $\rho$ starts in a deep and $\rho\beta$ is a string, with $\beta\in Q_1$,} \\
                   \b{e}_{t(\alpha)} + \b{e}_{t(\beta)}  & \textrm{if $\alpha^{-1}\rho\beta$ is a string, with $\alpha, \beta\in Q_1$,}
                  \end{cases}
        \) \\
        \medskip
        where $(\b{e}_v)_{v \in Q_0}$ is the standard basis of~$\Z^{Q_0}$.
 \end{itemize}
\end{corollary}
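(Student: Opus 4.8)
The plan is to deduce the Corollary directly from the Proposition that immediately precedes it, using only the description of the $\mathbf{g}$-vector via a minimal projective presentation (Definition~\ref{definition: g-vector of representation}) together with the identification $K_0(\proj \bar Q) \cong \Z^{Q_0}$ through the basis $([P(v)])_{v \in Q_0}$. The Proposition states that a minimal projective presentation of $M(\rho)$ has the form
\[
 R \oplus P(B) \longrightarrow P(A) \longrightarrow M(\rho) \longrightarrow 0,
\]
where $P(A) = \bigoplus_{v \in A} P(v)$ with $A$ the multiset of vertices on top of $\rho$, $P(B) = \bigoplus_{v \in B} P(v)$ with $B$ the multiset of vertices at the bottom of $\rho$ that are neither the start nor the end, and $R$ is the projective (or zero) read off the four-case table depending on whether $\rho$ starts or ends in a deep. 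By Definition~\ref{definition: g-vector of representation}, $\mathbf{g}(M(\rho)) = [P_0^{M(\rho)}] - [P_1^{M(\rho)}]$, so here $\mathbf{g}(M(\rho)) = [P(A)] - [P(B)] - [R]$.

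The remaining work is purely bookkeeping: translate each of the three projective modules into its coordinate vector in $\Z^{Q_0}$. First, $[P(A)] = \sum_{v \in A} [P(v)]$ corresponds to $\mathbf{a} = (a_v)_{v \in Q_0}$, where $a_v$ counts the occurrences of $v$ among the top substrings of length zero of $\rho$, i.e.\ the number of times $v$ appears at the top of $\rho$. Second, $[P(B)]$ corresponds to $\mathbf{b} = (b_v)_{v \in Q_0}$, where $b_v$ counts the occurrences of $v$ at the bottom of $\rho$ excluding the endpoints. Third, the four cases for $R$ in the Proposition transcribe verbatim into the four cases for $\mathbf{r}$: $R = 0$ gives $\mathbf{r} = 0$; $R = P(t(\alpha))$ gives $\mathbf{r} = \mathbf{e}_{t(\alpha)}$; $R = P(t(\beta))$ gives $\mathbf{r} = \mathbf{e}_{t(\beta)}$; and $R = P(t(\alpha)) \oplus P(t(\beta))$ gives $\mathbf{r} = \mathbf{e}_{t(\alpha)} + \mathbf{e}_{t(\beta)}$, where $\mathbf{e}_v$ is the coordinate vector of $[P(v)]$. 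Assembling these three pieces yields $\mathbf{g}(M(\rho)) = \mathbf{a} - \mathbf{b} - \mathbf{r}$, which is exactly the claimed formula.

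There is no real obstacle here: the Corollary is a formal consequence of the Proposition once one unwinds the definition of the $\mathbf{g}$-vector and passes to coordinates. The only point requiring a word of care is that $A$ and $B$ must be read as multisets (a vertex of $Q$ can occur several times along $\rho$, since substrings of $\rho$ are counted with their position), so that $a_v$ and $b_v$ are genuine counts rather than $0/1$ indicators; this is already built into the phrase ``the number of times the vertex $v$ is found at the top (resp.\ bottom) of $\rho$'' in the statement, and into the direct-sum (rather than sum-of-subspaces) description of $P(A)$ and $P(B)$ in the Proposition. With that understood, the proof is a single line: apply Definition~\ref{definition: g-vector of representation} to the presentation supplied by the Proposition and read off coordinates.
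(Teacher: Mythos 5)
Your proposal is correct and follows the same route the paper intends: the Corollary is an immediate translation of the preceding Proposition's minimal projective presentation through Definition~\ref{definition: g-vector of representation} and the identification $K_0(\proj \bar Q) \cong \Z^{Q_0}$. The paper gives no written proof precisely because it is this one-line bookkeeping, and your note that $A$ and $B$ must be treated as multisets is the right point of care.
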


\begin{definition}
Let $\bar Q$ be a string bound quiver.
\begin{enumerate}
  \item If $\rho$ is a string for $\bar Q$, we define its \defn{$\b{g}$-vector} as $\b{g}(\rho) \eqdef \b{g} \big( M(\rho) \big)$.
  \item If $-v$ is a negative simple string for $\bar Q$, we define its \defn{$\b{g}$-vector} as $\b{g}(-v) \eqdef \b{g} \big( P(v)[1] \big) = -\b{e}_v$.
\end{enumerate}
\end{definition}

\subsection{$\tau$-compatibility}
\label{subsec:compatibility}

Consider a string bound quiver~$\bar Q = (Q,I)$ and its string algebra~${A = kQ/I}$.
We now provide a combinatorial criterion for $\tau$-compatibility of string modules over~$A$.

\begin{notation}
In the diagrams of this section, we will make repeated use of the following notation:
\begin{itemize}
 \item waves~$\xy \ar@{~} (0,0);(7,0) \endxy$ mean ``any string'' (possibly of length 0);
 \item brackets~$( \hspace{1pc} )$ mean ``either empty or of the form given inside the brackets'';
 \item plain arrows have to be there;
 \item straight lines stand for hooks or cohooks and might thus be of any length (possibly~$0$);
 \item dotted arrows mean ``there exists''.
\end{itemize}
\end{notation}

\begin{definition}
\label{definition: dance and attract}
Let~$\rho$ and~$\rho'$ be two strings of~$\bar Q$. The string~$\rho$ is said to
\begin{itemize}
\item[(i)] \defn{attract}~$\rho'$ (or~$(\rho')^{-1}$) if the starting vertex~$v$ of~$\rho'$ is not a deep and there exists an arrow~$\alpha \in Q_1$ from~$v$ to~$u$ such that
	\begin{enumerate}[(a)]
	\item $\alpha^{-1}\rho'$ is a string of~$\bar Q$ and
	\item $\rho$ ends in~$u$ or contains an arrow starting at~$u$ and moreover, the arrow, if any, preceding this occurrence of $u$ in $\rho$ is not $\alpha$ (see Figure~\ref{fig:def-attracts});
	\end{enumerate}
\item[(ii)] \defn{reach} for~$\rho'$ (or~$(\rho')^{-1}$) if there is some common substring~$\xi$ on top of~$\rho$ and at the bottom of~$\rho'$ satisfying the swinging arms condition explained thereafter. In this definition only, we call \defn{left and right arms} of~$\rho$ (resp.~$\rho'$) the (at most) two arrows in the string~$\rho$ (resp.~$\rho'$) incident with~$\xi$. The \defn{swinging arms condition} (as illustrated in Figure~\ref{fig: swinging arms}) is the following: 
\begin{itemize}
\item if~$\rho'$ has no left arm, then~$\rho$ has a left arm~$\alpha$ and~$\alpha^{-1}\rho'$ is a string of~$\bar Q$,
\item if~$\rho'$ has no right arm, then~$\rho$ has a right arm~$\beta$ and~$\rho'\beta$ is a string of~$\bar Q$,
\end{itemize}
\end{itemize}
\begin{figure}[t]
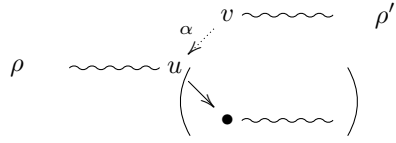

 	\capstart
\[
\xy
(28,7)*+{v}="14";
(42,7)="16";
(49,7)*+{\rho'}="17";
(0,0)*+{\rho}="0";
(7,0)="1";
(21,0)*+{u}="3";
(28,-7)*+{\bullet}="-4";
(42,-7)="-6";
{\ar@{~} "1";"3"};
{\ar "3";"-4"};
{\ar@{~} "-4";"-6"};
{\ar@{~} "14";"16"};
{\ar@{..>}_{\alpha} "14";"3"};
{\ar@/_.3pc/@{-} (23,0);(23,-9)};
{\ar@/^.3pc/@{-} (44,0);(44,-9)};
\endxy
\]
	\caption{The string~$\rho$ attracts~$\rho'$, as in Definition~\ref{definition: dance and attract} (i).}
	\label{fig:def-attracts}
\end{figure}

\begin{figure}[t]
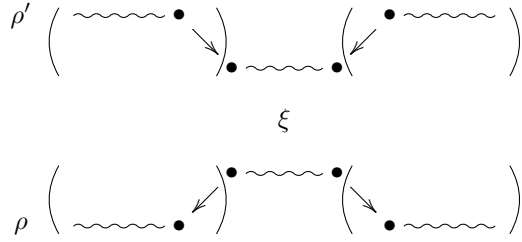

 	\capstart
\[
\xy
(-35,14)*+{\rho'};
(-28,14)="-14";
(-14,14)*+{\bullet}="-12";
(-7,7)*+{\bullet}="-11";
(7,7)*+{\bullet}="11";
(14,14)*+{\bullet}="12";
(28,14)="14";
(0,0)*+{\xi};
(-35,-14)*+{\rho};
(-28,-14)="-4";
(-14,-14)*+{\bullet}="-2";
(-7,-7)*+{\bullet}="-1";
(7,-7)*+{\bullet}="1";
(14,-14)*+{\bullet}="2";
(28,-14)="4";
{\ar@{~} "-14";"-12"};
{\ar "-12";"-11"};
{\ar@{~} "-11";"11"};
{\ar "12";"11"};
{\ar@{~} "12";"14"};
{\ar@/_.3pc/@{-} (-30,15);(-30,6)};
{\ar@/^.3pc/@{-} (-9,15);(-9,6)};
{\ar@/^.3pc/@{-} (30,15);(30,6)};
{\ar@/_.3pc/@{-} (9,15);(9,6)};
{\ar@/^.3pc/@{-} (-30,-15);(-30,-6)};
{\ar@/_.3pc/@{-} (-9,-15);(-9,-6)};
{\ar@/_.3pc/@{-} (30,-15);(30,-6)};
{\ar@/^.3pc/@{-} (9,-15);(9,-6)};
{\ar@{~} "-4";"-2"};
{\ar "-1";"-2"};
{\ar@{~} "-1";"1"};
{\ar "1";"2"};
{\ar@{~} "2";"4"};
\endxy
\]
	\caption{The string~$\rho$ reaches for~$\rho'$, as in Definition~\ref{definition: dance and attract} (ii).}
	\label{fig:def-dances}
\end{figure}

\begin{figure}[t]
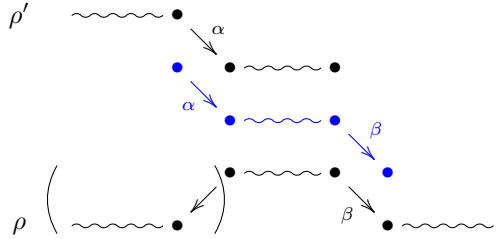

 	\capstart
\[
\xy
(-35,14)*+{\rho'};
(-28,14)="-14";
(-14,14)*+{\bullet}="-12";
(-7,7)*+{\bullet}="-11";
(7,7)*+{\bullet}="11";
(-14,7)*+{\textcolor{blue}{\bullet}}="a";
(-7,0)*+{\textcolor{blue}{\bullet}}="b";
(7,0)*+{\textcolor{blue}{\bullet}}="c";
(14,-7)*+{\textcolor{blue}{\bullet}}="d";
(-35,-14)*+{\rho};
(-28,-14)="-4";
(-14,-14)*+{\bullet}="-2";
(-7,-7)*+{\bullet}="-1";
(7,-7)*+{\bullet}="1";
(14,-14)*+{\bullet}="2";
(28,-14)="4";
{\ar@{~} "-14";"-12"};
{\ar^\alpha "-12";"-11"};
{\ar@{~} "-11";"11"};
{\ar@/^.3pc/@{-} (-30,-15);(-30,-6)};
{\ar@/_.3pc/@{-} (-9,-15);(-9,-6)};
{\ar@{~} "-4";"-2"};
{\ar "-1";"-2"};
{\ar@{~} "-1";"1"};
{\ar_\beta "1";"2"};
{\ar@{~} "2";"4"};
{\textcolor{blue}{\ar_\alpha "a";"b"}};
{\textcolor{blue}{\ar@{~} "b";"c"}};
{\textcolor{blue}{\ar^\beta "c";"d"}};
\endxy
\]
	\caption{The swinging arms condition, as in Definition~\ref{definition: dance and attract} (ii).}
	\label{fig: swinging arms}
\end{figure}
\end{definition}

\begin{remark}
If~$\xi$ is not reduced to a vertex, then the conditions~$\alpha^{-1}\rho'$ (resp.~$\rho'\beta$) are strings follows from the fact that~$\alpha^{-1}\xi$ (resp.~$\xi\beta$) is a substring of~$\rho$. If~$\xi$ is reduced to a vertex, then this condition is non-empty since it requires~$\alpha\beta \notin I$.
\end{remark}

We extend the relation of dancing or attracting to the set~$\strings^\pm_{\ge -1}(\bar Q) \eqdef \strings^\pm(\bar Q) \sqcup \set{-v}{v \in Q_0}$ of almost positive strings as follows.

\begin{definition}
\label{def:negativeSimpleDance}
Let $\sigma$ be a string in $\strings^\pm(\bar Q)$ and let~$v$ be a vertex of~$\bar Q$.
We say that the negative simple string $-v$ reaches for the string $\sigma$ if~$v$ is a vertex of~$\sigma$.
On the other hand, no almost positive string reaches for the negative simple string~$-v$.
\end{definition}

\begin{notation}
\label{not:explanationNegativeSimpleDance}
In the proposition below, we will make use of the following convention.
If~$P[1]$ is a shifted projective, then 
\begin{itemize}
\item $\Hom{A} (M, \tau P[1]) = \Hom{A} (P,M)$ for any representation or shifted projective~$M$,
\item $\Hom{A} (P[1], \tau M) = 0$ for any representation~$M$.
\end{itemize}
These choices are motivated by the isomorphism
\[
\Hom{A} (M, \tau N) \cong \Hom{K^b(\operatorname{proj} A)}(P_N, P_M[1]),
\]
where~$P_M$ and~$P_N$ are the minimal projective presentations of the representations~$M$ and~$N$.
\end{notation}

\begin{remark}
While writing up a first version of the paper, we noticed that a result similar to the following proposition can be found in~\cite{EiseleJanssensRaedschelders}. 
Since our notation and statement differ from those of~\cite{EiseleJanssensRaedschelders}, we nonetheless include a proof. We also note that, in the proof below, we work directly with modules over~$A$, while the proof in~\cite{EiseleJanssensRaedschelders} uses two-term complexes of projectives.
\end{remark}

\begin{proposition}
\label{proposition: attract or dance and tau compatibility}
Let~$\rho$ and~$\rho'$ be two almost positive strings for the string algebra~$A$ and let~$M(\rho)$ and~$M(\rho')$ be the associated string modules.
Then~$\Hom{A} \big( M(\rho), \tau M(\rho') \big) \ne 0$ if and only if~$\rho$ attracts~$\rho'$ or reaches for~$\rho'$.
\end{proposition}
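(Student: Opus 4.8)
The plan is to combine the explicit description of $\tau$ on string modules (Theorem \ref{theorem: tau for strings}) with the combinatorial formula for homomorphisms between string modules (Proposition \ref{prop:morphismsStringModules}). Concretely, $\Hom{A}(M(\rho), \tau M(\rho')) \ne 0$ means, by Proposition \ref{prop:morphismsStringModules}, that there exist substrings $\sigma$ of $\rho$ and $\sigma''$ of $\tau$-string $\rho''$ (the string with $\tau M(\rho') = M(\rho'')$) with $\sigma$ on top of $\rho$, $\sigma''$ at the bottom of $\rho''$, and $\sigma'' = \sigma^{\pm 1}$. So the whole proof reduces to understanding which bottom substrings the string $\rho''$ acquires, depending on whether $\rho'$ starts/ends in a deep, and matching those against the two geometric scenarios in Definition \ref{definition: dance and attract}.

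First I would handle the generic case where $\rho'$ neither starts nor ends in a deep, so $\rho'' = {}_c\rho'_c$ by the table. Adding a cohook at the start replaces the starting arm of $\rho'$ by a detour $\alpha_r \cdots \alpha_1 \alpha^{-1}$ ending on a peak; the key observation is that the \emph{bottom substrings of ${}_c\rho'_c$} are exactly: (a) the bottom substrings of $\rho'$ that do not touch the endpoints, which gives the ``reach for'' scenario with $\xi$ interior to both sides, and (b) new bottom substrings of the form $\xi = \alpha^{-1}\rho'[\,\cdot\,, w]$ created by the added cohook — precisely the configuration of Figure \ref{fig:def-attracts}, where $\rho$ must end at or pass through the vertex $u = t(\alpha)$ so that $\sigma = \xi^{\pm 1}$ sits on top of $\rho$. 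I would spell out carefully that ``$\sigma$ on top of $\rho$ and $\sigma = \xi^{\pm1}$ with $\xi$ a bottom substring of ${}_c\rho'_c$'' translates exactly into ``$\rho$ attracts $\rho'$'' when $\xi$ uses the cohook, and into ``$\rho$ reaches for $\rho'$'' (with the swinging arms condition coming from the requirement that the arms of $\xi$ inside $\rho''$ versus $\rho$ be compatible — a missing arm on the $\rho'$ side forces, via the cohook construction, the existence of the corresponding arm $\alpha$ or $\beta$ on $\rho$ with $\alpha^{-1}\rho'$ or $\rho'\beta$ a string) when $\xi$ is interior.

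Next I would treat the three remaining cases of the table ($\rho'$ starts in a deep, ends in a deep, or both), where a hook is \emph{removed} rather than a cohook added at the relevant end. The point is that ${}_{h^{-1}}$ shortens $\rho'$ at a deep endpoint, and since that endpoint was a deep, it contributes no new top-of-$\rho''$ obstruction; the bottom substrings of $\rho''$ touching that end are again governed by the ``swinging arms'' bookkeeping, and one checks the ``no arm on $\rho'$ side'' clause of the swinging arms condition matches the fact that a string starting in a deep has, on that side, no arm to contribute. The negative-simple cases are disposed of using Notation \ref{not:explanationNegativeSimpleDance}: $\Hom{A}(M(\rho),\tau P(v)[1]) = \Hom{A}(P(v), M(\rho)) \ne 0$ iff $M(\rho)_v \ne 0$ iff $v$ is a vertex of $\rho$, which is exactly ``$-v$ reaches for $\rho$''; and $\Hom{A}(P(v)[1], \tau M(\rho')) = 0$ matches ``no almost positive string reaches for $-v$''.

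\textbf{Main obstacle.} The hard part will be the careful case analysis of how bottom substrings behave under adding a cohook and removing a hook at each end of $\rho'$, and in particular verifying that the ``swinging arms condition'' of Definition \ref{definition: dance and attract}(ii) is \emph{exactly} the right condition — neither too strong nor too weak — for a shared substring $\xi$ (top of $\rho$, bottom of $\rho'$) to survive as a bottom substring of the $\tau$-translate $\rho''$. This requires being scrupulous about the endpoint conventions in Definition \ref{def:topBottom} (whether $\xi$ ends at an endpoint of $\rho$ or $\rho'$, and the $\varepsilon_{i-1},\varepsilon_{j+1}$ sign conditions), and about the degenerate situations where $\xi$ is a single vertex (where, per the Remark following Definition \ref{definition: dance and attract}, the string conditions $\alpha^{-1}\rho'$, $\rho'\beta$ become the genuinely restrictive requirement $\alpha\beta \notin I$). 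I would organize this by reversing the quiver (Remark \ref{rem:reverseStrings}) to halve the work, treating the ``start'' side and deducing the ``end'' side by duality.
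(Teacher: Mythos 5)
Your overall strategy — translate $\Hom{A}(M(\rho),\tau M(\rho')) \ne 0$ via Proposition~\ref{prop:morphismsStringModules} into the existence of a common substring $\nu$ on top of $\rho$ and at the bottom of $\rho''$, then track how bottom substrings of $\rho''$ relate to $\rho'$ via Theorem~\ref{theorem: tau for strings} — is exactly the paper's approach, and your handling of the negative simple cases and the quiver-reversal symmetry is correct. However, the dichotomy you propose is wrong in a way that would derail the case analysis. You assert ``$\xi$ uses the cohook $\Rightarrow$ attract, $\xi$ interior $\Rightarrow$ reach for (with swinging arms),'' but the correct partition is governed by the intersection of the witnessing substring $\nu$ with $\rho'$, not by whether $\nu$ touches a cohook: if $\nu$ is \emph{disjoint} from $\rho'$ (entirely inside one cohook) you get ``attract''; if $\nu$ \emph{meets} $\rho'$ you get ``reach for,'' even when $\nu$ also runs into a cohook. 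Your example $\nu = \alpha^{-1}\rho'[\,\cdot\,,w]$ is in fact a ``reach for'' configuration (take $\xi = \rho'[\,\cdot\,,w]$ as the shared substring; the missing left arm of $\xi$ in $\rho'$ is exactly what triggers the swinging-arms clause), whereas the true ``attract'' witness in the paper's proof is a prefix $\alpha_k\cdots\alpha_1$ of the cohook, or even the single vertex $u$, lying entirely off $\rho'$. Relatedly, you have the swinging-arms condition attached to the wrong side: it is \emph{vacuous} when $\xi$ is interior to $\rho'$ (both arms exist), and bites precisely when $\xi$ abuts an endpoint of $\rho'$, i.e.\ precisely when the corresponding bottom substring of $\rho''$ must run through a cohook.

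A second point you gloss over: your class (a), ``bottom substrings of $\rho'$ not touching the endpoints,'' tacitly uses the fact that \emph{any} substring common to $\rho'$ and $\rho''$ that is at the bottom of $\rho''$ is automatically surrounded in $\rho'$ by an incoming and an outgoing arrow (hence interior, hence at the bottom of $\rho'$ too). This is not a tautology — it is the content of Lemma~\ref{lem:xiEqualsNu} in the paper, which requires an argument that neither adding a cohook nor removing a hook can manufacture the bounding arrows $\gamma$ and $\delta^{-1}$. Once you fix the partition (split by $\xi \eqdef \nu \cap \rho'$ being empty, all of $\nu$, or a proper nonempty part of $\nu$, as the paper does in its converse direction) and isolate this lemma, your plan does close up and is essentially the paper's proof.
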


\begin{proof}
The case where $\rho$ or~$\rho'$ is a negative simple string directly follows from Definition~\ref{def:negativeSimpleDance} and Notation~\ref{not:explanationNegativeSimpleDance}.
We therefore assume that~$\rho$ and~$\rho'$ are positive strings.
We let~$\rho''$ be the string corresponding to~$\tau M(\rho')$ obtained from~$\rho'$ by the rules of Theorem~\ref{theorem: tau for strings}.

Assume first that~$\rho$ attracts~$\rho'$ (Figure~\ref{fig:rho attracts rho'}), and let~$\alpha$ be as in Definition~\ref{definition: dance and attract}.
Since~$\rho'$ does not start in a deep,~$\rho''$ starts at some substring~$\alpha_1\cdots\alpha_r$, where~$\alpha_r\cdots\alpha_1\alpha^{-1}$ is the cohook at the start of~$\rho'$.
Let~$0\leq k\leq r$ be maximal such that~$\alpha_k\cdots\alpha_1$ is a substring (possibly a vertex if $k=0$) of~$\rho$ ending in~$u$, where~$u$ is as in Definition~\ref{definition: dance and attract}.
By definition of a cohook,~$\alpha_k\cdots\alpha_1$ has to be on top of~$\rho$, and is at the bottom of~$\rho''$.
By Proposition~\ref{prop:morphismsStringModules}, this implies the existence of a non-zero morphism from~$M(\rho)$ to~$M(\rho'') = \tau M(\rho')$.

\begin{figure}[h]
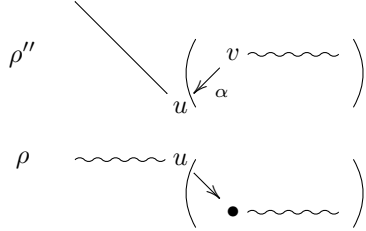

\[
\xy
(7,14)="peak";
(0,7)*+{\rho''}="10";
(28,7)*+{v}="14";
(42,7)="16";
(21,0)*+{u}="3";
(0,-7)*+{\rho}="-0";
(7,-7)="-1";
(21,-7)*+{u}="-3";
(28,-14)*+{\bullet}="-14";
(42,-14)="-16";
{\ar@{~} "-1";"-3"};
{\ar "-3";"-14"};
{\ar@{~} "-14";"-16"};
{\ar@{~} "14";"16"};
{\ar^{\alpha} "14";"3"};
{\ar@{-} "peak";"3"};
{\ar@/_.3pc/@{-} (23,9);(23,0)};
{\ar@/^.3pc/@{-} (44,9);(44,0)};
{\ar@/_.3pc/@{-} (23,-7);(23,-16)};
{\ar@/^.3pc/@{-} (44,-7);(44,-16)};
\endxy
\]
    \caption{If the string~$\rho$ attracts~$\rho'$, then~$\rho''$ is of the form given above.}
    \vspace{-.5cm}
    \label{fig:rho attracts rho'}
\end{figure}

Assume now that~$\rho$ reaches for~$\rho'$, and let~$\xi$ be as in Definition~\ref{definition: dance and attract}.
We distinguish four cases:

\begin{enumerate}[(a)]
\item If~$\rho'=\xi$ (see Figure~\ref{fig:dance case a}). Then~$\rho'$ does not start and does not end in a deep so that ${\rho''= \alpha_r\cdots\alpha_1\alpha^{-1}\rho'\beta\beta_1^{-1}\cdots\beta_s^{-1}}$, where~$\alpha_r\cdots\alpha_1\alpha^{-1}$ and~$\beta\beta_1^{-1}\cdots\beta_s^{-1}$ are cohooks for~$\rho'$.
 By assumption, ~$\alpha^{-1}\rho'\beta$ is a substring of~$\rho$. Let~$0\leq i\leq r$ and~$0\leq j\leq s$ be maximal such that~$\alpha_i\cdots\alpha_1\alpha^{-1}\rho'\beta\beta_1^{-1}\cdots\beta_j^{-1}~$ is a substring of~$\rho$.
 This substring is on top of~$\rho$ and at the bottom of~$\rho''$, which shows that~$\Hom{A} \big( M(\rho), \tau M(\rho') \big) \ne 0$ by Proposition~\ref{prop:morphismsStringModules}.

\begin{figure}[h]
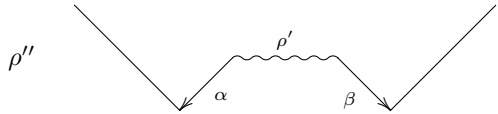

 	\capstart
\[
\xy
(7,14)="lpeak";
(63,14)="rpeak";
(0,7)*+{\rho''}="10";
(28,7)="14";
(42,7)="16";
(21,0)="3";
(49,0)="7";
{\ar@{~}^{\rho'} "14";"16"};
{\ar^{\alpha} "14";"3"};
{\ar@{-} "lpeak";"3"};
{\ar_{\beta} "16";"7"};
{\ar@{-} "rpeak";"7"};
\endxy
\]
    \caption{The string~$\rho''$, when~$\rho$ reaches for~$\rho'$ and~$\rho'=\xi$ as in case (a).}
    \vspace{-.5cm}
    \label{fig:dance case a}
\end{figure}

\item If there are arrows~$\alpha, \beta$ such that~$\alpha\xi\beta^{-1}$ is a substring of~$\rho'$ (see Figure~\ref{fig:dance case b}).
 Then~$\rho''$ contains~$(\alpha)\xi(\beta^{-1})$ where the brackets have to be understood as follows:~$\rho''$ either contains~$\xi\beta^{-1}$ or ends in~$\xi$ and either contains~$\alpha\xi$ or starts at~$\xi$.
 In any case,~$\xi$ is at the bottom of~$\rho''$. Since~$\xi$ is at the top of~$\rho$ by assumption, we have~$\Hom{A} \big( M(\rho), \tau M(\rho') \big)\neq 0$ by Proposition~\ref{prop:morphismsStringModules}.

\begin{figure}[h]
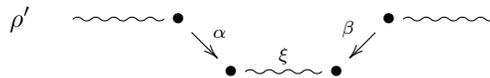

 	\capstart
\[
\xy
(-35,14)*+{\rho'};
(-28,14)="-14";
(-14,14)*+{\bullet}="-12";
(-7,7)*+{\bullet}="-11";
(7,7)*+{\bullet}="11";
(14,14)*+{\bullet}="12";
(28,14)="14";
{\ar@{~} "-14";"-12"};
{\ar^{\alpha} "-12";"-11"};
{\ar@{~}^{\xi} "-11";"11"};
{\ar_{\beta} "12";"11"};
{\ar@{~} "12";"14"};
\endxy
\]
    \caption{When~$\rho$ reaches for~$\rho'$ and~$\rho'$ contains a substring~$\alpha\xi\beta^{-1}$ as in case (b).}
    \vspace{-.5cm}
    \label{fig:dance case b}
\end{figure}

\item If~$\rho'$ is of the form~$\xi\beta^{-1}\sigma$ (for some~$\beta\in Q_1$ and some substring~$\sigma$) and satisfies the swinging arms condition of Definition~\ref{definition: dance and attract}\,(ii) (see Figure~\ref{fig:dance case c}).
 Then the beginning of~$\rho''$ is~$c\xi(\beta^{-1})$, where~$c$ is a cohook for~$\rho'$. Note that the swinging arms condition ensures that the cohook~$c$ ends in~$\alpha^{-1}$.
 One concludes, similarly as in case (a), that there is some substring on top of~$\rho$ which is at the bottom of~$\rho''$, so that~$\Hom{A} \big( M(\rho), \tau M(\rho') \big) \ne 0$ by Proposition~\ref{prop:morphismsStringModules}.
 
 \begin{figure}[h]
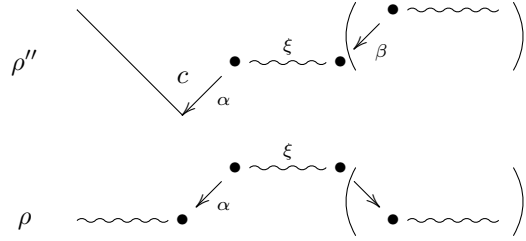

 	\capstart
\[
\xy
(-35,7)*+{\rho''};
(-28,14)="peak";
(-14,5)*+{c};
(-14,0)="-12";
(-7,7)*+{\bullet}="-11";
(7,7)*+{\bullet}="11";
(14,14)*+{\bullet}="12";
(28,14)="14";
(-35,-14)*+{\rho};
(-28,-14)="-4";
(-14,-14)*+{\bullet}="-2";
(-7,-7)*+{\bullet}="-1";
(7,-7)*+{\bullet}="1";
(14,-14)*+{\bullet}="2";
(28,-14)="4";
{\ar@{-} "peak";"-12"};
{\ar^{\alpha} "-11";"-12"};
{\ar@{~}^\xi "-11";"11"};
{\ar^{\beta} "12";"11"};
{\ar@{~} "12";"14"};
{\ar@/^.3pc/@{-} (30,15);(30,6)};
{\ar@/_.3pc/@{-} (9,15);(9,6)};
{\ar@/_.3pc/@{-} (30,-16);(30,-7)};
{\ar@/^.3pc/@{-} (9,-16);(9,-7)};
{\ar@{~} "-4";"-2"};
{\ar^{\alpha} "-1";"-2"};
{\ar@{~}^\xi "-1";"1"};
{\ar "1";"2"};
{\ar@{~} "2";"4"};
\endxy
\]
    \caption{When~$\rho$ reaches for~$\rho'$ and~$\rho'=\xi\beta^{-1}\sigma$ as in case (c).}
    \vspace{-.5cm}
    \label{fig:dance case c}
\end{figure}

\item If~$\rho'$ is of the form~$\sigma\alpha\xi$, then~$(\rho')^{-1}$ is as in case (c), so that~$\Hom{A} \big( M(\rho), \tau M(\rho') \big) \ne 0$.
\end{enumerate}
This concludes the proof that~$\Hom{A} \big( M(\rho), \tau M(\rho') \big) \ne 0$ when~$\rho$ attracts~$\rho'$ or reaches for~$\rho'$.

\bigskip
Conversely, assume that~$\Hom{A} \big( M(\rho), \tau M(\rho') \big) \ne 0$.
Then there is some substring~$\nu$ on top of~$\rho$ and at the bottom of~$\rho''$.
Let~$\xi$ be the longest substring common to~$\nu$ and~$\rho'$.
We again distinguish three cases:

\begin{enumerate}[(a)]
\item If~$\xi$ is empty (see Figure~\ref{fig:not tau compatible case a}).
The substring~$\nu$ has to be contained in some cohook for~$\rho'$.
We may assume that it is a left cohook.
The string~$\rho''$ is of the form~$\rho''= \alpha_r\cdots\alpha_1(\alpha^{-1}\sigma)$ where~$\alpha_r\cdots\alpha_1\alpha^{-1}$ is a left cohook for~$\rho'$.
(We note that removing a right hook to~$\rho'_c$ might remove~$\alpha^{-1}$ from~$\rho''$, which explains the brackets).
In particular,~$\rho'$ does not end in a deep.
Moreover,~$\nu$ does not intersect~$\rho'$ so that it is of the form~$\alpha_i\cdots\alpha_1$, for some~$0\leq i \leq r$.
Here, we use the following convention: let~$v$ be the source of~$\alpha$ and~$u$ be its target; then~$i=0$ means that~$\nu = u$.
Since~$\rho'$ starts at~$v$ and~$\nu$ is on top of~$\rho$, we deduce that~$\rho$ attracts~$\rho'$.

\begin{figure}[h]
 	\capstart
\[
\xy
(0,0)*+{\rho''};
(7,7)="1";
(11.8,2.2)*+{w}="2";
(21,-7)*+{u}="3";
(28,0)*+{v}="4";
(42,0)="6";
{\ar@{-} "1";"2"};
{\ar@{-}_{\nu} "2";"3"};
{\ar^{\alpha} "4";"3"};
{\ar@{~} "4";"6"};
{\ar@/^.3pc/@{-} (23,-9);(23,2)};
{\ar@/_.3pc/@{-} (43,-9);(43,2)};
(32,7)*+{v\in \rho'\ssm\nu};
(55,0)*+{\rho};
(63,-4)="11";
(70,-4)="12";
(77,2)*+{w}="13";
(88,-7)*+{u}="14";
(95,-14)*+{\bullet}="15";
(104,-14)="16";
{\ar@{~} "11";"12"};
{\ar "13";"12"};
{\ar@{-}^{\nu} "13";"14"};
{\ar "14";"15"};
{\ar@{~} "15";"16"};
{\ar@/^.3pc/@{-} (62,-6);(62,2)};
{\ar@/_.3pc/@{-} (75,-6);(75,2)};
{\ar@/_.3pc/@{-} (90,-7);(90,-16)};
{\ar@/^.3pc/@{-} (105,-7);(105,-16)};
\endxy
\]
    \caption{When~$\Hom{A} \big( M(\rho), \tau M(\rho') \big) \ne 0$, case (a).}
    \label{fig:not tau compatible case a}
\end{figure}

\item If~$\xi=\nu$. By Lemma~\ref{lem:xiEqualsNu} below and the fact that~$\xi$ is on top of~$\rho$, we have that~$\rho$ reaches for~$\rho'$. 

\item Assume now that~$\xi$ is a non-empty strict substring of~$\nu$. Since~$\xi$ is not all of~$\nu$, at least one cohook was added to~$\rho'$ when forming~$\rho''$. We distinguish two cases:

\begin{enumerate}[(c1)]
\item If~$\nu$ intersects exactly one cohook that was added to~$\rho'$ (see Figure~\ref{fig: not tau compatible case c1}).
By symmetry, we may assume that it is a left cohook.
In that case,~$\rho'$ does not start in a deep and~$\rho''$ is of the form~$\rho''=\alpha_r\cdots\alpha_1\alpha^{-1}\xi(\beta^{-1}\sigma)$ for some substring~$\sigma$, some arrow~$\beta$ and some cohook~$\alpha_r\cdots\alpha_1\alpha^{-1}$.
Moreover, there is some~$0\leq i\leq r$ such that~$\nu=\alpha_i\cdots\alpha_1\alpha^{-1}\xi$.
The proof of Lemma~\ref{lem:xiEqualsNu} shows that~$\rho'$ is of the form~$\xi\beta^{-1}\sigma'$.
Since~$\nu$ is on top of~$\rho$, we have~$\rho=\rho_1\alpha^{-1}\xi(\beta\rho_2)$ for some substrings~$\rho_1,\rho_2$ and some arrow~$\beta$.
This shows that~$\rho$ reaches for~$\rho'$ (we note that the swinging arms condition is satisfied).

\begin{figure}[h]
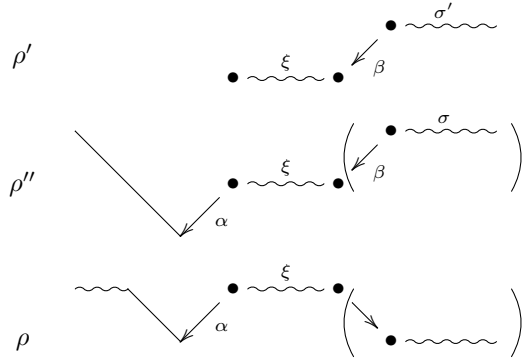

 	\capstart
\[
\xy
(-35,7)*+{\rho''};
(-28,14)="peak";
(-14,0)="-12";
(-7,7)*+{\bullet}="-11";
(7,7)*+{\bullet}="11";
(14,14)*+{\bullet}="12";
(28,14)="14";
(-35,-14)*+{\rho};
(-28,-7)="-4";
(-21,-7)="-3";
(-14,-14)="-2";
(-7,-7)*+{\bullet}="-1";
(7,-7)*+{\bullet}="1";
(14,-14)*+{\bullet}="2";
(28,-14)="4";
{\ar@{-} "peak";"-12"};
{\ar^{\alpha} "-11";"-12"};
{\ar@{~}^\xi "-11";"11"};
{\ar^{\beta} "12";"11"};
{\ar@{~}^\sigma "12";"14"};
{\ar@/^.3pc/@{-} (30,15);(30,6)};
{\ar@/_.3pc/@{-} (9,15);(9,6)};
{\ar@/_.3pc/@{-} (30,-16);(30,-7)};
{\ar@/^.3pc/@{-} (9,-16);(9,-7)};
{\ar@{~} "-4";"-3"};
{\ar@{-} "-3";"-2"};
{\ar^{\alpha} "-1";"-2"};
{\ar@{~}^\xi "-1";"1"};
{\ar "1";"2"};
{\ar@{~} "2";"4"};
(-35,24)*+{\rho'};
(-7,21)*+{\bullet}="-21";
(7,21)*+{\bullet}="21";
(14,28)*+{\bullet}="22";
(28,28)="24";
{\ar@{~}^\xi "-21";"21"};
{\ar^{\beta} "22";"21"};
{\ar@{~}^{\sigma'} "22";"24"};
\endxy
\]
    \caption{Case (c1): the string~$\nu$ intersects exactly one cohook.}
    \label{fig: not tau compatible case c1}
    \vspace{-.3cm}
\end{figure}

\item Assume that~$\nu$ intersects two cohooks that where added to~$\rho'$ (see Figure~\ref{fig: not tau compatible case c2}).
In that case,~$\rho''$ is of the form~$\rho''=\alpha_r\cdots\alpha_1\alpha^{-1}\xi\beta\beta_1^{-1}\cdots\beta_s^{-1}$ where~$\alpha_r\cdots\alpha_1\alpha^{-1}$ and~$\beta\beta_1^{-1}\cdots\beta_s^{-1}$ are cohooks for~$\xi$.
Then~$\rho'=\xi$ does not start nor end in a deep.
Moreover,~$\nu$ is at the bottom of~$\rho''$ so that it contains~$\alpha^{-1}\xi\beta$.
The string~$\rho$ also contains~$\alpha^{-1}\xi\beta$ and therefore reaches for~$\xi=\rho'$.

\begin{figure}[h]
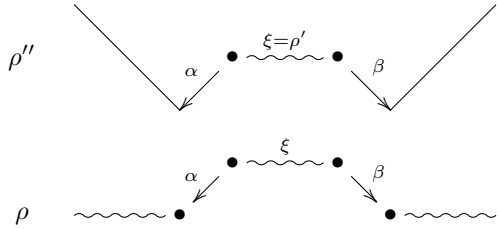

 	\capstart
\[
\xy
(-35,7)*+{\rho''};
(-28,14)="lpeak";
(-14,0)="-12";
(-7,7)*+{\bullet}="-11";
(7,7)*+{\bullet}="11";
(14,0)="12";
(28,14)="rpeak";
(-35,-14)*+{\rho};
(-28,-14)="-4";
(-14,-14)*+{\bullet}="-2";
(-7,-7)*+{\bullet}="-1";
(7,-7)*+{\bullet}="1";
(14,-14)*+{\bullet}="2";
(28,-14)="4";
{\ar@{-} "lpeak";"-12"};
{\ar_{\alpha} "-11";"-12"};
{\ar@{~}^{\xi=\rho'} "-11";"11"};
{\ar^{\beta} "11";"12"};
{\ar@{-} "12";"rpeak"};
{\ar@{~} "-4";"-2"};
{\ar_{\alpha} "-1";"-2"};
{\ar@{~}^\xi "-1";"1"};
{\ar^\beta "1";"2"};
{\ar@{~} "2";"4"};
\endxy
\]
    \caption{Case (c2): the string~$\nu$ intersects two cohooks.}
    \label{fig: not tau compatible case c2}
\end{figure}
\vspace*{-1.2cm}
\qedhere
\end{enumerate}
\end{enumerate}
\end{proof}

\begin{lemma}\label{lem:xiEqualsNu}
Let~$\rho'$ be a string for~$A$ and~$\rho''$ be the string obtained when computing~$\tau M(\rho')$ as in Theorem~\ref{theorem: tau for strings}.
Assume that~$\xi$ is a substring common to~$\rho'$ and~$\rho''$, and that~$\xi$ is at the bottom of~$\rho''$.
Then~$\rho'$ has a substring of the form~$\alpha\xi\beta^{-1}$, for some arrows~$\alpha, \beta$.
\end{lemma}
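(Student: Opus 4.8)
The plan is to unwind the recipe of Theorem~\ref{theorem: tau for strings}: in each of its four cases, $\rho''$ is produced from $\rho'$ by performing, independently at the two ends, one of two local moves — \emph{adding a cohook} (when that end of $\rho'$ is not in a deep) or \emph{removing a hook} (when it is). Consequently there is a common ``middle'' factor $\mu$, namely the part of $\rho'$ untouched by both moves, occurring verbatim both in $\rho'$ and in $\rho''$; the occurrence of $\xi$ in the statement is the one inherited from this middle portion, so I may assume $\xi$ sits inside (this occurrence of) $\mu$. The two facts I would record about the flanking material are: in $\rho''$, a cohook attached to $\mu$ on the left ends, at its junction with $\mu$, with a formal inverse $\alpha^{-1}$ (an ``up step''), and one attached on the right begins with a genuine arrow (a ``down step''); dually, in $\rho'$, a hook removed on the left ends, at its junction with $\mu$, with a genuine arrow $\alpha$ (a ``down step''), and one removed on the right begins with a formal inverse (an ``up step''). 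Moreover, by the case list, if a cohook is attached to $\mu$ on a given side in $\rho''$ then on that side $\mu$ reaches the corresponding end of $\rho'$ (no hook there), and conversely if $\mu$ reaches the corresponding end of $\rho''$ then on that side a hook was removed from $\rho'$.

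With this in hand I would treat the left endpoint of $\xi$, the right one being symmetric via the involution $\rho\mapsto\rho^{-1}$, under which the whole $\tau$-recipe, hooks, cohooks and the notion of bottom substring are equivariant. If $\xi$ does not reach the left end of $\mu$, its left-neighbouring step in $\rho''$ lies inside $\mu$ and, since $\xi$ is at the bottom of $\rho''$, it is a down step $\alpha$; the same step neighbours $\xi$ in $\rho'$. If $\xi$ does reach the left end of $\mu$, then the left flank of $\mu$ in $\rho''$ cannot be a cohook, for a cohook would put the up step $\alpha^{-1}$ immediately left of $\xi$ in $\rho''$, contradicting that $\xi$ is at the bottom of $\rho''$ (and $\xi$ is not at the very start of $\rho''$, a nonempty cohook preceding it). Hence on that side the move was a hook removal, so in $\rho'$ the occurrence of $\mu$ — and with it $\xi$ — is immediately preceded by the genuine arrow $\alpha$ ending that removed hook, again a down step. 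In both situations $\rho'$ has a factor $\alpha\xi$ with $\alpha$ an arrow; symmetrically $\rho'$ has a factor $\xi\beta^{-1}$ with $\beta$ an arrow, and since these two prolongations concern the same occurrence of $\xi$ on opposite sides, $\rho'$ contains $\alpha\xi\beta^{-1}$.

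I expect the only real care to go into the bookkeeping of the common core: checking that $\mu$ is genuinely a verbatim factor of both words in each of the four cases of Theorem~\ref{theorem: tau for strings}, and disposing of the degenerate possibilities — for instance a hook removed from $\rho'$ that is as long as $\rho'$ itself (so that $\tau M(\rho')$ degenerates), or a $\xi$ of length zero — where one falls back on the uniqueness clauses for hooks and cohooks in Definition~\ref{def:addingRemovingHooksCohooks} and the ensuing lemma. None of this should be hard, but it is where a clean statement can quietly acquire exceptions, so that is the step I would write out in full.
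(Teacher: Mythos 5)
Your proposal is correct and follows essentially the same route as the paper: both arguments examine the arrows immediately flanking $\xi$ in $\rho''$ and use the up/down‑step structure at the junctions of hooks and cohooks (a left cohook ends with a formal inverse, a removed left hook ends with a genuine arrow, and dually on the right) to transfer the bottom‑substring condition from $\rho''$ to $\rho'$. Your version makes the ``common middle'' $\mu$ and the three‑way case split (inside $\mu$ / cohook junction, which is impossible / hook junction) explicit, whereas the paper compresses this into the shorter dichotomy ``$\gamma$ present'' (the down step immediately before $\xi$ cannot be created by a cohook, so it is already in $\rho'$) versus ``$\gamma$ absent'' (then $\rho''$ begins with $\xi$ and a hook was removed), but the underlying observations and the conclusion $\alpha\xi\beta^{-1}\subseteq\rho'$ are the same.
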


\begin{figure}[h]
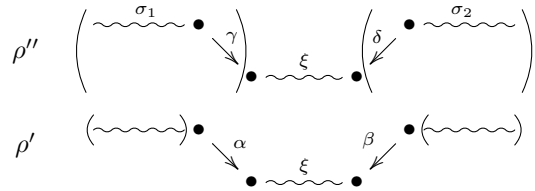

 	\capstart
\[
\xy
(-37,10.5)*+{\rho''};
(-28,14)="-14";
(-14,14)*+{\bullet}="-12";
(-7,7)*+{\bullet}="-11";
(7,7)*+{\bullet}="11";
(14,14)*+{\bullet}="12";
(28,14)="14";
{\ar@{~}^{\sigma_1} "-14";"-12"};
{\ar^{\!\!\gamma} "-12";"-11"};
{\ar@{~}^{\xi} "-11";"11"};
{\ar_{\delta\!\!} "12";"11"};
{\ar@{~}^{\sigma_2} "12";"14"};
{\ar@/^.3pc/@{-} (-29,5);(-29,16)};
{\ar@/_.3pc/@{-} (-9,5);(-9,16)};
{\ar@/^.3pc/@{-} (9,5);(9,16)};
{\ar@/_.3pc/@{-} (29,5);(29,16)};
(-37,-2)*+{\rho'};
(-28,0)="-4";
(-14,0)*+{\bullet}="-2";
(-7,-7)*+{\bullet}="-1";
(7,-7)*+{\bullet}="1";
(14,0)*+{\bullet}="2";
(28,0)="4";
{\ar@{~} "-4";"-2"};
{\ar^{\alpha} "-2";"-1"};
{\ar@{~}^{\xi} "-1";"1"};
{\ar_{\beta} "2";"1"};
{\ar@{~} "2";"4"};
{\ar@/^.2pc/@{-} (-28,-2);(-28,2)};
{\ar@/_.2pc/@{-} (-16.5,-2);(-16.5,2)};
{\ar@/_.2pc/@{-} (28,-2);(28,2)};
{\ar@/^.2pc/@{-} (16.5,-2);(16.5,2)};
\endxy
\]
    \caption{A substring~$\xi$ at the bottom of~$\rho''$ which is also a substring of~$\rho$.}
    \label{fig: xi equals nu}
    \vspace{-.3cm}
\end{figure}

\begin{proof}
 Since~$\xi$ is at the bottom of~$\rho''$, we have~$\rho''=(\sigma_1\gamma)\xi(\delta^{-1}\sigma_2)$, for some substrings~$\sigma_1,\sigma_2$ and arrows~$\gamma,\delta$.
 If the substring~$\gamma$ (resp.~$\delta^{-1}$) belongs to the string~$\rho''$, then it already belongs to the string~$\rho'$.
 Indeed, adding a cohook to~$\rho'$ cannot create~$\gamma$ nor~$\delta^{-1}$ as in the string~$\rho''$.
 If~$\gamma$ does not appear in the string~$\rho''$, then~$\rho''$ starts by~$\xi$.
 Since~$\xi$ also appears in~$\rho'$, the string~$\rho'$ is of the form~$h\xi\sigma_3$ for some substring~$\sigma_3$ and some hook~$h$.
 In particular,~$\rho'$ contain a substring of the form~$\alpha\xi$.
 In case~$\delta^{-1}$ does not appear in~$\rho''$, a similar argument shows that some~$\xi\beta^{-1}$ is a substring of~$\rho'$.
\end{proof}

\begin{corollary}
\label{coro: stautilt complex from strings}
The support $\tau$-tilting complex of~$\bar Q$ is isomorphic to the clique complex of the graph whose vertices are the unoriented strings of $\tau$-rigid modules of~$\bar Q$ and whose edges link two strings which do not attract nor reach for each other.
\end{corollary}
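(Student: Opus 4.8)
The plan is to match the vertices and faces of the two simplicial complexes directly, using the Butler--Ringel classification of indecomposable modules over a string algebra together with Proposition~\ref{proposition: attract or dance and tau compatibility}.

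First I would identify the vertex sets. By Definition~\ref{def: stautilt complex}, the vertices of $\tTC$ are the isomorphism classes of indecomposable $\tau$-rigid representations of $A = kQ/I$, together with the shifted indecomposable projectives. By the classification recalled in Section~\ref{subsec:stringBandModules} (from~\cite{ButlerRingel}), every indecomposable representation is a string module or a band module; but a band module is never $\tau$-rigid, since $\tau M(\varpi,\lambda,d) = M(\varpi,\lambda,d)$ by Theorem~\ref{theorem: tau for strings}, so the identity gives a nonzero element of $\Hom{A}\big( M(\varpi,\lambda,d), \tau M(\varpi,\lambda,d) \big)$. Hence every indecomposable $\tau$-rigid representation is a string module $M(\rho)$, and, since $M(\rho) \cong M(\rho')$ exactly when $\rho' = \rho^{\pm 1}$, these are naturally indexed by unoriented strings; the shifted indecomposable projectives are the $P(v)[1] = M(-v)$, indexed by the negative simple strings. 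Applying Proposition~\ref{proposition: attract or dance and tau compatibility} with $\rho = \rho'$ (and Definition~\ref{def:negativeSimpleDance} in the negative-simple case), $M(\rho)$ is $\tau$-rigid if and only if $\rho$ does not attract or reach for itself, so --- reading the graph of the statement as having a loop at $\rho$ exactly when $M(\rho)$ is not $\tau$-rigid, hence excluding such $\rho$ from every clique --- the vertices of $\tTC$ are precisely the almost positive unoriented strings that can appear in a clique.

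Next I would match the faces. A set $S$ of almost positive unoriented strings is a face of $\tTC$ if and only if $\bigoplus_{\rho \in S} M(\rho)$ is a $\tau$-rigid representation in the sense of Definition~\ref{def: tau-rigid and stautilt}. Writing this direct sum as $M \oplus P[1]$ and using additivity of $\tau$ and of $\Hom$, the fact that $\tau$ kills projective summands, and the conventions of Notation~\ref{not:explanationNegativeSimpleDance} and Definition~\ref{def:negativeSimpleDance}, this is equivalent to $\Hom{A}\big( M(\rho), \tau M(\rho') \big) = 0$ for every ordered pair $\rho, \rho' \in S$, hence to $M(\rho)$ and $M(\rho')$ being $\tau$-compatible (Definition~\ref{def: tau-compatibility}) for all $\rho, \rho' \in S$. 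By Proposition~\ref{proposition: attract or dance and tau compatibility} the latter says that, for all $\rho, \rho' \in S$, neither $\rho$ nor $\rho'$ attracts or reaches for the other --- exactly the condition that $S$ be a clique in the graph. Since a simplicial complex is determined by its set of faces, combining this with the vertex identification yields the desired isomorphism.

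I expect the only genuinely delicate point to be the reduction in the previous paragraph: one must unwind Definition~\ref{def: tau-rigid and stautilt}, in particular the clause $\Hom{A}(P, M) = 0$, through the symbolic treatment of shifted projectives, and check case by case (two strings; a string and a negative simple; two negative simples) that ``$-v$ reaches for $\sigma$'' and the conventions of Notation~\ref{not:explanationNegativeSimpleDance} together encode exactly the failure of $\tau$-rigidity of the corresponding direct sum. Everything else is a direct translation between the defining data of $\tTC$ and of the clique complex.
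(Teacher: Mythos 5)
Your argument is correct and coincides with the paper's (implicit) proof: the corollary there is stated without proof as an immediate consequence of Proposition~\ref{proposition: attract or dance and tau compatibility}, the Butler--Ringel classification, and the observation from Theorem~\ref{theorem: tau for strings} that band modules are $\tau$-fixed and hence never $\tau$-rigid --- precisely the ingredients you assemble, with the same reduction of $\tau$-rigidity of a direct sum to the pairwise vanishing of the relevant $\Hom{A}(-, \tau -)$ spaces. You are also right to flag that the terse statement needs the almost positive strings $\strings^\pm_{\ge -1}(\bar Q)$ rather than $\strings^\pm(\bar Q)$ (so that shifted projectives are represented), and that strings attracting or reaching for themselves must be excluded; for the latter, note that the standard clique-complex convention would still admit a looped vertex as a singleton face, so the cleanest formulation is to restrict the graph's vertex set to the $\tau$-rigid almost positive strings rather than to invoke loops.
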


\begin{remark}
The criterion of Proposition \ref{proposition: attract or dance and tau compatibility} for $\tau$-compatibility between string modules can be compared with a similar criterion obtained recently in \cite{CanakciPauksztelloSchroll} for the existence of extensions between two string modules.
The similarity between the two criteria is explained by the Auslander--Reiten formula asserting that $\Ext{}^1(N,M)$ is a quotient of the space $\Hom{}(M,\tau N)$ (see \cite[Thm.~IV.2.13]{AssemSimsonSkowronski}).
\end{remark}

\begin{example}
Two examples of support~$\tau$-tilting complexes of gentle algebras, illustrating Corollary~\ref{coro: stautilt complex from strings}, are shown in Figure~\ref{fig:exmtTC}.
The support~$\tau$-tilting complexes of the gentle algebras for all connected gentle bound quivers with~$2$ vertices are illustrated on \fref{fig:all2}.

\begin{figure}[h]
	\capstart
	\centerline{\includegraphics[scale=.5]{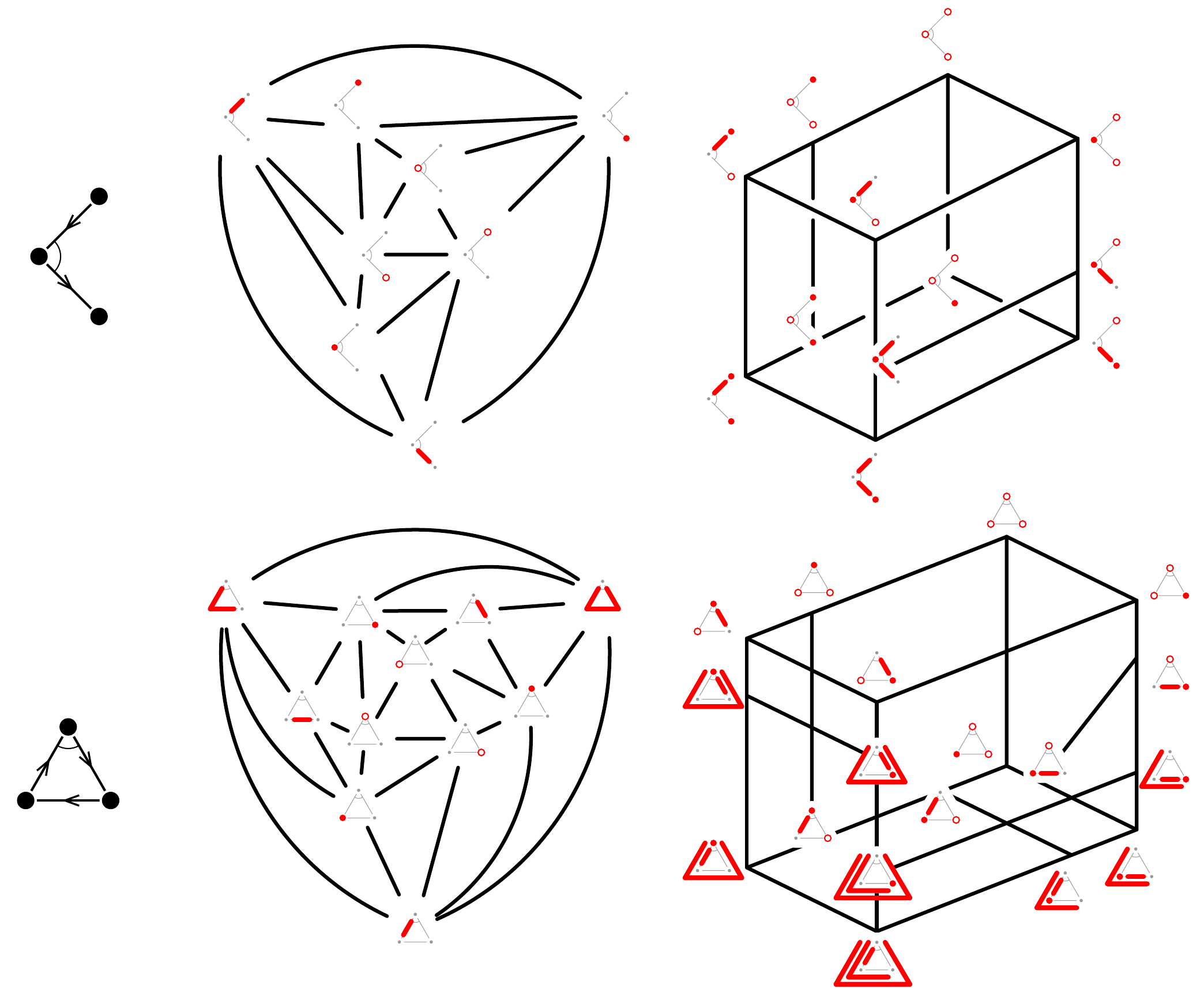}}
	\caption{The support $\tau$-tilting complex~$\tTC$ (left) and its dual graph (right). The string module~$M(\rho)$ is represented by the string~$\rho$ of~$\bar Q$ and the preprojective module~$P(v)[1]$ is represented by a hollow vertex at~$v$.}
	\label{fig:exmtTC}
\end{figure}
\end{example}


\clearpage
\part{The non-kissing complex}
\label{part:combinatorics}

This section focuses on the non-kissing complex of a gentle bound quiver, defined as the clique complex of a non-kissing relation among walks in a blossoming version of the quiver~(\ref{sec:blossomingQuivers}).
We prove combinatorially that this complex is a pseudomanifold~(\ref{sec:nonKissingComplex}), introducing along the way the definitions of distinguished walks, arrows and substrings~(\ref{subsec:distinguished}) and the operation of flip~(\ref{subsec:flips}) that will be fundamental tools all throughout the paper.
We then prove the motivating result of this paper, namely that the support $\tau$-tilting complex is isomorphic to the non-kissing complex for any gentle bound quiver~(\ref{sec:nkcvsttc}).


\section{Blossoming quivers}
\label{sec:blossomingQuivers}

\subsection{Blossoming quiver of a gentle bound quiver}
\label{subsec:blossomingQuiver}

Let~${\bar Q = (Q,I)}$ be a gentle bound quiver.
Denote by~$\indeg(v) \eqdef |t^{-1}(v)|$ and~$\outdeg(v) \eqdef |s^{-1}(v)|$ the \defn{in-} and \defn{out-degrees} of a vertex~${v \in Q_0}$ and by~$\deg(v) \eqdef \indeg(v) + \outdeg(v)$ its \defn{degree}.
We start by completing~$\bar Q$ such that each vertex gets four neighbors.

\begin{definition}
The \defn{blossoming quiver} of a gentle bound quiver~${\bar Q = (Q,I)}$ is the gentle bound quiver~$\bar Q\blossom \eqdef (Q\blossom, I\blossom)$ obtained by adding at each vertex~$v \in Q_0$:
\begin{itemize}
\item $2-\indeg(v)$ incoming arrows and~$2-\outdeg(v)$ outgoing arrows,
\item relations such that vertex~$v$ fulfills the gentle bound quiver conditions of Definition~\ref{def:gentleQuiver}.
\end{itemize}
Note that each vertex of~$Q_0$ has precisely two incoming and two outgoing arrows in~$Q\blossom$.
The vertices of~$Q\blossom_0 \ssm Q_0$ are called \defn{blossom vertices}, and the arrows~$Q\blossom_1 \ssm Q_1$ are called \defn{blossom arrows}.
\end{definition}

\begin{example}
A gentle bound quiver~$\bar Q$ and its blossoming gentle bound quiver~$\bar Q\blossom$ are represented in \fref{fig:exmBlossomingQuiver}.
\vspace*{-.4cm}

\begin{figure}[h]
	\capstart
	\centerline{\includegraphics[scale=.7]{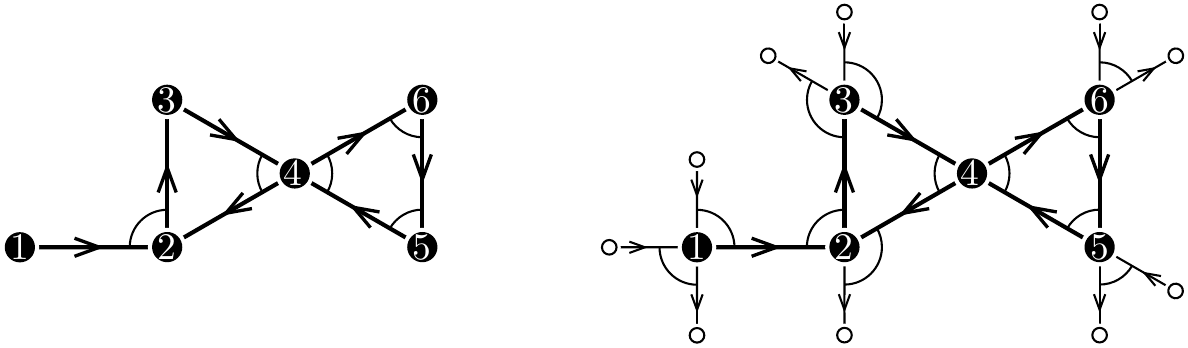}}
	\caption{A gentle bound quiver~$\bar Q$~(left) and its blossoming gentle bound quiver~$\bar Q\blossom$~(right). Blossom vertices are white and blossom edges are thin.}
	\label{fig:exmBlossomingQuiver}
\end{figure}
\end{example}

The following lemma is immediate and left to the reader.

\begin{lemma}
The number of vertices and arrows of the blossoming quiver~$Q\blossom$ are given by
\[
|Q\blossom_0| = |Q_0| + 2\Delta = 5|Q_0|-2|Q_1|
\qquad\text{while}\qquad
|Q\blossom_1| = |Q_1| + 2\Delta = 4|Q_0|-|Q_1|,
\]
where
\[
\Delta \eqdef 2|Q_0| - |Q_1| = \frac{1}{2} \sum_{v \in Q_0} \big( 4-\deg(v) \big) = \sum_{v \in Q_0} \big( 2-\indeg(v) \big) = \sum_{v \in Q_0} \big( 2-\outdeg(v) \big)
\]
is the number of incoming (or equivalently outgoing) blossom arrows of~$\bar Q\blossom$.
\end{lemma}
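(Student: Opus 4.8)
The plan is a straightforward double-counting argument, organised around the identity between the number of blossom vertices and the number of blossom arrows. First I would record the structural observation underlying everything: by the blossoming construction, each blossom arrow is incident to exactly one blossom vertex (its source, if it is an added incoming arrow at some $v\in Q_0$; its target, if it is an added outgoing arrow), and conversely each blossom vertex is the unique endpoint of exactly one blossom arrow. Consequently $|Q\blossom_0| - |Q_0|$ and $|Q\blossom_1| - |Q_1|$ both equal the total number of blossom arrows, so it suffices to count the latter.

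Next I would count the blossom arrows directly from the definition: at each vertex $v\in Q_0$ we add $2-\indeg(v)$ incoming arrows and $2-\outdeg(v)$ outgoing arrows, which makes sense precisely because the string bound quiver condition forces $\indeg(v)\le 2$ and $\outdeg(v)\le 2$. Summing over $v\in Q_0$, the number of incoming blossom arrows is $\sum_{v\in Q_0}(2-\indeg(v))$ and the number of outgoing blossom arrows is $\sum_{v\in Q_0}(2-\outdeg(v))$. Since every arrow of $Q$ has exactly one source and exactly one target, the handshake-type identities $\sum_{v\in Q_0}\indeg(v) = \sum_{v\in Q_0}\outdeg(v) = |Q_1|$ give that both of these sums equal $2|Q_0| - |Q_1| = \Delta$; in particular the numbers of incoming and of outgoing blossom arrows agree, as claimed. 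Hence there are $2\Delta$ blossom arrows and, by the first step, $2\Delta$ blossom vertices.

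It then remains to substitute. Plugging $\Delta = 2|Q_0| - |Q_1|$ into $|Q\blossom_0| = |Q_0| + 2\Delta$ and $|Q\blossom_1| = |Q_1| + 2\Delta$ yields $|Q\blossom_0| = 5|Q_0| - 2|Q_1|$ and $|Q\blossom_1| = 4|Q_0| - |Q_1|$. The remaining alternative expression $\Delta = \frac12\sum_{v\in Q_0}(4-\deg(v))$ follows at once from $\deg(v) = \indeg(v) + \outdeg(v)$ together with the same two summation identities.

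There is essentially no genuine obstacle here; the statement is a bookkeeping fact. The one point deserving a sentence of care is verifying that the blossom vertices are genuinely new and pairwise distinct, so that no double-counting occurs in the first step — but this is immediate from the construction, since each blossom vertex is introduced only as the free endpoint of a single freshly added arrow and never shared between two arrows.
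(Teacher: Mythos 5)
Your proof is correct, and since the paper explicitly leaves this lemma to the reader as immediate, your bookkeeping argument is precisely the intended verification: count blossom arrows by summing the deficiencies $2-\indeg(v)$ and $2-\outdeg(v)$, use the handshake identity $\sum_v \indeg(v)=\sum_v \outdeg(v)=|Q_1|$ to see both sums equal $\Delta$, observe that blossom arrows and blossom vertices are in bijection (each blossom vertex is the free endpoint of a single newly added arrow), and substitute. Nothing is missing.
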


%

Note that any arrow~$\alpha \in Q_1$ is incident with~$6$ other arrows of~$Q\blossom_1$.
Therefore, any~$\alpha \in Q_1$ can be continued to a string both at its source and target in two ways (switching orientation or not).
It follows that a string in~$\bar Q\blossom$ is maximal if and only if it joins two blossom vertices of~$\bar Q\blossom$.

\begin{definition}
A \defn{walk} of~$\bar Q$ is a maximal string in~$\bar Q\blossom$ (thus joining two blossom vertices of~$\bar Q\blossom$).
We denote by~$\walks(\bar Q)$ the set of all walks on~$\bar Q$.
Note that~$\walks(\bar Q)$ is finite if and only if~$\bar Q$ has a relation in any (not necessarily oriented) cycle.
As for strings, we often implicitly identify the two inverse walks~$\omega$ and~$\omega^{-1}$, and we let~$\walks^\pm(\bar Q) \eqdef \set{\{\omega, \omega^{-1}\}}{\omega \in \walks(\bar Q)}$ denote the set of undirected~walks.
\end{definition}

\begin{example}
\fref{fig:exmFacet} shows many examples of walks on the gentle quiver of \fref{fig:exmBlossomingQuiver}.
\end{example}

\begin{definition}
\label{def:substrings}
We call \defn{substring} of a walk~$\omega \in \walks(\bar Q)$ the strings of~$\strings(\bar Q)$ which are factors of~$\omega$.
We insist on the fact that we forbid the blossom endpoints of~$\omega$ to belong to a substring of~$\omega$.
We denote by~$\Sigma(\omega)$ the set of substrings of~$\omega$ and by~$\Sigma_\top(\omega)$ and~$\Sigma_\bottom(\omega)$ the sets of top and bottom substrings of~$\omega$ respectively.
We use the same notations for undirected walks (of course, substrings of an undirected walk are undirected).
\end{definition}

\begin{definition}
\label{def:straightBended}
A walk~$\omega$ is \defn{straight} if it has no corner (\ie if~$\omega$ or~$\omega^{-1}$ is a path in~$\bar Q\blossom$), and \defn{bending} otherwise. Let $\straightWalks(\bar Q)$ and~$\bendingWalks(\bar Q)$ denote the sets of straight and bending walks of~$\bar Q$ and let~$\straightWalks^\pm(\bar Q) \eqdef \set{\{\omega, \omega^{-1}\}}{\omega \in \straightWalks(\bar Q)}$ and~$\bendingWalks^\pm(\bar Q) \eqdef \set{\{\omega, \omega^{-1}\}}{\omega \in \bendingWalks(\bar Q)}$.
\end{definition}

\begin{definition}
\label{def: deep walk}
A \defn{peak walk} (resp.~\defn{deep walk}) is a walk that switches orientation only once, at a peak (resp.~deep). For~$v \in Q_0$, we denote by~$v_\peak$ the peak walk with peak at~$v$ and by~$v_\deep$ the deep walk with deep~at~$v$.
\end{definition}

\begin{remark}
\label{rem:reverseBlossomingQuiver}
Observe that blossoming and reversing commute:~$(\reversed{\bar Q})\blossom = \reversed{(\bar Q\blossom)}$.
In particular, the undirected walks in~$\bar Q$ coincide with the undirected walks in~$\reversed{\bar Q}$.
However, bottom and top substrings are reversed: $\Sigma_\bottom(\reversed{\omega}) = \reversed{\big( \Sigma_\top(\omega) \big)}$ and~$\Sigma_\top(\reversed{\omega}) = \reversed{\big( \Sigma_\bottom(\omega) \big)}$.
\end{remark}

\subsection{Dissection and grid bound quivers}
\label{subsec:dissectionGridQuivers}

\begin{figure}[b]
	\capstart
	\centerline{$\vcenter{\hbox{\includegraphics[scale=1.2]{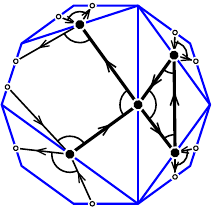}}}$ \hspace*{1.5cm}~$\vcenter{\hbox{\includegraphics[scale=.45]{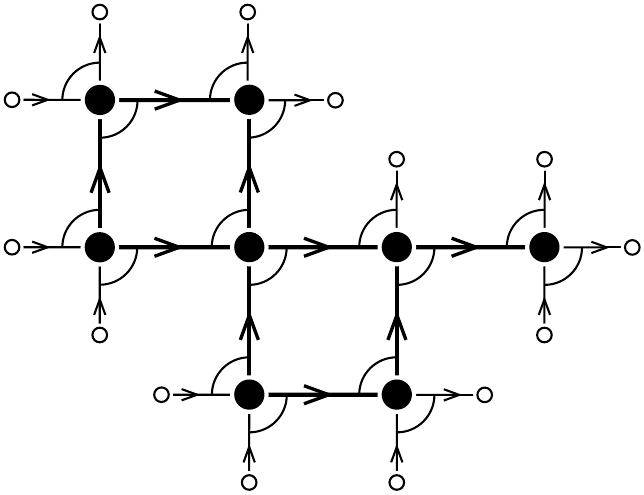}}}$}
	\caption{A dissection bound quiver~$\bar Q(D)$~(left) and a grid bound quiver~$\bar Q(L)$~(right), together with their blossoming quivers.}
	\label{fig:dissectionGridQuivers}
\end{figure}

\enlargethispage{.8cm}
Throughout the paper, we will illustrate our results with two specific families of gentle bound quivers:

\para{Dissection quivers}
The following is motivated by the works of Y.~Baryshnikov~\cite{Baryshnikov}, F.~Chapoton~\cite{Chapoton-quadrangulations}, and A.~Garver and T.~McConville~\cite{GarverMcConville}.
Consider a convex polygon~$P$ and a \defn{dissection}~$D$ of~$P$, \ie a collection of pairwise non-crossing internal diagonals of~$P$.
The \defn{dissection bound quiver} of~$D$ is the gentle bound quiver~$\bar Q(D)$ with a vertex for each diagonal of~$D$, with an arrow connecting any two diagonals of~$D$ that are counterclockwise consecutive edges of a cell of~$D$, and with a relation between any two arrows connecting three counterclockwise consecutive edges of a cell of~$D$.
Note that the blossoming quiver~$\bar Q\blossom(D)$ of~$\bar Q(D)$ is obtained similarly by considering as well the boundary edges of the polygon~$P$ (however, observe that a boundary edge whose endpoints are both incident to a diagonal of~$D$ should be duplicated into two distinct blossom vertices).
See \fref{fig:dissectionGridQuivers}\,(left).
Note that the walks of~$\bar Q(D)$ correspond to the diagonals joining two blossom vertices of~$\bar Q\blossom(D)$ and intersecting a connected set of diagonals of~$D$.
These diagonals are called \defn{accordion diagonals} of~$D$ in~\cite{MannevillePilaud-accordion} (they correspond to accordions in~$D$).

\para{Grid quivers}
The following is motivated by the works of T.~K.~Petersen, P.~Pylyavskyy and D.~Speyer for staircase shapes~\cite{PetersenPylyavskyySpeyer}, of F.~Santos, C.~Stump and V.~Welker on Grassmann associahedra~\cite{SantosStumpWelker}, and of T.~McConville~\cite{McConville} and A.~Garver and T.~McConville~\cite{GarverMcConville-grid} for arbitrary grid bound quivers.
Consider a connected subset~$L$ of the integer grid~$\Z^2$.
The \defn{grid bound quiver} of~$L$ is the gentle bound quiver~$\bar Q(L)$ obtained as the induced subgraph of the grid where edges are oriented in the direction of the increasing coordinates (meaning north and east), and with a relation for each length~$2$ path with a vertical and a horizontal steps.
The blossoming quiver~$\bar Q\blossom(L)$ of~$\bar Q(L)$ is obtained similarly by considering as well the neighbors of~$L$ (however, observe that a vertex of the grid with two or more neighbors in~$L$ should be duplicated into two or more distinct blossom vertices).
See \fref{fig:dissectionGridQuivers}\,(right) for an illustration.
Note that the walks of~$\bar Q(L)$ correspond to the maximal south-east lattice paths in~$\bar Q\blossom(L)$.

\para{Oriented paths}
Some quivers are simultaneously dissection and grid bound quivers. This is in particular the case of any oriented path, which is both the dissection bound quiver of an acyclic triangulation (with no internal triangle) and the grid bound quiver of a ribbon of~$\Z^2$. This is illustrated in \fref{fig:exmBijectionAssociahedron}\,(left).

\begin{figure}[t]
	\capstart
	\centerline{\includegraphics[scale=1.4]{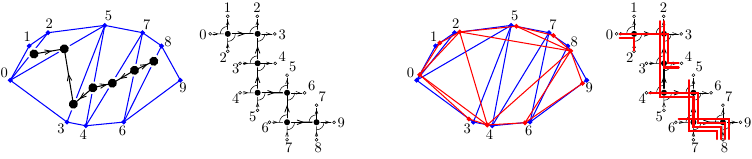}}
	\caption{An oriented path~$\exmAn$ is simultaneously a triangulation and a grid bound quiver (left). A facet of the reduced non-kissing complex~$\RNKC[\exmAn]$ can be seen equivalently as a triangulation or as a collection of non-kissing south-east paths (right).}
	\label{fig:exmBijectionAssociahedron}
\end{figure}


\section{The non-kissing complex}
\label{sec:nonKissingComplex}

\enlargethispage{.2cm}
We now define and study first elementary properties of the non-kissing complex of a quiver.
This section is inspired from previous works on grid bound quivers~\cite{PetersenPylyavskyySpeyer, SantosStumpWelker, McConville, GarverMcConville}.

\subsection{The non-kissing complex}

Let~$\bar Q = (Q,I)$ be a gentle bound quiver and~$\bar Q\blossom = (Q\blossom,I\blossom)$ denote its blossoming quiver.
The following definition is schematically illustrated in \fref{fig:kissingCrossing}\,(left).

\begin{definition}\label{def: kissing}
Let~$\omega,\omega' \in \walks^\pm(\bar Q)$ be two undirected walks on~$\bar Q$.
We say that~$\omega$ \defn{kisses}~$\omega'$ if~$\Sigma_\top(\omega) \cap \Sigma_\bottom(\omega') \ne \varnothing$, \ie if there exist a common substring~$\sigma \in \strings^\pm(\bar Q)$ of~$\omega$ and~$\omega'$ such that the arrows of~$\omega$ incident to~$\sigma$ are both outgoing while the arrows of~$\omega'$ incident to~$\sigma$ are both incoming.
We say that~$\omega$ and~$\omega'$ are \defn{kissing} if~$\omega$ kisses~$\omega'$ or~$\omega'$ kisses~$\omega$ (or both).
\end{definition}

Note that we authorize the situation where~$\sigma$ is reduced to a vertex~$v$, meaning that~$v$ is a peak of~$\omega$ and a deep of~$\omega'$.
Observe also that~$\omega$ can kiss~$\omega'$ several times, that~$\omega$ and~$\omega'$ can mutually kiss, and that~$\omega$ can kiss itself.
We denote by~$\NKWalks(\bar Q)$ the walks on~$\bar Q$ which are not self-kissing and by~$\NKWalks^\pm(\bar Q) \eqdef \set{\{\omega, \omega^{-1}\}}{\omega \in \NKWalks(\bar Q)}$ their undirected version.

\begin{figure}[b]
	\capstart
	\centerline{\includegraphics[scale=1]{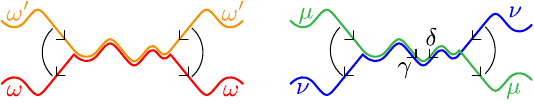}}
	\caption{A schematic representation of two kissing walks (left) and two non-kissing walks~(right) having a common substring: $\omega$~kisses~$\omega'$ (left) while~$\mu$~does not kiss~$\nu$ (right). On the right, we have~$\mu \prec_\gamma \nu$ and~$\nu \prec_\delta \mu$.}
	\label{fig:kissingCrossing}
\end{figure}

\begin{example}
\fref{fig:exmFacet}~(left) provides examples of kissing and non-kissing walks.
\begin{figure}[t]
	\capstart
	\centerline{\includegraphics[scale=.7]{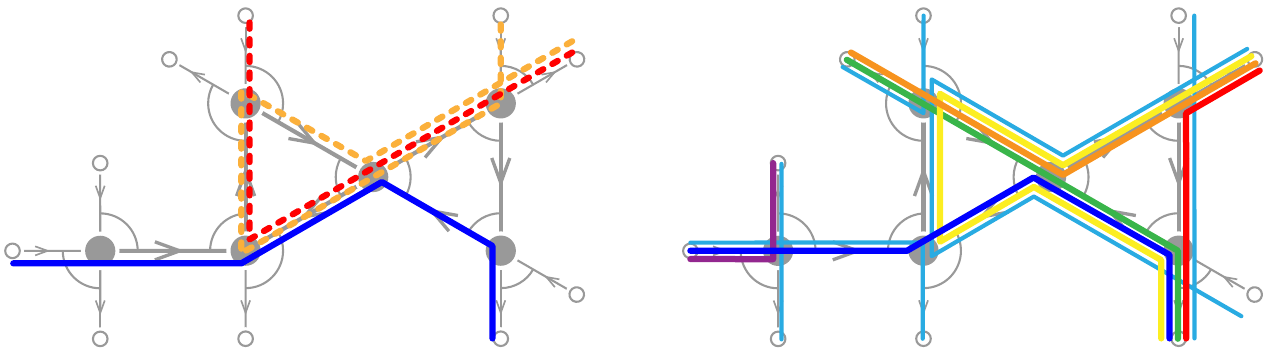}}
	\caption{(Left) Three walks on the bound quiver~$\bar Q$ of \fref{fig:exmBlossomingQuiver}: the dotted red and orange walks are non-kissing, but both kiss the plain blue walk. (Right) A maximal collection~$F$ of pairwise non-kissing walks on~$\bar Q$: the thin light blue walks are straight, the others~are~bending.}
	\label{fig:exmFacet}
\end{figure}
\end{example}

\begin{definition}\label{def: nKc}
The \defn{non-kissing complex} of~$\bar Q$ is the simplicial complex~$\NKC$ whose faces are the collections of pairwise non-kissing walks of~$\NKWalks^\pm(\bar Q)$.
Note that self-kissing walks never appear in~$\NKC$ by definition.
In contrast, no straight walk can kiss another walk by definition, so that they appear in all facets of~$\NKC$.
We thus consider the \defn{reduced non-kissing complex}~$\RNKC$ to be the deletion of all straight walks from~$\NKC$.
\end{definition}

\begin{example}
\fref{fig:exmFacet}\,(right) shows a facet~$F$ of the non-kissing complex~$\NKC[\bar Q]$ for the bound quiver~$\bar Q$ of \fref{fig:exmBlossomingQuiver}.
The reduced non-kissing complexes of all connected gentle bound quivers with~$2$ vertices are illustrated on \fref{fig:all2}.
The reduced non-kissing complexes of two gentle bound quivers with~$3$ vertices are illustrated on \fref{fig:exmNKC}\,(left).
\end{example}

\begin{example}\label{example: Fdeep Fpeak}
For any vertices~$v,w \in Q_0$:
\begin{itemize}
\item the two peak walks~$v_\peak$ and~$w_\peak$ are non-kissing,
\item the two deep walks~$v_\deep$ and~$w_\deep$ are non-kissing,
\item the peak walk~$v_\peak$ kisses the deep walk~$w_\deep$ if and only if there is an oriented path from~$v$ to~$w$ in~$\bar Q$.
\end{itemize}
Therefore, the sets~$F_\peak \eqdef \set{v_\peak}{v \in Q_0}$ and~$F_\deep \eqdef \set{v_\deep}{v \in Q_0}$ are both in~$\RNKC$.
It will follow from Corollary~\ref{coro:pure} that~$F_\peak$ and~$F_\deep$ are in fact both non-kissing facets of~$\RNKC$ that we call the \defn{peak facet} and the \defn{deep facet}.
\end{example}

\begin{example}
The non-kissing complex of the dissection and grid bound quivers introduced in Section~\ref{subsec:dissectionGridQuivers} have been studied in details in the literature:
\begin{itemize}
\item For a dissection~$D$, the non-kissing complex~$\RNKC[\bar Q(D)]$ was studied by A.~Garver and T.~McConville~\cite{GarverMcConville} and T.~Manneville and V.~Pilaud~\cite{MannevillePilaud-accordion}, motivated by preliminary works of Y.~Baryshnikov~\cite{Baryshnikov} and F.~Chapoton~\cite{Chapoton-quadrangulations} on quadrangulations. The faces of~$\RNKC[\bar Q(D)]$ correspond to sets of pairwise non-crossing accordion diagonals~of~$D$.
\item For a subset~$L$ of the integer grid~$\Z^2$, the non-kissing complex~$\RNKC[\bar Q(L)]$ was studied by T.~K.~Petersen, P.~Pylyavskyy and D.~Speyer for staircase shapes~\cite{PetersenPylyavskyySpeyer}, by F.~Santos, C.~Stump and V.~Welker on rectangular shapes~\cite{SantosStumpWelker}, and by T.~McConville~\cite{McConville} and A.~Garver and T.~McConville~\cite{GarverMcConville-grid} on arbitrary grid bound quivers. 
\item The non-kissing complex of an oriented path is a simplicial associahedron. \fref{fig:exmBijectionAssociahedron}\,(right) shows the correspondence between triangulations of an $(n+3)$-gon and facets of the non-kissing complex of an $n$-path. The simplicial associahedron was introduced in early works of D.~Tamari~\cite{Tamari} and J.~Stasheff~\cite{Stasheff}, in connection to associative schemes and topological motivations.
\end{itemize}
\end{example}

\begin{figure}[t]
	\capstart
	\centerline{\includegraphics[scale=.5]{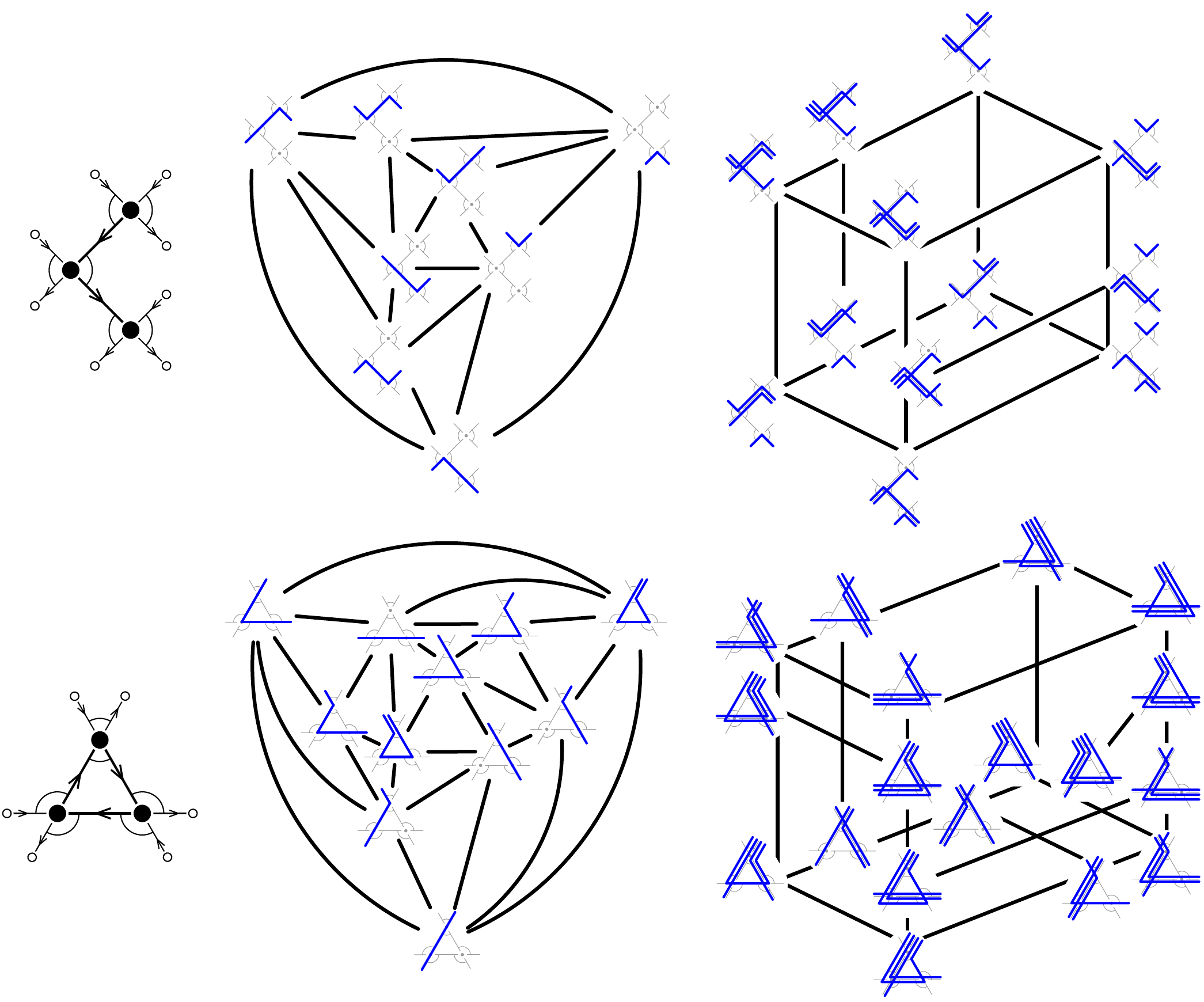}}
	\caption{The reduced non-kissing complex~$\RNKC$ (left) and the non-kissing oriented flip graph~$\NKG$ (right). The graph is oriented from bottom to top: the peak facet~$F_\peak$ appears at the bottom while the deep facet~$F_\deep$ appear on top.}
	\label{fig:exmNKC}
\end{figure}

\begin{remark}
\label{rem:reverseNKC}
For any gentle bound quivers~$\bar Q, \bar Q'$, if~$\bar Q'$ is isomorphic to either~$\bar Q$ or~$\reversed{\bar Q}$, then the non-kissing complexes~$\NKC[\bar Q]$ and~$\NKC[\bar Q']$ are isomorphic simplicial complexes.
\end{remark}

\subsection{Distinguished walks, arrows and substrings}
\label{subsec:distinguished}

Our next task is to show that the non-kissing complex~$\NKC$ is \defn{pure}, \ie that all maximal non-kissing facets of~$\NKC$ have the same number of walks.
We follow the proof of~\cite[Thm~3.2\,(1--2)]{McConville}, but we adapt the arguments to cover our level of generality.
We introduce along the way the convenient notions of distinguished walks, arrows and substrings that will be used throughout the paper.

\begin{definition}
A \defn{marked walk}~$\omega_\star$ is a walk~$\omega = \alpha_1^{\varepsilon_1} \cdots \alpha_\ell^{\varepsilon_\ell} \in \walks(\bar Q)$ with a marked arrow~$\alpha_i^{\varepsilon_i}$.
Note that if~$\omega$ contains several occurrences of~$\alpha_i^\pm$, only one occurrence is marked.
\end{definition}

\begin{definition}
Consider an arrow~$\alpha \in Q\blossom_1$ and any two distinct non-kissing walks~$\mu_\star, \nu_\star$ marked at an occurrence of~$\alpha^\pm$.
Let~$\sigma$ denote their maximal common substring containing that occurrence of~$\alpha$.
Since~$\mu_\star \ne \nu_\star$, their common substring~$\sigma$ is strict, so that~$\mu_\star$ and~$\nu_\star$ split at least at one endpoint of~$\sigma$.
We define the \defn{countercurrent order at~$\alpha$} by~$\mu_\star \prec_\alpha \nu_\star$ when~$\mu_\star$ enters and/or exits~$\sigma$ in the direction pointed by~$\alpha$, while~$\nu_\star$ enters and/or exits~$\sigma$ in the direction opposite to~$\alpha$.
\end{definition}

\begin{example}
On the schematic illustration of \fref{fig:kissingCrossing}\,(right), the walks~$\mu$ and~$\nu$ are non-kissing and for the walks~$\mu_\star, \nu_\star$ marked at~$\gamma$ (resp.~at~$\delta$), we have~$\mu_\star \prec_\gamma \nu_\star$ (resp.~$\nu_\star \prec_\delta \mu_\star$).
\end{example}

\begin{remark}
Note that the countercurrent order~$\prec_\alpha$
\begin{itemize}
\item is well-defined: if both endpoints of~$\sigma$ are in~$Q_0$, then $\mu_\star$ (resp.~$\nu_\star$) enters and exits~$\sigma$ in the same direction since~$\mu_\star$ and~$\nu_\star$ are non-kissing;
\item is independent of the orientation of~$\mu_\star$ and~$\nu_\star$:~${\mu_\star \prec_\alpha \nu_\star \!\iff\! (\mu_\star)^{-1} \prec_\alpha \nu_\star \!\iff\! \mu_\star \prec_\alpha (\nu_\star)^{-1}}$; we thus also consider~$\prec_\alpha$ as a relation on undirected marked walks;
\item depends on which occurrence of~$\alpha$ is marked on~$\mu$ and~$\nu$.
\end{itemize}
\end{remark}


\begin{lemma}
For any face~$F$ of~$\NKC$, the countercurrent order~$\prec_\alpha$ defines a total order on the walks of~$F$ marked at an occurrence of~$\alpha$.
\end{lemma}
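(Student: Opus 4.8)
**The plan is to show that $\prec_\alpha$ is irreflexive, antisymmetric, total, and transitive on the set $F_\alpha$ of walks of $F$ marked at an occurrence of $\alpha$.** Irreflexivity and antisymmetry are immediate from the definition, since a walk cannot simultaneously enter/exit its maximal common substring $\sigma$ with another fixed walk in two opposite directions. Totality is the core assertion that the countercurrent order \emph{is defined} for any two distinct walks $\mu_\star,\nu_\star\in F_\alpha$: the maximal common substring $\sigma$ containing the marked occurrence of $\alpha$ is strict (as $\mu_\star\neq\nu_\star$), so $\mu_\star$ and $\nu_\star$ split at an endpoint of $\sigma$; at each split endpoint lying in $Q_0$, the non-kissing hypothesis forces each of $\mu_\star,\nu_\star$ to leave $\sigma$ along an arrow of $\sigma$'s string-picture of consistent orientation type (\ie one is "outgoing-outgoing", the other "incoming-incoming" would be a kiss, which is excluded); hence at every split endpoint one of the two walks is "pointing with $\alpha$" and the other "pointing against $\alpha$", and moreover these assignments are consistent across the (one or two) split endpoints. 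This gives a well-defined, never-tied comparison, so $\prec_\alpha$ is a tournament on $F_\alpha$.

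\textbf{The real work is transitivity.} Suppose $\mu_\star\prec_\alpha\nu_\star$ and $\nu_\star\prec_\alpha\lambda_\star$, and let $\sigma_{\mu\nu}$, $\sigma_{\nu\lambda}$, $\sigma_{\mu\lambda}$ be the corresponding maximal common substrings, all containing the marked occurrence of $\alpha$. I would first observe that these three substrings are nested around $\alpha$: each is a substring of $\nu$ (resp. of the two walks involved) centered at the fixed occurrence of $\alpha$, so any two of them are comparable by inclusion, and in fact $\sigma_{\mu\lambda}\supseteq\sigma_{\mu\nu}\cap\sigma_{\nu\lambda}$ must hold because $\mu$ and $\lambda$ agree on $\sigma_{\mu\nu}\cap\sigma_{\nu\lambda}$ (both agree with $\nu$ there). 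The key point is then to compare where $\mu,\nu,\lambda$ split off from one another. I would argue by looking at the endpoint of $\sigma_{\mu\lambda}$ in the direction of $\alpha$ and, separately, the endpoint opposite to $\alpha$ (handling each side symmetrically, using the orientation-independence remark). On the "$\alpha$-side", since $\nu\prec_\alpha\lambda$, $\nu$ exits $\sigma_{\nu\lambda}$ in the $\alpha$-direction; since $\mu\prec_\alpha\nu$, $\mu$ exits $\sigma_{\mu\nu}$ in the $\alpha$-direction. One then checks that whichever of $\sigma_{\mu\nu},\sigma_{\nu\lambda}$ is shorter controls where $\mu$ and $\lambda$ first disagree, and in both cases $\mu$ ends up exiting $\sigma_{\mu\lambda}$ in the $\alpha$-direction while $\lambda$ does not; the non-kissing condition at an internal vertex of $\sigma_{\mu\lambda}$ is what rules out the degenerate possibility that $\mu$ and $\lambda$ "cross back". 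The same analysis on the opposite side, together with the consistency noted above (a walk points with $\alpha$ at one split endpoint iff it does at the other), yields $\mu_\star\prec_\alpha\lambda_\star$.

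\textbf{The main obstacle} is precisely this transitivity bookkeeping: one must carefully track, for the three pairwise-maximal common substrings, which contains which, and verify that the "direction of $\alpha$ at a split endpoint" behaves monotonically under passing to sub- and super-strings, all while invoking the non-kissing hypothesis at exactly the right vertices to exclude kissing configurations. A clean way to organize this is to encode, for each walk $\omega_\star\in F_\alpha$ and each position $p$ along the common substring, the local "above/below the $\alpha$-axis" type of $\omega$ at $p$; the non-kissing condition says no two walks of $F$ have incompatible types at a shared internal vertex, and $\mu_\star\prec_\alpha\nu_\star$ translates into $\mu$ lying weakly below $\nu$ at the first position where they differ on each side of $\alpha$. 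Transitivity of $\prec_\alpha$ then reduces to transitivity of this fiberwise comparison, which is routine once the setup is in place. I would also record the (easy) degenerate cases where $\sigma$ is reduced to a single vertex or where a split endpoint is a blossom vertex, since there the "direction of $\alpha$" is read off directly rather than from the orientation of an incident arrow of $\sigma$.
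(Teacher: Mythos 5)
Your overall strategy is the same as the paper's---reduce everything to transitivity and track where pairs of walks split relative to the marked occurrence of $\alpha$---but your transitivity step contains a false intermediate claim and the decisive verification is only gestured at. The false claim is that the three maximal common substrings $\sigma_{\mu\nu}$, $\sigma_{\nu\lambda}$, $\sigma_{\mu\lambda}$ ``are nested around $\alpha$ \dots so any two of them are comparable by inclusion.'' This does not follow from their all containing the marked occurrence of $\alpha$: for instance $\sigma_{\mu\nu}$ can extend farther than $\sigma_{\nu\lambda}$ on the $\alpha$-side of $\alpha$ and less far on the opposite side, in which case neither contains the other. (Your one correct inclusion, $\sigma_{\mu\lambda}\supseteq\sigma_{\mu\nu}\cap\sigma_{\nu\lambda}$, is strictly weaker.) Once nesting is dropped, the heuristic ``whichever of $\sigma_{\mu\nu},\sigma_{\nu\lambda}$ is shorter controls where $\mu$ and $\lambda$ first disagree'' must be run side by side, and you then meet a case you never treat: when the $\alpha$-side split points of $\sigma_{\mu\nu}$ and $\sigma_{\nu\lambda}$ coincide, the $\alpha$-side split point of $\sigma_{\mu\lambda}$ lies strictly farther out, and the orientations of $\mu$ and $\lambda$ there are not directly read off from the hypotheses. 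Accordingly, ``one then checks that \dots in both cases $\mu$ ends up exiting $\sigma_{\mu\lambda}$ in the $\alpha$-direction while $\lambda$ does not'' is not a proof but a restatement of the goal.

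The paper's argument avoids this bookkeeping by fixing a single vertex rather than comparing three substrings simultaneously. From $\lambda_\star\prec_\alpha\mu_\star\prec_\alpha\nu_\star$, pick a vertex $v$ where $\lambda_\star$ and $\mu_\star$ split, and distinguish two cases: if $\nu_\star$ passes through $v$, then $\nu_\star$ must leave $v$ along $\mu_\star$'s arrow (leaving along $\lambda_\star$'s arrow would make $v$ a witness for $\nu_\star\prec_\alpha\mu_\star$, a contradiction), so $v$ itself witnesses $\lambda_\star\prec_\alpha\nu_\star$; if $\nu_\star$ does not pass through $v$, then $\nu_\star$ and $\mu_\star$ split at some $w$ strictly between $\alpha$ and $v$, where $\lambda_\star$ still agrees with $\mu_\star$, and the orientation dictated by $\mu_\star\prec_\alpha\nu_\star$ at $w$ then witnesses $\lambda_\star\prec_\alpha\nu_\star$. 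I'd rewrite your transitivity step along these lines. Your discussion of irreflexivity, antisymmetry, and totality is fine and matches what the paper takes as immediate from the definition together with the well-definedness remark.
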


\begin{proof}
We prove that~$\prec_\alpha$ is transitive.
Let~$\lambda_\star$, $\mu_\star$, $\nu_\star$ be three pairwise non-kissing walks marked at an occurrence of~$\alpha$ such that~${\lambda_\star \prec_\alpha \mu_\star \prec_\alpha \nu_\star}$. Let~$v$ be a vertex where~$\lambda_\star$ and~$\mu_\star$ split. Then
\begin{itemize}
\item either~$\nu_\star$ contains~$v$, then it must leave~$v$ via the same arrow as~$\mu_\star$ (otherwise~$\nu_\star \prec_\alpha \mu_\star$), so that~$\lambda_\star \prec_\alpha \nu_\star$.
\item or~$\nu_\star$ does not contain~$v$, then~$\nu_\star$ and~$\mu_\star$ split at a vertex~$w$ between~$\alpha$ and~$v$. Since~${\mu_\star \prec_\alpha \nu_\star}$, $\mu$ (and thus~$\lambda$) leaves~$w$ with an arrow in the same direction as~$\alpha$, while~$\nu$ leaves~$w$ with an arrow opposite to the direction of~$\alpha$, so that~$\lambda_\star \prec_\alpha \nu_\star$.
\end{itemize}
Moreover, $\prec_\alpha$ is antisymmetric by definition.
Finally, any two distinct walks of~$F$ marked at an occurrence of~$\alpha$ are comparable in~$\prec_\alpha$.
Therefore, the reflexive closure of~$\prec_\alpha$ defines a total order on the walks of~$F$ marked at an occurrence of~$\alpha$.
\end{proof}

\begin{notation}
In all pictures, we represent the walks passing through~$\alpha$ in increasing order of~$\prec_\alpha$ from the left side to the right side of~$\alpha$. See for example \fref{fig:exmFacet}\,(right).
\end{notation}

Consider now an arrow~$\alpha \in Q\blossom_1$ and a non-kissing face~$F \in \NKC$ containing the straight walk passing through~$\alpha$.
Let~$F_\alpha$ be the set of walks of~$F$ containing~$\alpha$, marked at an occurrence of~$\alpha$.
Note that~$F_\alpha \ne \varnothing$ since it contains at least the straight walk passing through~$\alpha$, marked at~$\alpha$.
Observe that this straight walk is always the minimal marked walk of~$F_\alpha$ for the order~$\prec_\alpha$, since it always agrees with the direction of~$\alpha$.
On the other hand, Proposition~\ref{prop:nkFacetsAreFinite}, below, shows that we can consider the maximal marked walk of~$F_\alpha$ for the order~$\prec_\alpha$.
Intuitively, it is the most countercurrent walk passing through~$\alpha$ with respect to the direction of~$\alpha$.

\begin{lemma}\label{lem:distArrowExist}
Let~$F$ be a finite face of~$\NKC$, let~$\omega$ be a walk in~$F$, and let~$\alpha$ be an arrow on~$\omega$.
Then, there is an arrow~$\beta$ oriented in the same direction as~$\alpha$ in~$\omega$ such that~$\omega$ (marked at~$\beta$) is maximal in~$F$ for~$\prec_\beta$.
\end{lemma}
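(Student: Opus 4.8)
The plan is to walk along $\omega$ starting from the arrow $\alpha$ and, each time $\omega$ fails to be $\prec_\beta$-maximal at the current arrow $\beta$, jump to a strictly ``more countercurrent'' arrow further along $\omega$, using the finiteness of $F$ to guarantee termination. Concretely, suppose $\omega$ (marked at $\alpha$) is not maximal in $F_\alpha$ for $\prec_\alpha$; then there is a walk $\nu_\star \in F$ marked at an occurrence of $\alpha$ with $\omega_\star \prec_\alpha \nu_\star$. Let $\sigma$ be the maximal common substring of $\omega$ and $\nu$ through that occurrence of $\alpha$; since $\omega_\star \prec_\alpha \nu_\star$, the walk $\omega$ exits $\sigma$ (at one of its endpoints, say the endpoint reached by following $\omega$ in the direction of $\alpha$) via an arrow $\beta$ which is oriented \emph{away} from $\sigma$, i.e. oriented in the same direction along $\omega$ as $\alpha$ is. I would like to say that $\omega$ is now ``closer to maximal at $\beta$'', and iterate.

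The key step is to make that ``closer to maximal'' precise and to show the process terminates. First I would observe that $\beta$ is indeed oriented in the same direction as $\alpha$ in $\omega$: by the non-kissing condition, $\omega$ cannot have both incident arrows at an endpoint of $\sigma$ pointing into $\sigma$ while $\nu$ has both pointing out (that would be a kiss of $\nu$ by $\omega$ or vice versa depending on top/bottom), so the orientation of $\omega$ relative to $\sigma$ is consistent at the two endpoints, and the fact that $\omega_\star\prec_\alpha\nu_\star$ forces the exit arrow $\beta$ to continue ``with'' $\alpha$. Then I would argue that every walk $\nu'\in F$ marked at the chosen occurrence of $\alpha$ with $\omega_\star\prec_\alpha\nu'$ either splits from $\omega$ before $\beta$ (hence the set of walks above $\omega$ at $\beta$ is a proper subset of those above $\omega$ at $\alpha$) or shares $\beta$ with $\omega$; using the totality of $\prec_\alpha$ on $F_\alpha$ (the Lemma just before \texttt{lem:distArrowExist}) and the fact that $\nu$ itself splits off at the endpoint of $\sigma$, the multiset $\{\,\nu'\in F : \nu' \text{ through }\beta,\ \omega\prec_\beta\nu'\,\}$ is \emph{strictly smaller} than $\{\,\nu'\in F : \nu'\text{ through the marked }\alpha,\ \omega\prec_\alpha\nu'\,\}$. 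Since $F$ is finite, after finitely many steps we reach an arrow $\beta$, oriented in the direction of $\alpha$ along $\omega$, at which $\omega$ (marked at $\beta$) is $\prec_\beta$-maximal in $F$.

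The main obstacle I anticipate is bookkeeping the ``same direction as $\alpha$'' claim across iterations and making the strict-decrease argument airtight when $\omega$ has several occurrences of the same underlying arrow of $Q\blossom$ and when walks in $F$ wrap around cycles — the countercurrent order depends on which occurrence is marked, so the induction must track occurrences, not just arrows. A clean way to handle this is to phrase the decreasing quantity not as a set of walks but as a position: to each step associate the position along $\omega$ of the endpoint of $\sigma$ where $\omega$ and the witnessing walk $\nu$ split, measured as distance (number of arrows of $\omega$) from the fixed endpoint of $\omega$ that lies in the direction of $\alpha$; each jump strictly decreases the number of walks of $F$ that agree with $\omega$ on the segment from $\alpha$ to that split point and are $\prec$-above $\omega$, because $\nu$ drops out. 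Since this count is a nonnegative integer bounded by $|F|$, the iteration stops, and the arrow reached is the desired $\beta$. I would close by noting that maximality of $\omega$ at $\beta$ for $\prec_\beta$ is exactly the assertion of the lemma, and that the construction visibly kept $\beta$ oriented as $\alpha$ throughout.
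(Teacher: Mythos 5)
Your overall plan is genuinely different from the paper's. Where you iterate — jump from $\alpha$ to an exit arrow $\beta$, and keep jumping until $\omega$ becomes maximal, using finiteness to guarantee termination — the paper makes a single jump and is done. The paper's key move is to take $\mu$ to be the $\prec_\alpha$-\emph{minimum} walk in $F$ strictly above $\omega$ (not an arbitrary witness $\nu$), let $\beta$ be the exit arrow of $\omega$ at the split vertex with $\mu$, and then show directly that $\omega$ \emph{is} already maximal at $\beta$: any $\lambda$ with $\omega\prec_\beta\lambda$ would either kiss $\mu$ or, if it agrees with $\omega$ and $\mu$ back past $\alpha$, would contradict the minimality of $\mu$ among $\set{\nu\in F_\alpha}{\omega\prec_\alpha\nu}$. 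Finiteness of $F$ is used only to make the min and the max exist, not to bound an induction.

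Your iteration, as written, has a real gap in the termination argument. Two issues. First, you repeatedly assume the split vertex is ``the endpoint reached by following $\omega$ in the direction of $\alpha$''; but if $\sigma$ stretches to the forward blossom of $\omega$ (equivalently, $\nu$ agrees with $\omega$ on the whole forward tail from $\alpha$), the only split is at the \emph{backward} endpoint, and the exit arrow $\beta$ lies strictly before $\alpha$ along $\omega$. It is still oriented as $\alpha$ — so the lemma's conclusion is fine — but it destroys any ``position moves forward'' monovariant, and your sketch never resolves whether you track a position or a cardinality. Second, the cardinality version also fails: it is not true that $\set{\nu'\in F}{\omega\prec_\beta\nu'}$ sits inside $\set{\nu'\in F}{\omega\prec_\alpha\nu'}$ with $\nu$ dropping out. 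A walk $\nu'$ can join $\omega$ strictly between $\alpha$ and $\beta$, agree with $\omega$ up past $\beta$, and then leave countercurrently; such a $\nu'$ is above $\omega$ at $\beta$ but does not pass through (the marked occurrence of) $\alpha$ at all, so it is not counted at $\alpha$. The parenthetical ``hence the set of walks above $\omega$ at $\beta$ is a proper subset of those above $\omega$ at $\alpha$'' does not follow from the trivially true dichotomy that precedes it. Patching this requires precisely the paper's device: choose $\mu$ minimal so that such an intruding $\nu'$ would itself be below $\mu$ at $\alpha$ (contradicting minimality) or would kiss $\mu$; at that point you no longer need an iteration at all.
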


\begin{proof}
If~$\omega$ is not maximal in~$F$ for~$\prec_\alpha$, consider~${\mu \eqdef \min_{\prec_\alpha} \set{\nu \in F_\alpha}{\omega \prec_\alpha \nu}}$.
Let~$\sigma$ denote the maximal common substring of~$\omega$ and~$\mu$ containing~$\alpha$.
Since~$\omega$ and~$\mu$ are distinct, they split at an endpoint of~$\sigma$.
At this endpoint, the arrow~$\beta$ of~$\omega$ not in~$\sigma$ points in the same direction as~$\alpha$ since~$\omega \prec_\alpha \mu$.
Let~$\lambda$ be maximal in~$F$ for~$\prec_\beta$.
If~$\lambda \ne \omega$, then~$\lambda$ either kisses~$\mu$ or contradicts the minimality of~$\mu$ among the walks~$\set{\nu \in F_\alpha}{\omega \prec_\alpha \nu}$.
See \fref{fig:pure}\,(left) for an illustration.
We conclude that~$\lambda = \omega$ has a distinguished arrow~$\beta$ in the direction of~$\alpha$.
\end{proof}

\begin{proposition}\label{prop:nkFacetsAreFinite}
The faces of the non-kissing complex have finite cardinality.
\end{proposition}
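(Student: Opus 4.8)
The plan is to argue by contradiction: assuming some face $F$ of $\NKC$ is infinite, I produce two walks of $F$ that kiss, contradicting $F\in\NKC$. (Note this cannot rely on Lemma~\ref{lem:distArrowExist}, which presupposes the very finiteness we wish to establish.) Since $\bar Q\blossom$ has finitely many arrows and every walk traverses at least one arrow of $Q\blossom_1$, some fixed arrow $\alpha\in Q\blossom_1$ lies on infinitely many walks of $F$; marking one occurrence of $\alpha$ on each of them gives an infinite set of marked walks of $F$, which by the above lemma on the countercurrent order forms a chain for $\prec_\alpha$. As any infinite totally ordered set contains an infinite strictly increasing sequence, I extract marked walks $\omega_1\prec_\alpha\omega_2\prec_\alpha\cdots$ in $F$, each marked at an occurrence of $\alpha$.

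The first step is to show that consecutive walks of this chain agree, around $\alpha$, on substrings of unbounded length. Write $\sigma_n$ for the maximal common substring of $\omega_n$ and $\omega_{n+1}$ containing the marked occurrence of $\alpha$; at each of its endpoints where the two walks split, $\omega_n$ leaves $\sigma_n$ in the direction of $\alpha$ while $\omega_{n+1}$ leaves against it, this being the meaning of $\omega_n\prec_\alpha\omega_{n+1}$ (with the two endpoints behaving coherently because the walks do not kiss). If the lengths $\ell(\sigma_n)$ stayed bounded, then, after pigeonholing the finitely many strings through $\alpha$ of bounded length, infinitely many $\omega_n$ would agree on a fixed bounded neighbourhood of $\alpha$ and split beyond it; running the transitivity analysis of $\prec_\alpha$ (as in the proof of that lemma) on such a subfamily would then either force two consecutive pairs of the chain to split at the same vertex --- impossible, since a fixed marked walk cannot leave a given common substring simultaneously along and against the direction of $\alpha$ --- or produce, among the walks of $F$, a pair whose common substring through $\alpha$ is on top of one and at the bottom of the other, hence a kiss. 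Either way we reach a contradiction, so $\ell(\sigma_n)\to\infty$, and the walks $\omega_n$ eventually contain arbitrarily long reduced, relation-free substrings through $\alpha$.

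It remains to convert this into a kiss. A reduced, relation-free substring that is long enough must, in the finite bound quiver $\bar Q\blossom$, revisit some vertex with the same two incident arrows, hence contain a closed reduced relation-free subword --- a power of a band $\varpi$ of $\bar Q\blossom$ --- and, taken long enough, contain the square $\varpi\varpi$. Thus for $n$ large the walk $\omega_n$ contains $\varpi\varpi$; but a walk containing the square of a band is self-kissing, since the period $\varpi$ then occurs at two distinct positions of $\omega_n$, between which one exhibits a common substring lying on top of $\omega_n$ at one occurrence and at the bottom at the other --- exactly the phenomenon that makes a walk wrapping twice around an oriented cycle self-kissing. This contradicts $\omega_n\in\NKWalks(\bar Q)$, and therefore no face of $\NKC$ is infinite.

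I expect the weight of the proof to rest on two combinatorial facts. Ruling out the bounded-length scenario for $\ell(\sigma_n)$ requires careful bookkeeping of split vertices against the definition of $\prec_\alpha$ and its transitivity; this is delicate but essentially elementary. The genuine obstacle --- and the only place where the gentleness of $\bar Q$, which guarantees a clean ``continue straight or turn around'' dichotomy at every vertex of $\bar Q\blossom$, is really used --- is the pair of assertions that a long enough reduced relation-free string must contain the square of a band and that such a square inside a walk always produces a kiss.
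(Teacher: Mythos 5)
Your opening remark is mistaken: the paper \emph{does} rely on Lemma~\ref{lem:distArrowExist}, and without circularity. That lemma, applied to any \emph{finite} face $F$, shows that each walk in $F$ carries a distinguished arrow, and since at most one walk of $F$ can be distinguished at a given arrow, $|F|\le|Q_1\blossom|$. An infinite face is then impossible, since it would contain a finite subface of cardinality $|Q_1\blossom|+1$. Bounding finite faces uniformly automatically bounds all faces; so the two-line proof you ruled out is precisely the one the paper gives, and the much longer route you embark on is not forced.

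That route has a genuine gap at its last step: a walk containing the square $\varpi\varpi$ of a band need \emph{not} be self-kissing. Take $\bar Q$ to be the alternating $4$-cycle with vertices $\{1,2,3,4\}$, arrows $\alpha_1\colon 1\to 2$, $\alpha_2\colon 3\to 2$, $\alpha_3\colon 3\to 4$, $\alpha_4\colon 1\to 4$, and $I=0$ (this is gentle and admissible since $Q$ is acyclic). Then $\varpi=\alpha_1\alpha_2^{-1}\alpha_3\alpha_4^{-1}$ is a band, and $\omega=\beta\,\varpi^2\,\gamma^{-1}$ is a walk, where $\beta,\gamma$ are the two incoming blossom arrows at vertex $1$ chosen so that $\beta\alpha_1\notin I\blossom$ and $\gamma\alpha_4\notin I\blossom$. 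Writing out $\Sigma_\top(\omega)$ and $\Sigma_\bottom(\omega)$, one finds that every top substring of $\omega$ has both endpoints among the peak vertices $\{1,3\}$, while every bottom substring has both endpoints among $\{1,2,4\}$; the only possible common endpoint is vertex $1$, and the unique top substring with both endpoints at $1$ is $\varepsilon_1$ while the unique bottom one is $\varpi^2$. Hence $\Sigma_\top(\omega)\cap\Sigma_\bottom(\omega)=\varnothing$ and $\omega$ is not self-kissing. Your heuristic of ``a walk wrapping twice around an oriented cycle'' does not transfer, because admissibility of $I$ forbids relation-free oriented cycles: every band in a gentle bound quiver must turn at least once per period, and that turning is exactly what keeps the top/bottom parity aligned across consecutive copies of $\varpi$, killing the would-be self-kiss. (The proposition is of course still true in this example: $\beta\varpi^m\gamma^{-1}$ kisses $\beta\varpi^n\gamma^{-1}$ once $n\ge m+2$, since $\varpi^m$ is a bottom substring of the former, flanked by the two incoming blossom arrows at $1$, and a top substring of the latter at an interior occurrence flanked by $\alpha_4^{-1}$ and $\alpha_1$. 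So the kiss you need does exist, but it is a kiss between two distinct walks of your chain, not a self-kiss, and finding it requires a different argument from the one you give.)
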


\begin{proof}
By Lemma~\ref{lem:distArrowExist}, the cardinality of any finite non-kissing face is bounded by~$|Q_1\blossom|$.
Thus, any collection of~$|Q_1\blossom|+1$ walks contains a kiss and all non-kissing faces are finite.
\end{proof}

\begin{definition}
For an arrow~$\alpha \in Q\blossom_1$ and a non-kissing face~$F \in \NKC$ containing the straight walk passing through~$\alpha$, consider the marked walk~$\omega_\star \eqdef \max_{\prec_\alpha} F_\alpha$ and let~$\omega$ be the walk obtained by forgetting the mark in~$\omega_\star$. We say that~$\omega$ is the \defn{distinguished walk} of~$F$ at~$\alpha$ and that~$\alpha$ is a \defn{distinguished arrow} on~$\omega \in F$.
We write
\[
\distinguishedWalk{\alpha}{F} \eqdef \max\nolimits_{\prec_\alpha} F_\alpha
\qquad\text{and}\qquad
\distinguishedArrows{\omega}{F} \eqdef \set{\alpha \in \omega}{\distinguishedWalk{\alpha}{F} = \omega}.
\]
\end{definition}

\begin{example}
Distinguished arrows and walks are illustrated on \fref{fig:exmFlip}\,(left).
\end{example}

\begin{example}
For any~$v \in Q_0$, the distinguished arrows of the peak walk~$v_\peak$ in the peak facet~$F_\peak$ are the two outgoing arrows at~$v$, and the distinguished arrows of the deep walk~$v_\deep$ in the deep facet~$F_\deep$ are the two incoming arrows at~$v$.
\end{example}

The following statement extends~\cite[Thm.~3.2]{McConville}.

\begin{proposition}
\label{prop:distinguishedArrows}
For any non-kissing facet~$F \in \NKC$, each bending (resp.~straight) walk of~$F$ contains precisely~$2$ (resp.~$1$) distinguished arrows in~$F$ pointing in opposite directions (resp.~in the direction~of~the~walk).
\end{proposition}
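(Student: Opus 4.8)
The plan is to adapt the argument of~\cite[Thm.~3.2]{McConville}, splitting the statement into a lower bound and an upper bound on $|\distinguishedArrows{\omega}{F}|$. The claimed exact values will then also give the purity of $\NKC$ (Corollary~\ref{coro:pure}) through the counting identity $\sum_{\omega \in F} |\distinguishedArrows{\omega}{F}| = |Q\blossom_1|$, which holds because $\distinguishedWalk{\alpha}{F}$ is defined for \emph{every} arrow $\alpha \in Q\blossom_1$ (the straight walk through $\alpha$ lies in the facet $F$ by Definition~\ref{def: nKc}, and $F_\alpha$ is finite by Proposition~\ref{prop:nkFacetsAreFinite}) and is a single walk of $F$, so each arrow lies in exactly one fiber of $\alpha \mapsto \distinguishedWalk{\alpha}{F}$.

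\emph{Lower bound.} Every arrow on a straight walk $\omega$ is traversed in the single direction of $\omega$, so Lemma~\ref{lem:distArrowExist} applied to any such arrow produces a distinguished arrow of $\omega$ in that direction, whence $|\distinguishedArrows{\omega}{F}| \ge 1$. A bending walk $\omega$ has a corner, \ie a consecutive pair $\alpha_i^{\varepsilon_i}\alpha_{i+1}^{\varepsilon_{i+1}}$ with $\varepsilon_i \ne \varepsilon_{i+1}$, hence is traversed forward along at least one of its arrows and backward along at least one; Lemma~\ref{lem:distArrowExist} applied to one arrow of each kind gives a distinguished arrow of $\omega$ of each orientation, so $|\distinguishedArrows{\omega}{F}| \ge 2$ with two of the distinguished arrows pointing in opposite directions.

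\emph{Upper bound.} It suffices to prove that a walk $\omega \in F$ has no two distinguished arrows pointing in the same direction: combined with the lower bound this yields $|\distinguishedArrows{\omega}{F}| = 1$ when $\omega$ is straight (all its arrows being codirectional), and, when $\omega$ is bending, forces its at least two distinguished arrows to be pairwise anti-directional, hence to be exactly two pointing in opposite directions. The key input is a local description of a distinguished arrow $\gamma$ of a walk $\nu \in F$: since the straight walk $w_\gamma$ through $\gamma$ is the $\prec_\gamma$-minimum of $F_\gamma$ while $\nu = \max_{\prec_\gamma} F_\gamma$, either $\nu = w_\gamma$ or $\nu$ must turn against the orientation of $\gamma$ at the first corner it meets on at least one side of $\gamma$ --- a peak on the side $\gamma$ points away from, a deep on the side $\gamma$ points towards --- and $\prec_\gamma$-maximality forbids any walk of $F$ through $\gamma$ to turn against $\gamma$ strictly closer to $\gamma$ than $\nu$ does, on either side; in particular no walk of $F$ through $\gamma$ may turn at an endpoint of $\gamma$ unless $\nu$ already does there. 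Assuming $\omega$ has distinguished arrows $\alpha$ and $\beta$ traversed in the same direction, with $\alpha$ before $\beta$ along $\omega$, I would follow $\omega$ from $\alpha$ towards $\beta$ to its first corner: if that corner lies strictly between $\alpha$ and $\beta$, the maximality clause at $\alpha$ is violated; if it lies at or beyond $\beta$, the maximality clause at $\beta$ is violated, via the distinguished walks at the arrows of $\omega$ between $\alpha$ and $\beta$, which would be forced to turn against the orientation sooner than $\omega$; and if $\omega$ runs straight to a blossom beyond $\alpha$ or $\beta$, then all walks of $F$ through those arrows coincide with $\omega$ there, and one rearranges $\omega$ at another of its corners to build a walk non-kissing with every walk of $F$, contradicting that $F$ is a facet.

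\emph{Main obstacle.} The hard part is the upper bound: making the ``local picture of a distinguished arrow'' precise (fixing orientation conventions, determining which corner $\omega$ turns at, and phrasing $\prec_\gamma$-maximality as ``no walk of $F$ turns sooner''), and then combining it with the maximality of $F$ to rule out two codirectional distinguished arrows. This is exactly the delicate casework of~\cite[Sect.~3]{McConville}, here further complicated by walks touching blossom vertices and by walks repeating an arrow, so that ``the side of $\gamma$'' must always be read at the marked occurrence.
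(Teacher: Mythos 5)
Your decomposition matches the paper's at the top level: both prove $\ge$ via Lemma~\ref{lem:distArrowExist} and then prove that no walk in $F$ can carry two distinguished arrows pointing in the same direction, from which the exact count and the anti-directionality follow for free. Your treatment of the lower bound is correct and your use of the counting identity to recover Corollary~\ref{coro:pure} is right.

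The upper bound, however, has a real gap and does not match the argument the paper actually runs. You try to derive a contradiction purely ``locally'' from the countercurrent-maximality of $\omega$ at $\alpha$ and $\beta$ by following $\omega$ to its first corner and splitting into three cases. But case~(1), ``if the first corner lies strictly between $\alpha$ and $\beta$, the maximality clause at $\alpha$ is violated,'' is simply unjustified: $\omega$ having its first deep strictly between $\alpha$ and $\beta$ is perfectly compatible with $\omega = \distinguishedWalk{\alpha}{F}$ --- maximality for $\prec_\alpha$ says something about how the \emph{other} walks of $F$ through $\alpha$ split from $\omega$, not about where $\omega$'s own corners sit. Similarly, in case~(2) the claim that distinguished walks at intermediate arrows ``would be forced to turn against the orientation sooner than $\omega$'' is not established, nor is it explained how that would contradict maximality at $\beta$. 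Only in case~(3) do you gesture at the move that actually carries the proof.

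What is missing is precisely the paper's key construction. Rather than arguing locally, the paper picks the companion arrow $\gamma$ with $t(\gamma) = s(\beta)$ and $\gamma\beta \in I$, sets $\nu \eqdef \distinguishedWalk{\gamma}{F}$ (oriented backward along $\gamma$), observes that the maximal common substring $\sigma$ of $\omega$ and $\nu$ around $s(\beta)$ cannot contain $\alpha$ (else $\omega \prec_\alpha \nu$), and then builds the hybrid walk $\mu \eqdef \omega[\,\cdot\,, s(\beta)]\,\nu[s(\beta), \cdot\,]$. One then shows $\mu$ is non-kissing with every walk of $F$, contradicting that $F$ is a \emph{facet}: any $\lambda\in F$ kissing $\mu$ would have to kiss it along a substring $\tau$ strictly containing $\sigma$ (since $\lambda$ kisses neither $\omega$ nor $\nu$), and the two resulting subcases yield contradictions with the $\prec_\beta$-maximality of $\omega$ and the $\prec_\gamma$-maximality of $\nu$ respectively. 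This is where both distinguished arrows $\alpha$, $\beta$ and the maximality of $F$ enter simultaneously; without a new-walk construction of this type, the local picture at $\alpha$ and $\beta$ alone does not produce a contradiction, which is why your cases~(1) and~(2) do not close.
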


\begin{proof}
In view of Lemma~\ref{lem:distArrowExist}, it remains to prove that each walk of~$F$ contains at most one distinguished arrow in each direction.
Assume that a walk~$\omega$ has two distinguished arrows~$\alpha, \beta$ pointing in the same direction.
Orient~$\omega$ in this same direction and assume without loss of generality that~$\alpha$ appears before~$\beta$ along~$\omega$.
Let~$\gamma$ be the incoming arrow at~$v \eqdef s(\beta)$ such that~$\gamma\beta \in I$.
Let~$\nu \eqdef \distinguishedWalk{\gamma}{F}$ denote the distinguished walk of~$F$ at~$\gamma$, considered oriented backward~$\gamma$.
We denote by~$\sigma$ the maximal common substring of~$\omega$ and~$\nu$ (it can be reduced to~$\varepsilon_v$).
Note that~$\sigma$ cannot contain~$\alpha$ since otherwise we would have~$\omega \prec_\alpha \nu$ contradicting the assumption that~$\omega = \distinguishedWalk{\alpha}{F}$. 
Since~$\omega$ and~$\nu$ are non-kissing, $\omega$ (resp.~$\nu$) enters~$\sigma$ with an incoming (resp.~outgoing) arrow.
Let~$\mu \eqdef \omega[\,\cdot\,, v] \, \nu[v, \cdot\,]$ be the walk connecting~$s(\omega)$ to~$t(\nu)$ via~$v$, using the prefix of~$\omega$ ending at~$v$ followed by the suffix of~$\nu$ starting at~$v$.
All these notations are illustrated in \fref{fig:pure}\,(right).
Note that $\omega \prec_\alpha \mu$ so that $\mu \notin F$ by maximality of~$\omega$ in~$\prec_\alpha$.
We claim that~$\mu$ is non-kissing with any walk of~$F$, contradicting the maximality~of~$F$.

Assume for the sake of contradiction that there is~$\lambda \in F$ such that~$\lambda$ and~$\mu$ are kissing.
Let~$\tau$ be a maximal common substring of~$\lambda$ and~$\mu$ where they kiss.
Since~$\lambda$ does not kiss~$\omega$ nor~$\nu$ (as they all belong to~$F$), we obtain that~$\tau$ contains~$v$ and strictly contains~$\sigma$.
We now distinguish two cases (see \fref{fig:pure}\,(right)):
\begin{enumerate}[(i)]
\item If~$\lambda$ passes through~$\beta$, then it enters~$\tau$ with an outgoing arrow (since it kisses~$\mu$), contradicting the maximality of~$\omega$~in~$\prec_\beta$.
\item If~$\lambda$ passes through~$\gamma$, then it enters~$\sigma$ with an incoming arrow (since~$\tau$ strictly contains~$\sigma$), contradicting the maximality of~$\nu$ in~$\prec_\gamma$. 
\end{enumerate}
We get a contradiction in all cases, therefore~$\mu$ is non-kissing with any other walk~of~$F$.
\begin{figure}[t]
	\capstart
	\centerline{\includegraphics[scale=1]{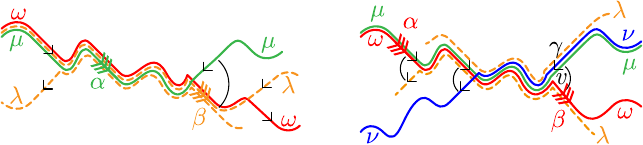}}
	\caption{A schematic illustration of the proofs of Lemma~\ref{lem:distArrowExist} and Proposition~\ref{prop:distinguishedArrows}: for each arrow~$\alpha$ on~$\omega$, there is a distinguished arrow on~$\omega$ in the direction of~$\alpha$ (left), and~$\omega$ cannot have two distinguished arrows~$\alpha, \beta$ in the same direction (right). See the proof of Proposition~\ref{prop:distinguishedArrows} for a detailed description.}
	\label{fig:pure}
\end{figure}
\end{proof}

\begin{corollary}
\label{coro:pure}
The non-kissing complex~$\NKC$ is pure of dimension~$|Q_0| + \Delta = 3|Q_0|-|Q_1|$. The reduced non-kissing complex~$\RNKC$ is pure of dimension~$|Q_0|$.
\end{corollary}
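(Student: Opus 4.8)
The plan is a double-counting argument over the incidences between the walks of a facet and their distinguished arrows, with Proposition~\ref{prop:distinguishedArrows} as the main input. Fix a non-kissing facet $F \in \NKC$. Recall from Definition~\ref{def: nKc} that no straight walk ever kisses or is kissed, so every straight walk of $\bar Q$ belongs to $F$; write $W_s$ for the (fixed, $F$-independent) set of straight walks, $s \eqdef |W_s|$, and let $b = b(F)$ be the number of bending walks of $F$, so that $|F| = s + b$. It suffices to show that $b$ does not depend on $F$, since then neither does $|F|$.

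Consider the set $D \eqdef \set{(\omega,\alpha)}{\omega \in F,\ \alpha \in \distinguishedArrows{\omega}{F}}$ of distinguished incidences, and count it in two ways. Summing over the walks $\omega \in F$: by Proposition~\ref{prop:distinguishedArrows} each bending walk of $F$ carries exactly two distinguished arrows and each straight walk exactly one, so $|D| = 2b + s$. Summing instead over the arrows $\alpha \in Q\blossom_1$: the straight walk through $\alpha$ lies in $F$, so $F_\alpha \ne \varnothing$, and therefore (faces being finite, Proposition~\ref{prop:nkFacetsAreFinite}) the distinguished walk $\distinguishedWalk{\alpha}{F} = \max_{\prec_\alpha} F_\alpha$ is well-defined; by definition it is the unique walk $\omega$ of $F$ with $\alpha \in \distinguishedArrows{\omega}{F}$, so each arrow contributes exactly $1$ and $|D| = |Q\blossom_1|$. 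Comparing, $2b = |Q\blossom_1| - s$, which is independent of $F$; hence $b$, and with it $|F| = s+b$, is the same for every facet, and $\NKC$ is pure.

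It remains to compute the dimension. Recall $|Q\blossom_1| = 4|Q_0| - |Q_1|$ and $\Delta = 2|Q_0| - |Q_1|$. A direct count gives $s = \Delta$: each of the $\Delta$ blossom vertices at which an incoming blossom arrow originates is the start of a unique maximal directed path in $\bar Q\blossom$ — the continuation is forced at every vertex of $Q_0$ by the gentle relations — and this is a bijection between those blossom vertices and the straight walks. Plugging in, $b = \tfrac{1}{2}\big((4|Q_0|-|Q_1|)-(2|Q_0|-|Q_1|)\big) = |Q_0|$, so every facet of $\NKC$ has $|F| = s + b = \Delta + |Q_0| = 3|Q_0|-|Q_1|$ walks, which is the asserted dimension. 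For the reduced complex, since every facet of $\NKC$ contains all straight walks and no straight walk kisses or is kissed, deleting the straight walks carries facets of $\NKC$ bijectively onto facets of $\RNKC$; hence $\RNKC$ is pure and each of its facets has $|F| - s = |Q_0|$ walks.

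The step requiring the most care is the arrow-side of the count, namely that $F_\alpha \ne \varnothing$ for \emph{every} $\alpha \in Q\blossom_1$ — equivalently that every arrow of $Q\blossom$ lies on a straight walk and that this straight walk belongs to $F$. This is automatic once $\bar Q$ has a relation in every oriented cycle, so that no arrow is trapped in a relation-free oriented cycle and $\distinguishedWalk{\alpha}{F}$ is defined for all $\alpha$; granting this, everything else is the bookkeeping of Proposition~\ref{prop:distinguishedArrows} together with the vertex and arrow counts of the blossoming quiver.
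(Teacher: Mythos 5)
Your argument is correct and follows the same double-counting strategy as the paper's: you enumerate the incidences $(\omega,\alpha)$ with $\alpha$ distinguished on $\omega\in F$, obtaining $2b+s$ from the walk side via Proposition~\ref{prop:distinguishedArrows} and $|Q\blossom_1|$ from the arrow side, then substitute $s=\Delta$. The closing caveat is more careful than necessary: for any gentle bound quiver, admissibility of $I$ (hence of $I\blossom$) already excludes relation-free oriented cycles, so the maximal directed path through any arrow of $Q\blossom_1$ is finite and terminates at two blossom vertices; as you noted, this straight walk lies in every facet, so $F_\alpha\neq\varnothing$ requires no extra hypothesis.
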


\begin{proof}
Let~$b$ (resp.~$s$) be the number of bending (resp.~straight) walks in a non-kissing facet~$F$ of~$\NKC$.
Since each straight walk uses two blossom arrows of~$\bar Q\blossom$, we have~$s = \Delta = 2|Q_0|-|Q_1|$.
From Proposition~\ref{prop:distinguishedArrows}, we obtain by double counting that
\[
2b+s = |Q\blossom_1| = 4|Q_0|-|Q_1|,
\qquad\text{thus}\qquad
b = |Q_0|
\qquad\text{and}\qquad
b+s = 3|Q_0|-|Q_1|.
\qedhere
\]
\end{proof}

\pagebreak

Since each bending walk in a non-kissing facet of~$\RNKC$ has two distinguished arrows, we can consider the substring they surround.

\begin{definition}
\label{def:distinguishedSubstring}
The \defn{distinguished string}~$\distinguishedString{\omega}{F}$ of a bending walk~$\omega$ in a non-kissing facet~$F$ of~$\NKC$ is the substring of~$\omega$ located in between the two distinguished arrows of~$\omega \in F$.
A string~${\sigma \in \strings^\pm(\bar Q)}$ is \defn{distinguishable} if there exists a walk~$\omega$ in a non-kissing facet~$F \in \NKC$ such that~$\sigma = \distinguishedString{\omega}{F}$ is the distinguished string of~$\omega \in F$.
We denote by~$\distinguishableStrings^\pm(\bar Q)$ the set of distinguishable strings~of~$\bar Q$.
\end{definition}

\begin{example}
For any~$v \in Q_1$, we have~$\distinguishedString{v_\peak}{F_\peak} = \distinguishedString{v_\deep}{F_\deep} = \varepsilon_v$.
\end{example}

Using material from Part~\ref{part:lattice}, we will characterize distinguishable strings of~$\bar Q$ in Section~\ref{subsec:distinguishableStrings}.
We close this section by a technical lemma about distinguished arrows, which will be helpful to understand the different possible situations in Proposition~\ref{prop:flip}.

\begin{lemma}
\label{lem:weirdDistinguishedArrows}
Consider a bending walk~$\omega$ in a non-kissing facet~$F \in \NKC$.
\begin{enumerate}[(i)]
\item If a distinguished arrow~$\alpha$ of~$\omega$ is a loop, then there is a string~$\sigma$ such that~${\omega = \sigma \alpha \sigma^{-1}}$. In particular, $\omega$ cannot be distinguished at two loops.
\item If the distinguished arrows~$\alpha, \beta$ of~$\omega$ are not loops, then~$\alpha \beta \notin I$ and~$\beta \alpha \notin I$, when these concatenations make sense.
\end{enumerate}
\end{lemma}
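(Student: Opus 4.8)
The plan is to determine the local shape of $\omega$ around its distinguished arrow(s) from the defining $\prec$-maximality together with the gentle relations, and then to invoke maximality of the facet $F$. For part~(i), observe first that a loop $\alpha$ at a vertex $v$ of $\bar Q$ satisfies $\alpha^2\in I$ (since $I$ is generated by paths of length two and $kQ/I$ is finite-dimensional), so the gentle conditions at $v$ in $\bar Q\blossom$ force $\gamma\alpha\notin I$ and $\alpha\delta\notin I$, where $\gamma$ (resp.~$\delta$) is the unique arrow of~$Q\blossom$ other than~$\alpha$ entering (resp.~leaving)~$v$. Reducedness and $\alpha^2\in I$ then leave any walk through~$\alpha$ with exactly one of the four local shapes $\gamma\alpha\delta$ (straight), $\gamma\alpha\gamma^{-1}$ (a deep at $t(\alpha)$), $\delta^{-1}\alpha\delta$ (a peak at $s(\alpha)$), or $\delta^{-1}\alpha\gamma^{-1}$; the last makes~$v$ occur consecutively as a peak and as a deep, hence is self-kissing and absent from $\NKC$. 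Moreover~$\alpha$ witnesses a kiss between any walk of shape $\gamma\alpha\gamma^{-1}$ and any walk of shape $\delta^{-1}\alpha\delta$. Thus $F_\alpha$ consists of the straight walk through~$\alpha$ — its $\prec_\alpha$-minimum — together with walks of a single one of these two shapes; as $\omega=\distinguishedWalk{\alpha}{F}$ is bending, it is not straight, so (say) it has shape $\gamma\alpha\gamma^{-1}$ at~$\alpha$, and we write $\omega=\omega_1\,\alpha\,\omega_2$ with $\omega_1$ ending in $\gamma$ and $\omega_2$ beginning with $\gamma^{-1}$.

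It remains to show $\omega_2=\omega_1^{-1}$. Put $\eta\eqdef\omega_1\,\alpha\,\omega_1^{-1}$ and $\eta'\eqdef\omega_2^{-1}\,\alpha\,\omega_2$; using $\gamma\alpha\notin I$, both are genuine walks of the announced form $\sigma\alpha\sigma^{-1}$ passing through~$\alpha$. Arguing as in the proof of Proposition~\ref{prop:distinguishedArrows}, one checks that $\eta$ and $\eta'$ are non-kissing with every walk of $F$: a walk of $F$ kissing $\eta$ would, upon transporting the witnessing common substring across the reflection of $\eta$ fixing~$\alpha$, kiss $\omega$ (or contradict the maximality defining a distinguished walk of $F$ in the boundary case where the witness abuts~$\alpha$). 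Maximality of the facet $F$ then puts $\eta$ and $\eta'$ in $F$, hence in $F_\alpha$; the decisive step is to analyse the total order $\prec_\alpha$ on $\eta$, $\eta'$ and $\omega$ — which coincide on the shape $\gamma\alpha\gamma^{-1}$ and hence respectively agree with $\omega$ along the whole of $\omega_1$ and along the whole of $\omega_2$ — and to deduce from $\omega=\max_{\prec_\alpha}F_\alpha$ that $\omega=\eta=\eta'$, i.e.\ $\omega_2=\omega_1^{-1}$. Finally, if $\omega$ were distinguished at two distinct loops $\alpha$ and $\alpha'$, it would be of the form $\sigma\alpha\sigma^{-1}$ and also $\sigma'\alpha'\sigma'^{-1}$, hence invariant under two distinct reflections, hence periodic — impossible for a finite walk joining two blossom vertices.

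For part~(ii), write $\xi\eqdef\distinguishedString{\omega}{F}$, so $\omega=\cdots\alpha^{\varepsilon_\alpha}\,\xi\,\beta^{\varepsilon_\beta}\cdots$ with $\alpha^{\varepsilon_\alpha}$, $\beta^{\varepsilon_\beta}$ the two distinguished-arrow occurrences. By Proposition~\ref{prop:distinguishedArrows} these point in opposite directions, which excludes equal signs and leaves two configurations: (P)~$\varepsilon_\alpha=-1$, $\varepsilon_\beta=+1$, with $s(\alpha)$, $s(\beta)$ the endpoints of $\xi$ and $\xi$ a top substring of $\omega$; or (D)~$\varepsilon_\alpha=+1$, $\varepsilon_\beta=-1$, with $t(\alpha)$, $t(\beta)$ the endpoints of $\xi$ and $\xi$ a bottom substring. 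As reversing $\bar Q$ swaps (P) and (D) (Remark~\ref{rem:reverseBlossomingQuiver}), we may assume~(P). If $\xi=\varepsilon_v$ is trivial then $v=s(\alpha)=s(\beta)$, so neither $\alpha\beta$ nor $\beta\alpha$ is a composable path unless~$\alpha$ or~$\beta$ is a loop at~$v$, contrary to hypothesis; hence both composites vanish and the claim holds. If $\xi$ is non-trivial with endpoints $p=s(\alpha)$ and $q=s(\beta)$, then $\alpha\beta$ is composable only if~$\alpha$ is a (non-loop) arrow $p\to q$ and $\beta\alpha$ only if~$\beta$ is a (non-loop) arrow $q\to p$; I claim neither composite is in $I$. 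Assuming $\alpha\beta\in I$ with $\alpha\colon p\to q$: reducedness of $\omega$ together with $\alpha\beta\in I$ forces both the arrow of $\xi$ incident to~$q$ and the last arrow of the portion of $\omega$ preceding $\alpha^{-1}$ to be among two prescribed arrows of~$Q\blossom$ at~$q$; combining with the two gentle conditions at~$q$ and with the $\prec_\alpha$-maximality of $\omega$ — against which one plays the walk obtained by deleting the subword $\xi\beta$ from $\omega$, once checked to be a well-defined walk non-kissing with $F$ — produces a contradiction. The case $\beta\alpha\in I$ is symmetric (using $\prec_\beta$), and configuration (D) follows by reversing $\bar Q$.

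The main obstacle is the fine analysis of the countercurrent order $\prec_\alpha$ closing part~(i) — showing that the symmetrized walks and $\omega$ must actually coincide, which is delicate because a naive comparison only yields $\omega\succeq_\alpha\eta$ — and the analogous sub-case bookkeeping closing part~(ii) when $\xi$ is a non-trivial string whose two endpoints are joined by a parallel ``shortcut'' arrow; in both places reducedness, the gentle conditions, and $\prec$-maximality have to be brought to bear simultaneously.
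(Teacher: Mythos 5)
Your proposal takes a genuinely different route from the paper, and in both parts the decisive step is left unresolved — as you candidly flag yourself at the end.

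For part~(i), you set up the local picture correctly (the four shapes at a loop, the incompatibility of $\gamma\alpha\gamma^{-1}$ with $\delta^{-1}\alpha\delta$, the exclusion of $\delta^{-1}\alpha\gamma^{-1}$ by self-kissing), and you try to conclude by producing the symmetrized walks $\eta=\omega_1\alpha\omega_1^{-1}$ and $\eta'=\omega_2^{-1}\alpha\omega_2$, forcing them into $F$, and then invoking $\prec_\alpha$-maximality of $\omega$. Two things go wrong. First, the non-kissing of $\eta$ with all of $F$: your reflection-transport argument is fine for a kissing witness strictly inside $\omega_1$ or its mirror, but it does not apply when the witness straddles $\alpha$ (it then has the form $\rho\alpha\rho^{-1}$ with $\rho$ a suffix of $\omega_1$), because the mirror of this substring in $\omega$ would require the matching prefix of $\omega_2$ to equal $\rho^{-1}$ — precisely what you are trying to prove. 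Second, even granting $\eta\in F_\alpha$, maximality of $\omega$ under $\prec_\alpha$ only yields $\eta\preceq_\alpha\omega$; it does not force $\eta=\omega$. You name this as the main obstacle, but the resolution is absent. The paper sidesteps both difficulties: instead of building a new walk, it writes $\omega=\rho\,\sigma\alpha\sigma^{-1}\,\tau$ with $\sigma$ the maximal palindromic factor around $\alpha$, assumes $\rho,\tau$ nonempty, picks the arrow $\gamma$ of $\omega$ just outside this palindrome, and then uses the distinguished walk $\lambda=\distinguishedWalk{\gamma}{F}$ and the non-kissing of $\lambda$ with $\omega$ to derive $\lambda\prec_\gamma\omega$ — a direct contradiction with $\lambda$ being the $\prec_\gamma$-maximum. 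The key move you are missing is to switch the arrow at which one maximizes from $\alpha$ to the nearby arrow $\gamma$ at the boundary of the palindrome; this makes the contradiction immediate and avoids the one-sided inequality you run into at $\alpha$.

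For part~(ii), the same structural issue recurs in a sharper form. Your plan is to compare $\omega$ against ``the walk obtained by deleting the subword $\xi\beta$,'' but $\omega_1\alpha^{-1}\omega_2$ is not a concatenable string in general: the source of $\omega_2$ is $t(\beta)$, while the target of $\alpha^{-1}$ is $p=s(\alpha)$, and there is no reason for these to coincide. (Deleting $\alpha^{-1}\xi$ instead does give a concatenable word, but that is not what you wrote, and the rest of the argument is left as ``produces a contradiction.'') The paper again decomposes $\omega=\rho\,\alpha\,\sigma\tau\,\beta^{-1}\sigma\,\xi$ with $\sigma$ the maximal string that appears on both sides of $\beta^{-1}$, takes the first arrow $\gamma$ of $\xi$ (pointing into $\sigma$ because $\omega$ is not self-kissing), and derives $\lambda\prec_\gamma\omega$ for $\lambda=\distinguishedWalk{\gamma}{F}$ — exactly the same device as in~(i). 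No auxiliary walk is manufactured and no delicate equality in $\prec_\alpha$ needs to be forced.

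In short: correct local analysis, but a genuine gap in both parts where the proof must actually close, and the missing idea — pass to the distinguished walk of $F$ at the arrow adjacent to the maximal common/palindromic factor and contradict its $\prec_\gamma$-maximality — is what the paper supplies.
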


\begin{proof}
For~(i), write~$\omega = \rho \sigma \alpha \sigma^{-1} \tau$ for some strings~$\rho, \sigma, \tau$ with~$\sigma$ maximal, and assume that~$\rho$ and~$\tau$ are non-empty.
Then the last arrow~$\gamma$ of~$\rho$ points towards~$\sigma$ (otherwise, by maximality of~$\sigma$, the first arrow of~$\tau$ would point outwards~$\sigma$ so that~$\omega$ would be self-kissing).
Let~$\lambda$ be the distinguished walk of~$F$ at~$\gamma$ and let~$\xi$ be the substring of~$\sigma$ contained in~$\lambda$.
Since~$\omega$ is distinguished at~$\alpha$, and since~$\alpha$ is a loop, $\xi$ is a strict substring of~$\sigma$.
Since~$\omega$ and~$\lambda$ are non-kissing, $\lambda$ (resp.~$\omega$) leaves~$\xi$ with an outgoing (resp.~incoming) arrow.
This implies that~$\lambda \prec_\gamma \omega$, a contradiction.
This shows that~$\rho = \varnothing = \tau$~and~$\omega = \sigma \alpha \sigma^{-1}$.

For~(ii), assume for example that~$\alpha\beta \in I$ and that~$\omega = \rho \alpha \sigma \tau \beta^{-1} \sigma \xi$ for some strings~$\rho, \sigma, \tau, \xi$ with~$\sigma$ maximal (we leave it to the reader to check that the other cases are similar).
Since~$\omega$ is not self-kissing, the first arrow~$\gamma$ of~$\xi$ points towards~$\sigma$.
Let~$\lambda$ be the distinguished walk of~$F$ at~$\gamma$ and let~$\zeta$ be the substring of~$\sigma$ contained in~$\lambda$.
Since~$\omega$ is distinguished at~$\alpha$ and~$\beta$, $\tau$ is a strict substring of~$\sigma$.
Since~$\omega$ and~$\lambda$ are non-kissing, $\lambda$ (resp.~$\omega$) leaves~$\zeta$ with an outgoing (resp.~incoming) arrow.
This implies that~$\lambda \prec_\gamma \omega$, a contradiction.
This shows~(ii).
\end{proof}

\subsection{Flips}
\label{subsec:flips}

\begin{figure}[t]
	\capstart
	\centerline{\includegraphics[scale=.7]{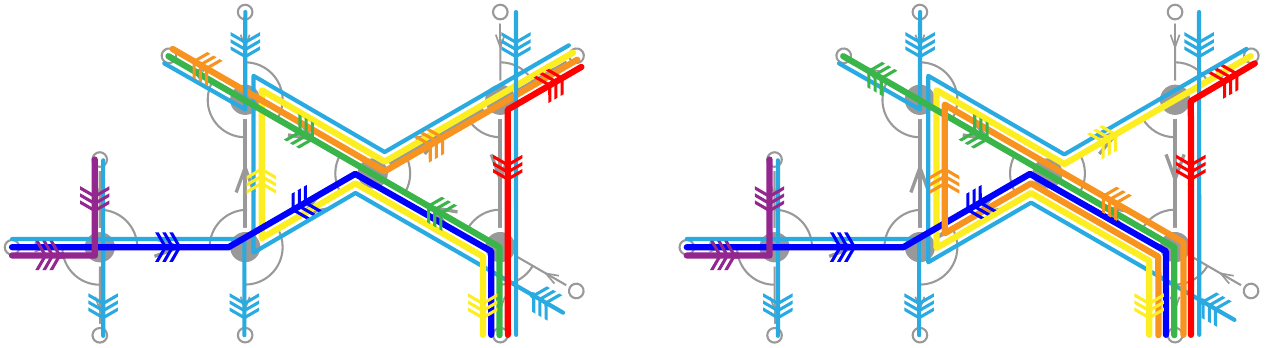}}
	\caption{The flip of the orange walk~$\omega$ in the facet~$F$ of \fref{fig:exmFacet}. The other two walks~$\mu, \nu$ of~$F$ involved in the flip are the yellow and green walks. The distinguished arrows on each walk of~$F$ are marked with triple arrows.}
	\label{fig:exmFlip}
\end{figure}

We now show that the non-kissing complex~$\RNKC$ is \defn{thin}, \ie that any codimension~$1$ non-kissing face of~$\RNKC$ is contained in exactly two non-kissing facets of~$\RNKC$.
Here again, it is more convenient to work with the unreduced non-kissing complex~$\NKC$.
We adapt the proof of~\cite[Thm~3.2\,(3)]{McConville}.
The following statement is schematized in \fref{fig:flip}.

\pagebreak

\begin{proposition}
\label{prop:flip}
Consider a non-kissing facet~$F \in \NKC$ and a bending walk~$\omega \in F$.
Let~$\alpha$ and~$\beta$ be the distinguished arrows of~$\omega$, and~$\sigma$ be the distinguished substring of~$\omega$, which splits~$\omega$ into~$\omega = \rho \sigma \tau$.
Let~$\alpha'$ and~$\beta'$ be the other two arrows of~$Q_1\blossom$ incident to the endpoints of~$\sigma$ and such that~$\alpha'\alpha \in I$ or~$\alpha\alpha' \in I$, and $\beta'\beta \in I$ or~$\beta\beta' \in I$.
Let~$\mu \eqdef \distinguishedWalk{\alpha'}{F \ssm \{\omega\}}$ and~$\nu \eqdef \distinguishedWalk{\beta'}{F \ssm \{\omega\}}$ be the distinguished walks of~$F \ssm \{\omega\}$ at~$\alpha'$ and~$\beta'$ respectively.
Then
\begin{enumerate}[(i)]
\item The walk~$\mu$ splits into~$\mu = \rho' \sigma \tau$ and the walk~$\nu$ splits into~$\nu = \rho \sigma \tau'$.
\item The walk~$\omega' \eqdef \rho' \sigma \tau'$ is kissing~$\omega$ but no other~walk~of~$F$. Moreover, $\omega'$ is the only other (undirected) walk besides~$\omega$ which is not kissing any other walk of~$F \ssm \{\omega\}$.
\end{enumerate}
\end{proposition}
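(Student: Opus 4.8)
The plan is to set up the geometry around the distinguished substring $\sigma$ of $\omega$ very carefully, exactly as McConville does in~\cite[Thm~3.2\,(3)]{McConville}, and then to run two ``sliding'' arguments — one at each end of $\sigma$ — to identify $\mu$ and $\nu$, followed by a non-kissing verification for the candidate $\omega'$. First I would fix orientations: orient $\omega = \rho\,\sigma\,\tau$ so that $\alpha$ is the last arrow of $\rho$ and $\beta^{-1}$ the first arrow of $\tau$ (using Lemma~\ref{lem:weirdDistinguishedArrows} to know $\alpha\beta\notin I$, $\beta\alpha\notin I$ when neither is a loop, and to handle the loop case by the explicit form $\omega=\sigma\alpha\sigma^{-1}$ there separately). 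The arrows $\alpha'$ and $\beta'$ are then the unique arrows of $Q_1\blossom$ completing $\sigma$ at its two endpoints in the ``other'' direction, forced by the gentle conditions and the relations $\alpha'\alpha\in I$ (or $\alpha\alpha'\in I$) and $\beta'\beta\in I$ (or $\beta\beta'\in I$). Since $F$ contains the straight walk through $\alpha'$ (straight walks lie in every facet, by Definition~\ref{def: nKc}), $F\ssm\{\omega\}$ still contains it, so $\mu\eqdef\distinguishedWalk{\alpha'}{F\ssm\{\omega\}}$ and $\nu\eqdef\distinguishedWalk{\beta'}{F\ssm\{\omega\}}$ are well-defined by Lemma~\ref{lem:distArrowExist} and Proposition~\ref{prop:nkFacetsAreFinite}.

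For part (i), I would argue that $\mu$, being the most countercurrent walk of $F\ssm\{\omega\}$ at $\alpha'$, must agree with $\tau$ to the right of $\sigma$. Concretely: let $\mu = \rho'\,\xi\,?$ where $\xi$ is the maximal common substring of $\mu$ and $\omega$ containing $\sigma$; I claim $\xi\supseteq\sigma$ and in fact $\xi$ extends past the $\sigma\tau$ junction all the way along $\tau$. The key point is that if $\mu$ split from $\omega$ somewhere inside $\tau$, then at that splitting vertex we would get either a kiss between $\mu$ and $\omega$ (impossible, since we will show $\omega'$ — not $\mu$ — is the unique kisser), or else a walk more countercurrent at $\beta'$ than $\nu$, or a violation of the maximality of $\nu$ in $\prec_{\beta'}$ — this is the same bookkeeping as in the proof of Proposition~\ref{prop:distinguishedArrows}, using that $\omega$ is distinguished at $\beta$ in $F$ so nothing in $F\ssm\{\omega\}$ can out-countercurrent it at $\beta$. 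Symmetrically $\nu = \rho\,\sigma\,\tau'$. The bend of $\mu$ and $\nu$: $\mu$ enters $\sigma$ from the $\alpha'$ side in the opposite direction to $\omega$, so $\mu$ bends at (or near) the left endpoint of $\sigma$; dually for $\nu$.

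For part (ii), I would first check $\omega'\eqdef\rho'\,\sigma\,\tau'$ is genuinely a walk (a reduced maximal string in $\bar Q\blossom$): reducedness at the two junctions follows because $\alpha'$ and $\beta'$ go in the opposite direction to $\alpha,\beta$ while respecting the relations, so no $\gamma\gamma^{-1}$ appears and no relation is created — here I invoke Lemma~\ref{lem:weirdDistinguishedArrows}(ii) to rule out the degenerate relation configurations. Then I would show $\omega'$ kisses $\omega$: the substring $\sigma$ is on top of exactly one of $\omega,\omega'$ and at the bottom of the other (the bends at the ends of $\sigma$ got flipped), giving a kiss at $\sigma$. Next, $\omega'$ does not kiss any walk $\lambda\in F\ssm\{\omega\}$: if it did, at a maximal common kissing substring $\tau_0$, then since $\lambda$ does not kiss $\mu=\rho'\sigma\tau$ nor $\nu=\rho\sigma\tau'$ (all in $F$), $\tau_0$ must straddle $\sigma$ and use one of $\alpha,\beta$; passing through $\alpha$ (resp.\ $\beta$) then contradicts the maximality of $\omega=\distinguishedWalk{\alpha}{F}$ (resp.\ of $\omega=\distinguishedWalk{\beta}{F}$) in the corresponding countercurrent order — this is exactly the two-case analysis at the end of the proof of Proposition~\ref{prop:distinguishedArrows}. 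Also $\omega'$ is not self-kissing, by the same argument applied to the two halves of $\omega'$ together with the non-self-kissing of $\omega$. Finally, uniqueness: suppose $\omega''$ is any walk with $F\ssm\{\omega,\omega''\}$ still non-kissing after replacing; then $\omega''$ must kiss $\omega$ (else $F\cup\{\omega''\}$ contradicts maximality), the kiss must occur at some substring containing a distinguished arrow $\alpha$ or $\beta$ of $\omega$ (otherwise the kiss already involves a non-distinguished part, contradicting $\omega$ being distinguished there against $\omega''\in F\ssm\{\omega\}$... — one runs the $\prec_\alpha$/$\prec_\beta$ maximality argument once more), and then tracing $\omega''$ from that distinguished arrow forces $\omega''$ to follow the cohook structure of $\mu$ on one side and $\nu$ on the other, i.e.\ $\omega''=\rho'\sigma\tau'=\omega'$.

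\textbf{Main obstacle.} The hard part will be the bookkeeping in part (i): showing $\mu=\rho'\sigma\tau$ and $\nu=\rho\sigma\tau'$ rather than some strictly longer or shorter coincidence with $\omega$. This requires carefully ruling out, at every potential splitting vertex between $\omega$ and $\mu$ (resp.\ $\nu$), three competing bad scenarios — a hidden kiss, a failure of $\nu$'s (resp.\ $\mu$'s) maximality, and a failure of $\omega$'s maximality at $\beta$ (resp.\ $\alpha$) in $F$. Keeping straight which walk is more countercurrent at which arrow, and doing this uniformly across the loop and relation-degenerate cases flagged by Lemma~\ref{lem:weirdDistinguishedArrows}, is where the real care goes; the schematic Figure~\ref{fig:flip} is essentially the proof, and the text just has to verify each arrow points the way the picture says it does.
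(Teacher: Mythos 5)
Your overall plan (follow McConville's flip argument, track distinguished arrows and countercurrent orders, handle the loop/relation degeneracies via Lemma~\ref{lem:weirdDistinguishedArrows}) is the right one, and it matches the source the paper also adapts. But there is a genuine gap in the plan for part~(i), and the uniqueness step in~(ii) is also off track.

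For~(i), your sketch begins ``let $\mu = \rho'\,\xi\,?$ where $\xi$ is the maximal common substring of $\mu$ and $\omega$ containing $\sigma$'' and then rules out splits ``somewhere inside $\tau$.'' This already presupposes that $\sigma$ is a common substring of $\mu$ and $\omega$, which is not given: a priori $\mu$ enters $v$ via $\alpha'$ and could exit $v$ via the fourth arrow at $v$, sharing only the vertex $v$ with $\omega$ and never entering $\sigma$ at all. Your proposed case analysis (``a kiss between $\mu$ and $\omega$ \dots or else a walk more countercurrent at $\beta'$ than $\nu$'') does not cover this, and the first alternative is vacuous anyway since $\mu,\omega\in F$ cannot kiss. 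Crucially, you never invoke the \emph{maximality of the facet} $F$, which is the engine of the proof. The paper's route is to leave $\mu$ alone at first and instead build the hybrid walk $\lambda \eqdef \rho'\sigma\tau$ by splicing the $\rho'$-side of $\mu$ with the $\sigma\tau$-side of $\omega$; one then shows $\lambda$ is non-kissing with every walk of $F$ (a two-case argument at the endpoint $v$, using the maximality of $\omega$ in $\prec_\alpha$ and of $\mu$ in $\prec_{\alpha'}$), concludes $\lambda\in F$ by maximality of $F$, and finally proves $\lambda=\mu$. It is this last identification that forces $\mu$ to contain all of $\sigma\tau$; trying to derive the shape of $\mu$ directly, as your plan does, leaves the $\sigma\not\subseteq\mu$ case unaddressed.

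For the uniqueness in~(ii), your tracing argument (``contradicting $\omega$ being distinguished there against $\omega''\in F\ssm\{\omega\}$'') does not parse, since the competing walk $\omega''$ is not a member of $F$. The paper's cleaner route is to note that in $F\ssm\{\omega\}$, the walk $\mu$ is maximal for both $\prec_{\alpha'}$ and $\prec_\beta$ and $\nu$ for both $\prec_\alpha$ and $\prec_{\beta'}$, so in the new facet $G=F\symdif\{\omega,\lambda\}$ the replacing walk $\lambda$ must pick up one distinguished arrow from $\{\alpha',\beta\}$ and one from $\{\alpha,\beta'\}$ (each walk in a facet carries exactly two). Since $\lambda$ cannot contain both $\alpha,\alpha'$ or both $\beta,\beta'$ (by the relations), this forces $\distinguishedArrows{\lambda}{G}=\{\alpha,\beta\}$ (whence $\lambda=\omega$) or $\{\alpha',\beta'\}$ (whence $\lambda=\omega'$), finishing the proof.
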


\begin{proof}
Let~$v$ (resp.~$w$) be the endpoint of~$\sigma$ incident to~$\alpha$ and~$\alpha'$ (resp.~to~$\beta$ and~$\beta'$).
The walk~$\mu$ splits into~$\rho' \mu'$ where~$t(\rho') = v$ and~$\rho'$ finishes with the marked occurrence of~$\alpha'$ along~$\mu$.
Consider the walk~$\lambda \eqdef \rho' \sigma \tau$.
We claim that~$\lambda$ is non-kissing with all walks of~$F$.
Indeed, assume that there is a walk~$\theta \in F$ kissing~$\lambda$, and let~$\xi$ denote a maximal common substring of~$\lambda$ and~$\theta$ where they kiss.
Since~$\theta$ does not kiss~$\mu$ nor~$\omega$, the substring~$\xi$ must contain~$v$.
We now distinguish two cases:
\begin{itemize}
\item If~$\theta$ contains~$\alpha$, then it enters~$\xi$ with an outgoing arrow and thus leaves~$\xi$ with an outgoing arrow as well, thus contradicting the maximality of~$\omega$ for~$\prec_\alpha$.
\item If~$\theta$ contains~$\alpha'$, then it enters~$\sigma\tau$ with an incoming arrow and thus leaves~$\sigma\tau$ with an outgoing arrow (otherwise it would kiss~$\omega$). Therefore, it enters and leaves~$\xi$ with an outgoing arrow, thus contradicting the maximality of~$\mu$~for~$\prec_{\alpha'}$.
\end{itemize}
Since both cases raise a contradiction, we obtained that~$\lambda$ is non-kissing with all walks of~$F$.
Since~$F$ is a maximal non-kissing set of walks, we obtain that~$\lambda \in F$.
Assume for contradiction that~$\lambda \ne \mu$, and let~$\xi$ be the maximal common substring of~$\mu$ and~$\omega$ containing~$v$.
Since~$\mu$ and~$\omega$ are non-kissing, and~$\mu$ enters~$\xi$ with an incoming arrow, it leaves~$\xi$ with an incoming arrow as well.
This implies that~$\mu \prec_\alpha \lambda$, contradicting the maximality of~$\mu$ for~$\prec_\alpha$.
We conclude that~$\lambda = \mu$, so that the walk~$\mu$ splits into~$\rho' \sigma \tau$ and similarly the walk~$\nu$ splits into~${\rho \sigma \tau'}$.
In particular, the string~$\sigma$ is common to~$\omega$ and~$\mu$ and~$\nu$.

We now show that no walk of~$F \ssm \{\omega\}$ can kiss~$\omega'$.
Let~$\lambda$ be a walk of~$F$ kissing~$\omega'$, and let~$\xi$ denote a maximal common substring of~$\lambda$ and~$\omega'$ where they kiss.
Since~$\lambda \in F$, it does not kiss~$\mu$ or~$\nu$, which implies that~$\xi$ contains~$\sigma$.
Assume first that~$\lambda$ enters~$\sigma$ at~$v$ by the arrow~$\alpha'$.
Since~$\lambda$ and~$\omega$ are non-kissing, it forces~$\lambda$ to leave~$\sigma$, and thus~$\xi$, at~$w$ by the arrow~$\beta$.
For~$\lambda$ and~$\omega'$ to kiss, $\lambda$ thus enters~$\xi$ with an arrow in the opposite direction of~$\beta$, which contradicts the maximality of~$\mu$ for~$\prec_{\alpha'}$.
By symmetry, we obtain that~$\lambda$ contains the arrows~$\alpha$ and~$\beta$.
Now by maximality of~$\omega$ for both orders~$\prec_\alpha$ and~$\prec_\beta$, the walks~$\lambda$ and~$\omega$ of~$F$ cannot split before~$\alpha$ or after~$\beta$.
We conclude that~$\lambda = \omega$.

Finally, we prove that~$\omega'$ is the only walk distinct from~$\omega$ which is non-kissing with any walk of~$F \ssm \{\omega\}$.
Let~$\lambda$ be a walk such that~$G = F \symdif \{\omega, \lambda\}$ is non-kissing.
Note that in the face~$F \ssm \{\omega\}$, the walk~$\mu$ is maximal for both~$\prec_{\alpha'}$ and~$\prec_\beta$, while the walk~$\nu$ is maximal for both~$\prec_\alpha$ and~$\prec_{\beta'}$.
Therefore, in the facet~$G$, the walk~$\lambda$ should have a distinguished arrow among~$\{\alpha', \beta\}$ and a distinguished arrow among~$\{\alpha, \beta'\}$.
Since~$\lambda$ cannot contain~$\{\alpha, \alpha'\}$ or~$\{\beta, \beta'\}$, this imposes that the distinguished arrows of~$\lambda$ are either~$\{\alpha, \beta\}$, or~$\{\alpha', \beta'\}$.
Assume for example that~$\lambda$ is distinguished at~$\{\alpha, \beta\}$.
If~$\lambda \ne \omega$, then~$\lambda$ and~$\omega$ are kissing by Corollary~\ref{coro:pure}.
This is incompatible with the maximality of~$\lambda$ for both~$\prec_\alpha$ and~$\prec_\beta$.
We conclude that~$\lambda = \omega$.
We obtain similarly that if~$\lambda$ is distinguished at~$\{\alpha', \beta'\}$, then~$\lambda = \omega'$.
\end{proof}

\begin{remark}
\label{rem:changeDistinguishedArrows}
For latter purposes, observe that one can update distinguished arrows when transforming~$F$ to~$F'$:
\begin{itemize}
\item On~$\omega$ and~$\omega'$, we always have
\[
\omega = \distinguishedWalk{\alpha}{F} = \distinguishedWalk{\beta}{F}
\qquad\text{and}\qquad
\omega' = \distinguishedWalk{\alpha'}{F'} = \distinguishedWalk{\beta'}{F'}.
\]

\item On~$\mu$ and~$\nu$, be aware that two different situations can happen:
\begin{itemize}
\item If~$\alpha = \alpha'$ is a loop, then it follows from Lemma~\ref{lem:weirdDistinguishedArrows} that
\[
\mu = \nu = \distinguishedWalk{\alpha}{F \ssm \{\omega\}} = \distinguishedWalk{\beta'}{F} = \distinguishedWalk{\beta}{F'}.
\]
(The situation is symmetric when~$\beta = \beta'$ is a loop).
\item If neither~$\alpha$ nor~$\beta$ are loops, then~$\alpha, \beta, \alpha', \beta'$ are all distinct by Lemma~\ref{lem:weirdDistinguishedArrows} and
\[
\mu = \distinguishedWalk{\alpha'}{F} = \distinguishedWalk{\beta}{F'}
\qquad\text{and}\qquad
\nu = \distinguishedWalk{\alpha}{F'} = \distinguishedWalk{\beta'}{F}.
\]
Note that it can still happen that~$\mu = \nu$ or that~$\mu$ or~$\nu$ are straight walks.
\end{itemize}
\item Finally, all the other distinguished arrows on the walks of~$F \cap F'$ are preserved.
\end{itemize}
\end{remark}


\begin{corollary}
The non-kissing complex~$\RNKC$ is a pseudomanifold without boundary.
\end{corollary}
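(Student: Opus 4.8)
The plan is to read off the corollary from the two structural results just established, since for this complex being a \emph{pseudomanifold without boundary} amounts to being pure and thin, \ie every codimension-one face is contained in exactly two facets (with, on the stronger convention, the dual graph connected). \textbf{Purity} is precisely Corollary~\ref{coro:pure}: $\RNKC$ is pure of dimension~$|Q_0|$, so every face lies in a facet and every facet consists of $|Q_0|+1$ bending walks.

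For \textbf{thinness}, let~$G$ be a codimension-one face of~$\RNKC$. By purity, $G$ lies in a facet of~$\RNKC$, which by definition comes from a non-kissing facet~$F\in\NKC$ by deleting the straight walks of~$F$; hence $G$ is obtained from such an~$F$ by deleting its straight walks and exactly one bending walk~$\omega\in F$, so that $F\ssm\{\omega\} = G\cup S_F$, where~$S_F$ denotes the set of straight walks of~$F$. A facet of~$\RNKC$ containing~$G$ is, again by purity, a set~$G\cup\{\lambda\}$ with~$\lambda$ a walk non-kissing with every walk of~$G$; since no straight walk ever kisses any walk (Definition~\ref{def: nKc}), this is the same as requiring that~$\lambda$ be non-kissing with every walk of~$F\ssm\{\omega\}$. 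Now Proposition~\ref{prop:flip}(ii) tells us that there are exactly two such walks: $\omega$ itself and the flipped walk~$\omega'\eqdef\rho'\sigma\tau'$; and~$\omega'\ne\omega$ because~$\omega'$ kisses~$\omega$. It remains to check that~$\omega'$ actually belongs to the reduced complex, \ie that~$\omega'$ is bending and not straight: by Remark~\ref{rem:changeDistinguishedArrows} the arrows~$\alpha'$ and~$\beta'$ are two distinguished arrows of~$\omega'$ in the flipped facet, so~$\omega'$ is bending by Proposition~\ref{prop:distinguishedArrows}. Therefore~$G$ is contained in exactly the two facets~$G\cup\{\omega\}$ and~$G\cup\{\omega'\}$ of~$\RNKC$, which is the desired thinness.

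Putting the two together, $\RNKC$ is a pure, thin simplicial complex, hence a pseudomanifold without boundary. (If one additionally wants the dual, or flip, graph to be connected, it is enough to flip any facet step by step towards a reference facet, \eg the peak facet~$F_\peak$, using Proposition~\ref{prop:flip}.) I do not expect a real obstacle in this argument: all the substance is already in Corollary~\ref{coro:pure} and Proposition~\ref{prop:flip}, and the only points requiring a little care are the bookkeeping between~$\RNKC$ and~$\NKC$ and the verification that the flip partner~$\omega'$ is again a bending walk distinct from~$\omega$.
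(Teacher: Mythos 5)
Your proof is correct and follows the paper's implicit argument: purity is Corollary~\ref{coro:pure}, thinness is Proposition~\ref{prop:flip}\,(ii) after observing that codimension-one faces of $\RNKC$ correspond to deleting a bending walk from a facet of $\NKC$ (the straight walks being present in every facet). One small simplification worth noting: to see that the flip partner $\omega'$ is bending, you can skip Remark~\ref{rem:changeDistinguishedArrows} and Proposition~\ref{prop:distinguishedArrows} entirely, since Proposition~\ref{prop:flip}\,(ii) already states that $\omega'$ kisses $\omega$, while straight walks never kiss any walk (Definition~\ref{def: nKc}); this also sidesteps the need to worry about whether $\alpha'$ and $\beta'$ could coincide in the loop case.
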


\begin{definition}
\label{def:flip}
The non-kissing facet~$F' \eqdef F \symdif \{\omega, \omega'\}$ is obtained from~$F$ by the \defn{flip} of~$\omega$.
We say that the flip is \defn{supported} by the common substring~$\sigma = \distinguishedString{\omega}{F} = \distinguishedString{\omega'}{F'}$ of~$\omega$ and~$\omega'$.
The facets~$F$ and~$F'$ are \defn{adjacent facets}.
The (non-kissing) \defn{flip graph} of~$\bar Q$ is the dual graph~$\NKG$ of~$\RNKC$, whose vertices are the non-kissing facets of~$\RNKC$ and whose edges are pairs of adjacent non-kissing facets.
\end{definition}

\begin{example}
\fref{fig:exmFlip} illustrates a flip in the non-kissing complex of the quiver of \fref{fig:exmBlossomingQuiver}.

\begin{figure}[t]
	\capstart
	\centerline{\includegraphics[scale=1]{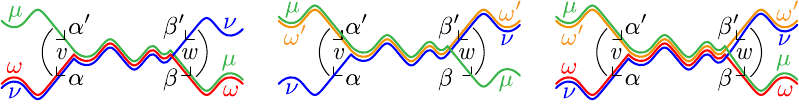}}
	\caption{A schematic representation of flips in the non-kissing complex (see Proposition~\ref{prop:flip}). The flip exchanges the walk~$\omega = \rho \sigma \tau$ in the facet~$F$ (left) with the walk~$\omega' = \rho' \sigma \tau'$ in the facet~$F'$ (middle), using the walks~$\mu = \rho' \sigma \tau = \distinguishedWalk{\alpha'}{F \ssm \{\omega\}}$ and~$\nu = \rho \sigma \tau' = \distinguishedWalk{\beta'}{F \ssm \{\omega\}}$. The walk~$\omega'$ kisses~$\omega$ but no other walk of~$F$ (right).}
	\label{fig:flip}
\end{figure}
\end{example}


\begin{definition}
\label{def:increasingFlip}
\enlargethispage{.3cm}
If the common substring~$\sigma$ of~$\omega$ and~$\omega'$ is on top of~$\omega$ and at the bottom of~$\omega'$, then we say that the flip~$F \to F'$ is \defn{increasing} and that the common substring~$\sigma$ is an \defn{ascent} of~$F$ and a \defn{descent} of~$F'$.
We denote by~$\ascents(F) \subseteq \strings^\pm(\bar Q)$ and $\descents(F) \subseteq \strings^\pm(\bar Q)$ the sets of ascents and descents of~$F$ respectively.
Otherwise,~$\sigma$ is a descent of~$F$ and an ascent of~$F'$ and the flip is called \defn{decreasing}.
\end{definition}

This provides a natural orientation of the non-kissing flip graph~$\NKG$.
We still denote by~$\NKG$ the oriented flip graph.
The purpose of Part~\ref{part:lattice} is to show that the non-kissing oriented flip graph is the Hasse diagram of a congruence-uniform lattice when~$\RNKC$ is finite.

\begin{example}
The non-kissing oriented flip graph is represented in \fref{fig:all2} for all connected gentle bound quivers with~$2$ vertices and in \fref{fig:exmNKC}\,(left) for two gentle bound quivers with~$3$ vertices.
In all pictures, graphs are oriented bottom-up.
\end{example}

\begin{example}
Oriented flip graphs were studied carefully in~\cite{GarverMcConville, MannevillePilaud-accordion, McConville} for the dissection and grid bound quivers introduced in Section~\ref{subsec:dissectionGridQuivers}.
Note that when the quiver is an oriented path~$\exmAn$, a flip in a facet of the non-kissing complex~$\NKC[\exmAn]$ corresponds to the classical flip of a diagonal in the corresponding triangulation (see \fref{fig:exmBijectionAssociahedron}).
Moreover, the flip is increasing if and only if the slope of the diagonal increases along the flip.
\end{example}

\begin{remark}
\label{rem:reverseFlipGraph}
Reversing all arrows reverses the orientation of the flips: $\NKG[\reversed{\bar Q}] = \reversed{\NKG[\bar Q]}$.
\end{remark}

%
%


\section{Non-kissing complexes versus support $\tau$-tilting complexes}
\label{sec:nkcvsttc}

In this section, we start the dictionary of Table~\ref{table:dictionary} between algebraic notions on the support $\tau$-tilting complex of a gentle algebra and the combinatorial notions on the non-kissing complex of its blossoming quiver.

\subsection{From walks to strings}
\label{subsec:walks2strings}

Recall that $\bar Q = (Q,I)$ is a gentle bound quiver.
Since~$\bar Q$ is a subquiver of~$\bar Q\blossom$, we can identify a string~$\sigma$ of~$\bar Q$ with a string~$\sigma\blossom$ of~$\bar Q\blossom$ with endpoints in~$Q_0$.
The following lemma is immediate from the definition of a blossoming quiver.

\begin{lemma}
\label{lem:bijections-STTC-NKC}
For a directed string~$\sigma \in \strings(\bar Q)$, the string~$\sigma\blossom$ does not start or end in a deep, so that we can define a bending walk~$\omega(\sigma) = \phantom{}_c (\sigma\blossom) \phantom{}_c \in \bendingWalks^\pm(\bar Q)$.
Conversely, for a directed bending walk~$\omega \in \bendingWalks^\pm(\bar Q)$ not in the deep facet~$F_\deep$, the string~$\phantom{}_{c^{-1}} \omega \phantom{}_{c^{-1}}$ has endpoints in~$Q_0$, so that it is a string~$\sigma(\omega) \in \strings(\bar Q)$.
The induced maps
\[
\strings^\pm(\bar Q) \xrightleftharpoons[\quad\mbox{$\sigma$}\;\quad]{\mbox{$\omega$}} \bendingWalks^\pm(\bar Q) \ssm F_\deep
\]
are inverse bijections.
Putting~$\omega(-v) = v_\deep$ and~$\sigma(v_\deep) = -v$, they extend to inverse bijections
\[
\strings_{\ge-1}^\pm(\bar Q) \xrightleftharpoons[\quad\mbox{$\sigma$}\;\quad]{\mbox{$\omega$}} \bendingWalks^\pm(\bar Q).
\]
\end{lemma}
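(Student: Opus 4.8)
The statement to prove is Lemma~\ref{lem:bijections-STTC-NKC}: that adding a cohook at both ends of a string~$\sigma$ of~$\bar Q$ produces a bending walk of~$\bar Q\blossom$, that removing a cohook at both ends of such a walk (not in~$F_\deep$) recovers a string of~$\bar Q$, and that these operations are mutually inverse, extending to a bijection~$\strings_{\ge-1}^\pm(\bar Q) \leftrightarrow \bendingWalks^\pm(\bar Q)$ once we match~$-v$ with~$v_\deep$.

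The plan is to proceed in four steps. \textbf{First}, I would check that $\sigma\blossom$ neither starts nor ends in a deep. This is immediate from the construction of the blossoming quiver: at the source vertex~$u = s(\sigma)$, the string~$\sigma\blossom$ uses at most one outgoing arrow of~$\bar Q$ (the first letter of~$\sigma$, if it is a direct arrow) and at most one incoming arrow, leaving at least one outgoing blossom arrow available to prepend; concretely, since $u$ has exactly two outgoing arrows in~$Q\blossom$, and the gentle relations at~$u$ forbid exactly one composition, there is always an arrow~$\beta$ with $\sigma\blossom\beta$ (hence $\beta^{-1}(\sigma\blossom)^{-1}$ reversed) not in $I\blossom$ — i.e. the start of~$\sigma\blossom$ is not a deep. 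The dual argument handles the target. Hence ${}_c(\sigma\blossom)$ and $({}_c(\sigma\blossom))_c$ are defined, and since a cohook is added at each end, the resulting walk bends at each endpoint of~$\sigma\blossom$, so~$\omega(\sigma)$ is bending; it is maximal (a walk) because adding cohooks terminates exactly when the string starts/ends on a peak, which in~$\bar Q\blossom$ forces the endpoint to be a blossom vertex. \textbf{Second}, conversely, given a bending walk~$\omega \notin F_\deep$, I would argue that $\omega$ starts and ends on a peak (its endpoints are blossom vertices, which have a single adjacent arrow, necessarily outgoing from the blossom or incoming to it — after checking the orientation convention, a walk endpoint is a source of its first arrow read appropriately), so cohook-removal ${}_{c^{-1}}\omega$ and $({}_{c^{-1}}\omega)_{c^{-1}}$ are defined by the Lemma preceding Theorem~\ref{theorem: tau for strings}; I must check the result is nonzero, i.e. $\omega$ is not (the inverse of) a path in~$\bar Q\blossom$ at either end, which holds precisely because $\omega$ is bending and not in~$F_\deep$ (a deep walk switches orientation once, at a deep, and removing cohooks from it would give~$0$ — this is exactly the excluded case, matched instead with~$-v$). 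Moreover the endpoints of $\sigma(\omega) = {}_{c^{-1}}\omega_{c^{-1}}$ lie in~$Q_0$: removing the cohook peels off precisely the blossom portion, because each blossom vertex is $4$-valent with its unique non-blossom neighbor, so the cohook being removed consists entirely of blossom arrows down to the first vertex of~$Q_0$.

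\textbf{Third}, I would verify that $\sigma$ and $\omega$ are mutually inverse. This is the heart of the argument, and it follows from the uniqueness clauses in Definition~\ref{def:addingRemovingHooksCohooks} and the Lemma~and~Definition that follows it: by construction, $_{c^{-1}}({}_c\rho) = \rho$ whenever $\rho$ does not start in a deep, and dually at the other end; applying this at both ends gives $\sigma({\omega(\sigma)}) = \sigma$ for every~$\sigma \in \strings^\pm(\bar Q)$. For the other composite, given a bending walk~$\omega \notin F_\deep$, the string $\sigma(\omega)$ does not start or end in a deep (removing a cohook produces a string starting/ending \emph{not} on a peak — but I need ``not in a deep'', which again follows from the blossoming structure: the vertex of~$Q_0$ reached after peeling has its ``outer'' blossom neighbors available), so $\omega({\sigma(\omega)}) = {}_c({}_{c^{-1}}\omega)_c = \omega$ by the same uniqueness. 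One should double-check the degenerate case where $\sigma$ has length zero, $\sigma = \varepsilon_v$: then the Remark after Definition~\ref{def:addingRemovingHooksCohooks} warns that $_c\rho$ and $\rho_c$ may be exchanged, but the \emph{undirected} walk~$\omega(\varepsilon_v)$ — the walk with a single peak at~$v$ obtained by attaching the two outgoing blossom-cohooks — is well-defined, and its inverse image is~$\varepsilon_v$; working with undirected strings and walks throughout (as the statement does) absorbs this ambiguity. \textbf{Fourth}, I would extend to~$\strings_{\ge-1}^\pm$: the only bending walks left out of the image of~$\sigma \mapsto \omega(\sigma)$ are exactly the deep walks~$v_\deep$ (a bending walk is excluded iff removing cohooks gives~$0$ at some end iff it is a path or inverse path in~$\bar Q\blossom$ on a segment, which for a bending walk forces it to switch orientation exactly once at a deep), and these biject with negative simples~$-v$ via~$v \leftrightarrow v_\deep$; combined with the bijection already established, this gives the claimed bijection $\strings_{\ge-1}^\pm(\bar Q) \leftrightarrow \bendingWalks^\pm(\bar Q)$.

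The main obstacle I anticipate is bookkeeping the orientation conventions — arrows composed left to right, strings drawn downward, ``starts on a peak'' versus ``starts in a deep'' — so that the blossom arrows at a vertex genuinely provide the room needed for the cohook operations to be defined and to terminate at a vertex of~$Q_0$; once the $4$-valence of every vertex in~$Q\blossom_0$ (Lemma after Definition of the blossoming quiver) and the gentle-relation constraints are correctly invoked, the inverse-bijection claim is a formal consequence of the uniqueness statements already proved for adding and removing (co)hooks, so no substantive new idea is needed beyond careful case analysis at the endpoints.
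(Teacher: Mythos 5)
The paper offers no proof of this lemma — it is stated as ``immediate from the definition of a blossoming quiver'' — so there is no authorial argument to compare against; your proposal supplies the verification the authors take for granted, and your overall plan is the right one. The load-bearing facts you identify are correct: every vertex of~$Q_0$ becomes $4$-valent in~$Q\blossom$ with the gentle relations pairing each incoming arrow with exactly one composable and one non-composable outgoing arrow (and dually), which is what makes ``does not start/end in a deep'' automatic and makes the cohook iteration terminate at a blossom; the uniqueness clauses of the Lemma-and-Definition after Definition~\ref{def:addingRemovingHooksCohooks} make the two composites ${}_{c^{-1}}({}_c(\cdot))$ and ${}_c({}_{c^{-1}}(\cdot))$ the identity where defined; the only bending walks for which the cohook-removal composite vanishes are those for which ${}_{c^{-1}}\omega$ is an inverse path, which is exactly the deep walks, and these are matched with the negative simples.

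A few imprecisions worth flagging, none fatal. First, in your opening paragraph you construct an arrow $\beta$ with $\sigma\blossom\beta$ not in~$I\blossom$ and then conclude ``the start of~$\sigma\blossom$ is not a deep'' — that argument addresses the \emph{end}, not the start; by Definition~\ref{def:peaksDeeps}, not starting in a deep requires an arrow $\alpha$ with $\alpha^{-1}\sigma\blossom$ a string. The two ends are symmetric, so nothing is lost, but the source/target bookkeeping should be carried through carefully. Second, ``each blossom vertex is $4$-valent with its unique non-blossom neighbor'' is not right: a blossom vertex is $1$-valent; it is the vertices of~$Q_0$ that become $4$-valent in~$Q\blossom$. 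What actually forces the peeled-off endpoint into~$Q_0$ is that the vertex where a cohook joins the rest of the walk has two incident arrows along the walk and hence cannot be a $1$-valent blossom. Third, ``the resulting walk bends at each endpoint of~$\sigma\blossom$'' is not quite true: the cohook junction at an endpoint is a peak only when~$\sigma\blossom$ has an outgoing arrow there; when~$\sigma\blossom$ has an incoming arrow at that endpoint (e.g.\ at the target of a positive path~$\sigma$), the junction is not a corner, and the bend comes from the other endpoint and/or from the deeps inside the cohooks. The conclusion that~$\omega(\sigma)$ is always bending is nonetheless correct — one always gets at least one peak — so again this is a local inaccuracy rather than a gap.
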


\begin{example}
\label{exm:exmBijectionStringsWalks1}
\fref{fig:exmBijectionStringsWalks1} illustrates the map~$\sigma = \omega^{-1}$ for two quivers on $3$ vertices.

\begin{figure}[t]
	\capstart
	\centerline{\includegraphics[scale=.4]{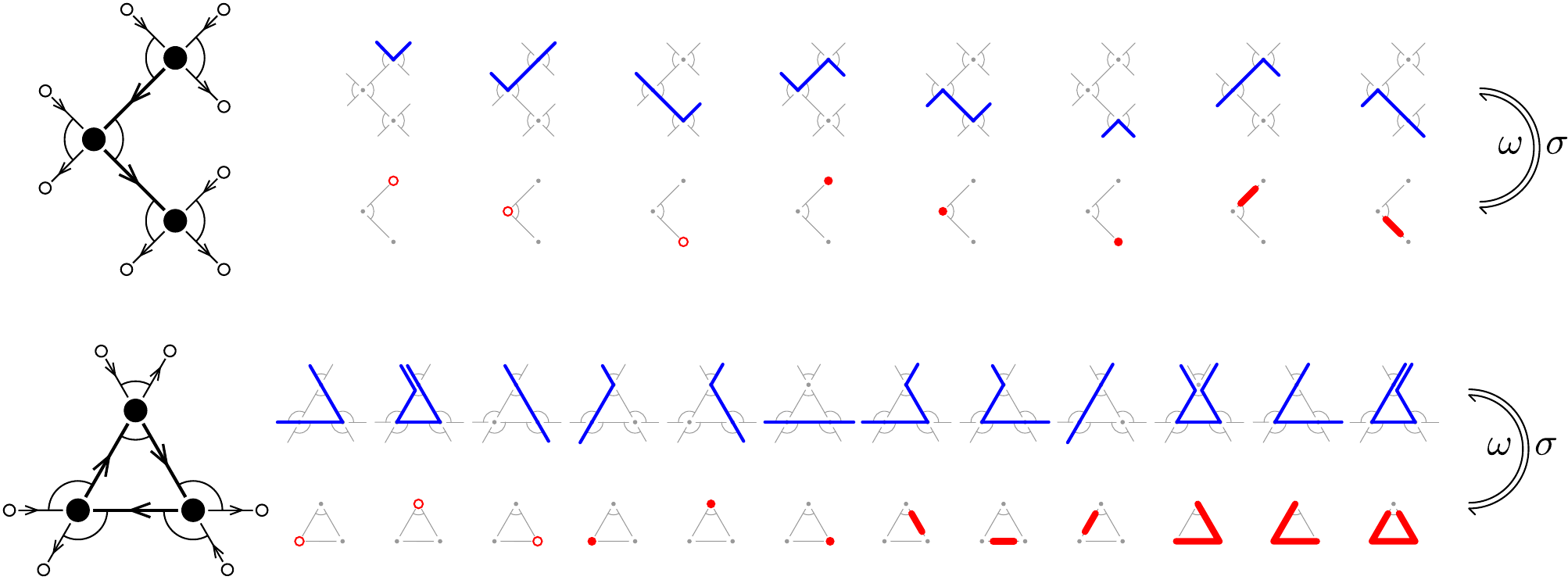}}
	\caption{The inverse bijections~$\sigma$ and~$\omega$ for two specific quivers.}
	\label{fig:exmBijectionStringsWalks1}
\end{figure}
\end{example}

\begin{remark}
In the case of gentle algebras arising from grids, the computation of the string module $M(\sigma)$ is made easier by the fact that the indecomposable representations are determined by their dimension vectors.
This dimension vector is computed by counting the number of kissings between~$\omega(\sigma)$ and the deep walks.
More precisely, for any string~$\sigma \in \strings^\pm(\bar Q)$ and any vertex~$v \in Q_0$, the dimension of~$M(\sigma)_v$ is the number of kisses between the deep walk~$v_\deep$ and the walk~$\omega(\sigma)$.
\end{remark}

\begin{lemma}\label{lem:attractDanceKiss}
Let~$\sigma$ and~$\sigma'$ be two strings for the gentle bound quiver~$\bar Q$.
Then~$\sigma$ attracts~$\sigma'$ or reaches for~$\sigma'$ if and only if~$\omega(\sigma)$ kisses~$\omega(\sigma')$.
\end{lemma}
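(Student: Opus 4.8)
<br>

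The plan is to transfer the definitions of "attracts" and "reaches for" (Definition~\ref{definition: dance and attract}) through the bijection of Lemma~\ref{lem:bijections-STTC-NKC} and match them term by term with the definition of kissing (Definition~\ref{def: kissing}). Recall that~$\omega(\sigma) = {}_c(\sigma\blossom)_c$, so $\omega(\sigma)$ is obtained from the string~$\sigma$ by attaching a cohook at each end; the key structural observation is that a cohook, when drawn, is an outgoing arrow followed by a descent, so the two arrows of $\omega(\sigma)$ immediately adjacent to $\sigma$ inside $\omega(\sigma)$ are precisely the unique arrows $\alpha'$ (resp.\ $\beta'$) with $\alpha'\alpha \in I$ or $\alpha\alpha'\in I$, and these point \emph{out} of $\sigma$; more generally every vertex of $\sigma$ that was a strict peak/deep/flat of $\sigma$ retains its status in $\omega(\sigma)$, and the endpoints of $\sigma$ which are not corners of $\omega(\sigma)$ become places where $\omega(\sigma)$ has an outgoing arrow. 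The upshot is: a substring $\xi \in \Sigma(\sigma)$ that is on top of $\sigma$ (a top substring) is exactly a substring that is on top of $\omega(\sigma)$, while a substring that was on the bottom of $\sigma$ but \emph{strictly} inside becomes bottom in $\omega(\sigma)$, and a bottom substring of $\sigma$ that ends at an endpoint of $\sigma$ becomes, in $\omega(\sigma)$, a substring entered by an outgoing arrow of the cohook.

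First I would set up this dictionary carefully as a preliminary claim: for a string $\sigma$ and the walk $\omega(\sigma)$, describe $\Sigma_\top(\omega(\sigma))$ and the "bottom-ish" substrings of $\omega(\sigma)$ that touch the region near the cohooks, all in terms of data intrinsic to $\sigma$ (its top/bottom substrings, whether it starts/ends in a deep, the arrows $\alpha,\beta$ that could extend it). This is where the bulk of the work lies and I expect it to be the main obstacle — not because any single step is hard, but because one must correctly enumerate the (at most) four configurations at each of the two ends of $\sigma$ (ends in a deep vs.\ not; single blossom arrow vs.\ longer cohook; the added arrows of the cohook), and keep track of which added vertices are peaks of $\omega(\sigma)$. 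The figures in the paper (Figures for "attracts", "reaches for", and the swinging arms condition) are exactly the $\omega(\sigma)$-pictures, so I would use them as the backbone of the bookkeeping.

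Next, with the dictionary in hand, I would prove both implications at once by comparing the clause structure. Suppose $\omega(\sigma)$ kisses $\omega(\sigma')$: by Definition~\ref{def: kissing} there is a common substring $\tau$ that is on top of $\omega(\sigma)$ and at the bottom of $\omega(\sigma')$. By the dictionary, $\tau$ being on top of $\omega(\sigma)$ forces the part of $\tau$ overlapping $\sigma$ to be a top substring of $\sigma$ (or $\tau$ lies partly in a cohook of $\sigma$, contributing the arrows $\alpha_k\cdots\alpha_1$ of the proof pattern already seen in Proposition~\ref{proposition: attract or dance and tau compatibility}). Meanwhile $\tau$ at the bottom of $\omega(\sigma')$ either sits strictly inside $\sigma'$ as a bottom substring (giving the "reaches for" case with interior $\xi$), or reaches an end of $\sigma'$ where $\omega(\sigma')$'s cohook has its outgoing arrow $\alpha'$ — but that arrow is precisely the $\alpha$ of Definition~\ref{definition: dance and attract}(i) witnessing that $\sigma$ attracts $\sigma'$, or it provides the "missing arm" replaced by a cohook arm in the swinging arms condition of (ii). Splitting into the cases (a) $\tau$ disjoint from $\sigma'$, (b) $\tau$ meets $\sigma'$ in an interior bottom substring with arms on both sides, (c) $\tau$ meets $\sigma'$ up to one end, (c') the mirror case, reproduces exactly cases (i), (ii)(a)--(d) of the "attracts/reaches for" definition. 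The converse is the same analysis read backwards: given a witness $\alpha$ (for attracting) or a witness $\xi$ with the swinging arms condition (for reaching), one checks the corresponding substring of $\omega(\sigma')$ lies at its bottom and coincides with a top substring of $\omega(\sigma)$ — using the remark after Definition~\ref{definition: dance and attract} to handle the degenerate case $\xi$ a vertex (where $\alpha\beta\notin I$ is needed exactly so that the vertex is simultaneously a peak of $\omega(\sigma)$ and a deep of $\omega(\sigma')$).

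Finally I would remark that the negative-simple cases are consistent: by Lemma~\ref{lem:bijections-STTC-NKC} we have $\omega(-v) = v_\deep$, and $-v$ reaches for $\sigma'$ iff $v$ is a vertex of $\sigma'$ (Definition~\ref{def:negativeSimpleDance}), which by Example~\ref{example: Fdeep Fpeak} and the dictionary is exactly the condition that $v_\deep$ kisses $\omega(\sigma')$ (the peak/deep kissing criterion, since a vertex of $\sigma'$ becomes a peak-accessible spot in $\omega(\sigma')$); and nothing reaches for $-v = \sigma(v_\deep)$ because $v_\deep$ is in the deep facet and is never kissed. This closes the equivalence in all cases.
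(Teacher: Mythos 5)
Your proof is correct, and for one of the two directions it follows a genuinely different route from the paper. The forward implication (attracts or reaches for~$\Rightarrow$~kissing) is proved in the paper exactly by the ``dictionary'' you describe: the cohook arrows added by~${}_c(\cdot)_c$ point out of~$\sigma$, so top substrings of~$\sigma$ become top substrings of~$\omega(\sigma)$, and the bottom substrings acquired in the cohooks give the kissing witnesses; the paper's three cases (a), (b), (c) for ``reaches for'' are the same bookkeeping you outline. For the converse, however, the paper does \emph{not} run the combinatorial case analysis. Instead it takes a kissing substring~$\xi$ on top of~$\omega(\sigma)$ and at the bottom of~$\omega(\sigma')$, then argues directly in representation theory that~$\xi$ lies at the bottom of the string~$\sigma''$ computing~$\tau M(\sigma')$, hence~$\Hom{A}\!\big(M(\sigma), \tau M(\sigma')\big)\neq 0$, and concludes by invoking Proposition~\ref{proposition: attract or dance and tau compatibility}. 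The authors explicitly remark that the direct combinatorial approach you propose -- ``using the same method as in the proof of Proposition~\ref{proposition: attract or dance and tau compatibility}'' -- would work but they ``choose an alternative proof.'' What your route buys is a self-contained combinatorial argument that never leaves the walk picture; what the paper's route buys is brevity, by recycling the full case analysis already carried out (with Figures~\ref{fig:not tau compatible case a}--\ref{fig: not tau compatible case c2}) in the proof of the $\tau$-compatibility criterion. Your remark on the negative simple strings is harmless but out of scope here: Lemma~\ref{lem:attractDanceKiss} as stated concerns only honest strings in~$\strings^\pm(\bar Q)$; the extension to~$\strings^\pm_{\ge -1}(\bar Q)$ is folded into Theorem~\ref{thm:nkc/sttiltc} rather than this lemma.
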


\begin{proof}
In this proof, we freely identify a string~$\rho$ for~$\bar Q$ with the corresponding string~$\rho\blossom$ for~$\bar Q\blossom$.

Assume that~$\sigma$ attracts~$\sigma'$.
Then~$\omega(\sigma')$ is of the form~$\alpha_r\cdots\alpha_1\alpha^{-1}\sigma'\beta\beta_1^{-1}\cdots\beta_s^{-1}$ for some~${r \geq 1}$, ${s\geq 0}$ and cohooks~$\alpha_r\cdots\alpha_1\alpha^{-1}$ and~$\beta\beta_1^{-1}\cdots\beta_s^{-1}$.
We note that the arrow~$\alpha$ is in~$Q_1$ by Definition~\ref{definition: dance and attract} and that all arrows in~$\omega(\sigma')$ belong to~$Q_1$ except for the blossoming arrows~$\alpha_r$ and~$\beta_s$.
Since the arrow~$\alpha_r$ is blossoming, there is some substring~$\alpha_i\cdots\alpha_1$, with~$0\leq i \leq r$, which is on top of~$\omega(\sigma)$ (see Figure~\ref{fig: sigma attracts sigma'}).
Here again, we use the convention that~$\alpha_i\cdots\alpha_1 = t(\alpha)= u$ if~$i=0$. 
Moreover,~$\alpha_i\cdots\alpha_1$ is at the bottom of~$\omega(\sigma')$, so that the walk~$\omega(\sigma)$ kisses~$\omega(\sigma')$.

\begin{figure}[h]
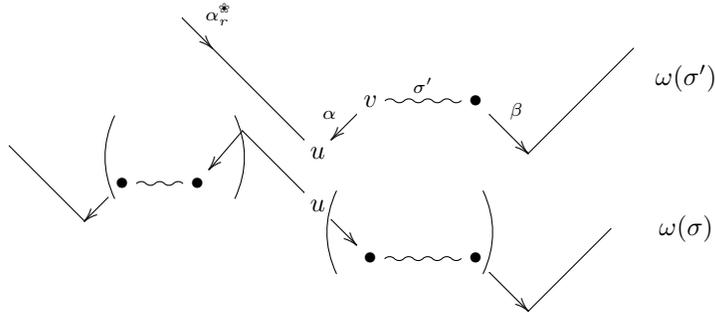

 	\capstart
\[
\xy
(35,10)*+{\omega(\sigma')};
(-28,14)="lpeak";
(-14,0)*+{u}="-12";
(-7,7)*+{v}="-11";
(7,7)*+{\bullet}="11";
(14,0)="12";
(28,14)="rpeak";
(35,-10)*+{\omega(\sigma)};
(-55,1)="1";
(-45,-9)="2";
(-40,-4)*+{\bullet}="3";
(-30,-4)*+{\bullet}="4";
(-24,3)="5";
(-14,-7)*+{u}="u";
(-7,-14)*+{\bullet}="a";
(7,-14)*+{\bullet}="b";
(14,-21)="c";
(25,-10)="d";
{\ar^{\alpha_r\blossom} (-32,18);"lpeak"};
{\ar@{-} "lpeak";"-12"};
{\ar_{\alpha} "-11";"-12"};
{\ar@{~}^{\sigma'} "-11";"11"};
{\ar^{\beta} "11";"12"};
{\ar@{-} "12";"rpeak"};
{\ar@{-} "1";"2"};
{\ar "3";"2"};
{\ar@{~} "3";"4"};
{\ar "5";"4"};
{\ar@{-} "5";"u"};
{\ar "u";"a"};
{\ar@{~} "a";"b"};
{\ar "b";"c"};
{\ar@{-} "c";"d"};
{\ar@/_.3pc/@{-} (-11.5,-5);(-11.5,-16)};
{\ar@/^.3pc/@{-} (8,-5);(8,-16)};
{\ar@/_.3pc/@{-} (-41,5);(-41,-6)};
{\ar@/^.3pc/@{-} (-25,5);(-25,-6)};
\endxy
\]
    \caption{The walks~$\omega(\sigma)$ and~$\omega(\sigma')$ when~$\sigma$ attracts~$\sigma'$.}
    \label{fig: sigma attracts sigma'}
\end{figure}

Assume that~$\sigma$ reaches for~$\sigma'$, and let~$\xi$ be the substring on top of~$\sigma$ and at the bottom of~$\sigma'$, as in Definition~\ref{definition: dance and attract}\,(ii).
We distinguish three cases:

\begin{enumerate}[(a)]
\item If~$\xi=\sigma'$, then~$\sigma = \sigma_1\alpha^{-1}\xi\beta\sigma_2$ for some substrings~$\sigma_1,\sigma_2$ and arrows~$\alpha,\beta\in Q_1$.
As a consequence,~$\omega(\sigma')$ is of the form~$\alpha_r\cdots\alpha_1\alpha^{-1}\sigma'\beta\beta_1^{-1}\cdots\beta_s^{-1}$ for some~$r,s\geq 1$, and cohooks~$\alpha_r\cdots\alpha_1\alpha^{-1}$ and~$\beta\beta_1^{-1}\cdots\beta_s^{-1}$ where~$\alpha_r,\beta_s$ are blossoming.
There exist~$0\leq i\leq r$ and~$0\leq j\leq s$ such that~$\alpha_i\cdots\alpha_1\alpha^{-1}\xi\beta\beta_1^{-1}\cdots\beta_j^{-1}$ is a substring on top of~$\omega(\sigma)$ (see Figure~\ref{fig: dance case a}).
Since it is at the bottom of~$\omega(\sigma')$, the walk~$\omega(\sigma)$ kisses~$\omega(\sigma')$.

\item If~$\sigma'$ is of the form~$\sigma'_1\alpha\xi\beta^{-1}\sigma'_2$, then~$\omega(\sigma')$ contains~$\alpha\xi\beta^{-1}$ as a substring.
Moreover,~$\xi$ being on top of~$\sigma$,~$\omega(\sigma)$ contains some substring of the form~$\gamma^{-1}\xi\delta$.
This shows that~$\omega(\sigma)$ kisses~$\omega(\sigma')$.

\item If~$\sigma'=\sigma'_1\alpha\xi$ for some substring~$\sigma'_1$ and some arrow~$\alpha$, then~$\sigma$ is of the form~$\sigma=(\sigma_1\gamma^{-1})\xi\beta\sigma_2$, where~$\alpha\xi\beta$ is a string for~$\bar Q$.
It follows that~$\omega(\sigma)$ kisses~$\omega(\sigma')$ along some substring~$\xi\beta\beta_1^{-1}\cdots\beta_j^{-1}$ (see Figure~\ref{fig: dance case c}), where~$\beta\beta_1^{-1}\cdots\beta_j^{-1}\cdots\beta_s^{-1}$ is a cohook for~$\sigma\blossom$ and~${j>0}$.
\end{enumerate}

We now assume that $\omega(\sigma)$ kisses $\omega(\sigma')$.
Let $\xi$ be a substring which is strictly on top of $\omega(\sigma)$ and strictly at the bottom of $\omega(\sigma')$.
Note that this implies that $\xi$ is also on top of $\sigma$ (but not necessarily strictly on top).
It is possible to show that $\sigma$ attracts or reaches for $\sigma'$ by using the same method as in the proof of Proposition~\ref{proposition: attract or dance and tau compatibility}.
However, we choose an alternative proof, showing instead that $\Hom{A} \big( M(\sigma), \tau M(\sigma') \big) \ne 0$, which implies the result by Proposition~\ref{proposition: attract or dance and tau compatibility}. 
Write $\omega(\sigma')=\alpha_r \cdots \alpha_1 \alpha^{-1} \sigma' \beta \beta_1^{-1} \cdots \beta_s^{-1}$ where $r,s\geq 0$, and where if $r\geq 1$, $\alpha_r$ is a blossoming arrow and $\alpha_1,\cdots,\alpha_{r-1},\alpha\in Q_1$ while if $r=0$ $\alpha$ is a blossoming arrow; and similarly for $\beta\beta_1^{-1}\cdots\beta_s^{-1}$.

Assume first that $r$ and $s$ are positive.
Then $\tau M(\sigma')$ is given by the string $\sigma''=\alpha_{r-1} \cdots \alpha_1 \alpha^{-1} \sigma'$ $\beta \beta_1^{-1} \cdots \beta_{s-1}^{-1}$.
Thus, $\xi$ is at the bottom of $\sigma''$ and $\Hom{A} \big( M(\sigma), \tau M(\sigma') \big) \neq 0$.

Assume that exactly one of $r,s$ equals $0$.
By symmetry, we may assume that $r=0$, which is equivalent to $\sigma'$ starting in a deep.
Since $\xi$ is strictly at the bottom of $\alpha^{-1}\sigma'\beta\beta_1^{-1}\cdots\beta_s^{-1}$, it is also strictly at the bottom of $\sigma'\beta\beta_1^{-1}\cdots\beta_s^{-1}$ (see Figure~\ref{fig: r=0}).
This implies that $\xi$ is at the bottom of $_{h^{-1}}(\sigma'\beta\beta_1^{-1}\cdots\beta_{s-1}^{-1})$, so that $\Hom{A} \big( M(\sigma), \tau M(\sigma') \big) \ne 0$.

\begin{figure}[h]
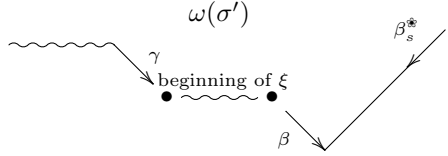

 	\capstart
\[
\xy
(-33,16)="-14";
(-28,21)="-13";
(-14,21)="-12";
(-7,14)*+{\bullet}="-11";
(0,25)*+{\omega(\sigma')};
(7,14)*+{\bullet}="11";
(14,7)="12";
(25,18)="13";
(30,23)="14";
{\ar@{~} "-13";"-12"};
{\ar^{\gamma} "-12";"-11"};
{\ar@{~}^{\text{ beginning of }\xi} "-11";"11"};
{\ar_\beta "11";"12"};
{\ar@{-} "12";"13"};
{\ar_{\beta_s\blossom} "14";"13"};
\endxy
\]
    \caption{The walk $\omega(\sigma')$ when $r=0$ and $s>0$. The arrow $\gamma$ ``protects'' $\xi$ from being deleted when removing a left hook for $\sigma'$.}
    \label{fig: r=0}
\end{figure}

Finally, assume that $r=0$ and $s=0$.
In that case, $\xi$ is strictly at the bottom of $\sigma'$.
It is thus at the bottom of $_{h^{-1}}(\sigma')_{h^{-1}}$ and this gives a non-zero morphism in $\Hom{A} \big( M(\sigma), \tau M(\sigma') \big)$.
\end{proof}

\begin{figure}[p]
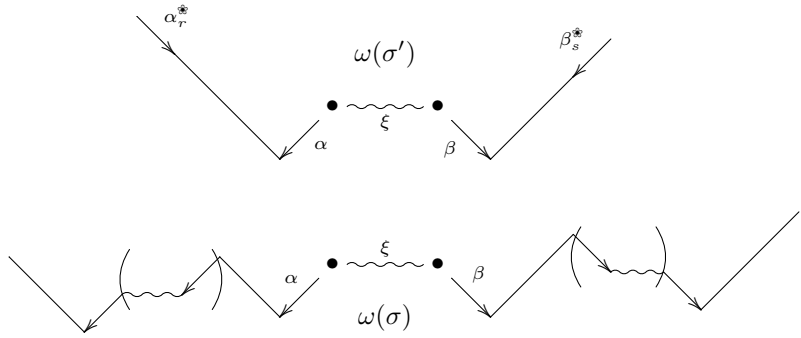

\[
\xy
(-33,26)="-14";
(-28,21)="-13";
(-14,7)="-12";
(-7,14)*+{\bullet}="-11";
(0,21)*+{\omega(\sigma')};
(7,14)*+{\bullet}="11";
(14,7)="12";
(25,18)="13";
(30,23)="14";
{\ar^{\alpha_r\blossom} "-14";"-13"};
{\ar@{-} "-13";"-12"};
{\ar^\alpha "-11";"-12"};
{\ar@{~}_\xi "-11";"11"};
{\ar_\beta "11";"12"};
{\ar@{-} "12";"13"};
{\ar_{\beta_s\blossom} "14";"13"};
(-50,-6)="-7";
(-40,-16)="-6";
(-35,-11)="-5";
(-27,-11)="-4";
(-22,-6)="-3";
(-14,-14)="-2";
(-7,-7)*+{\bullet}="-1";
(0,-14)*+{\omega(\sigma)};
(7,-7)*+{\bullet}="1";
(14,-14)="2";
(25,-3)="3";
(30,-8)="4";
(37,-8)="5";
(42,-13)="6";
(55,0)="7";
{\ar@{-} "-7";"-6"};
{\ar "-5";"-6"};
{\ar@{~} "-5";"-4"};
{\ar "-3";"-4"};
{\ar@{-} "-3";"-2"};
{\ar_\alpha "-1";"-2"};
{\ar@{~}^\xi "-1";"1"};
{\ar^\beta "1";"2"};
{\ar@{-} "2";"3"};
{\ar "3";"4"};
{\ar@{~} "4";"5"};
{\ar "5";"6"};
{\ar@{-} "6";"7"};
{\ar@/^.3pc/@{-} (-23,-5);(-23,-13)};
{\ar@/_.3pc/@{-} (-34,-5);(-34,-13)};
{\ar@/_.3pc/@{-} (26,-2);(26,-10)};
{\ar@/^.3pc/@{-} (36,-2);(36,-10)};
\endxy
\]
    \caption{The walks~$\omega(\sigma)$ and~$\omega(\sigma')$ when~$\sigma$ reaches for~$\sigma'$: case (a).}
    \vspace{1cm}
    \label{fig: dance case a}
\end{figure}

\begin{figure}[p]
\[
\xy
(-49,21)="-15";
(-35,7)="-14";
(-28,14)="-13";
(-14,14)="-12";
(-7,7)="-11";
(0,18)*+{\omega(\sigma')};
(7,7)="11";
(14,14)="12";
(28,14)="13";
(35,7)="14";
(49,21)="15";
{\ar@{-} "-15";"-14"};
{\ar "-13";"-14"};
{\ar@{~}^{\sigma'_1} "-13";"-12"};
{\ar^\alpha "-12";"-11"};
{\ar@{~}^\xi "-11";"11"};
{\ar_\beta "12";"11"};
{\ar@{~}^{\sigma'_2} "12";"13"};
{\ar "13";"14"};
{\ar@{-} "14";"15"};
(-49,-7)="-5";
(-35,-21)="-4";
(-28,-14)="-3";
(-14,-14)="-2";
(-7,-7)="-1";
(0,-18)*+{\omega(\sigma)};
(7,-7)="1";
(14,-14)="2";
(28,-14)="3";
(35,-21)="4";
(49,-7)="5";
{\ar@{-} "-5";"-4"};
{\ar "-3";"-4"};
{\ar@{~} "-3";"-2"};
{\ar "-1";"-2"};
{\ar@{~}^\xi "-1";"1"};
{\ar "1";"2"};
{\ar@{~} "2";"3"};
{\ar "3";"4"};
{\ar@{-} "4";"5"};
{\ar@/_.3pc/@{-} (8,-5);(8,-16)};
{\ar@/^.3pc/@{-} (27,-5);(27,-16)};
{\ar@/^.3pc/@{-} (-8,-5);(-8,-16)};
{\ar@/_.3pc/@{-} (-27,-5);(-27,-16)};
\endxy
\]
    \caption{The walks~$\omega(\sigma)$ and~$\omega(\sigma')$ when~$\sigma$ reaches for~$\sigma'$: case (b).}
    \vspace{1cm}
    \label{fig: dance case b}
\end{figure}

\begin{figure}[p]
\[
\xy
(-42,25)="-15";
(-33,16)="-14";
(-28,21)="-13";
(-14,21)="-12";
(-7,14)*+{\bullet}="-11";
(0,25)*+{\omega(\sigma')};
(7,14)*+{\bullet}="11";
(14,7)="12";
(25,18)="13";
(30,23)="14";
{\ar@{-} "-15";"-14"};
{\ar "-13";"-14"};
{\ar@{~}^{\sigma'_1} "-13";"-12"};
{\ar^\alpha "-12";"-11"};
{\ar@{~}_\xi "-11";"11"};
{\ar_\beta "11";"12"};
{\ar@{-} "12";"13"};
{\ar_{\beta_s\blossom} "14";"13"};
(-40,-9)="-5";
(-30,-19)="-4";
(-25,-14)="-3";
(-14,-14)="-2";
(-7,-7)*+{\bullet}="-1";
(0,-17)*+{\omega(\sigma)};
(7,-7)*+{\bullet}="1";
(14,-14)="2";
(25,-3)="3";
(30,-8)="4";
(37,-8)="5";
(42,-13)="6";
(55,0)="7";
{\ar@{-} "-5";"-4"};
{\ar "-3";"-4"};
{\ar@{~} "-3";"-2"};
{\ar "-1";"-2"};
{\ar@{~}^\xi "-1";"1"};
{\ar^\beta "1";"2"};
{\ar@{-} "2";"3"};
{\ar "3";"4"};
{\ar@{~} "4";"5"};
{\ar "5";"6"};
{\ar@{-} "6";"7"};
{\ar@/^.3pc/@{-} (-9,-7);(-9,-15)};
{\ar@/_.3pc/@{-} (-24,-7);(-24,-15)};
{\ar@/_.3pc/@{-} (26,-2);(26,-10)};
{\ar@/^.3pc/@{-} (36,-2);(36,-10)};
\endxy
\]
    \caption{The walks~$\omega(\sigma)$ and~$\omega(\sigma')$ when~$\sigma$ reaches for~$\sigma'$: case (c).}
    \label{fig: dance case c}
\end{figure}

We conclude this section with the observation which motivated this paper.

\begin{theorem}
\label{thm:nkc/sttiltc}
For any gentle quiver~$\bar Q = (Q,I)$, the maps
\[
\strings_{\ge-1}^\pm(\bar Q) \xrightleftharpoons[\quad\mbox{$\sigma$}\;\quad]{\mbox{$\omega$}} \bendingWalks^\pm(\bar Q)
\]
induce
\begin{itemize}
\item isomorphisms of simplicial complexes between the support $\tau$-tilting complex~$\tTC$ and the non-kissing complex~$\RNKC$,
\item isomorphisms of oriented graphs between the graph of left mutations in~$\tTC$ and the graph of increasing flips in~$\RNKC$.
\end{itemize}
\end{theorem}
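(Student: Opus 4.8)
The plan is to assemble the theorem from the structural results proved earlier, using the combinatorial bijection $\sigma \leftrightleftharpoons \omega$ of Lemma~\ref{lem:bijections-STTC-NKC} as the backbone. First I would record what the two simplicial complexes actually are. On the algebraic side, Corollary~\ref{coro: stautilt complex from strings} identifies the support $\tau$-tilting complex $\tTC$ with the clique complex of the graph on $\strings_{\ge-1}^\pm(\bar Q)$ whose edges join pairs of strings that neither attract nor reach for each other (here a negative simple $-v$ plays the role of the shifted projective $P(v)[1]$, as in the Notation following Definition~\ref{def:peaksDeeps} and in Definition~\ref{def:negativeSimpleDance}). On the combinatorial side, $\RNKC$ is by Definition~\ref{def: nKc} the clique complex of the graph on $\bendingWalks^\pm(\bar Q)$ whose edges join pairs of non-kissing walks. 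So both complexes are clique complexes of explicit graphs on the two vertex sets, and it suffices to check that the bijection of Lemma~\ref{lem:bijections-STTC-NKC} is an isomorphism of those graphs.

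\textbf{Isomorphism of complexes.} The bijection on vertices is exactly $\strings_{\ge-1}^\pm(\bar Q) \xrightleftharpoons[\sigma]{\omega} \bendingWalks^\pm(\bar Q)$ of Lemma~\ref{lem:bijections-STTC-NKC} (with $\omega(-v) = v_\deep$, $\sigma(v_\deep) = -v$). For the edges: Lemma~\ref{lem:attractDanceKiss} says precisely that, for directed strings $\sigma, \sigma'$ of $\bar Q$, $\sigma$ attracts or reaches for $\sigma'$ if and only if $\omega(\sigma)$ kisses $\omega(\sigma')$. Combining this with Proposition~\ref{proposition: attract or dance and tau compatibility} (which turns ``$\sigma$ attracts or reaches for $\sigma'$'' into ``$\Hom{A}(M(\sigma), \tau M(\sigma')) \ne 0$''), we get that $M(\sigma) \oplus M(\sigma')$ is $\tau$-rigid if and only if $\omega(\sigma)$ and $\omega(\sigma')$ are non-kissing. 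I would then handle the cases involving negative simples separately but routinely: if $\sigma = -v$, then $M(-v) = P(v)[1]$ and by Notation~\ref{not:explanationNegativeSimpleDance} together with Definition~\ref{def:negativeSimpleDance}, the compatibility of $P(v)[1]$ with $M(\sigma')$ (i.e.\ $\Hom{A}(P(v), M(\sigma')) = 0$, i.e.\ $M(\sigma')_v = 0$) matches ``$v_\deep = \omega(-v)$ does not kiss $\omega(\sigma')$'', using the Remark after Definition~\ref{def: tau-rigid and stautilt} identifying $M(\sigma')_v = 0$ with $v$ not being a vertex of $\sigma'$; and two negative simples are always compatible and the two deep walks are always non-kissing (Example~\ref{example: Fdeep Fpeak}). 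Since both complexes are clique complexes, agreement of the edge sets upgrades to agreement of all faces, giving the simplicial isomorphism. A single self-consistency remark: the bijection sends $A = kQ/I$ (all projectives $P(v) = M(v_\peak)$) to the peak facet $F_\peak$ and $A[1]$ to the deep facet $F_\deep$, which matches Example~\ref{example: Fdeep Fpeak}.

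\textbf{Isomorphism of oriented graphs.} For the second bullet, the dual graphs of the two complexes are identified by the first bullet, so it remains only to match the two orientations: a left mutation $\mu_i$ in $\tTC$ (Theorem~\ref{thm:mutationstautilts}) should correspond to an increasing flip in $\RNKC$ (Definition~\ref{def:increasingFlip}). The edges of the flip graph are already described combinatorially by Proposition~\ref{prop:flip}, where the flip of $\omega$ is supported by the distinguished string $\sigma = \distinguishedString{\omega}{F}$, which under the bijection is precisely the string $\sigma(\omega)$ attached to the walk; the flip is increasing exactly when $\sigma$ is on top of $\omega$ and at the bottom of $\omega'$. On the algebraic side, the minimal projective presentation of a string module is computed in the Proposition/Corollary of Section~\ref{subsec:gvectors}: the top of $M(\rho)$ comes from $\Sigma_\top(\rho)$ and the first syzygy from $\Sigma_\bottom(\rho)$, so replacing a top substring by a bottom substring of the distinguished string is exactly the move that exchanges a summand of $P_0$ for a summand of $P_1$ in the presentation — i.e.\ a left mutation in the sense of $2$-term silting, which is the content of ``left mutation'' in the Adachi--Iyama--Reiten setup (see the Remark after Definition~\ref{def: tau-compatibility} and the table entry Thm.~\ref{thm:mutationstautilts}). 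So I would argue: the flip of $\omega$ at $F$ changes exactly one summand (by Proposition~\ref{prop:flip} and uniqueness of mutation, Theorem~\ref{thm:mutationstautilts}), hence corresponds to a mutation; and it is increasing (in the sense of Definition~\ref{def:increasingFlip}) iff the distinguished string passes from top to bottom, iff the corresponding $2$-term silting mutation is a left mutation. This is the one step where I expect to need genuine care rather than citation: pinning down that ``top substring of $\omega \mapsto$ summand of $P_0^{M(\sigma(\omega))}$'' matches the direction convention for left mutations, so that increasing flips and left mutations agree rather than being opposite. (If the conventions happened to be reversed, the statement would be false as written, so getting this orientation bookkeeping right — using the table's stated correspondence of $P(v)$ with $v_\peak$, of $F_\peak$ with the minimal support $\tau$-tilting module $A$, and of $F_\deep$ with $A[1] = $ maximal — is the crux.) Once the orientations are matched on every edge, the bijection of facets is an isomorphism of oriented graphs, completing the proof.
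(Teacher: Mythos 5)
Your treatment of the first bullet — the simplicial complex isomorphism — is correct and matches the paper's approach: it is essentially a repackaging of Lemma~\ref{lem:attractDanceKiss} together with Corollary~\ref{coro: stautilt complex from strings} and the routine handling of negative simple strings via Definition~\ref{def:negativeSimpleDance} and Notation~\ref{not:explanationNegativeSimpleDance}. For the second bullet, however, there is a genuine gap which your own parenthetical already flags. You treat the orientation matching as a convention-checking problem (``does top~$\mapsto P_0$ agree with the left-mutation convention?''), but nothing in your proposal actually \emph{proves} that an increasing flip is a left mutation. Your proposed mechanism — ``replacing a top substring by a bottom substring of the distinguished string is exactly the move that exchanges a summand of $P_0$ for a summand of $P_1$'' — does not describe what a flip does: the flip replaces the entire walk $\omega$ by $\omega'$, hence $M\bigl(\sigma(\omega)\bigr)$ by $M\bigl(\sigma(\omega')\bigr)$, and the minimal projective presentations of these two modules are not related by swapping one summand between $P_0$ and $P_1$. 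Indeed, the $\b{g}$-vector relation between flipped facets, used later in Theorem~\ref{thm:gvectorFan}, is $\gvector{\omega} + \gvector{\omega'} = \gvector{\mu} + \gvector{\nu}$ where $\mu,\nu$ are the auxiliary walks of Proposition~\ref{prop:flip}; four modules participate, not two, and there is no single-summand swap.

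What the paper actually does is invoke a different characterization of left mutation: by \cite[Def.--Prop.~2.28]{AdachiIyamaReiten}, the mutation at the removed summand is a left mutation precisely when the new indecomposable summand is a quotient of a module built from the unchanged part of the support $\tau$-tilting module. Concretely, in the notation of Proposition~\ref{prop:flip} and Remark~\ref{rem:changeDistinguishedArrows}, one must show that when the flip $F \to F'$ is increasing, $M\bigl(\sigma(\omega')\bigr)$ is a quotient of $M\bigl(\sigma(\mu)\bigr) \oplus M\bigl(\sigma(\nu)\bigr)$ with $\mu,\nu \in F \cap F'$. The paper establishes this by a case analysis on which of the distinguished arrows $\alpha',\beta'$ of $\omega'$ lie inside $\sigma(\omega')$ versus inside one of the cohooks of $\omega'$: in the generic case one obtains a short exact sequence $0 \to M\bigl(\sigma(\omega)\bigr) \to M\bigl(\sigma(\mu)\bigr) \oplus M\bigl(\sigma(\nu)\bigr) \to M\bigl(\sigma(\omega')\bigr) \to 0$, and in the degenerate cases one still gets the required surjection. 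This quotient criterion — not a sign-convention check on presentations — is the missing idea, and the $\b{g}$-vector heuristic you propose would not substitute for it.
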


\begin{proof}
The theorem is a direct consequence of Lemma~\ref{lem:attractDanceKiss}, except for the fact that increasing flips correspond to left mutations.
We use the notation of Proposition~\ref{prop:flip} and Figure~\ref{fig:flip}: let $F$ and~$F'$ be two adjacent facets such that the flip of~$F$ to~$F'$ is increasing, let $\omega$ and~$\omega'$ are the walks such that~$F \ssm \{\omega\} = F' \ssm \{\omega'\}$, let~$\{\alpha, \beta\} = \distinguishedArrows{\omega}{F}$ and~$\{\alpha', \beta'\} = \distinguishedArrows{\omega'}{F'}$, let~$\xi = \distinguishedString{\omega}{F} = \distinguishedString{\omega'}{F'}$, and let~$\mu = \distinguishedWalk{\alpha'}{F} = \distinguishedWalk{\beta}{F'}$ and~$\nu = \distinguishedWalk{\beta'}{F} = \distinguishedWalk{\alpha}{F'}$.
We consider the four associated strings~$\sigma(\omega), \sigma(\omega'), \sigma(\mu)$ and~$\sigma(\nu)$.
Note that~$\mu$ could be a straight walk, in which case we let~$\sigma(\mu)$ be~$\varnothing$ so that~$M \big( \sigma(\mu) \big) = 0$, and similarly for~$\nu$.
Since the statement is trivially satisfied when~$\omega'$ is a deep walk, we may assume that it is not the case.
We claim that the representation~$M \big( \sigma(\omega') \big)$ is a quotient of~$M \big( \sigma(\mu) \big) \oplus M \big( \sigma(\nu) \big)$, which implies the statement by~\cite[Def.--Prop.~2.28]{AdachiIyamaReiten}.
First note that the distinguished string~$\xi$ is also a substring of~$\omega$ (use~$\alpha$ and~$\beta$).
We distinguish five cases, depending on which of the distinguished arrows~$\alpha'$ and~$\beta'$ belongs to some cohook in~$\omega'$.
Assuming first that~$\alpha', \beta'$ both belong to~$\sigma(\omega')$, then there is a short exact sequence~$0\rightarrow M \big( \sigma(\omega) \big) \rightarrow M \big( \sigma(\mu) \big) \oplus M \big( \sigma(\nu) \big) \rightarrow M \big( \sigma(\omega') \big) \rightarrow 0$.
Assume next that~$\alpha'$ belongs to some (left) cohook for~$\omega'$, but that~$\beta'$ belongs to~$\sigma(\omega')$.
This implies that~$\nu$ is not a straight walk and that~$\sigma(\omega')$ is a substring on top of~$\sigma(\nu)$, thus giving a sujection~$M \big( \sigma(\nu) \big) \rightarrow M \big( \sigma(\omega') \big)$.
The symmetric case gives a surjection~$M \big( \sigma(\mu) \big) \rightarrow M \big( \sigma(\omega') \big)$.
Assume that the distinguished arrows~$\alpha', \beta'$ both belong to the same cohook.
By symmetry, we may assume that it is a left cohook.
In that case again,~$\nu$ is not a straight walk and~$\sigma(\omega')$ is a substring on top of~$\sigma(\nu)$.
Finally, assuming that~$\alpha'$ belongs to some left cohook, while~$\beta'$ belongs to some right cohook for~$\omega'$, the string~$\sigma(\omega')$ is easily seen to be on top of both strings~$\sigma(\mu)$ and~$\sigma(\nu)$.
\end{proof}

\begin{remark}
\begin{enumerate}
\item Considering walks on the blossoming gentle bound quiver allows to obtain a short exact sequence in~$\rep \bar Q\blossom$ associated with each left mutation:
\[
0\rightarrow M(\omega) \rightarrow M(\mu) \oplus M(\nu) \rightarrow M(\omega') \rightarrow 0,
\]
in the notations of Proposition~\ref{prop:flip} and Figure~\ref{fig:flip}.
We note that these extensions have a flavour very similar to the results of  \.{I}.~\c{C}anak\c{c}{\i} and S.~Schroll in~\cite{CanakciSchroll}.
In their paper, the middle term of an extension is obtained by resolving a crossing, while here, a kissing is resolved into a crossing.
\item One can check that, in all five cases of the proof above, the morphism $M \big( \sigma(\omega) \big) \rightarrow M \big( \sigma(\mu) \big) \oplus M \big( \sigma(\nu) \big)$ is a left-$T$ approximation with cokernel~$M \big( \sigma(\omega') \big)$, where $T$ is the direct sum of all~$M \big( \sigma(\lambda) \big)$ for $\lambda \in F \ssm \{\omega\}$ not a straight walk nor a deep walk.
 However, this approximation is not injective in general.
\end{enumerate}
\end{remark}

\begin{example}
The map~$\sigma$ of \fref{exm:exmBijectionStringsWalks1} sends the support~$\tau$-tilting complexes of \fref{fig:exmtTC}\,(left) to the non-kissing complexes of~\fref{fig:exmNKC}\,(left).
\end{example}

\subsection{From walks to AR-quivers}

In this section, we remark that, by using walks on $\bar Q\blossom$, it is possible to define combinatorially some translation quiver.
We show that this translation quiver contains enough information to recover the components of the Auslander--Reiten quiver of $\bar Q$ containing the string modules.
Two examples in the representation-finite case are given in Figure~\ref{fig:exmAuslanderReitenQuiver}.

\begin{proposition}\label{proposition: elementary moves and walks}
Let $\sigma\in\strings(\bar Q)$ be a string for $\bar Q$ and let $\sigma' = {}_c\sigma$ if $\sigma$ does not start in a deep or $\sigma' = {}_{h^{-1}}\sigma$ if $\sigma$ starts in a deep.
Then the walk $\omega(\sigma)$ starts in a deep and we have $\omega(\sigma') = {}_c\big(_{h^{-1}}\omega(\sigma)\big)$.
\end{proposition}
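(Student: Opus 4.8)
The statement relates two operations: on the string side, adding a cohook at the start (if $\sigma$ does not start in a deep) or removing a hook at the start (if $\sigma$ starts in a deep); and on the walk side, the composition of removing a hook and then adding a cohook at the start. The plan is to translate everything through the bijection of Lemma~\ref{lem:bijections-STTC-NKC}, which says $\omega(\sigma) = {}_c(\sigma\blossom){}_c$, and then to analyze the shape of $\omega(\sigma)$ near its left endpoint to see which basic string operations are being composed.

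First I would show that $\omega(\sigma)$ starts in a deep. Since $\bar Q$ is a subquiver of $\bar Q\blossom$, the string $\sigma\blossom$ has both endpoints in $Q_0$, and by Lemma~\ref{lem:bijections-STTC-NKC} it does not start or end in a deep, so $\omega(\sigma) = {}_c(\sigma\blossom){}_c$ is obtained by adding a cohook at each end of $\sigma\blossom$. By the very definition of adding a cohook (Definition~\ref{def:addingRemovingHooksCohooks}), the string ${}_c\rho$ starts on a peak; but since we are in the blossoming quiver, the left endpoint of the cohook is a blossom vertex, and a maximal string ending at a blossom vertex that arrives by an outgoing arrow of $Q\blossom$ in fact starts in a deep when read from the blossom. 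Concretely, after adding the cohook $\alpha_r\cdots\alpha_1\alpha^{-1}$ at the start of $\sigma\blossom$, the leftmost arrow $\alpha_r$ is a blossom arrow pointing \emph{into} the walk, so $\omega(\sigma)$ begins with an incoming blossom arrow and hence starts in a deep (no arrow can be prepended switching orientation, and none can be prepended keeping orientation since $\alpha_r$ is a blossom arrow). So that part is a direct bookkeeping check.

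Next I would verify the main identity $\omega(\sigma') = {}_c\big({}_{h^{-1}}\omega(\sigma)\big)$ by cases on whether $\sigma$ starts in a deep. Since $\omega(\sigma)$ starts in a deep (just shown) and is a walk, hence not the inverse of a path in $\bar Q\blossom$ (a walk reaches blossom vertices at both ends, so removing a hook is defined), the string ${}_{h^{-1}}\omega(\sigma)$ makes sense. The key local observation is: removing a hook at the start of $\omega(\sigma)$ exactly undoes whatever was done at the left end in passing from $\sigma$ to $\omega(\sigma)$, stopping at the first place where $\sigma$ itself differs from a deep-start. If $\sigma$ does \emph{not} start in a deep, then the left-end cohook of $\omega(\sigma)$ relative to $\sigma\blossom$ and the left-end cohook of $\omega(\sigma')={}_c(({}_c\sigma)\blossom){}_c$ relative to $({}_c\sigma)\blossom$ overlap: ${}_{h^{-1}}\omega(\sigma)$ peels the walk back down to ${}_c\sigma$ (using that ${}_c\sigma$ starts on a peak, hence ${}_{h^{-1}}$ halts there since a peak-start is not a deep-start), after which adding a cohook gives ${}_c({}_c\sigma)\blossom{}_c = \omega({}_c\sigma) = \omega(\sigma')$ at the left end (the right end is untouched throughout). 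If $\sigma$ \emph{does} start in a deep, then $\sigma\blossom$ already has its left endpoint reachable only through the cohook added to form $\omega(\sigma)$; removing a hook from $\omega(\sigma)$ peels past the cohook and one further hook, landing on ${}_{h^{-1}}\sigma$, and then adding a cohook gives $\omega({}_{h^{-1}}\sigma) = \omega(\sigma')$. In both cases I would draw the strings as in the paper's picture conventions and argue the uniqueness clauses of Definition~\ref{def:addingRemovingHooksCohooks} and of the Lemma-and-Definition after it to pin down that the composite ${}_c\circ{}_{h^{-1}}$ produces precisely $\omega(\sigma')$ and not some other string.

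The main obstacle I anticipate is the careful case analysis of how the hook/cohook at the left endpoint of $\sigma\blossom$ interacts with the hook/cohook at the left endpoint of $\sigma'\blossom$: one has to check that the arrow $\alpha$ (resp.\ the relation $\gamma\alpha\in I^\blossom$) used to build the cohook of $\omega(\sigma)$ is the same one, read correctly, that reappears when building $\omega(\sigma')$, and that removing a hook does not overshoot into the interior of $\sigma$. This is exactly the kind of ``boundary behaviour of hooks and cohooks in the blossoming quiver'' reasoning that the bijection in Lemma~\ref{lem:bijections-STTC-NKC} encapsulates, so I would lean on that lemma and on the uniqueness statements of the surrounding Lemma-and-Definition to keep the argument short, rather than re-deriving the combinatorics of hooks from scratch. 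A clean diagram for each of the two cases should make the verification essentially visual.
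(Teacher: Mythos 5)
Your proposal follows the same overall strategy as the paper: observe that every walk starts in a deep, then split into the two cases according to whether $\sigma$ starts in a deep, and in each case analyze the shape of $\omega(\sigma)={}_c(\sigma\blossom){}_c$ near its left end to read off ${}_{h^{-1}}\omega(\sigma)$ explicitly. The paper does exactly this, writing $\omega(\sigma)=\alpha_{r+1}\blossom\,\alpha_r\cdots\alpha_1\alpha^{-1}\sigma\,\nu$ in the first case and $\omega(\sigma)=(\alpha\blossom)^{-1}\mu\sigma'\nu$ in the second, so that ${}_{h^{-1}}\omega(\sigma)=\sigma'\nu$ in both cases, from which the identity follows.

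Two places where your wording is off, even though your conclusions are right. First, for the initial claim that $\omega(\sigma)$ starts in a deep, you reason through the structure of the left cohook and speak of $\omega(\sigma)$ beginning with an ``incoming blossom arrow.'' In the case where the cohook contains more than just $\alpha^{-1}$, its leftmost letter $\alpha_r$ is a \emph{direct} arrow with source a blossom vertex, so it is an \emph{outgoing} arrow at the start of the walk, not incoming. The paper avoids this entirely by the one-line observation that every walk begins (and ends) at a blossom vertex, which has degree one, so nothing can be prepended at all; that is both simpler and cleaner. Second, in your case (ii) analysis you justify the halting of ${}_{h^{-1}}$ at ${}_c\sigma\cdot\nu$ by ``${}_c\sigma$ starts on a peak, hence ${}_{h^{-1}}$ halts there since a peak-start is not a deep-start.'' This isn't the right mechanism. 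The defining property of ${}_{h^{-1}}\rho$ is that it is the unique $\rho'$ with $\rho={}_h\rho'$ and $\rho'$ \emph{not} starting on a peak, and the hook has a rigid shape (a run of inverse arrows then exactly one direct arrow). Since $\omega(\sigma)$ opens with a single direct blossom arrow and no inverse arrows, the hook is exactly that blossom arrow; the halting is then justified not because ${}_c\sigma$ starts on a peak in $\bar Q$, but because $\sigma'\nu$ does \emph{not} start on a peak in $\bar Q\blossom$ (the blossom arrow can be re-prepended). Your phrasing flips the relevant condition. These are presentation issues, not gaps, since you also say you would fall back on the uniqueness clauses of the hook/cohook Lemma-and-Definition, which is precisely what pins this down, and the paper's proof proceeds along the same lines.
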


\begin{proof}
We first note that any walk in $Q\blossom$ starts either with a blossoming arrow or with the inverse of a blossoming arrow.
As a consequence, any walk in $Q\blossom$ starts in a deep.

We first assume that $\sigma$ does not start in a deep, so that $\sigma'=\,_c\sigma$.
Then $\omega(\sigma)$ is of the form $\alpha_{r+1} \blossom \alpha_r \cdots \alpha_1 \alpha^{-1}\sigma\nu$ where $r\geq -1$, $\alpha_r\cdots\alpha_1\alpha^{-1}$ is a left cohook for $\sigma$ in $\bar Q$, $\alpha_{r+1}\blossom$ is a blossoming arrow, and $\nu$ is a right cohook for $\sigma$ in $\bar Q\blossom$.
Moreover, $\sigma'=\alpha_r\cdots\alpha_1\alpha^{-1}\sigma$ and $\,_{h^{-1}}\omega(\sigma) = \alpha_r\cdots\alpha_1\alpha^{-1}\sigma\nu = \sigma'\nu$.
The result follows.

We now assume that $\sigma$ starts in a deep, so that $\sigma'=\,_{h^{-1}}\sigma$.
Write $\sigma = \mu\sigma'$ where $\mu$ is a hook in $\bar Q$.
Then we have $\omega(\sigma)=(\alpha\blossom)^{-1}\mu\sigma'\nu$ for some cohook $\nu$ in $\bar Q\blossom$ and some blossoming arrow $\alpha\blossom$ and $_{h^{-1}}\omega(\sigma)=\sigma'\nu$.
This implies that $\omega(\sigma') = {}_c\big(_{h^{-1}}\omega(\sigma)\big)$.
\end{proof}

\begin{remark}
Assume that $\bar Q$ is representation-finite.
It follows from Proposition~\ref{proposition: elementary moves and walks} and from its dual that the Auslander--Reiten quiver of $\bar Q$ can be computed by using walks in $\bar Q\blossom$ instead of strings in $\bar Q$.
The computation is made slightly easier by the fact that irreducible arrows always correspond to the same elementary move on the walks (replacing a cohook by a hook); while for strings two different elementary moves appear (adding a hook or removing a cohook).
Two examples of computations are shown in Figure~\ref{fig:exmAuslanderReitenQuiver}.
We note that one can slightly extend the AR-quiver by including straight walks and deep walks.
The translation quiver thus obtained resemble the AR-quiver of an exact category whose projective-injectives are the straight walks, whose non-injective projectives are the peak walks (corresponding to the projective representations of $\bar Q$) and whose non-projective injectives are the deep walks (corresponding to the shifted projective representations of $\bar Q$).
The category obtained by killing the projective-injectives seems to be related to two-term silting complexes since its indecomposable objects are the indecomposable representations of $\bar Q$ and the shifts of the indecomposable projective representations of $\bar Q$.
This remark might deserve further investigation.
We also note that, if $Q$ is of type $A_n$, the translation quiver described above is not the AR-quiver of the corresponding cluster category:
the arrows from shifted projectives to modules do not appear in this translation quiver.
\end{remark}

\captionsetup{width=1.5\textwidth}
\hvFloat[floatPos=h, capWidth=h, capPos=right, capAngle=90, objectAngle=90, capVPos=c, objectPos=c]{figure}
{\includegraphics[scale=.45]{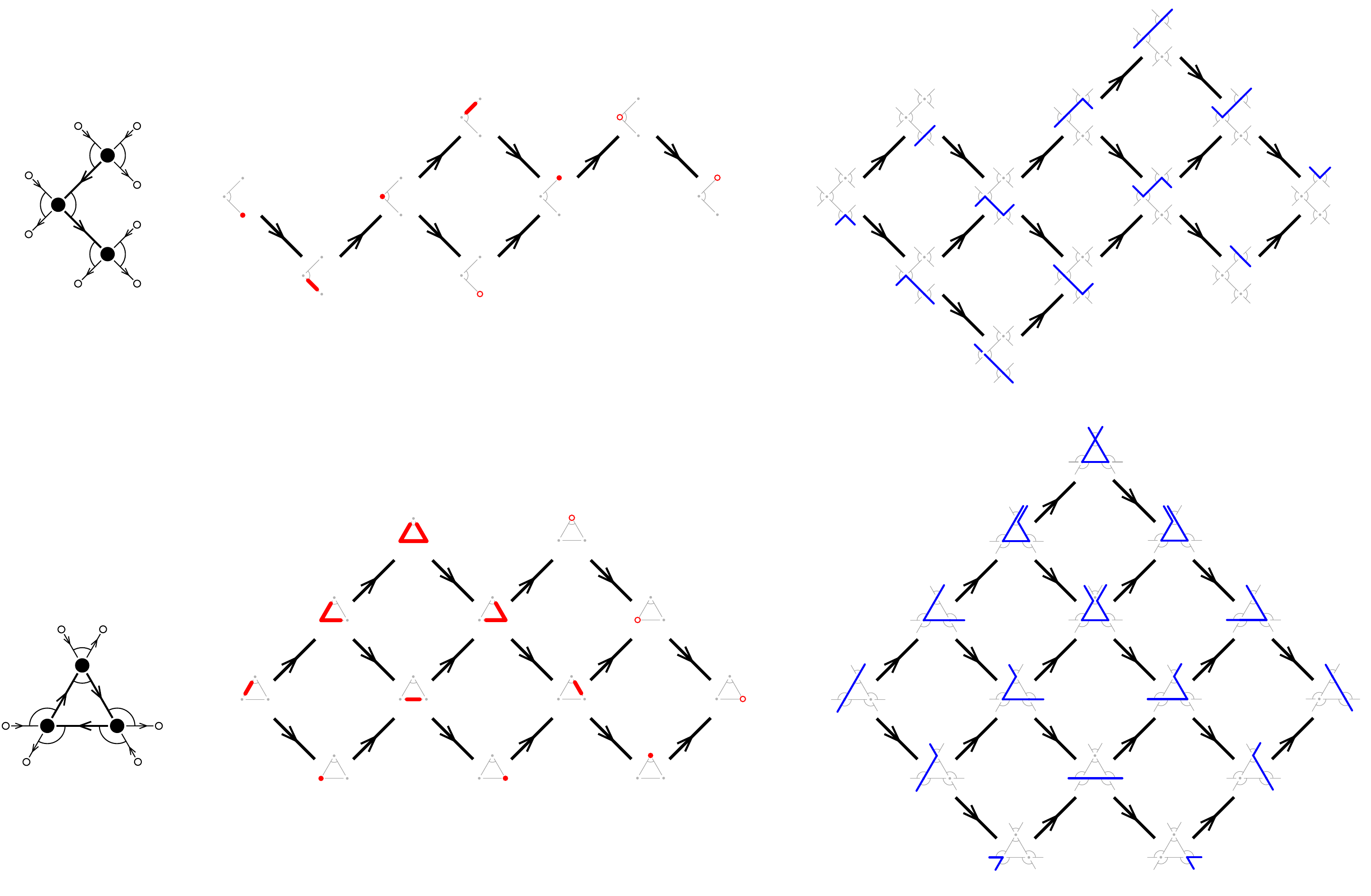}}
{The Auslander-Reiten quiver~$\AR$ for two gentle bound quivers on~$3$ vertices. By Sections~\ref{subsec:stringBandModules} and~\ref{subsec:walks2strings}, the indecomposable $\tau$-rigid or shifted projective representations of~$A = kQ/I$ can be identified either with strings of~$\bar Q$ (left) or with walks on~$\bar Q\blossom$ (right). The additional nodes on the walk picture (right) correspond to straight walks.}
{fig:exmAuslanderReitenQuiver}
\captionsetup{width=\textwidth}


\clearpage
\part{The non-kissing lattice}
\label{part:lattice}

This section focuses on lattice-theoretic properties of non-kissing complexes.
Referring to~\ref{sec:recollectionLattice} for proper definitions of the lattice-theoretic concepts involved, let us immediately state the main result of this section.

\begin{theorem}
\label{thm:lattice}
If~$\bar Q$ is a gentle bound quiver whose non-kissing complex~$\RNKC$ is finite, then the non-kissing oriented flip graph~$\NKG$ is the Hasse diagram of a congruence-uniform lattice.
\end{theorem}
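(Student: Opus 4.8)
The plan is to follow the strategy of T.~McConville~\cite{McConville} and exhibit the non-kissing lattice $\NKL$ as a lattice quotient of a lattice of biclosed sets of strings, then transfer congruence-uniformity from the latter.

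First I would define a closure operator on the set $\strings^\pm(\bar Q)$ of undirected strings of $\bar Q$, which is finite under our standing hypothesis that $\RNKC$ is finite. Roughly, a set $X$ of strings is \emph{closed} when it is stable under composing two of its members along a shared endpoint at which the resulting string bends downward, so that the closure records the transitivity of the ``forcing'' relations implicit in the non-kissing condition; $X$ is \emph{biclosed} when $X$ and its complement are both closed. Ordered by inclusion, the biclosed sets form a poset $\Bicl{\bar Q}$ with minimum $\varnothing$ and maximum $\strings^\pm(\bar Q)$, and I would show this poset is a lattice. The subtlety here is that the join of two biclosed sets is not their union: one must prove that the closure of the union of biclosed sets is again biclosed (equivalently, that biclosed sets are stable under a suitable directed operation), which is precisely where the string combinatorics of a gentle bound quiver, together with finiteness, is used.

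Next I would construct a projection $\eta$ from $\Bicl{\bar Q}$ onto the set of non-kissing facets of $\RNKC$: to a biclosed set $X$ associate the collection of walks whose distinguished strings (Definition~\ref{def:distinguishedSubstring}) are compatible with $X$, and check, using the flip analysis of Proposition~\ref{prop:flip} together with Proposition~\ref{prop:distinguishedArrows}, that $\eta(X)$ is always a non-kissing facet, that $\eta$ is surjective and order preserving, and that its fibers are intervals of $\Bicl{\bar Q}$ --- with the fiber over the peak facet $F_\peak$ the bottom interval (containing $\varnothing$) and the fiber over $F_\deep$ the top one. I would then verify that the fibers of $\eta$ are the classes of a lattice congruence $\Theta$ on $\Bicl{\bar Q}$, so that $\Bicl{\bar Q}/\Theta$ is a lattice. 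Finally, matching covers: a covering relation in $\Bicl{\bar Q}/\Theta$ corresponds exactly to an increasing flip (Definition~\ref{def:increasingFlip}) with the matching orientation, so the oriented flip graph $\NKG$ is isomorphic to the Hasse diagram of $\Bicl{\bar Q}/\Theta$; in particular $\NKG$ is acyclic and the non-kissing lattice $\NKL \eqdef \Bicl{\bar Q}/\Theta$ is well defined.

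It then remains to see that $\NKL$ is congruence-uniform. Since a finite lattice is congruence-uniform if and only if it is bounded in the sense of McKenzie, and since homomorphic images of finite bounded lattices are again bounded, it suffices to prove that $\Bicl{\bar Q}$ itself is congruence-uniform; then $\NKL = \Bicl{\bar Q}/\Theta$ inherits the property. For this I would equip the Hasse diagram of $\Bicl{\bar Q}$ with the edge-labeling sending a cover $X \lessdot X'$ to the unique $\subseteq$-minimal string in the closure of $X' \setminus X$ --- so that each cover ``adds a single prime string'' --- and verify that this is a $\mathsf{CU}$-labeling in the sense of~\cite{GarverMcConville-grid}. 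The hard parts of the whole argument are twofold: proving that the biclosed sets form a lattice (controlling the closure of unions) and checking the axioms of the $\mathsf{CU}$-labeling, both of which rest on a careful analysis of how inserting one string propagates through the closure operator; verifying that $\Theta$ is a lattice congruence is comparatively routine once the fibers of $\eta$ are understood.
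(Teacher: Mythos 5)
Your proposal follows the paper's proof strategy quite closely: define a closure operator on undirected strings (the paper's composition $\sigma\circ\tau$ inserts a positive arrow $\alpha\in Q_1$ between $\sigma$ and $\tau$, so that $\sigma\alpha\tau$ bends downward at the two new junctions --- this is the idea your ``shared endpoint'' phrasing is reaching for), show that the biclosed sets form a congruence-uniform lattice $\Bicl{\bar Q}$, build a surjection $\eta:\Bicl{\bar Q}\to\RNKC$ whose fibers are the classes of a lattice congruence $\equiv$, match the covers of the quotient with increasing flips so that $\NKG$ is the Hasse diagram of $\Bicl{\bar Q}/{\equiv}$, and transfer congruence-uniformity to the quotient. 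This is exactly what the paper does (via the maps $\projDown,\projUp$, the inverse surjections $\eta$ and $\zeta$, and Proposition~\ref{prop:characterizationCoversLatticeQuotient}).

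One point your sketch glosses over, and which the paper singles out as the main technical obstacle: when $\bar Q$ has a loop, singletons are not closed --- if $\alpha$ is a loop with $\sigma\alpha\in\strings^\pm(\bar Q)$, then $\closure{\{\sigma\}}=\{\sigma,\sigma\alpha\sigma^{-1}\}$. Consequently a cover $S\subsetneq T$ in $\Bicl{\bar Q}$ can add \emph{two} strings at once, so your claim that ``each cover adds a single prime string'' fails, and McConville's original criterion (Theorem~\ref{thm:characterizationCongruenceUniform}) does not apply as stated. The paper has to adapt it (Theorem~\ref{thm:characterizationCongruenceUniform2}), replacing ``a cover adds one element'' by ``each cover $S\subsetneq T$ is of the form $T=S\cup\closure{\{\tau\}}$ for a unique $\tau$.'' Your proposed label --- the unique $\subseteq$-minimal string in $T\ssm S$ --- is indeed that $\tau$, so the labeling is right, but the axiom behind it needs the stated modification. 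Everything else in your outline is correct and matches the paper.
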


\begin{example}
Theorem~\ref{thm:lattice} is already known for the dissection and grid bound quivers:
\begin{itemize}
\item For a dissection, it is shown in~\cite{GarverMcConville}.
\item For a subset of~$\Z^2$,  it is the central result of~\cite{McConville}.
\item For oriented paths, the non-kissing lattice is the (type~$A$) Cambrian lattice of N.~Reading~\cite{Reading-CambrianLattices}. This contains in particular the classical Tamari lattice for a directed path.
\end{itemize}
\end{example}

In fact, our proof of Theorem~\ref{thm:lattice} is an adaptation of the proof  developed by T.~McConville in~\cite{McConville}.
Namely, we define a closure operator on subsets of strings whose biclosed sets have a lattice structure~(\ref{sec:biclosedStrings}), define a natural lattice congruence on biclosed sets~(\ref{sec:latticeCongruence}), and show that the Hasse diagram of the lattice quotient is isomorphic to the non-kissing oriented flip graph~(\ref{sec:biclosedSetsToNKF}).
Finally (\ref{sec:nonFriendlyComplex}), we study the join-irreducible elements of the non-kissing lattice~$\NKL$ (Definition~\ref{def:NKL}) and the canonical join complex of~$\NKL$ which we call the non-friendly complex as in~\cite{GarverMcConville-grid}.

This section follows the same strategy as that of~\cite{McConville, GarverMcConville-grid} and many arguments are just translated to our context.
The reader will certainly appreciate that the definitions on arbitrary gentle bound quivers become somewhat uniform and do not depend on vertical and horizontal edges as in~\cite{McConville}.
However, there are two difficulties which make the proofs sometimes more technical: on the one hand, strings on arbitrary gentle bound quivers are not naturally oriented and can self-intersect which sometimes creates difficulties; on the other hand, only distinguishable strings correspond to join-irreducible elements in~$\NKL$.

In all this section, we assume that~$\RNKC$ is finite, so that~$\bar Q$ has finitely many strings and walks.
Equivalently, there is a relation of~$I$ in any non-kissing cycle of~$Q$ (oriented~or~not).

\section{Some algebraic notions on lattices}
\label{sec:recollectionLattice}

\subsection{Canonical join- and meet-representations}
\label{subsec:joinIrreducibleRep}

Consider a finite lattice~$(L,\le,\meet,\join)$ with minimal element~$\hat 0$ and maximal element~$\hat 1$.
Denote by~$x \lessdot y$ the \defn{cover relations} in~$L$, \ie $x < y$ such that there is no~$z \in L$ with~$x < z < y$.

\begin{definition}
An element~$x \in L$ is \defn{join-irreducible} (resp.~\defn{meet-irreducible}) if~$x \ne \hat 0$ (resp.~$x \ne \hat 1$) and~$x = y \join z$ (resp.~$x = y \meet z$) implies~$x = y$ or~$x = z$.
In other words, $x$ covers a unique element~$x_\star$ (resp.~is covered by a unique element~$x^\star$).
We denote by~$\JI(L)$ and~$\MI(L)$ the subposets induced by join and meet irreducible elements of~$L$.
\end{definition}

Join- and meet-irreducible elements are building blocks for join- and meet-representations.

\begin{definition}
A \defn{join-representation} of~$x \in L$ is a subset~$J \subseteq L$ such that~$x = \bigJoin J$.
Such a representation is \defn{irredundant} if~$x \ne \bigJoin J'$ for any strict subset~$J' \subsetneq J$.
The irredundant join-representations of an element~$x \in L$ are ordered by containement of the lower ideals of their elements, \ie~$J \le J'$ if and only if for any~$y \in J$ there exists~$y' \in J'$ such that~$y \le y'$ in~$L$.
The \defn{canonical join-representation} of~$x$ is the minimal irredundant join-representation of~$x$ for this order when it exists.
The \defn{canonical meet-representation} is defined dually.
\end{definition}

Note that the canonical join-representation does not always exists.
However, when it exists, its elements are necessarily join-irreducible.
The existence of canonical join- and meet-representations are guarantied by the following condition~\cite[Thm.~2.24]{FreeseNation}.

\begin{definition}
A lattice is \defn{semi-distributive} if
\[
x \join z = y \join z \Longrightarrow x \join z = (x \meet y) \join z
\qquad\text{and}\qquad
x \meet z = y \meet z \Longrightarrow x \meet z = (x \join y) \meet z
\]
for any~$x, y, z \in L$, or equivalently if any element admits canonical join- and meet-representations.
\end{definition}

The following characterization of the canonical join- and meet-representations in a semidistributive lattice was recently established by E.~Barnard~\cite{Barnard}.

\begin{proposition}[{\cite[Lem.~3.3]{Barnard}}]
\label{prop:canonicalJoinRepresentation}
Consider a semi-distributive lattice~$(L,\le,\join,\meet)$. Then
\begin{enumerate}[(i)]
\item For any cover relation~$x \lessdot y$ in~$L$, there exists a unique join-irreducible~$\ji(x \lessdot y) \in \JI(L)$ such that~$x \join \ji(x \lessdot y) = y$ and~$x \meet \ji(x \lessdot y) = \ji(x \lessdot y)_\star$.
\item The canonical join-representation of~$y \in L$ is given by
\[
y = \bigJoin_{\substack{x \in L \\ x \lessdot y}} \ji(x \lessdot y).
\]
\end{enumerate}
A dual statement also holds for canonical meet-representations.
\end{proposition}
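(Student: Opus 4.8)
The plan is to reconstruct the argument of E.~Barnard \cite[Lem.~3.3]{Barnard}, working throughout in a finite semi-distributive lattice $(L,\le,\join,\meet)$; recall that semi-distributivity together with finiteness guarantees that every element has a canonical join-representation and a canonical meet-representation \cite[Thm.~2.24]{FreeseNation}. The first task is to isolate the label $\ji(x\lessdot y)$. Fix a cover relation $x\lessdot y$ and consider $S\eqdef\set{z\in L}{x\join z=y}$. It is nonempty (it contains $y$), and by join-semi-distributivity it is closed under meets; since $L$ is finite, $S$ has a minimum element $j\eqdef\bigMeet S$. If $j=a\join b$ with $a,b<j$, then $a,b\notin S$, so $x\join a,x\join b\in\{x,y\}$ are both equal to $x$ (they cannot equal $y$ by minimality of $j$), whence $a,b\le x$ and $j\le x$, contradicting $x\join j=y$; thus $j\in\JI(L)$, with unique lower cover $j_\star$. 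Since $j\not\le x$ we have $x\meet j<j$, so $x\meet j\le j_\star$; conversely $x\join j_\star\in\{x,y\}$ equals $x$ (again by minimality of $j$), so $j_\star\le x$ and hence $x\meet j=j_\star$. Finally, if $j'\in\JI(L)$ also satisfies $x\join j'=y$ and $x\meet j'=j'_\star$, then $j'\in S$ gives $j\le j'$, while $j<j'$ would force $j\le j'_\star\le x$ (every element strictly below a join-irreducible lies under its lower cover), contradicting $j\not\le x$; so $j=j'$. This proves (i), and we set $\ji(x\lessdot y)\eqdef j$.

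For (ii), write $J\eqdef\set{\ji(x\lessdot y)}{x\lessdot y\text{ in }L}$. First, $\bigJoin J\le y$ since every $\ji(x\lessdot y)\le y$; if the join were strictly below $y$ it would lie under some lower cover $x_0\lessdot y$, forcing $\ji(x_0\lessdot y)\le\bigJoin J\le x_0$, which is absurd, so $\bigJoin J=y$. Next, the map $x\lessdot y\mapsto\ji(x\lessdot y)$ is injective: if $\ji(x_1\lessdot y)=\ji(x_2\lessdot y)=j$ with $x_1\ne x_2$, then $x_1\meet j=x_2\meet j=j_\star$, and meet-semi-distributivity gives $(x_1\join x_2)\meet j=j_\star$; but $x_1\join x_2=y$ (two distinct lower covers of $y$), so $j=y\meet j=j_\star$, a contradiction. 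To see that $J$ is irredundant and is in fact the canonical join-representation, let $y=\bigJoin D$ be the abstractly existing canonical join-representation, with $D$ an antichain of join-irreducibles. For each lower cover $x\lessdot y$ one shows, using join- and meet-semi-distributivity in the same spirit as above, that among the joinands $d\in D$ with $d\not\le x$ there is precisely one, and that it coincides with $\ji(x\lessdot y)$; conversely every $d\in D$ escapes some lower cover and is thus of the form $\ji(x\lessdot y)$. Hence $J=D$, which is (ii). The dual statement for canonical meet-representations follows by applying everything above in the order-dual lattice $L^{\mathrm{op}}$.

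The routine semi-distributivity manipulations in (i) and in the injectivity statement are unproblematic; \emph{the main obstacle is the last identification step}, namely proving that for each lower cover $x\lessdot y$ exactly one canonical joinand $d\in D$ satisfies $d\not\le x$ and that this $d$ equals $\ji(x\lessdot y)$. The delicate point is that naive applications of join-semi-distributivity become circular whenever the two elements being combined are comparable, so one must feed in a genuinely different lower cover or joinand at each step; bookkeeping of which pairs remain incomparable is where the argument needs care.
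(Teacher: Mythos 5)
The paper gives no proof of this proposition; it is cited verbatim from Barnard's {[Lem.~3.3]}, so there is no ``paper argument'' to compare against. Evaluating your reconstruction on its own terms: part~(i) is complete and correct, and your arguments in part~(ii) that $\bigJoin J = y$ and that the map $x\lessdot y \mapsto \ji(x\lessdot y)$ is injective are also fine.

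The gap you flag yourself at the end is real, but the way you propose to close it makes it harder than it needs to be. You do not need to introduce the abstract canonical representation $D$, prove a uniqueness statement about joinands escaping each lower cover, and then reconcile $J$ with $D$; that route is indeed awkward and the circularity worry is legitimate. The cleaner closure uses nothing beyond the minimality of $j = \min S$ that you already established, plus the cover relation, and bypasses any further appeal to semi-distributivity. First, $J$ refines every join-representation: if $y = \bigJoin A$ and $j = \ji(x\lessdot y)$, then since $\bigJoin A = y \not\le x$ there is some $a\in A$ with $a\not\le x$; then $x < x\join a \le y$ forces $x\join a = y$ (cover), so $a\in S$ and $j = \min S \le a$. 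Second, $J$ is irredundant: if $j = \ji(x\lessdot y)$ and $j' = \ji(x'\lessdot y)$ for a different lower cover $x'\ne x$, then $x\join x' = y$ (two distinct lower covers of the same element), so $x\in S'$ and $j' = \min S' \le x$; hence $\bigJoin\big(J\ssm\{j\}\big) \le x < y$. Combining the two, $J$ is an irredundant join-representation below every join-representation of $y$ in the refinement order, so it is the canonical one. In particular your intermediate ``exactly one $d\in D$ escapes $x$'' is a corollary of the conclusion rather than a lemma you must prove en route; abandoning that intermediate goal removes the bookkeeping concern you raised.
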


Finally, as they are closed by subsets, canonical join- and meet-representations naturally define simplicial complexes.

\begin{definition}
The \defn{canonical join complex} of~$L$ is the simplicial complex on join-irreducible elements of~$L$ whose faces are the canonical join-representations of the elements of~$L$.
The \defn{canonical meet complex} is defined dually.
\end{definition}

Recall that a simplicial complex is \defn{flag} when its minimal non-faces are edges, \ie when it is the clique complex of its graph.
The following statement was proved by E.~Barnard~\cite{Barnard}.

\begin{theorem}[{\cite[Thm.~1.1]{Barnard}}]
The canonical join complex of a semi-distributive lattice~is~flag.
\end{theorem}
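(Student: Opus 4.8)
The plan is to reformulate flagness as a statement about pairs of join-irreducibles and then bootstrap the general case from the two-element case, using the extremal maps $\kappa$ attached to a finite semidistributive lattice. Recall that a simplicial complex is flag precisely when, as soon as every $2$-element subset of a set $A$ is a face, the whole of $A$ is a face. Since $L$ is semidistributive, every element of $L$ admits a canonical join-representation, and by definition the faces of the canonical join complex are exactly the subsets $A \subseteq \JI(L)$ that coincide with the canonical join-representation of $\bigJoin A$. So the goal is: whenever $A \subseteq \JI(L)$ is such that $\{j, j'\}$ is the canonical join-representation of $j \join j'$ for all distinct $j, j' \in A$, then $A$ is the canonical join-representation of $\bigJoin A$.

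First I would set up the $\kappa$-machinery. For $j \in \JI(L)$ with unique lower cover $j_\star$, note that every element strictly below $j$ is below $j_\star$ (since $j$ is join-irreducible), so one may define
\[
\kappa(j) \eqdef \bigJoin \set{z \in L}{z \ge j_\star \text{ and } z \not\ge j}.
\]
This set is nonempty as it contains $j_\star$, and each of its elements $z$ satisfies $j_\star \le z \meet j < j$, hence $z \meet j = j_\star$; iterated meet-semidistributivity then gives $\kappa(j) \meet j = j_\star$, so $\kappa(j) \not\ge j$ and $\kappa(j)$ is the largest element of $L$ lying above $j_\star$ but not above $j$.

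The technical heart of the argument is the following characterization, to be proved using semidistributivity (this $\kappa$-calculus is by now standard, compare \cite{FreeseNation} and \cite{Barnard}): for an antichain $A \subseteq \JI(L)$ with $x \eqdef \bigJoin A$, the set $A$ is the canonical join-representation of $x$ if and only if $\bigJoin(A \ssm \{j\}) \le \kappa(j)$ for every $j \in A$. For the ``if'' direction, irredundancy is immediate since $\bigJoin(A \ssm \{j\}) \le \kappa(j) \not\ge j$ while $x \ge j$; to see that $A$ is moreover the finest irredundant join-representation of $x$, one takes an arbitrary irredundant join-representation $B$ of $x$, picks for each $j \in A$ a member $b \in B$ with $b \not\le \kappa(j)$ (possible because $x = \bigJoin B \not\le \kappa(j)$), and then uses the maximality of $\kappa(j)$ together with join-semidistributivity to force $j \le b$. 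For the ``only if'' direction, if $\bigJoin(A \ssm \{j\}) \not\le \kappa(j)$ for some $j \in A$, one builds from $A$ a strictly finer irredundant join-representation of $x$ by replacing $j$ with suitable elements strictly below it (obtained as meets with $\kappa(j)$), contradicting canonicity.

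Finally I would deduce flagness. Let $A \subseteq \JI(L)$ have all of its $2$-element subsets as faces; in particular $A$ is an antichain of join-irreducibles. Fix $j \in A$; for each $j' \in A \ssm \{j\}$ the pair $\{j, j'\}$ is the canonical join-representation of $j \join j'$, so the ``only if'' half of the characterization gives $j' = \bigJoin(\{j, j'\} \ssm \{j\}) \le \kappa(j)$, whence $\bigJoin(A \ssm \{j\}) \le \kappa(j)$. As this holds for every $j \in A$, the ``if'' half shows that $A$ is the canonical join-representation of $\bigJoin A$, that is, a face. Hence every minimal non-face of the canonical join complex has at most two elements, so the complex is flag. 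I expect the main obstacle to be the characterization lemma itself --- concretely the canonicity (``finest representation'') part of the ``if'' direction and the refinement construction in the ``only if'' direction, which are precisely the places where semidistributivity must be exploited with care; once the lemma is available, the passage from pairs to arbitrary faces is purely formal.
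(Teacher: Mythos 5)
The paper does not prove this statement but only cites it as Theorem 1.1 of Barnard's \emph{The canonical join complex}; your proposal correctly reconstructs essentially that proof. The $\kappa$-characterization you isolate (that an antichain $A \subseteq \JI(L)$ is the canonical join-representation of $\bigJoin A$ iff $\bigJoin(A\ssm\{j\}) \le \kappa(j)$ for every $j \in A$) is indeed Barnard's key lemma, both directions of your sketch can be completed as indicated (for the ``if'' direction, with $w = \bigJoin(A\ssm\{j\})$ and $y = w\join j_\star \le \kappa(j)$, one has $b\join y = j\join y = x$, so join-semidistributivity gives $(b\meet j)\join y = x$, which contradicts $b\meet j \le j_\star \le y < x$ unless $j\le b$), and the passage from pairs to arbitrary subsets is exactly her deduction of flagness.
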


\subsection{Lattice congruences}
\label{sec:latticeTheory}

We now remind the reader with the definition of lattice congruences and refer to~\cite{Reading-LatticeCongruences, Reading-CambrianLattices} for further details.

\begin{definition}
An \defn{order congruence} is an equivalence relation~$\equiv$ on a poset~$P$ such that:
\begin{enumerate}[(i)]
\item Every equivalence class under~$\equiv$ is an interval of~$P$.
\item The projection~$\projDown : P \to P$ (resp.~$\projUp : P \to P$), which maps an element of~$P$ to the minimal (resp.~maximal) element of its equivalence class, is order preserving.
\end{enumerate}
The \defn{quotient}~$P/{\equiv}$ is a poset on the equivalence classes of~$\equiv$, where the order relation is defined by~$X \le Y$ in~$P/{\equiv}$ iff there exists representatives~$x \in X$ and~$y \in Y$ such that~$x \le y$ in~$P$. The quotient~$P/{\equiv}$ is isomorphic to the subposet of~$P$ induced by~$\projDown(P)$ (or equivalently by~$\projUp(P)$).

If moreover~$P$ is a finite lattice, then~$\equiv$ is automatically a \defn{lattice congruence}, meaning that it is compatible with meets and joins: for any~$x \equiv x'$ and~$y \equiv y'$, we have~$x \meet y \equiv x' \meet y'$ and~$x \join y \equiv x' \join y'$. The poset quotient~$P/{\equiv}$ then inherits a lattice structure where the meet~$X \meet Y$ (resp.~the join~$X \join Y$) of two congruence classes~$X$ and~$Y$ is the congruence classe of~$x \meet y$ (resp.~of~$x \join y$) for arbitrary representatives~$x \in X$ and~$y \in Y$.
\end{definition}

\begin{example}
A lattice congruence is illustrated on \fref{fig:exmLatticeQuotient}\,(left).
\end{example}

\begin{remark}
\label{rem:sublattice}
For a lattice congruence~$\equiv$ on a lattice~$L$, the subposet of~$L$ induced by~$\projDown(L)$ is automatically a join-sublattice of~$L$, meaning that for any~$x,y \in L$, we have~${\projDown(x) \join \projDown(y) \in \projDown(L)}$.
However, $\projDown(L)$ can fail to be a sublattice of~$L$, since it might happen that~$\projDown(x) \meet \projDown(y) \notin \projDown(L)$.
The dual remark holds for~$\projUp(L)$.
\end{remark}

Note that for any order congruence~$\equiv$ the projection maps~$\projDown$ and~$\projUp$ satisfy~${\projDown(x) \le x \le \projUp(x)}$, ${\projUp \circ \projUp = \projUp \circ \projDown = \projUp}$ and ${\projDown \circ \projDown = \projDown \circ \projUp = \projDown}$, and $\projUp$ and~$\projDown$ are order preserving.
The following lemma shows the reciprocal statement.
It is proved for example in~\cite[Lem.~4.2]{DermenjianHohlwegPilaud}.

\begin{lemma}
\label{lem:conditionsProjectionMaps}
If two maps~$\projUp : P \to P$ and~$\projDown : P \to P$ satisfy
\begin{enumerate}[(i)]
\item
\label{item:conditionsProjectionMapsSandwich}
$\projDown(x) \le x \le \projUp(x)$ for any element~$x \in P$,

\item
\label{item:conditionsProjectionMapsLastWins}
$\projUp \circ \projUp = \projUp \circ \projDown = \projUp$ and $\projDown \circ \projDown = \projDown \circ \projUp = \projDown$,

\item
\label{item:conditionsProjectionMapsOrderPreserving}
$\projUp$ and~$\projDown$ are order preserving,
\end{enumerate}
then the fibers of~$\projUp$ and~$\projDown$ coincide and the relation~$\equiv$ on~$P$ defined by
\[
x \equiv y \iff \projUp(x) = \projUp(y) \iff \projDown(x) = \projDown(y)
\]
is an order congruence on~$P$ with projection maps~$\projUp$ and~$\projDown$.
\end{lemma}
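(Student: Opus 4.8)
The plan is to derive the conclusion from the three hypotheses by purely elementary manipulations, using only that $P$ is a poset (antisymmetry is used, but no lattice structure is needed). The guiding slogan is that ``the last projection wins'': in any composite of $\projUp$'s and $\projDown$'s, hypothesis~(ii) collapses everything to the outermost factor.

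First I would show that $\projUp$ and $\projDown$ have the same fibers. If $\projUp(x) = \projUp(y)$, applying $\projDown$ and using $\projDown \circ \projUp = \projDown$ gives $\projDown(x) = \projDown(\projUp(x)) = \projDown(\projUp(y)) = \projDown(y)$; symmetrically, $\projDown(x) = \projDown(y)$ forces $\projUp(x) = \projUp(y)$ via $\projUp \circ \projDown = \projUp$. Hence the two conditions $\projUp(x) = \projUp(y)$ and $\projDown(x) = \projDown(y)$ are equivalent, so $\equiv$ is unambiguously defined and is an equivalence relation, being the kernel of a map. Moreover $\projUp(x) \equiv x$ and $\projDown(x) \equiv x$: indeed $\projUp(\projUp(x)) = \projUp(x)$ and $\projUp(\projDown(x)) = \projUp(x)$ by~(ii). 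Combining this with the sandwich hypothesis~(i) applied to any $z$ with $z \equiv x$ (so $\projDown(z) = \projDown(x)$, $\projUp(z) = \projUp(x)$) yields $\projDown(x) = \projDown(z) \le z \le \projUp(z) = \projUp(x)$, so $\projDown(x)$ and $\projUp(x)$ are respectively a minimum and a maximum of the $\equiv$-class of $x$, and both lie in it.

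Next I would prove that this class is exactly the interval $[\projDown(x), \projUp(x)]$, which is the remaining content of condition~(i) in the definition of an order congruence and also pins down the projection maps. The inclusion $\subseteq$ was just established. For $\supseteq$, suppose $\projDown(x) \le z \le \projUp(x)$; applying the order-preserving map $\projDown$ and using $\projDown \circ \projDown = \projDown$ and $\projDown \circ \projUp = \projDown$ gives $\projDown(x) \le \projDown(z) \le \projDown(x)$, hence $\projDown(z) = \projDown(x)$ by antisymmetry, i.e. $z \equiv x$. Thus the classes are intervals, and the map sending an element to the minimum (resp.\ maximum) of its class is precisely $\projDown$ (resp.\ $\projUp$). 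Since these maps are order preserving by hypothesis~(iii), condition~(ii) of the definition of an order congruence holds as well, and the proof is complete.

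I do not expect a real obstacle; the argument is a short formal verification. The only points that require a little care are bookkeeping of which of the four absorption/idempotency identities in~(ii) is used at each step, and noting that the one place where antisymmetry of $P$ is invoked (the squeeze $\projDown(x) \le \projDown(z) \le \projDown(x)$) is legitimate because $P$ is a genuine poset, not merely a preorder.
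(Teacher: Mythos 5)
Your proof is correct and complete. Note that the paper does not actually supply a proof of this lemma; it defers to \cite[Lem.~4.2]{DermenjianHohlwegPilaud}. Your argument is the standard one and proceeds exactly as one would expect: the absorption identities in~(ii) immediately force the fibers of $\projUp$ and $\projDown$ to coincide and place $\projUp(x)$ and $\projDown(x)$ in the class of $x$; the sandwich condition~(i) makes them the maximum and minimum of the class; the squeeze argument using the monotonicity from~(iii) shows each class is exactly the interval $[\projDown(x),\projUp(x)]$; and hypothesis~(iii) is then literally condition~(ii) in the paper's definition of an order congruence. Your attention to which absorption identity is used where, and to the one use of antisymmetry, is exactly the right level of care for this short formal verification.
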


\begin{definition}
Given two lattice congruences~$\equiv_1, \equiv_2$ on a poset~$P$, we say that~$\equiv_1$ \defn{refines}~$\equiv_2$ (or that~$\equiv_2$ \defn{coarsens}~$\equiv_1$) when~$x \equiv_1 y$ implies~$x \equiv_2 y$.
The set of order congruences~$\Cong(P)$ on~$P$ is partially ordered by refinement.
The minimal element of~$\Cong(P)$ is the complete congruence ($x \equiv y$ for all~$x,y \in P$) while the maximal element of~$\Cong(P)$ is the empty congruence ($x \not\equiv y$ for all~$x \ne y \in P$).
\end{definition}

\subsection{Congruence-uniform lattices}
\label{subsec:congruenceUniformLattices}

A lattice~$L$ is \defn{distributive} when~$(x \join y) \meet z = (x \meet z) \join (y \meet z)$ for any~$x,y,z \in L$ (which implies as well the dual equality~$(x \meet y) \join z = (x \join z) \meet (y \join z)$).
Birkhoff's representation theorem for distributive lattices affirms that any distributive lattice can be represented as the lattice of order ideals of its subposet~$\JI(L)$ of join-irreducible elements.

This result plays a particularly relevant role with respect to lattice congruences.
Namely, the refinement poset~$\Cong(L)$ of lattice congruences on~$L$ is known to be a distributive lattice, which is thus represented by the lattice of order ideals of~$\JI(\Cong(L))$.
For any cover relation~$x \lessdot y$ in~$L$, there exists a finest lattice congruence~$\con(x,y)$ which identifies~$x$ and~$y$.
The congruence~$\con(x,y)$ is join-irreducible in~$\Cong(L)$.
In fact, all join-irreducible elements in~$\Cong(L)$ are actually of the form~$\con(x_\star,x)$ for some join irreducible~$x \in \JI(L)$.
In other words, the map~$x \mapsto \con(x_\star,x)$ defines a surjection from the join-irreducibles of~$\JI(L)$ to that of~$\JI(\Cong(L))$.
Dually, the map~${x \mapsto \con(x,x^\star)}$ defines a surjection from the meet-irreducibles of~$\MI(L)$ to that of~$\MI(\Cong(L))$.
In this section, we will be mainly interested in the following family of lattices, for which the join- and meet-irreducibles of~$\Cong(L)$ are understood directly from those of~$L$.

\begin{definition}
A lattice~$L$ is \defn{congruence-uniform} if the maps
\[
\begin{array}{ccccccc}
\JI(L) & \longrightarrow & \JI\big(\Cong(L)\big) &
\multirow{2}{*}{\qquad\text{and}\qquad} & 
\MI(L) & \longrightarrow & \MI\big(\Cong(L)\big) \\
x & \longmapsto & \con(x_\star,x) & & x & \longmapsto & \con(x,x^\star)
\end{array}
\]
are bijections.
\end{definition}

\begin{remark}
For finite lattices, it is known that 
\begin{itemize}
\item Congruence-uniformity implies semi-distributivity.
\item Congruence-uniformity and semi-distributivity are preserved by lattice quotients: if~$\equiv$ is a lattice congruence on a congruence-uniform (resp.~semi-distributive) lattice~$L$, then~$L/{\equiv}$ is also congruence-uniform (resp.~semi-distributive).
\end{itemize}
\end{remark}

\subsection{Closure operators and biclosed sets}

We now consider closure operators and the corresponding closed sets. In all this section, the ground set~$\cS$ is assumed to be finite.

\begin{definition}
\label{def:closureOperator}
A \defn{closure operator} on a finite set~$\cS$ is a map~$S \mapsto \closure{S}$ on subsets of~$\cS$ such that
\[
\closure{\varnothing} = \varnothing,
\qquad
S \subseteq \closure{S},
\qquad
\closure{(\closure{S})} = \closure{S},
\qquad\text{and}\qquad
S \subseteq T \Longrightarrow \closure{S} \subseteq \closure{T},
\]
for any~$S,T \subseteq \cS$.
A subset~$S \subseteq \cS$ is \defn{closed} if~$\closure{S} = S$, \defn{coclosed} if~$\cS \ssm S$ is closed, and \defn{biclosed} if it is both closed and coclosed.
We denote by~$\Bicl{\cS}$ the inclusion poset of biclosed subsets of~$\cS$.
Finally, we denote by~$\coclosure{S} \eqdef \cS \ssm \closure{(\cS \ssm S)}$ the \defn{coclosure} of~$S \subseteq \cS$.
\end{definition}

Note that~$\Bicl{\cS}$ always has minimal element~$\varnothing$ and maximal element~$\cS$.
We omit to mention the closure operator in the notation~$\Bicl{\cS}$ since it will be fixed once and for all.
The following result developed in~\cite[Sect.~5]{McConville} gives a very convenient criterion to show that the inclusion poset of biclosed sets of~$\cS$ is a congruence-uniform lattice.

\begin{theorem}[{\cite[Thm.~5.5]{McConville}}]
\label{thm:characterizationCongruenceUniform}
If~$(\cS, \vartriangleleft)$ is a poset with a closure operator~$S \mapsto \closure{S}$~such~that
\begin{enumerate}[(i)]
\item for all~$S,T \in \Bicl{\cS}$ with~$S \subsetneq T$, there exists~$\tau \in T \ssm S$ such that~$S \cup \{\tau\} \in \Bicl{\cS}$, 
\item $R \cup \closure{\big((S \cup T) \ssm R\big)} \in \Bicl{\cS}$ for~$R,S,T \in \Bicl{\cS}$ with~$R \subseteq S \cap T$, and
\item if~$\rho,\sigma,\tau \in \cS$ with~$\rho \in \closure{\{\sigma,\tau\}} \ssm \{\sigma,\tau\}$, then~$\sigma \vartriangleleft \rho$ and~$\tau \vartriangleleft \rho$,
\end{enumerate}
then the inclusion poset~$\Bicl{\cS}$ of biclosed sets of~$\cS$ is a congruence-uniform lattice.
\end{theorem}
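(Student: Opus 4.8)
The plan is to establish the three features that, for a finite lattice, jointly characterise congruence-uniformity: that $\Bicl{\cS}$ is a lattice, that it is semidistributive, and that it is congruence-normal (for a finite lattice, semidistributive $+$ congruence-normal $=$ congruence-uniform; see \cite{Reading-LatticeCongruences, McConville}). Each of the three hypotheses is designed for one of these steps, with hypothesis~(i) additionally supplying the grading that produces a canonical edge-labeling of the Hasse diagram.

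First I would dispose of the lattice structure. Since intersections of closed sets are closed and unions of coclosed sets are coclosed, the natural candidate for the join of two biclosed sets~$S,T$ is~$\closure{S\cup T}$; hypothesis~(ii) applied with $R=\varnothing$ says precisely that $\closure{S\cup T}$ is biclosed, and it is visibly the least biclosed set containing $S\cup T$. As $\Bicl{\cS}$ is finite with bottom $\varnothing$ and top $\cS$, the existence of all binary joins makes it a lattice, $S\wedge T$ being the join of all biclosed subsets of $S\cap T$. Next, hypothesis~(i) forces gradedness: if $S\lessdot T$ then (i) yields some $\tau\in T\ssm S$ with $S\cup\{\tau\}$ biclosed, and $S\subsetneq S\cup\{\tau\}\subseteq T$ gives $S\cup\{\tau\}=T$. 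So cardinality is a rank function, and I can label every cover $S\lessdot T$ by the unique element $\lambda(S,T)\in T\ssm S$; this labeling automatically separates the edges leaving any fixed element upward, and likewise downward.

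It then remains to check that $\lambda$ is a congruence-uniform labeling, i.e.\ that it simultaneously witnesses semidistributivity and congruence-normality. For semidistributivity I would use hypothesis~(ii) in full strength: it controls how $\closure{S\cup T}$ varies and, through a manipulation of the biclosed sets $R\cup\closure{(S\cup T)\ssm R}$ for well-chosen $R\subseteq S\cap T$, yields the join-semidistributive law $x\vee z=y\vee z\implies x\vee z=(x\wedge y)\vee z$, the meet law being dual via the coclosure. One must also verify the polygon conditions on $\lambda$ required of a congruence-normal labeling — in each polygon the two sides are matched up by $\lambda$ at their extreme edges — which is bookkeeping with~(i) and~(ii). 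Finally, congruence-normality reduces to acyclicity of the forcing relation on labels, and this is exactly what hypothesis~(iii) delivers: an interior edge of a polygon carrying a label $\rho$ arises only via a relation $\rho\in\closure{\{\sigma,\tau\}}\ssm\{\sigma,\tau\}$ with $\sigma,\tau$ the relevant side labels, so by~(iii) one has $\sigma\vartriangleleft\rho$ and $\tau\vartriangleleft\rho$; thus the forcing relation refines the strict partial order $\vartriangleleft$ on $\cS$ and has no directed cycle. Assembling the pieces, $\Bicl{\cS}$ is semidistributive and congruence-normal, hence congruence-uniform.

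The lattice axioms, the grading, and the passage from~(iii) to congruence-normality are routine. The genuinely delicate point — and the step I expect to absorb most of the effort — is semidistributivity: one has to track precisely how the closure and coclosure operators interact along joins and meets of biclosed sets and extract the semidistributive laws (and the polygon behaviour of $\lambda$) from the single identity in~(ii). This is where essentially all of the content sits.
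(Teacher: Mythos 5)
Your plan is, in outline, the proof the paper is implicitly invoking. The paper does not actually give its own proof of Theorem~\ref{thm:characterizationCongruenceUniform} — it cites it directly to McConville — but the detailed proof it does write out, for the adapted Theorem~\ref{thm:characterizationCongruenceUniform2}, follows exactly your decomposition: biclosed sets form a lattice via (ii) with $R=\varnothing$; (i) gives the grading and the canonical edge-labeling $\lambda(S,T)=T\ssm S$; semidistributivity and the polygon conditions of a CN-labeling are extracted from (ii) (this is the paper's numbered claims (0)–(2), which do take some care beyond "bookkeeping"); (iii) supplies the remaining polygon condition that interior labels dominate the two bottom labels in $\vartriangleleft$; and the conclusion follows from Reading's criterion that a finite lattice which is semidistributive and admits a CN-labeling is congruence-uniform. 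So your proposal matches the paper's route; the only real difference is that you leave the semidistributivity step and the top-edge label identification as acknowledged work, which is precisely the content the paper spells out (in the adapted setting) via its claims (1) and (2).
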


\begin{remark}
\label{rem:meetJoinBiclosedSets}
Condition~(ii) of Theorem~\ref{thm:characterizationCongruenceUniform} applied to~$R = \varnothing$ implies that
\[
S_1, \dots, S_\ell \in \Bicl{\bar Q} \quad \Longrightarrow \quad \closure{\Big( \bigcup_{i \in [\ell]} S_i \Big)} \in \Bicl{\bar Q}.
\]
In particular the meet and join of biclosed sets~$S_1, \dots, S_\ell \in \Bicl{\bar Q}$ are given by
\[
\bigJoin_{i \in [\ell]} S_i = \closure{\Big( \bigcup_{i \in [\ell]} S_i \Big)}
\qquad\text{and}\qquad
\bigMeet_{i \in [\ell]} S_i = \cS \ssm \closure{\Big( \bigcup_{i \in [\ell]} \cS \ssm S_i \Big)} = \coclosure{\Big( \bigcap_{i \in [\ell]} S_i \Big)}.
\]
\end{remark}

\medskip
Unfortunately, the criterion Theorem~\ref{thm:characterizationCongruenceUniform} does not apply directly when singletons are not closed.
The issue comes from the following observation.

\begin{lemma}
\label{lem:loops}
For any closure operator~$S \mapsto \closure{S}$ on a ground set~$\cS$, any biclosed set~$S \in \Bicl{\cS}$ and any~$\sigma \in \cS$, we have~$\sigma \in S \iff \closure{\{\sigma\}} \subseteq S \iff \closure{\{\sigma\}} \cap S \ne \varnothing$.
\end{lemma}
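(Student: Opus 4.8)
The statement to prove is Lemma~\ref{lem:loops}: for any closure operator $S \mapsto \closure{S}$ on a ground set $\cS$, any biclosed set $S \in \Bicl{\cS}$, and any $\sigma \in \cS$, we have $\sigma \in S \iff \closure{\{\sigma\}} \subseteq S \iff \closure{\{\sigma\}} \cap S \ne \varnothing$. The plan is to prove the chain of implications $\sigma \in S \implies \closure{\{\sigma\}} \subseteq S \implies \closure{\{\sigma\}} \cap S \ne \varnothing \implies \sigma \in S$ in a cycle, which establishes the equivalence of all three conditions. None of these steps requires anything beyond the defining axioms of a closure operator together with the definition of a biclosed set, so the argument is short; the only mildly delicate point is the last implication, which is where coclosedness of $S$ is genuinely used.

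First I would handle $\sigma \in S \implies \closure{\{\sigma\}} \subseteq S$. Since $S$ is biclosed, it is in particular closed, so $\closure{S} = S$. If $\sigma \in S$, then $\{\sigma\} \subseteq S$, and monotonicity of the closure operator gives $\closure{\{\sigma\}} \subseteq \closure{S} = S$. The implication $\closure{\{\sigma\}} \subseteq S \implies \closure{\{\sigma\}} \cap S \ne \varnothing$ is immediate once one observes that $\closure{\{\sigma\}} \ne \varnothing$: indeed $\sigma \in \{\sigma\} \subseteq \closure{\{\sigma\}}$ by the extensivity axiom $S \subseteq \closure{S}$, so $\closure{\{\sigma\}}$ is nonempty and, being contained in $S$, meets $S$.

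The main point, such as it is, is the last implication $\closure{\{\sigma\}} \cap S \ne \varnothing \implies \sigma \in S$. I would argue by contraposition: suppose $\sigma \notin S$. Since $S$ is biclosed, $\cS \ssm S$ is closed, so $\closure{\cS \ssm S} = \cS \ssm S$. As $\sigma \in \cS \ssm S$, monotonicity gives $\closure{\{\sigma\}} \subseteq \closure{\cS \ssm S} = \cS \ssm S$, hence $\closure{\{\sigma\}} \cap S = \varnothing$. This is the contrapositive of the desired implication. Combining the three implications closes the cycle, so all three conditions are equivalent, which completes the proof. I do not anticipate any real obstacle here — the lemma is a formal consequence of the axioms — but the one thing to be careful about is not to accidentally invoke that singletons are closed (which the surrounding text explicitly warns may fail); the argument above only uses extensivity, monotonicity, and the closedness of both $S$ and its complement, never $\closure{\{\sigma\}} = \{\sigma\}$.
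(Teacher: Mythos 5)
Your proof is correct and follows essentially the same route as the paper: use closedness of $S$ for the forward chain $\sigma \in S \implies \closure{\{\sigma\}} \subseteq S \implies \closure{\{\sigma\}} \cap S \ne \varnothing$, and apply the same argument to the complement $\cS \ssm S$ (which is closed by coclosedness of $S$) to show $\sigma \notin S \implies \closure{\{\sigma\}} \cap S = \varnothing$, closing the cycle.
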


\begin{proof}
Since~$S$ is closed, we have~$\sigma \in S \Longrightarrow \closure{\{\sigma\}} \subseteq S \Longrightarrow \closure{\{\sigma\}} \cap S \ne \varnothing$.
Since~$S$ is coclosed, we can apply the same argument to the complement of~$S$, and we obtain~$\sigma \notin S \Longrightarrow \closure{\{\sigma\}} \cap S = \varnothing$.
\end{proof}

In particular, the previous lemma implies that if~$\closure{\{\sigma\}} \ne \{\sigma\}$, then $\Bicl{\cS}$ cannot satisfy Condition~(i) of Theorem~\ref{thm:characterizationCongruenceUniform}.

The following statement is an adaptation of Theorem~\ref{thm:characterizationCongruenceUniform} to the case when singletons are not closed.

\begin{theorem}[Adapted from~{\cite[Thm.~5.2 \& Thm.~5.5]{McConville}}]
\label{thm:characterizationCongruenceUniform2}
If~$(\cS, \vartriangleleft)$ is a poset with a closure operator~$S \mapsto \closure{S}$~such~that
\begin{enumerate}[(i)]
\item for each cover relation~$S \subset T$ in~$\Bicl{\cS}$, there is a unique~$\tau \in (T \ssm S)$ such that~${S \cup \closure{\{\tau\}} = T}$,
\item $R \cup \closure{\big((S \cup T) \ssm R\big)} \in \Bicl{\cS}$ for~$R,S,T \in \Bicl{\cS}$ with~$R \subseteq S \cap T$, and
\item if~$\rho,\sigma,\tau \in \cS$ with~$\rho \in \closure{\{\sigma,\tau\}} \ssm \{\sigma,\tau\}$, then~$\sigma \vartriangleleft \rho$ and~$\tau \vartriangleleft \rho$,
\end{enumerate}
then the inclusion poset~$\Bicl{\cS}$ of biclosed sets of~$\cS$ is a congruence-uniform lattice.
\end{theorem}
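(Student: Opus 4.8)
The plan is to follow the proof of Theorem~\ref{thm:characterizationCongruenceUniform} (that is, of \cite[Thm.~5.2 \& 5.5]{McConville}) essentially verbatim, the only systematic change being that wherever one adjoins a single element $\tau$ to a biclosed set in order to climb a cover relation, one instead adjoins the whole closure $\closure{\{\tau\}}$. The device that makes this substitution legitimate is Lemma~\ref{lem:loops}: for a biclosed set $S$ and an element $\sigma$, we have $\sigma \in S$ iff $\closure{\{\sigma\}} \subseteq S$ iff $\closure{\{\sigma\}} \cap S \ne \varnothing$, so that each $\closure{\{\sigma\}}$ behaves like an indivisible atom with respect to membership in biclosed sets. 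I would first record the lattice structure: $\Bicl{\cS}$ has minimum $\varnothing$ and maximum $\cS$, and by condition~(ii) with $R = \varnothing$ (Remark~\ref{rem:meetJoinBiclosedSets}) the closure of a finite union of biclosed sets is biclosed; thus $\Bicl{\cS}$ has all joins $\bigJoin_i S_i = \closure{\big(\bigcup_i S_i\big)}$, and a finite join-semilattice with a bottom is a lattice, with meets $\bigMeet_i S_i = \coclosure{\big(\bigcap_i S_i\big)}$.

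Next I would label the Hasse diagram: by condition~(i), every cover $S \lessdot T$ determines a unique $\tau(S,T) \in T \ssm S$ with $S \cup \closure{\{\tau(S,T)\}} = T$, and I set $\lambda(S \lessdot T) \eqdef \closure{\{\tau(S,T)\}}$. Lemma~\ref{lem:loops} then gives the two injectivity facts that drive the rest: the upward edges at a fixed $S$ have pairwise distinct labels (since $T = S \cup \lambda(S \lessdot T)$ is recovered from the label), and the downward edges at a fixed $T$ have pairwise distinct labels (since $\tau(S,T) \notin S$ forces $\closure{\{\tau(S,T)\}} \cap S = \varnothing$, whence $S = T \ssm \lambda(S \lessdot T)$). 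I would then prove the polygon property: for distinct covers $S \lessdot T_1, T_2$ with $U = T_1 \join T_2$, the interval $[S,U]$ is a polygon, its two sides realized by inserting $\closure{\{\tau(S,T_1)\}}$ and $\closure{\{\tau(S,T_2)\}}$ into a common maximal chain of $[S,U]$ in the two possible orders. Condition~(ii) is what is used here — it guarantees that the intermediate unions occurring along these chains are biclosed — and it also yields the label symmetry across a polygon. From polygonality together with the injectivity facts, semi-distributivity of $\Bicl{\cS}$ follows exactly as in \cite{McConville}, and with it the existence of canonical join- and meet-representations.

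To pass from semi-distributivity to congruence-uniformity, I would note that $\lambda$ is now a labeling of the type used in \cite[Thm.~5.2 \& 5.5]{McConville}, so the only remaining point is that the forcing preorder on labels — in which $\lambda(e)$ forces $\lambda(e')$ whenever $e$ and $e'$ occupy, respectively, the lower and an upper position of a common polygon — is acyclic. This is exactly what condition~(iii) provides: if $\rho \in \closure{\{\sigma,\tau\}} \ssm \{\sigma,\tau\}$ then $\sigma \vartriangleleft \rho$ and $\tau \vartriangleleft \rho$, so any forcing chain among labels strictly progresses along $\vartriangleleft$ and cannot close into a cycle. Feeding this, together with the standard projection bookkeeping of Lemma~\ref{lem:conditionsProjectionMaps}, into McConville's argument yields that $\Bicl{\cS}$ is congruence-uniform.

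The hard part will be the polygon step and, more generally, auditing McConville's proof to be sure that \emph{every} place where a singleton $\{\tau\}$ is added to or removed from a biclosed set can be replaced by $\closure{\{\tau\}}$ without damage — in particular that sets of the form $R \cup \closure{\big((S \cup T) \ssm R\big)}$ stay biclosed (condition~(ii)) and that the objects one wants to use as labels are genuinely of the form $\closure{\{\tau\}}$ rather than strictly larger closed sets. Lemma~\ref{lem:loops} should close each such gap, but it has to be invoked at essentially every step, and keeping careful track of this is where the work lies.
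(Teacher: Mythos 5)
Your proposal is correct and takes essentially the same route as the paper: replace each singleton $\{\tau\}$ in McConville's argument by $\closure{\{\tau\}}$, and observe via Lemma~\ref{lem:loops} that such a closure behaves atomically with respect to biclosed sets, so the labelling $\lambda(S \lessdot T) = \closure{\{\tau\}}$ enjoys exactly the injectivity and recoverability properties that drive McConville's proof. The paper organizes the verification a bit differently from your polygon framing: starting from two covers $S, T$ of a common $R$ (with $S = R \cup \closure{\{\sigma\}}$, $T = R \cup \closure{\{\tau\}}$), it first shows $S \vee T = R \cup \closure{\{\sigma,\tau\}}$ using condition~(ii), then directly verifies the semi-distributivity criterion ($S \wedge U = T \wedge U \Rightarrow (S \vee T)\wedge U = R \wedge U$) and the CN-labelling condition ($S \le V \lessdot S\vee T \Rightarrow S\vee T = V\cup\closure{\{\tau\}}$), the latter via coclosedness of $R\cup\closure{\{\tau\}}$ and Lemma~\ref{lem:loops}, before invoking Reading's characterization of congruence-uniform lattices. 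Your "hard part," the polygon/forcing step, is precisely what those two enumerated verifications amount to, and your identification of the roles of (ii) (biclosedness of the joins encountered) and (iii) (acyclicity of forcing via $\vartriangleleft$) matches the paper's intent.
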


\begin{proof}
We follow the proofs of~\cite[Thm.~5.2 \& Thm.~5.5]{McConville}. 
Let $R,S,T,U$ be biclosed.
Assume that $S$ and $T$ are distinct and both cover $R$, so that $R = S \wedge T$.
Let $\sigma \in S\ssm R$ and $\tau \in T \ssm R$ be such that $S = R \cup \closure{\{\sigma\}}$ and $T = R \cup \closure{\{\tau\}}$.
The conclusion of the theorem follows from the two points below:
\begin{enumerate}
 \item If $S \wedge U = T \wedge U$, then $(S\vee T) \wedge U = R \wedge U$,
 \item If $V$ is biclosed and such that $S \leq V \lessdot S \vee T$, then $S\vee T = V \cup \closure{\{\tau\}}$.
\end{enumerate}
Point (1) shows that~$\Bicl{\cS}$ is a semi-distributive lattice, while point (2) shows that it admits a CN-labelling, and is thus a congruence-uniform lattice (see~\cite[Thm.~4]{Reading-HyperplaneArrangement}).

In order to prove point (1), we will make use of the following:
\begin{itemize}
 \item[(0)] $S\vee T = R\cup\closure{\{\sigma,\tau\}}$.
\end{itemize}

We now prove the three points above:

\begin{enumerate}[(i)]
\item We have $R \cup \closure{(\closure{\{\sigma\}} \cup \closure{\{\tau\}} )} =   R \cup \closure{\{\sigma,\tau\}}$.
Condition (ii) thus shows that $R \cup \closure{\{\sigma,\tau\}}$ is biclosed.
It follows that $S\vee T = R \cup \closure{\{\sigma,\tau\}}$.

\item We let $S\wedge U$ equal $T\wedge U$. Assume $R\wedge U < (S\vee T)\wedge U$ and let $\rho \in (S\vee T) \ssm R$ satisfy $(R\wedge U) \cup \closure{\{\rho\}} \leq (S\vee T)\wedge U$ in $\Bicl{\cS}$.
Assume that $\rho\in\closure{\{\sigma\}}$.
Then, $\rho\in S$.
Since $\rho\in U$, we have ${\rho \in S \wedge U = R \wedge U}$, which is absurd.
We thus have $\rho \notin \closure{\{\sigma\}}$.
Since $S$ is coclosed, we have $\closure{\{\rho\}} \subseteq \cS\ssm S$.
In particular, $\sigma \notin \closure{\{\rho\}}$.
Similarly $\tau \notin \closure{\{\rho\}}$.
We deduce that ${\closure{\{\sigma\}} \cup \closure{\{\tau\}} \subseteq \cS\ssm \big( (R\wedge U) \cup \closure{\{\rho\}} \big)}$.
This is absurd because the latter set is closed and $\rho$ lies in~$\closure{( \closure{\{\sigma\}} \cup \closure{\{\tau\}} )}$ by (0).

\item Let $V$ be biclosed such that ${S \leq V \lessdot S \vee T}$.
There is a unique $\rho$ such that ${V \cup \closure{\{\rho\}} = S \vee T}$.
Then $\tau\notin V$ (otherwise $V$ would contain $\sigma$ and $\tau$ and thus $S\vee T$).
We thus have $\tau\in\closure{\{\rho\}}$, and $\closure{\{\tau\}}\subseteq\closure{\{\rho\}}$.
Assume that $\rho\notin\closure{\{\tau\}}$.
Then $\rho$ is not in $R\cup\closure{\{\tau\}}$ which is coclosed.
It follows that~$\closure{\{\rho\}}\subseteq \cS\ssm \left(R\cup\closure{\{\tau\}}\right)$ which is absurd.
We have thus proven that~$V \cup \closure{\{\rho\}} = V \cup \closure{\{\tau\}}$, so that~$\rho=\tau$.
\qedhere
\end{enumerate}
\end{proof}

\section{Biclosed sets of strings}
\label{sec:biclosedStrings}

Following~\cite[Sect.~6]{McConville}, we now define a closure operator on the set~$\strings^\pm(\bar Q)$ of undirected strings of~$\bar Q$.
First, for two oriented strings~$\sigma, \tau \in \strings(\bar Q)$, we define
\[
\sigma \circ \tau \eqdef \set{\sigma \alpha \tau}{\alpha \in Q_1 \text{ and } \sigma \alpha \tau \in \strings(\bar Q)}.
\]
This extends to sets~$S,T \subseteq \strings(\bar Q)$ by
\(
S \circ T \eqdef \bigcup_{\sigma \in S, \tau \in T} \sigma \circ \tau.
\)
In particular, for two undirected strings~$\sigma, \tau \in \strings^\pm(\bar Q)$, we can consider~$\sigma \circ \tau$ as the set of strings of~$\strings^\pm(\bar Q)$ obtained by joining an endpoint of~$\sigma$ to an endpoint of~$\tau$ with any arrow~$\alpha \in Q_1$.
Note that~$\sigma \circ \tau$ is not necessary a singleton: it may contain up to~$4$ strings and can also be empty (\eg when no arrow joins an endpoint of~$\sigma$ to an endpoint of~$\tau$).
For a set~$S$ of undirected strings of~$\strings^\pm(\bar Q)$, we then define
\[
\closure{S} \eqdef \bigcup_{\substack{\ell \in \N \\ \sigma_1, \dots, \sigma_\ell \in S}} \sigma_1 \circ \dots \circ \sigma_\ell.
\]
As there is no ambiguity, we abbreviate the notation~$\Bicl{\strings^\pm(\bar Q)}$ by~$\Bicl{\bar Q}$.

\begin{example}
\fref{fig:exmBiclosed} illustrates the notion of closed, coclosed and biclosed sets of strings.

\begin{figure}[t]
	\capstart
	\centerline{\includegraphics[scale=.45]{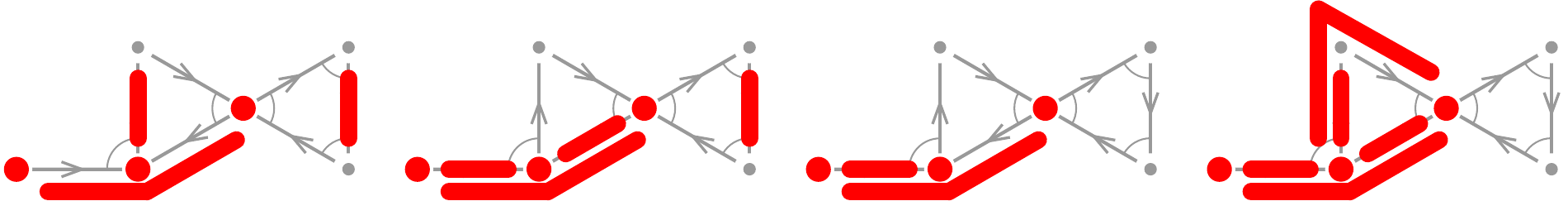}}
	\caption{Four sets of strings on the gentle bound quiver~$\bar Q$ of \fref{fig:exmBlossomingQuiver}\,(left). The first is neither closed nor coclosed, the second is closed but not coclosed, the third is coclosed but not closed, and the fourth is biclosed.}
	\label{fig:exmBiclosed}
\end{figure}
\end{example}

\begin{example}
\fref{fig:exmLatticeQuotient}\,(left) illustrates the inclusion poset of biclosed sets~$\Bicl{\bar Q}$ for a specific gentle bound quiver.
All posets are represented from bottom to top, with their minimal elements at the bottom and their maximal elements on top.
\end{example}

\begin{example}
When~$\bar Q$ is an oriented path with vertices linearly labeled by~$[n]$, a string is determined by its endpoint labels~$i \le j$ and can thus be represented by the pair~$(i,j+1)$.
This maps biclosed sets of strings to inversion sets of permutations of~$\fS_{n+1}$.
\end{example}

\begin{remark}
\label{rem:reverseBiclosed}
With the natural identification between undirected strings of a bound quiver~$\bar Q$ and its reversed bound quiver~$\reversed{\bar Q}$, a subset~$S$ is closed (resp.~coclosed, resp.~biclosed) in~$\strings^\pm(\bar Q)$ if and only if it is closed (resp.~coclosed, resp.~biclosed) in~$\strings^\pm(\reversed{\bar Q})$.
\end{remark}

\enlargethispage{.3cm}
To get used to the definition and to provide more examples of biclosed sets, let us immediately state the following lemma which will be used repeatedly.
Recall from Definition~\ref{def:topBottom} that a substring~$\sigma$ of a string~$\tau$ is a top (resp.~bottom) substring of~$\tau$ if $\tau$ either ends or has an outgoing (resp.~incoming) arrow at each endpoint of~$\sigma$.
The set of top (resp.~bottom) substrings of~$\sigma$ is denoted~$\Sigma_\top(\sigma)$ (resp.~$\Sigma_\bottom(\sigma)$).
Note that~$\tau$ is simultaneously a top and a bottom \mbox{substring of itself}.

\begin{lemma}
\label{lem:exmBiclosed}
For any string~$\sigma \in \strings^\pm(\bar Q)$, the closures~$\closure{\Sigma_\bottom(\sigma)}$ and~$\closure{\Sigma_\top(\sigma)}$ of the set of bottom and top substrings of~$\sigma$ are both biclosed.
Therefore,
\[
\closure{\Big( \bigcup_{i \in [\ell]} \Sigma_\bottom(\sigma_i) \Big)}
\quad\text{and}\quad
\coclosure{\Big( \bigcap_{i \in [\ell]} \closure{\Sigma_\top(\sigma_i)} \Big)}
\]
are both biclosed for any~$\sigma_1, \dots, \sigma_\ell \in \strings^\pm(\bar Q)$.
\end{lemma}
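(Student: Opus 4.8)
## Plan for proving Lemma~\ref{lem:exmBiclosed}

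The plan is to prove the first assertion directly — that $\closure{\Sigma_\bottom(\sigma)}$ is biclosed — and then derive $\closure{\Sigma_\top(\sigma)}$ biclosed by applying Remark~\ref{rem:reverseBiclosed} (reversing the quiver exchanges top and bottom substrings, by Remark~\ref{rem:reverseStrings}, while preserving biclosedness). For the consequences in the second displayed formula: the set $\closure{\big(\bigcup_i \Sigma_\bottom(\sigma_i)\big)}$ is biclosed because it equals $\closure{\big(\bigcup_i \closure{\Sigma_\bottom(\sigma_i)}\big)}$, which is the join $\bigJoin_i \closure{\Sigma_\bottom(\sigma_i)}$ of biclosed sets and hence biclosed by Remark~\ref{rem:meetJoinBiclosedSets} (once we know $\Bicl{\bar Q}$ is a lattice, which is the point of Theorem~\ref{thm:characterizationCongruenceUniform2} — though strictly we only need that the closure of a union of biclosed sets is biclosed, i.e. Condition~(ii) with $R = \varnothing$, which will be verified separately in Section~\ref{sec:biclosedStrings}; to stay self-contained here I would just invoke the lattice structure). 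Dually, $\coclosure{\big(\bigcap_i \closure{\Sigma_\top(\sigma_i)}\big)}$ is the meet $\bigMeet_i \closure{\Sigma_\top(\sigma_i)}$ of biclosed sets, hence biclosed.

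So the heart of the matter is: $\closure{\Sigma_\bottom(\sigma)}$ is coclosed, i.e. its complement in $\strings^\pm(\bar Q)$ is closed. (Closedness is automatic since it is a closure.) Concretely I must show: if $\rho_1, \dots, \rho_\ell$ are strings none of which lies in $\closure{\Sigma_\bottom(\sigma)}$, and $\alpha_1, \dots, \alpha_{\ell-1} \in Q_1$ are arrows with $\rho \defeq \rho_1 \alpha_1 \rho_2 \alpha_2 \cdots \alpha_{\ell-1} \rho_\ell$ a string, then $\rho \notin \closure{\Sigma_\bottom(\sigma)}$. By an easy induction it suffices to treat $\ell = 2$: if $\rho_1, \rho_2 \notin \closure{\Sigma_\bottom(\sigma)}$ and $\rho_1 \alpha \rho_2$ is a string, then $\rho_1\alpha\rho_2 \notin \closure{\Sigma_\bottom(\sigma)}$. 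I would prove the contrapositive: suppose $\rho_1\alpha\rho_2 \in \closure{\Sigma_\bottom(\sigma)}$; then write $\rho_1\alpha\rho_2 = \xi_1 \circ \cdots \circ \xi_m$ with each $\xi_j \in \Sigma_\bottom(\sigma)$, i.e. $\rho_1 \alpha \rho_2 = \xi_1 \gamma_1 \xi_2 \gamma_2 \cdots \gamma_{m-1}\xi_m$ for suitable arrows $\gamma_j$. The key combinatorial observation is that the arrow $\alpha$ of $Q_1$ occurring in the middle is one of the connecting arrows $\gamma_j$ — it cannot lie inside any single $\xi_j$, because each $\xi_j$ is a \emph{bottom} substring of $\sigma$, and $\alpha$ is an arrow of $Q_1$ (not a blossom arrow) pointing "downward into" the concatenation point in the wrong orientation to be interior to a bottom substring. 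Hmm — this is exactly the point that needs care: I must check that at the position where $\alpha$ connects $\rho_1$ to $\rho_2$, the local picture (outgoing from the end of $\rho_1$, incoming to the start of $\rho_2$, or whatever the orientation $\varepsilon$ dictates) forces a break between two of the $\xi_j$'s. Granting this, the $\xi_j$ split as $\xi_1 \cdots \xi_k$ forming (a factorization witnessing membership in $\closure{\Sigma_\bottom(\sigma)}$ of) $\rho_1$ and $\xi_{k+1} \cdots \xi_m$ forming $\rho_2$, contradicting $\rho_1, \rho_2 \notin \closure{\Sigma_\bottom(\sigma)}$.

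The main obstacle — and the step I would spend the most care on — is precisely the claim that the middle arrow $\alpha$ must be a connecting arrow and not interior to some $\xi_j$. This requires understanding when an arrow of $Q_1$ can occur \emph{inside} a bottom substring of $\sigma$: a bottom substring of $\sigma$, by Definition~\ref{def:topBottom}, has the property that $\sigma$ enters it from below (incoming arrow) or ends at each endpoint; so at an interior position an arrow $\gamma$ of the substring is just any arrow of $\sigma$, which can certainly be in $Q_1$. So I cannot argue "$\alpha$ interior to $\xi_j$ is impossible" outright. Rather, the right argument examines the \emph{orientation} pattern at the splice point of $\rho_1\alpha\rho_2$: the definition of $\sigma_1 \circ \sigma_2$ concatenates via an arrow $\alpha$ with $\sigma_1\alpha, \alpha\sigma_2$ both strings, which pins down $\varepsilon$ at $\alpha$ to be $+1$ and forces $\sigma_1$ to \emph{end} with an arrow pointing toward $t(\rho_1)=s(\alpha)$ orientation-wise — so at the splice the configuration is a "valley/peak" of a specific type. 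I would match this against the internal-versus-boundary structure of the $\xi_j$'s in $\sigma$: since each $\xi_j$ is a bottom substring, the connecting positions $\gamma_j$ in the factorization of $\rho_1\alpha\rho_2$ must be exactly the peaks (outgoing–outgoing corners) of the concatenated word, whereas $\alpha$ at the splice sits at such a peak by construction. The bookkeeping is a routine but delicate case analysis on the two signs $\varepsilon$ at the two arrows flanking $\alpha$; once set up carefully it closes the argument. An alternative, possibly cleaner route — which I would try first — is to characterize $\closure{\Sigma_\bottom(\sigma)}$ intrinsically (e.g. as the set of all bottom substrings of \emph{all} strings obtained by repeatedly gluing bottom substrings of $\sigma$, equivalently a "bottom-closure under gluing" description), and then biclosedness of such sets becomes a formal consequence of stability of the "bottom substring" relation under the gluing operation $\circ$; this reduces the whole lemma to one clean structural statement about how $\Sigma_\bottom$ interacts with $\circ$.
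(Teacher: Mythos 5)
Your overall plan — prove coclosedness of $\closure{\Sigma_\bottom(\sigma)}$ directly, get the top version by reversing the quiver, and derive the consequences from the lattice structure of $\Bicl{\bar Q}$ (Remark~\ref{rem:meetJoinBiclosedSets}) — matches the paper, and you have correctly located the crux: showing that if $\tau = \tau'\beta^\varepsilon\tau'' \in \closure{\Sigma_\bottom(\sigma)}$, then $\tau'$ or $\tau''$ is also in the closure.

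But the mechanism you propose for that crux does not work, and you sense this yourself. You try to argue that the splice arrow $\beta$ in a factorization $\tau = \xi_1\gamma_1\cdots\gamma_{m-1}\xi_m$ (with $\xi_j\in\Sigma_\bottom(\sigma)$) must coincide with one of the connecting arrows $\gamma_j$. There is no reason for that to hold: the blocks $\xi_j$ can be long, and an interior arrow of a bottom substring of $\sigma$ is just an arbitrary arrow of $\sigma$ — the orientation constraints in Definition~\ref{def:topBottom} are only at the two endpoints of $\xi_j$, not in its interior. The paper does not attempt this claim. Instead it splits into two cases: if $\beta^\varepsilon$ is one of the $\gamma_j$, then $\tau'$ and $\tau''$ both inherit factorizations; otherwise $\beta^\varepsilon$ sits inside a single block $\xi_k = \xi_k'\beta^\varepsilon\xi_k''$, and then the sign of $\varepsilon$ decides which of $\xi_k'$, $\xi_k''$ is again a bottom substring of $\xi_k$ (a prefix of a bottom substring that stops just before an arrow of sign $-1$ is again a bottom substring, by the local condition at the cut point; dually for a suffix stopping just after an arrow of sign $+1$), hence of $\sigma$. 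Replacing $\xi_k$ by that prefix (resp.\ suffix) yields a valid factorization of $\tau'$ (resp.\ $\tau''$) into elements of $\Sigma_\bottom(\sigma)$, so one of the two pieces is in the closure. This is the ``stability of $\Sigma_\bottom$ under cutting at an interior arrow'' that your final paragraph gestures at; you should commit to it, drop the ``$\beta$ must be a connecting arrow'' route entirely, and just do the two-sign case analysis on where the cut point falls inside the factorization.
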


\begin{proof}
The set $\closure{\Sigma_\bottom(\sigma)}$ is closed by definition.
To prove that it is coclosed, consider~${\tau \in \closure{\Sigma_\bottom(\sigma)}}$.
There are substrings~$\tau_1, \dots, \tau_{\ell-1} \in \Sigma_\bottom(\sigma)$, arrows~$\alpha_1, \dots, \alpha_{\ell-1} \in Q_1$ and signs~${\varepsilon_1, \dots, \varepsilon_\ell \in \{-1,1\}}$ such that~$\tau = \tau_1 \alpha_1^{\varepsilon_1} \tau_2 \alpha_2^{\varepsilon_2} \cdots \alpha_{\ell-1}^{\varepsilon_{\ell-1}} \tau_\ell$.
Assume that~$\tau = \tau' \beta^\varepsilon \tau''$ for some strings~$\tau', \tau'' \in \strings(\bar Q)$, an arrow~$\beta \in Q_1$, and a sign~$\varepsilon$.
If~$\beta^\varepsilon$ is one of the arrows~$\alpha_k^{\varepsilon_k}$, then both~$\tau'$ and~$\tau''$ are in~$\closure{\Sigma_\bottom(\sigma)}$.
We can therefore assume that there exists~$k \in [\ell]$ such that~$\beta^\varepsilon$ appears inside the string~$\tau_k$.
Write~$\tau_k = \tau_k' \beta^\varepsilon \tau_k''$.
We distinguish two cases:
\begin{itemize}
\item If~$\varepsilon = -1$, then~$\tau_k'$ is a bottom substring of~$\tau_k$ which is a bottom substring of~$\sigma$, so that~$\tau_k' \in \Sigma_\bottom(\sigma)$ and thus~$\tau' = \tau_1 \alpha_1^{\varepsilon_1} \dots \alpha_{k-1}^{\varepsilon_{k-1}} \tau_k' \in \closure{\Sigma_\bottom(\sigma)}$.
\item Similarly, if~$\varepsilon = 1$, then~$\tau''_k$ is a bottom substring of~$\tau_k$ which is a bottom substring of~$\sigma$, so that~$\tau_k'' \in \Sigma_\bottom(\sigma)$ and thus~$\tau'' = \tau_k'' \alpha_k^{\varepsilon_k} \dots \alpha_{\ell-1}^{\varepsilon_{\ell-1}} \tau_\ell \in \closure{\Sigma_\bottom(\sigma)}$.
\end{itemize}
We conclude that~$\closure{\Sigma_\bottom(\sigma)}$ is coclosed, and thus biclosed.
By Remark~\ref{rem:meetJoinBiclosedSets}, $\closure{\big( \bigcup_{i \in [\ell]} \Sigma_\bottom(\sigma_i) \big)}$ is also biclosed for any~$\sigma_1, \dots, \sigma_\ell \in \strings^\pm(\bar Q)$.
The proof is similar for~$\closure{\Sigma_\top(\sigma)}$ or follows by duality from Remarks~\ref{rem:reverseBlossomingQuiver} and~\ref{rem:reverseBiclosed}.
\end{proof}

The next result is the keystone of this section.

\begin{theorem}
For any gentle bound quiver~$\bar Q$ with finitely many strings, the inclusion poset of biclosed sets~$\Bicl{\bar Q}$ is a congruence-uniform lattice.
\end{theorem}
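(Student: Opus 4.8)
The plan is to apply the criterion of Theorem~\ref{thm:characterizationCongruenceUniform2} to the finite ground set $\cS = \strings^\pm(\bar Q)$, equipped with the closure operator $S \mapsto \closure{S}$ just defined and with a suitable strict partial order $\vartriangleleft$ on $\strings^\pm(\bar Q)$; for instance one may take $\vartriangleleft$ to be the relation generated by setting $\sigma \vartriangleleft \rho$ and $\tau \vartriangleleft \rho$ whenever $\rho \in \closure{\{\sigma,\tau\}} \ssm \{\sigma,\tau\}$, so that condition~(iii) holds by construction and the only thing to check is that $\vartriangleleft$ is acyclic. A preliminary observation that I would use repeatedly is that \emph{the finiteness hypothesis forbids strings of the form $\rho\alpha\rho$ with $\rho$ of positive length and $\alpha\in Q_1$}: such a string would iterate to $\rho(\alpha\rho)^n$ for all $n$, since every factor-junction of $\rho(\alpha\rho)^n$ already occurs in $\rho\alpha\rho$, contradicting finiteness. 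Hence $\closure{\{\sigma\}} = \{\sigma\}$ whenever $\ell(\sigma) \ge 1$, while $\closure{\{\varepsilon_v\}}$ consists of $\varepsilon_v$ together with the (undirected) loops at $v$; consequently any element of $\closure{\{\sigma,\tau\}} \ssm \{\sigma,\tau\}$ is a genuine concatenation whose positive-length factors alternate between $\sigma$ and $\tau$, so that it properly contains both of them (or is a loop lying over $\varepsilon_v$). This makes $\vartriangleleft$ acyclic — a chain in $\vartriangleleft$ can take at most one step that does not strictly increase the length, namely from some $\varepsilon_v$ to a loop at $v$ — so $\vartriangleleft$ is a partial order and condition~(iii) is satisfied.

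For condition~(i) of Theorem~\ref{thm:characterizationCongruenceUniform2}, given a cover relation $S \subsetneq T$ in $\Bicl{\bar Q}$, I would choose a $\vartriangleleft$-minimal element $\tau$ of $T \ssm S$ and show that $S \cup \closure{\{\tau\}}$ is again biclosed. Closedness is immediate; for coclosedness I would mimic the string-surgery argument in the proof of Lemma~\ref{lem:exmBiclosed}, splitting any string of $S \cup \closure{\{\tau\}}$ at an arrow and using the minimality of $\tau$ together with the explicit description of $\closure{\{\tau\}}$ above to locate one half of the split inside $S \cup \closure{\{\tau\}}$. Since $S \subseteq S \cup \closure{\{\tau\}} \subseteq T$ and $S \lessdot T$, this forces $S \cup \closure{\{\tau\}} = T$. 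Uniqueness of $\tau$ then follows from Lemma~\ref{lem:loops}: if $\tau'$ also works, then $\tau,\tau' \notin S$ while $\closure{\{\tau\}},\closure{\{\tau'\}} \subseteq T$, so that $\closure{\{\tau\}} = T \ssm S = \closure{\{\tau'\}}$, and $\tau$ and $\tau'$ are both the $\vartriangleleft$-minimum of this set.

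The genuinely technical point, and the one I expect to be the main obstacle, is condition~(ii): for biclosed sets $R, S, T$ with $R \subseteq S \cap T$, one must prove that $R \cup \closure{\big((S \cup T) \ssm R\big)}$ is biclosed. Closedness is already delicate because $R$ is re-inserted after taking the closure, so one must check that no concatenation of elements of $S \cup T$ which uses an element of $R$ can leave the set, using that $R$, $S$ and $T$ are all closed. Coclosedness is harder: writing an arbitrary element of $R \cup \closure{\big((S \cup T) \ssm R\big)}$ as $\rho' \beta^{\varepsilon} \rho''$, one distinguishes whether the splitting arrow $\beta^{\varepsilon}$ is one of the connecting arrows of the underlying concatenation or lies strictly inside one of its factors, and in the latter case one argues exactly as in Lemma~\ref{lem:exmBiclosed} that the relevant half of the factor is a top or bottom substring and hence already belongs to the set. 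The warning from the introduction to this part is precisely what makes this step laborious: strings on a general gentle quiver are unoriented and may self-intersect, so one has to keep careful track of which occurrence of a factor is being split and of the directions of the arrows incident to it. Once conditions~(i), (ii) and~(iii) are verified, Theorem~\ref{thm:characterizationCongruenceUniform2} yields that $\Bicl{\bar Q}$ is a congruence-uniform lattice; in particular it is a lattice, with meets and joins given by the formulas of Remark~\ref{rem:meetJoinBiclosedSets}.
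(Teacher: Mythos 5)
Your overall strategy — apply Theorem~\ref{thm:characterizationCongruenceUniform2} to $\cS = \strings^\pm(\bar Q)$ with the concatenation closure and a suitable order $\vartriangleleft$ — is exactly the strategy of the paper. However, there is a genuine error in your preliminary observation, and the verification of conditions~(i) and~(ii) is sketched in a way that would not go through.

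\textbf{The characterization of $\closure{\{\sigma\}}$ is wrong.} You argue that finiteness forbids strings of the form $\rho\alpha\rho$ and conclude that $\closure{\{\sigma\}}=\{\sigma\}$ whenever $\ell(\sigma)\ge 1$. But for \emph{undirected} strings, the product $\sigma\circ\sigma$ joins an endpoint of $\sigma$ to an endpoint of $\sigma$, and joining the same endpoint of both copies produces $\sigma\alpha\sigma^{-1}$, where the second copy is \emph{reversed}. This is not of the form $\rho\alpha\rho$, so your non-iteration argument does not apply, and in fact $\sigma\alpha\sigma^{-1}$ can perfectly well be a string even when $\ell(\sigma)\ge 1$. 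For example, take vertices $u,v$, an arrow $\beta\colon u\to v$, and a loop $\alpha$ at $v$ with $\alpha^2\in I$ and $\beta\alpha\notin I$: then $\beta\alpha\beta^{-1}$ is a string and $\closure{\{\beta\}}=\{\beta,\beta\alpha\beta^{-1}\}$ has two elements with $\ell(\beta)=1$. The paper's characterization is the correct one: $\closure{\{\sigma\}}$ is either $\{\sigma\}$ or $\{\sigma,\sigma\alpha\sigma^{-1}\}$ for a unique loop $\alpha$ with $\sigma\alpha\in\strings^\pm(\bar Q)$, and this can happen for $\sigma$ of any length. This error propagates into your acyclicity argument (your claim that ``a chain can take at most one step that does not strictly increase the length'' is both unnecessary and based on the false premise; the substring order, which the paper uses for $\vartriangleleft$, is acyclic simply because length strictly increases under proper substrings).

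\textbf{Condition~(i) is not correctly set up.} You propose to take a $\vartriangleleft$-minimal $\tau\in T\ssm S$, claim that closedness of $S\cup\closure{\{\tau\}}$ is ``immediate'', and say that coclosedness is the hard part. This has it essentially backwards, and the choice of $\tau$ is not the right one. In the paper, a \emph{minimal-length} $\tau$ has the ``decomposition property'' that whenever $\tau\in\sigma\circ\sigma'$ one of $\sigma,\sigma'$ is already in $S$; this property immediately gives coclosedness of $S\cup\closure{\{\tau\}}$. To get \emph{closedness}, which is the genuinely delicate step, the paper instead takes a \emph{maximal-length} $\tau$ among those with the decomposition property and derives a contradiction from non-closedness. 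Note that closedness of $S\cup\closure{\{\tau\}}$ is certainly not immediate: a union of two closed sets is generally not closed, and some element $\sigma\in S$ could have $\sigma\circ\tau\not\subseteq S\cup\closure{\{\tau\}}$. Without the maximality argument, this step breaks down. As for condition~(ii), you correctly flag it as the technical crux but do not give an argument; the paper handles it by a double-layer induction on $|\strings^\pm(\bar Q)\ssm R|$, using the auxiliary biclosed set $X=R\cup\closure{\{\sigma,\tau\}}$, which is a substantially different argument from ``split at an arrow and argue as in Lemma~\ref{lem:exmBiclosed}''. The surgery of Lemma~\ref{lem:exmBiclosed} treats one biclosed set of a very particular form; condition~(ii) is a statement about three arbitrary biclosed sets with containments, and the induction is what lets the paper reduce to that lemma.
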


\begin{proof}
The proof is inspired from that of~\cite[Thm.~6.5]{McConville}.
The main difficulty is that~$\Bicl{\bar Q}$ does not satisfy Condition~(i) of Theorem~\ref{thm:characterizationCongruenceUniform} when~$\bar Q$ has a loop.
For example, for the quiver~$\bar Q$ with a single vertex~$v$, a loop~$\alpha$, and the relation~$\alpha^2 = 0$, the biclosed sets of strings are precisely~$\varnothing$ and~$\{\varepsilon_v, \alpha\}$.
We therefore need the adapted criterion of Theorem~\ref{thm:characterizationCongruenceUniform2}.
For a string~$\sigma \in \strings^\pm(\bar Q)$, the closure~$\closure{\{\sigma\}}$ can be of two types:
\begin{itemize}
\item if there is a loop~$\alpha \in Q_1$ such that~$\sigma \alpha \in \strings^\pm(\bar Q)$, then~$\closure{\{\sigma\}} = \{\sigma, \sigma \alpha \sigma^{-1}\}$,
\item otherwise, $\closure{\{\sigma\}} = \{\sigma\}$.
\end{itemize}
In particular, for any two strings~$\sigma, \tau$, the equality~$\closure{\{\sigma\}}=\closure{\{\tau\}}$ implies $\sigma=\tau$.
We note that $\strings^\pm(\bar Q)$ is partially ordered by~$\sigma \vartriangleleft \tau$ if and only if~$\sigma$ is a substring of~$\tau$.

\medskip\noindent
{\bf (i)}
Consider a cover relation~$S \subset T$ in~$\Bicl{\bar Q}$.
If~$\tau$ is a minimal length string in~$T \ssm S$, then for any~$\sigma, \sigma' \in \strings^\pm(\bar Q)$ with~$\tau \in \sigma \circ \sigma'$, either~$\sigma \in S$ or~$\sigma' \in S$.
Consider now a maximal length string~$\tau$ in~$T \ssm S$ such that for any~$\sigma, \sigma' \in \strings^\pm(\bar Q)$ with~$\tau \in \sigma \circ \sigma'$, either~$\sigma \in S$ or~$\sigma' \in S$.
We claim that~$S \cup \closure{\{\tau\}} \in \Bicl{\bar Q}$.
Observe first that~$S \cup \closure{\{\tau\}}$ is coclosed by the property defining~$\tau$.
Assume by means of contradiction that~$S \cup \closure{\{\tau\}}$ is not closed.
Since~$S$ and~$\closure{\{\tau\}}$ are both closed, it implies that there exists~$\sigma \in S$ such that~$\sigma \circ \tau \not\subseteq S$.
Consider a string~$\sigma \in S$ of minimal length such that~$\sigma \circ \tau \not\subseteq S$.
Let~$\rho \in (\sigma \circ \tau) \ssm S$.
Since~$S \cup \{\tau\} \subseteq T$ and~$T$ is closed, we have~$\rho \in T$.
By maximality of~$\tau$, we thus obtain that there exist~$\sigma', \tau' \in \strings^\pm(\bar Q) \ssm S$ such that~$\rho \in \sigma' \circ \tau'$.
Up to exchanging~$\sigma'$ and~$\tau'$, we have $\sigma \subseteq \sigma'$ and~$\tau \supseteq \tau'$ or \viceversa.
We distinguish these~two~cases:
\begin{itemize}
\item Either there exists a string~$\zeta$ such that~$\sigma' \in \sigma \circ \zeta$ and~$\tau \in \zeta \circ \tau'$. By definition of~$\tau$, we obtain that~$\zeta \in S$. Since~$\sigma, \zeta \in S$ while~$\sigma' \notin S$, this contradicts the closedness of~$S$.
\item Or there exists a string~$\zeta$ such that~$\sigma \in \sigma' \circ \zeta$ and~$\tau' \in \zeta \circ \tau$. We have~$\zeta \notin S$ (otherwise~${\tau, \zeta \in S}$ and~$\tau' \notin S$ would contradict the closedness of~$S$). We obtain a string~$\zeta \in S$ with~$\zeta \circ \tau \not\subseteq S$ and shorter than~$\sigma$, thus contradicting the minimality~of~$\sigma$.
\end{itemize}
Since we reach a contradiction in both cases, we conclude that~$S \cup \closure{\{\tau\}}$ is biclosed.
Finally, since $T$ covers~$S$ in~$\Bicl{\bar Q}$, we obtain that~$S \cup \closure{\{\tau\}} = T$.
The uniqueness is immediate since~$\closure{\{\tau\}} = T \ssm S$ determines~$\tau$ as observed above.
This concludes the proof of~(i).

\medskip\noindent
{\bf (ii)}
We proceed by induction on~$|\strings^\pm(\bar Q) \ssm R|$. The property clearly holds if~${R = S = T = \strings^\pm(\bar Q)}$. Consider thus~$R,S,T \in \Bicl{\bar Q}$ such that the property holds for any~$R' \supsetneq R$. The result is immediate when~$S \subseteq T$ (and similarly when~$S \supseteq T$) since~$R \cup \closure{((S \cup T) \ssm R)} = T \in \Bicl{\bar Q}$ in this case. Assume thus that~$S \ssm T \ne \varnothing$ and~$T \ssm S \ne \varnothing$. By~(i), since~$R \subsetneq S$ and~$R \subsetneq T$, there exist~$\sigma \in S \ssm R$ and~$\tau \in T \ssm R$ such that~$R \cup \closure{\{\sigma\}}$ and~$R \cup \closure{\{\tau\}}$ are biclosed. We claim that~$X \eqdef R \cup \closure{\{\sigma,\tau\}}$ is biclosed:
\begin{description}
\item[closed] Since~$R \cup \closure{\{\sigma\}}$ and~$R \cup \closure{\{\tau\}}$ are closed, we have~$\rho \circ \zeta \subseteq R$ for any~$\rho \in R$~and~${\zeta \in \{\sigma,\tau\}}$. An immediate induction thus shows that~$\rho \circ \zeta_1 \circ \dots \circ \zeta_\ell \subseteq R$ for any~$\zeta_1, \dots, \zeta_\ell \in \{\sigma, \tau\}$. Therefore, $R \circ \zeta \subseteq R$ for any~$\zeta \in \closure{\{\sigma,\tau\}}$. This implies that~$X$ is closed.
\item[coclosed] Assume by means of contradiction that~$X$ is not coclosed. Then there exists~$\sigma', \tau' \in \strings^\pm(\bar Q) \ssm X$ such that~$X \cap (\sigma' \circ \tau') \ne \varnothing$. Since~$R \cup \closure{\{\sigma\}}$ and~$R \cup \closure{\{\tau\}}$ are coclosed, we can assume that~$(\sigma \circ \tau) \cap (\sigma' \circ \tau') \ne \varnothing$. Therefore, up to exchanging~$\sigma'$ and~$\tau'$, we have either~$\sigma \subseteq \sigma'$ and~$\tau \supseteq \tau'$, or~$\sigma \supseteq \sigma'$ and~$\tau \subseteq \tau'$. Say for example that the former holds. Then there exists~$\theta \in \strings^\pm(\bar Q)$ such that~$\sigma' \in \sigma \circ \theta$ and~$\tau \in \theta \circ \tau'$. This implies that~$\theta \notin R$ (because~$R \cup \closure{\{\sigma\}}$ is closed) and that~$\theta \in R$ (because~$R \cup \closure{\{\tau\}}$ is coclosed and~$\theta \subsetneq \tau$), a contradiction.
\end{description}
We will now use the biclosed set~$X = R \cup \closure{\{\sigma, \tau\}}$ to prove that the property holds for~$T$.
We first apply our induction hypothesis to~$R' = R \cup \{\sigma\}$, $S' = S$ and~$T' = X$. We obtain that
\[
(R \cup \{\sigma\}) \cup \closure{\big( ( S \cup X ) \ssm (R \cup \{\sigma\}) \big)}
\]
is biclosed. However, we have
\[
S \cup X \subseteq (R \cup \{\sigma\}) \cup \closure{\big( ( S \cup X ) \ssm (R \cup \{\sigma\}) \big)} \subseteq \closure{(S \cup X)}.
\]
Since~$\closure{(S \cup X)}$ is the smallest closed set containing~$S \cup X$, we obtain that
\[
(R \cup \{\sigma\}) \cup \closure{\big( ( S \cup X ) \ssm (R \cup \{\sigma\}) \big)} = \closure{(S \cup X)}.
\]
Applying the induction hypothesis to~$R' = R \cup \{\tau\}$, $S' = X$ and~$T' = T$, we obtain similarly that
\[
(R \cup \{\tau\}) \cup \closure{\big( ( X \cup T ) \ssm (R \cup \{\tau\}) \big)} = \closure{(X \cup T)}.
\]
Applying our induction hypothesis to~$R' = X$, $S' = \closure{(S \cup X)}$ and~$T' = \closure{(X \cup T)}$, we obtain that
\[
X \cup \closure{\big( \big( \closure{(S \cup X)} \cup \closure{(X \cup T)} \big) \ssm X \big)}
\]
is biclosed. Since~$\closure{\big( \big( \closure{(S \cup X)} \cup \closure{(X \cup T)} \big) \ssm X \big)}$ contains~$S \cup T$ and is closed, it also contains~$\closure{(S \cup T)}$.
Moreover,
\begin{align*}
X \cup \closure{\big( \big( \closure{(S \cup X)} \cup \closure{(X \cup T)} \big) \ssm X \big)}
& \subseteq X \cup \closure{\big( \closure{\big( (S \cup T \cup X) \ssm R\big)} \ssm X \big)} \\
& \subseteq R \cup \closure{\big( \closure{\big( (S \cup T \cup X) \ssm R\big)} \ssm R \big)} \\
& = R \cup \closure{\big( (S \cup T \cup X) \ssm R \big)} \\
& = R \cup \closure{\big( (S \cup T) \ssm R \big)} \\
& \subseteq \closure{(S \cup T)}.
\end{align*}
Therefore, all these sets coincide and~$R \cup \closure{\big( (S \cup T) \ssm R \big)}$ is biclosed, concluding the proof of~(ii).

\medskip\noindent
{\bf (iii)}
Consider~$\sigma, \tau \in \strings^\pm(\bar Q)$.
Any string~$\rho \in \closure{\{\sigma, \tau\}} \ssm \{\sigma, \tau\}$ is obtained by concatenations of copies of~$\sigma$ and~$\tau$ with arrows of~$Q_1$.
We thus immediately obtain that~$\sigma$ and~$\tau$ are substrings of~$\rho$.
\end{proof}

\section{Lattice congruence on biclosed sets}
\label{sec:latticeCongruence}

We now define a lattice congruence on biclosed sets of strings.
Our definition is borrowed from~\cite[Sect.~7]{McConville}.

\begin{definition}
\label{def:downUpProjections}
For a biclosed set~$S \in \Bicl{\bar Q}$, define
\[
\projDown(S) \eqdef \set{\sigma \in \strings^\pm(\bar Q)}{\Sigma_\bottom(\sigma) \subseteq S}
\qquad\text{and}\qquad
\projUp(S) \eqdef \set{\sigma \in \strings^\pm(\bar Q)}{\Sigma_\top(\sigma) \cap S \ne \varnothing}.
\]
\end{definition}

\begin{example}
The down and up projections~$\projDown$ and~$\projUp$ are illustrated on \fref{fig:exmProjections}.

\begin{figure}[t]
	\capstart
	\centerline{
	\begin{overpic}[scale=.45]{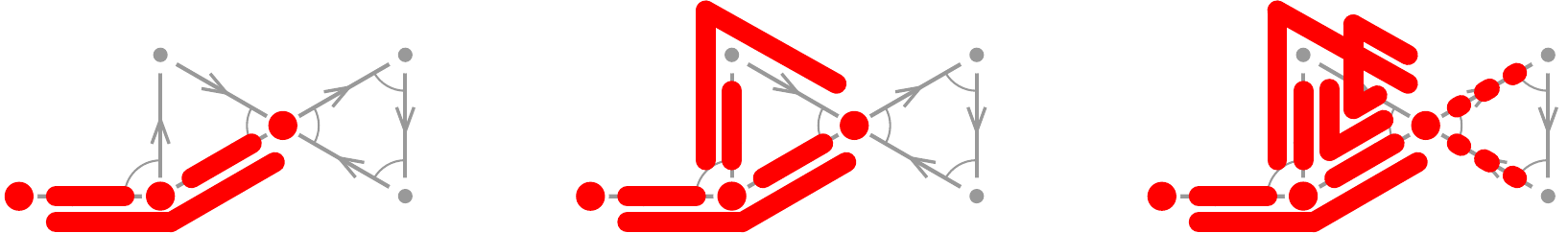}
			\put(30,6){$\longleftarrow$}
			\put(31,8){$\projDown$}
			\put(68,6){$\longrightarrow$}
			\put(69,8){$\projUp$}
			\put(12,-3){$\projDown(S)$}
			\put(50,-3){$S$}
			\put(85,-3){$\projUp(S)$}
	\end{overpic}
	}
	\vspace{.3cm}
	\caption{A biclosed set of strings~$S \in \Bicl{\bar Q}$ (middle), where~$\bar Q$ is the bound quiver of~\fref{fig:exmBlossomingQuiver}, and its images~$\projDown(S)$ (left) and~$\projUp(S)$ (right) by the down and up projections of Definition~\ref{def:downUpProjections}. For space reasons, $\projUp(S)$ is only represented partially: the remaining strings are obtained by adding independently the two dotted arrows to all strings containing their left endpoint.}
	\label{fig:exmProjections}
\end{figure}
\end{example}

Following Remarks~\ref{rem:reverseBlossomingQuiver}, \ref{rem:reverseNKC}, and~\ref{rem:reverseBiclosed}, we start with a simple observation.

\begin{lemma}
\label{rem:reversedProjections}
For any biclosed set~$S \in \Bicl{\bar Q}$,
\[
\strings^\pm(\bar Q) \ssm \projUp(S) = \projDown \big( \strings^\pm(\reversed{\bar Q}) \ssm S \big).
\]
\end{lemma}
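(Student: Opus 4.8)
The statement to prove is Lemma \ref{rem:reversedProjections}:
\[
\strings^\pm(\bar Q) \ssm \projUp(S) = \projDown \big( \strings^\pm(\reversed{\bar Q}) \ssm S \big),
\]
for any biclosed set $S \in \Bicl{\bar Q}$.

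The plan is to unwind both sides directly from Definition \ref{def:downUpProjections}, using the compatibility between reversing a quiver and exchanging top with bottom substrings (Remarks \ref{rem:reverseStrings} and \ref{rem:reverseBlossomingQuiver}) together with the fact that biclosedness is preserved by quiver reversal (Remark \ref{rem:reverseBiclosed}). First I would note that a string $\sigma$ lies in $\strings^\pm(\bar Q) \ssm \projUp(S)$ precisely when $\Sigma_\top(\sigma) \cap S = \varnothing$, \ie when $\Sigma_\top(\sigma) \subseteq \strings^\pm(\bar Q) \ssm S$. On the other hand, $\sigma \in \projDown\big(\strings^\pm(\reversed{\bar Q}) \ssm S\big)$ means, applying the definition of $\projDown$ in the reversed bound quiver, that $\Sigma_\bottom(\reversed{\sigma}) \subseteq \strings^\pm(\reversed{\bar Q}) \ssm S$ (where the complement is taken inside $\strings^\pm(\reversed{\bar Q})$, which is the same ground set as $\strings^\pm(\bar Q)$ under the natural identification). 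Since $\Sigma_\bottom(\reversed{\sigma}) = \reversed{\big(\Sigma_\top(\sigma)\big)}$ and, under the natural identification of undirected strings of $\bar Q$ with those of $\reversed{\bar Q}$, the complement $\strings^\pm(\reversed{\bar Q}) \ssm S$ corresponds to $\strings^\pm(\bar Q) \ssm S$, the two conditions coincide.

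Concretely, I would write the chain of equivalences: $\sigma \in \strings^\pm(\bar Q) \ssm \projUp(S)$ iff $\Sigma_\top(\sigma) \cap S = \varnothing$ iff $\Sigma_\top(\sigma) \subseteq \strings^\pm(\bar Q) \ssm S$ iff (by Remark \ref{rem:reverseStrings}, since $\Sigma_\top(\sigma)$ as a set of undirected strings of $\bar Q$ equals $\Sigma_\bottom(\reversed{\sigma})$ as a set of undirected strings of $\reversed{\bar Q}$) $\Sigma_\bottom(\reversed{\sigma}) \subseteq \strings^\pm(\reversed{\bar Q}) \ssm S$ iff $\reversed{\sigma} \in \projDown\big(\strings^\pm(\reversed{\bar Q}) \ssm S\big)$, and finally, identifying $\sigma$ with $\reversed{\sigma}$ as undirected strings, iff $\sigma \in \projDown\big(\strings^\pm(\reversed{\bar Q}) \ssm S\big)$. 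One small point to spell out is that $\strings^\pm(\reversed{\bar Q}) \ssm S$ is indeed a legitimate biclosed set of $\reversed{\bar Q}$ so that $\projDown$ applies to it: since $S$ is biclosed in $\bar Q$, its complement $\strings^\pm(\bar Q) \ssm S$ is biclosed in $\bar Q$, hence biclosed in $\reversed{\bar Q}$ by Remark \ref{rem:reverseBiclosed}.

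I do not expect a serious obstacle here; the only care needed is bookkeeping about which ground set and which (co)closure operator is in play, and making sure the "top/bottom exchange under reversal" is applied at the level of undirected strings (this is exactly the content of Remark \ref{rem:reverseStrings}, extended to walks/substrings in Remark \ref{rem:reverseBlossomingQuiver}). Since the paper already identifies $\strings^\pm(\bar Q)$ with $\strings^\pm(\reversed{\bar Q})$ and $\projDown$, $\projUp$ are defined purely in terms of $\Sigma_\bottom$ and $\Sigma_\top$, the argument is a short unwinding and the lemma follows.

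\begin{proof}
Under the natural identification of undirected strings of $\bar Q$ with undirected strings of $\reversed{\bar Q}$, the sets $\strings^\pm(\bar Q)$ and $\strings^\pm(\reversed{\bar Q})$ coincide, and by Remark~\ref{rem:reverseStrings} we have~$\Sigma_\top(\sigma) = \Sigma_\bottom(\reversed{\sigma})$ for any string~$\sigma$ (as sets of undirected substrings). Moreover, since~$S$ is biclosed in~$\bar Q$, its complement $\strings^\pm(\bar Q) \ssm S$ is biclosed in~$\bar Q$, hence biclosed in~$\reversed{\bar Q}$ by Remark~\ref{rem:reverseBiclosed}, so that~$\projDown$ applies to it. For any string~$\sigma$, the following are equivalent:
\begin{align*}
\sigma \in \strings^\pm(\bar Q) \ssm \projUp(S)
& \iff \Sigma_\top(\sigma) \cap S = \varnothing \\
& \iff \Sigma_\top(\sigma) \subseteq \strings^\pm(\bar Q) \ssm S \\
& \iff \Sigma_\bottom(\reversed{\sigma}) \subseteq \strings^\pm(\reversed{\bar Q}) \ssm S \\
& \iff \reversed{\sigma} \in \projDown \big( \strings^\pm(\reversed{\bar Q}) \ssm S \big) \\
& \iff \sigma \in \projDown \big( \strings^\pm(\reversed{\bar Q}) \ssm S \big),
\end{align*}
where the first equivalence is the definition of~$\projUp$, the third uses Remark~\ref{rem:reverseStrings} together with the identification of ground sets, the fourth is the definition of~$\projDown$ in~$\reversed{\bar Q}$, and the last uses the identification~$\sigma = \reversed{\sigma}$ of undirected strings. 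This proves the claimed equality.
\end{proof}
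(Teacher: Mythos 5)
Your proof is correct and follows essentially the same route as the paper's: unwind both definitions in terms of $\Sigma_\top$ and $\Sigma_\bottom$ and invoke the fact that quiver reversal exchanges top and bottom substrings (Remark~\ref{rem:reverseStrings}). The one place where you are a bit more careful than the published proof is in checking that $\strings^\pm(\reversed{\bar Q}) \ssm S$ is biclosed so that $\projDown$ is actually applicable to it; this is a legitimate (if minor) point of hygiene.
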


\begin{proof}
On the one hand, $\strings^\pm(\bar Q) \ssm \projUp(S)$ is the set of strings of~$\strings^\pm(\bar Q)$ with no top substrings contained in~$S$, \ie whose top substrings are all in~$\strings^\pm(\bar Q) \ssm S$.
On the other hand, ${\projDown(\strings^\pm(\reversed{\bar Q}) \ssm S)}$ is the set of strings of~$\strings^\pm(\reversed{\bar Q})$ whose bottom substrings are all in~$\strings^\pm(\reversed{\bar Q}) \ssm S$.
Since reversing all arrows preserves the strings but exchanges top with bottom substrings, we conclude that the two sets coincide. 
\end{proof}

\begin{lemma}
For any biclosed set~$S \in \Bicl{\bar Q}$, the sets~$\projDown(S)$ and~$\projUp(S)$ are biclosed.
\end{lemma}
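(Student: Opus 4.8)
The plan is to prove the two assertions separately, and to handle the coclosedness statements by reduction to closedness via Lemma~\ref{rem:reversedProjections} together with the observation (Remark~\ref{rem:reverseBiclosed}) that a set is biclosed in~$\strings^\pm(\bar Q)$ if and only if it is biclosed in~$\strings^\pm(\reversed{\bar Q})$, and (Remark~\ref{rem:reverseBlossomingQuiver}, Remark~\ref{rem:reverseStrings}) that reversing exchanges top and bottom substrings. Concretely, $\projUp(S)$ is biclosed in~$\strings^\pm(\bar Q)$ iff its complement~$\strings^\pm(\bar Q) \ssm \projUp(S) = \projDown\big(\strings^\pm(\reversed{\bar Q}) \ssm S\big)$ is biclosed in~$\strings^\pm(\reversed{\bar Q})$; and since~$\strings^\pm(\reversed{\bar Q}) \ssm S$ is biclosed whenever~$S$ is, it suffices to prove that~$\projDown(T)$ is biclosed in~$\bar Q$ for every biclosed~$T$, and the statement for~$\projUp$ follows formally. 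So the whole lemma reduces to: \emph{if~$T \in \Bicl{\bar Q}$ then~$\projDown(T)$ is biclosed}.

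For closedness of~$\projDown(T)$: suppose~$\sigma, \tau \in \projDown(T)$ and~$\rho \in \sigma \circ \tau$, say~$\rho = \sigma \alpha \tau$ with~$\alpha \in Q_1$ (in the undirected sense, joining an endpoint of~$\sigma$ to an endpoint of~$\tau$). I must show every bottom substring~$\nu$ of~$\rho$ lies in~$T$. Writing~$\rho = \sigma\alpha\tau$, a bottom substring~$\nu$ of~$\rho$ either stays entirely inside~$\sigma$, or entirely inside~$\tau$, or straddles the arrow~$\alpha$. In the first two cases $\nu$ is a bottom substring of~$\sigma$ (resp.~$\tau$): here one must check that being at the bottom of~$\rho$ and contained in~$\sigma$ forces being at the bottom of~$\sigma$ — at the interior endpoint of~$\sigma$ the relevant incident arrow of~$\rho$ is~$\alpha$, and for~$\nu$ to be a bottom substring of~$\rho$ there with~$\nu$ ending before~$\alpha$ we need~$\alpha$ to point into~$\sigma$, which is exactly the bottom-substring condition for~$\sigma$ at that endpoint; the boundary cases (where~$\nu$ reaches the interior endpoint of~$\sigma$) are handled because~$\sigma$ itself is in~$\Sigma_\bottom(\sigma)$. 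In the straddling case, $\nu$ contains~$\alpha$, so~$\nu = \nu_1 \alpha \nu_2$ where~$\nu_1 \in \Sigma_\bottom(\sigma)$ is a bottom \emph{suffix} of~$\sigma$ and~$\nu_2 \in \Sigma_\bottom(\tau)$ is a bottom \emph{prefix} of~$\tau$; hence~$\nu_1, \nu_2 \in T$ (using~$\sigma, \tau \in \projDown(T)$) and~$\nu \in \nu_1 \circ \nu_2 \subseteq \closure{T} = T$ since~$T$ is closed. This gives~$\nu \in T$ in every case, so~$\rho \in \projDown(T)$, and since closure is generated by the binary operation~$\circ$ an easy induction shows~$\projDown(T)$ is closed.

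For coclosedness of~$\projDown(T)$: take~$\rho \notin \projDown(T)$, i.e.\ some bottom substring~$\nu$ of~$\rho$ is not in~$T$; I want to show that whenever~$\rho \in \sigma \circ \tau$ then~$\sigma \notin \projDown(T)$ or~$\tau \notin \projDown(T)$. With~$\rho = \sigma \alpha \tau$, the bottom substring~$\nu$ of~$\rho$ falls into the same three cases as above. If~$\nu$ lies inside~$\sigma$ then (by the same endpoint analysis) $\nu$ is a bottom substring of~$\sigma$, so~$\sigma \notin \projDown(T)$; symmetrically if~$\nu$ lies inside~$\tau$. If~$\nu$ straddles~$\alpha$, write~$\nu = \nu_1 \alpha \nu_2$ with~$\nu_1 \in \Sigma_\bottom(\sigma)$, $\nu_2 \in \Sigma_\bottom(\tau)$; if both~$\nu_1 \in T$ and~$\nu_2 \in T$ then~$\nu \in \nu_1 \circ \nu_2 \subseteq T$ (since $T$ closed), contradicting~$\nu \notin T$, so~$\nu_1 \notin T$ or~$\nu_2 \notin T$, giving~$\sigma \notin \projDown(T)$ or~$\tau \notin \projDown(T)$. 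Thus the complement of~$\projDown(T)$ is closed, so~$\projDown(T)$ is coclosed, completing the proof. The main obstacle is purely bookkeeping: carefully checking, in the degenerate cases where a bottom substring of~$\rho$ touches the gluing vertex or one of the original endpoints, that the top/bottom condition really does transfer correctly between~$\rho$ and its pieces~$\sigma, \tau$; this is where the precise wording of Definition~\ref{def:topBottom} (and the convention that a string is both a top and a bottom substring of itself) must be invoked with care.
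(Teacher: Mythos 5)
Your reduction of the $\projUp$ case to the $\projDown$ case via Lemma~\ref{rem:reversedProjections}, and your closedness argument for $\projDown(T)$, match the paper's approach in substance (the paper's closedness paragraph is exactly your three-case analysis, with the paper just letting the ``contained in $\sigma$ or $\tau$'' case absorb the boundary bookkeeping). However, your coclosedness argument contains a genuine logical error.

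What you actually establish in the coclosedness paragraph is: if $\rho \notin \projDown(T)$ and $\rho \in \sigma \circ \tau$, then $\sigma \notin \projDown(T)$ or $\tau \notin \projDown(T)$. Taking the contrapositive, this says: if $\sigma, \tau \in \projDown(T)$ and $\rho \in \sigma \circ \tau$, then $\rho \in \projDown(T)$ --- which is \emph{closedness} again, not coclosedness. What coclosedness requires is the implication in the opposite direction: if $\rho \in \projDown(T)$ and $\rho \in \sigma \circ \tau$, then $\sigma \in \projDown(T)$ or $\tau \in \projDown(T)$ (equivalently: $\sigma, \tau \notin \projDown(T)$ forces $\sigma \circ \tau \cap \projDown(T) = \varnothing$). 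So the coclosedness of $\projDown(T)$ remains unproved; the bottom substring $\nu \notin T$ you start from lives on the wrong side of the implication.

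The missing idea, which the paper uses, is that in any concatenation $\rho = \sigma\alpha\tau$ with $\alpha \in Q_1$ the arrow $\alpha$ points out of one piece and into the other, so exactly one of $\sigma, \tau$ (as undirected strings) is a bottom substring of $\rho$ and the other a top substring. Given $\rho \in \projDown(T)$, take the piece --- say $\sigma$ --- that is a bottom substring of $\rho$. Every bottom substring of $\sigma$ is then a bottom substring of $\rho$ (transitivity of bottom substrings), hence belongs to $T$; so $\sigma \in \projDown(T)$. This single-case argument is shorter than your three-case split precisely because coclosedness only needs \emph{one} of the factors to land in $\projDown(T)$, and the arrow direction hands you the right one.
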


\begin{proof}
Let~$\rho, \sigma, \tau \in \strings^\pm(\bar Q)$ such that~$\rho \in \sigma \circ \tau$.
Assume first that~$\sigma, \tau \in \projDown(S)$.
Consider a bottom substring~$\rho'$ of~$\rho$.
We distinguish two cases:
\begin{itemize}
\item If~$\rho'$ is a substring of~$\sigma$ or~$\tau$, then it is a bottom substring and thus belongs to~$S$.
\item Otherwise, $\rho' \in \sigma' \circ \tau'$ for some substrings~$\sigma'$ of~$\sigma$ and~$\tau'$ of~$\tau$. These substrings are again bottom substrings so that~$\sigma' \in S$ and~$\tau' \in S$, which implies that~$\rho' \in S$ since~$S$ is closed.
\end{itemize}
In both cases, we showed that any bottom substring~$\rho'$ of~$\rho$ belongs to~$S$, so that~$\rho \in \projDown(S)$ and thus~$\projDown(S)$ is closed.

Assume now that~$\rho \in \projDown(S)$.
Since~$\rho \in \sigma \circ \tau$, either~$\sigma$ or~$\tau$ is a bottom substring of~$\rho$, say~$\sigma$ without loss of generality.
Any bottom substring of~$\sigma$ is then a bottom substring of~$\rho$ and thus belongs to~$S$ since~$\rho \in \projDown(S)$.
We conclude that~$\sigma \in \projDown(S)$ which proves that~$\projDown(S)$ is coclosed.

We thus proved that~$\projDown(S)$ is biclosed.
The proof is similar for~$\projUp(S)$ or follows by duality from Remarks~\ref{rem:reverseBlossomingQuiver} and~\ref{rem:reverseBiclosed} and Lemma~\ref{rem:reversedProjections}.
\end{proof}

\begin{proposition}
\label{prop:latticeCongruence}
The two maps~$\projDown : \Bicl{\bar Q} \to \Bicl{\bar Q}$ and~$\projUp : \Bicl{\bar Q} \to \Bicl{\bar Q}$ satisfy
\begin{enumerate}[(i)]
\item
$\projDown(S) \subseteq S \subseteq \projUp(S)$ for any element~$S \in \Bicl{\bar Q}$,

\item
$\projDown \circ \projDown = \projDown \circ \projUp = \projDown$ and $\projUp \circ \projUp = \projUp \circ \projDown = \projUp$,

\item
$\projDown$ and~$\projUp$ are order preserving.
\end{enumerate}
Therefore, the fibers of~$\projUp$ and~$\projDown$ coincide and the relation~$\equiv$ on~$\Bicl{\bar Q}$ defined by
\[
S \equiv T \iff \projDown(S) = \projDown(T) \iff \projUp(S) = \projUp(T)
\]
is an order congruence on~$\Bicl{\bar Q}$.
\end{proposition}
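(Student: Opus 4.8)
The statement asserts conditions (i)–(iii) of Lemma~\ref{lem:conditionsProjectionMaps} for the pair $(\projDown,\projUp)$ on $\Bicl{\bar Q}$; the conclusion then follows from that lemma. So the plan is to verify (i), (ii), (iii) directly from Definition~\ref{def:downUpProjections}, using the observation (Lemma~\ref{rem:reversedProjections}) that $\projUp$ on $\bar Q$ is (up to complementation) $\projDown$ on $\reversed{\bar Q}$, so that every claim about $\projUp$ reduces to the dual claim about $\projDown$.

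\textbf{Condition (i).} For $\projDown(S)\subseteq S$: if $\sigma\in\projDown(S)$ then $\Sigma_\bottom(\sigma)\subseteq S$, and since $\sigma\in\Sigma_\bottom(\sigma)$ (Definition~\ref{def:topBottom} records that $\sigma$ is both a top and a bottom substring of itself), we get $\sigma\in S$. For $S\subseteq\projUp(S)$: if $\sigma\in S$ then $\sigma\in\Sigma_\top(\sigma)\cap S$, so $\sigma\in\projUp(S)$. (Alternatively, $S\subseteq\projUp(S)$ follows from $\projDown(S')\subseteq S'$ applied to $S'=\strings^\pm(\reversed{\bar Q})\ssm S$ via Lemma~\ref{rem:reversedProjections}.)

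\textbf{Condition (iii).} Monotonicity of $\projDown$ is immediate: if $S\subseteq T$ and $\Sigma_\bottom(\sigma)\subseteq S$, then $\Sigma_\bottom(\sigma)\subseteq T$; monotonicity of $\projUp$ is equally immediate from its definition, or again by duality via Lemma~\ref{rem:reversedProjections}.

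\textbf{Condition (ii)} is the main point. I would prove $\projDown\circ\projDown=\projDown$ and $\projDown\circ\projUp=\projDown$, the two statements for $\projUp$ following by duality. The inclusion $\projDown(\projDown(S))\subseteq\projDown(S)$ is just (i) applied to the biclosed set $\projDown(S)$. For the reverse inclusion $\projDown(S)\subseteq\projDown(\projDown(S))$, take $\sigma\in\projDown(S)$; I must show every bottom substring $\rho$ of $\sigma$ lies in $\projDown(S)$, i.e.\ $\Sigma_\bottom(\rho)\subseteq S$. But a bottom substring of a bottom substring of $\sigma$ is a bottom substring of $\sigma$, hence in $S$; this is the transitivity of the ``bottom substring'' relation, which should be checked from Definition~\ref{def:topBottom} (the endpoint conditions compose). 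For $\projDown\circ\projUp=\projDown$, using (i) we have $\projDown(S)\subseteq\projDown(\projUp(S))$ by monotonicity; for the other inclusion, suppose $\sigma\in\projDown(\projUp(S))$, so $\Sigma_\bottom(\sigma)\subseteq\projUp(S)$, and I must show $\Sigma_\bottom(\sigma)\subseteq S$. Pick $\rho\in\Sigma_\bottom(\sigma)$; then $\rho\in\projUp(S)$, so some top substring $\rho'\in\Sigma_\top(\rho)$ lies in $S$. Now $\rho'$ is simultaneously a bottom substring of $\sigma$ (being a substring of the bottom substring $\rho$ that is also top in $\rho$ — this needs a short check that a top substring of a bottom substring of $\sigma$ is itself a bottom substring of $\sigma$, which again comes down to the endpoint conditions) and lies in $S$; but I actually need $\rho$ itself, not merely $\rho'$, to be in $S$. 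Here I use that $S$ is \emph{biclosed}: $\rho'$ is a substring of $\rho$, so $\rho\in\rho_1\circ\rho'\circ\rho_2$ for appropriate substrings, and coclosedness of $S$ forces... — more carefully, the right tool is the complementary formulation, namely to show $\strings^\pm(\bar Q)\ssm\projDown(\projUp(S))\supseteq\strings^\pm(\bar Q)\ssm\projDown(S)$, i.e.\ if $\sigma\notin\projDown(S)$ then $\sigma$ has a bottom substring not in $\projUp(S)$; if $\rho\in\Sigma_\bottom(\sigma)\ssm S$, then since $S$ is coclosed and $\rho\notin S$, every top substring of $\rho$ lies outside $S$ (a top substring $\rho'$ of $\rho$ satisfies $\rho\in\rho_1\circ\rho'\circ\rho_2$ with all pieces substrings; closedness of $\strings^\pm(\bar Q)\ssm S$ then gives $\rho'\notin S$), hence $\rho\notin\projUp(S)$, so $\sigma\notin\projDown(\projUp(S))$. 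This closes (ii).

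\textbf{Conclusion.} Having verified (i)–(iii), Lemma~\ref{lem:conditionsProjectionMaps} gives that the fibers of $\projDown$ and $\projUp$ coincide and that $\equiv$ is an order congruence with projection maps $\projDown$ and $\projUp$. Since $\Bicl{\bar Q}$ is a finite lattice (being a congruence-uniform lattice by the previous theorem, once $\RNKC$ is finite, which is our standing hypothesis), $\equiv$ is automatically a lattice congruence.

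\textbf{Main obstacle.} The only genuinely delicate part is the bookkeeping in condition (ii): one must be careful that ``bottom substring'' and ``top substring'' compose in the way the argument needs, and that when a top substring is removed from a bottom substring one stays inside the world of bottom substrings of $\sigma$ — all of which follows from the endpoint ($\varepsilon_{i-1},\varepsilon_{j+1}$) conditions of Definition~\ref{def:topBottom}, but must be stated cleanly. The use of coclosedness of $S$ (a biclosed set) to pass from a top substring lying outside $S$ to the whole string lying outside $S$ is the conceptual heart of $\projDown\circ\projUp=\projDown$; this is exactly the place where biclosedness, not mere closedness, is used.
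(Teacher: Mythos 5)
Your verification of (i), (iii), and of $\projDown\circ\projDown=\projDown$ in (ii) is correct and follows the same route as the paper: the transitivity of the bottom-substring relation is exactly the right observation.

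The gap is in $\projDown(\projUp(S))\subseteq\projDown(S)$. You reduce to the claim that if $\rho\in\Sigma_\bottom(\sigma)\ssm S$ (with $S$ biclosed) then every top substring of $\rho$ lies outside $S$, so that $\rho\notin\projUp(S)$. This claim is false. Take $\bar Q$ the $A_2$ quiver $1\xrightarrow{\alpha}2$ and $S=\{\varepsilon_1\}$, which is biclosed. Then $\rho=\alpha\notin S$, but $\varepsilon_1\in\Sigma_\top(\alpha)\cap S$, so $\alpha\in\projUp(S)$. The parenthetical justification inverts the direction of closedness: closedness of $\strings^\pm(\bar Q)\ssm S$ says that membership of all pieces in $S^c$ propagates to the concatenation, not that membership of the concatenation in $S^c$ propagates back to a prescribed middle piece. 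Even the dual (coclosedness of $S^c$, i.e.\ closedness of $S$) only gives, for a two-fold factorization $\rho\in\mu\circ\nu$ with $\rho\notin S$, that \emph{one} of $\mu,\nu$ lies outside $S$; it does not isolate $\rho'$.

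The missing ingredient is minimality, which is how the paper closes this step. Assume for contradiction $\tau\in\projDown(\projUp(S))\ssm\projDown(S)$ and choose $\sigma$ inclusion-minimal among the bottom substrings of $\tau$ not in $S$. Since $\sigma\in\projUp(S)$, there is $\rho\in\Sigma_\top(\sigma)\cap S$, giving a decomposition $\sigma\in\sigma'\circ\rho\circ\sigma''$ (with one or both of $\sigma',\sigma''$ possibly absent) where $\sigma',\sigma''$ are \emph{proper} bottom substrings of $\sigma$, hence of $\tau$. Minimality of $\sigma$ forces $\sigma',\sigma''\in S$, and then closedness of $S$ yields $\sigma\in S$, a contradiction. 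You do exactly this kind of reasoning with a general $\rho$, but without choosing a minimal one there is nothing forcing the flanking pieces into $S$, and the argument does not close.
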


\begin{proof}
For~(i), we just observe that any string~$\tau$ is a top and a bottom substring of itself.

For~(iii), it follows from the definition that~$S \subseteq T$ implies~$\projDown(S) \subseteq \projDown(T)$ and~$\projUp(S) \subseteq \projUp(T)$.

For~(ii), we already obtain from~(i) and~(iii) that $\projDown(\projDown(S)) \subseteq \projDown(S)$ and~$\projDown(\projUp(S)) \supseteq \projDown(S)$.
We then observe that if~$\rho, \sigma, \tau \in \strings^\pm(\bar Q)$ are such that $\rho$ is a bottom substring of~$\sigma$ and~$\sigma$ is a bottom substring of~$\tau$, then~$\rho$ is a bottom substring of~$\tau$.
This shows that~$\projDown(\projDown(S)) \supseteq \projDown(S)$.
Finally, we prove that~$\projDown(\projUp(S)) \subseteq \projDown(S)$.
By means of contradiction, assume that there exists~${\tau \in \projDown(\projUp(S)) \ssm \projDown(S)}$.
Since~$\tau \notin \projDown(S)$, there exists a bottom substring~$\sigma$ of~$\tau$ which is not in~$S$.
We can assume that~$\sigma$ is an inclusion minimal such substring of~$\tau$.
Since~$\tau \in \projDown(\projUp(S))$, we have~$\sigma \in \projUp(S)$, so that there exists a top substring~$\rho$ of~$\sigma$ which belongs to~$S$.
The substring~$\rho$ decomposes~$\sigma$ into~$\sigma = \sigma' \circ \rho \circ \sigma''$, where~$\sigma'$ and~$\sigma''$ are bottom substrings of~$\sigma$, and thus of~$\tau$.
By minimality of~$\sigma$, we have~$\sigma' \in S$ and~$\sigma'' \in S$.
Since~$S$ is closed, we obtain that~$\sigma \in \sigma' \circ \rho \circ \sigma'' \subseteq S$, a contradiction.
We therefore proved that~$\projDown \circ \projDown = \projDown \circ \projUp = \projDown$.
Using the symmetry presented in Lemma~\ref{rem:reversedProjections}, this implies as well that~$\projUp \circ \projUp = \projUp \circ \projDown = \projUp$.
\end{proof}

\begin{example}
\fref{fig:exmLatticeQuotient}\,(left) illustrates this lattice congruence on~$\Bicl{\bar Q}$ for a specific gentle bound quiver.
Congruence classes are represented by blue rectangles.

\begin{figure}[t]
	\capstart
	\centerline{\includegraphics[width=1.1\textwidth]{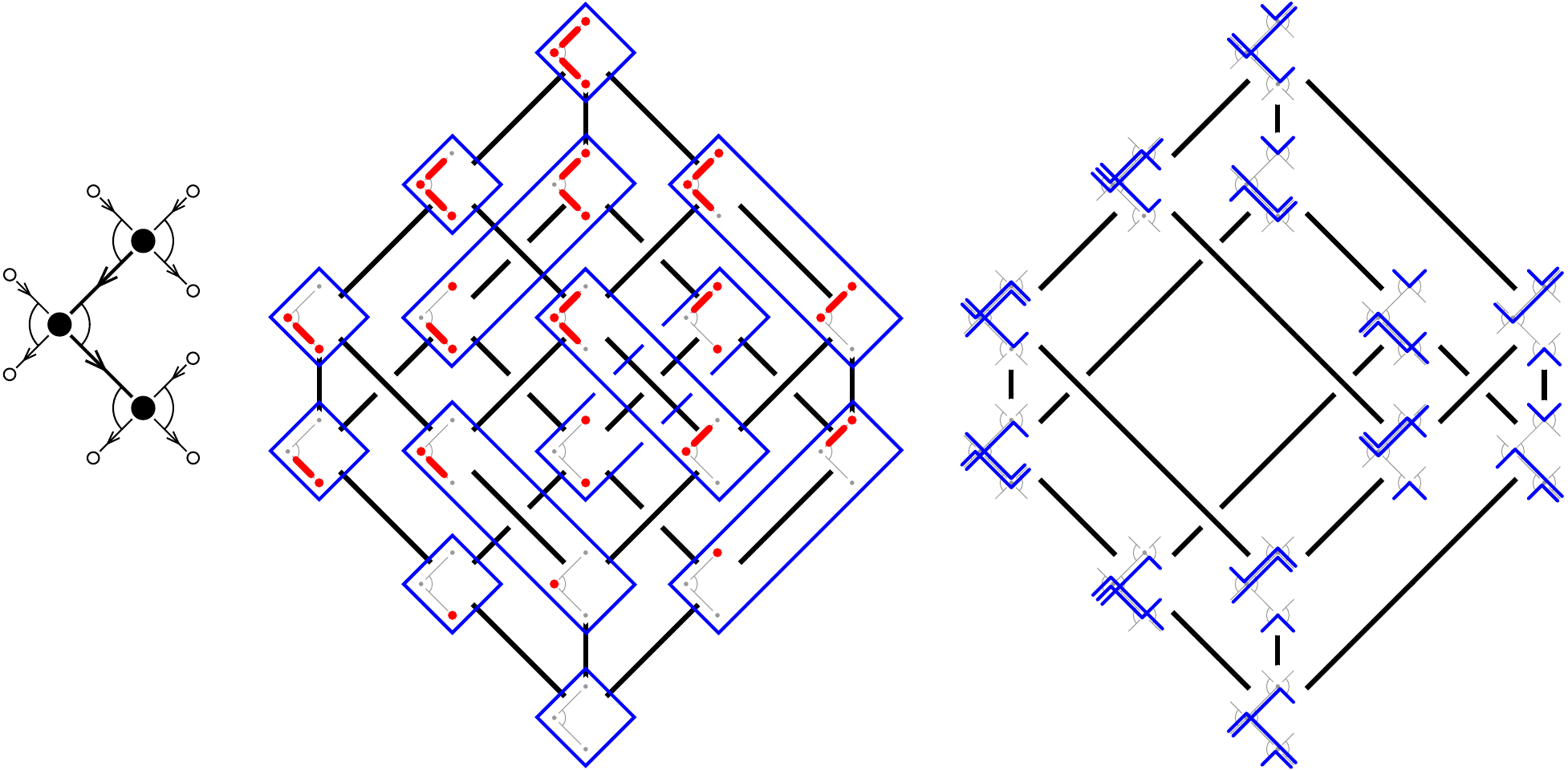}}
	\caption{The inclusion lattice of biclosed sets~$\Bicl{\bar Q}$ with congruence classes of~$\equiv$ in blue (left), and the corresponding lattice of increasing flips on facets of~$\NKC$ (right).}
	\label{fig:exmLatticeQuotient}
\end{figure}
\end{example}

To conclude this section, we show that the examples of biclosed sets~$S$ seen in Lemma~\ref{lem:exmBiclosed} are fixed by~$\projDown$ (resp.~$\projUp$).
In other words, they correspond to certain bottom (resp.~top) elements of the congruence classes of~$\equiv$.
We will see in Section~\ref{sec:biclosedSetsToNKF} that all bottom (resp.~top) elements of $\equiv$-classes are in fact of this form, and in Section~\ref{sec:nonFriendlyComplex} that one can in fact restrict to distinguishable~strings.

\begin{lemma}
\label{lem:fixProjDown}
For any string~$\sigma \in \strings^\pm(\bar Q)$, the biclosed set~$\closure{\Sigma_\bottom(\sigma)}$ is fixed by~$\projDown$ while~$\closure{\Sigma_\top(\sigma)}$ is fixed by~$\projUp$.
Therefore,
\[
\projDown \bigg( \closure{\Big( \bigcup_{i \in [\ell]} \Sigma_\bottom(\sigma_i) \Big)} \bigg) = \closure{\Big( \bigcup_{i \in [\ell]} \Sigma_\bottom(\sigma_i) \Big)}
\;\;\text{and}\;\;
\projUp \bigg( \coclosure{\Big( \bigcap_{i \in [\ell]} \closure{\Sigma_\top(\sigma_i)} \Big)} \bigg) = \coclosure{\Big( \bigcap_{i \in [\ell]} \closure{\Sigma_\top(\sigma_i)} \Big)}
\]
for any~$\sigma_1, \dots, \sigma_\ell \in \strings^\pm(\bar Q)$.
\end{lemma}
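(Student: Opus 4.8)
The plan is to prove the two fixed-point statements for a single string~$\sigma$; the statements for unions/intersections then follow formally from the closure formulas in Remark~\ref{rem:meetJoinBiclosedSets} combined with Proposition~\ref{prop:latticeCongruence}(ii), since $\projDown$ is a lattice-theoretic \emph{down}-projection and hence determined by its fibers. Concretely, once we know $\projDown(\closure{\Sigma_\bottom(\sigma_i)}) = \closure{\Sigma_\bottom(\sigma_i)}$ for each $i$, the join $\closure{(\bigcup_i \Sigma_\bottom(\sigma_i))}$ equals $\bigJoin_i \closure{\Sigma_\bottom(\sigma_i)}$ (Lemma~\ref{lem:exmBiclosed} gives biclosedness), and a join of $\projDown$-fixed biclosed sets is $\projDown$-fixed because $\projDown(L)$ is a join-sublattice (Remark~\ref{rem:sublattice}); similarly the intersection term is handled via $\projUp$ and Remark~\ref{rem:meetJoinBiclosedSets}. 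The dual half ($\projUp$ fixes $\closure{\Sigma_\top(\sigma)}$) then follows from the $\projDown$ half by the symmetry of Lemma~\ref{rem:reversedProjections} together with Remarks~\ref{rem:reverseBlossomingQuiver} and~\ref{rem:reverseBiclosed}, exactly as in the proof of Lemma~\ref{lem:exmBiclosed}.

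So the heart of the matter is: for a fixed string $\sigma$, show $\projDown(\closure{\Sigma_\bottom(\sigma)}) = \closure{\Sigma_\bottom(\sigma)}$. Write $S \eqdef \closure{\Sigma_\bottom(\sigma)}$, which is biclosed by Lemma~\ref{lem:exmBiclosed}. By Proposition~\ref{prop:latticeCongruence}(i) we always have $\projDown(S) \subseteq S$, so only the reverse inclusion $S \subseteq \projDown(S)$ needs an argument, i.e.\ we must show every $\tau \in S$ satisfies $\Sigma_\bottom(\tau) \subseteq S$. First reduce to $\tau \in \Sigma_\bottom(\sigma)$: an arbitrary $\tau \in S$ has the form $\tau = \tau_1 \alpha_1^{\varepsilon_1} \tau_2 \cdots \alpha_{\ell-1}^{\varepsilon_{\ell-1}} \tau_\ell$ with each $\tau_k \in \Sigma_\bottom(\sigma)$, and $S$ is closed, so it suffices to prove $\Sigma_\bottom(\tau_k) \subseteq S$ for each $k$ and then to control how a bottom substring of $\tau$ splits across the connecting arrows $\alpha_k^{\varepsilon_k}$ — but a bottom substring of $\tau$ crossing a connecting arrow decomposes (using the sign analysis already carried out in the proof of Lemma~\ref{lem:exmBiclosed}) into bottom substrings of the individual $\tau_k$'s joined by those same arrows, hence lies in $S$ by closedness. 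The remaining claim is the \emph{transitivity of ``bottom substring of''}: if $\tau \in \Sigma_\bottom(\sigma)$ and $\rho \in \Sigma_\bottom(\tau)$, then $\rho \in \Sigma_\bottom(\sigma)$. This is immediate from Definition~\ref{def:topBottom}: at an interior endpoint of $\rho$ that is also interior in $\tau$, the incident arrow of $\tau$ (which equals the incident arrow of $\sigma$ there) is incoming by hypothesis; at an endpoint of $\rho$ that coincides with an endpoint of $\tau$, either $\tau$ (hence $\sigma$) ends there, or $\sigma$ has an incoming arrow there because $\tau$ was a bottom substring of $\sigma$ — in every case the bottom condition for $\rho \in \Sigma_\bottom(\sigma)$ holds.

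The main obstacle — though a mild one — is the bookkeeping in the reduction of a general $\tau \in \closure{\Sigma_\bottom(\sigma)}$ to the base case, namely verifying that when a bottom substring $\rho$ of $\tau = \tau_1\alpha_1^{\varepsilon_1}\cdots\tau_\ell$ straddles one or more of the linking arrows $\alpha_k^{\varepsilon_k}$, its pieces are genuinely bottom substrings of the respective $\tau_k$'s (so that $\rho \in S$ by closedness of $S$). This is precisely the two-case sign argument ($\varepsilon = \pm 1$) already appearing verbatim in the proof of Lemma~\ref{lem:exmBiclosed}, so it can be invoked rather than redone. Apart from that, everything is a direct unwinding of Definitions~\ref{def:topBottom} and~\ref{def:downUpProjections} and the closure axioms of Definition~\ref{def:closureOperator}, and the passage to arbitrary finite families is purely formal via Remarks~\ref{rem:sublattice} and~\ref{rem:meetJoinBiclosedSets}.
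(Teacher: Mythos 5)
Your proposal is correct and takes essentially the same route as the paper: a direct verification that $\Sigma_\bottom(\tau)\subseteq\closure{\Sigma_\bottom(\sigma)}$ for $\tau\in\closure{\Sigma_\bottom(\sigma)}$ by decomposing a bottom substring across the connecting arrows and invoking transitivity of ``bottom substring of,'' followed by the formal passage to the $\ell$-fold join via the fact that $\projDown$ commutes with joins (the join-sublattice property of $\projDown(\Bicl{\bar Q})$), and duality for the $\projUp$ half. Your explicit isolation of the transitivity claim, which the paper's proof uses implicitly, is if anything a slight gain in clarity.
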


\begin{proof}
Let~$\tau \in \closure{\Sigma_\bottom(\sigma)}$ and~$\rho \in \Sigma_\bottom(\tau)$.
By definition, there exist strings~${\tau_1, \dots, \tau_\ell \in \Sigma_\bottom(\sigma)}$, arrows~$\alpha_1, \dots, \alpha_{\ell-1} \in Q_1$ and signs~$\varepsilon_1, \dots, \varepsilon_{\ell-1} \in \{-1,1\}$ such that~$\tau = \tau_1 \alpha_1^{\varepsilon_1} \dots \alpha_{\ell-1}^{\varepsilon_{\ell-1}} \tau_\ell$.
Since~${\rho \in \Sigma_\bottom(\tau)}$, there exist indices~$1 \le i < j \le \ell$ and substrings~$\tau_i' \in \Sigma_\bottom(\tau_i)$ and~${\tau_j' \in \Sigma_\bottom(\tau_j)}$ such that ${\rho = \tau_i' \alpha_i^{\varepsilon_i} \tau_{i+1} \dots \tau_{j-1} \alpha_{j-1}^{\varepsilon_{j-1}} \tau_j'}$.
We thus obtain~$\rho \in \closure{\Sigma_\bottom(\sigma)}$ so that~${\Sigma_\bottom(\tau) \subseteq \closure{\Sigma_\bottom(\sigma)}}$.
This shows that~$\closure{\Sigma_\bottom(\sigma)} \subseteq \projDown(\closure{\Sigma_\bottom(\sigma)})$ and thus the equality by Proposition~\ref{prop:latticeCongruence}\,(i).

Consider now~$\sigma_1, \dots, \sigma_\ell \in \strings^\pm(\bar Q)$.
We have
\[
\projDown \bigg( \bigJoin_{i\in [\ell]} \closure{\Sigma_\bottom(\sigma_i)} \bigg) = \bigJoin_{i \in [\ell]} \projDown \big( \closure{\Sigma_\bottom(\sigma_i)} \big) = \bigJoin_{i \in [\ell]} \closure{\Sigma_\bottom(\sigma_i)},
\]
so that
\[
\projDown \bigg( \closure{\Big( \bigcup_{i \in [\ell]} \Sigma_\bottom(\sigma_i) \Big)} \bigg) = \closure{\Big( \bigcup_{i \in [\ell]} \Sigma_\bottom(\sigma_i) \Big)}.
\]
The proof is similar for~$\closure{\Sigma_\top(\sigma)}$ or follows by duality from Remark~\ref{rem:reverseBlossomingQuiver} and Lemma~\ref{rem:reversedProjections}.
\end{proof}

\section{Biclosed sets of strings versus non-kissing facets}
\label{sec:biclosedSetsToNKF}

The goal of this section is to show that the non-kissing oriented flip graph~$\NKG$ is isomorphic to the Hasse diagram of the lattice quotient of the lattice of biclosed sets~$\Bicl{\bar Q}$ by the lattice congruence~$\equiv$.
For this, we first need to define inverse bijections between congruence classes of biclosed sets of~$\Bicl{\bar Q}$ and non-kissing facets of~$\NKC$.

\subsection{From biclosed sets of strings to non-kissing facets}

We first define a map~$\eta$ from biclosed sets in~$\Bicl{\bar Q}$ to subsets of walks of~$\walks^\pm(\bar Q)$.
Although slightly more technical to cover the general situation of gentle bound quivers, our definition is directly inspired by~\cite[Sect.~8]{McConville}.

\begin{definition}
\label{def:fromBiclToNKC}
For~$S \in \Bicl{\bar Q}$ and~$\alpha \in Q_1\blossom$, let
\(
{\omega(\alpha,S) \eqdef \alpha_{-\ell}^{\varepsilon_{-\ell}} \cdots \alpha_{-1}^{\varepsilon_{-1}} \cdot \alpha \cdot \alpha_1^{\varepsilon_1} \cdots \alpha_r^{\varepsilon_r}}
\)
be the directed walk containing~$\alpha$ defined by
\begin{itemize}
\item $\varepsilon_i = -1$ if the string~$\alpha_1^{\varepsilon_1} \cdots \alpha_{i-1}^{\varepsilon_{i-1}}$ belongs to~$S$, and~$\varepsilon_i = 1$ otherwise, for all~$i \in [r]$, and
\item $\varepsilon_{-i} = 1$ if the string~$\alpha_{-i+1}^{\varepsilon_{-i+1}} \cdots \alpha_{-1}^{\varepsilon_{-1}}$ belongs to~$S$, and~$\varepsilon_i = -1$ otherwise, for all~$i \in [\ell]$.
\end{itemize}
\end{definition}

\begin{remark}
Observe that~$\omega(\alpha,S)$ is well-defined and unique for any~$S \in \Bicl{\bar Q}$ and~$\alpha \in Q_1$.
Indeed, one can start from the string~$\alpha$ and let it grow in both directions according to the local rules of Definition~\ref{def:fromBiclToNKC} until it reaches some blossom vertices of~$\bar Q\blossom$.
Note that~$\ell = 0$ (resp.~$r = 0$) when~$\alpha$ is an incoming (resp.~outgoing) blossom arrow.
Note also that for~$i = 1$, we consider $\alpha_1^{\varepsilon_1} \dots \alpha_{i-1}^{\varepsilon_{i-1}}$ to be~$t(\alpha)$ and similarly~$\alpha_{-i+1}^{\varepsilon_{-i+1}} \dots \alpha_{-1}^{\varepsilon_{-1}}$ to be~$s(\alpha)$.
\end{remark}


\begin{lemma}
\label{lem:distinctDirectedWalks}
As directed walks, $\omega(\alpha,S) \ne \omega(\beta,S)$ for~$\alpha \ne \beta \in Q_1\blossom$.
\end{lemma}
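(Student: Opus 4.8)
The statement says that the directed-walk assignment $\alpha \mapsto \omega(\alpha, S)$ is injective on $Q_1\blossom$. First I would unwind what it means for $\omega(\alpha,S)$ and $\omega(\beta,S)$ to be equal \emph{as directed walks}: it means they traverse exactly the same sequence of (signed) arrows $\alpha_{-\ell}^{\varepsilon_{-\ell}} \cdots \alpha_r^{\varepsilon_r}$ in the same order, and $\alpha$ resp.\ $\beta$ is the distinguished middle arrow at some position along this common sequence. So the heart of the matter is: if $\alpha$ and $\beta$ are two arrows occurring (with positive sign) at two different positions of the \emph{same} directed walk $\omega$, can both of them be the ``seed'' from which the growth rules of Definition~\ref{def:fromBiclToNKC} reconstruct $\omega$?

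The key observation to exploit is that the growth rules are \emph{deterministic and position-independent}: given the portion of the walk built so far (a string in $\strings(\bar Q)$), whether the next step switches orientation or not depends only on whether that string lies in $S$, not on which arrow we started from. Concretely, suppose $\omega(\alpha,S) = \omega(\beta,S) = \omega = \gamma_1^{\delta_1}\cdots\gamma_n^{\delta_n}$ as a directed walk, with $\alpha = \gamma_i$ at position $i$ (so necessarily $\delta_i = +1$ since a seed arrow is read left-to-right as $\alpha$, not $\alpha^{-1}$) and $\beta = \gamma_j$ at position $j$ with $\delta_j = +1$, and WLOG $i < j$. Apply the backward growth rule of Definition~\ref{def:fromBiclToNKC} starting from the seed $\beta = \gamma_j$: the rule declares $\varepsilon_{-1} = +1$ (i.e.\ $\delta_{j-1} = +1$) iff the length-zero string $s(\beta) = s(\gamma_j)$ lies in $S$; but $\varepsilon_v$ for a vertex $v$ is never a nontrivial string, and one must check the boundary convention — here $\alpha_{-i+1}^{\varepsilon_{-i+1}}\cdots\alpha_{-1}^{\varepsilon_{-1}}$ is taken to be $s(\beta)$ for $i=1$, and whether $s(\beta) \in S$ is governed by Lemma~\ref{lem:loops} and the fact that $\varepsilon_v \notin S$ unless... — I would instead argue more robustly: the \emph{same} sequence of rule-evaluations must produce both walks, so comparing the orientation $\delta_{j-1}$ predicted by the $\beta$-seed growth (which looks at the substring of $\omega$ strictly to the left of position $j$, built from $\beta$) with the orientation $\delta_{j-1}$ predicted by the $\alpha$-seed growth (which at position $j-1$ looks at the string $\alpha_1^{\varepsilon_1}\cdots\alpha_{j-i-1}^{\varepsilon_{j-i-1}}$, a string strictly \emph{containing} $\alpha = \gamma_i$) will give contradictory constraints. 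The point is that the $\alpha$-seed sees a string that \emph{contains} $\alpha_i$ while the $\beta$-seed sees one that \emph{does not}, and by the biclosedness of $S$ (Lemma~\ref{lem:loops}: $\sigma \in S \iff \closure{\{\sigma\}} \subseteq S$), membership of the longer string in $S$ is not freely compatible with membership of the relevant proper substrings.

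More precisely, the clean way to finish is: since $\omega(\alpha, S)$ is built by letting $\alpha$ grow in both directions, the signs $\varepsilon_i$ record, at each step, a single bit ``is the so-far-built string in $S$?''. Two seeds $\alpha \neq \beta$ on the same directed walk $\omega$ would force, at the position $p$ strictly between them adjacent to one of them, the seed-$\beta$ rule to read the string $\xi$ between $\beta$ and $p$, while the seed-$\alpha$ rule reads the longer string $\xi'$ between $\alpha$ and $p$ (with $\xi'$ obtained from $\xi$ by prepending more arrows, hence $\xi \subsetneq \xi'$ as substrings). But the \emph{same} physical step of $\omega$ at position $p$ has a fixed sign, so we need both ``$\xi \in S$'' and ``$\xi' \in S$'' to have the same truth value, and iterating this from the seed $\alpha$ onwards one derives that the entire initial segment of $\omega$ from position $1$ up to $\beta$ is forced to be non-alternating in a way incompatible with $\beta$ itself being readable as a forward seed arrow $\beta$ (positive sign) — that is, the sign at $\beta$ would be forced to $-1$. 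This contradiction shows $i = j$ and hence $\alpha = \beta$ as signed arrows, so $\alpha = \beta$ in $Q_1\blossom$.

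\textbf{Main obstacle.} The genuinely delicate point is handling the \emph{length-zero boundary cases} and the behaviour at loops: when $\alpha$ and $\beta$ are adjacent along $\omega$ sharing a vertex, or when a loop is involved (where by Lemma~\ref{lem:weirdDistinguishedArrows}-type phenomena a walk can look like $\sigma\alpha\sigma^{-1}$), the ``longer string contains shorter string'' argument degenerates to comparing a vertex-string $\varepsilon_v$ with a genuine string, and one must invoke Lemma~\ref{lem:loops} carefully — using that biclosed sets cannot separate $\sigma$ from the looped string $\sigma\alpha\sigma^{-1}$ in $\closure{\{\sigma\}}$. I expect the bulk of the writeup to be a clean induction along the walk from the seed outward, with the loop/boundary cases checked separately; the conceptual content is just ``the reconstruction rule is deterministic and strictly monotone in the length of the inspected string, which is incompatible with two distinct fixed-point seeds.''
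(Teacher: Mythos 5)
You land on the correct final conclusion (the sign at $\beta$ is forced to $-1$), but the argument leading there does not work. The key error is the claim ``$\xi \subsetneq \xi'$'': for a position $p$ with $a < p < b$ (writing $a, b$ for the positions of $\alpha, \beta$ along $\omega = \gamma_1^{\delta_1}\cdots\gamma_n^{\delta_n}$), the $\beta$-seed \emph{backward} rule at $p$ inspects the substring on positions $p+1, \ldots, b-1$, while the $\alpha$-seed \emph{forward} rule at $p$ inspects the substring on positions $a+1, \ldots, p-1$. These two strings lie on \emph{opposite} sides of $p$ and are disjoint, not nested. Even for genuinely nested strings, ``same truth value in $S$'' is perfectly possible and gives no contradiction; the subsequent ``iterating \ldots\ non-alternating'' step is too vague to verify and does not, as stated, force $\delta_b = -1$. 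There is also a smaller error earlier: the $\alpha$-seed rule at position $j-1$ does \emph{not} inspect a string containing $\alpha$ --- the forward rule never looks at the seed arrow itself, only at the portion of the walk strictly ahead of it.

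The clean proof (and the paper's) is a single comparison, not an induction. Let $\sigma$ be the substring of $\omega$ strictly between the two seeds, $\sigma = \gamma_{a+1}^{\delta_{a+1}}\cdots\gamma_{b-1}^{\delta_{b-1}}$, possibly of length zero. The $\alpha$-seed \emph{forward} rule at position $b$ inspects exactly $\sigma$ and gives $\delta_b = 1$ iff $\sigma\notin S$; since $\delta_b = 1$ (as $\beta$ is the seed of $\omega(\beta,S)$), we get $\sigma\notin S$. The $\beta$-seed \emph{backward} rule at position $a$ also inspects exactly $\sigma$ and gives $\delta_a = 1$ iff $\sigma\in S$; since $\delta_a = 1$, we get $\sigma\in S$. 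Contradiction. The point is that both rules read the \emph{same} string $\sigma$, one from each side, so the two constraints on $\sigma$ are opposite --- no monotonicity in the length of the inspected string is needed. Your worries about loops and the length-zero case are red herrings: $\varepsilon_v$ is a bona fide element of $\strings^\pm(\bar Q)$ that may or may not lie in $S$, the growth rule already accommodates that (this is precisely the boundary convention in the definition), and the contradiction above holds verbatim when $\sigma = \varepsilon_v$. Lemma~\ref{lem:loops} plays no role, and the only place $\alpha\ne\beta$ is used is to guarantee $a\ne b$, since the arrow at a given position of $\omega$ is unique.
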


\begin{proof}
Assume that~$\alpha \ne \beta \in Q_1\blossom$ are such that~$\omega \eqdef \omega(\alpha,S) = \omega(\beta,S)$.
We can assume without loss of generality that~$\alpha$ appears before~$\beta$ along~$\omega$.
Let~$\sigma$ denote the substring of~$\omega$ between~$\alpha$ and~$\beta$.
We obtain that~$\sigma \in S$ since~$\alpha$ is incoming at~$s(\sigma)$, and that~$\sigma \notin S$ since~$\beta$ is outgoing at~$t(\sigma)$, a contradiction.
\end{proof}

Recall from Definition~\ref{def:substrings} that a string~$\sigma \in \strings(\bar Q)$ is a \defn{top substring} (resp.~\defn{bottom substring}) of a walk~$\omega \in \walks(\bar Q)$ if the arrows of~$\omega \ssm \sigma$ incident to the endpoints of~$\sigma$ are both outgoing (resp.~incoming).

\begin{lemma}
\label{lem:positiveNegativeOrientation}
For any~$S \in \Bicl{\bar Q}$ and~$\alpha \in Q_1$, all bottom (resp.~top) substrings of~$\omega(\alpha,S)$ belong to~$S$ (resp.~to the complement of~$S$).
\end{lemma}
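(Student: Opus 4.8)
The plan is to unwind the construction of $\omega(\alpha,S)$ in Definition~\ref{def:fromBiclToNKC} and read off directly why bottom substrings land in $S$. Suppose $\sigma$ is a bottom substring of $\omega \eqdef \omega(\alpha,S)$, say $\sigma = \alpha_i^{\varepsilon_i} \cdots \alpha_j^{\varepsilon_j}$ as a factor of $\omega = \cdots \alpha_i^{\varepsilon_i} \cdots \alpha_j^{\varepsilon_j} \cdots$. Being a bottom substring means the arrow $\omega$ uses just before $\sigma$ (at the endpoint $s(\sigma)$) is incoming into $\sigma$, and the arrow just after $\sigma$ (at the endpoint $t(\sigma)$) is incoming into $\sigma$ as well --- in the left-to-right/downward drawing convention, both ``arms'' point towards $\sigma$. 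I would first reindex so that the left endpoint of $\sigma$ is the reference vertex $t(\alpha)$ of the seed arrow, distinguishing the case where $\alpha$ itself is interior to $\sigma$ from the case where $\sigma$ lies strictly to one side of $\alpha$; by the symmetry between the $\alpha_{\bullet}$ and $\alpha_{-\bullet}$ halves of the walk it suffices to treat $\sigma$ lying in the forward part, i.e. $\sigma = \alpha_i^{\varepsilon_i}\cdots\alpha_j^{\varepsilon_j}$ with $1 \le i \le j$.

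The key step: the sign $\varepsilon_j$ records, by definition, whether the string $\alpha_1^{\varepsilon_1}\cdots\alpha_{j-1}^{\varepsilon_{j-1}}$ lies in $S$ ($\varepsilon_j = -1$) or not ($\varepsilon_j = 1$), and $\varepsilon_{j+1}$ records whether $\alpha_1^{\varepsilon_1}\cdots\alpha_j^{\varepsilon_j} \in S$. The hypothesis that $\sigma$ is a bottom substring forces the arrow $\alpha_{j+1}^{\varepsilon_{j+1}}$ (the right arm) to point into $\sigma$, which by the drawing convention means $\varepsilon_{j+1} = -1$, i.e. $\alpha_1^{\varepsilon_1}\cdots\alpha_j^{\varepsilon_j} \in S$ (or, if $j = r$ so there is no right arm, $\sigma$ reaches a blossom vertex and we instead argue as below). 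Now let $\tau \eqdef \alpha_1^{\varepsilon_1}\cdots\alpha_{i-1}^{\varepsilon_{i-1}}$ be the prefix ending at $s(\sigma)$ (interpreted as $\varepsilon_{s(\alpha)} = t(\alpha)$ if $i=1$). The left arm of $\sigma$ is $\alpha_i^{\varepsilon_i}$ pointing into $\sigma$, which forces $\varepsilon_i = 1$, i.e. $\tau \notin S$ (when $i=1$ this is the seeding step and $\tau = t(\alpha)$, which I address separately --- a length-zero string is never required to lie in $S$, and the adjacent arrow $\alpha$ being incoming into $\sigma$ at $s(\sigma)$ is exactly what we need). We have $\alpha_1^{\varepsilon_1}\cdots\alpha_j^{\varepsilon_j} = \tau \cdot \sigma \in S$ while $\tau \notin S$; since $S$ is biclosed, hence coclosed, and $\tau$ is a bottom substring of $\tau \cdot \sigma$ (the right arm $\alpha_i^{\varepsilon_i}$ points away from $\tau$), coclosedness of $S$ gives $\tau \in S$ unless $\sigma \in S$ --- more precisely, I will invoke that $\tau\cdot\sigma \in \tau \circ \sigma$ together with $\tau \cdot \sigma \in S$, $\tau \notin S$, and the coclosed-set property $\coclosure{S} = S$ to conclude $\sigma \in S$. (If the prefix $\alpha_1^{\varepsilon_1}\cdots\alpha_j^{\varepsilon_j}$ is already the whole forward half, i.e. $j = r$, then $t(\sigma)$ is a blossom vertex and the same argument with $\tau \notin S$ and coclosedness applies with the convention $\alpha_1^{\varepsilon_1}\cdots\alpha_r^{\varepsilon_r} \in S$ being what the rule at step $r$ would assert were there a further step.)

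The top-substring claim is the mirror image: if $\sigma$ is a top substring of $\omega$, both arms point away from $\sigma$, forcing the corresponding signs so that the prefix $\tau = \alpha_1^{\varepsilon_1}\cdots\alpha_{i-1}^{\varepsilon_{i-1}}$ up to $s(\sigma)$ lies in $S$ while $\tau\cdot\sigma \notin S$, and closedness of $S$ (namely $\closure{S} = S$, so $\tau \circ \sigma \subseteq S$ when $\tau \in S$ and $\sigma \in S$) then forces $\sigma \notin S$. I expect the only real subtlety --- and the main thing to get right rather than the main obstacle --- is bookkeeping the boundary cases: when $\sigma$ contains the seed arrow $\alpha$, when one endpoint of $\sigma$ is a blossom vertex (no arm on that side), and when $\sigma$ is reduced to a single vertex. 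In each of these the relevant ``string in $S$ or not'' assertion is either the seeding convention or an instance of Definition~\ref{def:fromBiclToNKC} applied at the appropriate index, and the conclusion follows from the biclosedness of $S$ exactly as above. It may be cleaner to package the argument as: \emph{for every $v$ on $\omega$, the prefix of $\omega$ from its initial blossom vertex to $v$ lies in $S$ iff the arrow of $\omega$ incident to $v$ on the far side of that prefix is outgoing at $v$} --- this is immediate from the local rule and reversing --- and then both statements drop out by splitting $\sigma$'s two defining prefixes and using closed/coclosed.
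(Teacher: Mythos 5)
Your high-level strategy is the right one and is the same as the paper's: unwind the local rule in Definition~\ref{def:fromBiclToNKC} to read off which prefixes lie in $S$, then combine this with closedness and coclosedness of $S$. However there are several concrete errors in the execution.

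First, the bottom-substring condition constrains the \emph{arm} arrows $\alpha_{i-1}^{\varepsilon_{i-1}}$ and $\alpha_{j+1}^{\varepsilon_{j+1}}$, giving $\varepsilon_{i-1}=1$ and $\varepsilon_{j+1}=-1$; it says nothing about $\varepsilon_i$, the first arrow inside $\sigma$. Your step ``which forces $\varepsilon_i=1$'' is not implied. This off-by-one matters: with your $\tau=\alpha_1^{\varepsilon_1}\cdots\alpha_{i-1}^{\varepsilon_{i-1}}$ the assertion $\tau\cdot\sigma\in\tau\circ\sigma$ is false, because $\tau\circ\sigma$ is by definition the set of strings $\tau\beta\sigma$ with an extra arrow $\beta\in Q_1$ inserted, while $\tau\cdot\sigma$ is the bare concatenation. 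The correct choice is $\tau'=\alpha_1^{\varepsilon_1}\cdots\alpha_{i-2}^{\varepsilon_{i-2}}$: then $\varepsilon_{i-1}=1$ yields $\tau'\notin S$, the decomposition $\alpha_1^{\varepsilon_1}\cdots\alpha_j^{\varepsilon_j}=\tau'\alpha_{i-1}\sigma\in\tau'\circ\sigma$ is legitimate (the joining arrow is $\alpha_{i-1}$, forward since $\varepsilon_{i-1}=1$), and $\varepsilon_{j+1}=-1$ puts the full string in $S$, so coclosedness forces $\sigma\in S$.

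Second, the straddling case --- $\sigma$ containing the seed arrow $\alpha$ --- is not ``the same by symmetry'' and you cannot wave it away. When $i<0<j$, the two constraints $\varepsilon_{i-1}=1$ and $\varepsilon_{j+1}=-1$ place \emph{both} halves $\alpha_i^{\varepsilon_i}\cdots\alpha_{-1}^{\varepsilon_{-1}}$ and $\alpha_1^{\varepsilon_1}\cdots\alpha_j^{\varepsilon_j}$ in $S$, and one concludes $\sigma\in S$ using \emph{closedness} of $S$ (since $\sigma$ is their concatenation through $\alpha$), not coclosedness. The paper treats this as a separate third case; your reduction to the forward half silently drops it.

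Finally, the closing ``packaged'' reformulation --- ``the prefix of $\omega$ from its initial blossom vertex to $v$ lies in $S$ iff \ldots'' --- is ill-formed: such a prefix contains a blossom arrow, hence is not an element of $\strings^\pm(\bar Q)$ and cannot be tested for membership in $S$. (Also, the minor remark about $j=r$ cannot occur: substrings of walks avoid blossom arrows by Definition~\ref{def:substrings}, so $j\le r-1$ always, and the right arm $\alpha_{j+1}^{\varepsilon_{j+1}}$ exists.) The correct packaging of the local rule refers to prefixes and suffixes \emph{between $\alpha$ and $v$}, exactly as in Definition~\ref{def:fromBiclToNKC}; stated that way, your plan goes through once the indices are fixed and all three positional cases are handled.
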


\begin{proof}
Write~$\omega(\alpha,S) = \alpha_{-\ell}^{\varepsilon_{-\ell}} \cdots \alpha_{-1}^{\varepsilon_{-1}} \alpha \alpha_1^{\varepsilon_1} \cdots \alpha_r^{\varepsilon_r}$ and consider a substring~$\sigma = \alpha_i^{\varepsilon_i} \cdots \alpha_j^{\varepsilon_j}$ ($i < j$).
Assume for instance that~$\sigma$ is a bottom substring of~$\omega(\alpha,S)$, \ie that~$\varepsilon_{i-1} = 1$ while~$\varepsilon_{j+1} = -1$.
We distinguished three cases:
\begin{itemize}
\item If~$j < 0$, then~$\alpha_i^{\varepsilon_i} \cdots \alpha_{-1}^{\varepsilon_{-1}} \in S$ (since~$\varepsilon_{i-1} = 1$) while~$\alpha_{j+2}^{\varepsilon_{j+2}} \cdots \alpha_{-1}^{\varepsilon_{-1}} \notin S$ (since~$\varepsilon_{j+1} = -1$).
\item If~$i < 0 < j$, then~$\alpha_i^{\varepsilon_i} \cdots \alpha_{-1}^{\varepsilon_{-1}} \in S$  (since~$\varepsilon_{i-1} = 1$) and~$\alpha_1^{\varepsilon_1} \cdots \alpha_j^{\varepsilon_j} \in S$ (since~${\varepsilon_{j+1} = -1}$).
\item If~$0 < i$, then~$\alpha_1^{\varepsilon_1} \cdots \alpha_{i-2}^{\varepsilon_{i-2}} \notin S$ (since~$\varepsilon_{i-1} = 1$) while~$\alpha_1^{\varepsilon_1} \cdots \alpha_j^{\varepsilon_j} \in S$ (since~$\varepsilon_{j+1} = -1$).
\end{itemize}
In all cases, we obtain that~$\sigma = \alpha_i^{\varepsilon_i} \cdots \alpha_j^{\varepsilon_j}$ must belong to~$S$ since~$S$ is biclosed.
The case of top substrings is symmetric, or can be deduced from the case of bottom substrings by Lemma~\ref{rem:reversedProjections}.
\end{proof}

\begin{corollary}
\label{coro:surjectionNonKissingWalks}
For any~$\alpha, \beta \in Q_1\blossom$, the walks~$\omega(\alpha,S)$ and~$\omega(\beta,S)$ (or its inverse) are non-kissing.
In particular, $\omega(\alpha,S)$ is not self-kissing.
\end{corollary}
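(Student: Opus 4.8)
The plan is to deduce Corollary~\ref{coro:surjectionNonKissingWalks} directly from Lemma~\ref{lem:positiveNegativeOrientation}, which already pins down the ``sidedness'' of every substring of a walk of the form~$\omega(\alpha,S)$ with respect to the biclosed set~$S$. Recall that, by Definition~\ref{def: kissing}, the walk~$\omega(\alpha,S)$ kisses the walk~$\omega(\beta,S)$ precisely when there is a common substring~$\sigma \in \strings^\pm(\bar Q)$ which is a top substring of~$\omega(\alpha,S)$ and a bottom substring of~$\omega(\beta,S)$ (or the reverse role). So the whole statement reduces to showing that no string can simultaneously be a top substring of one~$\omega(\gamma,S)$ and a bottom substring of another~$\omega(\gamma',S)$, where~$\gamma,\gamma' \in Q\blossom_1$.

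First I would argue the self-kissing claim and the mutual-kissing claim uniformly. Suppose~$\sigma$ is a common substring that is on top of~$\omega(\alpha,S)$ and at the bottom of~$\omega(\beta,S)$ (here~$\alpha$ and~$\beta$ are allowed to be equal, which covers the self-kissing case, and we allow replacing~$\omega(\beta,S)$ by its inverse, which by Lemma~\ref{lem:positiveNegativeOrientation} applied to~$\reversed{\bar Q}$ and Lemma~\ref{rem:reversedProjections} does not change the conclusion). By Lemma~\ref{lem:positiveNegativeOrientation}, since~$\sigma$ is a top substring of~$\omega(\alpha,S)$, we have~$\sigma \notin S$; since~$\sigma$ is a bottom substring of~$\omega(\beta,S)$, we have~$\sigma \in S$. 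This is a contradiction. Hence no such~$\sigma$ exists, so~$\omega(\alpha,S)$ does not kiss~$\omega(\beta,S)$; symmetrically~$\omega(\beta,S)$ does not kiss~$\omega(\alpha,S)$, so the two are non-kissing in the sense of Definition~\ref{def: kissing}. Taking~$\alpha = \beta$ gives that~$\omega(\alpha,S)$ is not self-kissing.

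The only genuine subtlety to take care of is bookkeeping around orientations and around the case where~$\sigma$ is reduced to a single vertex~$v$. In the vertex case, ``$\sigma$ on top of~$\omega(\alpha,S)$'' means~$v$ is a peak of~$\omega(\alpha,S)$ and ``$\sigma$ at the bottom of~$\omega(\beta,S)$'' means~$v$ is a deep of~$\omega(\beta,S)$; Lemma~\ref{lem:positiveNegativeOrientation} covers strings of any length~$\ge 0$, and the degenerate string~$\varepsilon_v$ is treated on the same footing (it is both a top and a bottom substring of itself, and the dichotomy $\varepsilon_v\in S$ versus $\varepsilon_v\notin S$ still applies), so the argument goes through verbatim. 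I do not anticipate a real obstacle here: the statement is essentially a repackaging of Lemma~\ref{lem:positiveNegativeOrientation} together with the definition of kissing, and the main point has already been done in that lemma. If anything, the place requiring a sentence of care is making explicit that allowing~$\omega(\beta,S)$ to be taken up to reversal is harmless, which is exactly the content of Lemma~\ref{rem:reversedProjections}.

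\begin{proof}
Suppose, for contradiction, that~$\omega(\alpha,S)$ kisses~$\omega(\beta,S)$ (or its inverse) for some~$\alpha, \beta \in Q_1\blossom$.
By Definition~\ref{def: kissing}, there is a common substring~$\sigma \in \strings^\pm(\bar Q)$ of the two walks such that~$\sigma$ is a top substring of~$\omega(\alpha,S)$ and a bottom substring of~$\omega(\beta,S)$ (or of its inverse).
By Lemma~\ref{lem:positiveNegativeOrientation}, being a top substring of~$\omega(\alpha,S)$ forces~$\sigma \notin S$.
On the other hand, being a bottom substring of~$\omega(\beta,S)$ forces~$\sigma \in S$ by the same lemma; if~$\sigma$ is a bottom substring of the inverse of~$\omega(\beta,S)$, then it is a top substring of~$\omega(\beta,S)$, and since reversing all arrows exchanges~$S$ with its complement on the relevant substrings (Lemma~\ref{rem:reversedProjections}), we again get~$\sigma \in S$.
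This contradiction shows that~$\omega(\alpha,S)$ does not kiss~$\omega(\beta,S)$ nor its inverse; by symmetry~$\omega(\beta,S)$ (or its inverse) does not kiss~$\omega(\alpha,S)$ either, so the two walks are non-kissing.
Taking~$\alpha = \beta$ shows that~$\omega(\alpha,S)$ is not self-kissing.
\end{proof}
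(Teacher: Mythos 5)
Your main line of reasoning is exactly the paper's: combine Lemma~\ref{lem:positiveNegativeOrientation} with Definition~\ref{def: kissing} to force a string~$\sigma$ to lie simultaneously in~$S$ and in its complement, a contradiction. In that sense the proof is correct and follows the same approach as the paper.

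However, the sentence handling the ``or its inverse'' case is wrong, and the citation accompanying it is a red herring. You claim that if~$\sigma$ is a bottom substring of~$\omega(\beta,S)^{-1}$, then it is a \emph{top} substring of~$\omega(\beta,S)$, and invoke Lemma~\ref{rem:reversedProjections} to patch things up. In fact, whether a substring is on top or at the bottom is determined by whether the adjacent arrows of the walk point away from or towards it, and this is intrinsic to the \emph{undirected} walk: inverting the string~$\omega \mapsto \omega^{-1}$ (reading the same sequence of arrows backward) does \emph{not} exchange top with bottom — a bottom substring of~$\omega^{-1}$ is a bottom substring of~$\omega$. What exchanges top with bottom is reversing the \emph{quiver} ($\bar Q \mapsto \reversed{\bar Q}$), which is what Remark~\ref{rem:reverseStrings}, Remark~\ref{rem:reverseBlossomingQuiver}, and Lemma~\ref{rem:reversedProjections} are about; that operation replaces~$\bar Q$ by a different quiver and is irrelevant here. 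The good news is that the correct observation makes your proof simpler, not harder: since~$\Sigma_\bottom(\omega) = \Sigma_\bottom(\omega^{-1})$ as subsets of~$\strings^\pm(\bar Q)$, Lemma~\ref{lem:positiveNegativeOrientation} applies directly regardless of the orientation used, and the case split you introduce can be removed altogether.
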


\begin{proof}
Assume that~$\omega(\alpha,S)$ kisses~$\omega(\beta,S)$ along~$\sigma$, which is thus a maximal substring of~$\omega(\alpha,S)$ and~$\omega(\beta,S)$ (or its inverse).
By Lemma~\ref{lem:positiveNegativeOrientation}, we have~$\sigma \notin S$ since~$\sigma$ is a top substring of~$\omega(\alpha,S)$, while~$\sigma \in S$ since~$\sigma$ is a bottom substring of~$\omega(\beta,S)$, a contradiction.
\end{proof}

\begin{corollary}
\label{coro:eta}
The set~$\set{\omega(\alpha,S)}{\alpha \in Q_1\blossom}$ contains~$2|Q_0| - |Q_1|$ straight walks and~$|Q_0|$ pairs of inverse directed walks.
\end{corollary}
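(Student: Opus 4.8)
The plan is to count the elements of the set $\{\omega(\alpha,S) \mid \alpha \in Q_1\blossom\}$, carefully tracking when two different arrows give the same directed walk and how many of those walks are straight.

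First I would recall from Lemma~\ref{lem:distinctDirectedWalks} that the map $\alpha \mapsto \omega(\alpha,S)$ from $Q_1\blossom$ to directed walks is injective, so the set $\{\omega(\alpha,S) \mid \alpha \in Q_1\blossom\}$ has exactly $|Q_1\blossom| = 4|Q_0| - |Q_1|$ elements as \emph{directed} walks. The task is then to organize these $|Q_1\blossom|$ arrows according to which undirected walks they produce. I would observe that each directed walk $\omega(\alpha,S)$ contains a well-defined set of arrows of $Q_1$ traversed "in the forward direction", and by Definition~\ref{def:fromBiclToNKC}, if $\alpha$ lies on the walk $\omega(\alpha,S)$ it is traversed forward; moreover the construction shows that $\omega(\beta,S) = \omega(\alpha,S)$ (as directed walks) precisely when $\beta$ is another forward arrow of that same directed walk. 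So the fibers of $\alpha \mapsto \omega(\alpha,S)$ partition $Q_1\blossom$ according to directed walks, with the fiber over a directed walk $\omega$ being its set of forward arrows.

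Next I would pair up directed walks with their inverses. A directed walk $\omega$ and its reverse $\omega^{-1}$ are both of the form $\omega(\alpha,S)$: indeed $\omega(\alpha,S)^{-1} = \omega(\alpha^{-1},S)$ reading off the local rules, so the involution $\omega \mapsto \omega^{-1}$ acts on our set of directed walks. The key dichotomy is: a walk $\omega$ is straight if and only if $\omega = \omega^{-1}$ as undirected walks and it uses no forward-plus-backward arrow pair — more precisely, a straight walk is (up to reversal) a directed path in $\bar Q\blossom$, so all its non-blossom arrows point the same way, and it uses exactly two blossom arrows; a bending walk is never equal to its own reverse. Using Proposition~\ref{prop:distinguishedArrows} or just the local counting: for a directed walk $\omega$, the arrows $\beta \in Q_1\blossom$ with $\omega(\beta,S)=\omega$ are the forward $Q_1$-arrows, and together with the forward $Q_1$-arrows of $\omega^{-1}$ (the backward arrows of $\omega$), every non-blossom arrow on the undirected walk $\{\omega,\omega^{-1}\}$ is counted exactly once, while blossom arrows generate no $\omega(\alpha,S)$ beyond themselves. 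So letting $b$ and $s$ denote the number of bending and straight undirected walks among the $\omega(\alpha,S)$, and summing arrow-counts: each straight walk is its own reverse and accounts for $2$ blossom arrows plus its forward $Q_1$-arrows, but I would instead just use the double-count $2b + s = |Q\blossom_1| = 4|Q_0| - |Q_1|$ exactly as in the proof of Corollary~\ref{coro:pure}, combined with $s = \Delta = 2|Q_0| - |Q_1|$ since each straight walk uses two of the $2\Delta$ blossom arrows and every blossom arrow lies on a straight walk $\omega(\alpha,S)$. This yields $b = |Q_0|$ and $s = 2|Q_0|-|Q_1|$, giving the claimed count.

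The main obstacle is the bookkeeping of the fibers: I need to verify cleanly that (a) every blossom arrow $\alpha$ gives a \emph{straight} walk $\omega(\alpha,S)$ — this holds because a blossom arrow is pendant, so the walk grows only in one direction from it, and by the local rules it will keep the same orientation? No — I must check this carefully, since the orientation rules in Definition~\ref{def:fromBiclToNKC} could force a bend. Actually the correct statement is weaker: it is not that blossom arrows give straight walks, but rather that the straight walks in the set are in bijection with pairs of "complementary" blossom arrows. So the real subtlety is identifying which of the $\omega(\alpha,S)$ are straight, and I would handle this by noting a walk is straight iff it is a path iff it contains no $Q_1$-corner, then argue via the arrow-counting identity above that exactly $2|Q_0|-|Q_1| = \Delta$ of them must be straight, avoiding a direct case analysis. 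Once $s = \Delta$ is established (each straight walk consumes exactly $2$ of the $2\Delta$ blossom arrows, and no two straight walks share a blossom arrow by Lemma~\ref{lem:distinctDirectedWalks}), the double-counting finishes the proof.
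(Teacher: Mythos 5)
Your proposal has a genuine gap, and several of the intermediate claims are false. First, by Lemma~\ref{lem:distinctDirectedWalks} the map $\alpha \mapsto \omega(\alpha,S)$ is \emph{injective} on $Q_1\blossom$, so its fibers are singletons, not ``the set of forward arrows of $\omega$''; and the expression $\omega(\alpha^{-1},S)$ is not defined, since $\alpha^{-1}$ is a formal inverse, not an element of $Q_1\blossom$. More seriously, the set $\set{\omega(\alpha,S)}{\alpha \in Q_1\blossom}$ is \emph{not} closed under $\omega \mapsto \omega^{-1}$: the construction in Definition~\ref{def:fromBiclToNKC} always writes $\alpha$ with exponent $+1$, so if $\omega$ is a straight walk (all exponents equal, say $+1$), then $\omega^{-1}$ has all exponents $-1$ and hence can never equal some $\omega(\beta,S)$. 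Establishing that the \emph{bending} walks do come in inverse pairs is precisely the content of the corollary; it is not available as a free assumption.

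The missing ingredient is that the walks $\omega(\alpha,S)$, viewed as undirected walks, are not self-kissing and pairwise non-kissing (Corollary~\ref{coro:surjectionNonKissingWalks}), hence form a face of $\NKC$, and by purity (Corollary~\ref{coro:pure}) such a face contains at most $|Q_0|$ bending walks. Writing $s$ for the number of straight directed walks, $B$ for the number of inverse pairs of bending walks, and $b$ for the remaining unpaired bending walks, injectivity gives $s + 2B + b = |Q_1\blossom| = 4|Q_0| - |Q_1|$; the observation above that straight walks occur in only one orientation, together with the fact that there are only $\Delta = 2|Q_0|-|Q_1|$ undirected straight walks, gives $s \le \Delta$; and the non-kissing bound gives $B + b \le |Q_0|$. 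Subtracting yields $B = (s+2B+b) - (s+B+b) \ge (4|Q_0|-|Q_1|) - \Delta - |Q_0| = |Q_0|$, which squeezes to $B = |Q_0|$, $b = 0$, $s = \Delta$. Your plan tries to establish $s=\Delta$ directly (and you correctly sense this is hard, since a blossom arrow need not produce a straight walk), then appeals to the identity $2b+s = |Q_1\blossom|$ from Corollary~\ref{coro:pure}; but that identity holds for non-kissing \emph{facets}, which is what one is in the process of establishing — invoking it here is circular. Without the non-kissing cardinality bound $B+b\le|Q_0|$, the total count alone cannot separate straight from bending walks.
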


\begin{proof}
Denote by~$s$ the number of straight walks, by~$B$ the number of pairs of inverse bending walks and by~$b$ the remaining bending walks in the set~$\set{\omega(\alpha,S)}{\alpha \in Q_1\blossom}$.
Since this set contains~${|Q_1\blossom| = 4|Q_0| - |Q_1|}$ distinct directed walks by Lemma~\ref{lem:distinctDirectedWalks}, we have~$s + 2B + b = 4|Q_0| - |Q_1|$ and~$s \le 2|Q_0| - |Q_1|$.
Moreover, since all these walks are not self-kissing and pairwise non-kissing by Corollary~\ref{coro:surjectionNonKissingWalks}, we obtain that~$B + b \le |Q_0|$.
Therefore, Corollary~\ref{coro:pure} ensures that ${B = (s + 2B + b) - (s + B + b) \ge |Q_0|}$.
We conclude that~$B = |Q_0|$ and thus~$s = 2|Q_0| - |Q_1|$.
\end{proof}

In other words, beside the straight walks, we can group the walks of~$\set{\omega(\alpha,S)}{\alpha \in Q_1\blossom}$ into pairs of inverse walks.
This leads to the definition of the following map.

\begin{definition}
\label{def:eta}
We denote by~$\eta : \Bicl{\bar Q} \to 2^{\walks^\pm(\bar Q)}$ the map defined by
\[
\eta(S) \eqdef \set{\omega(\alpha,S)}{\alpha \in Q_1\blossom}_{ / \, (\omega \sim \omega^{-1})},
\]
where the quotient means that we consider the walks of~$\eta(S)$ as undirected.
\end{definition}

\begin{example}
Observe that~$\eta(\varnothing) = F_\deep$ and that~$\eta\big(\strings^\pm(\bar Q)\big) = F_\peak$.
\end{example}

\begin{example}
The map~$\eta$ is illustrated on \fref{fig:exmSurjection}\,(left) for the gentle bound quiver~$\bar Q$ of \fref{fig:exmBlossomingQuiver}\,(left).

\begin{figure}[t]
	\capstart
	\centerline{
	\begin{overpic}[scale=.45]{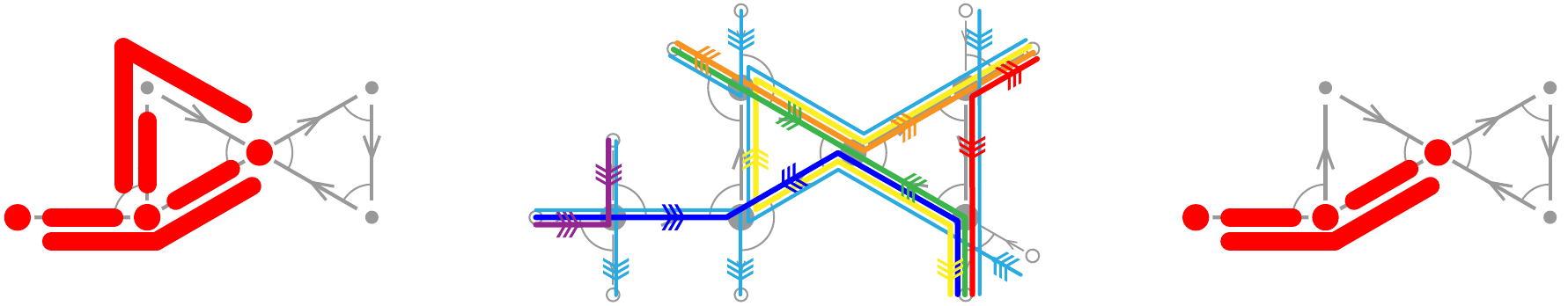}
			\put(28,8){$\longrightarrow$}
			\put(29,10){$\eta$}
			\put(69,8){$\longrightarrow$}
			\put(70,10){$\zeta$}
			\put(13,-3){$S$}
			\put(46,-3){$\eta(S)$}
			\put(77,-3){$\zeta(\eta(S)) = \projDown(S)$}
	\end{overpic}
	}
	\vspace{.3cm}
	\caption{The maps~$\eta$ of Definition~\ref{def:eta} from biclosed sets to non-kissing facets (left) and $\zeta$ of Definition~\ref{def:zeta} from non-kissing facets to biclosed sets (right).}
	\label{fig:exmSurjection}
\end{figure}
\end{example}

\begin{example}
\fref{fig:exmLatticeQuotient} shows the complete map~$\eta$ from the biclosed sets of~$\Bicl{\bar Q}$ to the non-kissing facets of~$\NKC$ for a specific gentle bound quiver.
Instead of drawing the image~$\eta(S)$ for each biclosed set~$S \in \Bicl{\bar Q}$, we have grouped in blue boxes the fibers of~$\eta$.
\end{example}

\enlargethispage{.2cm}
The following result recasts Corollaries~\ref{coro:surjectionNonKissingWalks} and~\ref{coro:eta} in terms of the map~$\eta$ of Definition~\ref{def:eta}.

\begin{proposition}
For any~$S \in \Bicl{\bar Q}$, the collection of undirected walks~$\eta(S)$ is a non-kissing facet of~$\NKC$.
\end{proposition}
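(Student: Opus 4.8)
The plan is to show that $\eta(S)$ is a \emph{facet} of $\NKC$, using the preceding corollaries which already establish that the directed walks $\omega(\alpha, S)$, $\alpha \in Q_1\blossom$, are pairwise non-kissing and none is self-kissing (Corollary~\ref{coro:surjectionNonKissingWalks}), and that among them there are exactly $2|Q_0| - |Q_1|$ straight walks and $|Q_0|$ pairs of inverse directed bending walks (Corollary~\ref{coro:eta}). First I would observe that $\eta(S)$, being the set of these walks modulo the identification $\omega \sim \omega^{-1}$, is a \emph{collection of pairwise non-kissing walks} of $\NKWalks^\pm(\bar Q)$: non-self-kissing is Corollary~\ref{coro:surjectionNonKissingWalks} (second sentence), and pairwise non-kissing follows from the first sentence of that corollary together with the fact that kissing is a property of undirected walks (Definition~\ref{def: kissing} is phrased for $\walks^\pm$). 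Hence $\eta(S)$ is a face of $\NKC$.

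It remains to see that this face is maximal, i.e. a facet. Here I would invoke the cardinality count: by Corollary~\ref{coro:eta}, $\eta(S)$ contains $s = 2|Q_0| - |Q_1| = \Delta$ straight walks and $B = |Q_0|$ bending walks, so
\[
|\eta(S)| = \Delta + |Q_0| = 3|Q_0| - |Q_1|.
\]
By Corollary~\ref{coro:pure}, the non-kissing complex $\NKC$ is pure of dimension $|Q_0| + \Delta = 3|Q_0| - |Q_1|$, so every facet of $\NKC$ has exactly $3|Q_0| - |Q_1| + 1$ vertices; wait — I need to be careful about the off-by-one convention. Looking at the statement of Corollary~\ref{coro:pure} and its proof, a non-kissing facet $F$ has $b + s = 3|Q_0| - |Q_1|$ walks (the proof computes $b = |Q_0|$ and $b + s = 3|Q_0| - |Q_1|$ as the number of walks in a facet, with the complex having dimension one less than that, $|Q_0| + \Delta$... actually the stated dimension $|Q_0| + \Delta = 3|Q_0| - |Q_1|$ matches $b+s$, so the facet has $3|Q_0| - |Q_1| + 1$ walks — but the proof's double-count gives $b+s = 3|Q_0|-|Q_1|$). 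In any case, the key point is that $|\eta(S)|$ equals the common cardinality of the facets of $\NKC$ as computed in Corollary~\ref{coro:pure}. Since $\NKC$ is pure, any non-kissing face of that maximal cardinality is automatically a facet. Therefore $\eta(S)$ is a facet of $\NKC$.

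The main (and only real) obstacle is bookkeeping the off-by-one in the dimension/cardinality convention and making sure the count from Corollary~\ref{coro:eta} is read correctly: one must check that $\eta(S)$, as a set of \emph{undirected} walks, has exactly $\Delta$ straight walks (each straight walk $\omega(\alpha,S)$ arises from its two blossom arrows, and straight walks are fixed by $\omega \sim \omega^{-1}$ in the sense that they contribute one undirected walk each) plus $|Q_0|$ undirected bending walks (each contributed by a pair of inverse directed walks $\omega(\alpha,S) = \omega(\beta,S)^{-1}$). So the proof is essentially a one-line deduction: $\eta(S)$ is a non-kissing face by Corollary~\ref{coro:surjectionNonKissingWalks}, it has the maximal possible cardinality by Corollary~\ref{coro:eta}, and $\NKC$ is pure by Corollary~\ref{coro:pure}, hence $\eta(S)$ is a facet. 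I would write it as: ``By Corollary~\ref{coro:surjectionNonKissingWalks}, the walks in $\eta(S)$ are pairwise non-kissing and none is self-kissing, so $\eta(S)$ is a face of $\NKC$. By Corollary~\ref{coro:eta}, it has $2|Q_0|-|Q_1|$ straight walks and $|Q_0|$ bending walks, hence $|\eta(S)| = 3|Q_0|-|Q_1|$, which by Corollary~\ref{coro:pure} is the number of walks in a facet of $\NKC$. Since $\NKC$ is pure, $\eta(S)$ is a facet.''
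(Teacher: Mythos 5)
Your approach is exactly the intended one: the paper gives this proposition no separate proof, introducing it with the remark that it merely recasts Corollaries~\ref{coro:surjectionNonKissingWalks} and~\ref{coro:eta}, and your chain (face by Corollary~\ref{coro:surjectionNonKissingWalks}, cardinality by Corollary~\ref{coro:eta}, purity by Corollary~\ref{coro:pure}, hence facet) fills in the expected details. You also handle the off-by-one ambiguity in Corollary~\ref{coro:pure} correctly: all that matters is that $|\eta(S)|$ equals the common facet cardinality $3|Q_0|-|Q_1|$ computed in its proof.

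One flaw in the parenthetical bookkeeping: the claim that ``each straight walk $\omega(\alpha,S)$ arises from its two blossom arrows'' cannot be right, since Lemma~\ref{lem:distinctDirectedWalks} says $\alpha \mapsto \omega(\alpha,S)$ is injective, so each directed walk arises from exactly one arrow. What your count actually needs is that no two of the $2|Q_0|-|Q_1| = \Delta$ straight directed walks in $\set{\omega(\alpha,S)}{\alpha\in Q_1\blossom}$ are inverse to each other, so that passing to undirected walks does not shrink their number. This holds because $\omega(\alpha,S)$ contains, by construction, the letter $\alpha$ with exponent $+1$; a straight walk containing a positive letter must itself be a directed path in $\bar Q\blossom$ (its reverse has all negative letters, so cannot contain $\alpha$), and two distinct directed paths are never mutually inverse. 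Hence the $\Delta$ straight directed walks are $\Delta$ pairwise non-inverse paths, yielding all $\Delta$ undirected straight walks, and $|\eta(S)| = \Delta + |Q_0| = 3|Q_0|-|Q_1|$ as required. With that correction your argument is complete.
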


We now identify the relation between an arrow~$\alpha \in Q_1\blossom$ and the walk~$\omega(\alpha,S)$ in the facet~$\eta(S)$.

\begin{lemma}
\label{lem:distinguishedWalkEta}
For any~$S \in \Bicl{\bar Q}$ and~$\alpha \in Q_1\blossom$, the walk~$\omega(\alpha,S)$ is the distinguished walk of the non-kissing facet~$\eta(S)$ at~$\alpha$:
\[
\omega(\alpha,S) = \distinguishedWalk{\alpha}{\eta(S)}.
\]
\end{lemma}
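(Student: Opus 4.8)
The plan is to unpack the definition of $\distinguishedWalk{\alpha}{\eta(S)} = \max_{\prec_\alpha} \eta(S)_\alpha$ and show it coincides with $\omega(\alpha,S)$. First I would note that by Corollary~\ref{coro:surjectionNonKissingWalks} all walks $\omega(\beta,S)$ are pairwise non-kissing (and $\omega(\alpha,S)$ is not self-kissing), so $\eta(S)$ is a genuine face, and by Lemma~\ref{lem:distinguishedWalkEta}'s hypotheses $\omega(\alpha,S) \in \eta(S)_\alpha$: indeed $\omega(\alpha,S)$ passes through $\alpha$ by construction. It remains to check that $\omega(\alpha,S)$ is the $\prec_\alpha$-maximum of $\eta(S)_\alpha$, i.e. that for every other walk $\omega(\beta,S) \in \eta(S)$ passing through $\alpha$ (for some arrow $\beta$, marked at an occurrence of $\alpha$), we have $\omega(\beta,S) \prec_\alpha \omega(\alpha,S)$.

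The key step is the following local comparison. Fix such a walk $\nu = \omega(\beta,S)$ distinct from $\omega(\alpha,S)$, both marked at the same chosen occurrence of $\alpha$, and let $\sigma$ be their maximal common substring containing that occurrence of $\alpha$. Since the two walks split at (at least) one endpoint $v$ of $\sigma$, I would compare the two arrows of $\omega(\alpha,S)$ and $\nu$ leaving $v$ away from $\sigma$. Here I invoke Lemma~\ref{lem:positiveNegativeOrientation} applied to $\omega(\alpha,S)$: every bottom substring of $\omega(\alpha,S)$ lies in $S$ and every top substring lies in $\strings^\pm(\bar Q)\ssm S$. Combined with the local growth rule of Definition~\ref{def:fromBiclToNKC} (which determines the orientation of the next arrow at each step according to membership of the already-built substring in $S$), this forces $\omega(\alpha,S)$ to leave $v$ in the direction that is "with the current" relative to $\alpha$ whenever possible. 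More precisely: consider the substring $\zeta$ of $\sigma$ between $\alpha$ and $v$; whether $\omega(\alpha,S)$ continues past $v$ into $\sigma$ or turns, its behaviour at $v$ is governed by whether $\zeta \in S$, and since $\zeta$ is a substring of both walks and $\nu = \omega(\beta,S)$ also obeys the same rule with respect to $S$, the two walks can only differ at $v$ if the walk $\nu$ arrived at $v$ with an arrow of $\sigma$ that is a top-or-bottom boundary differently oriented — which, unwinding the definition of $\prec_\alpha$, says exactly $\omega(\alpha,S)$ exits $\sigma$ in the direction pointed by $\alpha$ while $\nu$ exits opposite to $\alpha$, i.e. $\nu \prec_\alpha \omega(\alpha,S)$.

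I would organize this as: (1) $\omega(\alpha,S) \in \eta(S)_\alpha$; (2) for any other $\nu \in \eta(S)_\alpha$, using Definition~\ref{def:fromBiclToNKC} show that at every vertex where $\nu$ and $\omega(\alpha,S)$ could split, $\omega(\alpha,S)$ takes the branch that agrees with $\alpha$'s direction — because the arrow chosen by the rule is $\varepsilon = 1$ (forward, agreeing with how $\alpha$ "flows") precisely when the built substring is \emph{not} in $S$, and Lemma~\ref{lem:positiveNegativeOrientation} pins down these memberships for the top/bottom substrings of $\omega(\alpha,S)$; (3) conclude $\nu \prec_\alpha \omega(\alpha,S)$, hence $\omega(\alpha,S) = \max_{\prec_\alpha}\eta(S)_\alpha = \distinguishedWalk{\alpha}{\eta(S)}$. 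A small bookkeeping point is to treat separately the endpoint of $\sigma$ on the "$\alpha$-source side" versus the "$\alpha$-target side", since $\prec_\alpha$ only requires the direction disagreement at one of the two split points; the case analysis in Definition~\ref{def:fromBiclToNKC} (the two bullet items for $\varepsilon_i$ and $\varepsilon_{-i}$) handles these symmetrically.

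The main obstacle I anticipate is the careful translation between the \emph{orientation-of-arrow} language of Definition~\ref{def:fromBiclToNKC} (growing a string and choosing $\varepsilon_i \in \{\pm 1\}$ based on $S$-membership) and the \emph{countercurrent} language of $\prec_\alpha$ (who exits $\sigma$ "in the direction pointed by $\alpha$"). One must check that "walk $\omega$ exits a common substring $\sigma$ agreeing with $\alpha$'s flow" is equivalent to "the boundary substring of $\sigma$ at that endpoint is a bottom substring of $\omega$ on one side and the rule picked the forward orientation" — and that this is exactly what Lemma~\ref{lem:positiveNegativeOrientation} guarantees for $\omega(\alpha,S)$ but need not hold for a competitor $\nu$. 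Getting the signs and the top/bottom conventions consistent (especially since strings can self-intersect and the marked occurrence of $\alpha$ matters) is where all the real care goes; everything else is definition-chasing, and the finiteness of faces needed to even speak of $\max_{\prec_\alpha}$ is already supplied by Proposition~\ref{prop:nkFacetsAreFinite}.
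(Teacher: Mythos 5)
There is a genuine gap, and in fact an error of direction. Your argument wants to conclude that $\nu \prec_\alpha \omega(\alpha,S)$ from the claim that ``$\omega(\alpha,S)$ exits $\sigma$ in the direction pointed by $\alpha$ while $\nu$ exits opposite to $\alpha$.'' But that is precisely the \emph{opposite} conclusion: by the definition of $\prec_\alpha$, if $\omega(\alpha,S)$ exits with $\alpha$'s direction and $\nu$ exits against it, then $\omega(\alpha,S) \prec_\alpha \nu$, i.e.\ $\omega(\alpha,S)$ is smaller. The distinguished walk $\distinguishedWalk{\alpha}{F}$ is the $\prec_\alpha$-\emph{maximum}, which is the most countercurrent walk --- the one that turns \emph{against} $\alpha$'s direction as it leaves the common substring. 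The straight walk through $\alpha$ is exactly the walk that always ``goes with the current,'' and it is the \emph{minimum} of $\prec_\alpha$, not the maximum. So the intuition in your step (2), that the growth rule forces $\omega(\alpha,S)$ to leave in the direction ``with the current relative to $\alpha$ whenever possible,'' is backwards.

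The deeper issue is that you have not identified what actually determines whether $\omega(\alpha,S)$ turns at the split vertex $v$. By Definition~\ref{def:fromBiclToNKC}, $\omega(\alpha,S)$ turns (goes against $\alpha$) at $v$ if and only if the prefix $\zeta$ from $t(\alpha)$ to $v$ lies in $S$; but neither the rule nor Lemma~\ref{lem:positiveNegativeOrientation} tells you directly whether $\zeta\in S$, since $\zeta$ is not yet known to be a top or bottom substring of $\omega(\alpha,S)$ until you know which way it turns --- this is circular. Worse, the competitor $\nu = \omega(\beta,S)$ obeys the growth rule anchored at $\beta$, so its behaviour at $v$ is governed by the substring between $v$ and $\beta$, not by $\zeta$; ``they obey the same rule'' does not by itself determine the outcome. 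The paper's own proof resolves exactly this: after reducing (via Corollary~\ref{coro:eta}) to the case where $\alpha$ and $\beta$ point in the same direction along $\nu$ with $\alpha$ before $\beta$, it observes that the growth rule \emph{for $\nu$} anchored at $\beta$ forces the whole substring $\rho$ between $\alpha$ and $\beta$ to lie in $S$. Then it writes the split as $\rho = \sigma\gamma^\varepsilon\tau$ with $\omega(\alpha,S)$ following $\sigma$ but not $\sigma\gamma^\varepsilon$, and shows that $\varepsilon=-1$ would make $\sigma\notin S$ and $\tau\notin S$ while $\rho\in S$ and $\rho\in\sigma\circ\tau$, contradicting coclosedness of $S$. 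Hence $\varepsilon=1$, so $\nu$ exits forward (with $\alpha$) and $\omega(\alpha,S)$ must exit backward (against $\alpha$), giving $\nu\prec_\alpha\omega(\alpha,S)$. The coclosedness argument supplying the missing $S$-membership information is the essential ingredient absent from your sketch.
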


\begin{proof}
Let~$\alpha \ne \beta \in Q_1\blossom$ and assume that~$\alpha$ belongs to~$\omega(\beta,S)$.
If~$\alpha$ and~$\beta$ are opposite along~$\omega(\beta,S)$, then~$\omega(\beta,S)$ is not straight, thus there exists~$\beta' \in Q_1\blossom$ such that~${\omega(\beta,S) = \omega(\beta',S)^{-1}}$ by Corollary~\ref{coro:eta}.
We can thus assume that~$\alpha$ and~$\beta$ are both in the same direction as~$\omega(\beta,S)$.
Assume also for instance that~$\alpha$ appears before~$\beta$ along~$\omega(\beta,S)$, and let~$\rho$ denote the substring of~$\omega(\beta,S)$ between~$\alpha$ and~$\beta$.
By definition of~$\omega(\beta,S)$, we have~$\rho \in S$ which implies that~$\beta \notin \omega(\alpha,S)$.
Decompose~$\rho$ into~$\rho = \sigma \gamma^\varepsilon \tau$ for some strings~$\sigma, \tau \in \strings^\pm(\bar Q)$, arrow~$\gamma \in Q_1$ and sign~$\varepsilon \in \{-1,1\}$ such that~$\omega(\alpha,S)$ contains~$\sigma$ but not~$\sigma \gamma^\varepsilon$.
If~$\varepsilon = -1$, then~$\sigma \notin S$ and~$\tau \notin S$, while~$\rho \in S$ which contradicts that~$S$ is coclosed.
We conclude that~$\varepsilon = 1$ so that~$\omega(\beta,S) \prec_\alpha \omega(\alpha,S)$.
The other situation ($\alpha$ appears after~$\beta$ along~$\omega(\beta,S)$) is symmetric.
\end{proof}

\subsection{From non-kissing facets to biclosed sets of strings}

We now define a map~$\zeta$ from non-kissing facets of~$\NKC$ to sets of strings.
It is again similar to~\cite[Sect.~8]{McConville}.

\begin{definition}
\label{def:zeta}
Recall from Definition~\ref{def:substrings} that~$\Sigma_\bottom(\omega)$ denotes the set of bottom substrings of a walk~$\omega \in \walks^\pm(\bar Q)$ (\ie substrings~$\sigma \in \strings^\pm(\bar Q)$ of~$\omega$ such that the arrows of~$\omega \ssm \sigma$ incident to the endpoints of~$\sigma$ are both incoming).
For a non-kissing facet~$F \in \NKC$, define
\[
\zeta(F) \eqdef \closure{\Big( \bigcup_{\omega \in F} \Sigma_\bottom(\omega) \Big)}.
\]
\end{definition}

\begin{example}
The map~$\zeta$ is illustrated on \fref{fig:exmSurjection}\,(right)  for the gentle bound quiver~$\bar Q$ of \fref{fig:exmBlossomingQuiver}\,(left).
\end{example}

\begin{proposition}
For any non-kissing facet~$F \in \NKC$, the set of strings~$\zeta(F)$ is biclosed.
\end{proposition}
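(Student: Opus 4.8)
The statement to prove is that $\zeta(F) = \closure{\big( \bigcup_{\omega \in F} \Sigma_\bottom(\omega) \big)}$ is biclosed for any non-kissing facet $F \in \NKC$. By construction $\zeta(F)$ is closed, so the only thing to check is that it is coclosed, i.e.\ that $\strings^\pm(\bar Q) \ssm \zeta(F)$ is closed. The natural approach is to combine two facts established earlier in the excerpt: first, Lemma~\ref{lem:exmBiclosed} already tells us that each $\closure{\Sigma_\bottom(\omega)}$ is biclosed, hence by Remark~\ref{rem:meetJoinBiclosedSets} the join $\closure{\big( \bigcup_{\omega \in F} \Sigma_\bottom(\omega) \big)}$ is biclosed as well. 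In other words, $\zeta(F)$ is a finite join of biclosed sets of the form $\closure{\Sigma_\bottom(\omega)}$, and Remark~\ref{rem:meetJoinBiclosedSets} guarantees that such a join stays biclosed. So the first step is simply to invoke Lemma~\ref{lem:exmBiclosed} (with $\sigma_i$ ranging over the walks of $F$, viewed via the identification of substrings of walks with strings of $\bar Q$ from Definition~\ref{def:substrings}) together with Remark~\ref{rem:meetJoinBiclosedSets}.

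The one subtlety is that Lemma~\ref{lem:exmBiclosed} is stated for $\Sigma_\bottom(\sigma)$ where $\sigma$ is a string, whereas here we take bottom substrings of \emph{walks} in the blossoming quiver. I would address this by noting that the proof of Lemma~\ref{lem:exmBiclosed} only uses the combinatorial fact that a bottom substring of a bottom substring is a bottom substring, and that bottom substrings of $\omega$ are strings of $\strings^\pm(\bar Q)$ (blossom endpoints being forbidden in substrings, per Definition~\ref{def:substrings}), so the same argument applies verbatim with $\omega$ in place of $\sigma$. Concretely: if $\tau \in \closure{\Sigma_\bottom(\omega)}$ and $\tau = \tau' \beta^\varepsilon \tau''$, then either $\beta^\varepsilon$ is one of the connecting arrows (in which case $\tau', \tau'' \in \closure{\Sigma_\bottom(\omega)}$), or $\beta^\varepsilon$ lies inside one of the pieces $\tau_k \in \Sigma_\bottom(\omega)$, and then cutting $\tau_k = \tau_k' \beta^\varepsilon \tau_k''$ yields a bottom substring of $\omega$ on the appropriate side, so $\tau'$ or $\tau''$ lies in $\closure{\Sigma_\bottom(\omega)}$. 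This shows each $\closure{\Sigma_\bottom(\omega)}$ is coclosed.

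There is essentially no hard obstacle here; the content of the proposition is that it follows formally from the machinery already set up. The only thing requiring a sentence of care is the passage from strings to walks, and one should make sure that the coclosedness argument of Lemma~\ref{lem:exmBiclosed} is genuinely local — it is, since it examines one cut $\tau = \tau'\beta^\varepsilon\tau''$ at a time and never needs the endpoints of the ambient object. Therefore the proof is short:

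\begin{proof}
The set $\zeta(F)$ is closed by definition.
By the same argument as in Lemma~\ref{lem:exmBiclosed} (applied to each walk $\omega \in F$ in place of a string, using that a bottom substring of a bottom substring of $\omega$ is again a bottom substring of $\omega$, and that bottom substrings of $\omega$ belong to $\strings^\pm(\bar Q)$ by Definition~\ref{def:substrings}), each set $\closure{\Sigma_\bottom(\omega)}$ is biclosed.
Hence, by Remark~\ref{rem:meetJoinBiclosedSets}, the join
\[
\bigJoin_{\omega \in F} \closure{\Sigma_\bottom(\omega)} = \closure{\Big( \bigcup_{\omega \in F} \Sigma_\bottom(\omega) \Big)} = \zeta(F)
\]
is biclosed as well.
\end{proof}
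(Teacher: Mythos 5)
Your proof is correct, and it uses the same two ingredients as the paper (Lemma~\ref{lem:exmBiclosed} plus Remark~\ref{rem:meetJoinBiclosedSets}). The only point where you depart from the paper is in how you pass from strings to walks: the paper observes that for each walk $\omega$, either $\omega$ is a peak walk (in which case $\Sigma_\bottom(\omega)=\varnothing$), or $\closure{\Sigma_\bottom(\omega)}=\closure{\Sigma_\bottom(\sigma)}$ where $\sigma$ is the inclusion-maximal bottom substring of $\omega$ (which is a genuine string of $\bar Q$), and then applies Lemma~\ref{lem:exmBiclosed} literally to that $\sigma$. You instead observe that the proof of Lemma~\ref{lem:exmBiclosed} is local and goes through verbatim with the walk $\omega$ in place of $\sigma$, since all that is used is the transitivity of "bottom substring" and the fact (Definition~\ref{def:substrings}) that substrings of a walk are honest strings of $\bar Q$. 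Both routes are valid; the paper's reduction has the small advantage of not requiring the reader to re-inspect the lemma's proof, while yours is slightly more self-contained. In either case, the degenerate case of a peak walk (where $\Sigma_\bottom(\omega)=\varnothing$) is handled trivially.
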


\begin{proof}
Observe first that for a walk~$\omega \in \walks^\pm(\bar Q)$, either~$\omega$ is a peak walk and it has no bottom substring, or we have~$\closure{\Sigma_\bottom(\omega)} = \closure{\Sigma_\bottom(\sigma)}$ where~$\sigma$ is the inclusion maximal bottom substring of~$\omega$.
It follows that~$\closure{\Sigma_\bottom(\omega)}$ is biclosed by Lemma~\ref{lem:exmBiclosed}.
Therefore, according to Remark~\ref{rem:meetJoinBiclosedSets},
\[
\zeta(F) = \closure{\Big( \bigcup_{\omega \in F} \Sigma_\bottom(\omega) \Big)} = \closure{\Big( \bigcup_{\omega \in F} \closure{\Sigma_\bottom(\omega)} \Big)} \in \Bicl{\bar Q}. \qedhere
\]
\end{proof}

Observe moreover that~$\projDown \big( \zeta(F) \big) = \zeta(F)$ for any non-kissing facet~$F \in \NKC$ by Lemma~\ref{lem:fixProjDown}.

\subsection{Projection map}

\enlargethispage{.4cm}
We have defined in the previous sections two functions
\[
\Bicl{\bar Q} \xrightleftharpoons[\quad\mbox{$\zeta$}\;\quad]{\mbox{$\eta$}} \NKC
\]
between biclosed sets of strings of~$\bar Q$ and non-kissing facets of~$\NKC$.
We now show that~$\eta$ is surjective and that~$\zeta$ provides a section of~$\eta$.

\begin{proposition}
\label{prop:surjection}
For any non-kissing facet~$F \in \NKC$, we have~$\eta \big( \zeta(F) \big) = F$.
\end{proposition}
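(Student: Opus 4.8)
The plan is to establish the inclusion $\eta(\zeta(F)) \subseteq F$ and then upgrade it to an equality by counting. Since $\zeta(F)$ is biclosed, the preceding proposition shows that $\eta(\zeta(F))$ is a non-kissing facet of $\NKC$, and $F$ is one by hypothesis; as $\NKC$ is pure (Corollary~\ref{coro:pure}), these two facets have the same number of walks, so any inclusion between them is forced to be an equality. It therefore suffices to prove that $\omega(\alpha, \zeta(F)) \in F$ for every $\alpha \in Q_1\blossom$. Because $F$ is a maximal collection of pairwise non-kissing walks and $\omega(\alpha,\zeta(F))$ is not self-kissing (Corollary~\ref{coro:surjectionNonKissingWalks}), it is enough to check that $\omega(\alpha,\zeta(F))$ is non-kissing with every walk of $F$.

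The heart of the argument is the observation that \emph{no top substring of a walk of $F$ belongs to $\zeta(F)$}, i.e.\ $\Sigma_\top(\lambda) \cap \zeta(F) = \varnothing$ for every $\lambda \in F$. To prove this, suppose $\tau \in \Sigma_\top(\lambda) \cap \zeta(F)$ and write $\tau = \tau_1 \gamma_1 \tau_2 \cdots \gamma_{k-1}\tau_k$ with $\tau_m \in \Sigma_\bottom(\omega_m)$ for some $\omega_m \in F$ and $\gamma_m \in Q_1$, choosing the orientation of $\tau$ (and, if necessary, replacing $\lambda$ by $\lambda^{-1}$) so that this word is read as an honest factor of $\lambda$ with each connecting arrow $\gamma_m$ occurring positively. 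Then the prefix $\tau_1$ is again a top substring of $\lambda$: it is on top at its left endpoint since $\tau$ is, and at its right endpoint because in $\lambda$ it is followed by the positively oriented arrow $\gamma_1$ (when $k \geq 2$) or by an outgoing arrow (when $k=1$). Since moreover $\tau_1 \in \Sigma_\bottom(\omega_1)$, the walks $\lambda$ and $\omega_1$ share a substring that is on top of $\lambda$ and at the bottom of $\omega_1$; this makes $\lambda$ self-kissing if $\omega_1 = \lambda$ and makes $\lambda$ kiss $\omega_1$ otherwise, contradicting in both cases that $F$ is a non-kissing facet. The degenerate situations (where $\tau_1$ is a single vertex, or where $\lambda$ self-intersects) are handled in the same way, the offending kiss then being supported on a vertex.

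With this observation the non-kissing verification is immediate. Suppose for contradiction that $\omega(\alpha,\zeta(F))$ and some $\lambda \in F$ are kissing, witnessed by a common substring $\tau$. If $\tau$ is on top of $\omega(\alpha,\zeta(F))$ and at the bottom of $\lambda$, then $\tau \notin \zeta(F)$ by Lemma~\ref{lem:positiveNegativeOrientation}, whereas $\tau \in \Sigma_\bottom(\lambda) \subseteq \zeta(F)$ by definition of $\zeta$, a contradiction. If instead $\tau$ is at the bottom of $\omega(\alpha,\zeta(F))$ and on top of $\lambda$, then $\tau \in \zeta(F)$ by Lemma~\ref{lem:positiveNegativeOrientation}, while $\tau \notin \zeta(F)$ by the observation above, again a contradiction. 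Hence $\omega(\alpha,\zeta(F))$ is non-kissing with all of $F$, so $\omega(\alpha,\zeta(F)) \in F$, and $\eta(\zeta(F)) = F$ follows.

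I expect the only genuinely delicate point to be the bookkeeping in the observation of the second paragraph: contrary to the grid situation of~\cite{McConville}, strings of $\bar Q$ carry no intrinsic orientation and may self-intersect, so one must take care that the decomposition $\tau = \tau_1\gamma_1 \cdots \gamma_{k-1}\tau_k$ really matches a factor of $\lambda$ with positively oriented connecting arrows, and that the degenerate cases are covered. Since the notion of kiss is defined for undirected walks and explicitly allows vertex-kisses, these cases cause no real obstruction, but they do need to be spelled out.
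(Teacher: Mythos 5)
Your overall strategy is sound and genuinely different from the paper's. You reduce equality to one inclusion via purity, reduce that inclusion to a pairwise non-kissing check via maximality, dispatch half of that check with Lemma~\ref{lem:positiveNegativeOrientation}, and isolate the other half into the clean statement $\Sigma_\top(\lambda)\cap\zeta(F)=\varnothing$ for $\lambda\in F$. By contrast, the paper's proof argues arrow-by-arrow, comparing $\distinguishedWalk{\alpha}{F}$ with $\distinguishedWalk{\alpha}{\eta(\zeta(F))}$ and deriving a contradiction at the first vertex where they split. Your route is shorter and more modular, and the key observation is a nice lemma in its own right.

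However, there is a genuine gap in your proof of the key observation. You claim that by choosing the orientation of $\tau$ (and possibly replacing $\lambda$ by $\lambda^{-1}$) you can arrange for \emph{every} connecting arrow $\gamma_m$ in the decomposition $\tau = \tau_1\gamma_1\tau_2\cdots\gamma_{k-1}\tau_k$ to occur positively. This is false: reversing the orientation of $\tau$ flips the signs of \emph{all} connecting arrows simultaneously, so it cannot repair a decomposition with mixed signs, and mixed signs do arise from the $\circ$-operation on undirected strings (a decomposition of the form $\tau_1\gamma_1\tau_2\gamma_2^{-1}\tau_3$ is perfectly possible). You then use this false claim critically: the conclusion that $\tau_1$ is a top substring of $\lambda$ at its right endpoint needs $\gamma_1$ to appear with sign $+1$. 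If $\gamma_1$ appears inverted, $\tau_1$ is in fact a \emph{bottom} substring at that endpoint and the argument collapses. The repair is standard and is exactly the content of the paper's Lemma~\ref{lem:sigmaBottomCapSigmaTop}: rather than always taking the prefix $\tau_1$, one scans the sign sequence $\varepsilon_1,\dots,\varepsilon_{k-1}$ of the connecting arrows and selects the piece $\tau_m$ that is guaranteed to be a top substring of $\tau$ (take $\tau_1$ if $\varepsilon_1=1$, take $\tau_k$ if $\varepsilon_{k-1}=-1$, otherwise take $\tau_m$ at a sign change from $-1$ to $1$). Since $\tau\in\Sigma_\top(\lambda)$, this $\tau_m$ is also a top substring of $\lambda$ while lying in $\Sigma_\bottom(\omega_m)$, producing the desired kiss. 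With that fix, and with the degenerate cases you already flagged handled as you indicate, the proof is complete.
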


\begin{proof}
Fix~$\alpha \in Q_1\blossom$ and let~$\omega \eqdef \distinguishedWalk{\alpha}{F}$ (resp.~$\omega' \eqdef \distinguishedWalk{\alpha}{\eta(\zeta(F))}$) denote the distinguished walk of~$F$ (resp.~$\eta(\zeta(F))$) at~$\alpha$.
Assume for the sake of contradiction that~$\omega \ne \omega'$.
Orient~$\omega$ and~$\omega'$ in the same direction as~$\alpha$ and assume for example that~$\omega$ and~$\omega'$ split after~$\alpha$ (if not, they split before~$\alpha$ and the argument is symmetric).
Let~$\sigma$ be the maximal common substring of~$\omega$ and~$\omega'$ after~$\alpha$.
Recall from Lemma~\ref{lem:distinguishedWalkEta} that~$\omega' = \omega(\alpha, \zeta(F))$ is the walk constructed by the procedure described in Definition~\ref{def:fromBiclToNKC} starting from~$\alpha$ in~$\zeta(F)$.
We distinguish two cases:
\begin{itemize}
\item Assume first that~$\omega \ssm \sigma$ has an incoming arrow at~$t(\sigma)$. Then~$\sigma$ is a bottom substring of~$\omega$, thus belongs to~$\zeta(F)$. This would imply that~$\omega' \ssm \sigma$ also has an incoming arrow at~$t(\sigma)$, thus contradicting the maximality of~$\sigma$ (even in the case that~$\sigma$ is a vertex).
\item Assume now that~$\omega \ssm \sigma$ has an outgoing arrow at~$t(\sigma)$. Since~$\omega$ and~$\omega'$ split at~$t(\sigma)$, this forces~$\omega' \ssm \sigma$ to have an incoming arrow at~$t(\sigma)$. Therefore, $\sigma \in \zeta(F)$. Thus, there exist bottom substrings~$\sigma_1, \dots, \sigma_\ell$ of walks~$\omega_1, \dots, \omega_\ell$ of~$F$, arrows~${\alpha_1, \dots, \alpha_{\ell-1} \in Q_1}$, and signs~$\varepsilon_1, \dots, \varepsilon_{\ell-1} \in \{-1,1\}$ such that~${\sigma = \sigma_1 \alpha_1^{\varepsilon_1} \cdots \alpha_{\ell-1}^{\varepsilon_{\ell-1}} \sigma_\ell}$. Note that~$\omega_1 = \omega$ and thus~$\varepsilon_1 = -1$, as otherwise, we would have~$\omega \prec_\alpha \omega_1$, contradicting the definition of~$\omega$. Let~$k \in [\ell]$ be minimal such that~$\sigma_1 \alpha_1^{\varepsilon_1} \dots \alpha_{k-1}^{\varepsilon_{k-1}} \sigma_k$ is not a bottom substring of~$\omega$ (note that, by assumption, $\sigma$ itself is not a bottom substring of~$\omega$, so that~$k$ is well defined). We thus have~$\varepsilon_{k-1} = -1$ while~$\varepsilon_k = 1$. This implies that~$\omega \ssm \sigma_k$ has two outgoing arrows at the endpoints of~$\sigma_k$, while~$\omega_k \ssm \sigma_k$ has two incoming arrows at the endpoints of~$\sigma_k$. This implies that~$\omega$ and~$\omega_k$ are kissing, contradicting the assumption that they both belong to the non-kissing facet~$F$ of~$\NKC$.
\end{itemize}
As we obtain a contradiction in both cases, we conclude that~$\omega = \omega'$, \ie $\distinguishedWalk{\alpha}{F} = \distinguishedWalk{\alpha}{\eta(\zeta(F))}$.
Since this holds for any arrow~$\alpha \in Q_1\blossom$ and any walk has at least one distinguished arrow, we conclude that~$F = \eta \big( \zeta(F) \big)$.
\end{proof}

\begin{proposition}
\label{prop:section}
For any biclosed set of strings~$S \in \Bicl{\bar Q}$, we have~$\zeta \big( \eta(S) \big) = \projDown(S)$.
\end{proposition}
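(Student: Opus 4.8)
The plan is to unfold both sides of the equation~$\zeta(\eta(S)) = \projDown(S)$ and show mutual containment, using the structural results already established about the walks~$\omega(\alpha,S)$. Recall that by Definition~\ref{def:zeta} we have $\zeta(\eta(S)) = \closure{\big( \bigcup_{\omega \in \eta(S)} \Sigma_\bottom(\omega) \big)}$, while by Definition~\ref{def:downUpProjections} we have $\projDown(S) = \set{\sigma \in \strings^\pm(\bar Q)}{\Sigma_\bottom(\sigma) \subseteq S}$. Both sets are biclosed (the left side because $\zeta$ lands in $\Bicl{\bar Q}$, the right because $\projDown$ preserves biclosedness), and both are fixed by~$\projDown$: the left by Lemma~\ref{lem:fixProjDown} (or directly by the observation after Definition~\ref{def:zeta}), and the right because $\projDown \circ \projDown = \projDown$ by Proposition~\ref{prop:latticeCongruence}\,(ii).

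First I would prove the inclusion $\zeta(\eta(S)) \subseteq \projDown(S)$. By Lemma~\ref{lem:positiveNegativeOrientation}, every bottom substring of every walk~$\omega(\alpha,S)$ (for~$\alpha \in Q_1$) belongs to~$S$; thus $\bigcup_{\omega \in \eta(S)} \Sigma_\bottom(\omega) \subseteq S$. Now I need $\Sigma_\bottom(\sigma) \subseteq S$ for every~$\sigma$ in the closure of this union, which is exactly the statement that $\closure{\big( \bigcup_{\omega \in \eta(S)} \Sigma_\bottom(\omega) \big)} \subseteq \projDown(S)$; this follows because $\projDown(S)$ is closed and contains $\bigcup_{\omega} \Sigma_\bottom(\omega)$ (each element~$\tau$ of that union lies in~$S$ and has all its bottom substrings among the bottom substrings of the corresponding walk, hence in~$S$, so $\tau \in \projDown(S)$). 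So this direction is essentially bookkeeping on top of Lemma~\ref{lem:positiveNegativeOrientation}.

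For the reverse inclusion $\projDown(S) \subseteq \zeta(\eta(S))$, take~$\sigma \in \projDown(S)$, so all bottom substrings of~$\sigma$ lie in~$S$. I want to exhibit~$\sigma$ as built by concatenation from bottom substrings of walks in~$\eta(S)$. The natural idea is to extend~$\sigma$ to a walk by running the local rule of Definition~\ref{def:fromBiclToNKC}: more precisely, for each maximal bottom substring segment I would find an arrow~$\alpha$ such that~$\sigma$ (or the relevant piece) is a bottom substring of~$\omega(\alpha, S)$. Concretely, I expect to argue that if~$\sigma$ has all bottom substrings in~$S$, then starting from any arrow~$\alpha$ interior to~$\sigma$ and growing via the rule, the walk $\omega(\alpha,S)$ will contain~$\sigma$ as a bottom substring (using that the signs $\varepsilon_i$ dictated by membership of initial/terminal segments in~$S$ force the boundary arrows of~$\sigma$ to be incoming whenever~$\sigma$'s own corners are deeps — and where~$\sigma$ has a peak, it decomposes at that peak into two shorter bottom substrings that also lie in $\projDown(S)$, handled by induction on length). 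Then $\sigma$ is a concatenation of bottom substrings of walks of $\eta(S)$, hence $\sigma \in \zeta(\eta(S))$.

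The main obstacle will be the reverse inclusion, specifically handling the case where the maximal bottom substring~$\sigma$ is not itself a bottom substring of a single walk~$\omega(\alpha,S)$ — i.e. when growing from~$\alpha$ the walk splits off~$\sigma$ at an endpoint with an outgoing arrow rather than an incoming one. This is precisely the subtle point in the proof of Proposition~\ref{prop:surjection}, and the cleanest route is probably to mirror that argument: decompose~$\sigma$ according to whether initial segments lie in~$S$, locate the first index where the sign flips from~$-1$ to~$+1$, and use biclosedness of~$S$ (coclosedness, really) to derive that the corresponding sub-piece on both sides lies in~$S$, which lets the induction proceed. Alternatively, one can invoke the fact that $\projDown(S)$ is the minimal element of its $\equiv$-class together with $\eta(\zeta(\eta(S))) = \eta(S)$ (from Proposition~\ref{prop:surjection}) and $\projDown \circ \projDown = \projDown$ to conclude $\zeta(\eta(S)) \equiv S$ and $\projDown(\zeta(\eta(S))) = \zeta(\eta(S))$, hence $\zeta(\eta(S)) = \projDown(S)$ since both are the bottom of the same class — this would make the proof short, provided one has already established (in Section~\ref{sec:biclosedSetsToNKF}) that congruence classes of $\equiv$ are exactly the fibers of $\eta$, which seems to be the structural goal of the section.
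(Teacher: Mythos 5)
Your argument for the inclusion $\zeta(\eta(S)) \subseteq \projDown(S)$ is correct and relies on the same key fact (Lemma~\ref{lem:positiveNegativeOrientation}) as the paper's; you package it directly via closedness of $\projDown(S)$, whereas the paper applies the fixpoint identity $\zeta(\eta(S)) = \projDown(\zeta(\eta(S)))$ from Lemma~\ref{lem:fixProjDown} together with $\zeta(\eta(S)) \subseteq S$ and monotonicity of $\projDown$. Either route is fine.

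The hard inclusion $\projDown(S) \subseteq \zeta(\eta(S))$ has genuine gaps as planned. Your primary route --- grow $\omega(\alpha,S)$ from an arrow \emph{interior} to $\sigma$, hope $\sigma$ appears as a bottom substring, and otherwise decompose $\sigma$ at a peak into two shorter bottom substrings of $\projDown(S)$ --- fails on two counts. First, the walk need not diverge from $\sigma$ at a peak of $\sigma$: it leaves $\sigma$ at the first position where $\sigma$ demands a sign $+1$ while the current prefix already lies in $S$, and this can happen mid-descent (the preceding sign also $+1$), i.e.\ at a vertex which is not a corner of $\sigma$ at all. Second, cutting $\sigma$ at a peak $v$ produces pieces that are \emph{top} substrings of $\sigma$ at $v$, not bottom ones, so the hypothesis $\sigma\in\projDown(S)$ gives you no control over them; and rebuilding $\sigma$ from them via $\circ$ unavoidably involves a factor that is a top substring at $v$. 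The paper sidesteps both problems by choosing $\alpha$ to be the arrow of $Q_1\blossom$ \emph{entering $\sigma$ at $s(\sigma)$} (so that $\alpha\sigma$ is a string of $\bar Q\blossom$), taking $\tau$ to be the maximal common prefix of $\sigma$ and $\omega(\alpha,S)$ after $\alpha$, and noting that $\tau$ is a bottom substring of the \emph{walk} (so $\tau\in\zeta(\eta(S))$) while the remainder $\tau'$ is a bottom substring of $\sigma$ (so $\tau'\in\projDown(S)$, handled by induction on length); closedness of $\zeta(\eta(S))$ and $\sigma\in\tau\circ\tau'$ then conclude. Finally, your suggested shortcut via ``fibers of $\eta$ equal $\equiv$-classes'' is circular: that statement is Corollary~\ref{coro:fibers}, whose proof in the paper invokes Proposition~\ref{prop:section} itself.
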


\begin{proof}
Let~$S \in \Bicl{\bar Q}$ be a biclosed set of strings.
We first prove that~$\projDown(S) \subseteq \zeta \big( \eta(S) \big)$ by induction on the length of the strings in~$\projDown(S)$.
For this, consider a string~$\sigma \in \projDown(S)$ (by definition, all bottom substrings of~$\sigma$ belong to~$S$).
Orient~$\sigma$ in an arbitrary direction, and let~$\alpha \in Q_1\blossom$ be the incoming arrow at~$s(\alpha)$ such that~$\alpha \sigma \in \strings(\bar Q\blossom)$.
Let~$\omega \eqdef \omega(\alpha,S) = \distinguishedWalk{\alpha}{\eta(S)}$ be the walk of~$\eta(S)$ defined form~$\alpha$ (see Definition~\ref{def:fromBiclToNKC}), and let~$\tau$ be the maximal common substring of~$\sigma$ and~$\omega$.
We distinguish two cases:
\begin{itemize}
\item If~$\sigma = \tau$, then~$\omega \ssm \sigma$ has an incoming arrow at~$t(\sigma)$ (by Definition~\ref{def:fromBiclToNKC}). Therefore, $\sigma \in \Sigma_\bottom \big( \omega \big) \subseteq \zeta \big( \eta(S) \big)$.
\item Otherwise, we write~$\sigma = \tau \beta^\varepsilon \tau'$. If~$\varepsilon = -1$, $\tau$ is a bottom substring of~$\sigma$, and thus belong to~$S$, so that Definition~\ref{def:fromBiclToNKC} forces~$\omega$ to use~$\beta^{-1}$ as well, contradicting the maximality of~$\tau$. Therefore, $\sigma = \tau \beta \tau'$. It follows that the walk~$\omega$ has an incoming arrow at~$t(\tau)$, so that~$\tau \in \Sigma_\bottom(\omega) \subseteq \zeta(\eta(S))$. Moreover, $\tau'$ is a bottom substring of~$\sigma$, so that~$\tau' \in \projDown(S)$, which in turns implies by induction hypothesis that~$\tau' \in \zeta(\eta(S))$. Since~$\tau, \tau' \in \zeta(\eta(S))$ and~$\zeta(\eta(S))$ is closed, we conclude that~$\sigma \in \zeta(\eta(S))$.
\end{itemize}

\medskip
Conversely, we now prove the reverse inclusion~$\zeta \big( \eta(S) \big) \subseteq \projDown(S)$.
We showed in Lemma~\ref{lem:fixProjDown} that ${\zeta \big( \eta(S) \big) = \projDown \big( \zeta \big( \eta(S) \big) \big)}$. 
Moreover, as observed in Lemma~\ref{lem:positiveNegativeOrientation}, ${\Sigma_\bottom(\omega) \subseteq S}$ for any~$\omega \in \eta(S)$, so that~$\zeta \big( \eta(S) \big) \subseteq S$.
Since~$\projDown$ is order-preserving, we obtain~${\zeta \big( \eta(S) \big) = \projDown \big( \zeta \big( \eta(S) \big) \big) \subseteq \projDown (S)}$.
\end{proof}

\begin{example}
The maps~$\eta$ and~$\zeta$ are illustrated on \fref{fig:exmSurjection}\,(right).
Compare~$\zeta(\eta(S))$ on \fref{fig:exmSurjection}\,(right) with the down projection~$\projDown(S)$ on \fref{fig:exmProjections}\,(left).
\end{example}

\begin{corollary}
\label{coro:fibers}
$\eta(R) = \eta(S) \iff \projDown(R) = \projDown(S)$ for any biclosed sets~$R,S$. In other words, the fibers of~$\eta$ are precisely the equivalence classes of the lattice congruence of Proposition~\ref{prop:latticeCongruence}.
\end{corollary}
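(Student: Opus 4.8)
The plan is to combine Propositions~\ref{prop:surjection} and~\ref{prop:section} with the structure of the order congruence~$\equiv$ from Proposition~\ref{prop:latticeCongruence}. Recall that by Lemma~\ref{lem:conditionsProjectionMaps}, the fibers of~$\projDown$ coincide with the fibers of~$\projUp$, and these are precisely the equivalence classes of~$\equiv$. So it suffices to prove the equivalence
\[
\eta(R) = \eta(S) \iff \projDown(R) = \projDown(S).
\]

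For the forward direction, suppose $\eta(R) = \eta(S)$. Applying $\zeta$ to both sides and using Proposition~\ref{prop:section}, we get $\projDown(R) = \zeta(\eta(R)) = \zeta(\eta(S)) = \projDown(S)$. This direction is immediate.

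For the backward direction, suppose $\projDown(R) = \projDown(S)$. The key observation is that $\eta(S)$ depends only on $\projDown(S)$; concretely, I would show $\eta(S) = \eta(\projDown(S))$. Indeed, the walk $\omega(\alpha, S)$ of Definition~\ref{def:fromBiclToNKC} is built by a local growth procedure whose only input is, at each step, whether a given string lies in $S$; and by Lemma~\ref{lem:positiveNegativeOrientation} all bottom substrings of $\omega(\alpha,S)$ lie in $S$ (so in fact in $\projDown(S)$, since each such bottom substring has all its own bottom substrings again bottom substrings of the walk, hence in $S$), while all top substrings lie outside $S$ (so outside $\projDown(S) \subseteq S$). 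Thus at every stage of the growth procedure, testing membership in $S$ gives the same answer as testing membership in $\projDown(S)$: when the current string $\alpha_1^{\varepsilon_1}\cdots\alpha_{i-1}^{\varepsilon_{i-1}}$ is a bottom substring of the walk-so-far it lies in $S$ iff it lies in $\projDown(S)$, and otherwise it lies in neither. Hence $\omega(\alpha,S) = \omega(\alpha,\projDown(S))$ for every $\alpha \in Q_1\blossom$, so $\eta(S) = \eta(\projDown(S))$. The same holds for $R$, and since $\projDown(R) = \projDown(S)$ we conclude $\eta(R) = \eta(\projDown(R)) = \eta(\projDown(S)) = \eta(S)$.

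Finally, the identification of the fibers of $\eta$ with the $\equiv$-classes follows because, by Proposition~\ref{prop:latticeCongruence} and Lemma~\ref{lem:conditionsProjectionMaps}, $R \equiv S \iff \projDown(R) = \projDown(S)$, which we have just shown is equivalent to $\eta(R) = \eta(S)$. The main obstacle is the backward direction, and specifically the verification that the growth procedure of Definition~\ref{def:fromBiclToNKC} genuinely only queries the "$\projDown$-part" of $S$; this requires the careful bookkeeping with Lemma~\ref{lem:positiveNegativeOrientation} sketched above, observing that a bottom substring of a bottom substring of $\omega(\alpha,S)$ is again a bottom substring of $\omega(\alpha,S)$, so that membership in $S$ can always be upgraded to membership in $\projDown(S)$ at the relevant steps.
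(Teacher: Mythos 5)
Your forward direction coincides with the paper's. For the backward direction the paper's own proof is a one-line chain of identities already in hand: by Proposition~\ref{prop:surjection} one has $\eta \circ \zeta = \mathrm{id}$ on non-kissing facets, and by Proposition~\ref{prop:section} one has $\zeta \circ \eta = \projDown$, so
\[
\eta(R) = \eta\bigl(\zeta(\eta(R))\bigr) = \eta\bigl(\projDown(R)\bigr) = \eta\bigl(\projDown(S)\bigr) = \eta\bigl(\zeta(\eta(S))\bigr) = \eta(S).
\]
You instead re-derive $\eta(S) = \eta(\projDown(S))$ directly from the growth procedure of Definition~\ref{def:fromBiclToNKC}. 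This is more work than the corollary requires once Propositions~\ref{prop:surjection} and~\ref{prop:section} are available, and moreover your re-derivation has a genuine gap.

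The growth procedure of Definition~\ref{def:fromBiclToNKC} is asymmetric around $\alpha$: for positions $i > 0$ it sets $\varepsilon_i = -1$ exactly when the prefix $\alpha_1^{\varepsilon_1}\cdots\alpha_{i-1}^{\varepsilon_{i-1}}$ lies in $S$, while for positions $-i < 0$ it sets $\varepsilon_{-i} = +1$ exactly when the string $\alpha_{-i+1}^{\varepsilon_{-i+1}}\cdots\alpha_{-1}^{\varepsilon_{-1}}$ lies in $S$. Your dichotomy is correct only on the positive side: there, the prefix lies in $S$ precisely when $\varepsilon_i=-1$, which makes it a bottom substring of $\omega(\alpha,S)$ (both boundary arrows $\alpha$ and $\alpha_i^{-1}$ then point inward), and your transitivity remark (a bottom substring of a bottom substring of the walk is a bottom substring of the walk) places it in $\projDown(S)$; otherwise $\varepsilon_i=+1$, the prefix is outside $S$, hence outside $\projDown(S)$. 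But on the negative side the tested string $\alpha_{-i+1}^{\varepsilon_{-i+1}}\cdots\alpha_{-1}^{\varepsilon_{-1}}$ is \emph{never} a bottom substring of $\omega(\alpha,S)$, because its right boundary in the walk is the arrow $\alpha$ itself, which always points away from $s(\alpha)$. Yet such a string can perfectly well lie in $S$ — that is exactly what triggers $\varepsilon_{-i}=+1$. So the claim "otherwise it lies in neither" is false on the negative half, and as written your argument covers only half of each walk. The patch is not hard but it is a genuinely separate case: a bottom substring $\rho$ of $\alpha_{-i+1}^{\varepsilon_{-i+1}}\cdots\alpha_{-1}^{\varepsilon_{-1}}$ that does not reach $s(\alpha)$ is still a bottom substring of the full walk and so lies in $S$ by Lemma~\ref{lem:positiveNegativeOrientation}; one that does reach $s(\alpha)$ is one of the suffixes $\alpha_{-j+1}^{\varepsilon_{-j+1}}\cdots\alpha_{-1}^{\varepsilon_{-1}}$ with $j \le i$, and the growth rule itself puts it in $S$ because the arrow $\alpha_{-j}^{\varepsilon_{-j}}$ bounding it on the left points inward precisely when $\varepsilon_{-j}=+1$, which happens precisely when that suffix belongs to $S$. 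Only once both halves are treated does the induction yield $\omega(\alpha,S) = \omega(\alpha,\projDown(S))$ for every $\alpha \in Q_1\blossom$, hence $\eta(S) = \eta(\projDown(S))$.
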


\begin{proof}
If~$\eta(R) = \eta(S)$, then by Proposition~\ref{prop:section}, we have~$\projDown(R) = \zeta(\eta(R)) = \zeta(\eta(S)) = \projDown(S)$.
Reciprocally, if~$\projDown(R) = \projDown(S)$, then by Propositions~\ref{prop:surjection} and~\ref{prop:section}, we have~$\eta(R) = \eta \circ \zeta \circ \eta(R) = \eta(\projDown(R)) = \eta(\projDown(S)) = \eta \circ \zeta \circ \eta(S) = \eta(S)$.
\end{proof}

\begin{corollary}\label{cor: interval biclosed}
For any non-kissing facet~$F \in \NKC$, the fiber~$\eta^{-1}(F)$ is the interval of~$\Bicl{\bar Q}$
\[
\eta^{-1}(F) = \bigg[ \closure{ \Big( \bigcup_{\omega \in F} \Sigma_\bottom(\omega) \Big) }, \coclosure{ \Big( \bigcup_{\omega \in F} \closure{\Sigma_\top(\omega)} \Big) } \bigg].
\]
\end{corollary}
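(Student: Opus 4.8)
The claim is that for a non-kissing facet $F$, the fiber $\eta^{-1}(F)$ is precisely the interval $[L, U]$ of $\Bicl{\bar Q}$, where $L \eqdef \closure{\big( \bigcup_{\omega \in F} \Sigma_\bottom(\omega) \big)} = \zeta(F)$ and $U \eqdef \coclosure{\big( \bigcup_{\omega \in F} \closure{\Sigma_\top(\omega)} \big)}$. The first reduction is to invoke Corollary~\ref{coro:fibers}: the fibers of $\eta$ are exactly the $\equiv$-classes of the congruence of Proposition~\ref{prop:latticeCongruence}, and each such class is an interval of $\Bicl{\bar Q}$ (by Definition of an order congruence, condition~(i)). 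So $\eta^{-1}(F)$ is automatically an interval, and it remains only to identify its bottom and top elements. The bottom element of the class of any $S$ with $\eta(S) = F$ is $\projDown(S)$, which by Proposition~\ref{prop:section} equals $\zeta(\eta(S)) = \zeta(F) = L$; this settles the lower endpoint immediately. Dually, the top element is $\projUp(S)$, so the whole problem reduces to showing $\projUp(S) = U$ whenever $\eta(S) = F$.

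For the upper endpoint I would argue by the left--right symmetry already set up in the paper (Remarks~\ref{rem:reverseBlossomingQuiver}, \ref{rem:reverseNKC}, \ref{rem:reverseBiclosed}, and Lemma~\ref{rem:reversedProjections}). Passing to the reversed quiver $\reversed{\bar Q}$ exchanges top and bottom substrings and, by Lemma~\ref{rem:reversedProjections}, conjugates $\projUp$ (on $\bar Q$) with $\projDown$ (on $\reversed{\bar Q}$) under complementation: $\strings^\pm(\bar Q) \ssm \projUp(S) = \projDown\big(\strings^\pm(\reversed{\bar Q}) \ssm S\big)$. Also, the undirected walks of $\bar Q$ and $\reversed{\bar Q}$ coincide, $F$ is still a non-kissing facet for $\reversed{\bar Q}$, and $\strings^\pm(\reversed{\bar Q}) \ssm S$ is biclosed whenever $S$ is. Applying the already-established formula $\projDown(S') = \zeta(F)$ for the reversed quiver to $S' \eqdef \strings^\pm(\reversed{\bar Q}) \ssm S$ (which lies in the $\eta$-fiber of $F$ over $\reversed{\bar Q}$, because $\eta$ for $\reversed{\bar Q}$ sends $S'$ to the same facet $F$ — this needs the observation that $\omega(\alpha, \strings^\pm(\reversed{\bar Q}) \ssm S)$ for $\reversed{\bar Q}$ is $\omega(\alpha,S)^{-1}$ for $\bar Q$, which follows directly from unwinding Definition~\ref{def:fromBiclToNKC}), one gets $\projDown(S') = \closure{\big(\bigcup_{\omega \in F} \Sigma_\bottom^{\reversed{\bar Q}}(\omega)\big)} = \closure{\big(\bigcup_{\omega \in F} \Sigma_\top(\omega)\big)}$, the closure being taken in $\strings^\pm(\reversed{\bar Q})$, which as a \emph{set} equals $\closure{\big(\bigcup_{\omega \in F} \closure{\Sigma_\top(\omega)}\big)}$. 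Complementing and using Lemma~\ref{rem:reversedProjections} then yields $\projUp(S) = \strings^\pm(\bar Q) \ssm \projDown(S') = \coclosure{\big(\bigcup_{\omega \in F} \closure{\Sigma_\top(\omega)}\big)} = U$, as desired. (Alternatively, one could reprove $\projUp(S) = U$ directly by repeating the argument of Proposition~\ref{prop:section} with tops in place of bottoms and $\projUp$ in place of $\projDown$, together with the top analogue of Lemma~\ref{lem:fixProjDown}; the symmetry route just avoids duplicating that work.)

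Finally, I would assemble the pieces: $\eta^{-1}(F)$ is an interval of $\Bicl{\bar Q}$ by Corollary~\ref{coro:fibers} and the order-congruence axioms; its minimum is $\projDown(S) = \zeta(F) = L$ by Proposition~\ref{prop:section} for any $S$ in the fiber; its maximum is $\projUp(S) = U$ by the symmetric computation above; hence $\eta^{-1}(F) = [L,U]$. One small point to verify carefully is that $L$ and $U$ do genuinely lie in the fiber, i.e.\ $\eta(L) = F = \eta(U)$: for $L = \zeta(F)$ this is exactly Proposition~\ref{prop:surjection}, and for $U$ it follows from $U \equiv L$ (both have the same image under $\projDown$, since $\projDown(U) = \projDown\projUp(S) = \projDown(S) = L$ using Proposition~\ref{prop:latticeCongruence}(ii)) together with Corollary~\ref{coro:fibers}. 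The main obstacle is the bookkeeping in the upper-endpoint computation — specifically, making the identification $\omega(\alpha, \strings^\pm(\reversed{\bar Q}) \ssm S) = \omega(\alpha,S)^{-1}$ precise so that the reversed-quiver instance of Proposition~\ref{prop:section} really does apply to the facet $F$ — but this is a direct unwinding of Definition~\ref{def:fromBiclToNKC} and should present no serious difficulty.
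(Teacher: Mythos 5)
Your plan is sound and uses the right ingredients: by Corollary~\ref{coro:fibers} the fibers of~$\eta$ are the $\equiv$-classes, hence intervals; the minimum is~$\projDown(S) = \zeta(F)$ by Proposition~\ref{prop:section}; and the maximum is~$\projUp(S)$, naturally computed via the reversal symmetry of Lemma~\ref{rem:reversedProjections} together with your (correct) observation that~$\eta$ for~$\reversed{\bar Q}$ sends~$\strings^\pm(\reversed{\bar Q})\ssm S$ to the same undirected facet~$F$. The gap is in the very last identification of your chain. Lemma~\ref{rem:reversedProjections} and the reversed-quiver instance of Proposition~\ref{prop:section} give
\[
\projUp(S) \;=\; \strings^\pm(\bar Q)\ssm\closure{\Big(\bigcup_{\omega\in F}\Sigma_\top(\omega)\Big)},
\]
and this is \emph{not} equal to~$\coclosure{\big(\bigcup_{\omega\in F}\closure{\Sigma_\top(\omega)}\big)}$. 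Setting~$A \eqdef \bigcup_{\omega\in F}\closure{\Sigma_\top(\omega)}$, so that $\closure{A} = \closure{\big(\bigcup_{\omega}\Sigma_\top(\omega)\big)}$, what you derived is~$\strings^\pm(\bar Q)\ssm\closure{A}$; but~$\coclosure{A}$ unfolds to~$\strings^\pm(\bar Q)\ssm\closure{\big(\strings^\pm(\bar Q)\ssm A\big)}$, so the complement lands inside the closure in the wrong place. The coclosure expression consistent with your reversing computation is~$\coclosure{\big(\strings^\pm(\bar Q)\ssm A\big)}$, which does simplify to~$\strings^\pm(\bar Q)\ssm\closure{A}$.

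This is not merely a cosmetic slip on your part: the corollary as printed appears to carry the same error, and your last equality simply inherits it. A minimal check: take~$\bar Q$ the linear quiver~$1\xrightarrow{\alpha}2$ and the facet~$F = \{2_\peak, 1_\deep\}$. Then~$\zeta(F) = \closure{\big(\Sigma_\bottom(2_\peak)\cup\Sigma_\bottom(1_\deep)\big)} = \{\varepsilon_1\}$ and~$\projUp(\{\varepsilon_1\}) = \{\varepsilon_1,\alpha\}$, so~$\eta^{-1}(F) = \big[\{\varepsilon_1\},\{\varepsilon_1,\alpha\}\big]$. On the other hand~$\bigcup_{\omega\in F}\closure{\Sigma_\top(\omega)} = \closure{\{\varepsilon_2\}}\cup\closure{\varnothing} = \{\varepsilon_2\}$, whose coclosure is~$\{\varepsilon_2\}$, which is not even comparable to the bottom of the interval. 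The corrected upper endpoint~$\strings^\pm(\bar Q)\ssm\closure{\{\varepsilon_2\}} = \{\varepsilon_1,\alpha\}$ gives the right answer. So be careful not to force your derivation to land on the formula as printed: the argument you outline actually proves the corrected version with the complement inside the coclosure, and the last equality sign in your chain is false as written.
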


\begin{remark}\label{rem: Bongartz}
 We note that Corollary~\ref{cor: interval biclosed} gives a combinatorial description of Bongartz (co)com\-pletions.
 This allows to reduce the algebraic computations on representations of $\bar Q$ to much easier computations involving walks on $\bar Q\blossom$.
\end{remark}

\subsection{Oriented graph isomorphisms}

To conclude our interpretation of the non-kissing oriented flip graph~$\NKG$ in terms of lattice quotients of biclosed sets, we still need to show that~$\NKG$ is the Hasse diagram of~$\Bicl{\bar Q}/{\equiv}$.
This is the purpose of the next statement similar to~\cite[Claim~8.10]{McConville}.

\begin{proposition}
\label{prop:covers}
For any non-kissing facet~$F' \in \RNKC$ and~$\sigma \in \zeta(F')$, there exists an increasing flip~$F \to F'$ supported by~$\sigma$ if and only if~$\zeta(F') \ssm \{\sigma\}$ is biclosed.
\end{proposition}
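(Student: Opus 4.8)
The statement asserts an equivalence between a combinatorial condition (existence of an increasing flip into $F'$ supported by $\sigma$) and a lattice-theoretic condition ($\zeta(F') \ssm \{\sigma\}$ is biclosed). The natural strategy is to use the dictionary established in the previous section: non-kissing facets correspond to congruence classes of biclosed sets via $\eta$ and $\zeta$, with $\zeta(F')$ being the minimal element of the class $\eta^{-1}(F')$. Since increasing flips into $F'$ should correspond to cover relations below $\projDown \big( \zeta(F') \big) = \zeta(F')$ in $\Bicl{\bar Q}/{\equiv}$, and these in turn lift to cover relations $S \lessdot \zeta(F')$ in $\Bicl{\bar Q}$ with $\zeta(F')$ minimal in its class, the key is to pin down exactly which $S$ these are. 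By part (i) of the proof of Theorem~\ref{thm:characterizationCongruenceUniform2}, a cover relation $S \lessdot \zeta(F')$ in $\Bicl{\bar Q}$ is recorded by a unique $\tau$ with $S \cup \closure{\{\tau\}} = \zeta(F')$, and here $\closure{\{\tau\}} = \{\tau\}$ when $\tau$ is not surrounded by a loop, so $S = \zeta(F') \ssm \{\tau\}$. So the biclosed sets sitting just below $\zeta(F')$ are precisely the $\zeta(F') \ssm \{\sigma\}$ that happen to be biclosed.

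\textbf{Key steps.} First I would establish the forward direction: suppose there is an increasing flip $F \to F'$ supported by $\sigma$. Using Proposition~\ref{prop:flip} and the description of $\zeta$ in Definition~\ref{def:zeta}, I would show $\zeta(F) = \zeta(F') \ssm \{\sigma\}$; the point is that $F$ and $F'$ share all walks except that $\omega \in F$ is replaced by $\omega' \in F'$, and since the flip is increasing, $\sigma$ is a bottom substring of $\omega'$ but not of $\omega$, so $\sigma$ is a bottom substring of a walk of $F'$ but not of any walk of $F$ --- while all shorter bottom substrings are shared (one must check carefully that removing $\sigma$ from the closure of bottom substrings of $F'$ lands exactly on the closure of bottom substrings of $F$, using that $\sigma$ is inclusion-maximal among the newly appearing bottom substrings). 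Then $\zeta(F') \ssm \{\sigma\} = \zeta(F)$ is biclosed because it is in the image of $\zeta$. For the converse, suppose $\zeta(F') \ssm \{\sigma\}$ is biclosed and $\sigma \in \zeta(F')$; then $\zeta(F') \ssm \{\sigma\} \lessdot \zeta(F')$ is a cover in $\Bicl{\bar Q}$ (nothing strictly between two sets differing by one element), and since $\zeta(F') = \projDown \big( \zeta(F') \big)$ is minimal in its $\equiv$-class, this cover descends to a cover relation $\eta \big( \zeta(F') \ssm \{\sigma\} \big) \lessdot F'$ in the quotient $\Bicl{\bar Q}/{\equiv} \cong \NKG$ (using Corollary~\ref{coro:fibers} and that the quotient of a lattice by a congruence has its Hasse diagram obtained from cover relations between minimal-in-class elements). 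Let $F \eqdef \eta \big( \zeta(F') \ssm \{\sigma\} \big)$; then $F \to F'$ is a flip, and I must verify it is increasing and supported by $\sigma$. That the supporting string is $\sigma$ follows by reversing the bookkeeping of the forward direction: $\sigma$ is the unique string in $\zeta(F') \ssm \zeta(F)$, and by the flip description (Proposition~\ref{prop:flip}, Definition~\ref{def:flip}), the supported substring of a flip $F \to F'$ is a bottom substring of $\omega' \in F'$, hence lies in $\zeta(F') \ssm \zeta(F)$; since that difference is the singleton $\{\sigma\}$, it equals $\sigma$. Then the flip is increasing precisely because $\sigma \in \zeta(F')$ witnesses that $\sigma$ is at the bottom of $\omega'$, matching Definition~\ref{def:increasingFlip}.

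\textbf{Main obstacle.} The delicate point is the precise matching $\zeta(F) = \zeta(F') \ssm \{\sigma\}$ for an increasing flip, in both directions. One must argue that when $\omega$ is replaced by $\omega' = \rho'\sigma\tau'$ (with $\omega = \rho\sigma\tau$, $\mu = \rho'\sigma\tau$, $\nu = \rho\sigma\tau'$ all in $F' \cup \{\omega\}$, following Proposition~\ref{prop:flip}), the only new bottom substring contributed to the closure is $\sigma$ itself: all bottom substrings of $\omega'$ that are proper are already bottom substrings of $\mu$ or $\nu$ (which remain in $F \cap F'$ when $\mu, \nu$ are bending, or are subsumed otherwise), so $\Sigma_\bottom(\omega') \subseteq \{\sigma\} \cup \zeta(F')|_{F \cap F'}$, and conversely removing $\sigma$ does not accidentally disconnect the closure of the remaining bottom substrings. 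This requires a careful case analysis mirroring Remark~\ref{rem:changeDistinguishedArrows} (loop versus non-loop cases for the distinguished arrows $\alpha, \beta$), and the potential subtlety that $\sigma$ might reappear as a concatenation of shorter bottom substrings through arrows of $Q_1$ --- which would contradict $\zeta(F') \ssm \{\sigma\}$ being coclosed, so in fact the biclosedness hypothesis in the converse direction rules exactly this out. Handling the degenerate cases where $\mu$ or $\nu$ is a straight walk (so not recorded in $\RNKC$) needs attention too, but these contribute only blossom-endpoint data and no genuine substrings, so they are harmless.
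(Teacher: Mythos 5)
Your converse direction has a genuine circularity. After deriving a cover relation $\eta\big(\zeta(F')\ssm\{\sigma\}\big)\lessdot F'$ in the quotient $\Bicl{\bar Q}/{\equiv}$ (a legitimate use of Corollary~\ref{coro:fibers} and Proposition~\ref{prop:characterizationCoversLatticeQuotient}), you write ``$\Bicl{\bar Q}/{\equiv}\cong\NKG$'' and conclude that $F\to F'$ is a flip. But the identification of the Hasse diagram of the quotient lattice with the increasing flip graph is precisely the content of Corollary~\ref{coro:finalLatticeQuotient}, whose proof relies on Proposition~\ref{prop:covers} --- the statement you are proving. Before that corollary is available, a cover relation between the congruence classes $[\eta(S)]$ and $[F']$ carries no information about how many walks the facets $\eta(S)$ and $F'$ differ by, nor about the supporting string or the orientation of any putative flip. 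This is exactly where the paper does the real combinatorial work: setting $S=\zeta(F')\ssm\{\sigma\}$ and $F=\eta(S)$, it tracks the directed walks $\omega(\gamma,S)$ and $\omega(\gamma,S')$ for the arrows $\gamma\in\{\alpha,\beta,\alpha',\beta'\}$ at the endpoints of $\sigma$, proves (using biclosedness of $S$ at several steps) that $\omega(\alpha',S')=\omega(\beta',S')^{-1}$ is the unique undirected walk in $F'\ssm F$ and that its distinguished arrows in $F'$ are $\alpha',\beta'$; only then can one conclude the flip exists, is increasing, and is supported by $\sigma$. None of this can be read off from the lattice structure alone, because aligning the lattice structure with the flip structure is what is at stake.

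Your forward direction also diverges from the paper's. You propose to establish $\zeta(F)=\zeta(F')\ssm\{\sigma\}$ and conclude biclosedness because the right-hand side is in the image of $\zeta$; the paper instead proves closedness and coclosedness of $\zeta(F')\ssm\{\sigma\}$ directly, in each case exhibiting a kissing pair inside $F$ from any putative failure. Your route is plausible but heavier, and the specific claim that every proper bottom substring of $\omega'$ is a bottom substring of $\mu$ or $\nu$ is not literally correct: a bottom substring of $\omega'=\rho'\sigma\tau'$ that crosses $\sigma$ into both $\rho'$ and $\tau'$ sits on neither $\mu=\rho'\sigma\tau$ nor $\nu=\rho\sigma\tau'$, only in the closure generated by $\sigma$ and bottom pieces of $\mu,\nu$. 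This is repairable; the converse circularity is the substantive gap.
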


\begin{proof}
Throughout the proof, we denote~$S \eqdef \zeta(F') \ssm \{\sigma\}$ and~$S' \eqdef \zeta(F')$.
Consider an increasing flip~$F \to F'$ with~$F \ssm \{\omega\} = F' \ssm \{\omega'\}$ and~$\sigma = \distinguishedString{w}{F} = \distinguishedString{w'}{F'}$.
We prove that~$S$ is biclosed:
\begin{description}
\item[closed] Otherwise, there exist bottom substrings~$\sigma_1, \dots, \sigma_\ell$ of walks~$\omega_1, \dots, \omega_\ell$ of~$F'$, arrows~$\alpha_1, \dots, \alpha_{\ell-1} \in Q_1$ and signs~$\varepsilon_1, \dots, \varepsilon_{\ell-1} \in \{-1,1\}$ such that~$\sigma = \sigma_1 \alpha_1^{\varepsilon_1} \dots \alpha_{\ell-1}^{\varepsilon_{\ell-1}} \sigma_\ell$.
Assume that there is~$k \in [\ell-1]$ such that~$\varepsilon_k = 1$ and choose~$k$ minimal with that property.
Then~$\omega_k \ne \omega'$ and~$\omega$ has two outgoing arrows while~$\omega_k$ has two incoming arrows at the endpoints of~$\sigma_k$, so that~$\sigma_k \in \Sigma_\top(\omega) \cap \Sigma_\bottom(\omega_k)$, contradicting the non-kissingness of~$F$.
We obtain a similar contradiction assuming that there is~$k \in [\ell-1]$ such that~$\varepsilon_k = -1$ and choosing~$k$ maximal with that property.
This forces~$\ell = 1$, so that~$\sigma \in S$, a contradiction.

\item[coclosed] Otherwise, by coclosedness of~$S'$, we can assume that there is a string~${\tau \in \strings(\bar Q) \ssm S'}$, an arrow~$\alpha \in Q_1$ and a sign~$\varepsilon \in \{-1,1\}$ such that~$\sigma \alpha^\varepsilon \tau \in S'$ (the case~${\tau \alpha^\varepsilon \sigma \in S'}$ is symmetric).
Thus, there exist bottom substrings~$\sigma_1, \dots, \sigma_\ell$ of walks~$\omega_1, \dots, \omega_\ell$ of~$F'$, arrows~$\alpha_1, \dots, \alpha_{\ell-1} \in Q_1$ and signs~$\varepsilon_1, \dots, \varepsilon_{\ell-1} \in \{-1,1\}$ such that~$\sigma \alpha^\varepsilon \tau = \sigma_1 \alpha_1^{\varepsilon_1} \dots \alpha_{\ell-1}^{\varepsilon_{\ell-1}} \sigma_\ell$.
Since~$\tau \notin S'$, there is~$k \in [\ell]$ such that~$\alpha^\varepsilon$ belongs to~$\sigma_k$ and we write~$\sigma_k = \sigma'_k \alpha^\varepsilon \sigma''_k$.
Since~$\{\omega, \omega_1, \dots, \omega_k\}$ are non-kissing, we obtain that~$\varepsilon_1, \dots, \varepsilon_{k-1} = -1$, which in turn implies that~$\varepsilon = 1$ (otherwise~$\omega$ would kiss~$\omega_k$).
Since~$\varepsilon = 1$, we have~${\sigma''_k \in \Sigma_\bottom(\sigma_k) \subseteq \Sigma_\bottom(\omega_k)}$.
We thus obtain that~$\tau = \sigma''_k \alpha_k^{\varepsilon_k} \dots \alpha_{\ell-1}^{\varepsilon_{\ell-1}} \sigma_\ell \in S'$, a contradiction.
\end{description}

\medskip
Conversely, consider a string~$\sigma \in S' \eqdef \zeta(F')$ such that~$S \eqdef \zeta(F') \ssm \{\sigma\}$ is biclosed, and consider the facet~$F \eqdef \eta(S)$.
Let~$\alpha, \beta$ (resp.~$\alpha', \beta'$) denote the two outgoing (resp.~incoming) arrows at the endpoints of~$\sigma$ such that~$\alpha^{-1}\sigma\beta \in \strings(\bar Q\blossom)$ (resp.~$\alpha'\sigma\beta'^{-1} \in \strings(\bar Q\blossom)$).
Since~$S$ and~$S'$ only differ by~$\sigma$, the facets~$F = \eta(S)$ and~$F' = \eta(S') = \eta(S \cup \{\sigma\})$ can \apriori{} differ by $4$ directed walks by definition of the map~$\eta$: namely, the walks~$\omega(\alpha, S)$, $\omega(\alpha', S)$, $\omega(\beta, S)$ and~$\omega(\beta', S)$ of~$F$ may or may not be in~$F'$ and conversely the walks~$\omega(\alpha, S')$, $ \omega(\alpha', S')$, $\omega(\beta, S')$ and~$\omega(\beta', S')$ of~$F'$ may or may not be in~$F$.
We claim that
\begin{itemize}
\item $\omega(\alpha', S') = \omega(\beta', S')^{-1}$ is a bending walk~$\omega'$ of~$F' \ssm F$,
\item $\omega(\alpha', S) = \omega(\beta, S')$ and~$\omega(\beta', S) = \omega(\alpha, S')$.
\end{itemize}
This claim implies (by Corollary~\ref{coro:pure}) that~$\omega'$ is the single undirected walk of~$F' \ssm F$, and its distinguished arrows are~$\alpha'$ and~$\beta'$ (by Lemma~\ref{lem:distinguishedWalkEta}).
Therefore the non-kissing facets~$F$ and~$F'$ are adjacent and the flip~$F \to F'$ is increasing and supported by~$\sigma$.

To conclude, we just need to prove our claim.
Since~$\sigma \in S'$ and~$S = S' \ssm \{\sigma\}$ is coclosed, there is a walk~$\omega' \in F'$ such that~$\sigma \in \Sigma_\bottom(\omega')$.
Assume that~$\omega'$ is distinct from~$\lambda \eqdef \distinguishedWalk{\alpha'}{F'}$.
Note that~$\omega'$ and~$\lambda$ share the arrow~$\alpha'$, and thus its target vertex~$u \eqdef t(\alpha')$.
Let~$v$ denote the last common vertex of~$\omega'$ and~$\lambda$ after~$\alpha$, and let~$\tau \eqdef \omega'[u,v] = \lambda[u,v]$ denote their common substring between these two vertices.
Since~$\lambda \eqdef \distinguishedWalk{\alpha'}{F'}$, we know that~$\omega'$ (resp.~$\lambda$) leaves~$\tau$ with an outgoing (resp.~incoming) arrow at~$v$.
In particular, $\sigma \ne \tau$ and~$\tau \in \Sigma_\bottom(\lambda) \ssm \{\sigma\} \subseteq S$.
We distinguish two cases:
\begin{description}
\item[$\sigma \supset \tau$] Since~$\omega'$ leaves~$\tau$ with an outgoing arrow at~$v$, we have~$\sigma \ssm \tau \in \Sigma_\bottom(\omega') \ssm \{\sigma\} \subseteq S$. Therefore, we have~$\sigma \in \tau \circ (\sigma \ssm \tau)$ and~$\sigma \notin S$ while~$\tau \in S$ and~$\sigma \ssm \tau \in S$. This contradicts the assumption that~$S$ is closed.
\item[$\sigma \subset \tau$] Since~$\sigma \notin S$ while~$\tau \in S$ and~$S$ is coclosed, we have~$\tau \ssm \sigma \in S$. Therefore there exist bottom substrings~$\sigma_1, \dots, \sigma_\ell$ of walks~$\omega_1, \dots, \omega_\ell$ of~$F'$, arrows~$\alpha_1, \dots, \alpha_{\ell-1} \in Q_1$ and signs~$\varepsilon_1, \dots, \varepsilon_{\ell-1} \in \{-1,1\}$ such that~$\tau \ssm \sigma = \sigma_1 \alpha_1^{\varepsilon_1} \dots \alpha_{\ell-1}^{\varepsilon_{\ell-1}} \sigma_\ell$. Define~$\varepsilon_\ell = 1$ and let~$k \in [\ell]$ be minimal such that~$\varepsilon_k = 1$. Since~$\tau \ssm \sigma$ is a top substring of~$\omega'$, this implies that~$\omega_k$ kisses~$\omega'$. This contradicts the fact that~$F'$ is a non-kissing facet.
\end{description}
Since both cases are impossible, we conclude that~$\omega' = \distinguishedWalk{\alpha'}{F'} = \omega(\alpha', S')$ (by Lemma~\ref{lem:distinguishedWalkEta}).
By symmetry, we obtain that~$\omega' = \omega(\alpha', S') = \omega(\beta', S')$.
Finally, this implies that all bottom (resp.~top) subsegments of~$\sigma$ are contained in~$S'$ (resp.~in the complement of~$S'$).
We conclude by Definition~\ref{def:fromBiclToNKC} that~$\omega(\alpha', S) = \omega(\beta, S')$ and~$\omega(\beta', S) = \omega(\alpha, S')$.
\end{proof}

To conclude, we need a statement from~\cite{Reading-HopfAlgebras} which characterizes the cover relations in a lattice congruence.

\begin{proposition}[{\cite[Prop.~2.2]{Reading-HopfAlgebras}}]
\label{prop:characterizationCoversLatticeQuotient}
Consider a lattice congruence~$\equiv$ on a lattice~$L$ and two congruence classes~$X, Y \in L/{\equiv}$.
Then~$X$ covers~$Y$ in~$L/{\equiv}$ if and only if the minimal element of~$X$ covers some element of~$Y$ in~$L$.
\end{proposition}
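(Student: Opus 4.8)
The plan is to transport the question to the subposet $\projDown(L) \subseteq L$ induced by the minimal elements of the $\equiv$-classes, via the isomorphism $L/{\equiv} \cong \projDown(L)$ recalled in Section~\ref{sec:latticeTheory}. Write $x \eqdef \projDown(X)$ and $y \eqdef \projDown(Y)$; under this isomorphism, ``$X$ covers $Y$ in $L/{\equiv}$'' translates to ``$y \lessdot x$ in $\projDown(L)$'', so the claim becomes: $y \lessdot x$ in $\projDown(L)$ if and only if $x$ covers some element of the interval $Y = [y, \projUp(Y)]$ in $L$ itself. Throughout I use that each $\equiv$-class is an interval, that $\projDown$ is order preserving with $\projDown(w) \le w$ and $\projDown \circ \projDown = \projDown$, and that $x$ is the minimum of its own class.

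For the forward implication, assume $y \lessdot x$ in $\projDown(L)$; in particular $y < x$ in $L$. Since $L$ is finite, pick $w$ maximal with $y \le w < x$; then $w \lessdot x$ in $L$. To see that $w \in Y$, note $y = \projDown(y) \le \projDown(w)$ and, since $w < x$ with $x \in \projDown(L)$, also $\projDown(w) \le x$; the case $\projDown(w) = x$ is excluded because $x$ is $\equiv$-minimal while $w < x$. Hence $y \le \projDown(w) < x$ with $\projDown(w) \in \projDown(L)$, and $y \lessdot x$ in $\projDown(L)$ forces $\projDown(w) = y$, \ie $w \equiv y$. Thus $w$ is an element of $Y$ covered by $x$ in $L$.

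For the reverse implication, suppose $x$ covers some $y' \in Y$ in $L$. Then $y \le y' < x$ with $y \ne x$, so $y < x$ in $\projDown(L)$, and it remains to rule out an intermediate element: suppose $z \in \projDown(L)$ satisfies $y < z < x$. Since $z \le x$ and $y' \le x$, the join $z \join y'$ lies in $[y', x] = \{y', x\}$. If $z \join y' = y'$ then $z \le y'$, hence $z = \projDown(z) \le \projDown(y') = y$, contradicting $y < z$. If $z \join y' = x$, then using that $\equiv$ is a lattice congruence and $y' \equiv y$ we get $x = z \join y' \equiv z \join y = z$ (the last equality since $y \le z$); but then $z = \projDown(z) = \projDown(x) = x$, contradicting $z < x$. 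No such $z$ exists, so $y \lessdot x$ in $\projDown(L)$.

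I expect the only genuinely delicate step to be the second case of the reverse implication, where compatibility of $\equiv$ with joins is invoked to replace $y'$ by its minimal class representative $y$, after which the relation $y \le z$ collapses the join $z \join y$ to $z$. Everything else is routine manipulation of the projections $\projDown, \projUp$ together with the description of $\equiv$-classes as intervals.
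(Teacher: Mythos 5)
The paper does not give its own proof of this statement: it is cited verbatim from N.~Reading's work \cite{Reading-HopfAlgebras}, so there is no in-paper argument to compare against. On its own merits your proof is correct and follows what I would consider the standard route: identify $L/{\equiv}$ with the subposet $\projDown(L)$, then argue each direction using that congruence classes are intervals, that $\projDown$ is order preserving and idempotent, and (crucially) that $\equiv$ is compatible with joins.

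A few remarks. In the forward direction your maximality argument for $w$ does deliver a genuine $L$-cover $w \lessdot x$: if $w < v < x$ then $y \le w < v$ gives $y \le v < x$, contradicting maximality. You then correctly project: $y = \projDown(y) \le \projDown(w) \le w < x$ with $\projDown(w) \in \projDown(L)$, and the cover $y \lessdot x$ in $\projDown(L)$ forces $\projDown(w) = y$, i.e.\ $w \in Y$. In the reverse direction, the decisive step is the one you flag as delicate: from $y' \lessdot x$ you get $z \join y' \in [y',x] = \{y', x\}$, and the second case is killed by the congruence computation $x = z \join y' \equiv z \join y = z$ (using $y' \equiv y$ and $y \le z$), which forces $z = \projDown(z) = \projDown(x) = x$, a contradiction. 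That is exactly where lattice-compatibility of $\equiv$, rather than mere order-compatibility, is used, and it is used correctly. The implicit finiteness assumption (to pick a maximal $w$) matches the context in which the paper invokes the result, so no gap there.
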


This concludes our proof of Theorem~\ref{thm:lattice}, that we can now restate more precisely as follows.

\begin{corollary}
\label{coro:finalLatticeQuotient}
If~$\bar Q$ is a gentle bound quiver whose non-kissing complex~$\RNKC$ is finite, then the non-kissing oriented flip graph~$\NKG$ is the Hasse diagram of the lattice quotient~$\Bicl{\bar Q}/{\equiv}$ of the congruence-uniform lattice of biclosed sets of strings by the lattice congruence~$\equiv$.
\end{corollary}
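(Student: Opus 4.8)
The plan is to assemble Corollary~\ref{coro:finalLatticeQuotient} directly from the machinery built up in Sections~\ref{sec:biclosedStrings}--\ref{sec:biclosedSetsToNKF}, since essentially all the substantive work has already been done. First I would recall that by the theorem at the end of Section~\ref{sec:biclosedStrings}, $\Bicl{\bar Q}$ is a congruence-uniform (in particular, finite) lattice, using the standing assumption that $\RNKC$ is finite, so that $\bar Q$ has finitely many strings. By Proposition~\ref{prop:latticeCongruence}, the relation $\equiv$ defined through the projection maps $\projDown, \projUp$ is an order congruence on $\Bicl{\bar Q}$, hence a lattice congruence since $\Bicl{\bar Q}$ is a finite lattice; therefore $\Bicl{\bar Q}/{\equiv}$ is itself a lattice, and congruence-uniformity is inherited by lattice quotients (as recalled in the remark in Section~\ref{subsec:congruenceUniformLattices}). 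This already gives that $\Bicl{\bar Q}/{\equiv}$ is a congruence-uniform lattice.

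Next I would identify the underlying set. By Corollary~\ref{coro:fibers}, the fibers of $\eta$ are exactly the $\equiv$-classes, so $\eta$ induces a bijection between $\Bicl{\bar Q}/{\equiv}$ and its image, which by Propositions~\ref{prop:surjection} and~\ref{prop:section} is all of $\NKC$ (surjectivity of $\eta$ is Proposition~\ref{prop:surjection}, and every facet arises as $\eta(\zeta(F))$). Passing to the reduced complex, the straight walks appear in every facet, so $\NKC$ and $\RNKC$ have the same facets and the same dual graph; thus the vertex set of $\NKG$ is in bijection with $\Bicl{\bar Q}/{\equiv}$ via the map induced by $\eta$ (equivalently, with inverse induced by $\zeta$).

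The remaining point is that this bijection is a graph isomorphism respecting orientation, i.e.\ that the edges of $\NKG$ correspond precisely to the cover relations of $\Bicl{\bar Q}/{\equiv}$ and that increasing flips correspond to going up. Here I would combine Proposition~\ref{prop:covers} with Proposition~\ref{prop:characterizationCoversLatticeQuotient}. Fix a congruence class with minimal element $S'$ (so $S' = \projDown(S') = \zeta(F')$ where $F' \eqdef \eta(S')$, using the observation after Definition~\ref{def:zeta}). By Proposition~\ref{prop:characterizationCoversLatticeQuotient}, this class covers another class in $\Bicl{\bar Q}/{\equiv}$ iff $S'$ covers some biclosed set $S$ in $\Bicl{\bar Q}$; by part~(i) of the criterion used in the proof of the theorem in Section~\ref{sec:biclosedStrings} (uniqueness of the covered generator), such an $S$ is of the form $S' \ssm \closure{\{\sigma\}}$, and since $S' = \zeta(F')$ is fixed by $\projDown$ and $\sigma \in S'$, one checks $S = S' \ssm \{\sigma\}$ is biclosed exactly in the situations covered by Proposition~\ref{prop:covers}. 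That proposition then says $S' \ssm \{\sigma\}$ biclosed $\iff$ there is an increasing flip $F \to F'$ supported by $\sigma$, with $\eta(S' \ssm \{\sigma\}) = F$ by the construction in its proof; and the fiber $\eta^{-1}(F)$ is the $\equiv$-class of $S' \ssm \{\sigma\}$ by Corollary~\ref{coro:fibers}. Conversely any increasing flip into $F'$ is detected this way since its supporting string lies in $\zeta(F')$ (the supporting substring $\sigma$ is at the bottom of $\omega'$). Matching these up: the down-covers of the class of $S'$ in $\Bicl{\bar Q}/{\equiv}$ biject with increasing flips $F \to F'$, i.e.\ with edges of $\NKG$ pointing into $F'$, and the orientation matches because the flip is increasing. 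Hence $\eta$ (equivalently $\zeta$) induces an isomorphism of oriented graphs from $\Bicl{\bar Q}/{\equiv}$ (viewed via its Hasse diagram) to $\NKG$, which is precisely the assertion. Since Theorem~\ref{thm:lattice} is the statement that $\NKG$ is the Hasse diagram of a congruence-uniform lattice, this completes its proof as well.

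The main obstacle I anticipate is purely bookkeeping rather than conceptual: carefully reconciling part~(i) of Theorem~\ref{thm:characterizationCongruenceUniform2}'s cover description (covers are given by adding $\closure{\{\tau\}}$) with Proposition~\ref{prop:covers}'s hypothesis that $\zeta(F') \ssm \{\sigma\}$ itself be biclosed --- one must use that $S' = \zeta(F')$ is $\projDown$-fixed and, when $\sigma$ admits a loop continuation, argue that the $\closure{\{\sigma\}} = \{\sigma, \sigma\alpha\sigma^{-1}\}$ case cannot arise for a generator removed at a cover within a minimal-element class (or handle it directly). Everything else is a direct citation of the propositions already proved.
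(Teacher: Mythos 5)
Your approach tracks the paper's proof almost exactly: the congruence from Proposition~\ref{prop:latticeCongruence}, the fiber matching from Corollary~\ref{coro:fibers}, and the cover/flip matching from Propositions~\ref{prop:covers} and~\ref{prop:characterizationCoversLatticeQuotient}. However, the concern you flag at the end as ``purely bookkeeping'' is in fact a genuine gap, and your first suggested resolution --- arguing that the case $\closure{\{\sigma\}} = \{\sigma, \sigma\alpha\sigma^{-1}\}$ ``cannot arise for a generator removed at a cover within a minimal-element class'' --- is false. The paper's own canonical loop example witnesses this: take the quiver with a single vertex $v$, a loop $\alpha$, and relation $\alpha^2 = 0$. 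Then $\strings^\pm(\bar Q) = \{\varepsilon_v, \alpha\}$ and, as observed in the proof of the theorem of Section~\ref{sec:biclosedStrings}, $\Bicl{\bar Q} = \{\varnothing, \{\varepsilon_v, \alpha\}\}$. The unique increasing flip $F_\peak \to F_\deep$ is supported by $\sigma = \varepsilon_v$, the minimal element $\zeta(F_\deep)$ of the class of $F_\deep$ equals $\{\varepsilon_v, \alpha\}$, and $\zeta(F_\deep) \ssm \{\varepsilon_v\} = \{\alpha\}$ is \emph{not} biclosed, since its complement $\{\varepsilon_v\}$ is not closed ($\varepsilon_v \circ \varepsilon_v = \{\alpha\}$). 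So the forward direction of Proposition~\ref{prop:covers} fails as stated whenever $\sigma$ admits a loop continuation: the coclosedness step of its proof tacitly assumes that the two strings whose composition escapes $\strings^\pm(\bar Q) \ssm S$ cannot both be $\sigma$.

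Corollary~\ref{coro:finalLatticeQuotient} is nonetheless true, and the fix is your second option, carried out. For an increasing flip $F \to F'$ supported by $\sigma$, one should show that $\zeta(F) = \zeta(F') \ssm \closure{\{\sigma\}}$ (rather than $\zeta(F') \ssm \{\sigma\}$) and that this set is biclosed; the new case $\rho_1 = \rho_2 = \sigma$ in the coclosedness check is then harmless because $\sigma \circ \sigma \subseteq \closure{\{\sigma\}}$, which is removed in full. With Proposition~\ref{prop:covers} restated this way, the generators $\closure{\{\sigma\}}$ delivered by Theorem~\ref{thm:characterizationCongruenceUniform2}(i) line up exactly with the supporting strings of increasing flips, and your argument closes. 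Be aware that the paper's own proof of Corollary~\ref{coro:finalLatticeQuotient} inherits the identical omission from Proposition~\ref{prop:covers}, so your suspicion was well placed; you just cannot argue the loop case away.
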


\begin{proof}
Proposition~\ref{prop:latticeCongruence} affirms that the maps~$\projDown$ and~$\projUp$ of Definition~\ref{def:downUpProjections} define a lattice congruence~$\equiv$.
Corollary~\ref{coro:fibers} ensures that the congruence classes of~$\equiv$ are the fibers of~$\eta$.
Finally, Propositions~\ref{prop:covers} and~\ref{prop:characterizationCoversLatticeQuotient} ensure that the cover relations of~$\Bicl{\bar Q}/{\equiv}$ are precisely the increasing flips of~$\NKG$.
\end{proof}

\begin{example}
Corollary~\ref{coro:finalLatticeQuotient} is illustrated on \fref{fig:exmLatticeQuotient}, which shows a specific lattice quotient and the corresponding increasing flip graph.
\end{example}

\begin{definition}\label{def:NKL}
We call \defn{non-kissing lattice} and denote by~$\NKL$ the transitive closure of the non-kissing oriented flip graph~$\NKG$.
\end{definition}

\subsection{Sublattice}

As observed in Remark~\ref{rem:sublattice}, the subposet of~$\Bicl{\bar Q}$ induced by~$\projDown \big( \Bicl{\bar Q} \big)$ is a join-sublattice of~$\Bicl{\bar Q}$ but could fail to be a sublattice.
It turns out that there is sufficiently symmetry in the construction to ensures that it is indeed a sublattice.
A similar result was stated for grid bound quivers in~\cite[Lem.~4.5]{GarverMcConville-grid} and for dissection bound quivers in~\cite[Thm.~4.11]{GarverMcConville}.
See \fref{fig:exmSublattice} for an illustration.

\begin{proposition}
\label{prop:sublattice}
The non-kissing lattice~$\NKL$ is isomorphic to the sublattice of~$\Bicl{\bar Q}$ induced by~$\projDown \big( \Bicl{\bar Q} \big)$ (or dually by~$\projUp \big( \Bicl{\bar Q} \big)$). More precisely,
\begin{enumerate}[(i)]
\item $\zeta(F \meet F') = \zeta(F) \meet \zeta(F')$ and~$\zeta(F \join F') = \zeta(F) \join \zeta(F')$ for any non-kissing facets~${F, F' \in \NKC}$,
\item $\eta(S \meet S') = \eta(S) \meet \eta(S')$ and~$\eta(S \join S') = \eta(S) \join \eta(S')$ for any biclosed sets~$S, S' \in \projDown \big( \Bicl{\bar Q} \big)$.
\end{enumerate}
\end{proposition}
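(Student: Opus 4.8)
The plan is to leverage the two maps $\eta$ and $\zeta$ already constructed and to reduce everything to a single combinatorial statement about the closure operator. First I would record the formal picture: by Corollaries~\ref{coro:fibers} and~\ref{coro:finalLatticeQuotient}, under the identification $\NKC \cong \Bicl{\bar Q}/{\equiv}$ the map $\eta$ is the quotient lattice homomorphism, while by Propositions~\ref{prop:surjection} and~\ref{prop:section} and the remark following Definition~\ref{def:zeta} its section $\zeta$ satisfies $\eta \circ \zeta = \mathrm{id}$, $\zeta \circ \eta = \projDown$, and has image exactly $\projDown \big( \Bicl{\bar Q} \big)$; hence $\zeta$ is an isomorphism of posets from $\NKL$ onto the subposet of $\Bicl{\bar Q}$ induced by $\projDown \big( \Bicl{\bar Q} \big)$. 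Since quotient maps of lattices preserve meets and joins, statement~(ii) holds for \emph{all} $S,S' \in \Bicl{\bar Q}$, \textit{a fortiori} for $S,S' \in \projDown \big( \Bicl{\bar Q} \big)$; and statement~(i) is then equivalent to the assertion that $\projDown \big( \Bicl{\bar Q} \big)$ is a \emph{sublattice} of $\Bicl{\bar Q}$, \ie that $\zeta$ commutes with $\meet$ and $\join$.

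The join half is immediate from Remark~\ref{rem:sublattice}: the image of the down-projection of any lattice congruence is a join-sublattice. (Concretely, Proposition~\ref{prop:section} together with Lemma~\ref{lem:fixProjDown} identifies $\projDown \big( \Bicl{\bar Q} \big)$ with the family of sets $\closure{ \big( \bigcup_i \Sigma_\bottom(\sigma_i) \big) }$, which is visibly stable under $(S,S') \mapsto \closure{(S \cup S')}$.) Thus $\zeta(F \join F') = \zeta(F) \join \zeta(F')$ and $\eta(S \join S') = \eta(S) \join \eta(S')$.

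The heart of the proof is the meet half, \ie showing $S \meet_{\Bicl{\bar Q}} S' = \coclosure{(S \cap S')} \in \projDown \big( \Bicl{\bar Q} \big)$ for $S,S' \in \projDown \big( \Bicl{\bar Q} \big)$. By Remark~\ref{rem:meetJoinBiclosedSets} this set is automatically biclosed, so it suffices to prove it is stable under passing to bottom substrings, \ie fixed by $\projDown$. Two observations organize the argument. First, $\projDown$ commutes with intersections — immediate from the definition $\projDown(X) \eqdef \set{\sigma \in \strings^\pm(\bar Q)}{\Sigma_\bottom(\sigma) \subseteq X}$ — so $S \cap S'$ is already closed (an intersection of closed sets) and $\projDown$-fixed. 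It therefore remains to establish the key lemma: \emph{the coclosure of a closed, $\projDown$-fixed set $W$ is again $\projDown$-fixed}. I would prove this by a substring-surgery argument in the spirit of the proof of Proposition~\ref{prop:latticeCongruence}(ii) and Lemma~\ref{lem:exmBiclosed}: given $\sigma \in \coclosure{W}$ and a bottom substring $\rho$ of $\sigma$, one has $\rho \in W$ since $W$ is stable under bottom substrings and $\coclosure{W} \subseteq W$; if $\rho$ failed to lie in $\coclosure{W}$, then $\rho \in \closure{ \big( \strings^\pm(\bar Q) \ssm W \big) }$, so $\rho$ would factor through arrows of $Q_1$ into strings avoiding $W$, and splicing onto the two ends of this factorization the portions of $\sigma$ lying beyond the arrows of $\sigma$ incident to the endpoints of $\rho$ (which point into $\rho$, as $\rho$ is a bottom substring) would exhibit $\sigma$ itself as such a factorization, contradicting $\sigma \in \coclosure{W}$. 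Making this rigorous requires distinguishing whether each endpoint of $\rho$ is interior to $\sigma$, possibly choosing $\rho$ or the factorization of $\rho$ of extremal length, and checking that the spliced-on portions can themselves be taken to avoid $W$ or be absorbed — using here that $\strings^\pm(\bar Q) \ssm S$ and $\strings^\pm(\bar Q) \ssm S'$ are closed and that $S,S'$ are stable under bottom substrings. \textbf{This case analysis is the main obstacle}, and it is precisely where the argument departs from~\cite{McConville}: strings on a general gentle bound quiver carry no preferred orientation and may self-intersect, so the grid argument cannot simply be transcribed. (An alternative route, parallel to~\cite[Lem.~4.5]{GarverMcConville-grid} and~\cite[Thm.~4.11]{GarverMcConville}, is to produce $\coclosure{(S \cap S')}$ directly in the normal form $\closure{ \big( \bigcup_k \Sigma_\bottom(\tau_k) \big) }$ with the $\tau_k$ the inclusion-maximal common bottom substrings of the walks of $\eta(S)$ and $\eta(S')$, adapted to the non-oriented setting.) Once the lemma is in hand, applying it to $W = S \cap S'$ gives $\coclosure{(S \cap S')} \in \projDown \big( \Bicl{\bar Q} \big)$, whence $\zeta(F \meet F') = \zeta(F) \meet \zeta(F')$ (as $\zeta$ is a poset isomorphism onto a now-established sublattice) and, via $\eta \circ \zeta = \mathrm{id}$ and $\zeta \circ \eta = \projDown$, also $\eta(S \meet S') = \eta(S) \meet \eta(S')$.

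Finally, the ``dually by $\projUp$'' clause I would not reprove from scratch: by Lemma~\ref{rem:reversedProjections} the complementation $S \mapsto \strings^\pm(\bar Q) \ssm S$ is an anti-automorphism of $\Bicl{\bar Q}$ intertwining $\projUp_{\bar Q}$ with $\projDown_{\reversed{\bar Q}}$, so applying the $\projDown$-part of the proposition to the reversed quiver $\reversed{\bar Q}$ — and using Remarks~\ref{rem:reverseBlossomingQuiver}, \ref{rem:reverseFlipGraph} and~\ref{rem:reverseBiclosed} to match up the non-kissing data — shows that $\projUp_{\bar Q} \big( \Bicl{\bar Q} \big)$ is a sublattice of $\Bicl{\bar Q}$, isomorphic to $\NKL$ by sending each congruence class to its maximal element.
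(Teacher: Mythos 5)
Your proposal is not the paper's proof. The paper handles the meet half in a few lines, entirely by symmetry: applying Lemma~\ref{rem:reversedProjections} twice, it writes
\[
\projDown(R) \meet \projDown(S) = \strings^\pm(\bar Q) \ssm \Big( \projUp\big(\strings^\pm(\reversed{\bar Q}) \ssm R\big) \join \projUp\big(\strings^\pm(\reversed{\bar Q}) \ssm S\big) \Big),
\]
then asserts that the bracketed join has the form $\projUp(T)$ for some biclosed $T$, so that the complement lands in $\projDown\big(\Bicl{\bar Q}\big)$ by a third application of the lemma. There is no substring surgery, no coclosure lemma, no case analysis. Your route — reduce to the claim ``the coclosure of a closed, $\projDown$-fixed set is $\projDown$-fixed'' and attack that directly — is a genuinely different and more explicit plan, closer in spirit to the proofs in \cite{McConville} and \cite{GarverMcConville-grid} that the paper deliberately chose \emph{not} to adapt.

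The parts of your proposal that \emph{are} written out are sound: the identification of $\projDown\big(\Bicl{\bar Q}\big)$ as the image of $\zeta$, the observation that the join half is automatic from Remark~\ref{rem:sublattice}, the reduction via $\projDown(X \cap Y) = \projDown(X) \cap \projDown(Y)$ to your coclosure lemma, and the final reversal argument for the $\projUp$ clause are all correct.

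The gap is that the coclosure lemma is not proved. You flag this yourself (``this case analysis is the main obstacle''), but it is not a matter of routine bookkeeping: as sketched, the splicing step runs into trouble immediately. Starting from $\sigma \in \coclosure{W}$ and a bottom substring $\rho$ with $\rho \in \closure{\big(\strings^\pm(\bar Q) \ssm W\big)}$, you factor $\rho$ through arrows of $Q_1$ into pieces avoiding $W$ and propose to prepend and append the flanks $\sigma'$ and $\sigma''$ of $\sigma$ outside $\rho$; but those flanks are \emph{not} in $\strings^\pm(\bar Q) \ssm W$ in general (they may well lie in $W$ — being a flank of a bottom substring says nothing about membership in $W$), so the spliced factorization of $\sigma$ is not witnessed by pieces avoiding $W$, and the contradiction with $\sigma \in \coclosure{W}$ does not come for free. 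Making this work requires something more — likely choosing the factorization of $\rho$ so that its extremal pieces can be merged with the flanks into single $W$-avoiding strings, using that $W$ is closed and $\projDown$-fixed. This is precisely the difficulty the paper sidesteps with the reversal trick, and if you do want to pursue the direct combinatorial proof you should be aware that the paper's own computation also leans on an unstated fact (that the join $\projUp\big(\strings^\pm(\reversed{\bar Q}) \ssm R\big) \join \projUp\big(\strings^\pm(\reversed{\bar Q}) \ssm S\big)$ lies in the image of $\projUp$), which is not the automatic ``meet-sublattice'' property of Remark~\ref{rem:sublattice} but its dual — so whichever route you take, there is a genuine meet/join asymmetry to close.
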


\begin{proof}
This follows from the reversing operation discussed in Remarks~\ref{rem:reverseFlipGraph}, \ref{rem:reverseBiclosed} and Lemma~\ref{rem:reversedProjections}.
Indeed, consider~$R, S \in \Bicl{\bar Q}$.
By Lemma~\ref{rem:reversedProjections}, we have
\begin{align*}
\projDown(R) \meet \projDown(S) 
& = \big( \strings^\pm(\bar Q) \ssm \projUp\big( \strings^\pm(\reversed{\bar Q}) \ssm R \big) \big) \meet \big( \strings^\pm(\bar Q) \ssm \projUp\big( \strings^\pm(\reversed{\bar Q}) \ssm S \big) \big) \\
& = \strings^\pm(\bar Q) \ssm \big( \projUp\big( \strings^\pm(\reversed{\bar Q}) \ssm R \big) \join \projUp\big( \strings^\pm(\reversed{\bar Q}) \ssm S \big) \big) \\
& = \strings^\pm(\bar Q) \ssm \projUp(T) = \projDown\big( \strings^\pm(\reversed{\bar Q}) \ssm T \big)
\end{align*}
for some~$T \in \Bicl{\reversed{\bar Q}} = \Bicl{\bar Q}$.
Therefore, $\projDown(R) \meet \projDown(S) \in \projDown \big( \Bicl{\bar Q} \big)$.
\end{proof}

\begin{example}
\fref{fig:exmSublattice} illustrates Proposition~\ref{prop:sublattice} for a specific gentle bound quiver.

\begin{figure}[t]
	\capstart
	\centerline{\includegraphics[width=1.1\textwidth]{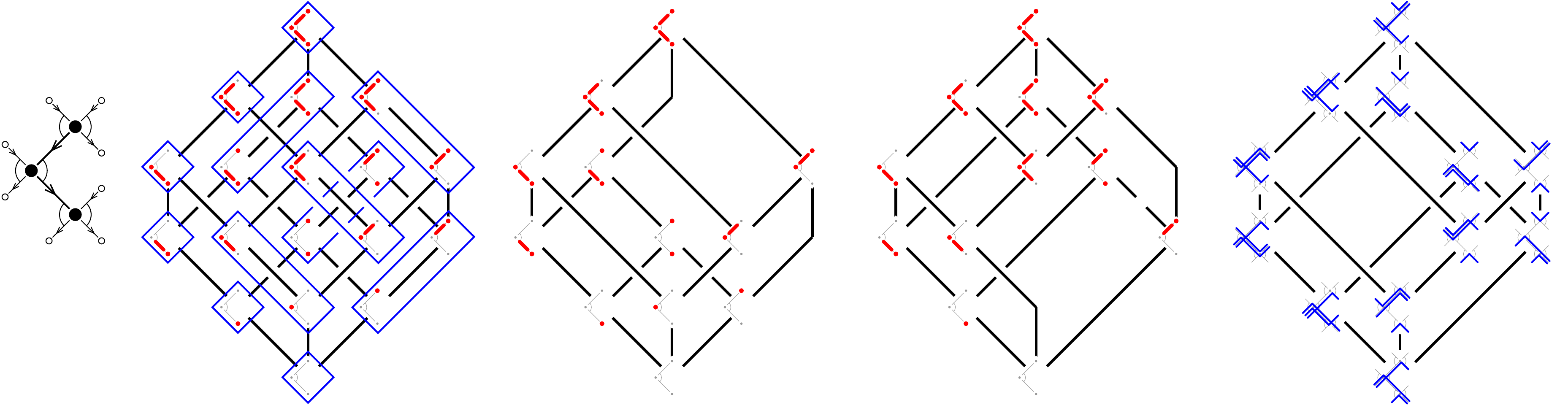}}
	\caption{The inclusion lattice of biclosed sets~$\Bicl{\bar Q}$ with congruence classes of~$\equiv$ in blue (left), the sublattice of~$\Bicl{\bar Q}$ induced by~$\projDown \big( \Bicl{\bar Q} \big)$ (middle left) and by~$\projUp \big( \Bicl{\bar Q} \big)$ (middle right), and the corresponding lattice of increasing flips on facets of~$\NKC$ (right).}
	\label{fig:exmSublattice}
\end{figure}
\end{example}

\section{Canonical join-representations in~$\NKL$ and the non-friendly complex}
\label{sec:nonFriendlyComplex}

As we have established that the non-kissing lattice~$\NKL$ is congruence-uniform, and thus semi-distributive, it is natural to study its canonical join complex.
Our first step is a bijective understanding of the join-irreducible elements of~$\NKL$.
They correspond to the distinguishable strings of~$\bar Q$, for which we first need an explicit characterization.

\subsection{Distinguishable strings}
\label{subsec:distinguishableStrings}

Recall Definition~\ref{def:distinguishedSubstring} for distinguishable strings.
We now have all tools to describe the distinguishable strings in a gentle bound quiver~$\bar Q$.
Our goal in this section is to prove the following statement.

\begin{proposition}
\label{prop:characterizationDistinguishableStrings}
A string~$\sigma \in \strings^\pm(\bar Q)$ is distinguishable if and only if $\Sigma_\bottom(\sigma) \cap \Sigma_\top(\sigma) = \{\sigma\}$.
\end{proposition}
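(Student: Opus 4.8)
The plan is to prove both implications of Proposition~\ref{prop:characterizationDistinguishableStrings} by linking the condition $\Sigma_\bottom(\sigma) \cap \Sigma_\top(\sigma) = \{\sigma\}$ to the structure of distinguished strings inside non-kissing facets, using the biclosed-set machinery of Sections~\ref{sec:biclosedStrings}--\ref{sec:biclosedSetsToNKF} together with the flip description of Proposition~\ref{prop:flip}. The key observation is that if $\sigma = \distinguishedString{\omega}{F}$ for a bending walk $\omega$ in a non-kissing facet $F$, then $\omega = \rho\sigma\tau$ with $\rho$ ending in an outgoing arrow at $s(\sigma)$ and $\tau$ starting with an outgoing arrow at $t(\sigma)$ (i.e.\ $\sigma$ is on top of $\omega$), while simultaneously the structure forces any proper substring of $\sigma$ that is on top of $\sigma$ to interfere with the distinguishedness of the arrows $\alpha, \beta$ bounding $\sigma$. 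So the condition is really saying ``$\sigma$ is minimal for being both a top and bottom substring of itself,'' which is exactly the configuration realizable as the distinguished string of a flip.

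\textbf{($\Rightarrow$)} Suppose $\sigma = \distinguishedString{\omega}{F}$ with distinguished arrows $\alpha, \beta$ of $\omega \in F$, so $\omega = \rho\alpha^{\pm}\sigma\beta^{\pm}\tau$ with the orientations making $\sigma$ a top substring of $\omega$. Trivially $\sigma \in \Sigma_\bottom(\sigma) \cap \Sigma_\top(\sigma)$. For the reverse containment, suppose for contradiction that there is a proper substring $\xi$ of $\sigma$ with $\xi \in \Sigma_\bottom(\sigma) \cap \Sigma_\top(\sigma)$. Being a proper top-and-bottom substring of $\sigma$, $\xi$ splits $\sigma$ at an endpoint, say $\sigma = \sigma_1 \gamma \xi \cdots$ where $\gamma$ points towards $\xi$; the countercurrent-order argument (mimicking the proof of Lemma~\ref{lem:weirdDistinguishedArrows} and Proposition~\ref{prop:distinguishedArrows}) shows that the distinguished walk of $F$ at an appropriate arrow incident to $\xi$ would enter $\xi$ in a direction contradicting the maximality of $\omega$ in $\prec_\alpha$ or $\prec_\beta$. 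Hence no such $\xi$ exists and $\Sigma_\bottom(\sigma) \cap \Sigma_\top(\sigma) = \{\sigma\}$.

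\textbf{($\Leftarrow$)} Conversely, assume $\Sigma_\bottom(\sigma) \cap \Sigma_\top(\sigma) = \{\sigma\}$. I would construct an explicit non-kissing facet in which $\sigma$ appears as a distinguished string. The natural candidate is $F' \eqdef \eta\big( \closure{\Sigma_\bottom(\sigma)} \cup \{\sigma\}\big)$ or, more symmetrically, $F' \eqdef \eta\big(\zeta(F) \cup \{\sigma\}\big)$ where $F \eqdef \eta\big(\closure{\Sigma_\bottom(\sigma)}\big)$. By Lemma~\ref{lem:exmBiclosed}, $S \eqdef \closure{\Sigma_\bottom(\sigma)}$ is biclosed; one then checks that $\sigma \notin S$ (this is where the hypothesis is used: if $\sigma \in \closure{\Sigma_\bottom(\sigma)}$, then $\sigma$ decomposes as a concatenation $\sigma_1 \circ \cdots \circ \sigma_\ell$ of strictly shorter bottom substrings of itself, and the concatenation point nearest an endpoint yields a proper substring lying in $\Sigma_\bottom(\sigma) \cap \Sigma_\top(\sigma)$, a contradiction) and that $S \cup \{\sigma\}$ is again biclosed (coclosedness uses the same hypothesis). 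Then by Proposition~\ref{prop:covers} there is an increasing flip $F \to F'$ supported by $\sigma$, and by Definition~\ref{def:flip} (together with Remark~\ref{rem:changeDistinguishedArrows}) $\sigma = \distinguishedString{\omega'}{F'}$ for the flipped-in walk $\omega'$, so $\sigma$ is distinguishable.

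\textbf{Main obstacle.} The delicate point is the equivalence ``$\sigma \notin \closure{\Sigma_\bottom(\sigma)}$ and $\closure{\Sigma_\bottom(\sigma)} \cup \{\sigma\}$ biclosed'' $\iff$ ``$\Sigma_\bottom(\sigma) \cap \Sigma_\top(\sigma) = \{\sigma\}$.'' Unpacking $\closure{(-)}$ here requires careful bookkeeping of how concatenations of bottom substrings via arrows of $Q_1$ can reproduce $\sigma$ itself: one must show that the first place a concatenation ``turns'' (switches from following $\sigma$ bottom-first to a genuinely new arrow) produces precisely a substring of $\sigma$ that is simultaneously on top and at the bottom of $\sigma$. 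Self-intersecting strings (flagged in the Part~\ref{part:lattice} introduction as a recurring source of technical difficulty) complicate the indexing, since the same word can occur as several distinct substrings; I would handle this by working with positions along a fixed orientation of $\sigma$ rather than with abstract substrings. Once this combinatorial lemma is in place, the two implications follow from the already-established dictionary between biclosed sets and non-kissing facets.
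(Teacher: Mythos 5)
Your proposal shares the paper's high-level architecture (interpret the hypothesis through the biclosed-set machinery of Part~\ref{part:lattice}, then produce a non-kissing facet realizing $\sigma$ as a distinguished string), but there are genuine problems with both implications.

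In the forward direction, you write that a proper $\xi \in \Sigma_\bottom(\sigma) \cap \Sigma_\top(\sigma)$ ``splits $\sigma$ at an endpoint.'' This glosses over the crux of the matter: no single occurrence of a proper substring of $\sigma$ can be simultaneously on top and at the bottom of $\sigma$. What the hypothesis really concerns is a string $\xi$ that occurs \emph{twice} in $\sigma$ --- once at a position $\xi_\top$ where it is on top and once at a distinct position $\xi_\bottom$ where it is at the bottom --- and these positions need not touch the endpoints of $\sigma$ at all. The paper does not invoke the countercurrent order here; instead it observes that if, say, $\xi_\top$ is interior to $\sigma$, then any walk $\omega$ having $\sigma$ as a (top or bottom) substring inherits both occurrences of $\xi$ and is therefore \emph{self-kissing}, which already forbids $\omega$ from lying in a non-kissing facet. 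The remaining case ($\xi_\top, \xi_\bottom$ at opposite ends of $\sigma$) is handled separately using finiteness of $\RNKC$. Your sketch, leaning on an unstated adaptation of Lemma~\ref{lem:weirdDistinguishedArrows}, would need substantial expansion and careful position bookkeeping to reach the same conclusion.

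In the reverse direction there is a hard error: you assert $\sigma \notin S = \closure{\Sigma_\bottom(\sigma)}$. This is false --- by Definition~\ref{def:topBottom}, $\sigma$ is always a bottom (and top) substring of itself, so $\sigma \in \Sigma_\bottom(\sigma) \subseteq S$. Consequently your candidate $F' = \eta\big(\closure{\Sigma_\bottom(\sigma)} \cup \{\sigma\}\big)$ is just $\eta\big(\closure{\Sigma_\bottom(\sigma)}\big)$, the set you called $F$, and no flip is produced; the ``concatenation'' argument you sketch for $\sigma \notin S$ cannot succeed. Your instinct that $\eta\big(\closure{\Sigma_\bottom(\sigma)}\big)$ is the right facet is correct (it is Lemma~\ref{lem:jiJI}), but the paper gets there by a direct computation (Lemma~\ref{lem:bijectionDistinguishableStringsWalks}): one shows, by induction along the arrows of $\sigma$, that the oriented walks $\omega(\alpha', S)$ and $\omega(\beta', S)$ (for $\alpha', \beta'$ the incoming blossom arrows at $s(\sigma)$, $t(\sigma)$) both contain $\sigma$ and are reverse to each other, using Lemma~\ref{lem:sigmaBottomCapSigmaTop} (which upgrades the hypothesis to $\closure{\Sigma_\bottom(\sigma)} \cap \Sigma_\top(\sigma) = \{\sigma\}$) to determine, at each step, whether the walk turns or continues. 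A flip-based version of the argument is possible but runs opposite to what you wrote: one must show that $\closure{\Sigma_\bottom(\sigma)} \ssm \{\sigma\}$ is biclosed (not that $\closure{\Sigma_\bottom(\sigma)} \cup \{\sigma\}$ is) and then apply Proposition~\ref{prop:covers}.
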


\begin{proof}
Assume first that~$\sigma$ is distinguishable and consider~$\tau \in \Sigma_\bottom(\sigma) \cap \Sigma_\top(\sigma)$.
Let~$\tau_\bottom$ (resp.~$\tau_\top$) denote the bottom (resp.~top) copy of~$\tau$ seen as a substring of~$\sigma$.
If~$\tau \ne \sigma$, then $\tau_\bottom$ and~$\tau_\top$ are distinct as substrings of~$\sigma$ but coincide as strings of~$\bar Q$.
We distinguish two cases:
\begin{itemize}
\item If~$s(\tau_\bottom) = s(\sigma)$ and~$t(\tau_\top) = t(\sigma)$, then the two endpoints of~$\sigma$ coincide (otherwise, $\NKC$ would not be finite) so that $\sigma$ is not distinguishable (otherwise, the same arrow would be distinguished twice along a path).
\item Otherwise, we can assume for example that~$\tau_\top$ does not share an endpoint with~$\sigma$. Then for any walk~$\omega$ such that~$\sigma \in \Sigma_\bottom(\omega)$, we have~$\tau \in \Sigma_\bottom(\omega) \cap \Sigma_\top(\omega)$ so that~$\omega$ is self-kissing. It follows that~$\sigma$ is not distinguishable.
\end{itemize}
We conclude that if~$\sigma$ is distinguishable, then~$\Sigma_\bottom(\sigma) \cap \Sigma_\top(\sigma) = \{\sigma\}$.
The converse statement will follow from the next two lemmas.
\end{proof}

\begin{remark}
It follows from Propositions~\ref{prop:characterizationDistinguishableStrings} and~\ref{prop:morphismsStringModules} that a string~$\sigma$ is distinguishable if and only if~$\Hom{A} \big( M(\sigma), M(\sigma) \big) = k$.
Modules~$M$ such that~$\Hom{A}(M, M) = k$ are called \defn{bricks}.
\end{remark}

\begin{lemma}
\label{lem:sigmaBottomCapSigmaTop}
For any~$\sigma, \tau \in \strings^\pm(\bar Q)$, any string in~$\closure{\Sigma_\bottom(\sigma)} \cap \Sigma_\top(\tau)$ has at least one substring in~$\Sigma_\bottom(\sigma) \cap \Sigma_\top(\tau)$. In particular, we have
\begin{align*}
\Sigma_\bottom(\sigma) \cap \Sigma_\top(\sigma) = \{\sigma\} & \quad \Longrightarrow \quad \closure{\Sigma_\bottom(\sigma)} \cap \Sigma_\top(\sigma) = \{\sigma\} \\
\text{and}\qquad
\Sigma_\bottom(\sigma) \cap \Sigma_\top(\tau) = \varnothing & \quad \Longrightarrow \quad \closure{\Sigma_\bottom(\sigma)} \cap \Sigma_\top(\tau) = \varnothing.
\end{align*}
\end{lemma}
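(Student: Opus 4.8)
The plan is to prove the main assertion of Lemma~\ref{lem:sigmaBottomCapSigmaTop} directly, namely: if $\rho \in \closure{\Sigma_\bottom(\sigma)} \cap \Sigma_\top(\tau)$, then $\rho$ has a substring lying in $\Sigma_\bottom(\sigma) \cap \Sigma_\top(\tau)$; the two displayed implications then follow immediately by specializing $\tau$ or by contraposition. First I would unpack membership of $\rho$ in $\closure{\Sigma_\bottom(\sigma)}$: by definition of the closure operator there exist bottom substrings $\rho_1, \dots, \rho_\ell \in \Sigma_\bottom(\sigma)$, arrows $\alpha_1, \dots, \alpha_{\ell-1} \in Q_1$ and signs $\varepsilon_1, \dots, \varepsilon_{\ell-1} \in \{-1,1\}$ with $\rho = \rho_1 \alpha_1^{\varepsilon_1} \rho_2 \alpha_2^{\varepsilon_2} \cdots \alpha_{\ell-1}^{\varepsilon_{\ell-1}} \rho_\ell$. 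I would pick such a decomposition with $\ell$ minimal.

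The key combinatorial observation is that $\rho$ is a \emph{top} substring of $\tau$, so at each endpoint $\rho$ either ends at an endpoint of $\tau$ or is followed (resp.\ preceded) by an outgoing arrow of $\tau$. I would then look at the connecting arrows $\alpha_k^{\varepsilon_k}$ inside $\rho$. The plan is to show that, by minimality of $\ell$, we must have $\ell = 1$: if some $\varepsilon_k = +1$, then the prefix $\rho_1 \alpha_1^{\varepsilon_1}\cdots \alpha_{k-1}^{\varepsilon_{k-1}} \rho_k$ would itself be a bottom substring of $\sigma$ (its right endpoint carries an outgoing arrow $\alpha_k$, hence is an incoming-arrow-or-endpoint situation for the string read from $\rho$'s side), contradicting minimality after re-amalgamating; symmetrically if some $\varepsilon_k = -1$ we can amalgamate on the left. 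Here I would argue carefully using the definition of top/bottom substring (Definition~\ref{def:topBottom}): inside $\rho$, a connecting arrow pointing ``up'' relative to $\rho$'s interior does not obstruct either $\rho_k$ being a bottom substring of $\sigma$ or the amalgamated piece staying a bottom substring of $\sigma$. Once $\ell = 1$, $\rho = \rho_1 \in \Sigma_\bottom(\sigma)$, and since $\rho \in \Sigma_\top(\tau)$ as well, $\rho$ itself is the desired substring in $\Sigma_\bottom(\sigma) \cap \Sigma_\top(\tau)$; more generally for $\ell > 1$ one extracts the appropriate $\rho_k$ which is simultaneously a bottom substring of $\sigma$ (given) and a top substring of $\tau$ (because being a substring of the top substring $\rho$ of $\tau$, with the neighboring connecting arrows oriented correctly, forces it to sit on top of $\tau$).

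The first displayed implication then follows: if $\Sigma_\bottom(\sigma) \cap \Sigma_\top(\sigma) = \{\sigma\}$ and $\rho \in \closure{\Sigma_\bottom(\sigma)} \cap \Sigma_\top(\sigma)$, then $\rho$ has a substring in $\Sigma_\bottom(\sigma) \cap \Sigma_\top(\sigma) = \{\sigma\}$, so $\sigma$ is a substring of $\rho$; but $\rho \in \closure{\Sigma_\bottom(\sigma)}$ forces every $\rho_i$ (hence $\rho$ itself, being built from bottom substrings of $\sigma$) to have length at most that of $\sigma$, whence $\rho = \sigma$. The second implication is the contrapositive-flavored special case: nonempty intersection $\closure{\Sigma_\bottom(\sigma)} \cap \Sigma_\top(\tau)$ would produce an element of $\Sigma_\bottom(\sigma) \cap \Sigma_\top(\tau)$. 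The main obstacle I anticipate is the bookkeeping in the minimality/amalgamation argument: one must handle the cases where $\rho_k$ touches an endpoint of $\tau$ (so the ``outgoing arrow'' condition of being a top substring is vacuous there) and where $\rho$ or the pieces can self-intersect, which is exactly the kind of subtlety the authors flag in the introduction to Part~\ref{part:lattice}. I would be careful to phrase the extraction of the witness substring $\rho_k$ so that it works uniformly whether or not $\rho_k$ shares an endpoint with $\tau$, and whether or not the connecting arrows are blossom arrows (here they are genuine arrows of $Q_1$, so that is not an issue, but the endpoints of $\rho$ within $\tau$ still require the two-pronged definition of top substring).
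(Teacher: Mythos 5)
Your overall decomposition $\rho = \rho_1 \alpha_1^{\varepsilon_1} \cdots \alpha_{\ell-1}^{\varepsilon_{\ell-1}} \rho_\ell$ and the idea of scanning the signs $\varepsilon_k$ to find a good piece match the paper's approach, but your \emph{primary} argument has a genuine gap. You claim that, by minimality of $\ell$, if some $\varepsilon_k = +1$ then the prefix $\rho_1 \alpha_1^{\varepsilon_1} \cdots \alpha_{k-1}^{\varepsilon_{k-1}} \rho_k$ is itself a bottom substring of $\sigma$, and hence that $\ell = 1$ is forced. This does not hold: the pieces $\rho_i \in \Sigma_\bottom(\sigma)$ sit at arbitrary (possibly overlapping, possibly repeated) positions of $\sigma$, and the concatenation $\rho_1 \alpha_1^{\varepsilon_1} \cdots \rho_k$ need not be a substring of $\sigma$ at all, let alone a bottom one. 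This is exactly why $\closure{\Sigma_\bottom(\sigma)}$ is genuinely larger than $\Sigma_\bottom(\sigma)$ and why the lemma requires an argument in the first place. (Concretely, if $\sigma$ has two bottom substrings joined by an arrow of $Q_1$ that does not occur between those positions in $\sigma$, the glued string lies in the closure but is not a substring of $\sigma$; it can even be longer than $\sigma$.)

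Your fallback idea --- for $\ell > 1$, extract some $\rho_k$ that is simultaneously in $\Sigma_\bottom(\sigma)$ and in $\Sigma_\top(\tau)$ --- is exactly the paper's argument, but you leave the choice of $k$ unspecified, and an arbitrary $\rho_k$ need not be a top substring of $\rho$. The paper's choice is: pick $k$ \emph{minimal} with $\varepsilon_k = 1$ (or symmetrically, maximal with $\varepsilon_k = -1$). Then either $k = 1$, so $\rho_1$ starts at an endpoint of $\rho$ and is followed by the outgoing arrow $\alpha_1$, making $\rho_1 \in \Sigma_\top(\rho)$; or $k > 1$, in which case $\varepsilon_{k-1} = -1$ and $\varepsilon_k = 1$, so $\rho_k$ is flanked on both sides by outgoing arrows inside $\rho$, again giving $\rho_k \in \Sigma_\top(\rho)$. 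In either case $\rho_k \in \Sigma_\top(\rho) \subseteq \Sigma_\top(\tau)$ because $\rho \in \Sigma_\top(\tau)$ and the top-substring relation composes. The case $\ell = 1$ is handled trivially, not forced. Once you replace your minimality-of-$\ell$ claim by this explicit choice of $k$, the argument goes through.

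One smaller slip: in your derivation of the first displayed implication you say that $\rho \in \closure{\Sigma_\bottom(\sigma)}$ forces $|\rho| \le |\sigma|$, but that is false for the reason above. The correct length bound is that $\rho \in \Sigma_\top(\sigma)$ makes $\rho$ a substring of $\sigma$, hence $|\rho| \le |\sigma|$; combined with $\sigma$ being a substring of $\rho$ this gives $\rho = \sigma$.
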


\begin{proof}
Let~$\rho \in \closure{\Sigma_\bottom(\sigma)} \cap \Sigma_\top(\tau)$.
As~$\rho \in \closure{\Sigma_\bottom(\sigma)}$, there exist substrings~$\rho_1, \dots, \rho_\ell \in \Sigma_\bottom(\sigma)$, arrows~$\alpha_1, \dots, \alpha_{\ell-1} \in Q_1$ and signs~$\varepsilon_1, \dots, \varepsilon_{\ell-1} \in \{-1,1\}$ such that~$\rho = \rho_1 \alpha_1^{\varepsilon_1} \dots \alpha_{\ell-1}^{\varepsilon_{\ell-1}} \rho_\ell$.
Assume that there exists~$k \in [\ell-1]$ such that~$\varepsilon_k = 1$ and choose~$k$ minimal for this property (the proof is similar if there exists~$k \in [\ell-1]$ such that~$\varepsilon_k = -1$).
If~$k > 1$, then~$\varepsilon_{k-1} = -1$ while~$\varepsilon_k = 1$ so that~$\rho_k \in \Sigma_\top(\tau)$.
If~$k = 1$, then~$\rho_1$ starts at the endpoint of~$\rho$ and finishes with an outgoing arrow, so that~$\rho_1$ is a top substring of~$\rho$ which is a top substring of~$\tau$, and thus~$\rho_1 \in \Sigma_\top(\tau)$.
In both cases, we found a substring of~$\rho$ in~$\Sigma_\bottom(\sigma) \cap \Sigma_\top(\tau)$.
\end{proof}

\begin{lemma}
\label{lem:bijectionDistinguishableStringsWalks}
Let~$\sigma \in \strings^\pm(\bar Q)$ be such that~$\Sigma_\bottom(\sigma) \cap \Sigma_\top(\sigma) = \{\sigma\}$.
Let~$\alpha', \beta'$ be the two incoming arrows at the endpoint of~$\sigma$ such that~$\alpha'\sigma\beta'^{-1} \in \strings(\bar Q\blossom)$.
Then the oriented walks~$\omega \big( \alpha', \closure{\Sigma_\bottom(\sigma)} \big)$ and~$\omega \big( \beta', \closure{\Sigma_\bottom(\sigma)} \big)$ both contain~$\sigma$ and are reverse to each other. Therefore the corresponding unoriented walk of~$\eta \big( \closure{\Sigma_\bottom(\sigma)} \big)$ has distinguished string~$\sigma$.
\end{lemma}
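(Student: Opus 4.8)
The statement concerns a string $\sigma$ with $\Sigma_\bottom(\sigma)\cap\Sigma_\top(\sigma)=\{\sigma\}$, and asks to show that the walk obtained by feeding either of the incoming arrows $\alpha',\beta'$ at the endpoints of $\sigma$ into the construction of Definition~\ref{def:fromBiclToNKC} with the biclosed set $S\eqdef\closure{\Sigma_\bottom(\sigma)}$ recovers $\sigma$ as a distinguished string. The key observation that makes everything run is that $S=\closure{\Sigma_\bottom(\sigma)}$ is indeed biclosed by Lemma~\ref{lem:exmBiclosed}, and that by Lemma~\ref{lem:fixProjDown} it is fixed by $\projDown$; moreover by Lemma~\ref{lem:sigmaBottomCapSigmaTop} our hypothesis $\Sigma_\bottom(\sigma)\cap\Sigma_\top(\sigma)=\{\sigma\}$ upgrades to $\closure{\Sigma_\bottom(\sigma)}\cap\Sigma_\top(\sigma)=\{\sigma\}$, i.e. the only top substring of $\sigma$ lying in $S$ is $\sigma$ itself.

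First I would run the construction of $\omega(\alpha',S)$ explicitly. Orient $\sigma=\alpha_1^{\varepsilon_1}\cdots\alpha_\ell^{\varepsilon_\ell}$, and let $\alpha'$ be the incoming arrow at $s(\sigma)$, so that $\alpha'\sigma$ is a string of $\bar Q\blossom$; then $\omega(\alpha',S)$ starts with $\alpha'$ (entering at $s(\sigma)$) and grows forward. I claim it traces out exactly $\sigma$ next: for each proper prefix $\sigma[\,\cdot\,,v]$ of $\sigma$ that is a bottom substring of $\sigma$, Definition~\ref{def:fromBiclToNKC} forces the next arrow to be the one along $\sigma$; and for a proper prefix that is \emph{not} a bottom substring of $\sigma$, the next arrow along $\sigma$ is incoming, hence the walk continues along $\sigma$ as well — so in both cases the walk follows $\sigma$ until it reaches $t(\sigma)$. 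At $t(\sigma)$, the prefix is all of $\sigma$; since $\sigma\in S$ (as $\sigma\in\Sigma_\bottom(\sigma)$), Definition~\ref{def:fromBiclToNKC} dictates $\varepsilon=-1$, i.e. the walk leaves $\sigma$ with an incoming arrow, so $\sigma\in\Sigma_\bottom\big(\omega(\alpha',S)\big)$. A symmetric argument on the backward half (which is trivial here since $\alpha'$ is the \emph{first} arrow) and then the analogous analysis for $\omega(\beta',S)$ started from the other endpoint shows that $\omega(\beta',S)$ also contains $\sigma$ as a bottom substring. Finally, by Lemma~\ref{lem:distinguishedWalkEta}, $\omega(\alpha',S)=\distinguishedWalk{\alpha'}{\eta(S)}$ and $\omega(\beta',S)=\distinguishedWalk{\beta'}{\eta(S)}$; and by Corollary~\ref{coro:eta}, the $|Q\blossom_1|$ directed walks $\{\omega(\gamma,S)\}$ group into $|Q_0|$ pairs of mutually reverse bending walks plus $\Delta$ straight walks, so to conclude $\omega(\alpha',S)=\omega(\beta',S)^{-1}$ it suffices to check that these two directed walks have the same underlying undirected walk — which is immediate since both contain the same $\sigma$ with the same ``bottom'' incidence pattern at both endpoints, forcing the two cohook completions on either side of $\sigma$ to coincide.

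It then remains to identify $\sigma$ as the \emph{distinguished string} of this undirected walk $\omega$ in the facet $F\eqdef\eta(S)$. By Proposition~\ref{prop:distinguishedArrows}, $\omega$ has exactly two distinguished arrows; I would show they are precisely $\alpha'$ and $\beta'$, so that the distinguished string $\distinguishedString{\omega}{F}$ is exactly the piece between them, which is $\sigma$. That $\omega=\distinguishedWalk{\alpha'}{F}=\distinguishedWalk{\beta'}{F}$ is already recorded above, so $\alpha',\beta'\in\distinguishedArrows{\omega}{F}$. Since there are at most two distinguished arrows and we have exhibited two, they are exactly $\{\alpha',\beta'\}$; note $\alpha'\ne\beta'$ because $\alpha'$ is incoming at one endpoint of $\sigma$ and $\beta'$ (as the reverse-orientation arrow $\beta'^{-1}$ appears at the other endpoint) is genuinely a different arrow of $Q\blossom_1$ — here one uses finiteness of $\NKC$ to rule out the degenerate case where the two endpoints of $\sigma$ coincide, exactly as in the first half of the proof of Proposition~\ref{prop:characterizationDistinguishableStrings}. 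Hence $\distinguishedString{\omega}{F}$ is the substring of $\omega$ strictly between its two distinguished arrows $\alpha',\beta'$, namely $\sigma$.

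\textbf{Main obstacle.} The subtle point — and the one I would be most careful about — is the degenerate possibility that $\alpha'=\beta'$, i.e. that the walk built from $\alpha'$ and the walk built from $\beta'$ coincide \emph{as directed walks} rather than being reverse to one another (for instance if $\sigma=\tau\delta\tau^{-1}$ around a loop $\delta$). The hypothesis $\Sigma_\bottom(\sigma)\cap\Sigma_\top(\sigma)=\{\sigma\}$ is exactly what is needed to exclude this, via Lemma~\ref{lem:sigmaBottomCapSigmaTop} and the self-kissing analysis in Proposition~\ref{prop:characterizationDistinguishableStrings}: if the two endpoints of $\sigma$ were identified, one could produce a proper common top-and-bottom substring of $\sigma$, contradicting the hypothesis. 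Checking that this genuinely forbids $\alpha'=\beta'$, and that the walk $\omega$ is therefore honestly bending with the two distinguished arrows pointing in opposite directions as required by Proposition~\ref{prop:distinguishedArrows}, is where the bulk of the bookkeeping lives; the rest is a direct unwinding of Definition~\ref{def:fromBiclToNKC} together with Lemmas~\ref{lem:exmBiclosed}, \ref{lem:distinguishedWalkEta} and~\ref{lem:sigmaBottomCapSigmaTop}.
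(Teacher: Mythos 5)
Your overall strategy tracks the paper's proof closely: reduce to $S=\closure{\Sigma_\bottom(\sigma)}$, which is biclosed (Lemma~\ref{lem:exmBiclosed}), use Lemma~\ref{lem:sigmaBottomCapSigmaTop} to upgrade the hypothesis to $S\cap\Sigma_\top(\sigma)=\{\sigma\}$, run a prefix-by-prefix induction along $\sigma$ to show that the construction of Definition~\ref{def:fromBiclToNKC} traces out $\alpha'\sigma\beta'^{-1}$, then invoke Lemma~\ref{lem:distinguishedWalkEta} to finish. Two issues, one cosmetic and one substantive.

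\emph{Cosmetic slip in the induction.} You write that for a proper prefix $\sigma_i$ that is not a bottom substring, ``the next arrow along $\sigma$ is incoming''. It is outgoing ($\varepsilon_{i+1}=1$), and $\sigma_i$ is then a \emph{top} substring of $\sigma$. The reason the construction follows $\sigma$ here is precisely that $\sigma_i\notin S$, which is exactly what $S\cap\Sigma_\top(\sigma)=\{\sigma\}$ gives. You state this upgraded fact in your preamble but never actually wire it into the case analysis, so the ``hence the walk continues along $\sigma$'' has no justification attached to it. The paper makes both halves of the dichotomy explicit: $\varepsilon_{i+1}=-1$ forces $\sigma_i\in\Sigma_\bottom(\sigma)\subseteq S$, and $\varepsilon_{i+1}=1$ forces $\sigma_i\in\Sigma_\top(\sigma)$ hence $\sigma_i\notin S$.

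\emph{Genuine gap: the ``reverse to each other'' step.} You dismiss as ``immediate'' the claim that $\omega(\alpha',S)$ and $\omega(\beta',S)$ are the same undirected walk, asserting that sharing $\sigma$ with the same bottom incidence pattern ``forces the two cohook completions on either side of $\sigma$ to coincide.'' This is not immediate and is in fact the crux of the lemma. Two non-kissing walks of the same facet can perfectly well share a long common substring and then diverge; Corollary~\ref{coro:eta} tells you the directed walks pair up, but gives no control over \emph{which} ones pair up. The paper has to do real work here: assuming the two walks are not reverse, it takes the maximal common substring $\tau=\tau'\alpha'\sigma\beta'^{-1}\tau''$, chooses an end $t(\tau'')$ that is not a blossom, and shows the two constructions prescribe the \emph{same} next arrow there because $\tau''\in S\iff\sigma\beta'^{-1}\tau''\in S$ (one direction is closedness of $S$, the other is the more delicate one and uses the specific form $S=\closure{\Sigma_\bottom(\sigma)}$). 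That equivalence, and the resulting contradiction with maximality of $\tau$, is exactly the missing content in your sketch. Your later identification of $\{\alpha',\beta'\}$ as the two distinguished arrows also quietly presupposes this step, since $\alpha',\beta'\in\distinguishedArrows{\omega}{F}$ only after you know they are distinguished arrows \emph{of the same undirected walk}.

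Finally, the ``main obstacle'' you flag (the degenerate possibility $\alpha'=\beta'$) is a reasonable sanity check, but it is not where the difficulty actually lies, and the paper does not treat it as a separate case; the real work is the maximality argument above.
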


\begin{proof}
Denote for short~$S \eqdef \closure{\Sigma_\bottom(\sigma)}$ and~$F \eqdef \eta(S)$ the corresponding non-kissing facet. Note that~$F$ is well-defined since~$S$ is biclosed by Lemma~\ref{lem:exmBiclosed}.

We first prove that~$\omega(\alpha',S)$ and~$\omega(\beta',S)$ both contain~$\alpha' \sigma \beta'^{-1}$.
For this, write~$\sigma = \alpha_1^{\varepsilon_1} \dots \alpha_\ell^{\varepsilon_\ell}$ for some arrows~$\alpha_1, \dots, \alpha_\ell \in Q_1$ and signs~$\varepsilon_1, \dots, \varepsilon_\ell \in \{-1, 1\}$.
Define~$\sigma_i \eqdef \alpha_1^{\varepsilon_1} \dots \alpha_i^{\varepsilon_i}$ for~$i \in [\ell]$.
We prove by induction that~$\omega(\alpha',S)$ contains~$\sigma_i$.
Indeed, the base case ($i = 0$) is clear, so assume that it holds for~$i \in [\ell-1]$.
We have two situations:
\begin{itemize}
\item If~$\varepsilon_{i+1} = -1$, then~$\sigma_i \in \Sigma_\bottom(\sigma) \subseteq S$ so that~$\omega(\alpha',S)$ contains~$\alpha_{i+1}^{-1}$ and thus~$\sigma_{i+1}$.
\item If~$\varepsilon_{i+1} = 1$, then~$\sigma_i \in \Sigma_\top(\sigma)$. Since~$S \cap \Sigma_\top(\sigma) = \{\sigma\}$ by Lemma~\ref{lem:sigmaBottomCapSigmaTop}, we obtain that~$\sigma_i \notin S$ so that~$\omega(\alpha',S)$ contains~$\alpha_{i+1}$ and thus~$\sigma_{i+1}$.
\end{itemize}
Finally, since~$\sigma \in S$, we obtain that~$\omega(\alpha',S)$ contains~$\alpha' \sigma \beta'^{-1}$.
By symmetry, we thus obtained that~$\omega(\alpha',S)$ and~$\omega(\beta',S)$ both contain~$\alpha' \sigma \beta'^{-1}$.

To conclude, assume that~$\omega(\alpha',S)$ and~$\omega(\beta',S)$ are not reverse to each other, and let~$\tau$ be a maximal common substring of them containing~$\alpha' \sigma \beta'^{-1}$.
Write~$\tau = \tau' \alpha' \sigma \beta'^{-1} \tau''$ and assume for example that~$t(\tau'')$ is not a blossom.
Since~$\tau'' \in S \iff \sigma \beta^{-1} \tau'' \in S$, the two walks~$\omega(\alpha',S)$ and~$\omega(\beta',S)$ should have the same behavior at~$t(\tau'')$, contradicting the maximality of~$\tau$.
\end{proof}

In fact, Lemma~\ref{lem:bijectionDistinguishableStringsWalks} even provides a bijection from distinguishable strings to walks which are not self-kissing and not in the peak facet.
We will prove that this map is bijective in Proposition~\ref{prop:bijectionDistinguishableStringsWalks}.
Before that, we need the following statement.

\begin{lemma}
\label{lem:bijectionWalksDistinguishableStrings}
For any bending walk~$\omega \in \NKWalks^\pm(\bar Q)$ which is not self-kissing and not in the peak facet, the unique inclusion minimal string in~$\set{\sigma \in \Sigma_\bottom(\omega)}{\closure{\Sigma_\bottom(\omega)} = \closure{\Sigma_\bottom(\sigma)}}$ is distinguishable.
\end{lemma}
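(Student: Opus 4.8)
Set $S \eqdef \closure{\Sigma_\bottom(\omega)}$, which is biclosed by Lemma~\ref{lem:exmBiclosed} (note $\Sigma_\bottom(\omega) \neq \varnothing$, since $\omega$, being bending but not a peak walk, has at least one deep, a length-$0$ bottom substring), and write $X \eqdef \set{\sigma \in \Sigma_\bottom(\omega)}{\closure{\Sigma_\bottom(\sigma)} = S}$. First I would single out the candidate: let $W_1, \dots, W_n$ ($n \geq 1$) be the maximal bottom substrings of $\omega$, listed in the order in which they occur along $\omega$, and let $\hat\sigma = \omega[a_1, b_n]$ be the shortest factor of $\omega$ containing all of them, where $a_1$ is the left endpoint of $W_1$ and $b_n$ the right endpoint of $W_n$. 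The endpoints of $W_1$ and $W_n$ are deeps of $\omega$ or endpoints where $\omega$ points inward, so $\hat\sigma$ is a bottom substring of $\omega$; moreover every bottom substring of $\omega$ is contained in some $W_i$, whence $\Sigma_\bottom(\hat\sigma) = \Sigma_\bottom(\omega)$ and $\closure{\Sigma_\bottom(\hat\sigma)} = S$, i.e.\ $\hat\sigma \in X$.

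Next I would prove that $\hat\sigma$ is the minimum of $X$, in particular its unique inclusion-minimal element. For any $\sigma \in \Sigma_\bottom(\omega)$ one has $\Sigma_\bottom(\sigma) \subseteq \Sigma_\bottom(\omega)$, hence $\closure{\Sigma_\bottom(\sigma)} \subseteq S$; so $\sigma \in X$ exactly when $W_i \in \closure{\Sigma_\bottom(\sigma)}$ for every $i$. A short argument then shows that a given $W_i$ can lie in $\closure{\Sigma_\bottom(\sigma)}$ only if $\sigma$ contains $W_i$ as a factor: writing $W_i$ as a concatenation $\sigma_1 \alpha_1 \cdots \alpha_{k-1} \sigma_k$ of bottom substrings of $\sigma$, the pieces $\sigma_1$ and $\sigma_k$ reach the two endpoints of $W_i$, forcing $\sigma$ to span all of $W_i$. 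Consequently every $\sigma \in X$ contains both $W_1$ and $W_n$, and being a factor of $\omega$ it therefore contains $\hat\sigma = \omega[a_1, b_n]$.

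It then remains to show that $\hat\sigma$ is distinguishable; by Proposition~\ref{prop:characterizationDistinguishableStrings} this amounts to $\Sigma_\bottom(\hat\sigma) \cap \Sigma_\top(\hat\sigma) = \{\hat\sigma\}$. Suppose some $\tau \neq \hat\sigma$ lies in the intersection, with a bottom-positioned occurrence $\tau_\bottom$ and a top-positioned occurrence $\tau_\top$ in $\hat\sigma$ (distinct as positioned factors, equal as strings). Following the dichotomy in the proof of Proposition~\ref{prop:characterizationDistinguishableStrings}, either (a) one of $\tau_\bottom, \tau_\top$ shares no endpoint with $\hat\sigma$, or (b) $s(\tau_\bottom) = s(\hat\sigma)$ and $t(\tau_\top) = t(\hat\sigma)$. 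In case (a), say $\tau_\top$ has no endpoint in common with $\hat\sigma$; then $\hat\sigma$ flanks $\tau_\top$ by two outgoing arrows, which are arrows of $\omega$ because $\hat\sigma \in \Sigma_\bottom(\omega)$, so $\tau$ occurs as a top substring of $\omega$; meanwhile $\tau_\bottom$ exhibits $\tau \in \Sigma_\bottom(\hat\sigma) \subseteq \Sigma_\bottom(\omega)$, so $\omega$ kisses itself, contradicting $\omega \in \NKWalks^\pm(\bar Q)$ (the subcase where $\tau_\bottom$ is the free occurrence is identical).

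The hard part is case (b). There $\tau_\bottom$ is a prefix of $\hat\sigma$ and $\tau_\top$ a suffix, equal as strings, so by the standing finiteness of $\RNKC$ one gets $s(\hat\sigma) = t(\hat\sigma)$; in this degenerate situation the two occurrences of $\tau$ hug the endpoints of $\hat\sigma$, so — unlike in case (a) — no self-kiss of $\omega$ is immediately visible. My plan here is to exploit the rigidity of $\hat\sigma$: its two endpoints are the outer endpoints of the extreme valleys $W_1$ and $W_n$, which are \emph{maximal} bottom substrings flanked in $\omega$ by incoming arrows. Tracking how the suffix-occurrence $\tau_\top$ sits with respect to $W_1, \dots, W_n$ should either still produce a string that is simultaneously on top of and at the bottom of $\omega$ (a self-kiss, contradicting $\omega \in \NKWalks^\pm(\bar Q)$), or yield a proper bottom substring $\sigma' \subsetneq \hat\sigma$ of $\omega$ still lying in $X$ (contradicting the minimality established above), or rerun the string-extension argument underlying the finiteness reduction to contradict the finiteness of $\RNKC$. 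Disentangling the interaction of the maximality of $W_1, W_n$, non-self-kissingness, and finiteness at the endpoints of $\hat\sigma$ is where the real work lies; everything else is bookkeeping.
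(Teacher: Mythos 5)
Your proof diverges from the paper's in a fundamental way, and you have two genuine gaps.

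\emph{On minimality.} You identify $\hat\sigma = \omega[a_1,b_n]$, which is the \emph{maximal} bottom substring of $\omega$; indeed the paper uses precisely this element to argue that $X \neq \varnothing$. But your claim that $\hat\sigma$ is the \emph{minimum} of $X$ is unproven and very likely false. The ``short argument'' you invoke — that $W_i \in \closure{\Sigma_\bottom(\sigma)}$ forces $\sigma$ to contain $W_i$ as a factor — does not hold: the closure operator concatenates strings \emph{abstractly}, so $W_i = \sigma_1 \alpha_1^{\varepsilon_1}\cdots\sigma_k$ with each $\sigma_m \in \Sigma_\bottom(\sigma)$ only tells you that $W_i$ is assembled from strings each of which \emph{happens to equal} some bottom substring of $\sigma$; their positions in $W_i$ and in $\sigma$ need bear no relation. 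In particular a $\sigma$ strictly inside a single $W_j$ can still have $\closure{\Sigma_\bottom(\sigma)} = \closure{\Sigma_\bottom(\omega)}$. The paper deliberately hedges here (``Consider now any inclusion minimal string $\sigma$ in $X$ (there could be several)'') and proves uniqueness only at the very end by deferring to Lemma~\ref{lem:jiJI}: since each minimal $\sigma$ turns out to be the unique descent of the facet $\eta\big(\closure{\Sigma_\bottom(\omega)}\big)$, minimal elements must coincide.

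\emph{On distinguishability.} Your case (b) is exactly where the paper's proof takes a different and much shorter route, and it is the step you leave unfinished. The paper never verifies $\Sigma_\bottom(\sigma)\cap\Sigma_\top(\sigma)=\{\sigma\}$ head-on; instead it \emph{uses minimality} to derive a contradiction from non-distinguishability. Take any inclusion-minimal $\sigma \in X$ and suppose it is not distinguishable, so there is $\tau \in \Sigma_\bottom(\sigma)\cap\Sigma_\top(\sigma)$ with $\tau \neq \sigma$. Because $\sigma \in \Sigma_\bottom(\omega)$, every bottom substring of $\sigma$ is a bottom substring of $\omega$; so if the top-positioned occurrence of $\tau$ sat in the interior of $\sigma$, $\tau$ would be simultaneously on top and at the bottom of $\omega$, a self-kiss. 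Hence (reversing if necessary) the top occurrence of $\tau$ is a prefix of $\sigma$, and one can write $\sigma = \tau\alpha\rho$ with $\alpha$ outgoing at $t(\tau)$. Then $\rho$ is a strictly shorter bottom substring of $\sigma$ (hence of $\omega$), the bottom occurrence of $\tau$ sits inside $\rho$ as a bottom substring, and therefore $\sigma = \tau\alpha\rho \in \closure{\Sigma_\bottom(\rho)}$, giving $\closure{\Sigma_\bottom(\rho)} = \closure{\Sigma_\bottom(\sigma)} = \closure{\Sigma_\bottom(\omega)}$. Thus $\rho \in X$ with $\rho \subsetneq \sigma$, contradicting minimality. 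Your cases (a) and (b) correspond to which endpoint of $\sigma$ the offending occurrence touches, but the paper's reduction step is the same in both cases and never needs to ``disentangle'' the interaction you flag; the hard work you correctly identify as remaining is precisely what the minimality trick dissolves.
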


\begin{proof}
Let~$X \eqdef \set{\sigma \in \Sigma_\bottom(\omega)}{\closure{\Sigma_\bottom(\omega)} = \closure{\Sigma_\bottom(\sigma)}}$.
Since~$\omega$ is not in the peak facet, it has at least one bottom substring.
It follows that~$X$ is non-empty since it contains the inclusion maximal bottom substring of~$\omega$.
Consider now any inclusion minimal string~$\sigma$ in~$X$ (there could be several).

Assume that~$\sigma$ is not distinguishable.
By Lemma~\ref{lem:bijectionDistinguishableStringsWalks}, there exists~${\tau \in \Sigma_\bottom(\sigma) \cap \Sigma_\top(\sigma)}$ distinct from~$\sigma$.
Since~$\sigma$ is a bottom substring of a non-self-kissing walk~$\omega$, the substring~$\tau$ must be a prefix or a suffix of~$\sigma$, say for example a prefix.
We write~$\sigma = \tau \alpha^\varepsilon \rho$ for an arrow~$\alpha \in Q_1$, a sign~$\varepsilon \in \{-1,1\}$ and a substring~$\rho$ of~$\sigma$.
If~$\varepsilon = -1$, then~$\tau$ would be a top and a bottom substring of~$\omega$, a contradiction.
Therefore~$\varepsilon = 1$ so that we have~$\tau \in \Sigma_\bottom(\rho)$ and thus~$\sigma \in \closure{\Sigma_\bottom(\rho)}$, hence~$\closure{\Sigma_\bottom(\rho)} = \closure{\Sigma_\bottom(\sigma)} = \closure{\Sigma_\bottom(\omega)}$, which would contradict the minimality of~$\sigma$.

Finally, observe that~$\sigma$ is the unique descent of~$\eta \big( \closure{\Sigma_\bottom(\sigma)} \big) = \eta \big( \closure{\Sigma_\bottom(\omega)} \big)$ as will be established in Lemma~\ref{lem:jiJI}.
\end{proof}

We gather the constructions of Lemmas~\ref{lem:bijectionDistinguishableStringsWalks} and~\ref{lem:bijectionWalksDistinguishableStrings} in the following definition.

\begin{definition}
\label{def:bijectionWalksDistinguishableStrings}
For a distinguishable string~$\sigma \in \distinguishableStrings^\pm(\bar Q)$, we denote by $\omega_\bottom(\sigma) \in \NKWalks^\pm(\bar Q)$ the undirected walk constructed in Lemma~\ref{lem:bijectionDistinguishableStringsWalks}, meaning that~$\omega_\bottom(\sigma) = \{\omega(\alpha',S), \omega(\beta',S)\}$ where~$S = \closure{\Sigma_\bottom(\sigma)}$ and~$\alpha', \beta'$ are such that~$\alpha'\sigma\beta'^{-1} \in \strings(\bar Q\blossom)$.

Conversely, for a bending non-self-kissing walk~$\omega \in \NKWalks^\pm(\bar Q)$ not in the peak facet, we denote by~$\sigma_\bottom(\omega)$ the inclusion minimal string in~$\set{\sigma \in \Sigma_\bottom(\omega)}{\closure{\Sigma_\bottom(\omega)} = \closure{\Sigma_\bottom(\sigma)}}$.
\end{definition}

\begin{proposition}
\label{prop:bijectionDistinguishableStringsWalks}
The maps~$\omega_\bottom$ and~$\sigma_\bottom$ are inverse bijections between the distinguishable strings of~$\bar Q$ and the bending non-self-kissing walks of~$\NKWalks^\pm(\bar Q)$ not in the peak facet~$F_\peak$.
\end{proposition}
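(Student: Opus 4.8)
The plan is to show that $\omega_\bottom$ and $\sigma_\bottom$ are well-defined maps between the announced sets and that they are mutually inverse.

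First I would check that $\omega_\bottom$ lands in the right set. Let $\sigma \in \distinguishableStrings^\pm(\bar Q)$, so $\Sigma_\bottom(\sigma) \cap \Sigma_\top(\sigma) = \{\sigma\}$ by Proposition~\ref{prop:characterizationDistinguishableStrings}. By Lemma~\ref{lem:exmBiclosed}, $S \eqdef \closure{\Sigma_\bottom(\sigma)}$ is biclosed, so $\eta(S)$ is a non-kissing facet, and by Lemma~\ref{lem:bijectionDistinguishableStringsWalks} the walk $\omega_\bottom(\sigma)$ is a single undirected bending walk of $\eta(S)$ whose distinguished string is $\sigma$. In particular $\omega_\bottom(\sigma)$ is not self-kissing (it belongs to a non-kissing facet), and it is not in the peak facet since it has the bottom substring $\sigma$ (peak walks have no bottom substring). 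Conversely, $\sigma_\bottom$ lands in $\distinguishableStrings^\pm(\bar Q)$: this is precisely the content of Lemma~\ref{lem:bijectionWalksDistinguishableStrings}, which shows that for a bending non-self-kissing walk $\omega \notin F_\peak$ the inclusion-minimal string $\sigma_\bottom(\omega)$ in $\set{\sigma \in \Sigma_\bottom(\omega)}{\closure{\Sigma_\bottom(\omega)} = \closure{\Sigma_\bottom(\sigma)}}$ is distinguishable. (Here I would note that this minimal string is unique: if $\sigma, \sigma'$ were two incomparable minimal elements, both being substrings of the non-self-kissing walk $\omega$ and both being bottom substrings, one would be a prefix or suffix of the other, contradicting minimality, or they would overlap in a way forcing $\omega$ to be self-kissing.)

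Next I would prove $\sigma_\bottom \circ \omega_\bottom = \mathrm{id}$. Fix $\sigma \in \distinguishableStrings^\pm(\bar Q)$ and set $\omega \eqdef \omega_\bottom(\sigma)$, $S \eqdef \closure{\Sigma_\bottom(\sigma)}$, so $\omega \in \eta(S)$ and $\sigma$ is a bottom substring of $\omega$ by Lemma~\ref{lem:bijectionDistinguishableStringsWalks}. I claim $\closure{\Sigma_\bottom(\omega)} = S$. The inclusion $\closure{\Sigma_\bottom(\sigma)} \subseteq \closure{\Sigma_\bottom(\omega)}$ is clear since $\sigma \in \Sigma_\bottom(\omega)$. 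For the reverse inclusion, by Lemma~\ref{lem:positiveNegativeOrientation} every bottom substring of $\omega = \omega(\alpha', S)$ lies in $S$, so $\Sigma_\bottom(\omega) \subseteq S$ and hence $\closure{\Sigma_\bottom(\omega)} \subseteq S$ since $S$ is closed. Thus $\closure{\Sigma_\bottom(\omega)} = \closure{\Sigma_\bottom(\sigma)}$, so $\sigma$ belongs to the set $\set{\sigma' \in \Sigma_\bottom(\omega)}{\closure{\Sigma_\bottom(\omega)} = \closure{\Sigma_\bottom(\sigma')}}$. It remains to see $\sigma$ is its inclusion-minimal element. If $\rho \subsetneq \sigma$ were a strictly smaller bottom substring of $\omega$ with $\closure{\Sigma_\bottom(\rho)} = \closure{\Sigma_\bottom(\sigma)}$, then $\sigma \in \closure{\Sigma_\bottom(\rho)}$; writing how $\sigma$ is built from copies of $\rho$ and arrows, one of the joining points forces a substring of $\sigma$ to be simultaneously a top and a bottom substring of $\sigma$ distinct from $\sigma$ (exactly the argument in Lemma~\ref{lem:bijectionWalksDistinguishableStrings}), contradicting $\Sigma_\bottom(\sigma) \cap \Sigma_\top(\sigma) = \{\sigma\}$. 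Hence $\sigma_\bottom(\omega) = \sigma$.

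Finally I would prove $\omega_\bottom \circ \sigma_\bottom = \mathrm{id}$. Fix a bending non-self-kissing walk $\omega \notin F_\peak$ and set $\sigma \eqdef \sigma_\bottom(\omega)$, a distinguishable string with $\closure{\Sigma_\bottom(\sigma)} = \closure{\Sigma_\bottom(\omega)} \eqdef S$. Then $\omega_\bottom(\sigma)$ is, by definition, the undirected walk $\{\omega(\alpha',S), \omega(\beta',S)\}$ of $\eta(S)$, where $\alpha'\sigma\beta'^{-1} \in \strings(\bar Q\blossom)$. It suffices to show $\omega \in \eta(S) = \eta(\closure{\Sigma_\bottom(\omega)})$ and that $\omega$ has distinguished string $\sigma$ in $\eta(S)$; then $\omega$ and $\omega_\bottom(\sigma)$ are two walks of $\eta(S)$ with the same distinguished string, hence equal by Proposition~\ref{prop:flip} (distinct walks of a facet have distinct distinguished arrows, so distinct distinguished strings — or more directly, a walk in a facet is determined by any one of its distinguished arrows via Lemma~\ref{lem:distinguishedWalkEta}). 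For the first point: by Lemma~\ref{lem:positiveNegativeOrientation}-type reasoning, or directly by applying Definition~\ref{def:fromBiclToNKC} to the incoming arrow $\alpha'$ at the start of $\sigma$ and induction along $\sigma$ (using $\Sigma_\bottom(\sigma) \subseteq S$ and $\Sigma_\top(\sigma) \cap S = \{\sigma\}$ from Lemma~\ref{lem:sigmaBottomCapSigmaTop}), one sees the walk $\omega(\alpha',S)$ contains $\alpha'\sigma\beta'^{-1}$; and since $S = \closure{\Sigma_\bottom(\omega)}$, the walk $\omega$ itself has all its bottom substrings in $S$ and all its top substrings outside $S$ (the latter because $\omega$ is not self-kissing, so a top substring of $\omega$ cannot be a bottom substring of $\omega$, hence cannot be forced into $S = \closure{\Sigma_\bottom(\omega)}$ — this uses Lemma~\ref{lem:sigmaBottomCapSigmaTop} applied to $\sigma$ together with the minimality giving $\closure{\Sigma_\bottom(\omega)}\cap\Sigma_\top(\omega)$ small), which by Definition~\ref{def:fromBiclToNKC} and uniqueness of $\omega(\gamma,S)$ forces $\omega = \omega(\gamma, S)$ for $\gamma$ one of its arrows, i.e. $\omega \in \eta(S)$. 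For the second point, $\sigma$ is a bottom substring of $\omega$, and by Lemma~\ref{lem:bijectionDistinguishableStringsWalks} the distinguished string of the walk of $\eta(S)$ through $\alpha'$ is $\sigma$; combining, $\omega$'s distinguished string is $\sigma$.

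The main obstacle is the careful bookkeeping in the last paragraph: showing that the abstract walk $\omega$ we started with is literally produced by the procedure $\omega(\alpha',\closure{\Sigma_\bottom(\omega)})$, i.e. reconciling "the bottom substrings of $\omega$ generate $S$" with "the local growth rule of Definition~\ref{def:fromBiclToNKC} in $S$ reproduces $\omega$". This requires knowing that $\Sigma_\top(\omega) \cap S = \varnothing$, which in turn leans on $\omega$ being non-self-kissing together with the minimality characterization of $\sigma_\bottom(\omega)$ and Lemma~\ref{lem:sigmaBottomCapSigmaTop}; all the pieces are available from the preceding lemmas, but assembling them cleanly is the delicate part. $\qed$
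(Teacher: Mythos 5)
Your proposal follows the same overall structure as the paper's proof (well-definedness via Lemmas~\ref{lem:bijectionDistinguishableStringsWalks} and~\ref{lem:bijectionWalksDistinguishableStrings}, then check the compositions), but you take a more hands-on route. The paper's own argument is terse: it only explicitly verifies $\sigma_\bottom \circ \omega_\bottom = \mathrm{id}$, by observing that $\closure{\Sigma_\bottom(\sigma_\bottom(\omega_\bottom(\sigma)))} = \closure{\Sigma_\bottom(\sigma)}$ and then invoking (as a forward reference) Lemma~\ref{lem:jiJI}, which says a distinguishable string is the unique descent of the facet $\eta(\closure{\Sigma_\bottom(\sigma)})$; since $\sigma_\bottom(\omega_\bottom(\sigma))$ and $\sigma$ are both distinguishable strings giving rise to the same facet, they coincide. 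Your argument for this direction instead tries to verify inclusion-minimality directly; that is a legitimate alternative, though it still leans on the uniqueness of the inclusion-minimal element of $\set{\sigma' \in \Sigma_\bottom(\omega)}{\closure{\Sigma_\bottom(\omega)} = \closure{\Sigma_\bottom(\sigma')}}$, which in the paper is itself established via Lemma~\ref{lem:jiJI} (your parenthetical remark about incomparable minimal elements being forced to overlap is not quite right --- two minimal bottom substrings could lie in disjoint valleys of $\omega$, so that argument does not close the uniqueness question on its own).

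The more substantive issue is in your verification of $\omega_\bottom \circ \sigma_\bottom = \mathrm{id}$. You correctly argue that $\omega$ has all bottom substrings in $S \eqdef \closure{\Sigma_\bottom(\omega)}$ and all top substrings outside $S$ (via non-self-kissingness and Lemma~\ref{lem:sigmaBottomCapSigmaTop}), and that this forces $\omega$ to be non-kissing with every walk of $\eta(S)$, hence $\omega \in \eta(S)$, hence $\omega = \omega(\gamma, S)$ for \emph{some} arrow $\gamma$ of $\omega$. But you then conclude that $\omega$'s distinguished string is $\sigma$ by "combining" the fact that $\omega(\alpha', S)$ has distinguished string $\sigma$ with the fact that $\omega$ passes through $\alpha'$ and contains $\sigma$ --- this silently assumes $\gamma = \alpha'$, i.e.\ that $\omega$ is the distinguished walk $\distinguishedWalk{\alpha'}{\eta(S)}$, which is precisely what is to be proved. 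A facet can contain several walks through $\alpha'$, totally ordered by $\prec_{\alpha'}$, and having $\sigma$ as a bottom substring does not by itself single out the maximal one. To close the gap you would need an argument that $\omega$ is maximal for $\prec_{\alpha'}$ among walks of $\eta(S)$ through (the relevant occurrence of) $\alpha'$; your own final paragraph flags exactly this as "the delicate part" but does not carry it out. For what it is worth, the paper's printed proof does not address this composition at all, so its completeness rests on the same implicit reasoning; the cleanest escape in the paper's framework is to invoke Lemma~\ref{lem:jiJI} twice (once per composition) via the unique-descent characterisation, rather than trying to rebuild the walk from the closure operator.
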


\begin{proof}
The maps~$\omega_\bottom$ and~$\sigma_\bottom$ are well-defined by Lemmas~\ref{lem:bijectionDistinguishableStringsWalks} and~\ref{lem:bijectionWalksDistinguishableStrings}.
To see that they are inverse bijections, observe that for any~$\sigma \in \distinguishableStrings^\pm(\bar Q)$, we have\[
\closure{ \Sigma_\bottom \big( \sigma_\bottom \big( \omega_\bottom(\sigma) \big) \big)}
= \closure{\Sigma_\bottom \big( \omega_\bottom(\sigma) \big)}
= \closure{\Sigma_\bottom(\sigma)}.
\]
Indeed, the first equality follows from the definition of~$\sigma_\bottom$, while the second one is derived from~$\sigma \in \Sigma_\bottom \big( \omega_\bottom(\sigma) \big)$ and~$\omega_\bottom(\sigma) \in \eta \big( \closure{\Sigma_\bottom(\sigma)} \big)$.
Therefore, we obtain that~$\sigma_\bottom \big( \omega_\bottom(\sigma) \big) = \sigma$ is the unique descent of
\[
\eta \big( \closure{ \Sigma_\bottom \big( \sigma_\bottom \big( \omega_\bottom(\sigma) \big) \big)} \big) = \eta \big( \closure{\Sigma_\bottom(\sigma)} \big)
\]
as will be established in Lemma~\ref{lem:jiJI}.
\end{proof}

\begin{remark}
\label{rem:bijectionDistinguishableStringsWalks}
One can define dually the maps~$\sigma_\top$ and~$\omega_\top$, which provide inverse bijections between distinguishable strings of~$\bar Q$ and the bending walks of~$\walks^\pm(\bar Q)$ not in the deep facet~$F_\deep$.
We will use both~$\sigma_\bottom$ and~$\sigma_\top$ in Section~\ref{subsec:zonotope}.
\end{remark}

\begin{example}
\fref{fig:exmBijectionStringsWalks2} illustrates the maps~$\sigma_\top = \omega_\top^{-1}$ and $\sigma_\bottom = \omega_\bottom^{-1}$ for two quivers on $3$ vertices.

\begin{figure}[t]
	\capstart
	\centerline{\includegraphics[scale=.4]{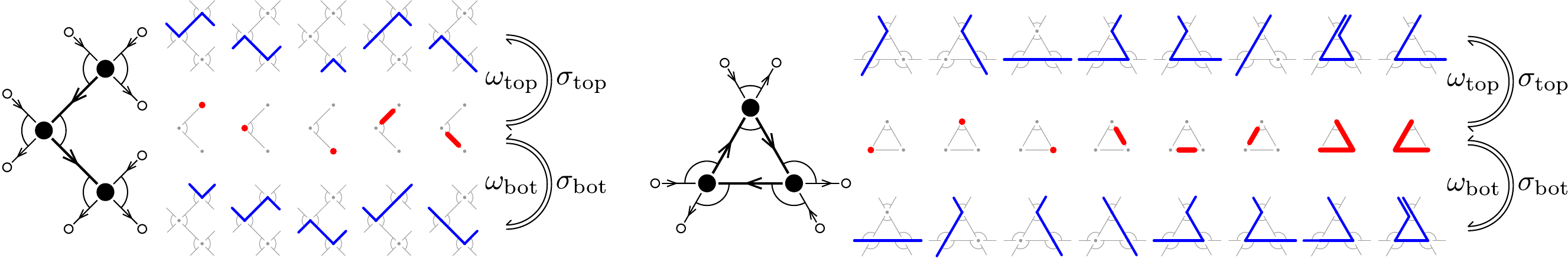}}
	\caption{The bijections~$\sigma_\top = \omega_\top^{-1}$ (top) and $\sigma_\bottom = \omega_\bottom^{-1}$ (bottom) for two specific quivers.}
	\label{fig:exmBijectionStringsWalks2}
\end{figure}
\end{example}

\begin{remark}
Assume that all strings of~$\bar Q$ are distinguishable, as happens for instance in grid and dissection quivers.
Then~$\omega_\bottom(\sigma)$ (resp.~$\omega_\top(\sigma)$) is obtained by adding a hook (resp.~a cohook) at each endpoint of~$\sigma$ while $\sigma_\bottom(\omega)$ (resp.~$\sigma_\top(\omega)$) is obtained by deleting a hook (resp.~a cohook) at each blossom of~$\omega$.
With the notations of Section~\ref{subsec:ARtranslationStringModules},
\[
\omega_\bottom(\sigma) = {}_h\sigma_h
\qquad\text{and}\qquad
\omega_\top(\sigma) = {}_c\sigma_c.
\]
In other words, when all strings of~$\bar Q$ are distinguishable, the bijections~$\omega_\bottom$ and~$\sigma_\bottom$ where already considered in Lemma~\ref{lem:bijections-STTC-NKC}.
As illustrated in \fref{fig:exmBijectionStringsWalks2}\,(right), the bijections of Lemma~\ref{lem:bijections-STTC-NKC} and Definition~\ref{def:bijectionWalksDistinguishableStrings} no longer coincide in general when not all strings are distinguishable.
\end{remark}

\subsection{Join-irreducibles in~$\NKL$}

In order to describe the canonical join-representations in the next section, we now completely describe the join-irreducible elements in~$\NKL$.
We first consider the following join-irreducible elements of~$\Bicl{\bar Q}$.

\begin{lemma}
\label{lem:exmBiclosedJI}
For any distinguishable string~$\sigma \in \strings^\pm(\bar Q)$, the biclosed set~$\closure{\Sigma_\bottom(\sigma)}$ is join-irreducible in~$\Bicl{\bar Q}$.
\end{lemma}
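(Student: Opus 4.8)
The goal is to show that for a distinguishable string $\sigma$, the biclosed set $S \defeq \closure{\Sigma_\bottom(\sigma)}$ covers a unique element in $\Bicl{\bar Q}$. First I would recall that $S$ is indeed biclosed by Lemma~\ref{lem:exmBiclosed}, so the statement makes sense. The natural candidate for the unique lower cover is $S_\star \defeq S \ssm \{\sigma\}$, where $\sigma$ is understood here to be the inclusion-maximal element of $S$ — indeed, since $\sigma$ is a top-and-bottom substring of itself and $\Sigma_\bottom(\sigma) \cap \Sigma_\top(\sigma) = \{\sigma\}$ by Proposition~\ref{prop:characterizationDistinguishableStrings}, every other element of $S$ is a strict substring of $\sigma$ (it is a bottom substring of $\sigma$, possibly after concatenation, but all pieces are substrings of $\sigma$, hence of length $< \ell(\sigma)$). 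So $\sigma$ is the unique element of $S$ of maximal length, and removing it is the only way to descend.

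\textbf{Key steps.} The first step is to check that $S_\star = S \ssm \{\sigma\}$ is biclosed. Closedness: if $\rho \in \tau_1 \circ \cdots \circ \tau_k$ with all $\tau_i \in S_\star$, then $\rho \in S$ since $S$ is closed, and $\rho \neq \sigma$ since each $\tau_i$ is a proper substring of $\sigma$ while $\sigma$ has no proper decomposition of this form that reproduces it (here I would use the characterization of $\closure{\{\sigma\}}$ from the proof of the biclosed-sets theorem: $\sigma$ could only reappear via a loop, but then $\sigma = \sigma'\alpha(\sigma')^{-1}$ would force $\sigma' \in \Sigma_\bottom(\sigma) \cap \Sigma_\top(\sigma) \ssm \{\sigma\}$, contradicting distinguishability). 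Coclosedness: if $\rho \in S_\star$ and $\rho = \rho' \beta^\varepsilon \rho''$, then by coclosedness of $S$, both $\rho', \rho'' \in S$; and neither equals $\sigma$ since both are proper substrings of $\rho \subseteq \sigma$. Hence $\rho', \rho'' \in S_\star$.

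\textbf{Uniqueness of the cover.} Next, suppose $T \lessdot S$ is any lower cover. By Theorem~\ref{thm:characterizationCongruenceUniform2}(i) there is a unique $\tau \in S \ssm T$ with $T \cup \closure{\{\tau\}} = S$; in particular $\closure{\{\tau\}} = S \ssm T$, so $S \ssm T$ is a single string or a loop-pair containing $\tau$. If $\sigma \notin S \ssm T$, then $\sigma \in T$, and since $T$ is closed and every generator $\Sigma_\bottom(\sigma) \subseteq \Sigma_\bottom(\sigma) = S$'s bottom substrings — wait, more carefully: $\Sigma_\bottom(\sigma)$ consists of bottom substrings of $\sigma$, each of which is obtained by removing top-substring prefixes/suffixes, hence lies in $\closure{\Sigma_\bottom(\sigma\text{-pieces})}$; the cleanest route is to observe $S = \closure{\Sigma_\bottom(\sigma)}$ and $\sigma \in T$ with $T$ coclosed forces each bottom substring of $\sigma$ into $T$ (coclosedness splits $\sigma$), hence $S = \closure{\Sigma_\bottom(\sigma)} \subseteq T$, contradicting $T \subsetneq S$. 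Therefore $\sigma \in S \ssm T$, and since $S \ssm T = \closure{\{\tau\}}$ and $\sigma$ is the unique maximal-length element of $S$ while every element of $\closure{\{\tau\}} \subseteq S$ other than $\tau$ is a strict substring of $\tau$, we get $\tau = \sigma$, whence $S \ssm T = \closure{\{\sigma\}} = \{\sigma\}$ (the loop case being excluded by distinguishability as above), i.e. $T = S_\star$.

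\textbf{Main obstacle.} The delicate point is handling the loop case and making the claim ``$\sigma$ is the unique maximal-length element of $S$ and every other element is a strict substring of $\sigma$'' fully rigorous: an element of $\closure{\Sigma_\bottom(\sigma)}$ is a concatenation $\tau_1 \alpha_1^{\varepsilon_1} \cdots \alpha_{k-1}^{\varepsilon_{k-1}} \tau_k$ of bottom substrings of $\sigma$ glued by arrows, so it need not literally be a substring of $\sigma$ — but it has length $\geq \ell(\sigma)$ only if $k=1$ and $\tau_1 = \sigma$, because $\ell(\tau_1\alpha_1^{\varepsilon_1}\cdots\tau_k) = \sum \ell(\tau_i) + (k-1)$ and each $\ell(\tau_i) \le \ell(\sigma)$ with equality forcing $\tau_i = \sigma$, and if some $\tau_i = \sigma$ with $k \geq 2$ then $\sigma$ would be properly extendable at an endpoint by $\Sigma_\bottom$, contradicting that $\sigma\in\Sigma_\bottom(\sigma)$ is itself a full bottom substring (no bottom extension is possible without creating a $\Sigma_\bottom\cap\Sigma_\top$ overlap). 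This length bookkeeping, together with invoking distinguishability precisely where loops or overlaps threaten, is the crux; once it is in place, the argument above closes cleanly and $\closure{\Sigma_\bottom(\sigma)}$ has the unique lower cover $\closure{\Sigma_\bottom(\sigma)} \ssm \{\sigma\}$, hence is join-irreducible.
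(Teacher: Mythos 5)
Your approach is a genuinely different route from the paper's. The paper works with the definition of join-irreducibility as ``cannot be written as a nontrivial join'': assuming $\closure{\Sigma_\bottom(\sigma)} = S \join T$, it first shows $\sigma \in S \cup T$, then, assuming WLOG $\sigma \in S$, shows that $\Sigma_\bottom(\sigma) \subseteq S$ and hence $S = \closure{\Sigma_\bottom(\sigma)}$; both steps are by contradiction and rely crucially on Lemma~\ref{lem:sigmaBottomCapSigmaTop} together with $\Sigma_\bottom(\sigma)\cap\Sigma_\top(\sigma)=\{\sigma\}$. You instead use the equivalent characterization ``covers a unique element'' and try to identify that cover explicitly as $\closure{\Sigma_\bottom(\sigma)}\ssm\{\sigma\}$. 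That is a legitimate strategy, and would even yield the extra information of what the unique lower cover is. But the execution has a gap serious enough to undermine the whole proof.

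The gap is the claim, made at the very start and leaned on repeatedly, that ``every other element of $S=\closure{\Sigma_\bottom(\sigma)}$ is a strict substring of $\sigma$'' (and consequently that $\sigma$ is the unique maximal-length element). This is false as stated. An element of the closure is a concatenation $\tau_1\alpha_1^{\varepsilon_1}\cdots\tau_k$ of bottom substrings of $\sigma$ glued by arrows of $Q_1$; such a concatenation need not be a substring of $\sigma$ at all, and its length $\sum_i \ell(\tau_i)+(k-1)$ can certainly exceed $\ell(\sigma)$ when $k\ge 2$. The parenthetical ``all pieces are substrings of $\sigma$, hence of length $<\ell(\sigma)$'' conflates the lengths of the pieces with the length of the concatenation, and the subsequent ``length bookkeeping'' in the last paragraph has the same flaw: nothing in the inequality $\ell(\tau_i)\le\ell(\sigma)$ forces $k=1$. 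This false premise propagates through the closedness check for $S_\star$, the coclosedness check (which additionally misreads coclosedness of $S$ as giving \emph{both} $\rho',\rho''\in S$ — it gives at least one), and the uniqueness-of-cover step. The tool you are missing is exactly Lemma~\ref{lem:sigmaBottomCapSigmaTop}: it tells you that any element of $\closure{\Sigma_\bottom(\sigma)}$ that is also in $\Sigma_\top(\sigma)$ must contain an element of $\Sigma_\bottom(\sigma)\cap\Sigma_\top(\sigma)=\{\sigma\}$ as a substring, which is the correct way to derive contradictions from decompositions of $\sigma$ or from top/bottom overlaps. (For the branch ``$\sigma\in T$ forces $S\subseteq T$,'' the same lemma is what guarantees that the two top-substring pieces flanking a bottom substring $\xi$ of $\sigma$ are outside $T\subseteq\closure{\Sigma_\bottom(\sigma)}$, so that coclosedness of $T$ actually lands on $\xi$ rather than being absorbed by one of the flanking pieces.) As written, your proof does not establish the lemma.
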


\begin{proof}
We have seen in Lemma~\ref{lem:exmBiclosed} that~$\closure{\Sigma_\bottom(\sigma)}$ is biclosed.
Assume that~$\closure{\Sigma_\bottom(\sigma)} = S \join T$ for some~$S,T \in \Bicl{\bar Q}$.
Recall from Remark~\ref{rem:meetJoinBiclosedSets} that~$S \join T = \closure{(S \cup T)}$.

Assume first that~$\sigma \notin S \cup T$.
Since~$\sigma \in \Sigma_\bottom(\sigma) \subseteq \closure{\Sigma_\bottom(\sigma)} = S \join T = \closure{(S \cup T)}$, there exists strings~$\sigma_1, \dots, \sigma_\ell \in \closure{\Sigma_\bottom(\sigma)} \ssm \{\sigma\}$, arrows~$\alpha_1, \dots, \alpha_{\ell-1} \in Q_1$ and signs~$\varepsilon_1, \dots, \varepsilon_{\ell-1} \in \{-1,1\}$ such that~$\sigma = \sigma_1 \alpha_1^{\varepsilon_1} \dots \alpha_{\ell-1}^{\varepsilon_{\ell-1}} \sigma_\ell$.
Assume that there exists~$k \in [\ell-1]$ such that~$\varepsilon_k = 1$ and choose~$k$ maximal for this property.
Then~$\sigma_{k-1} \in \closure{\Sigma_\bottom(\sigma)} \cap \Sigma_\top(\sigma) = \{\sigma\}$ by Proposition~\ref{prop:characterizationDistinguishableStrings} and Lemma~\ref{lem:sigmaBottomCapSigmaTop}.
We obtain a similar contradiction if there exists~$k \in [\ell-1]$ such that~$\varepsilon_k = -1$.
We thus obtain that~$\sigma \in S \cup T$ and we can thus assume by symmetry that~$\sigma \in S$.

Assume now that there exists~$\tau \in \Sigma_\bottom(\sigma) \ssm S$.
Since~$\tau \ne \sigma$, we can write either~$\sigma = \sigma' \alpha \tau \beta^{-1} \sigma''$ or~$\sigma = \tau \beta^{-1} \sigma''$ or~$\sigma = \sigma' \alpha \tau$ for some arrows~$\alpha, \beta \in Q_1$ and strings~$\sigma', \sigma'' \in \Sigma_\top(\sigma)$.
Assume for example the first situation, the other two are similar.
Since~$\sigma$ is distinguishable and~${\sigma', \sigma'' \in \Sigma_\top(\sigma)}$, Lemma~\ref{lem:sigmaBottomCapSigmaTop} ensures that~$\sigma'$ and~$\sigma''$ do not belong to~$\closure{\Sigma_\bottom(\sigma)}$ and thus to~$S$.
We thus obtain that~$\sigma \in \sigma' \circ \tau \circ \sigma''$ with~$\sigma \in S$ while~$\sigma', \tau, \sigma \notin S$ which contradicts the coclosedness of~$S$.

We conclude that~$\Sigma_\bottom(\sigma) \subseteq S$. Since~$S$ is closed, we thus obtain that~$S = \closure{\Sigma_\bottom(\sigma)}$, which proves that~$\closure{\Sigma_\bottom(\sigma)}$ is join-irreducible.
\end{proof}

Recall the notion of descent from Definition~\ref{def:increasingFlip}.

\begin{lemma}
\label{lem:jiJI}
Any distinguishable string~$\sigma \in \distinguishableStrings^\pm(\bar Q)$ is the unique descent of the non-kissing facet~$\eta \big( \closure{\Sigma_\bottom(\sigma)} \big)$. In other words, $\eta \big( \closure{\Sigma_\bottom(\sigma)} \big)$ is join-irreducible in the non-kissing lattice~$\NKL$.
\end{lemma}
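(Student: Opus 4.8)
The goal is to show that the facet $F \eqdef \eta\big(\closure{\Sigma_\bottom(\sigma)}\big)$ has exactly one descent, namely $\sigma$, which by Definition~\ref{def:increasingFlip} means exactly that $\sigma$ is a descent of $F$ and no other string is. Since descents of $F$ correspond to decreasing flips out of $F$, equivalently to the elements covered by $F$ in $\NKG$, establishing uniqueness of the descent is precisely establishing that the corresponding congruence class $\eta^{-1}(F)$ covers a unique class, i.e. that $F$ is join-irreducible in $\NKL$. So it suffices to prove the displayed claim about descents.

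\textbf{First step: $\sigma$ is a descent of $F$.} By Lemma~\ref{lem:bijectionDistinguishableStringsWalks}, the walk $\omega_\bottom(\sigma)$ of $F = \eta\big(\closure{\Sigma_\bottom(\sigma)}\big)$ is a bending walk with distinguished string $\sigma$, and $\sigma$ is at the bottom of this walk. I would then apply Proposition~\ref{prop:covers}: $\sigma$ is a descent of $F$ (i.e. there is an increasing flip $F'' \to F$ supported by $\sigma$, making $\sigma$ a descent of $F$ in the terminology of Definition~\ref{def:increasingFlip}) if and only if $\zeta(F) \ssm \{\sigma\}$ is biclosed. Now $\zeta(F) = \projDown\big(\closure{\Sigma_\bottom(\sigma)}\big) = \closure{\Sigma_\bottom(\sigma)}$ by Lemma~\ref{lem:fixProjDown} (and the observation that $\projDown\circ\zeta\circ\eta = \projDown$). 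By Lemma~\ref{lem:exmBiclosedJI}, $\closure{\Sigma_\bottom(\sigma)}$ is join-irreducible in $\Bicl{\bar Q}$, so it covers a unique element $S_\star$ in $\Bicl{\bar Q}$; since every cover relation $S_\star \subset \closure{\Sigma_\bottom(\sigma)}$ has the form $S_\star \cup \closure{\{\tau\}} = \closure{\Sigma_\bottom(\sigma)}$ for a unique $\tau$ (Theorem~\ref{thm:characterizationCongruenceUniform2}(i)), and since $\sigma$ generates the closure of itself (no loop case: the analysis in the proof of Lemma~\ref{lem:exmBiclosedJI} together with $\Sigma_\bottom(\sigma)\cap\Sigma_\top(\sigma)=\{\sigma\}$ forces $\closure{\{\sigma\}}=\{\sigma\}$, since a loop at an endpoint of $\sigma$ would create a nontrivial element of $\Sigma_\bottom(\sigma)\cap\Sigma_\top(\sigma)$), we get $S_\star = \closure{\Sigma_\bottom(\sigma)} \ssm \{\sigma\}$ is biclosed. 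Hence $\sigma$ is a descent of $F$.

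\textbf{Second step: uniqueness.} Suppose $\tau$ is any descent of $F$. By Definition~\ref{def:increasingFlip} there is an increasing flip $F'' \to F$ supported by $\tau$, so by Proposition~\ref{prop:covers} (applied with $F' = F$), $\tau \in \zeta(F) = \closure{\Sigma_\bottom(\sigma)}$ and $\closure{\Sigma_\bottom(\sigma)} \ssm \{\tau\}$ is biclosed, i.e. $\closure{\Sigma_\bottom(\sigma)}$ covers $\closure{\Sigma_\bottom(\sigma)}\ssm\{\tau\}$ in $\Bicl{\bar Q}$. But $\closure{\Sigma_\bottom(\sigma)}$ is join-irreducible in $\Bicl{\bar Q}$ (Lemma~\ref{lem:exmBiclosedJI}), so it covers exactly one element, and by the uniqueness in Theorem~\ref{thm:characterizationCongruenceUniform2}(i) the removed element is determined: $\closure{\Sigma_\bottom(\sigma)}\ssm\{\tau\} = \closure{\Sigma_\bottom(\sigma)}\ssm\{\sigma\}$, whence $\tau = \sigma$. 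Thus $\sigma$ is the unique descent of $F$, which is exactly the statement that $\eta\big(\closure{\Sigma_\bottom(\sigma)}\big)$ is join-irreducible in $\NKL$.

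\textbf{Anticipated obstacle.} The main subtlety is the bookkeeping around loops and the passage between ``descents of $F$'' and ``elements covered by $\eta^{-1}(F)$ in the quotient lattice''. One must confirm that a distinguishable string is never a loop-generator (so that $\closure{\{\sigma\}} = \{\sigma\}$ and removing $\sigma$ really is a cover in $\Bicl{\bar Q}$), and one must use Proposition~\ref{prop:characterizationCoversLatticeQuotient} together with Corollary~\ref{coro:fibers} to translate the cover relation $\closure{\Sigma_\bottom(\sigma)}\ssm\{\sigma\} \lessdot \closure{\Sigma_\bottom(\sigma)}$ downstairs into the statement that the congruence class $\eta^{-1}(F)$ covers a unique class upstairs, hence $F$ has a unique incoming increasing flip. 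Everything else is a direct assembly of Lemmas~\ref{lem:exmBiclosed}, \ref{lem:fixProjDown}, \ref{lem:exmBiclosedJI}, \ref{lem:bijectionDistinguishableStringsWalks} and Proposition~\ref{prop:covers}.
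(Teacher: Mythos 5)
Your proof takes a genuinely different route from the paper's. The paper proves join-irreducibility of~$\eta\big(\closure{\Sigma_\bottom(\sigma)}\big)$ in one abstract step: it assumes~$\eta\big(\closure{\Sigma_\bottom(\sigma)}\big)=F\join G$, applies~$\zeta$ (which commutes with joins by Proposition~\ref{prop:sublattice} and fixes~$\closure{\Sigma_\bottom(\sigma)}$ by Lemma~\ref{lem:fixProjDown} and Proposition~\ref{prop:section}), and concludes from Lemma~\ref{lem:exmBiclosedJI} upstairs. Your proof instead works at the level of cover relations and descents via Proposition~\ref{prop:covers}. Both are viable and both lean on Lemma~\ref{lem:exmBiclosedJI} as the crux; the paper's version is markedly shorter because the sublattice isomorphism transfers join-irreducibility wholesale, whereas you must match up individual cover relations.

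There is, however, a genuine gap in your First Step. Knowing that~$\closure{\Sigma_\bottom(\sigma)}$ covers a unique~$S_\star$, and that by Theorem~\ref{thm:characterizationCongruenceUniform2}\,(i) there is a unique~$\tau$ with~$S_\star\cup\closure{\{\tau\}}=\closure{\Sigma_\bottom(\sigma)}$, does not by itself force~$\tau=\sigma$; and your observation that~$\closure{\{\sigma\}}=\{\sigma\}$ is about the singleton closure of~$\sigma$, not about which string is removed. The missing piece has two parts: (a)~$\sigma\notin S_\star$, which follows from the proof of Lemma~\ref{lem:exmBiclosedJI} (any biclosed~$S\subsetneq\closure{\Sigma_\bottom(\sigma)}$ containing~$\sigma$ must contain all of~$\Sigma_\bottom(\sigma)$, hence equal~$\closure{\Sigma_\bottom(\sigma)}$), so that~$\sigma\in\closure{\{\tau\}}$; and (b) since~$\closure{\{\tau\}}\subseteq\{\tau,\tau\alpha\tau^{-1}\}$, and a string of the form~$\tau\alpha\tau^{-1}$ always has~$\tau\in\Sigma_\bottom\cap\Sigma_\top$ and hence is never distinguishable, one gets~$\tau=\sigma$. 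Neither of these is in your writeup.

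In fact your First Step is much easier than the route you chose, and you already set it up: by Lemma~\ref{lem:bijectionDistinguishableStringsWalks}, $\omega_\bottom(\sigma)\in\eta\big(\closure{\Sigma_\bottom(\sigma)}\big)$ has distinguished string~$\sigma$ with both distinguished arrows incoming, i.e.~$\sigma$ is a bottom substring of~$\omega_\bottom(\sigma)$. Unwinding Proposition~\ref{prop:flip}, Remark~\ref{rem:changeDistinguishedArrows} and Definition~\ref{def:increasingFlip}, this is exactly the statement that~$\sigma$ is a descent of~$F$ -- no appeal to Proposition~\ref{prop:covers} or to cover relations in~$\Bicl{\bar Q}$ is needed. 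Once that is granted, your Second Step is sound and delivers uniqueness cleanly. I would therefore recommend replacing the entire First Step by this one-line observation, and keeping your Second Step; the result would be a correct proof by descent counting, complementary to the paper's abstract sublattice argument.
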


\begin{proof}
Assume that~$\eta \big( \closure{\Sigma_\bottom(\sigma)} \big) = F \join G$ for some non-kissing facets~$F, G \in \NKC$.
By Lemma~\ref{lem:fixProjDown} and Propositions~\ref{prop:section} and~\ref{prop:sublattice}, we have
\[
\closure{\Sigma_\bottom(\sigma)} = \projDown \big( \closure{\Sigma_\bottom(\sigma)} \big) = \zeta \big( \eta \big( \closure{\Sigma_\bottom(\sigma)} \big) \big) = \zeta(F \join G) = \zeta(F) \join \zeta(G).
\]
Since~$\closure{\Sigma_\bottom(\sigma)}$ is join-irreducible in~$\Bicl{\bar Q}$ by Lemma~\ref{lem:exmBiclosedJI}, we can assume that~$\closure{\Sigma_\bottom(\sigma)} = \zeta(F)$ so that~$\eta \big( \closure{\Sigma_\bottom(\sigma)} \big) = \eta(\zeta(F)) = F$ by Proposition~\ref{prop:surjection}.
Thus, $\eta \big( \closure{\Sigma_\bottom(\sigma)} \big) \in \JI \big( \NKL \big)$.
\end{proof}

This motivates the definition of the following map~$\ji : \distinguishableStrings^\pm(\bar Q) \to \JI \big( \NKL \big)$.

\begin{definition}
For a distinguishable string~$\sigma \in \distinguishableStrings^\pm(\bar Q)$, we define the non-kissing facet
\[
\ji(\sigma) \eqdef \eta \big( \closure{\Sigma_\bottom(\sigma)} \big) \in \JI \big( \NKL \big).
\]
\end{definition}

\begin{proposition}
\label{prop:bijectionDistinguishableStringsJI}
The map~$\ji : \sigma \mapsto \ji(\sigma)$ defines a bijection between the distinguishable strings of~$\distinguishableStrings^\pm(\bar Q)$ and the join-irreducible elements of the non-kissing lattice~$\NKL$.
\end{proposition}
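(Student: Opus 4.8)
The plan is to assemble the bijection from the ingredients already proved, checking injectivity, surjectivity, and that $\ji$ lands in $\JI(\NKL)$ with the correct lower cover. First I would note that $\ji(\sigma) = \eta(\closure{\Sigma_\bottom(\sigma)})$ is well-defined since $\closure{\Sigma_\bottom(\sigma)}$ is biclosed by Lemma~\ref{lem:exmBiclosed}, and that it is join-irreducible in $\NKL$ by Lemma~\ref{lem:jiJI}, which also identifies $\sigma$ as its unique descent. Thus $\ji$ is a well-defined map $\distinguishableStrings^\pm(\bar Q) \to \JI(\NKL)$.

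For injectivity, suppose $\ji(\sigma) = \ji(\sigma')$ for distinguishable strings $\sigma, \sigma'$. Since $\sigma$ (resp.\ $\sigma'$) is the unique descent of $\ji(\sigma)$ (resp.\ $\ji(\sigma')$) by Lemma~\ref{lem:jiJI}, we get $\sigma = \sigma'$ immediately. For surjectivity, let $J \in \JI(\NKL)$ be a join-irreducible non-kissing facet, with unique descent $\sigma$, so the downward flip of $J$ at $\sigma$ produces a facet $J_\star$ with $J = J_\star \symdif \{\omega, \omega'\}$ where $\sigma = \distinguishedString{\omega'}{J}$ is on top of $\omega$ and at the bottom of $\omega'$. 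Here $\omega'$ is a bending walk of $J$; I would argue it is not self-kissing (walks in $\NKC$ never are, by definition) and not in the peak facet (since it has the bottom substring $\sigma$). Applying Proposition~\ref{prop:bijectionDistinguishableStringsWalks}, $\sigma_\bottom(\omega')$ is a distinguishable string; more directly, Lemma~\ref{lem:bijectionWalksDistinguishableStrings} applied to $\omega'$ shows the inclusion-minimal string in $\set{\tau \in \Sigma_\bottom(\omega')}{\closure{\Sigma_\bottom(\omega')} = \closure{\Sigma_\bottom(\tau)}}$ is distinguishable, and I would identify this string with $\sigma$ itself (using that $\sigma$ is the distinguished string of $\omega' \in J$, so $\sigma \in \Sigma_\bottom(\omega')$, and that $\zeta(J) = \closure{\bigcup_{\mu \in J} \Sigma_\bottom(\mu)}$ together with $J$ being join-irreducible forces $\closure{\Sigma_\bottom(\sigma)} = \zeta(J)$ via Lemma~\ref{lem:exmBiclosedJI} and Proposition~\ref{prop:section}). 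Then $\ji(\sigma) = \eta(\closure{\Sigma_\bottom(\sigma)}) = \eta(\zeta(J)) = J$ by Proposition~\ref{prop:surjection}, so $\ji$ is surjective.

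The main obstacle I expect is the surjectivity step: cleanly extracting from an abstract join-irreducible facet $J$ a distinguishable string $\sigma$ with $\closure{\Sigma_\bottom(\sigma)} = \zeta(J)$. The cleanest route is to use that $J$ join-irreducible in $\NKL$ and $\zeta$ a lattice embedding (Proposition~\ref{prop:sublattice}) forces $\zeta(J)$ to be join-irreducible in the sublattice $\projDown(\Bicl{\bar Q})$; combined with Lemma~\ref{lem:fixProjDown} one shows $\zeta(J) = \projDown(\zeta(J))$ is join-irreducible, and then one needs that every such join-irreducible biclosed set fixed by $\projDown$ has the form $\closure{\Sigma_\bottom(\sigma)}$ for a distinguishable $\sigma$ — this is essentially the content assembled in Lemmas~\ref{lem:bijectionWalksDistinguishableStrings} and~\ref{lem:exmBiclosedJI} and Proposition~\ref{prop:bijectionDistinguishableStringsWalks}. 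So the proof is short:

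\begin{proof}
By Lemmas~\ref{lem:exmBiclosed} and~\ref{lem:jiJI}, the map $\ji$ is well-defined with values in $\JI(\NKL)$, and each $\sigma \in \distinguishableStrings^\pm(\bar Q)$ is the unique descent of $\ji(\sigma)$. In particular $\ji$ is injective: if $\ji(\sigma) = \ji(\sigma')$, then $\sigma$ and $\sigma'$ are both the unique descent of this common facet, hence $\sigma = \sigma'$.

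For surjectivity, let $J \in \JI(\NKL)$ with unique descent $\sigma$, and let $\omega'$ be the walk of $J$ with $\distinguishedString{\omega'}{J} = \sigma$. Then $\omega'$ is a bending walk of $J$, it is not self-kissing (no walk of $\NKC$ is), and it is not in the peak facet (as it admits the bottom substring $\sigma$). By Proposition~\ref{prop:bijectionDistinguishableStringsWalks}, $\sigma_\bottom(\omega') \in \distinguishableStrings^\pm(\bar Q)$. On the other hand, $\zeta(J) = \closure{\big( \bigcup_{\mu \in J} \Sigma_\bottom(\mu) \big)}$ is, by Lemma~\ref{lem:fixProjDown}, fixed by $\projDown$, and it is join-irreducible in the sublattice $\projDown\big(\Bicl{\bar Q}\big)$ by Proposition~\ref{prop:sublattice} since $\zeta$ is a lattice embedding and $J \in \JI(\NKL)$. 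Combining Lemma~\ref{lem:bijectionWalksDistinguishableStrings} (applied to $\omega'$) with Lemma~\ref{lem:exmBiclosedJI}, the inclusion-minimal string $\sigma_\bottom(\omega') \in \Sigma_\bottom(\omega')$ satisfying $\closure{\Sigma_\bottom(\omega')} = \closure{\Sigma_\bottom(\sigma_\bottom(\omega'))}$ is distinguishable, and one has $\closure{\Sigma_\bottom(\sigma_\bottom(\omega'))} = \closure{\Sigma_\bottom(\omega')} \subseteq \zeta(J)$; since $\zeta(J)$ is join-irreducible and $\closure{\Sigma_\bottom(\sigma_\bottom(\omega'))}$ is a biclosed set realizing, via $\eta$, a facet whose unique descent $\sigma_\bottom(\omega')$ coincides with the descent $\sigma$ of $J$ (Lemma~\ref{lem:jiJI} and Proposition~\ref{prop:surjection}), we conclude $\closure{\Sigma_\bottom(\sigma)} = \zeta(J)$ with $\sigma = \sigma_\bottom(\omega')$. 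Therefore
\[
\ji(\sigma) = \eta\big( \closure{\Sigma_\bottom(\sigma)} \big) = \eta\big( \zeta(J) \big) = J
\]
by Proposition~\ref{prop:surjection}. Hence $\ji$ is a bijection.
\end{proof}
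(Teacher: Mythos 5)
Your injectivity argument is the paper's, and the framing of surjectivity (use join-irreducibility to exhibit $\zeta(J)$ as $\closure{\Sigma_\bottom(\sigma)}$ for a distinguishable~$\sigma$) is also the right one. But the formal proof of surjectivity has a gap: you assert, without establishing it, that $\sigma_\bottom(\omega')$ \emph{coincides with} $\sigma$, equivalently that $\closure{\Sigma_\bottom(\omega')} = \zeta(J)$ for the specific walk~$\omega'$ you chose. This is precisely what one needs join-irreducibility of~$J$ to produce, and your invocation of ``since $\zeta(J)$ is join-irreducible'' is not connected to any explicit join decomposition, so it does no work. Note that ``the unique descent of $\eta(\closure{\Sigma_\bottom(\sigma_\bottom(\omega'))})$ is $\sigma_\bottom(\omega')$'' is indeed Lemma~\ref{lem:jiJI}, but this does not tell you that this facet \emph{is} $J$; that conclusion would follow only once you already knew $\closure{\Sigma_\bottom(\omega')} = \zeta(J)$, which is what you set out to prove. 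In effect you attempt to identify $\omega' = \omega_\bottom(\sigma)$, but this identification would require $\ji(\sigma) = J$, which is the statement under proof.

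The paper avoids this by never committing to a particular walk up front. Writing $\zeta(J) = \closure{\big(\bigcup_{\mu \in J} \Sigma_\bottom(\mu)\big)} = \bigJoin_{\mu \in J} \closure{\Sigma_\bottom(\mu)}$ in~$\Bicl{\bar Q}$ (Remark~\ref{rem:meetJoinBiclosedSets}), and applying the surjective lattice homomorphism $\eta$ (Propositions~\ref{prop:surjection} and~\ref{prop:sublattice}) gives $J = \bigJoin_{\mu \in J}\eta\big(\closure{\Sigma_\bottom(\mu)}\big)$. Join-irreducibility of $J$ then forces $J = \eta\big(\closure{\Sigma_\bottom(\mu)}\big)$ for \emph{some} $\mu \in J$; applying Lemma~\ref{lem:bijectionWalksDistinguishableStrings} to this~$\mu$ produces $\sigma = \sigma_\bottom(\mu) \in \distinguishableStrings^\pm(\bar Q)$ with $\ji(\sigma) = J$. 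You do not need to know which walk $\mu$ turns out to be, nor that it equals your~$\omega'$; the a~posteriori observation that this $\sigma$ must be the unique descent of $J$ (so that $\mu = \omega'$ after the fact) is a by-product, not an ingredient. To repair your proof, replace the middle of your surjectivity paragraph with this join decomposition argument.
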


\begin{proof}
To see that~$\ji$ is injective, observe that~$\sigma$ is the unique descent of~$\ji(\sigma)$ since~$\ji(\sigma)$ is join-irreducible.
To see that~$\ji$ is surjective, consider a join-irreducible~$F$ in~$\NKL$.
By Propositions~\ref{prop:surjection} and~\ref{prop:sublattice}, we have
\[
F = \eta \big( \zeta(F) \big) = \eta \Big( \bigJoin_{\omega \in F} \closure{\Sigma_\bottom(\omega)} \Big) = \bigJoin_{\omega \in F} \eta \big( \closure{\Sigma_\bottom(\omega)} \big).
\]
Since~$F$ is join-irreducible, we obtain that there exists~$\omega \in F$ such that~$F = \eta \big( \closure{\Sigma_\bottom(\omega)} \big)$.
By Lemma~\ref{lem:bijectionWalksDistinguishableStrings}, there exists a distinguishable string~$\sigma = \sigma_\bottom(\omega)$ such that~$\closure{\Sigma_\bottom(\omega)} = \closure{\Sigma_\bottom(\sigma)}$.
We conclude that~$F = \eta \big( \closure{\Sigma_\bottom(\sigma)} \big)$ so that~$\ji$ is surjective.
\end{proof}

\begin{example}
On \fref{fig:exmLatticeQuotient}, observe that the bottom elements in the congruence classes corresponding to join-irreducible facets of~$\NKL$ are precisely the sets of bottom substrings of the $5$ strings of~$\bar Q$.
\end{example}

\subsection{Canonical join-representations in~$\NKL$}

Extending similar results on grid bound quivers~\cite{GarverMcConville-grid} and on dissection bound quivers~\cite{GarverMcConville}, we now consider canonical join-represen\-tations in~$\NKL$.
To apply Proposition~\ref{prop:canonicalJoinRepresentation}, we first need to understand the join-irreducible corresponding to each cover relation that appears in  Proposition~\ref{prop:canonicalJoinRepresentation}\,(i).

\begin{lemma}
Let~$F \to F'$ be an increasing flip in~$\NKG$ supported by~$\sigma$. Then
\[
F \join \ji(\sigma) = F'
\qquad\text{and}\qquad
F \meet \ji(\sigma) = \ji(\sigma)_\star,
\]
where~$\ji(\sigma)_\star$ denotes the only non-kissing facet covered by the join-irreducible facet~$\ji(\sigma)$.
\end{lemma}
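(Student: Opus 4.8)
The statement says that the increasing flip $F \to F'$ supported by the distinguishable string $\sigma$ is precisely the one realized by joining $F$ with the join-irreducible facet $\ji(\sigma) = \eta\bigl(\closure{\Sigma_\bottom(\sigma)}\bigr)$, and that meeting $F$ with $\ji(\sigma)$ recovers the unique lower cover $\ji(\sigma)_\star$ of $\ji(\sigma)$. The natural route is to transport everything to the biclosed-set side via the isomorphism $\eta$ (Corollary~\ref{coro:finalLatticeQuotient}) together with the sublattice description of Proposition~\ref{prop:sublattice}, and then argue with the closure operator on strings. Concretely, I would set $S \eqdef \zeta(F)$ and $S' \eqdef \zeta(F')$; since $F \to F'$ is an increasing flip supported by $\sigma$, Proposition~\ref{prop:covers} tells us $S' = S \cup \{\sigma\}$ is a cover in $\Bicl{\bar Q}$ (with $\sigma \notin S$), and since $\sigma$ is distinguishable, $\closure{\{\sigma\}} = \{\sigma\}$ (no loop creating $\sigma\alpha\sigma^{-1}$; this is the dichotomy recalled in the proof of Theorem on biclosed sets being congruence-uniform), so $S' = S \cup \closure{\{\sigma\}}$.

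\textbf{Key steps.} First, for the join: I would show $\zeta(F) \join \ji(\sigma)$ corresponds under $\eta/\zeta$ to $S \join \closure{\Sigma_\bottom(\sigma)}$. By Proposition~\ref{prop:sublattice}\,(ii) and the fact that both $S = \projDown(S) = \zeta(F)$ and $\closure{\Sigma_\bottom(\sigma)} = \projDown(\closure{\Sigma_\bottom(\sigma)}) = \zeta(\ji(\sigma))$ lie in $\projDown(\Bicl{\bar Q})$ (Lemma~\ref{lem:fixProjDown}), we get $F \join \ji(\sigma) = \eta\bigl( S \join \closure{\Sigma_\bottom(\sigma)} \bigr) = \eta\bigl( \closure{S \cup \closure{\Sigma_\bottom(\sigma)}} \bigr)$. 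Now $\sigma \in \Sigma_\bottom(\sigma) \subseteq \closure{\Sigma_\bottom(\sigma)}$, so the right-hand closure contains $S \cup \{\sigma\} = S'$; conversely $\closure{\Sigma_\bottom(\sigma)} \subseteq S'$ because every bottom substring of $\sigma$ is a substring of $\sigma$ and $S' = \zeta(F')$ contains all bottom substrings of all walks of $F'$, in particular those of the walk $\omega'$ with $\distinguishedString{\omega'}{F'} = \sigma$ (Definition~\ref{def:bijectionWalksDistinguishableStrings}/Lemma~\ref{lem:bijectionDistinguishableStringsWalks}). Hence $S \cup \closure{\Sigma_\bottom(\sigma)} = S'$, which is already closed, so $F \join \ji(\sigma) = \eta(S') = F'$. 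Second, for the meet: by Proposition~\ref{prop:sublattice}\,(ii) again, $F \meet \ji(\sigma) = \eta\bigl( S \meet \closure{\Sigma_\bottom(\sigma)} \bigr)$, and $S \meet \closure{\Sigma_\bottom(\sigma)} \subseteq \closure{\Sigma_\bottom(\sigma)}$ with $\sigma \notin S$, so $\sigma \notin S \meet \closure{\Sigma_\bottom(\sigma)}$; since $\closure{\Sigma_\bottom(\sigma)} = \ji(\sigma)$ is join-irreducible in $\Bicl{\bar Q}$ (Lemma~\ref{lem:exmBiclosedJI}) and $\sigma$ is its unique "new" element relative to its lower cover, the meet, being a biclosed subset of $\closure{\Sigma_\bottom(\sigma)}$ not containing $\sigma$, equals the unique lower cover $\closure{\Sigma_\bottom(\sigma)}_\star$ of $\closure{\Sigma_\bottom(\sigma)}$ in $\Bicl{\bar Q}$; applying $\eta$ and using that $\eta$ sends this cover relation to the cover $\ji(\sigma)_\star \lessdot \ji(\sigma)$ in $\NKL$ (Propositions~\ref{prop:covers} and~\ref{prop:characterizationCoversLatticeQuotient}) gives $F \meet \ji(\sigma) = \ji(\sigma)_\star$.

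\textbf{Main obstacle.} The delicate point is the meet computation: I must be careful that $S \meet \closure{\Sigma_\bottom(\sigma)}$ (which is a \emph{meet in $\Bicl{\bar Q}$}, i.e. $\coclosure{(S \cap \closure{\Sigma_\bottom(\sigma)})}$, not merely an intersection) genuinely equals the lower cover of $\closure{\Sigma_\bottom(\sigma)}$ and lands back in $\projDown(\Bicl{\bar Q})$ so that $\eta$ of it makes sense as a facet. This requires knowing that join-irreducibility of $\closure{\Sigma_\bottom(\sigma)}$ forces every biclosed proper subset obtained as such a meet to be below the unique lower cover, plus the observation that $\sigma$ being absent already pins the meet down to exactly that cover (rather than something strictly smaller) — here one uses that $F$ and $\ji(\sigma)_\star$ are \emph{adjacent to comparable facets}: since $F \to F'$ is a cover and $\ji(\sigma) \le F'$ but $\ji(\sigma) \not\le F$, a standard semidistributivity argument (or directly Proposition~\ref{prop:canonicalJoinRepresentation}\,(i) applied to the cover $F \lessdot F'$, whose associated join-irreducible must be $\ji(\sigma)$ by the uniqueness in that proposition together with the support-$\sigma$ description of the flip) yields $F \meet \ji(\sigma) = \ji(\sigma)_\star$ without any further computation. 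I would in fact phrase the final write-up as: $\ji(\sigma)$ is join-irreducible (Lemma~\ref{lem:jiJI}), $F \join \ji(\sigma) = F'$ by the closure computation above, and $\ji(\sigma) \not\le F$ (since $\sigma \notin \zeta(F)$); then Proposition~\ref{prop:canonicalJoinRepresentation}\,(i), valid because $\NKL$ is semidistributive (being congruence-uniform, Theorem~\ref{thm:lattice}), forces $F \meet \ji(\sigma) = \ji(\sigma)_\star$, which is exactly the claim.
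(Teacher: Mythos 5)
Your join argument is correct and, if anything, more explicit than the paper's: the paper merely observes $\ji(\sigma)\le F'$, deduces $F\neq F\join\ji(\sigma)\le F'$, and uses the cover relation $F\lessdot F'$, whereas you compute the exact equality $\zeta(F)\join\zeta(\ji(\sigma))=\zeta(F')$ of biclosed sets before applying $\eta$. Both work.

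The meet argument, however, has a genuine gap. You correctly flag the delicate point, but the fix you settle on does not hold up. Proposition~\ref{prop:canonicalJoinRepresentation}\,(i) asserts the existence of a \emph{unique} join-irreducible $j$ satisfying \emph{both} $F\join j=F'$ \emph{and} $F\meet j=j_\star$. It does \emph{not} say that a join-irreducible satisfying only the first condition must also satisfy the second, and in a general semidistributive lattice this implication is false: in the pentagon $N_5$ with $\hat0<a<c<\hat1$ and $\hat0<b<\hat1$, both join-irreducibles $a$ and $c$ satisfy $b\join a=b\join c=\hat1$ and $a,c\not\le b$, yet $b\meet c=\hat0\neq a=c_\star$. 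So knowing $F\join\ji(\sigma)=F'$, that $\ji(\sigma)$ is join-irreducible, and that $\ji(\sigma)\not\le F$, is \emph{not} enough to force $F\meet\ji(\sigma)=\ji(\sigma)_\star$ by an abstract semidistributivity argument. What one must actually establish is the lower bound $\ji(\sigma)_\star\le F$, equivalently $\closure{\Sigma_\bottom(\sigma)}\ssm\{\sigma\}\subseteq\zeta(F)$ --- and this is in fact a one-line consequence of the setup, since $\closure{\Sigma_\bottom(\sigma)}\subseteq\zeta(F')$ and $\zeta(F)=\zeta(F')\ssm\{\sigma\}$ give
\[
\zeta(F)\cap\closure{\Sigma_\bottom(\sigma)}=\bigl(\zeta(F')\ssm\{\sigma\}\bigr)\cap\closure{\Sigma_\bottom(\sigma)}=\closure{\Sigma_\bottom(\sigma)}\ssm\{\sigma\}.
\]
This set is biclosed by Proposition~\ref{prop:covers} applied to the cover $\ji(\sigma)_\star\lessdot\ji(\sigma)$ supported by~$\sigma$, so the meet in $\Bicl{\bar Q}$ coincides with the intersection, and the same proposition identifies it with $\zeta\bigl(\ji(\sigma)_\star\bigr)$. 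Applying $\eta$ via Proposition~\ref{prop:sublattice}\,(ii) (or Propositions~\ref{prop:surjection} and~\ref{prop:sublattice}\,(i)) finishes the meet computation. This is the route the paper actually takes, and your first ``direct'' sketch circles around it without landing: ``being a biclosed subset of $\closure{\Sigma_\bottom(\sigma)}$ not containing $\sigma$'' does not by itself pin down the lower cover --- $\varnothing$ is also such a set --- you need the explicit computation of the intersection. Finally, your worry about whether the meet ``lands back in $\projDown(\Bicl{\bar Q})$ so that $\eta$ of it makes sense as a facet'' is unfounded: $\eta$ is defined on all of $\Bicl{\bar Q}$, and Proposition~\ref{prop:sublattice} already guarantees that $\projDown(\Bicl{\bar Q})$ is closed under meets.
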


\begin{proof}
Applying Proposition~\ref{prop:covers} to the increasing flip~$F \to F'$ supported by~$\sigma$, we have
\[
\zeta \big( \ji(\sigma) \big) = \closure{\Sigma_\bottom(\sigma)} \subseteq \zeta(F')
\qquad\text{and}\qquad
\zeta(F) = \zeta(F') \ssm \{\sigma\}.
\]
We thus obtain that~$\ji(\sigma) \le F'$ so that~$F \ne F \join \ji(\sigma) \le F'$. Since~$F$ covers~$F'$ in~$\NKL$, we obtain the first equality
\(
F \join \ji(\sigma) = F'.
\)
Moreover, since~$\zeta \big( \ji(\sigma) \big) \subseteq \zeta(F')$ and~$\zeta(F) = \zeta(F') \ssm \{\sigma\}$, 
\[
\zeta(F) \meet \zeta \big( \ji(\sigma) \big) = \big( \zeta(F') \ssm \{\sigma\} \big) \cap \zeta \big( \ji(\sigma) \big) = \zeta \big( \ji(\sigma) \big) \ssm \{\sigma\} = \zeta \big( \ji(\sigma)_\star \big).
\]
where the last equality follows from Proposition~\ref{prop:covers} for the increasing flip~$\ji(\sigma)_\star \to \ji(\sigma)$ supported by~$\sigma$.
Therefore, we obtain from Propositions~\ref{prop:surjection} and~\ref{prop:sublattice} that
\[
\ji(\sigma)_\star
= \eta \big( \zeta(\sigma)_\star \big)
= \eta \big( \zeta(F) \meet \zeta \big( \ji(\sigma) \big) \big)
= \eta \big( \zeta(F) \big) \meet \eta \big( \zeta \big( \ji(\sigma) \big) \big)
= F \meet \ji(\sigma).
\qedhere
\]
\end{proof}

Together with Proposition~\ref{prop:canonicalJoinRepresentation}, this statement immediately implies the following description of the join-representation of any facet in~$\NKL$.

\begin{corollary}
\label{coro:joinRepresentations}
The canonical join-representation of a non-kissing facet~$F \in \NKC$ is
\[
F = \bigJoin_{\sigma \in \descents(F)} \ji(\sigma).
\]
\end{corollary}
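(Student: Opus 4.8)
The statement to prove is Corollary~\ref{coro:joinRepresentations}: that the canonical join-representation of a non-kissing facet~$F \in \NKC$ is exactly $F = \bigJoin_{\sigma \in \descents(F)} \ji(\sigma)$. The plan is to deduce this essentially formally from Proposition~\ref{prop:canonicalJoinRepresentation} (Barnard's characterization of canonical join-representations in a semi-distributive lattice) together with the preceding lemma, which identifies the join-irreducible attached to each increasing flip.

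First I would recall that the non-kissing lattice~$\NKL$ is congruence-uniform by Theorem~\ref{thm:lattice} (equivalently Corollary~\ref{coro:finalLatticeQuotient}), hence in particular semi-distributive, so that Proposition~\ref{prop:canonicalJoinRepresentation} applies. Next, I would translate the relevant notions: by Definition~\ref{def:increasingFlip}, the facets~$F'$ covered by~$F$ in~$\NKL$ are precisely the facets~$F'$ obtained from~$F$ by a decreasing flip, equivalently the facets~$F'$ such that $F' \to F$ is an increasing flip; and such flips are in bijection with the descents~$\sigma \in \descents(F)$ of~$F$ (the flip~$F' \to F$ is supported by~$\sigma$, and $\sigma$ is a descent of~$F$). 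Thus, writing the cover relations below~$F$ as~$F' \lessdot F$ with $F' \to F$ increasing and supported by~$\sigma \in \descents(F)$, Proposition~\ref{prop:canonicalJoinRepresentation}\,(i) furnishes a unique join-irreducible~$\ji(F' \lessdot F) \in \JI(\NKL)$ with $F' \join \ji(F' \lessdot F) = F$ and $F' \meet \ji(F' \lessdot F) = \ji(F' \lessdot F)_\star$.

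The crux is then to identify~$\ji(F' \lessdot F)$ with~$\ji(\sigma)$ as defined just before (via Lemma~\ref{lem:jiJI} and Proposition~\ref{prop:bijectionDistinguishableStringsJI}). This is exactly the content of the preceding lemma, which asserts $F' \join \ji(\sigma) = F$ and $F' \meet \ji(\sigma) = \ji(\sigma)_\star$. Since $\ji(\sigma)$ is join-irreducible (Lemma~\ref{lem:jiJI}), these two equalities match the defining properties of $\ji(F' \lessdot F)$ in Proposition~\ref{prop:canonicalJoinRepresentation}\,(i), and by the uniqueness clause there we conclude $\ji(F' \lessdot F) = \ji(\sigma)$. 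Then Proposition~\ref{prop:canonicalJoinRepresentation}\,(ii) gives
\[
F = \bigJoin_{F' \lessdot F} \ji(F' \lessdot F) = \bigJoin_{\sigma \in \descents(F)} \ji(\sigma),
\]
using the bijection between cover relations $F' \lessdot F$ and descents $\sigma \in \descents(F)$. This is the desired canonical join-representation.

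I expect the only genuinely delicate point to be checking carefully that the covers of~$F$ in the lattice~$\NKL$ correspond bijectively and correctly to the descents of~$F$ in the flip-graph sense — that is, that a decreasing flip of~$F$ produces a facet covered by~$F$ in $\NKL$ (no intermediate facet), and conversely that every cover arises this way. But this is already guaranteed by Corollary~\ref{coro:finalLatticeQuotient}, which identifies $\NKG$ as the Hasse diagram of $\Bicl{\bar Q}/{\equiv} \cong \NKL$: the edges of $\NKG$ are precisely the cover relations, and increasing flips are precisely the upward cover relations. So in fact this "obstacle" is dispatched by prior results, and the proof is a short formal composition of Barnard's criterion with the preceding lemma. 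The only care needed in writing is to phrase the index sets consistently (cover relations $x \lessdot F$ versus descents $\sigma$) so that the final equality of joins is transparent.
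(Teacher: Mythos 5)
Your proposal is correct and follows essentially the same route as the paper: the paper simply states that Corollary~\ref{coro:joinRepresentations} "immediately" follows from the preceding lemma (which identifies $\ji(\sigma)$ as the unique join-irreducible attached to the cover relation given by the increasing flip supported by~$\sigma$) combined with Barnard's characterization in Proposition~\ref{prop:canonicalJoinRepresentation}. Your write-up only makes explicit the bijection between covers below~$F$ and descents of~$F$, which as you note is already secured by Corollary~\ref{coro:finalLatticeQuotient}.
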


\begin{remark}
Dually, the map~$\sigma \mapsto \mi(\sigma) \eqdef \eta \big( \closure{\Sigma_\top(\sigma)} \big)$ defines a bijection between the distinguishable strings of~$\distinguishableStrings^\pm(\bar Q)$ and the meet-irreducible elements of the non-kissing lattice~$\NKL$, and the canonical meet-representation of any non-kissing facet~$F \in \NKC$ is $F = \bigMeet_{\sigma \in \ascents(F)} \mi(\sigma)$.
\end{remark}

\subsection{The non-friendly complex}
\label{subsec:nonFriendlyComplex}

\enlargethispage{.4cm}
To conclude, we want to understand the canonical join complex of the non-kissing lattice~$\NKL$.
In view of Corollary~\ref{coro:joinRepresentations}, this amounts to understanding which sets of strings are descent sets of non-kissing facets of~$\NKC$.
We start with a seemingly unrelated definition extending that of~\cite[Sect.~3.4]{GarverMcConville-grid}.

\begin{definition}\label{def: friendly}
Two strings~$\sigma, \tau \in \strings^\pm(\bar Q)$ are \defn{non-friendly} if
\[
\Sigma_\top(\sigma) \cap \Sigma_\bottom(\tau) = \varnothing = \Sigma_\bottom(\sigma) \cap \Sigma_\top(\tau).
\]
The \defn{non-friendly complex}~$\NFC$ is the simplicial complex of pairwise non-friendly subsets of distinguishable strings~of~$\bar Q$.
\end{definition}

\begin{example}
The non-friendly complexes of two gentle bound quivers with~$3$ vertices are illustrated on \fref{fig:exmNFC}.
\begin{figure}[t]
	\capstart
	\centerline{\includegraphics[scale=.4]{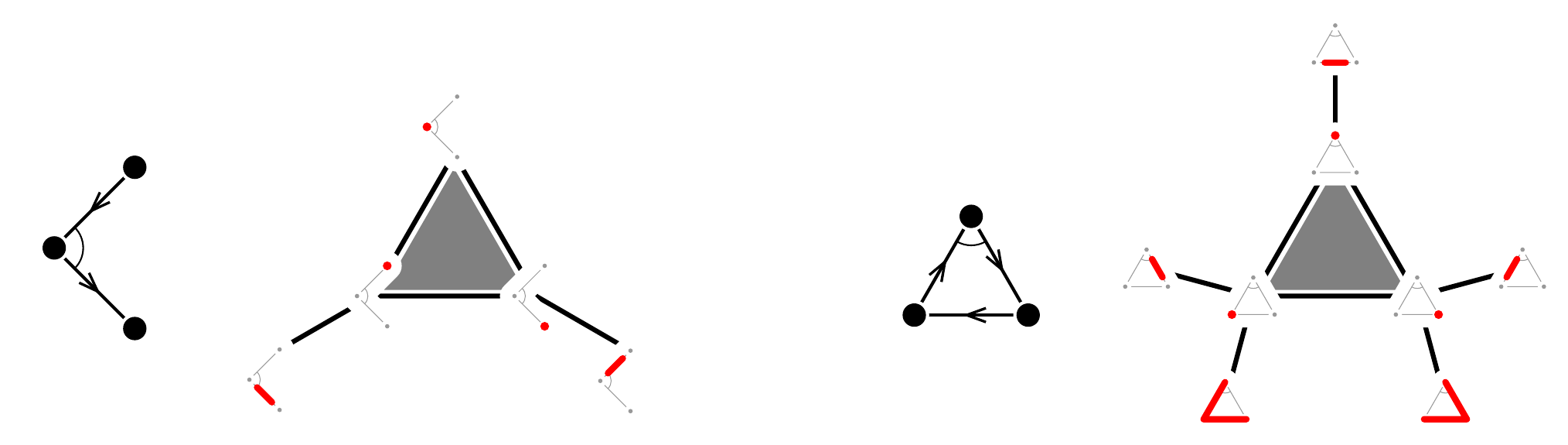}}
	\caption{The non-friendly complex~$\NFC$ for two specific quivers.}
	\label{fig:exmNFC}
\end{figure}
\end{example}

\begin{example}
When~$\bar Q$ is a path on~$n$ vertices, oriented from left to right, then the non-friendly subsets of strings of~$\strings^\pm(\bar Q)$ are in bijection with non-crossing partitions of~$[n+1]$.
The bijection is illustrated in \fref{fig:exmNCP}.
\begin{figure}[t]
	\capstart
	\centerline{\includegraphics[scale=.4]{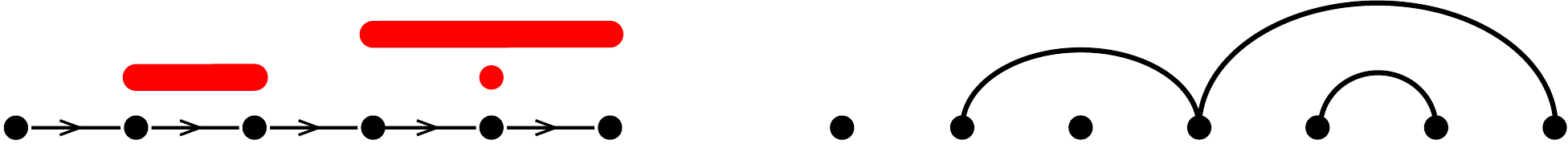}}
	\caption{The bijection between pairwise non-friendly set of strings of an oriented path on $n$ vertices and non-crossing partitions of~$[n+1]$.}
	\label{fig:exmNCP}
\end{figure}
\end{example}

\begin{theorem}
The following assertions are equivalent for a subset~$\Sigma$ of~$\distinguishableStrings^\pm(\bar Q)$:
\begin{enumerate}[(i)]
\item $\Sigma$ is pairwise non-friendly.
\item $\ji(\Sigma) \eqdef \set{\ji(\sigma)}{\sigma \in \Sigma}$ is the canonical join-representation of an element in~$\NKL$.
\item $\Sigma$ is the descent set of a non-kissing facet~$F \in \NKC$.
\end{enumerate}
Thus, the canonical join complex of~$\NKL$ is isomorphic to the non-friendly complex~$\NFC$.
\end{theorem}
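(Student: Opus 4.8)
The theorem asserts the equivalence of three conditions on a subset $\Sigma \subseteq \distinguishableStrings^\pm(\bar Q)$, from which the isomorphism of simplicial complexes follows immediately since canonical join-representations (and hence the faces of the canonical join complex) are closed under taking subsets. The strategy is to prove the chain $(\text{iii}) \Rightarrow (\text{ii}) \Rightarrow (\text{i}) \Rightarrow (\text{iii})$, using the machinery already assembled in the previous subsections. For $(\text{iii}) \Rightarrow (\text{ii})$: if $\Sigma = \descents(F)$, then Corollary~\ref{coro:joinRepresentations} states precisely that $F = \bigJoin_{\sigma\in\Sigma}\ji(\sigma)$ is the canonical join-representation of $F$, so $\ji(\Sigma)$ is the canonical join-representation of an element of $\NKL$; this step is essentially immediate. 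For $(\text{ii}) \Rightarrow (\text{iii})$: if $\ji(\Sigma)$ is the canonical join-representation of some $G \in \NKL$, then by uniqueness of canonical join-representations in the semi-distributive lattice $\NKL$ (guaranteed by congruence-uniformity, Theorem~\ref{thm:lattice}) and by Corollary~\ref{coro:joinRepresentations}, we must have $\ji(\Sigma) = \set{\ji(\tau)}{\tau\in\descents(G)}$; since $\ji$ is a bijection (Proposition~\ref{prop:bijectionDistinguishableStringsJI}), this forces $\Sigma = \descents(G)$, giving $(\text{iii})$ with $F = G$.

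The substantive content lies in relating non-friendliness to the combinatorics of descent sets and joins of the corresponding join-irreducibles, i.e.\ the implications involving $(\text{i})$. For $(\text{i}) \Rightarrow (\text{ii})$, or more directly $(\text{i}) \Rightarrow (\text{iii})$, I would proceed as follows: given a pairwise non-friendly set $\Sigma \subseteq \distinguishableStrings^\pm(\bar Q)$, form the biclosed set $S \eqdef \closure{\big(\bigcup_{\sigma\in\Sigma}\Sigma_\bottom(\sigma)\big)}$, which is biclosed by Lemma~\ref{lem:exmBiclosed} and fixed by $\projDown$ by Lemma~\ref{lem:fixProjDown}. Set $F \eqdef \eta(S) \in \NKC$. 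The key claim is that $\descents(F) = \Sigma$. For the inclusion $\Sigma \subseteq \descents(F)$: each $\sigma \in \Sigma$ is distinguishable, and one shows $\zeta(F) \ssm \{\sigma\}$ is biclosed using the non-friendliness hypothesis — the pairwise condition $\Sigma_\top(\sigma)\cap\Sigma_\bottom(\tau) = \varnothing = \Sigma_\bottom(\sigma)\cap\Sigma_\top(\tau)$ for $\tau\in\Sigma$ is exactly what prevents $\sigma$ from being forced back in by closure or coclosure once removed; then Proposition~\ref{prop:covers} produces an increasing flip into $F$ supported by $\sigma$, so $\sigma$ is a descent of $F$. For the reverse inclusion $\descents(F) \subseteq \Sigma$: if $\sigma' \in \descents(F)$, Proposition~\ref{prop:covers} says $\zeta(F)\ssm\{\sigma'\}$ is biclosed and $\sigma'\in\zeta(F) = S$; one then argues, using the structure of $S$ as the closure of bottom substrings of strings in $\Sigma$ and the minimality characterization of distinguishable strings (Proposition~\ref{prop:bijectionDistinguishableStringsWalks}, Definition~\ref{def:bijectionWalksDistinguishableStrings}), that $\sigma'$ must coincide with some $\sigma\in\Sigma$.

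For the remaining implication $(\text{iii}) \Rightarrow (\text{i})$ (equivalently $(\text{ii})\Rightarrow(\text{i})$): suppose $\Sigma = \descents(F)$ and take two distinct strings $\sigma,\tau\in\Sigma$, supporting increasing flips $F\to F_\sigma$ and $F\to F_\tau$ respectively. If $\sigma$ and $\tau$ were friendly, say $\xi \in \Sigma_\top(\sigma)\cap\Sigma_\bottom(\tau)$, then I would derive a contradiction with the non-kissing property of $F$: the walks $\omega_\bottom(\sigma)$ and $\omega_\bottom(\tau)$ (Definition~\ref{def:bijectionWalksDistinguishableStrings}) both lie in $F$ after the relevant structure is unwound, and a common substring of the prescribed top/bottom type produces a kiss, contradicting that $F$ is a non-kissing facet. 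Concretely, $\sigma$ being a descent means $\sigma$ is the distinguished string of some bending walk $\omega$ of $F$ with $\sigma\in\Sigma_\top(\omega)$ in the flipped configuration, and likewise for $\tau$; the friendliness overlap then forces two walks of $F$ to kiss along $\xi$.

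\textbf{Main obstacle.} The hardest part will be the careful bookkeeping in $(\text{i})\Rightarrow(\text{iii})$, specifically proving $\descents(F) \subseteq \Sigma$, i.e.\ that the descent set of $F = \eta(S)$ is \emph{exactly} $\Sigma$ and not larger. The inclusion $\Sigma\subseteq\descents(F)$ and the "no kiss" arguments use the non-friendliness in a fairly direct way, but showing that closure does not introduce \emph{new} removable strings requires combining: (a) the explicit description of $S$ via bottom substrings, (b) the fact that $\projDown(S) = S$ together with Proposition~\ref{prop:covers}'s biclosedness criterion, and (c) the minimality in Lemma~\ref{lem:bijectionWalksDistinguishableStrings} identifying which bottom substring of a walk is the distinguishable one. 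Self-intersecting strings, flagged earlier as a recurring source of difficulty on general gentle bound quivers, will need attention here: one must rule out that $\sigma'\in\descents(F)\ssm\Sigma$ arises as a proper substring glued inside some $\sigma\in\Sigma$, which is where the hypothesis $\Sigma_\bottom(\sigma)\cap\Sigma_\top(\sigma) = \{\sigma\}$ (distinguishability, Proposition~\ref{prop:characterizationDistinguishableStrings}) and Lemma~\ref{lem:sigmaBottomCapSigmaTop} do the essential work. Once the descent-set identity is established, the rest follows formally from the already-proven lattice-theoretic results.
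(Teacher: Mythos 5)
Your implications $(\text{iii}) \Rightarrow (\text{ii}) \Rightarrow (\text{iii})$ are correct and match the paper (this is Corollary~\ref{coro:joinRepresentations} plus uniqueness of canonical join-representations). However, you miss the structural observation that makes the remaining implications tractable, and as a result the hard steps in your plan either have a genuine flaw or are left unresolved.

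The key point the paper leans on is that \emph{both} complexes are flag: the non-friendly complex~$\NFC$ by its very definition as a clique complex, and the canonical join complex of~$\NKL$ by Barnard's theorem (quoted at the end of Section~\ref{subsec:joinIrreducibleRep}). Two flag complexes on the same vertex set are equal if and only if they agree on vertices and edges, so the whole theorem reduces to the cases $|\Sigma| \in \{1, 2\}$. The $|\Sigma|=1$ case is Proposition~\ref{prop:bijectionDistinguishableStringsJI}; the $|\Sigma|=2$ case is the only computation. By skipping the flagness reduction, you are forced to argue for arbitrary~$\Sigma$, and you yourself acknowledge you cannot complete the resulting inclusion $\descents(F) \subseteq \Sigma$ in your $(\text{i}) \Rightarrow (\text{iii})$ — that inclusion requires ruling out that the closure $\closure{\bigcup_\sigma \Sigma_\bottom(\sigma)}$ acquires new removable strings, which is exactly what the paper's reduction spares one from proving directly.

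There is also a concrete flaw in your proposed $(\text{iii}) \Rightarrow (\text{i})$. You argue: if $\sigma, \tau \in \descents(F)$ are friendly with witness $\xi \in \Sigma_\top(\sigma) \cap \Sigma_\bottom(\tau)$, then two walks of~$F$ kiss along~$\xi$. But $\sigma$ being a descent of~$F$ means $\sigma$ is a \emph{bottom} distinguished string of some walk $\omega \in F$, so the distinguished arrows of~$\omega$ at the endpoints of~$\sigma$ point \emph{inward}. If $\xi$ touches an endpoint of~$\sigma$ (which the definition of friendliness allows), the arrow of~$\omega$ there points into~$\xi$, so $\xi$ is \emph{not} a top substring of~$\omega$ even though it is a top substring of~$\sigma$. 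No kiss is produced, and the contradiction you want does not arise. The paper sidesteps this entirely by working inside $\Bicl{\bar Q}$: it shows for $\Sigma = \{\sigma,\tau\}$ that friendliness produces a strictly smaller join-representation of $\closure{\Sigma_\bottom(\sigma)} \join \closure{\Sigma_\bottom(\tau)}$, and conversely extracts a friendliness witness from any refinement of the join-representation — a purely order-theoretic argument that never touches the geometry of the walks in the facet.
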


\begin{proof}
We first prove~(i)$\iff$(ii), following exactly the same lines as the proof of~\cite[Thm.~4.9]{GarverMcConville-grid}.
Since both the non-friendly complex~$\NFC$ and the canonical join complex of~$\NKL$ are flag simplicial complexes, we just need to check the property for~$|\Sigma| \in \{1,2\}$.
The case $|\Sigma| = 1$ is Proposition~\ref{prop:bijectionDistinguishableStringsJI}.
Consider now the situation~$\Sigma = \{\sigma, \tau\}$.
Since~$\ji(\sigma) = \eta \big( \closure{\Sigma_\bottom(\sigma)} \big)$ and~$\eta$ is a surjective lattice homomorphism, $\ji(\sigma) \join \ji(\tau)$ is a canonical join representation in~$\NKL$ if and only if~$\Sigma_\bottom(\sigma) \join \Sigma_\bottom(\tau)$ is a canonical join-representation in~$\Bicl{\bar Q}$.
We thus just need to prove that the latter is equivalent to~$\sigma$ and~$\tau$ being non-friendly.

Assume first that~$\sigma$ and~$\tau$ are friendly and let for example~$\rho \in \Sigma_\top(\sigma) \cap \Sigma_\bottom(\tau)$.
Then~$\sigma$ decomposes into~$\sigma = \sigma' \alpha^{-1} \rho \beta \sigma''$, or~$\sigma = \rho \beta \sigma''$, or~$\sigma = \sigma' \alpha^{-1} \rho$ for some~$\sigma', \sigma'' \in \Sigma_\bottom(\sigma)$.
Assume for example the first situation, the other two are similar.
Since~$\rho \in \Sigma_\bottom(\tau)$ and~$\sigma', \sigma'' \in \Sigma_\bottom(\sigma)$, we have
\[
\closure{\Sigma_\bottom(\rho)} \subseteq \closure{\Sigma_\bottom(\tau)},
\qquad
\closure{\Sigma_\bottom(\sigma')} \subseteq \closure{\Sigma_\bottom(\sigma)}
\qquad\text{and}\qquad
\closure{\Sigma_\bottom(\sigma'')} \subseteq \closure{\Sigma_\bottom(\sigma)}.
\]
Therefore
\begin{align*}
\closure{\Sigma_\bottom(\sigma)} \join \closure{\Sigma_\bottom(\tau)}
& \subseteq \big( \closure{\Sigma_\bottom(\sigma')} \join \closure{\Sigma_\bottom(\rho)} \join \closure{\Sigma_\bottom(\sigma'')} \big) \join \closure{\Sigma_\bottom(\tau)} \\
& = \closure{\Sigma_\bottom(\sigma')} \join \closure{\Sigma_\bottom(\sigma'')} \join \closure{\Sigma_\bottom(\tau)} \\
& \subseteq \closure{\Sigma_\bottom(\sigma)} \join \closure{\Sigma_\bottom(\tau)}
\end{align*}
We thus obtain that
\[
\closure{\Sigma_\bottom(\sigma)} \join \closure{\Sigma_\bottom(\tau)} = \closure{\Sigma_\bottom(\sigma')} \join \closure{\Sigma_\bottom(\sigma'')} \join \closure{\Sigma_\bottom(\tau)}.
\]
Since~$\closure{\Sigma_\bottom(\sigma')} \subsetneq \closure{\Sigma_\bottom(\sigma)}$ and~$\closure{\Sigma_\bottom(\sigma'')} \subsetneq \closure{\Sigma_\bottom(\sigma)}$, we have obtained a smaller join-represen\-tation of~$\closure{\Sigma_\bottom(\sigma)} \join \closure{\Sigma_\bottom(\tau)}$. Therefore,~$\closure{\Sigma_\bottom(\sigma)} \join \closure{\Sigma_\bottom(\tau)}$ is not its canonical join-representation.

Conversely, assume that~$\closure{\Sigma_\bottom(\sigma)} \join \closure{\Sigma_\bottom(\tau)}$ is not its canonical join-representation, and let~${R \subseteq \distinguishableStrings^\pm(\sigma)}$ be such that
\[
\closure{\Sigma_\bottom(\sigma)} \join \closure{\Sigma_\bottom(\tau)} = \bigJoin_{\rho \in R} \closure{\Sigma_\bottom(\rho)}
\]
is the canonical join representation of~$\closure{\Sigma_\bottom(\sigma)} \join \closure{\Sigma_\bottom(\tau)}$.
Assume that there is no~$\rho \in R$ such that~$\sigma \in \Sigma_\bottom(\rho)$.
Since~$\sigma \in \closure{\Sigma_\bottom(\sigma)} \join \closure{\Sigma_\bottom(\tau)}$, we know that there exist strings~$\rho_1, \dots, \rho_\ell \in R$, substrings~$\xi_1 \in \Sigma_\bottom(\rho_1), \dots, \xi_\ell \in \Sigma_\bottom(\rho_\ell)$, arrows~$\alpha_1, \dots, \alpha_{\ell_1} \in Q_1$ and signs~${\varepsilon_1, \dots, \varepsilon_{\ell-1} \in \{-1,1\}}$ such that~$\sigma = \xi_1 \alpha_1^{\varepsilon_1} \dots \alpha_{\ell-1}^{\varepsilon_{\ell-1}} \xi_\ell$.
Assume that there is~$k \in [\ell-1]$ such that~$\varepsilon_k = 1$ and consider the prefix~$\xi \eqdef \xi_1 \alpha_1^{\varepsilon_1} \dots \alpha_{k-1}^{\varepsilon_{k-1}} \xi_k$ of~$\sigma$ (the proof for the other case is symmetric considering a suffix).
Since~$\xi \in \closure{\Sigma_\bottom(\sigma)} \join \closure{\Sigma_\bottom(\tau)}$, there exist strings~$\zeta_1, \dots, \zeta_m \in \Sigma_\bottom(\sigma) \cup \Sigma_\bottom(\tau)$, arrows~$\beta_1, \dots, \beta_{m-1} \in Q_1$ and signs~$\delta_1, \dots, \delta_{m-1} \in \{-1,1\}$ such that~$\xi = \zeta_1 \beta_1^{\delta_1} \dots \beta_{m-1}^{\delta_{m-1}} \zeta_m$.
Since~$\varepsilon_k = 1$, we have~$\zeta_m \notin \Sigma_\bottom(\sigma)$ so that~$\zeta_m \in \Sigma_\bottom(\tau)$.
Let~$n \in [m]$ be minimal such that~$\zeta \eqdef \zeta_n \beta_n^{\delta_n} \dots \beta_{m-1}^{\varepsilon_{m-1}} \zeta_m \in \closure{\Sigma_\bottom(\tau)}$.
We distinguish two cases:
\begin{itemize}
\item If~$n = 1$, then~$\zeta \in \Sigma_\top(\sigma)$ since~$\varepsilon_k = 1$.
\item If~$n > 1$, then~$\zeta_{n-1} \notin \Sigma_\bottom(\tau)$ (by minimality of~$n$), so that~$\zeta_{n-1} \in \Sigma_\bottom(\sigma)$, and we get~$\delta_{n-1} = -1$. Since~$\varepsilon_k = 1$, we thus obtain that~$\zeta \in \Sigma_\top(\sigma)$.
\end{itemize}
In both cases, we conclude that~$\zeta \in \Sigma_\top(\sigma) \cap \closure{\Sigma_\bottom(\tau)}$, so that~$\sigma$ and~$\tau$ are friendly along~$\zeta$ by Lemma~\ref{lem:sigmaBottomCapSigmaTop}.
This concludes the proof of~(i)$\iff$(ii). Finally, the equivalence (ii)$\iff$(iii) is proved by Corollary~\ref{coro:joinRepresentations}.
\end{proof}


\newpage
\part{The non-kissing associahedron}
\label{part:geometry}

In this section, we provide geometric realizations of the non-kissing complex~$\RNKC$ when it is finite.
Inspired by similar constructions of~\cite{MannevillePilaud-accordion} for dissection bound quivers and~\cite{GarverMcConville} for grid bound quivers, we first construct a complete simplicial fan~$\gvectorFan$ realizing~$\RNKC$~(\ref{subsec:gvectorFan}).
We then prove that~$\gvectorFan$ is the normal fan of a polytope~$\Asso$~(\ref{subsec:associahedron}) and that the graph of this polytope oriented in a suitable direction is the Hasse diagram of the non-kissing lattice~$\NKL$~(\ref{subsec:geomLattice}).
We then discuss the existence of a zonotope~$\Zono$ which could play the same role for~$\Asso$ as the classical permutahedron plays for the classical associahedron~(\ref{subsec:zonotope}).
Finally, we discuss certain transformations of the quiver~$Q$ that correspond to natural projections of~$\Asso$~(\ref{subsec:projections}).


\section{Recollections on polyhedral geometry}
\label{sec:polyhedralGeometry}

We briefly recall basic definitions and properties of polyhedral fans and polytopes, and refer to~\cite{Ziegler-polytopes} for a classical textbook on this topic.

\begin{definition}
A \defn{polyhedral cone} is a subset of~$\R^n$ defined equivalently as the positive span of finitely many vectors or as the intersection of finitely many closed linear halfspaces.
Throughout the paper, we write~$\R_{\ge0}\b{R}$ for the positive span of a set~$\b{R}$ of vectors of~$\R^n$.
The \defn{faces} of a cone~$C$ are the intersections of~$C$ with the supporting hyperplanes of~$C$.
The $1$-dimensional (resp.~codimension~$1$) faces of~$C$ are called~\defn{rays} (resp.~\defn{facets}) of~$C$.
A cone is \defn{simplicial} if it is generated by a set of independent vectors.
\end{definition}

\begin{definition}
A \defn{polyhedral fan} is a collection~$\Fan$ of polyhedral cones such that
\begin{itemize}
\item if~$C \in \Fan$ and~$F$ is a face of~$C$, then~$F \in \Fan$,
\item the intersection of any two cones of~$\Fan$ is a face of both.
\end{itemize}
A fan is \defn{simplicial} if all its cones are, and \defn{complete} if the union of its cones covers the ambient space~$\R^n$.
\end{definition}

The following statement characterizes complete simplicial fans.
A formal proof can be found \eg in~\cite[Coro.~4.5.20]{DeLoeraRambauSantos}.

\begin{proposition}
\label{prop:characterizationFan}
Consider a pseudomanifold~$\Delta$ on a finite vertex set~$V$ and a set of vectors $\big( \b{r}(v) \big)_{v \in V}$ of~$\R^n$.
For~$\triangle \in \Delta$, define the cone~$\b{r}(\triangle) \eqdef \bigset{\b{r}(v)}{v \in \triangle}$.
Then the collection of cones~${\bigset{\R_{\ge 0}\b{r}(\triangle)}{\triangle \in \Delta}}$ forms a complete simplicial fan if and only if
\begin{enumerate}
\item there exists a facet~$\triangle$ of~$\Delta$ such that~$\b{r}(\triangle)$ is a basis of~$\R^n$ and such that the open cones~$\R_{> 0}\b{r}(\triangle)$ and~$\R_{> 0}\b{r}(\triangle')$ are disjoint for any facet~$\triangle'$ of~$\Delta$ distinct from~$\triangle$;
\item for two adjacent facets~$\triangle, \triangle'$ of~$\Delta$ with~$\triangle \ssm \{v\} = \triangle' \ssm \{v'\}$, there is a linear dependence
\[
\alpha \, \b{r}(v) + \alpha' \, \b{r}(v') + \sum_{w \in \triangle \cap \triangle'} \beta_w \, \b{r}(w) = 0
\]
on~$\b{r}(\triangle \cup \triangle')$ where the coefficients~$\alpha$ and~$\alpha'$ have the same sign.
(When these conditions hold, these coefficients do not vanish and the linear dependence is unique up to rescaling.)
\end{enumerate}
\end{proposition}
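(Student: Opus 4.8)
\textbf{Proof plan for Proposition~\ref{prop:characterizationFan}.} The statement is the standard criterion characterizing when a collection of rays indexed by a pseudomanifold assembles into a complete simplicial fan; the plan is to reduce it to the classical ``wall-crossing'' argument, following for instance the exposition in~\cite{DeLoeraRambauSantos}. The forward direction is nearly immediate: if the cones~$\set{\R_{\ge0}\b{r}(\triangle)}{\triangle \in \Delta}$ do form a complete simplicial fan, then each facet cone is full-dimensional and simplicial, hence~$\b{r}(\triangle)$ is a basis; the open facet cones are pairwise disjoint because a point in the intersection of two maximal cones of a fan lies in a common proper face; and for adjacent facets~$\triangle, \triangle'$ sharing the wall~$\triangle \cap \triangle'$, the two rays~$\b{r}(v)$ and~$\b{r}(v')$ lie on opposite sides of the hyperplane spanned by~$\set{\b{r}(w)}{w \in \triangle \cap \triangle'}$, which forces the coefficients~$\alpha, \alpha'$ in the (unique up to scaling) linear dependence to have the same sign.

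For the converse, which is the substantive direction, I would argue as follows. First use condition~(1) to fix a base facet~$\triangle_0$ and orient each wall hyperplane so that~$\b{r}(\triangle_0)$ lies on the positive side; condition~(2) then propagates a consistent notion of ``inside'' across every wall of~$\Delta$. The heart of the proof is to show that the map sending a point~$x \in \R^n$ to the facet whose open cone contains it is well-defined and surjective, i.e.\ that~$\bigcup_{\triangle} \R_{\ge0}\b{r}(\triangle) = \R^n$ and that distinct open facet cones are disjoint. Surjectivity is established by a connectedness/degree argument: starting from a generic point of~$\R_{>0}\b{r}(\triangle_0)$ and moving along a generic segment to an arbitrary generic target point, each time the segment crosses the boundary of the current facet cone it must cross it through the relative interior of a wall (by genericity), and condition~(2) guarantees that the adjacent facet's open cone lies on the far side of that wall; since~$\Delta$ is a pseudomanifold the crossing is into a \emph{uniquely} determined neighboring facet, so the path of facets is well-defined and the segment never escapes~$\bigcup_{\triangle}\R_{\ge0}\b{r}(\triangle)$. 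Disjointness of the open cones follows because two distinct open facet cones meeting would be separated, along a path between interior points, by a wall across which~(2) places them on opposite sides, a contradiction. Finally, once we know the maximal cones cover~$\R^n$ with disjoint interiors and meet correctly along walls, the fan axioms (closure under taking faces, pairwise intersections being common faces) follow formally from simpliciality together with the pseudomanifold property.

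The main obstacle is the bookkeeping in the connectedness argument: one must check carefully that a generic line segment avoids all faces of codimension~$\ge 2$, that the sign conditions in~(2) glue coherently around every codimension-$2$ face (so that the ``side'' assignment is globally consistent and independent of the chosen path), and that the process of crossing walls terminates, i.e.\ that the segment crosses only finitely many walls. This is where the hypothesis that~$\Delta$ is a pseudomanifold (every codimension-$1$ face lies in exactly two facets) is essential, and where condition~(1) is needed to rule out degenerate global configurations such as the union of cones wrapping around and overlapping itself. Since this is a well-documented classical result, I would keep the write-up brief, citing~\cite[Coro.~4.5.20]{DeLoeraRambauSantos} for the detailed verification and only indicating the role of each hypothesis, rather than reproving the wall-crossing lemma from scratch.
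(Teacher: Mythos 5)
Your outline is correct, and it mirrors the paper's treatment: the paper does not reprove this proposition but simply cites~\cite[Coro.~4.5.20]{DeLoeraRambauSantos}, which is exactly the reference you invoke at the end. Your sketch of the forward direction and of the wall-crossing/degree argument for the converse (including the role of the pseudomanifold hypothesis in making wall-crossings uniquely determined and of condition~(1) in fixing the degree to one) is an accurate summary of what that reference establishes.
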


\begin{definition}
A \defn{polytope} is a subset~$P$ of~$\R^n$ defined equivalently as the convex hull of finitely many points or as a bounded intersection of finitely many closed affine halfspaces.
The \defn{faces} of~$P$ are the intersections of~$P$ with its supporting hyperplanes.
The dimension~$0$ (resp.~dimension~$1$, resp.~codimension~$1$) faces of~$P$ are called \defn{vertices} (resp.~\defn{edges}, resp.~\defn{facets}) of~$P$.
\end{definition}

\begin{definition}
The (outer) \defn{normal cone} of a face~$F$ of~$P$ is the cone generated by the outer normal vectors of the facets of~$P$ containing~$F$.
The (outer) \defn{normal fan} of~$P$ is the collection of the (outer) normal cones of all its faces.
We say that a complete polyhedral fan in~$\R^n$ is \defn{polytopal} when it is the normal fan of a polytope of~$\R^n$.
\end{definition}

The following statement provides a characterization of polytopality of complete simplicial fans.
It is a reformulation of regularity of triangulations of vector configurations, introduced in the theory of secondary polytopes~\cite{GelfandKapranovZelevinsky}, see also~\cite{DeLoeraRambauSantos}.
We present here a convenient formulation from~\cite[Lem.~2.1]{ChapotonFominZelevinsky}.

\begin{proposition}
\label{prop:characterizationPolytopalFan}
Consider a pseudomanifold~$\Delta$ with vertex set~$V$ and a set of vectors~$\big( \b{r}(v) \big)_{v \in V}$ of~$\R^n$ such that~$\Fan \eqdef \bigset{\R_{\ge 0}\b{r}(\triangle)}{\triangle \in \Delta}$ is a complete simplicial fan in~$\R^n$.
Then the following are equivalent:
\begin{enumerate}
\item $\Fan$ is the normal fan of a simple polytope in~$\R^n$;
\item There exists a map~$h: V \to \R_{> 0}$ such that for any two adjacent facets~$\triangle, \triangle'$ of~$\Delta$ with~$\triangle \ssm \{v\} = \triangle' \ssm \{v'\}$, we have
\[
\alpha \, h(v) + \alpha' \, h(v') + \sum_{w \in \triangle \cap \triangle'} \beta_{w} \, h(w) > 0,
\]
where
\[
\alpha \, \b{r}(v) + \alpha' \, \b{r}(v') + \sum_{w \in \triangle \cap \triangle'} \beta_{w} \, \b{r}(w) = 0
\]
is the unique (up to rescaling) linear dependence with~$\alpha, \alpha' > 0$ between the rays of~${\b{r}(\triangle \cup \triangle')}$.
\end{enumerate}
Under these conditions, $\Fan$ is the normal fan of the polytope defined by
\[
\bigset{\b{x} \in \R^n}{\dotprod{\b{r}(v)}{\b{x}} \le h(v) \text{ for all } v \in V}.
\]
\end{proposition}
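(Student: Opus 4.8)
The plan is to prove the two implications separately, recognising the statement as the standard wall-crossing (regularity) criterion for polytopality of a complete simplicial fan. In both directions the only candidate polytope is, for a given height function $h$,
\[
P_h \eqdef \bigset{\b{x} \in \R^n}{\dotprod{\b{r}(v)}{\b{x}} \le h(v) \text{ for all } v \in V},
\]
and the whole content is that $\Fan$ is the normal fan of $P_h$ exactly when $h$ satisfies the inequalities in~(2). Throughout I would use freely that $\Fan$ is a complete simplicial fan, so that for each facet $\triangle$ of $\Delta$ the set $\bigset{\b{r}(v)}{v\in\triangle}$ is a basis of $\R^n$, the wall-crossing linear dependences are those of Proposition~\ref{prop:characterizationFan}\,(2), and $\bigset{\b{r}(v)}{v\in V}$ positively spans $\R^n$.

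For $(1)\Rightarrow(2)$: assuming $\Fan$ is the normal fan of a simple polytope $P$, translate $P$ so that $\b{0}$ lies in its interior; then each facet of $P$ is cut out by an inequality $\dotprod{\b{r}(v)}{\b{x}}\le h(v)$ with $h(v)>0$, so $P=P_h$. For adjacent facets $\triangle,\triangle'$ of $\Delta$ with $\triangle\ssm\{v\}=\triangle'\ssm\{v'\}$, the corresponding vertices $\b{x}_\triangle,\b{x}_{\triangle'}$ of $P$ are the endpoints of an edge, $\b{x}_\triangle$ being the unique solution of $\dotprod{\b{r}(w)}{\b{x}_\triangle}=h(w)$ for $w\in\triangle$. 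Since the normal cone of $\b{x}_{\triangle'}$ is $\R_{\ge0}\b{r}(\triangle')$, which does not contain $\b{r}(v)$, the vertex $\b{x}_{\triangle'}$ is not on the facet of $P$ with outer normal $\b{r}(v)$, hence $\dotprod{\b{r}(v)}{\b{x}_{\triangle'}}<h(v)$. Pairing the wall-crossing dependence $\alpha\,\b{r}(v)+\alpha'\,\b{r}(v')+\sum_{w\in\triangle\cap\triangle'}\beta_w\,\b{r}(w)=\b{0}$ (with $\alpha,\alpha'>0$) against $\b{x}_{\triangle'}$ and using $\dotprod{\b{r}(v')}{\b{x}_{\triangle'}}=h(v')$ and $\dotprod{\b{r}(w)}{\b{x}_{\triangle'}}=h(w)$ for $w\in\triangle\cap\triangle'$ gives
\[
h(v)-\dotprod{\b{r}(v)}{\b{x}_{\triangle'}}=\tfrac{1}{\alpha}\Big(\alpha\,h(v)+\alpha'\,h(v')+\textstyle\sum_{w}\beta_w\,h(w)\Big),
\]
so $\dotprod{\b{r}(v)}{\b{x}_{\triangle'}}<h(v)$ is precisely the asserted strict inequality.

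For $(2)\Rightarrow(1)$, the heart of the proof: given $h:V\to\R_{>0}$ satisfying the wall-crossing inequalities, set $P\eqdef P_h$. Then $P$ is bounded (the $\b{r}(v)$ positively span $\R^n$) and $\b{0}\in\operatorname{int}P$ (as $h>0$), so $P$ is a polytope. For each facet $\triangle$ let $\b{x}_\triangle$ be the unique point with $\dotprod{\b{r}(v)}{\b{x}_\triangle}=h(v)$ for $v\in\triangle$. The decisive claim is that $\b{x}_\triangle\in P$ for every facet $\triangle$, with $\dotprod{\b{r}(w)}{\b{x}_\triangle}=h(w)$ iff $w\in\triangle$. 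One proves this by propagating along the dual graph of $\Delta$, which is connected since $\Delta$ is a pseudomanifold: for adjacent facets as above, $\b{x}_{\triangle'}-\b{x}_\triangle$ is orthogonal to every $\b{r}(w)$ with $w\in\triangle\cap\triangle'$, hence a scalar multiple of the vector dual to that wall, and the computation above shows that crossing a wall which removes $w=v$ from a facet containing it turns the slack $h(v)-\dotprod{\b{r}(v)}{\b{x}_\cdot}$ strictly positive. Granting the claim, $\triangle\mapsto\b{x}_\triangle$ is injective, each $\b{x}_\triangle$ is a vertex of $P$ with outer normal cone $\R_{\ge0}\b{r}(\triangle)$, and since the cones $\R_{\ge0}\b{r}(\triangle)$ already tile $\R^n$ by completeness of $\Fan$ there is no room for further faces; hence the normal fan of $P$ equals $\Fan$, and $P$ is simple because its vertex normal cones are simplicial. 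The closing formula for $P$ is then just the definition of $P_h$.

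I expect the main obstacle to be exactly the claim $\b{x}_\triangle\in P$ in the direction $(2)\Rightarrow(1)$: the slack function $\triangle\mapsto h(w)-\dotprod{\b{r}(w)}{\b{x}_\triangle}$ is not monotone along an arbitrary dual path, so one cannot naively walk from a facet containing $w$ to $\triangle$ and accumulate sign information. The resolution is a more global argument: the facets of $\Delta$ containing $w$ give points on the hyperplane $\dotprod{\b{r}(w)}{\b{x}}=h(w)$, and, using completeness of $\Fan$ together with the already-established strict wall-crossing behaviour, one shows every other $\b{x}_\triangle$ lies strictly on the side $\dotprod{\b{r}(w)}{\b{x}}<h(w)$; equivalently, the union of the closed cones $\R_{\ge0}\b{r}(\triangle)$ over facets $\triangle\ni w$ is exactly the set of functionals maximised on the face $P\cap\{\dotprod{\b{r}(w)}{\b{x}}=h(w)\}$, which is forced once the candidate vertex figure at each $\b{x}_\triangle$ is identified with the simplicial cone $\R_{\ge0}\b{r}(\triangle)$. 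This is the bookkeeping of the proof of~\cite[Lem.~2.1]{ChapotonFominZelevinsky}, and our setting is covered verbatim since $\Fan$ is a complete simplicial fan.
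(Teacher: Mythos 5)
The paper does not prove this proposition: it is recorded as a background recollection from polyhedral geometry, with a citation to~\cite[Lem.~2.1]{ChapotonFominZelevinsky} (going back to regularity of triangulations of vector configurations, see~\cite{GelfandKapranovZelevinsky, DeLoeraRambauSantos}), so there is no in-paper argument to compare your proposal against. Your direction $(1)\Rightarrow(2)$ is correct and complete as written.

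Your direction $(2)\Rightarrow(1)$ has a genuine gap, which you correctly identify but do not close: the claim that each candidate point $\b{x}_\triangle$ lies in $P_h$ and has normal cone exactly $\R_{\ge 0}\b{r}(\triangle)$. As you observe, the slack $h(w)-\dotprod{\b{r}(w)}{\b{x}_\triangle}$ is not monotone along paths in the dual graph, so one cannot just propagate, and the paragraph you offer as a ``resolution'' is a gesture rather than an argument. The standard way to close the gap is to introduce the candidate support function: define $g:\R^n\to\R$ piecewise linearly by $g(\b{y})=\dotprod{\b{y}}{\b{x}_\triangle}$ for $\b{y}\in\R_{\ge 0}\b{r}(\triangle)$. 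This is well-defined and continuous because the two linear pieces at a wall $\R_{\ge 0}\b{r}(\triangle\cap\triangle')$ both evaluate to $\sum_{w\in\triangle\cap\triangle'} c_w h(w)$, and by construction $g(\b{r}(v))=h(v)$ for all $v\in V$. Pairing the wall dependence against $\b{x}_\triangle-\b{x}_{\triangle'}$ and using $\dotprod{\b{r}(v)}{\b{x}_\triangle}=h(v)$ shows that condition~(2) is exactly strict convexity of $g$ across every wall; and a continuous piecewise-linear function on a complete fan that is convex across every wall is globally convex. Hence $g$ is the support function of $P_h$, its maximal domains of linearity are the cones $\R_{\ge 0}\b{r}(\triangle)$, and one obtains simultaneously that $P_h$ is simple, that its normal fan is $\Fan$, and that $\b{x}_\triangle\in P_h$ (now as a consequence rather than a prerequisite). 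Without this global-convexity step, or an equivalent one, your sketch of $(2)\Rightarrow(1)$ remains incomplete, and pointing to~\cite{ChapotonFominZelevinsky} does not resolve the obstacle you yourself flagged as the main one.
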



\section{Non-kissing geometry}
\label{sec:associahedron}

\subsection{$\b{g}$-vectors and $\b{c}$-vectors}
\label{subsec:gcvectors}

Let~$(\b{e}_v)_{v \in Q_0}$ be the standard basis of~$\R^{Q_0}$.
We first define an analog of the characteristic vector for multisets.

\begin{definition}
For a multiset~$V \eqdef \{v_1, \dots, v_m\}$ of vertices of~$Q_0$, we denote by~$\multiplicityVector_V \in \R^{Q_0}$ the \defn{multiplicity vector} of~$V$ defined~by
\[
\multiplicityVector_V \eqdef \sum_{i \in [m]} \b{e}_{v_i} = \sum_{v \in Q_0} |\set{i \in [m]}{v_i = v}| \, \b{e}_v.
\]
For a string~$\sigma \in \strings^\pm(\bar Q)$, we define~$\multiplicityVector_\sigma \eqdef \multiplicityVector_{V(\sigma)}$ where~$V(\sigma)$ is the multiset of vertices of~$\sigma$. 
\end{definition}

We now define two families of vectors that will play a crucial role in the geometric construction.

\begin{definition}\label{def: g-vectors for walks}
For a walk~$\omega \in \NKWalks^\pm(\bar Q)$, we denote by~$\peaks{\omega}$ (resp.~by~$\deeps{\omega}$) the multiset of vertices of~$Q_0$ corresponding to the peaks (resp.~deeps) of~$\omega$.
The \defn{$\b{g}$-vector} of a walk~$\omega$ is the vector~${\gvector{\omega} \in \R^{Q_0}}$ defined by
\[
\gvector{\omega} \eqdef \multiplicityVector_{\peaks{\omega}} - \multiplicityVector_{\deeps{\omega}}.
\]
For a set~$\Omega$ of walks, we let~$\gvectors{\Omega} \eqdef \set{\gvector{\omega}}{\omega \in \Omega}$.
Observe that~$\gvector{\omega} = 0$ for a straight walk~$\omega$.
\end{definition}

\begin{example}
The $\b{g}$-vector of the plain blue walk on \fref{fig:exmFacet}\,(left) is~$(0,-1,0,0,1,0)$ (see \fref{fig:exmBlossomingQuiver} for the labeling order of the vertices of the quiver~$Q$).
See \fref{fig:gcvectors} for more examples of $\b{g}$-vectors.
\end{example}

\begin{remark}\label{rem: g-vectors coincide}
One easily checks that if $\sigma$ is a string for a gentle bound quiver $\bar Q$, and $\omega(\sigma)$ the associated walk in the blossoming bound quiver $\bar Q\blossom$ as in Lemma~\ref{lem:bijections-STTC-NKC}, then $\gvector{\sigma}=\gvector{\omega(\sigma)}$.
\end{remark}

\begin{definition}\label{def: c-vectors}
Consider a bending walk~$\omega$ in a non-kissing facet~$F \in \RNKC$.
Recall from Proposition~\ref{prop:distinguishedArrows} that the walk~$\omega$ carries two distinguished arrows~$\distinguishedArrows{\omega}{F}$ surrounding its distinguished string~$\distinguishedString{\omega}{F}$ (see \fref{fig:gcvectors}).
Define
\[
\distinguishedSign{\omega}{F} \eqdef 
\begin{cases}
\phantom{-}1 & \text{if~$\distinguishedString{\omega}{F}$ is a top substring of~$\omega$ (\ie the arrows~$\distinguishedArrows{\omega}{F}$ point outside),} \\
-1 & \text{if~$\distinguishedString{\omega}{F}$ is a bottom substring of~$\omega$ (\ie if the arrows~$\distinguishedArrows{\omega}{F}$ point inside).}
\end{cases}
\]
The \defn{$\b{c}$-vector} of a walk~$\omega$ of~$F$ is the vector~${\cvector{\omega}{F} \in \R^{Q_0}}$ defined by
\[
\cvector{\omega}{F} \eqdef \distinguishedSign{\omega}{F} \, \multiplicityVector_{\distinguishedString{\omega}{F}}.
\]
We let~$\cvectors{F} \eqdef \set{\cvector{\omega}{F}}{\omega \in F}$ be the set of $\b{c}$-vectors of a non-kissing facet~$F \in \RNKC$, and~$\allcvectors \eqdef \bigcup_F \cvectors{F}$ be the set of all $\b{c}$-vectors of all non-kissing facets~${F \in \RNKC}$.
\end{definition}

\begin{example}
In the facet~$F$ of \fref{fig:exmFacet}\,(right), the $\b{c}$-vector of the blue walk is~$(0,-1,0,0,0,0)$ while the $\b{c}$-vector of the yellow path is~$(0,1,0,1,1,0)$ (see \fref{fig:exmBlossomingQuiver} for the labeling order of the vertices of the quiver~$Q$ and \fref{fig:gcvectors}\,(left) to see the distinguished arrows in the facet~$F$).
See \fref{fig:gcvectors} for more examples of $\b{c}$-vectors.

\begin{figure}[t]
	\capstart
	\centerline{$\raisebox{-1.5cm}{\begin{overpic}[scale=.7]{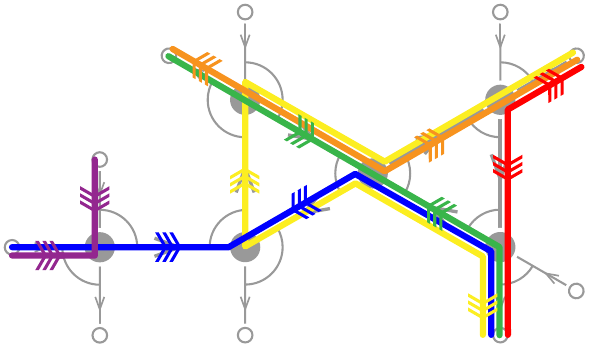}\put(60,4){$F$}\end{overpic}}$ \qquad \input{gcmatrices}}
	\caption{The $\b{g}$- and $\b{c}$-matrices of the facet~$F$ of \fref{fig:exmFacet}. Their columns correspond to the $\b{g}$- and $\b{c}$-vectors of the different walks of~$F$. See \fref{fig:exmBlossomingQuiver} for the labeling order of the vertices of the quiver~$Q$. Note that~$\gvectors{F} \cdot \cvectors{F}^T = \one$.}
	\label{fig:gcvectors}
\end{figure}
\end{example}

\begin{example}
\label{exm:gcvectors}
The $\b{g}$- and $\b{c}$-vectors of the peak and deep facets~$F_\peak$ and~$F_\deep$ are given by
\begin{gather*}
\gvector{v_\peak} = \b{e}_v, \quad 
\gvector{v_\deep} = -\b{e}_v, \quad
\cvector{v_\peak}{F_\peak} = \b{e}_v, \quad\text{and}\quad
\cvector{v_\deep}{F_\deep} = -\b{e}_v.
\end{gather*}
\end{example}

\begin{remark}
\label{rem:signCoherence}
The definitions of $\b{g}$- and $\b{c}$-vectors immediately imply the \defn{sign-coherence property}: for any non-kissing facet~$F \in \RNKC$,
\begin{itemize}
\item for any~$\omega \in F$, all coordinates of the $\b{c}$-vector~$\cvector{\omega}{F}$ have the same sign,
\item for~$v \in Q_0$, the~$v$th coordinates of all $\b{g}$-vectors~$\gvector{\omega}$ for~$\omega \in F$ have the same sign (otherwise, the corresponding walks would kiss at~$v$).
\end{itemize}
In particular, a $\b{c}$-vector is called \defn{positive} when all its coordinates are non-negative and \defn{negative} when all its coordinates are non-positive.
\end{remark}

\begin{remark}
\label{rem:setcvectors}
There is a subtle relation between $\b{c}$-vectors and strings. Indeed, $\b{c}$-vectors are multiplicity vectors of the vertex multisets of some strings of~$\bar Q$, but
\begin{itemize}
\item not all strings are distinguishable (see Definition~\ref{def:distinguishedSubstring} and Proposition~\ref{prop:characterizationDistinguishableStrings}), and
\item distinct distinguishable strings may have identical vertex multisets.
\end{itemize}
\end{remark}

The $\b{g}$- and $\b{c}$-vectors have been designed so that the following statement holds, as illustrated in \fref{fig:gcvectors}.

\begin{proposition}
\label{prop:gvectorscvectorsDualBases}
For any non-kissing facet~$F \in \RNKC$, the set of $\b{g}$-vectors~$\gvectors{F}$ and the set of $\b{c}$-vectors~$\cvectors{F}$ form dual bases. 
\end{proposition}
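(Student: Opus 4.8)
The statement to prove is that for a non-kissing facet $F$, the families $\gvectors{F} = \bigset{\gvector{\omega}}{\omega \in F}$ and $\cvectors{F} = \bigset{\cvector{\omega}{F}}{\omega \in F}$ form dual bases of $\R^{Q_0}$. Both families are indexed by the walks of $F$, and since $\RNKC$ is pure of dimension $|Q_0|$ by Corollary~\ref{coro:pure}, a non-kissing facet of $\RNKC$ contains exactly $|Q_0|$ bending walks (the straight walks carry $\b{g}$-vector $0$ and are discarded when passing to $\RNKC$). So the combinatorics of cardinalities is already in place; it suffices to prove that the pairing matrix $\gvectors{F} \cdot \cvectors{F}^t$ is the identity, i.e.
\[
\dotprod{\gvector{\omega}}{\cvector{\omega'}{F}} = \delta_{\omega,\omega'}
\qquad\text{for all bending walks } \omega, \omega' \in F.
\]
Indeed, once this is established, the two families are each automatically bases (a square matrix with an identity Gram-type pairing has full rank), and they are dual to each other by definition.

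\textbf{First I would unwind the two sides of the pairing combinatorially.} By Definition~\ref{def: g-vectors for walks}, $\gvector{\omega} = \multiplicityVector_{\peaks{\omega}} - \multiplicityVector_{\deeps{\omega}}$, so that for any vertex multiset $V$ we have $\dotprod{\gvector{\omega}}{\multiplicityVector_V} = (\text{number of occurrences, counted with multiplicity in } V, \text{ of peaks of } \omega) - (\text{same for deeps of } \omega)$. Writing $\sigma' \eqdef \distinguishedString{\omega'}{F}$ for the distinguished string of $\omega'$ in $F$ and $\varepsilon' \eqdef \distinguishedSign{\omega'}{F} \in \{\pm 1\}$, we get by Definition~\ref{def: c-vectors}
\[
\dotprod{\gvector{\omega}}{\cvector{\omega'}{F}} = \varepsilon' \sum_{v \in V(\sigma')} \bigl( \multiplicityVector_{\peaks{\omega}}(v) - \multiplicityVector_{\deeps{\omega}}(v) \bigr),
\]
where $V(\sigma')$ is the vertex multiset of $\sigma'$, read with multiplicity along $\sigma'$. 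The key observation is that this sum telescopes along the string $\sigma'$: as one traverses $\sigma'$ vertex by vertex, each arrow of $\sigma'$ that points "down at a vertex of $\omega$" contributes a deep and each arrow "up at a vertex of $\omega$" contributes a peak, and the alternating contributions collapse to a \emph{boundary term} depending only on how $\omega$ behaves at the two endpoints of $\sigma'$. More precisely I would prove the clean identity: $\sum_{v \in V(\sigma')}(\multiplicityVector_{\peaks{\omega}}(v) - \multiplicityVector_{\deeps{\omega}}(v))$ equals a signed count of the local configurations of $\omega$ at $s(\sigma')$ and $t(\sigma')$ — roughly, $+1$ for each endpoint of $\sigma'$ that is a peak of $\omega$, $-1$ for each endpoint that is a deep of $\omega$, with a correction when $\omega$ passes straight through the endpoint. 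This is a finite, local, case-by-case verification at each of the (at most two) endpoints of $\sigma'$.

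\textbf{Then I would feed in the structure of distinguished arrows via Proposition~\ref{prop:distinguishedArrows} and the flip geometry.} The distinguished string $\sigma' = \distinguishedString{\omega'}{F}$ is precisely the substring of $\omega'$ squeezed between the two distinguished arrows $\distinguishedArrows{\omega'}{F}$, which by Proposition~\ref{prop:distinguishedArrows} point in opposite directions; the sign $\varepsilon'$ records whether these point outward (top substring) or inward (bottom substring). To compute $\dotprod{\gvector{\omega}}{\cvector{\omega'}{F}}$, I case on whether $\omega = \omega'$ or $\omega \neq \omega'$.

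For $\omega = \omega'$: the endpoints of $\sigma'$ are interior corners of $\omega'$, and the corner at each endpoint is a \emph{peak} if $\varepsilon' = +1$ (distinguished arrows outgoing) and a \emph{deep} if $\varepsilon' = -1$. Hence the boundary term counts $2$ peaks when $\varepsilon' = +1$ and $2$ deeps when $\varepsilon' = -1$, both with possible sign complications — I would need to show the telescoping gives exactly $\varepsilon' \cdot \varepsilon' \cdot 1 = 1$, accounting for the degenerate case where $\sigma'$ is a single vertex (then that vertex is itself a peak or deep of $\omega'$) and the loop case from Lemma~\ref{lem:weirdDistinguishedArrows}. For $\omega \neq \omega'$ bending walks of $F$: here I would use that $\omega$ and $\omega'$ are non-kissing, so wherever $\omega$ and $\omega'$ share a substring, $\omega$ cannot enter and exit with arrows on the "kissing side", and — crucially — the maximality defining the distinguished arrows ($\omega' = \distinguishedWalk{\alpha'}{F}$ for its distinguished arrows $\alpha'$) forces that when $\omega$ meets $\sigma'$ near an endpoint, $\omega$ must split off in a controlled direction. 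The goal is to show that the two boundary contributions at the two endpoints of $\sigma'$ \emph{cancel}, giving $0$. I would organize this as: either $\omega$ contains neither endpoint-arrow region of $\sigma'$ (trivially $0$), or $\omega$ overlaps $\sigma'$ in some maximal substring $\tau$, and then the non-kissing condition plus the countercurrent maximality of $\omega'$ pins down the orientations of $\omega$ at the two ends of $\tau$ so that the peak/deep counts at those ends are opposite in sign. Compare with the grid and dissection cases of~\cite{GarverMcConville-grid, GarverMcConville, MannevillePilaud-accordion} where the analogous duality is proved; the argument is the same in spirit.

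\textbf{The main obstacle} will be the $\omega \neq \omega'$ case, and specifically handling self-intersecting strings: a distinguishable string $\sigma'$ or the walk $\omega$ may revisit a vertex, so "the boundary term at the endpoints of $\sigma'$" must be argued for a \emph{particular occurrence} of the substring $\sigma'$ inside $\omega'$, with all bookkeeping done at the level of marked walks and the countercurrent order $\prec_\alpha$. The telescoping identity itself is robust to self-intersection (it is local at each position), but extracting "$\omega$ splits off in a definite direction at each endpoint of $\sigma'$" from maximality in $\prec_{\alpha'}$ requires the same careful argument as in the proof of Proposition~\ref{prop:distinguishedArrows}, including the loop subtlety of Lemma~\ref{lem:weirdDistinguishedArrows}(i) where $\omega' = \sigma \alpha \sigma^{-1}$ and the two "distinguished arrows" coincide. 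I would handle the loop case separately at the outset, reducing to $\omega'$ having two distinct non-loop distinguished arrows via Lemma~\ref{lem:weirdDistinguishedArrows}(ii), and then run the endpoint analysis. A sanity check throughout: the peak and deep facets, where Example~\ref{exm:gcvectors} already gives $\gvector{v_\peak} = \cvector{v_\peak}{F_\peak} = \b{e}_v$ and $\gvector{v_\deep} = \cvector{v_\deep}{F_\deep} = -\b{e}_v$, so $\dotprod{\gvector{v_\peak}}{\cvector{w_\peak}{F_\peak}} = \delta_{v,w}$ as required, confirming the normalization.
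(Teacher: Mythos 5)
Your plan is correct and follows essentially the same route as the paper: reduce to showing $\dotprod{\gvector{\mu}}{\cvector{\nu}{F}} = \delta_{\mu,\nu}$, note that $\gvector{\mu}$ counts corners of $\mu$ while $\cvector{\nu}{F}$ counts (with sign) vertices of the distinguished string, treat $\mu = \nu$ by counting one excess peak or deep on the distinguished string, and for $\mu \ne \nu$ decompose into maximal common substrings and use non-kissingness together with countercurrent maximality to force the entering and exiting arrows of $\mu$ around the relevant piece of the distinguished string to point the same way, so peaks and deeps cancel in pairs. The paper's proof is equally a sketch at the same level of detail (it also does not spell out the loop or self-intersection bookkeeping that you flag), so the only difference is that you are slightly more explicit about where the subtleties lie, not in the underlying argument.
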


\begin{proof}
Consider two walks~$\mu, \nu \in F$.
By definition, the $\b{g}$-vector~$\gvector{\mu}$ counts the corners of~$\omega$, while the $\b{c}$-vector~$\cvector{\nu}{F}$ counts, up to a sign, the vertices on the distinguished string~$\distinguishedString{\nu}{F}$. 
To compute the scalar product~$\dotprod{\gvector{\mu}}{\cvector{\nu}{F}}$, we thus need to understand which corners of~$\mu$ lie on the distinguished string~$\distinguishedString{\nu}{F}$.

Assume first that~$\mu = \nu$. If the distinguished arrows~$\distinguishedArrows{\omega}{F}$ point outside, then~$\cvector{\mu}{F}$ is positive and there is one more peak than deeps on the distinguished string~$\distinguishedString{\mu}{F}$.
Otherwise, ${\cvector{\mu}{F}}$ is negative and there is one more deep than peaks on the distinguished string~$\distinguishedString{\mu}{F}$.
In both cases, we obtain~$\dotprod{\gvector{\mu}}{\cvector{\mu}{F}} = 1$.

Assume now that~$\mu \ne \nu$.
Consider a maximal common substring~$\sigma$ in the intersection of~$\mu$ with~$\nu$.
Since they have opposite directions, the distinguished arrows of~$\nu$ cannot both belong to~$\sigma$.
Therefore, the distinguished string~$\distinguishedString{\nu}{F}$ covers either the whole string~$\sigma$, or nothing, or an initial or final substring of~$\sigma$.
Moreover, the arrows of~$\mu$ incident to but not in~$\distinguishedString{\nu}{F}$ have the same direction along~$\mu$.
Therefore, the distinguished string~$\distinguishedString{\nu}{F}$ contains as many peaks as deeps of~$\mu$ in~$\sigma$.
We thus showed that~$\sigma$ does not contributes to~$\dotprod{\gvector{\mu}}{\cvector{\nu}{F}}$.
Summing the contribution of all common substrings of~$\mu$ and~$\nu$, we obtain that~$\dotprod{\gvector{\mu}}{\cvector{\nu}{F}} = 0$ when~$\mu \ne \nu$.
We conclude that~$\gvectors{F}$ and~$\cvectors{F}$ are dual bases.
\end{proof}

\subsection{The $\b{g}$-vector fan}
\label{subsec:gvectorFan}

We now restrict our attention to a gentle bound quiver~$\bar Q$ with a finite non-kissing complex~$\RNKC$.
We present a fan realization of~$\RNKC$ inspired by similar statements of T.~Manneville and V.~Pilaud in~\cite{MannevillePilaud-accordion} for dissection bound quivers and by A.~Garver and T.~McConville in~\cite{GarverMcConville-grid} for grid bound quivers.
As illustrated in \fref{fig:all2}, this construction actually provides fan realizations of all non-kissing complexes (finite or not).
Our elementary argument proves the finite case, the general case can be proven using~\cite{DehyKeller, DemonetIyamaJasso} (see~Remark~\ref{rem:linDep}).

\begin{theorem}
\label{thm:gvectorFan}
For a gentle bound quiver~$\bar Q$ with finite non-kissing complex~$\RNKC$, the collection of cones
\[
\gvectorFan \eqdef \bigset{\R_{\ge0} \gvectors{F}}{F \text{ non-kissing face of } \RNKC}
\]
forms a complete simplicial fan, that we call the \defn{$\b{g}$-vector fan} of~$\bar Q$.
\end{theorem}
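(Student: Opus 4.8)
The plan is to verify the two conditions of Proposition~\ref{prop:characterizationFan} for the pseudomanifold~$\RNKC$ (which is a pseudomanifold by the corollary following Proposition~\ref{prop:flip}), the vertex set being the undirected non-straight walks~$\NKWalks^\pm(\bar Q)$, and the ray map~$\omega \mapsto \gvector{\omega}$. The fan has to live in~$\R^{Q_0}$, which is consistent with Corollary~\ref{coro:pure} asserting that~$\RNKC$ is pure of dimension~$|Q_0|$, so that every facet has exactly~$|Q_0|$ walks.

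First I would treat Condition~(1). The natural choice of base facet is the peak facet~$F_\peak$. By Example~\ref{exm:gcvectors} we have~$\gvector{v_\peak} = \b{e}_v$, so~$\gvectors{F_\peak}$ is exactly the standard basis of~$\R^{Q_0}$, hence a basis. To see that the open cone~$\R_{>0}\gvectors{F_\peak}$ (the open positive orthant) meets no other open cone~$\R_{>0}\gvectors{F'}$, I would argue as follows: any point~$\b{x}$ in the open positive orthant has all coordinates strictly positive; if~$\b{x} \in \R_{>0}\gvectors{F'}$, then~$\b{x}$ is a strictly positive combination of the~$\gvector{\omega}$ for~$\omega \in F'$, and by the sign-coherence property (Remark~\ref{rem:signCoherence}) the~$v$th coordinate of all~$\gvector{\omega}$, $\omega \in F'$, has the same sign; since the sum is strictly positive in every coordinate, every~$\gvector{\omega}$ for~$\omega \in F'$ must be coordinatewise non-negative, i.e. every~$\omega \in F'$ has no deep. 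A bending walk with no deep is a peak walk, so~$F' \subseteq F_\peak$, and by purity~$F' = F_\peak$. This is the cleanest route; alternatively one checks directly that the only facet whose $\b{g}$-cone contains a generic point of the positive orthant is~$F_\peak$.

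Next, Condition~(2): the linear dependence at a flip. Let~$F, F'$ be adjacent facets with~$F \ssm \{\omega\} = F' \ssm \{\omega'\}$, where~$\omega,\omega'$ flip as in Proposition~\ref{prop:flip}, so~$\omega = \rho\sigma\tau$, $\omega' = \rho'\sigma\tau'$, and the two "intermediate" walks are~$\mu = \rho'\sigma\tau$ and~$\nu = \rho\sigma\tau'$, both in~$F \cap F'$ (with the caveat that~$\mu$ or~$\nu$ may be straight, in which case~$\gvector{\mu} = 0$ or~$\gvector{\nu} = 0$ and that term simply drops). The key identity I would establish is the \emph{peak–deep balance}:
\[
\gvector{\omega} + \gvector{\omega'} = \gvector{\mu} + \gvector{\nu}.
\]
Indeed, counting corners: the multiset of peaks of~$\omega$ together with those of~$\omega'$ equals, corner by corner, the multiset of peaks of~$\mu$ together with those of~$\nu$ — the corners coming from the common pieces~$\rho$, $\sigma$, $\tau$, $\tau'$ are distributed identically on both sides, and the only subtlety is the two junction vertices~$v = s(\sigma)$, $w = t(\sigma)$ where~$\sigma$ meets the differing arrows; one checks from the flip picture (\fref{fig:flip}) that each junction vertex contributes a peak on one side and a deep on the other in a way that exactly cancels when one forms~$\gvector{\,\cdot\,}$. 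The same holds for deeps, giving the displayed identity, i.e.
\[
\gvector{\omega} + \gvector{\omega'} - \gvector{\mu} - \gvector{\nu} = 0,
\]
which is the required dependence on~$\gvectors{F \cup F'}$ (the coefficients on~$\gvector{\mu}, \gvector{\nu}$ being~$-1$, or $0$ for a straight intermediate walk) with the coefficients of~$\gvector{\omega}$ and~$\gvector{\omega'}$ both equal to~$+1$, in particular of the same sign and nonzero. The uniqueness of this dependence is automatic from~$|Q_0| + 1$ vectors spanning~$\R^{Q_0}$ once Condition~(1) is in place, or can be seen directly since~$\gvectors{F}$ is a basis by Proposition~\ref{prop:gvectorscvectorsDualBases}. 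The main obstacle is precisely this corner-bookkeeping at the two junction vertices of the flip: one must carefully track, using Proposition~\ref{prop:distinguishedArrows} and Lemma~\ref{lem:weirdDistinguishedArrows}, the possible local shapes (the distinguished arrows pointing inward versus outward, the degenerate loop case~$\alpha = \alpha'$, and the cases where~$\mu$ or~$\nu$ degenerates to a straight walk), and confirm the cancellation holds uniformly. Once both conditions of Proposition~\ref{prop:characterizationFan} are checked, $\gvectorFan$ is a complete simplicial fan, completing the proof.
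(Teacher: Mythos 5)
Your proof is correct and follows essentially the same route as the paper: use Proposition~\ref{prop:characterizationFan} with the peak facet $F_\peak$ as base (and the sign-coherence observation of Remark~\ref{rem:signCoherence} for Condition~(1)), then verify the flip dependence $\gvector{\omega}+\gvector{\omega'}=\gvector{\mu}+\gvector{\nu}$ by matching peaks and deeps piece by piece and noting that the identity survives the degenerate cases where $\mu$ or $\nu$ is straight or $\mu=\nu$. Two minor cosmetic points: the list of common pieces should be $\rho,\rho',\sigma,\tau,\tau'$ rather than $\rho,\sigma,\tau,\tau'$, and at the junction vertices the right statement is not cancellation but matching — the local picture at $v=s(\sigma)$ in $\omega$ is identical to that in $\nu$, and in $\omega'$ identical to that in $\mu$ (and symmetrically at $w=t(\sigma)$), so the peak/deep contributions on the two sides of the identity coincide.
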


\begin{proof}
We use the characterization of complete simplicial fans of Proposition~\ref{prop:characterizationFan}.
By Remark~\ref{rem:signCoherence}, the cone~$\R_{\ge0} \gvectors{F_\peak}$ is the only cone of~$\gvectorFan$ intersecting the interior of the positive orthant~$(\R_{\ge0})^{Q_0}$.
Consider now two adjacent non-kissing facets~$F,F' \in \RNKC$.
Let~$\omega \in F$ and~$\omega' \in F'$ be such that~${F \ssm \{\omega\} = F' \ssm \{\omega'\}}$, and let~$\mu$ and~$\nu$ be the two other walks involved in the flip as defined in Proposition~\ref{prop:flip} (see \fref{fig:flip}).
Note that a vertex~$v \in Q_0$ is a peak of none of (resp.~one of, resp.~both) the walks~$\omega, \omega'$ if and only if it is a peak of none of (resp.~one of, resp.~both) the walks~$\mu, \nu$.
The same holds for deeps.
We thus have the linear~dependence
\[
\gvector{\omega} + \gvector{\omega'} = \gvector{\mu} + \gvector{\nu}
\]
among the $\b{g}$-vectors involved in the flip.
Note that this equality holds even when~$\mu$ and~$\nu$ coincide, or when~$\mu$ or~$\nu$ are straight walks (in which case their $\b{g}$-vector vanishes).
This shows that~$\gvectorFan$ satisfies the two conditions of Proposition~\ref{prop:characterizationFan}, and thus shows that $\gvectorFan$ is a fan.
\end{proof}

\begin{remark}
The linear dependence~$\gvector{\omega} + \gvector{\omega'} = \gvector{\mu} + \gvector{\nu}$ relating the $\b{g}$-vectors of two adjacent non-kissing facets~$F,F' \in \RNKC$ shows that~${\det \big( \gvector{F} \big) = - \det \big( \gvector{F'} \big)}$.
Since the initial cone~$\R_{\ge0} \gvectors{F_\peak}$ is generated by the coordinate vectors (see Example~\ref{exm:gcvectors}), we obtain that~$\det \big( \gvector{F} \big) = \pm 1$ for all non-kissing facets~$F \in \RNKC$, so that the $\b{g}$-vector fan~$\gvectorFan$ is~\defn{smooth}.
\end{remark}

\begin{remark}
\label{rem:cvectorFan}
Call \defn{$\b{c}$-vector fan} the fan~$\cvectorFan$ defined by the arrangement of the hyperplanes orthogonal to the $\b{c}$-vectors of~$\allcvectors$.
Be aware that contrarily to the $\b{g}$-vector fan whose rays are the $\b{g}$-vectors, the $\b{c}$-vectors are not the rays but the normal vectors of the hyperplanes of the $\b{c}$-vector fan.
By Proposition~\ref{prop:gvectorscvectorsDualBases}, any non-maximal cone of~$\gvectorFan$ is supported by an hyperplane orthogonal to a $\b{c}$-vector of~$\allcvectors$.
The $\b{g}$-vector fan~$\gvectorFan$ thus coarsens the $\b{c}$-vector~fan~$\cvectorFan$.
\end{remark}

\begin{example}
To illustrate Theorem~\ref{thm:gvectorFan} and Remark~\ref{rem:cvectorFan}, we have represented in \fref{fig:exmFans} the $\b{c}$-vector fan~$\cvectorFan$ and the $\b{g}$-vector fan~$\gvectorFan$ for two specific gentle bound quivers.
We use the classical projection to represent~$3$-dimensional fans: the fan is intersected with the unit sphere and stereographically projected to the plane from the pole in direction~$(1,1,1)$.
More generally, \fref{fig:allFans3} gathers the $\b{c}$-vector fan~$\cvectorFan$ and the $\b{g}$-vector fan~$\gvectorFan$ for all possible connected simple gentle bound quivers with~$3$ vertices such that the non-kissing complex~$\RNKC$ is finite (\ie where there is a relation in any cycle, oriented or not).
See also \fref{fig:all2} for the $\b{g}$-vector fans of all connected gentle bound quivers on $2$ vertices.

\begin{figure}[t]
	\capstart
	\centerline{\includegraphics[width=1.1\textwidth]{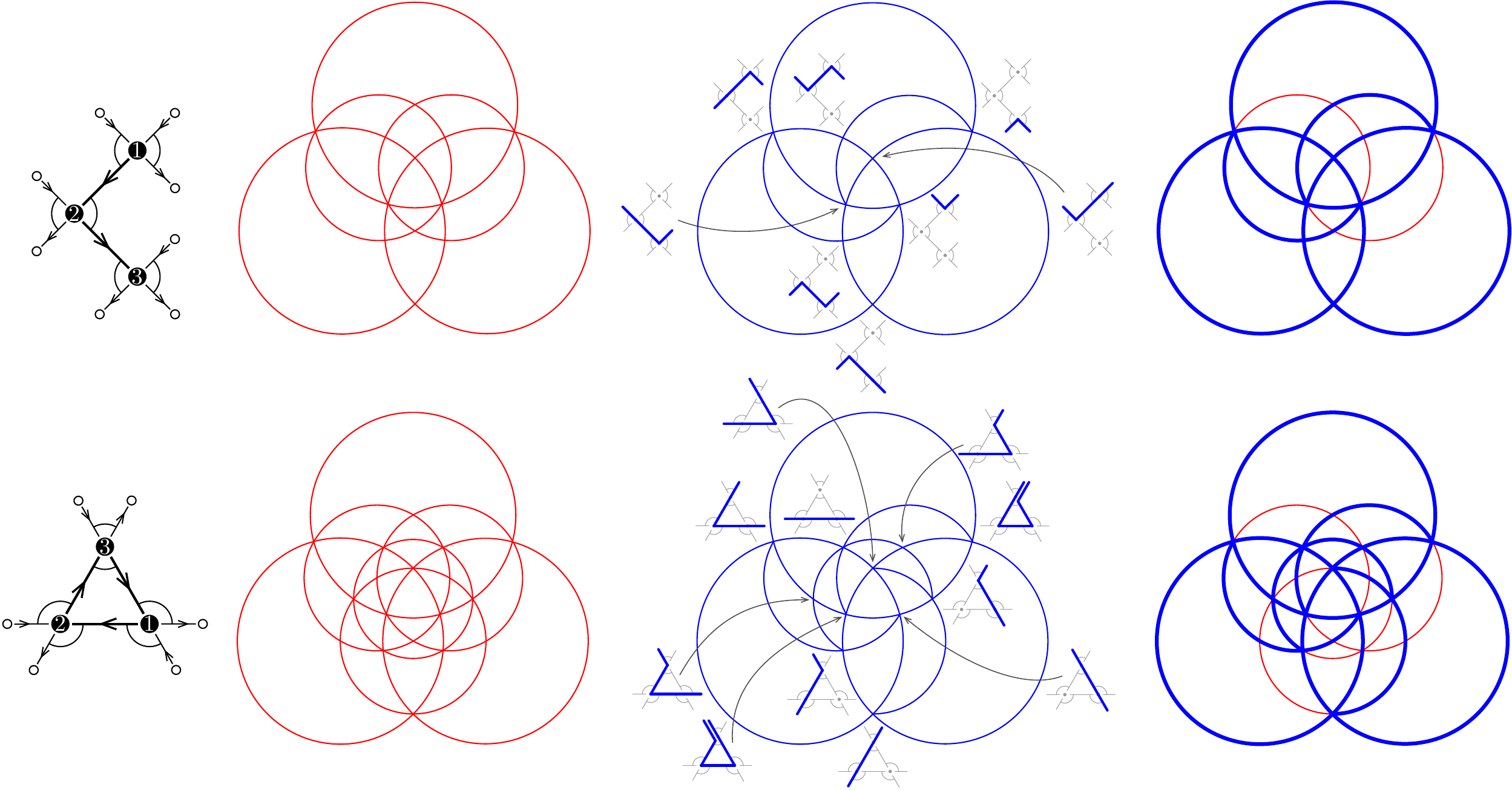}}
	\caption{Stereographic projections of the $\b{c}$-vector fans~$\cvectorFan$ (red, left) and the $\b{g}$-vector fans~$\gvectorFan$ (blue, middle) for two specific quivers. The~$3$-dimensional fan is intersected with the unit sphere and stereographically projected to the plane from the pole in direction~$(1,1,1)$. Note that the $\b{g}$-vector fan is supported by the $\b{c}$-vector fan (right).}
	\label{fig:exmFans}
\end{figure}
\end{example}

\begin{example}
For the dissection and grid bound quivers introduced in Section~\ref{subsec:dissectionGridQuivers}, the $\b{g}$-vector fans already appeared in earlier works:
\begin{itemize}
\item For a dissection~$D$, the $\b{g}$-vector fan~$\gvectorFan[\bar Q(D)]$ was constructed in~\cite{MannevillePilaud-accordion}.
\item For a connected subset~$L$ of~$\Z^2$, the $\b{g}$-vector fan~$\gvectorFan[\bar Q(L)]$ was constructed in~\cite{GarverMcConville-grid}. We note however that our proof, quite different from that of~\cite{GarverMcConville-grid}, will be decisive to obtain polytopal realizations in the next section.
\item For oriented paths, $\b{g}$-vector fans are known as type~$A$ Cambrian fans, constructed by N.~Reading and D.~Speyer~\cite{ReadingSpeyer} in connection with type~$A$ cluster algebras~\cite{FominZelevinsky-ClusterAlgebrasI, FominZelevinsky-ClusterAlgebrasII}.
\end{itemize}
Note that the topmost quiver of \fref{fig:exmFans} is both a dissection and a grid bound quiver, so that its $\b{g}$-vector fan was constructed independently in~\cite{MannevillePilaud-accordion} and~\cite{GarverMcConville-grid}.
\end{example}

\begin{remark}
\label{rem:linDep}
\enlargethispage{.1cm}
Theorem~\ref{thm:gvectorFan} extends to any finite-dimensional algebra~$A$. More precisely:
\begin{itemize}
\item As pointed out to us by L.~Demonet, a similar linear dependence holds for a mutation in the support $\tau$-tilting complex of any finite-dimensional algebra~$A$.
Indeed, the~$\b{g}$-vector of a~$\tau$-rigid pair can be identified with the class of the corresponding $2$-term complex in the Grothendieck group of the bounded homotopy category of finitely generated projective modules over~$A$.
The existence of an exchange triangle $X \rightarrow \bigoplus_{i \in [k]} Y_i \rightarrow X' \rightarrow X[1]$ (see~\cite{AiharaIyama}) immediately gives the equality~$[X] + [X'] = \sum_{i \in [k]} [Y_i]$ in the Grothendieck group, yielding the claimed linear dependence among~$\b{g}$-vectors.
\item By Proposition~\ref{prop:characterizationFan}, this suffices to prove Theorem~\ref{thm:gvectorFan} for any finite-dimensional algebra~$A$ with finite support $\tau$-tilting complex. 
\item This simple argument fails when the support $\tau$-tilting complex is infinite. However, the set~$\gvectorFan$ is still a simplicial fan (not necessarily complete). This can be proven using the (much harder) result~\cite{DehyKeller, DemonetIyamaJasso} that $\tau$-rigid objects are determined by their~$\b{g}$-vectors.
\end{itemize}
\end{remark}

\subsection{The non-kissing associahedron}
\label{subsec:associahedron}

We still consider a gentle bound quiver~$\bar Q$ with finite non-kissing complex~$\RNKC$.
We now construct a polytopal realization of the $\b{g}$-vector fan~$\gvectorFan$.

\begin{definition}\label{def: kissing number}
For two walks~$\omega, \omega' \in \NKWalks^\pm(\bar Q)$, denote by~$\kn(\omega,\omega')$ the number of kisses of~$\omega$ to~$\omega'$.
Note that kisses are counted with multiplicities if~$\omega$ or~$\omega'$ pass twice through the same substring.
The \defn{kissing number} of~$\omega$ and~$\omega'$ is~$\KN(\omega,\omega') \eqdef \kn(\omega,\omega') + \kn(\omega',\omega)$.
Since~$\RNKC$ is finite, we can define the \defn{kissing number} of a walk~$\omega$ on~$\bar Q$ as
\[
\KN(\omega) \eqdef \sum_{\omega'} \KN(\omega,\omega'),
\]
where the sum runs over all walks~$\omega'$ of~$\NKWalks^\pm(\bar Q)$.
\end{definition}

\begin{remark}\label{rem:KNvsTau}
Let~$\omega : \strings_{\ge -1}^\pm(\bar Q) \to \bendingWalks^\pm(\bar Q)$ be the bijection of Lemma~\ref{lem:bijections-STTC-NKC}.
Then one easily checks using Proposition~\ref{prop:morphismsStringModules} that when~$\NKC$ is finite, we have for any~$\sigma \in \strings_{\ge -1}^\pm(\bar Q)$
\[
\KN \big( \omega(\sigma) \big) = \sum_{\rho} \dim \Hom{A} \big( M(\sigma), \tau M(\rho) \big) + \dim \Hom{A} \big( M(\rho), \tau M(\sigma) \big),
\]
where the sum runs over all other strings~$\rho$ of~$\strings_{\ge -1}^\pm(\bar Q)$.
\end{remark}

\begin{lemma}
\label{lem:submodular}
For any adjacent non-kissing facets~$F,F' \in \RNKC$, the values of the kissing number~$\KN$ on the walks~$\omega, \omega', \mu, \nu$ defined in Proposition~\ref{prop:flip} (see \fref{fig:flip}) satisfy the inequality
\[
\KN(\omega) + \KN(\omega') > \KN(\mu) + \KN(\nu).
\]
\end{lemma}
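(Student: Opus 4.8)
The strategy is to show that the flip $F \to F'$ strictly decreases the total kissing number, by a careful local analysis of how kisses are created, destroyed, or preserved when $\omega, \mu, \nu$ are replaced by $\omega', \mu, \nu$ (note $\mu$ and $\nu$ stay in both facets, and all other walks of $F \cap F'$ are unchanged). First I would set up the bookkeeping: for any walk $\lambda \in F \cap F'$ not among $\{\omega, \omega', \mu, \nu\}$, the quantity $\KN(\lambda, \cdot)$ restricted to walks other than $\omega, \omega', \mu, \nu$ is unaffected by the flip, so it suffices to compare $\sum_{\lambda} \big( \KN(\omega,\lambda) + \KN(\omega',\lambda) - \KN(\mu,\lambda) - \KN(\nu,\lambda)\big)$ together with the ``internal'' contributions among $\{\omega, \omega', \mu, \nu\}$ themselves. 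Here one uses the decompositions $\omega = \rho\sigma\tau$, $\omega' = \rho'\sigma\tau'$, $\mu = \rho'\sigma\tau$, $\nu = \rho\sigma\tau'$ from Proposition~\ref{prop:flip}, so that the four walks share the common substring $\sigma$ and are built from the four ``half-walks'' $\rho, \rho', \tau, \tau'$.

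The key observation is that, since $\rho$ is a prefix of both $\omega$ and $\nu$, while $\rho'$ is a prefix of both $\omega'$ and $\mu$ (and symmetrically for $\tau, \tau'$ as suffixes), any kiss between a walk $\lambda \in F \cap F'$ and one of $\omega, \omega', \mu, \nu$ that is localized entirely inside one of the half-walks $\rho, \rho', \tau, \tau'$ (away from the junctions with $\sigma$) is ``shared'' symmetrically. Concretely, I would prove a combinatorial identity of the form
\[
\KN(\omega,\lambda) + \KN(\omega',\lambda) = \KN(\mu,\lambda) + \KN(\nu,\lambda)
\]
for every $\lambda \in F \cap F'$: a kiss either lives in $\rho$ (then it is counted once on the left via $\omega$ and once via $\nu$, and once on the right via $\mu$... wait, $\mu = \rho'\sigma\tau$ does not contain $\rho$) — so the matching is more delicate. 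The right way is to split kisses of $\lambda$ with these walks according to which region of the walk the kissing substring meets: the part strictly inside $\rho$, strictly inside $\rho'$, strictly inside $\tau$, strictly inside $\tau'$, strictly inside $\sigma$, or straddling a junction $\rho|\sigma$, $\sigma|\tau$, etc. Using that $\omega$ and $\nu$ agree on $\rho\sigma$ while $\omega'$ and $\mu$ agree on $\rho'\sigma$, and $\omega$ and $\mu$ agree on $\sigma\tau$ while $\omega'$ and $\nu$ agree on $\sigma\tau'$, one checks region by region that each kiss of $\lambda$ contributes equally to the two sides. (The fact that $\sigma = \distinguishedString{\omega}{F}$ sits between two distinguished arrows, so that $\lambda \in F$ cannot ``overtake'' $\omega$ at $\alpha$ or $\beta$, and the analogous maximality for $\mu, \nu$, is exactly what makes the straddling cases match up.)

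Granting the above identity, the only remaining discrepancy is the ``internal'' kissing among $\{\omega, \omega', \mu, \nu\}$ themselves, i.e. I must compare
\[
\KN(\omega,\omega) + 2\KN(\omega,\mu) + 2\KN(\omega,\nu) + \KN(\omega',\omega')
\]
against
\[
2\KN(\mu,\nu) + \KN(\mu,\mu) + \KN(\nu,\nu),
\]
being careful with self-kisses (which vanish, since all four are non-self-kissing walks of $\NKC$) and with the term $\KN(\omega,\omega') + \KN(\omega',\omega)$, which is present on the $F$-side since both $\omega, \omega' \in F \cup F'$ — actually the cleanest formulation compares $\KN(\omega) + \KN(\omega')$ (sums over \emph{all} walks) directly. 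After cancelling everything shared, the inequality reduces to: $\omega$ and $\omega'$ kiss each other (Proposition~\ref{prop:flip}(ii)), contributing a strictly positive amount $\KN(\omega,\omega') = \KN(\omega',\omega) \geq 1$, whereas $\mu$ and $\nu$ do not kiss (both lie in the non-kissing facet $F$), contributing $0$; all other internal contributions match by the region analysis. Hence $\KN(\omega) + \KN(\omega') = \KN(\mu) + \KN(\nu) + 2\KN(\omega,\omega') > \KN(\mu) + \KN(\nu)$, which is the claimed strict inequality. \textbf{The main obstacle} I anticipate is the junction/straddling case analysis: a kiss of $\lambda$ with $\omega$ whose kissing substring straddles the boundary between $\rho$ and $\sigma$ need not be a kiss of $\lambda$ with $\mu$ (since $\mu$ replaces $\rho$ by $\rho'$), so one genuinely needs the distinguished-arrow maximality of $\omega$ and $\mu$ to argue that such a straddling kiss forces a compensating straddling kiss on the other side — this is the combinatorial heart and the place where one must invoke Proposition~\ref{prop:flip} and the behavior of $\prec_\alpha$ most carefully.
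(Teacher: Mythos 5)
Your strategy is in the same spirit as the paper's (compare $\KN(\omega) + \KN(\omega')$ and $\KN(\mu) + \KN(\nu)$ walk by walk, and get the strict gap from the mutual kiss of $\omega$ and $\omega'$), but the central matching step has a real gap. You propose to prove $\KN(\omega,\lambda) + \KN(\omega',\lambda) = \KN(\mu,\lambda) + \KN(\nu,\lambda)$ for $\lambda \in F \cap F'$, and to deduce that all ``external'' contributions cancel. That restriction makes the claim vacuous: every $\lambda \in F \cap F'$ lies in $F$, hence is non-kissing with $\omega, \mu, \nu$, and lies in $F'$, hence is non-kissing with $\omega', \mu, \nu$, so both sides are already $0$. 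But $\KN(\omega)$ sums over \emph{all} non-self-kissing walks of $\bar Q\blossom$, and the substantive contributions come precisely from walks $\lambda$ lying outside $F \cup F'$; your region-by-region analysis never addresses them, so after cancelling the (zero) contributions from $\lambda \in F \cap F'$ and handling the four internal walks, the rest of the sum is simply unaccounted for.

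Moreover, if you repair the quantifier to range over all walks $\lambda$, the equality you aim for is too strong and in fact false: taking $\lambda = \omega'$ itself (or any walk having $\sigma$ as a bottom substring), $\omega$ kisses $\lambda$ along $\sigma$, but $\mu = \rho'\sigma\tau$ and $\nu = \rho\sigma\tau'$ have one incoming and one outgoing arrow at the endpoints of $\sigma$, so $\sigma$ is neither on top nor at the bottom of either; the left-hand side strictly exceeds the right. What the paper uses (and what suffices) is only the inequality $\KN(\omega,\lambda) + \KN(\omega',\lambda) \ge \KN(\mu,\lambda) + \KN(\nu,\lambda)$ for every walk $\lambda$, argued by an injection sending each kiss of $\lambda$ with $\mu$ or $\nu$ to one with $\omega$ or $\omega'$ using the decompositions $\omega = \rho\sigma\tau$, $\omega' = \rho'\sigma\tau'$, $\mu = \rho'\sigma\tau$, $\nu = \rho\sigma\tau'$ exactly as you write them. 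Consequently your final line should be the inequality $\KN(\omega) + \KN(\omega') \ge \KN(\mu) + \KN(\nu) + 2\KN(\omega,\omega')$, which still gives the strict conclusion since $\KN(\omega,\omega') \ge 1$ by Proposition~\ref{prop:flip}(ii). Finally, the ``distinguished-arrow maximality'' you hope to invoke for the straddling case is a property only of walks \emph{inside} $F$ or $F'$ relative to the countercurrent orders; it says nothing about the walks $\lambda$ outside $F \cup F'$, which are precisely the ones you must control, so it is not the right tool for this step.
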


\begin{proof}
Observe in \fref{fig:flip}\,(right) that for any walk~$\lambda$, a kiss of~$\lambda$ with one of (resp.~both) the walks~$\mu$ and~$\nu$ is as well a kiss with one of (resp.~both) the walks~$\omega$ and~$\omega'$.
In other words, $\KN(\lambda, \omega) + \KN(\lambda, \omega') \ge \KN(\lambda, \mu) + \KN(\lambda, \nu)$.
Moreover, the walks~$\omega$ and~$\omega'$ kiss each other but do not kiss~$\mu$ and~$\nu$.
We conclude that~$\KN(\omega) + \KN(\omega') \ge \KN(\mu) + \KN(\nu) + 2$.
\end{proof}

\begin{definition}
Let~$\bar Q$ be a gentle bound quiver with finite non-kissing complex~$\RNKC$.
Define
\begin{enumerate}[(i)]
\item for each non-kissing facet~${F \in \RNKC}$, a point
\[
\point{F} \eqdef \sum_{\omega \in F} \KN(\omega) \, \cvector{\omega}{F},
\]
\item for each walk~${\omega \in \NKWalks^\pm(\bar Q)}$, a halfspace
\[
\HS{\omega} \eqdef \set{\b{x} \in \R^{Q_0}}{\dotprod{\gvector{\omega}}{\b{x}} \le \KN(\omega)}.
\]
\end{enumerate}
\end{definition}

\begin{theorem}
\label{thm:associahedron}
For a gentle bound quiver~$\bar Q$ with finite non-kissing complex~$\RNKC$, the $\b{g}$-vector fan~$\gvectorFan$ is the normal fan of the \defn{$\bar Q$-associahedron}~$\Asso$ defined equivalently~as
\begin{enumerate}[(i)]
\item the convex hull of the points~$\point{F}$ for all non-kissing facets~${F \in \RNKC}$, or
\item the intersection of the halfspaces~$\HS{\omega}$ for all walks~${\omega \in \NKWalks^\pm(\bar Q)}$.
\end{enumerate}
\end{theorem}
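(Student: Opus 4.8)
The plan is to invoke the characterization of polytopal complete simplicial fans from Proposition~\ref{prop:characterizationPolytopalFan}, using the height function $h(\omega) \eqdef \KN(\omega)$ on walks $\omega \in \NKWalks^\pm(\bar Q)$. By Theorem~\ref{thm:gvectorFan}, we already know that $\gvectorFan = \bigset{\R_{\ge0} \gvectors{F}}{F \text{ non-kissing face}}$ is a complete simplicial fan whose rays are indexed by the walks in $\NKWalks^\pm(\bar Q)$. The proof of Theorem~\ref{thm:gvectorFan} also exhibits, for any two adjacent non-kissing facets $F,F'$ with $F \ssm \{\omega\} = F' \ssm \{\omega'\}$, the unique (up to scaling) linear dependence $\gvector{\omega} + \gvector{\omega'} = \gvector{\mu} + \gvector{\nu}$ among the $\b{g}$-vectors of $\b{g}(F \cup F')$, where $\mu, \nu$ are the two other walks involved in the flip (as in Proposition~\ref{prop:flip} and Figure~\ref{fig:flip}); here the coefficients of $\gvector{\omega}$ and $\gvector{\omega'}$ are both $+1$, hence of the same sign, as required. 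Therefore the hypotheses of Proposition~\ref{prop:characterizationPolytopalFan} are met, and the polytopality condition to verify is exactly
\[
\KN(\omega) + \KN(\omega') > \KN(\mu) + \KN(\nu)
\]
for every flip, which is precisely the content of Lemma~\ref{lem:submodular}. (When $\mu$ or $\nu$ is a straight walk, its $\b{g}$-vector vanishes and does not appear among the rays; but then its contribution $\KN(\mu)$ or $\KN(\nu)$ to the right-hand side should simply be dropped, which only strengthens the strict inequality — I will note this bookkeeping point so the reader sees the reduction is harmless.) Proposition~\ref{prop:characterizationPolytopalFan} then yields that $\gvectorFan$ is the normal fan of the simple polytope
\[
\bigset{\b{x} \in \R^{Q_0}}{\dotprod{\gvector{\omega}}{\b{x}} \le \KN(\omega) \text{ for all } \omega \in \NKWalks^\pm(\bar Q)} = \bigcap_{\omega} \HS{\omega},
\]
which is the description~(ii) of $\Asso$.

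It then remains to identify the vertices of this polytope with the points $\point{F}$ of description~(i). For a non-kissing facet $F \in \RNKC$, the vertex of $\bigcap_\omega \HS{\omega}$ dual to the full-dimensional cone $\R_{\ge0}\gvectors{F}$ is the unique point $\point{F}$ satisfying $\dotprod{\gvector{\omega}}{\point{F}} = \KN(\omega)$ for all $\omega \in F$. By Proposition~\ref{prop:gvectorscvectorsDualBases}, $\gvectors{F}$ and $\cvectors{F}$ are dual bases of $\R^{Q_0}$, so the point with these prescribed scalar products against the basis $\gvectors{F}$ is exactly $\sum_{\omega \in F} \KN(\omega)\, \cvector{\omega}{F}$, which is the definition of $\point{F}$. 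Thus $\point{F}$ is the vertex of $\Asso$ corresponding to $F$, and since $\gvectorFan$ is complete and simplicial, every vertex arises this way; hence $\Asso = \conv\set{\point{F}}{F \in \RNKC}$, establishing the equivalence of the two descriptions. Note that $\Asso$ lives in the hyperplane-free ambient space $\R^{Q_0}$ and, because $\gvectorFan$ is complete (covers $\R^{Q_0}$), it is indeed a bounded polytope.

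The main obstacle here is already dispatched by the groundwork: the genuinely substantive inputs are the fan property (Theorem~\ref{thm:gvectorFan}, with its explicit linear dependence), the duality of $\b{g}$- and $\b{c}$-vectors (Proposition~\ref{prop:gvectorscvectorsDualBases}), and the strict submodularity-type inequality on kissing numbers (Lemma~\ref{lem:submodular}); once these are in hand, the proof is a direct application of Proposition~\ref{prop:characterizationPolytopalFan} followed by a short dual-basis computation. The only place demanding any care is the reconciliation of the two possible conventions when a flip involves a straight walk (so one of the four walks $\omega,\omega',\mu,\nu$ contributes a zero $\b{g}$-vector and is not a ray of the fan); I would handle this by observing that straight walks belong to every facet of $\NKC$ and are deleted in passing to $\RNKC$, so they never index rays of $\gvectorFan$, and their kissing numbers $\KN = 0$ anyway, so both the linear dependence and the inequality of Lemma~\ref{lem:submodular} restrict consistently to the walks of $\RNKC$. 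With that remark, the argument is complete.
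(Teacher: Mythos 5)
Your proof is correct and follows essentially the same approach as the paper: invoke Proposition~\ref{prop:characterizationPolytopalFan} using the linear dependence $\gvector{\omega} + \gvector{\omega'} = \gvector{\mu} + \gvector{\nu}$ from the proof of Theorem~\ref{thm:gvectorFan} together with the strict inequality of Lemma~\ref{lem:submodular}, then identify the vertex $\point{F}$ via the dual-basis property of Proposition~\ref{prop:gvectorscvectorsDualBases}. Your bookkeeping remark on straight walks is a harmless elaboration the paper handles implicitly (straight walks have $\b{g}$-vector~$\b{0}$ and $\KN = 0$, so both the dependence and the inequality degenerate correctly).
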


\begin{proof}
Consider two adjacent non-kissing facets~$F,F'$ and define the walks~$\omega, \omega', \mu, \nu$ as in Proposition~\ref{prop:flip} (see \fref{fig:flip}).
We have seen in the proof of Theorem~\ref{thm:gvectorFan} that the linear dependence between the $\b{g}$-vectors of~$F \cup F'$ is~$\gvector{\omega} + \gvector{\omega'} = \gvector{\mu} + \gvector{\nu}$.
Moreover, according to Lemma~\ref{lem:submodular}, the kissing number satisfies~$\KN(\omega) + \KN(\omega') > \KN(\mu) + \KN(\nu)$.
Applying the characterization of Proposition~\ref{prop:characterizationPolytopalFan}, we thus immediately obtain that the $\b{g}$-vector fan~$\gvectorFan$ is the normal fan of the polytope defined as the intersection of the halfspaces~$\HS{\omega}$ for all walks~$\omega$ on~$\bar Q$.

Finally, to show the vertex description, we just need to observe that, for any non-kissing facet~$F \in \RNKC$, the point~$\point{F}$ is the intersection of the hyperplanes~$\Hyp{\omega}$ for all~$\omega \in F$.
Indeed, since~$\gvectors{F}$ and~$\cvectors{F}$ form dual bases by Proposition~\ref{prop:gvectorscvectorsDualBases}, we have for any~$\omega \in F$:
\[
\dotprod{\gvector{\omega}}{\point{F}} = \sum_{\mu \in F} \KN(\mu) \, \dotprod{\gvector{\omega}}{\cvector{\mu}{F}} = \sum_{\mu \in F} \KN(\mu) \, \delta_{\omega = \mu} \; = \; \KN(\omega).
\qedhere
\]
\end{proof}

\begin{example}
To illustrate Theorem~\ref{thm:associahedron}, we have represented in \fref{fig:exmAssociahedra}\,(middle) the associahedron~$\Asso$ for two specific gentle bound quivers.
More generally, \fref{fig:allAssociahedra3}\,(middle) gathers the associahedron~$\Asso$ for all possible connected simple gentle bound quivers with~$3$ vertices such that the non-kissing complex~$\RNKC$ is finite (\ie where there is a relation in any cycle, oriented or not).
Their normal fans are represented in \fref{fig:allFans3}\,(middle).
See also \fref{fig:all2} for the associahedra of all connected gentle bound quivers on $2$ vertices.

\begin{figure}[t]
	\capstart
	\centerline{\includegraphics[width=1.1\textwidth]{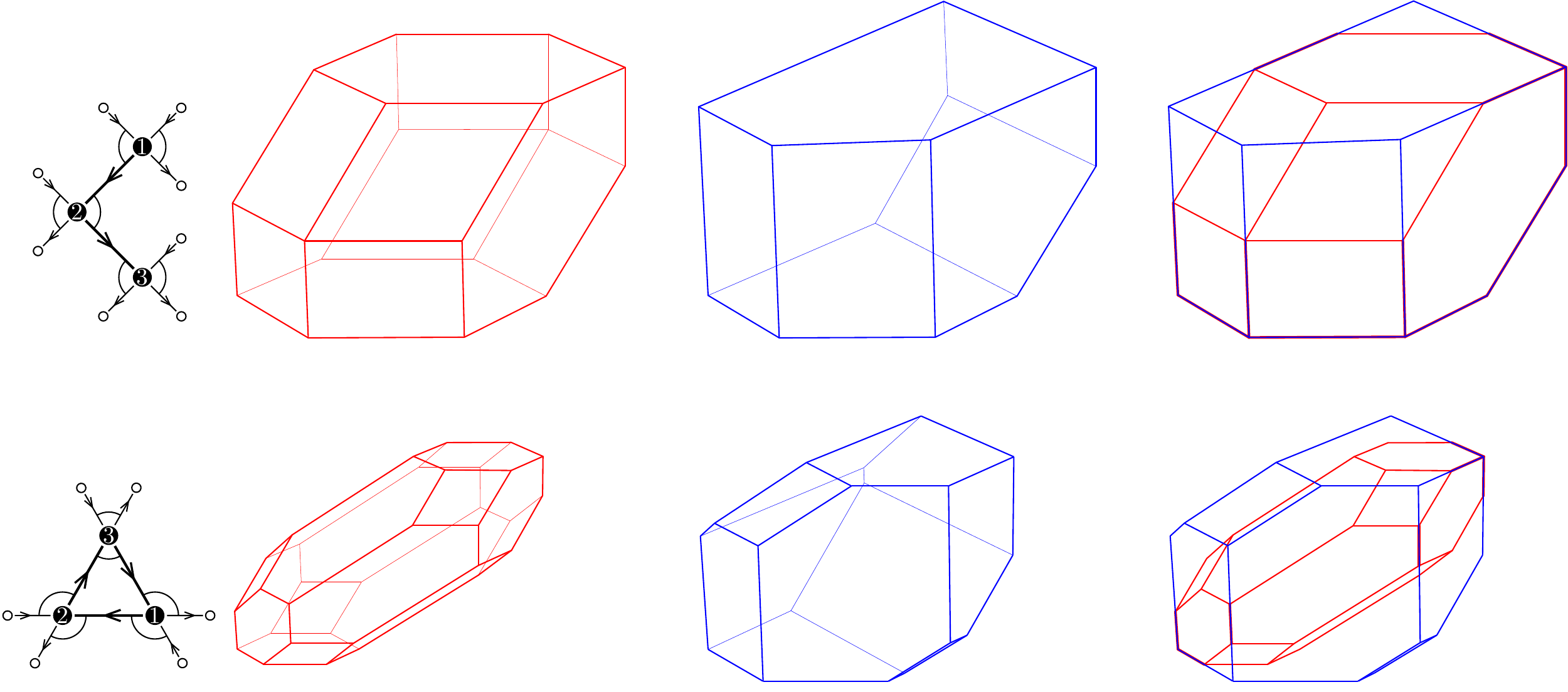}}
	\caption{The zonotope~$\Zono$ (red, left) and the associahedron~$\Asso$ (blue, middle) for two specific quivers. Observe that the facet defining inequalities of~$\Asso$ are not necessarily facet defining inequalities of~$\Zono$ (right).}
	\label{fig:exmAssociahedra}
\end{figure}
\end{example}

\begin{example}
The dissection and grid bound quivers introduced in Section~\ref{subsec:dissectionGridQuivers} give particularly relevant examples:
\begin{itemize}
\item For a dissection~$D$, the $\bar Q(D)$-associahedron of the dissection bound quiver~$\bar Q(D)$ was previously constructed in~\cite{MannevillePilaud-accordion} as a projection of a well-chosen associahedron.
For example, the top line of \fref{fig:exmAssociahedra} already appeared in~\cite{MannevillePilaud-accordion}.
\item In contrast, the non-kissing associahedron was not known for a grid bound quiver. Our construction thus answers a question of A.~Garver and T.~McConville~\cite{GarverMcConville-grid} concerning the polytopality of the $\b{g}$-vector fan~$\gvectorFan[\bar Q(L)]$ for a subset~$L$ of~$\Z^2$. Note that alternative polytopal realizations for the grid-associahedra were constructed in~\cite{SantosStumpWelker} using the dual of a nice triangulation of an order polytope and in~\cite[Sect.~4]{McConville} by a sequence of suspensions and edge subdivisions.
\item For oriented paths, the non-kissing associahedra are classical associahedra, constructed by J.-L.~Loday~\cite{Loday} and C.~Hohlweg and C.~Lange~\cite{HohlwegLange} and revisited in~\cite{HohlwegLangeThomas, Stella, PilaudSantos-brickPolytope, LangePilaud, HohlwegPilaudStella} (among others).
\end{itemize}
\end{example}

\begin{remark}
\label{rem:polytope}
As in Remark~\ref{rem:linDep}, Theorem~\ref{thm:associahedron} in fact holds over any~$\tau$-tilting finite algebra~$A$.
The kissing number has to be replaced by the dimension of some Hom-space, as in Remark~\ref{rem:KNvsTau}.
More precisely, consider the functions
\[
\KN_\ell(X) = \sum_Y \dim \Hom{} \big( X, Y[1] \big)
\qquad\text{and}\qquad
\KN_r(X) = \sum_Y \dim \Hom{} \big( Y, X[1] \big),
\]
where the sums run over all indecomposable $2$-term presilting complexes.
An exchange triangle $X \rightarrow \bigoplus_{i \in [k]} Y_i \rightarrow X' \rightarrow X[1]$ (see~\cite{AiharaIyama}) then gives rise to
\begin{itemize}
\item a linear dependence~$[X] + [X'] = \sum_{i \in [k]} [Y_i]$ in the Grothendieck group (see Remark~\ref{rem:linDep}),
\item an inequality of the form~$\KN(X) + \KN(X') > \sum_{i \in [k]} \KN(Y_i)$ for~$\KN \in \{\KN_\ell, \KN_r\}$.
\end{itemize}
The proof is then similar to that of Theorem~\ref{thm:associahedron} and leads to the same results using either~$\KN_\ell$, $\KN_r$, or more symmetrically their sum~$\KN_\ell + \KN_r$.
Compare to Remark~\ref{rem:KNvsTau}.
\end{remark}

\subsection{Non-kissing associahedron and lattice}
\label{subsec:geomLattice}

The following observation is an immediate consequence of the fact that the normal fan of the $\bar Q$-associahedron~$\Asso$ is the $\b{g}$-vector fan~$\gvectorFan$.

\begin{lemma}
Consider two adjacent non-kissing facets~$F,F'$ and let~$\omega, \omega', \mu$ and~$\nu$ denote the walks involved in the flip as defined in Proposition~\ref{prop:flip} (see \fref{fig:flip}).
Then the difference~${\point{F'} - \point{F}}$ is a strictly negative multiple of~$\cvector{\omega}{F} = -\cvector{\omega'}{F'}$.
More precisely,
\[
\point{F'} - \point{F} = \big( \KN(\mu) + \KN(\nu) - \KN(\omega) - \KN(\omega') \big) \, \cvector{\omega}{F}.
\]
\end{lemma}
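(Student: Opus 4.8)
The statement asserts two things: that $\point{F'} - \point{F}$ is parallel to $\cvector{\omega}{F}$ and points in the direction opposite to $\cvector{\omega}{F}$ (i.e.\ is a strictly negative multiple), and that the precise scalar is $\KN(\mu) + \KN(\nu) - \KN(\omega) - \KN(\omega')$. The first half is almost free: since $\gvectorFan$ is the normal fan of $\Asso$ by Theorem~\ref{thm:associahedron}, the vertices $\point{F}$ and $\point{F'}$ are joined by an edge of $\Asso$, and the direction of that edge is determined by the codimension~$1$ cone $\R_{\ge0}\gvectors{F \cap F'}$ shared by the two maximal cones $\R_{\ge0}\gvectors{F}$ and $\R_{\ge0}\gvectors{F'}$. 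The edge is orthogonal to the common walls' spanning $\b{g}$-vectors and hence parallel to the unique (up to scaling) vector orthogonal to $\gvectors{F \cap F'}$; by Proposition~\ref{prop:gvectorscvectorsDualBases}, $\cvectors{F}$ is the dual basis to $\gvectors{F}$, so the element of $\cvectors{F}$ dual to $\gvector{\omega}$, namely $\cvector{\omega}{F}$, is exactly orthogonal to all the $\b{g}$-vectors of $F \cap F' = F \ssm \{\omega\}$. Thus $\point{F'} - \point{F} = c \, \cvector{\omega}{F}$ for some scalar $c$. The sign (strict negativity of $c$) will follow once we compute $c$ and invoke Lemma~\ref{lem:submodular}, which gives $\KN(\omega) + \KN(\omega') > \KN(\mu) + \KN(\nu)$, so $c < 0$.

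\textbf{Computing the scalar.} The cleanest route is to pair $\point{F'} - \point{F}$ with $\gvector{\omega}$ and use duality. On the one hand $\dotprod{\gvector{\omega}}{\point{F'} - \point{F}} = c \, \dotprod{\gvector{\omega}}{\cvector{\omega}{F}} = c$, since $\gvectors{F}$ and $\cvectors{F}$ are dual bases. On the other hand, recall from the proof of Theorem~\ref{thm:associahedron} that $\dotprod{\gvector{\lambda}}{\point{F}} = \KN(\lambda)$ for every $\lambda \in F$, and in particular $\dotprod{\gvector{\omega}}{\point{F}} = \KN(\omega)$. For $\point{F'}$ I would instead use that $\point{F'}$ lies on all hyperplanes $\Hyp{\lambda}$ for $\lambda \in F'$, together with the fact that $\omega \notin F'$; here one needs to re-express $\gvector{\omega}$ in the basis $\gvectors{F'} = \gvectors{F \ssm \{\omega\}} \cup \{\gvector{\omega'}\}$. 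The flip linear dependence $\gvector{\omega} + \gvector{\omega'} = \gvector{\mu} + \gvector{\nu}$ (established in the proof of Theorem~\ref{thm:gvectorFan}) gives $\gvector{\omega} = \gvector{\mu} + \gvector{\nu} - \gvector{\omega'}$, and since $\mu, \nu$ (when bending) lie in $F \cap F' \subseteq F'$ and $\omega' \in F'$, we get
\[
\dotprod{\gvector{\omega}}{\point{F'}} = \dotprod{\gvector{\mu}}{\point{F'}} + \dotprod{\gvector{\nu}}{\point{F'}} - \dotprod{\gvector{\omega'}}{\point{F'}} = \KN(\mu) + \KN(\nu) - \KN(\omega'),
\]
using $\dotprod{\gvector{\lambda}}{\point{F'}} = \KN(\lambda)$ for $\lambda \in F'$. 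Subtracting, $c = \dotprod{\gvector{\omega}}{\point{F'}} - \dotprod{\gvector{\omega}}{\point{F}} = \KN(\mu) + \KN(\nu) - \KN(\omega') - \KN(\omega)$, which is the claimed value; by Lemma~\ref{lem:submodular} it is strictly negative.

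\textbf{Main obstacle.} The one delicate point is the bookkeeping when $\mu$ or $\nu$ is a straight walk, or when $\mu = \nu$, or when a distinguished arrow is a loop (the degenerate cases catalogued in Lemma~\ref{lem:weirdDistinguishedArrows} and Remark~\ref{rem:changeDistinguishedArrows}). In those cases $\gvector{\mu}$ or $\gvector{\nu}$ vanishes, or a walk is counted with multiplicity, and one must check that the linear dependence $\gvector{\omega} + \gvector{\omega'} = \gvector{\mu} + \gvector{\nu}$ and the identities $\dotprod{\gvector{\lambda}}{\point{F'}} = \KN(\lambda)$ still hold with the appropriate conventions ($\KN = 0$ and $\gvector{} = 0$ for straight walks). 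This is exactly the same degenerate-case analysis already carried out in the proof of Theorem~\ref{thm:gvectorFan}, so I would simply remark that the argument goes through verbatim; no genuinely new difficulty arises. Also note $\cvector{\omega}{F} = -\cvector{\omega'}{F'}$: the distinguished string $\sigma$ of $\omega$ in $F$ equals the distinguished string of $\omega'$ in $F'$ (by Proposition~\ref{prop:flip}), but it is a top substring of one and a bottom substring of the other, so the signs $\distinguishedSign{\cdot}{\cdot}$ are opposite, giving the stated sign flip for free.
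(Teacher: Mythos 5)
Your proof is correct, but it takes a genuinely different route from the paper's. The paper proceeds by a direct bookkeeping argument: it translates Remark~\ref{rem:changeDistinguishedArrows} into explicit rules for how each $\b{c}$-vector $\cvector{\lambda}{F}$ transforms into $\cvector{\lambda}{F'}$ across the flip (e.g.\ $\cvector{\mu}{F'} = \cvector{\mu}{F} + \cvector{\omega}{F}$ when $\mu \ne \nu$ are bending, with a doubling when $\mu = \nu$), and then plugs these transformation rules into the definition of $\point{F}$ and $\point{F'}$ and subtracts. You instead take the paper's preamble (``an immediate consequence of the fact that the normal fan of $\Asso$ is $\gvectorFan$'') literally: the edge $\point{F}\point{F'}$ is orthogonal to the shared wall, hence parallel to $\cvector{\omega}{F}$ by the dual-basis property (Proposition~\ref{prop:gvectorscvectorsDualBases}), and the scalar is then extracted by pairing with $\gvector{\omega}$, using $\dotprod{\gvector{\lambda}}{\point{F}} = \KN(\lambda)$ for $\lambda \in F$ and the flip linear dependence $\gvector{\omega} + \gvector{\omega'} = \gvector{\mu} + \gvector{\nu}$ from Theorem~\ref{thm:gvectorFan}. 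What your approach buys is that you never need the $\b{c}$-vector transformation rules of Remark~\ref{rem:changeDistinguishedArrows} at all; the degenerate cases are absorbed into conventions ($\gvector{\mu} = 0$ and $\KN(\mu) = 0$ for straight walks, $\mu = \nu$ counted once in $F'$) that were already settled in the proofs of Theorems~\ref{thm:gvectorFan} and~\ref{thm:associahedron}. What the paper's approach buys is the explicit per-walk $\b{c}$-vector mutation formulas, which are of independent interest (they are the walk-level analogue of cluster-algebra $\b{c}$-vector mutation) and are not produced as a byproduct of your computation. Both arguments are sound; yours is shorter given what has already been established, while the paper's is more informative.
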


\begin{proof}
One can translate Remark~\ref{rem:changeDistinguishedArrows} in terms of $\b{c}$-vectors as follows:
\begin{itemize}
\item For~$\omega$ and~$\omega'$, we have $\cvector{\omega}{F} = -\cvector{\omega'}{F'}$.
\item For~$\mu$ and~$\nu$, two different situations can happen:
\begin{itemize}
\item If~$\mu = \nu$, then it is a bending walk and~$\cvector{\mu}{F'} = \cvector{\mu}{F} + 2\,\cvector{\omega}{F}$.
\item Otherwise, whenever $\mu$ and~$\nu$ are bending walks, we have
\[
\cvector{\mu}{F'} = \cvector{\mu}{F} + \cvector{\omega}{F}
\quad\text{and}\quad
\cvector{\nu}{F'} = \cvector{\nu}{F} + \cvector{\omega}{F}.
\]
\end{itemize}
\item Finally, $\cvector{\lambda}{F} = \cvector{\lambda}{F'}$ for all walks~$\lambda \in (F \cap F') \ssm \{\mu,\nu\}$
\end{itemize}
The formula thus immediately follows from the definition of~$\point{F}$.
Finally, Lemma~\ref{lem:submodular} asserts that~${\KN(\omega) + \KN(\omega') > \KN(\mu) + \KN(\nu)}$, which shows that~${\point{F'} - \point{F}}$ is a strictly negative multiple of~$\cvector{\omega}{F} = -\cvector{\omega'}{F'}$.
\end{proof}

\begin{proposition}
When oriented in the linear direction~$-\one \eqdef (-1, \dots, -1) \in \R^{Q_0}$, the graph of the $\bar Q$-associahedron~$\Asso$ is (isomorphic to) the increasing flip graph~$\NKG$.
\end{proposition}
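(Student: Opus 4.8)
The goal is to show that, when the $\bar Q$-associahedron~$\Asso$ is oriented by the linear functional~$\b{x} \mapsto \dotprod{-\one}{\b{x}}$, its graph becomes the oriented non-kissing flip graph~$\NKG$. The plan is to combine three ingredients already established: first, that~$\gvectorFan$ is the normal fan of~$\Asso$ (Theorem~\ref{thm:associahedron}), so that vertices of~$\Asso$ are in bijection with non-kissing facets of~$\RNKC$ and edges of~$\Asso$ are in bijection with flips between adjacent facets; second, the vertex-difference formula of the previous lemma, which says that for a flip $F \to F'$ one has $\point{F'} - \point{F} = c \cdot \cvector{\omega}{F}$ with $c = \KN(\mu) + \KN(\nu) - \KN(\omega) - \KN(\omega') < 0$ by Lemma~\ref{lem:submodular}; and third, the sign-coherence property of $\b{c}$-vectors (Remark~\ref{rem:signCoherence}), which says that every coordinate of~$\cvector{\omega}{F}$ has the same sign, this sign being~$+1$ exactly when the distinguished string~$\distinguishedString{\omega}{F}$ is a top substring of~$\omega$, \ie when the flip $F \to F'$ supported by~$\sigma = \distinguishedString{\omega}{F}$ is \emph{decreasing} in the sense of Definition~\ref{def:increasingFlip}.

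The key computation is then to evaluate the orientation functional on the edge vector:
\[
\dotprod{-\one}{\point{F'} - \point{F}} = c \cdot \dotprod{-\one}{\cvector{\omega}{F}} = -c \sum_{v \in Q_0} \big( \cvector{\omega}{F} \big)_v.
\]
Since~$\cvector{\omega}{F} = \distinguishedSign{\omega}{F} \, \multiplicityVector_{\distinguishedString{\omega}{F}}$ and the multiplicity vector~$\multiplicityVector_{\distinguishedString{\omega}{F}}$ has strictly positive coordinate sum (a string, even reduced to a vertex, has at least one vertex, hence $\sum_v (\multiplicityVector_\sigma)_v = |V(\sigma)| \ge 1$), the sign of $\sum_v \big( \cvector{\omega}{F} \big)_v$ equals $\distinguishedSign{\omega}{F}$. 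Therefore $\dotprod{-\one}{\point{F'} - \point{F}}$ has the sign of $-c \cdot \distinguishedSign{\omega}{F}$, and since $c < 0$, it has the sign of $\distinguishedSign{\omega}{F}$. Thus $\point{F}$ is below $\point{F'}$ in the $-\one$ direction if and only if $\distinguishedSign{\omega}{F} = +1$, \ie if and only if $\sigma$ is a top substring of~$\omega$, which by Definition~\ref{def:increasingFlip} is exactly the condition that the flip $F \to F'$ is increasing. Running over all pairs of adjacent facets gives that the oriented graph of~$\Asso$ (edges pointing in increasing $-\one$ direction) coincides with~$\NKG$.

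The one subtlety to address carefully, rather than an outright obstacle, is well-definedness of the orientation: I must check that $-\one$ is generic with respect to~$\Asso$, \ie that no edge of~$\Asso$ is orthogonal to~$\one$. But the computation above shows precisely that for every edge the pairing $\dotprod{-\one}{\point{F'}-\point{F}}$ is a nonzero multiple of $\distinguishedSign{\omega}{F} \in \{+1,-1\}$ times the strictly positive quantity $|c| \cdot |V(\distinguishedString{\omega}{F})|$, hence is never zero; so the linear order induced on the vertices of~$\Asso$ is a total order and the ``graph oriented in direction~$-\one$'' is unambiguous. One should also note for completeness that the direction~$-\one$ lies in the interior of the normal cone~$\R_{\ge 0}\gvectors{F_\deep}$ of the vertex~$\point{F_\deep}$ (indeed $\gvectors{F_\deep} = \{-\b{e}_v : v \in Q_0\}$ by Example~\ref{exm:gcvectors}, so $-\one = \sum_v (-\b{e}_v)$ is a positive combination of them), confirming that the deep facet~$F_\deep$ is the unique source of~$\NKG$ and sits at the bottom of the oriented associahedron, consistently with the remark after Definition~\ref{def:increasingFlip}; dually $F_\peak$ is the top. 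This completes the proof.
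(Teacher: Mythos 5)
Your core argument is essentially identical to the paper's: combine the edge-vector formula $\point{F'}-\point{F}=c\cdot\cvector{\omega}{F}$ with $c<0$ from Lemma~\ref{lem:submodular}, compute $\dotprod{-\one}{\point{F'}-\point{F}}$, and observe that its sign matches $\distinguishedSign{\omega}{F}$, hence that increasing flips point in the direction of increasing $\dotprod{-\one}{\cdot}$. The genericity remark (no edge is orthogonal to $\one$ since $|V(\distinguishedString{\omega}{F})|\ge 1$) is a nice explicit check that the paper leaves implicit, and the observation that $-\one$ lies in the interior of the normal cone $\R_{\ge 0}\gvectors{F_\deep}$ is a useful sanity check.

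However, your final sentence gets the orientation labels exactly backwards, and in a way that is inconsistent with your own computation. You correctly show that $F\to F'$ increasing is equivalent to $\dotprod{-\one}{\point{F'}}>\dotprod{-\one}{\point{F}}$, so increasing flips go \emph{toward} the $-\one$-maximal vertex. Since $-\one$ is interior to the normal cone at $\point{F_\deep}$, the deep facet is the $-\one$-\emph{maximizer}, i.e.\ the unique \emph{sink} of $\NKG$ sitting at the \emph{top} of the oriented associahedron, while $F_\peak$ (whose normal cone $\R_{\ge 0}\gvectors{F_\peak}$ is the positive orthant and contains $\one$) is the unique source at the bottom. This matches the caption of Figure~\ref{fig:exmNKC} in the paper: the peak facet appears at the bottom and the deep facet on top. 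The slip is only in a cosmetic remark and does not affect the validity of the main argument.
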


\begin{proof}
For any two adjacent non-kissing facets~$F,F'$ with~$F \ssm \{\omega\} = F' \ssm \{\omega'\}$, the scalar product
\[
\dotprod{-\one}{\point{F'} - \point{F}} = \big( \KN(\mu) + \KN(\nu) - \KN(\omega) - \KN(\omega') \big) \dotprod{-\one}{\cvector{\omega}{F}}
\]
has the same sign as~$\cvector{\omega}{F}$.
However, the flip~$F \to F'$ is increasing if and only if the \mbox{$\b{c}$-vector}~$\cvector{\omega}{F}$ is positive.
This concludes the proof.
\end{proof}

\subsection{Zonotope}
\label{subsec:zonotope}

We have seen in Remark~\ref{rem:cvectorFan} that the $\b{c}$-vector fan~$\cvectorFan$ coarsens the $\b{g}$-vector fan~$\gvectorFan$.
The goal of this short section is to discuss the possibility to visualize this property at the level of polytopes.
The prototypes are the associahedra of C.~Hohlweg and C.~Lange which can be obtained by deleting inequalities in the facet description of the permtahedron~\cite{HohlwegLange}.
We are interested in a similar behavior for arbitrary gentle bound quivers.

We still consider only gentle bound quivers~$\bar Q$ with finite non-kissing complexes~$\RNKC$.
In particular, there are only finitely many strings, so that the following Minkowski sum is \mbox{well-defined}.

\begin{definition}
The \defn{$\bar Q$-zonotope} is the Minkowski sum
\[
\Zono \eqdef \sum_{\sigma \in \distinguishableStrings^\pm(\bar Q)} [-\multiplicityVector_\sigma, \multiplicityVector_\sigma] \; = \sum_{\b{c} \in \allcvectors} \big|\set{\sigma \in \distinguishableStrings^\pm(\bar Q)}{\b{c} = \pm \multiplicityVector_\sigma} \big| \, \b{c}
\]
where~$\distinguishableStrings^\pm(\bar Q)$ still denotes the set of distinguishable strings~of~$\bar Q$ (see Definition~\ref{def:distinguishedSubstring}), $\multiplicityVector_\sigma$ is the multiplicity vector of the multiset of vertices of~$\sigma$, and~$\allcvectors$ is the set of all $\b{c}$-vectors.
\end{definition}

\begin{example}
\fref{fig:exmAssociahedra}\,(left) illustrates the zonotope~$\Zono$ for two specific gentle bound quivers.
More generally, \fref{fig:allAssociahedra3}\,(top) gathers the zonotopes~$\Zono$ for all possible connected simple gentle bound quivers with~$3$ vertices such that the non-kissing complex~$\RNKC$ is finite.
Their normal fans are represented in \fref{fig:allFans3}\,(top).
See also \fref{fig:all2} for the zonotopes of all connected gentle bound quivers on $2$ vertices.
\end{example}

\begin{remark}
Since~$\allcvectors = \set{\pm \multiplicityVector_\sigma}{\sigma \in \distinguishableStrings^\pm(\bar Q)}$, the normal fan of the zonotope~$\Zono$ is by construction the $\b{c}$-vector fan~$\cvectorFan$.
In particular, the graph of~$\Zono$, oriented in the direction~$-\one$ is the poset of regions of the $\b{c}$-vector fan~$\cvectorFan$ from the positive orthant as defined by A.~Bj\"orner, P.~Edelman and G.~Ziegler in~\cite{BjornerEdelmanZiegler}.
In particular, when~$\cvectorFan$ is simplicial, the oriented graph of~$\Zono$ is the Hasse diagram of a lattice.
However, the $\b{c}$-vector fan is not simplicial in general, as illustrated by Figures~\ref{fig:exmAssociahedra} and~\ref{fig:allAssociahedra3}.
In fact, the oriented graph of~$\Zono$ is not always the Hasse diagram of a lattice, as observed in~\cite[Rem.~6.2]{McConville} (the first counter-example is given by a square quiver).
The lattice of biclosed sets~$\Bicl{\bar Q}$ was designed in~\cite{McConville} to play the role of this missing lattice structure on~$\Zono$.
\end{remark}

\begin{example}
\label{exm:permutahedron}
When~$\bar Q$ is a type~$A$ quiver, the $\bar Q$-zonotope~$\Zono$ is the Minkowski sum of all characteristic vectors of intervals (\ie all type~$A_n$ roots), thus yielding the classical permutahedron.
In this situation, the $\bar Q$-associahedron~$\Asso$ is thus obtained by deleting inequalities in the facet description of the $\bar Q$-zonotope~$\Zono$.
See~\cite{Loday, HohlwegLange, LangePilaud} for details.
\end{example}

\begin{example}
For a dissection quiver~$\bar Q = \bar Q(D)$, T.~Manneville and V.~Pilaud proved in~\cite{MannevillePilaud-accordion} that the $\bar Q$-associahedron~$\Asso$ is obtained by deleting inequalities in the facet description of the $\bar Q$-zonotope~$\Zono$.
\end{example}

Figures~\ref{fig:exmAssociahedra}, \ref{fig:all2} and~\ref{fig:allAssociahedra3} already illustrate that this property might fail for arbitrary gentle bound quivers.
It would be interesting to characterize those quivers for which it holds.

\begin{question}
For which gentle bound quivers~$\bar Q$ are all inequalities of the $\bar Q$-associahedron~$\Asso$ also inequalities of the $\bar Q$-zonotope~$\Zono$?
\end{question}

Although we cannot provide a general answer to this question, we want to exhibit a sufficient (but certainly not necessary) condition for this property.
We need the following definition.

\begin{definition}
We say that two walks~$\omega, \omega'$ of~$\bar Q$ are \defn{mutually kissing} if~$\omega$ kisses~$\omega'$ and~$\omega'$ kisses~$\omega$, that is, $\kn(\omega,\omega') \, \kn(\omega',\omega) > 0$.
\end{definition}

\begin{proposition}
\label{prop:zonotope}
If~$\bar Q$ is a gentle bound quiver with a finite non-kissing complex and no two mutually kissing walks, then all facet inequalities of the $\bar Q$-associahedron~$\Asso$ are also facet inequalities of the $\bar Q$-zonotope~$\Zono$.
In other words, the $\bar Q$-associahedron~$\Asso$ is obtained by deleting inequalities in the facet description of the $\bar Q$-zonotope~$\Zono$.
\end{proposition}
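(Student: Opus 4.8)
The statement compares two polytopal descriptions, so the natural strategy is to compare the facet-defining inequalities of $\Asso$ with those of $\Zono$ directly. Recall from Theorem~\ref{thm:associahedron}\,(ii) that $\Asso = \bigcap_\omega \HS{\omega}$, where $\omega$ ranges over all non-self-kissing walks, and the inequality for $\omega$ reads $\dotprod{\gvector{\omega}}{\b{x}} \le \KN(\omega)$. The key observation is that $\gvector{\omega}$ decomposes according to the corners of $\omega$. More precisely, each bending walk $\omega$ can be cut at its deeps and peaks into maximal top and bottom substrings; writing $\omega$ as an alternating concatenation of such pieces expresses $\gvector{\omega} = \multiplicityVector_{\peaks{\omega}} - \multiplicityVector_{\deeps{\omega}}$ as an alternating telescoping sum of the multiplicity vectors $\multiplicityVector_\sigma$ of those maximal pieces (each maximal top piece between two consecutive deeps contributes, with the right signs, exactly its two endpoint peaks minus its interior deeps once one accounts for shared corners). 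Since $\gvector{\omega}$ is already sign-coherent (Remark~\ref{rem:signCoherence}), and since each maximal top or bottom substring $\sigma$ of a non-kissing walk is easily checked to satisfy $\Sigma_\bottom(\sigma) \cap \Sigma_\top(\sigma) = \{\sigma\}$, these pieces are distinguishable strings by Proposition~\ref{prop:characterizationDistinguishableStrings}, hence their multiplicity vectors appear among the generators of $\Zono$.

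First I would make precise the decomposition: for a walk $\omega$ which is a facet-defining walk of $\Asso$, write $\gvector{\omega} = \sum_i \varepsilon_i \multiplicityVector_{\sigma_i}$ where the $\sigma_i$ are maximal top/bottom substrings of $\omega$ and the signs $\varepsilon_i \in \{+1,-1\}$ alternate, starting and ending with the sign dictated by the extreme corners. The point here is that the inequality $\dotprod{\gvector{\omega}}{\b x}\le \KN(\omega)$ is then a nonnegative combination of the zonotope's facet inequalities $\dotprod{\varepsilon_i\multiplicityVector_{\sigma_i}}{\b x}\le$ (the corresponding support value of $\Zono$), provided the right-hand sides are compatible. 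So the second step is the arithmetic check on supports: one must show that $\KN(\omega)$ equals (or is bounded in the correct direction by) the sum of the support function values of $\Zono$ in the directions $\varepsilon_i\multiplicityVector_{\sigma_i}$. The support function of $\Zono$ in direction $\b c$ is $\sum_{\sigma\in\distinguishableStrings^\pm(\bar Q)}|\dotprod{\multiplicityVector_\sigma}{\b c}|$ up to the multiplicity bookkeeping in the definition of $\Zono$; so this reduces to a counting identity: the kissing number $\KN(\omega)=\sum_{\omega'}\KN(\omega,\omega')$ should be expressible as a sum over distinguishable strings $\sigma$ of $|\dotprod{\multiplicityVector_\sigma}{\gvector\omega}|$-type contributions. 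This is exactly the place where the hypothesis \emph{no two mutually kissing walks} enters: it guarantees that for each walk $\omega'$, at most one of $\kn(\omega,\omega')$, $\kn(\omega',\omega)$ is nonzero, so a kiss between $\omega$ and $\omega'$ is counted with a definite sign relative to $\omega$, and this lets the absolute-value sum in the zonotope's support function match the (signed but then absolute-valued consistently) count of kisses.

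The cleanest way to organize the arithmetic is probably to prove, for every distinguishable string $\sigma$, the local identity relating (a) the number of walks $\omega'$ whose unique kiss with $\omega$ is supported at $\sigma$ (counted with multiplicity), and (b) the quantity $\dotprod{\multiplicityVector_\sigma}{\gvector{\omega}}$ restricted to the relevant maximal piece. Under the no-mutual-kissing hypothesis every kiss of $\omega$ with some $\omega'$ is, schematically as in \fref{fig:kissingCrossing}, a matching of a top substring of one walk with a bottom substring of the other along a common substring $\sigma$; grouping kisses by the distinguishable string $\sigma$ that is their maximal common support, and using that $\gvector{\omega}$ only "sees" corners, yields $\KN(\omega) = \sum_i (\text{support value of }\Zono\text{ at }\varepsilon_i\multiplicityVector_{\sigma_i})$ after summing the local identities over the maximal pieces $\sigma_i$ of $\omega$. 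Then $\HS{\omega} \supseteq$ the intersection of the zonotope halfspaces in directions $\varepsilon_i\multiplicityVector_{\sigma_i}$, with equality of right-hand sides, so the facet inequality of $\Asso$ indexed by $\omega$ is one of the (facet or redundant) inequalities of $\Zono$; but since it is facet-defining for $\Asso$, it must in fact be a facet inequality of $\Zono$ (a facet inequality cannot be a strict positive combination of others unless it coincides with one of them up to scaling).

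\textbf{Main obstacle.} The delicate part is the support/kissing-number identity, i.e.\ tracking signs and multiplicities carefully when $\omega$ passes several times through the same substring, and checking that the maximal top/bottom pieces $\sigma_i$ of $\omega$ really are distinguishable so their multiplicity vectors appear in $\Zono$. The no-mutual-kissing hypothesis is used precisely to avoid double-counting kisses with opposite orientations at the same string, which would otherwise spoil the translation between the absolute values in the zonotope's support function and the signed corner count in $\gvector\omega$; without it the identity $\KN(\omega)=\sum_i h_{\Zono}(\varepsilon_i\multiplicityVector_{\sigma_i})$ fails, as the examples in Figures~\ref{fig:exmAssociahedra} and \ref{fig:allAssociahedra3} show. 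I also expect a small amount of care is needed for degenerate pieces (when a maximal top piece is a single vertex, i.e.\ a strict peak, or when $\omega$ shares a corner with a blossom endpoint), but these should be handled by the same telescoping bookkeeping.
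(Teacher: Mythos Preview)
Your approach has a genuine logical gap in the final step, and it also diverges from the paper's method in a way that loses the key equality.

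\textbf{The gap.} Even granting your decomposition $\gvector{\omega}=\sum_i \varepsilon_i\,\multiplicityVector_{\sigma_i}$ and the identity $\KN(\omega)=\sum_i h_{\Zono}(\varepsilon_i\multiplicityVector_{\sigma_i})$, all you obtain is that $\dotprod{\gvector{\omega}}{\b{x}}\le \KN(\omega)$ is a \emph{valid} inequality for $\Zono$: for any $\b{x}\in\Zono$, $\dotprod{\gvector{\omega}}{\b{x}}=\sum_i\dotprod{\varepsilon_i\multiplicityVector_{\sigma_i}}{\b{x}}\le\sum_i h_{\Zono}(\varepsilon_i\multiplicityVector_{\sigma_i})=\KN(\omega)$. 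This only gives $h_{\Zono}(\gvector{\omega})\le \KN(\omega)$, not equality; the support function is subadditive, with equality only when all the summands lie in a common closed normal cone of $\Zono$, which you never establish. Your concluding sentence (``since it is facet-defining for $\Asso$, it must be a facet inequality of $\Zono$'') is a non-sequitur: being a facet of $\Asso$ says nothing about tightness on $\Zono$. The analogue: $x_1\le 1$ and $x_2\le 1$ are facets of the square, their sum $x_1+x_2\le 2$ is valid but touches only a vertex.

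\textbf{What the paper does instead.} The paper never decomposes $\gvector{\omega}$. It reindexes the generators of $\Zono$ by \emph{walks} rather than strings, using the bijections $\sigma_\bottom,\sigma_\top$ of Proposition~\ref{prop:bijectionDistinguishableStringsWalks} to write $\Zono=\sum_{\omega'}\big([\b0,\b c^-(\omega')]+[\b0,\b c^+(\omega')]\big)$ with $\b c^\pm(\omega')=\pm\multiplicityVector_{\sigma_{\top/\bottom}(\omega')}$. Then it computes $h_{\Zono}(\gvector{\omega})$ directly as $\sum_{\omega'}\max(\dotprod{\gvector{\omega}}{\b c^-(\omega')},0)+\max(\dotprod{\gvector{\omega}}{\b c^+(\omega')},0)$, and analyzes each term by looking at maximal common substrings of $\omega$ and $\omega'$: a common substring contributes $+1,-1,0$ to $\dotprod{\gvector{\omega}}{\b c^+(\omega')}$ according as $\omega$ kisses $\omega'$, $\omega'$ kisses $\omega$, or they cross there. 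The no-mutual-kissing hypothesis forces all contributions for a fixed $\omega'$ to have the same sign, so the $\max(\cdot,0)$ evaluates to $\kn(\omega,\omega')$ (resp.\ $\kn(\omega',\omega)$), and summing gives exactly $\KN(\omega)$. This yields $h_{\Zono}(\gvector{\omega})=\KN(\omega)$ on the nose, i.e.\ the inequality is tight on $\Zono$, which is what you need and what your argument does not reach.
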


\begin{proof}
Consider a bending walk~$\omega \in \bendingWalks^\pm(\bar Q)$ and the corresponding facet inequality of~$\Asso$ given by~${\dotprod{\gvector{\omega}}{\b{x}} \le \KN(\omega)}$.
We want to show that~$\dotprod{\gvector{\omega}}{\b{x}} \le \KN(\omega)$ is also a facet inequality of~$\Zono$.
For this, we associate to each bending walk~$\omega'$ a negative $\b{c}$-vector~$\b{c}^-(\omega')$ and a positive $\b{c}$-vector~$\b{c}^+(\omega')$ defined by
\[
\b{c}^-(\omega') \eqdef
\begin{cases}
-\multiplicityVector_{\sigma_\bottom(\omega')} & \text{if } \omega' \notin F_\peak \\
\b{0} & \text{otherwise}
\end{cases}
\qquad\text{and}\qquad
\b{c}^+(\omega') \eqdef 
\begin{cases}
\multiplicityVector_{\sigma_\top(\omega')} & \text{if } \omega' \notin F_\deep \\
\b{0} & \text{otherwise}
\end{cases}
\]
where~$\sigma_\bottom(\omega')$ and~$\sigma_\top(\omega')$ are the distinguishable strings defined in Definition~\ref{def:bijectionWalksDistinguishableStrings} and Remark~\ref{rem:bijectionDistinguishableStringsWalks}.
By Proposition~\ref{prop:bijectionDistinguishableStringsWalks}, the $\bar Q$-zonotope~$\Zono$ can be rewritten as the Minkowski sum of these $\b{c}$-vectors:
\[
\Zono \eqdef \sum_{\sigma \in \distinguishableStrings^\pm(\bar Q)} [-\multiplicityVector_\sigma, \multiplicityVector_\sigma] = \sum_{\omega' \in \bendingWalks^\pm(\bar Q)} \big( [\b{0}, \b{c}^-(\omega')] + [\b{0}, \b{c}^+(\omega')] \big).
\]
Therefore, the facet inequality of~$\Zono$ corresponding to the normal vector~$\gvector{\omega}$ is given~by
\[
\dotprod{\gvector{\omega}}{\b{x}} \le \sum_{\omega' \in \bendingWalks^\pm(\bar Q)} \max(\dotprod{\gvector{\omega}}{\b{c}^-(\omega')},0) + \max(\dotprod{\gvector{\omega}}{\b{c}^+(\omega')},0) \\
\]

Consider now a bending walk~$\omega' \in \bendingWalks^\pm(\bar Q)$.
We want to understand when~$\dotprod{\gvector{\omega}}{\b{c}^+(\omega')} > 0$ (the case~$\dotprod{\gvector{\omega}}{\b{c}^-(\omega')} > 0$ is similar).
If~$\dotprod{\gvector{\omega}}{\b{c}^+(\omega')} > 0$, then the walks~$\omega$ and~$\omega'$ share common vertices.
Let~$\tau$ be a maximal common substring of~$\omega$ and~$\omega'$.
If~$\omega$ enters and leaves~$\tau$ in the same direction, then it has as many peaks as deeps along~$\tau$, so that~$\dotprod{\gvector{\omega}}{\multiplicityVector_{\tau}} = 0$.
In contrast, if~$\omega$ enters and leaves~$\tau$ with two outgoing (resp.~incoming) arrows, then~$\omega$ has one more peak than deep (resp.~one more deep than peak) along~$\tau$, so that~$\dotprod{\gvector{\omega}}{\multiplicityVector_{\tau}} = 1$ (resp.~$-1$).
Observe now that if~$\tau$ is not included in~$\sigma \eqdef \sigma_\top(\omega')$, then~$\omega$ must enter and leave the substring~$\tau \ssm \sigma$ with incoming arrows so that~$\dotprod{\gvector{\omega}}{\multiplicityVector_{\tau \ssm \sigma}} = 0$.
We conclude that~$\tau$ contributes~$0$ (resp.~$1$, resp.~$-1$) to~$\dotprod{\gvector{\omega}}{\b{c}^+(\omega)}$ when~$\omega$ and~$\omega'$ are crossing (resp.~when~$\omega$ kisses~$\omega'$, resp.~when~$\omega'$ kisses~$\omega$) at~$\tau$.
Since~$\omega$ and~$\omega'$ are not mutually kissing, all maximal common substrings of~$\omega$ and~$\omega'$ contribute with the same sign.
Therefore, we have~$\max(\dotprod{\gvector{\omega}}{\b{c}^+(\omega)}, 0) = \kn(w,w')$.
We obtain similarly that~$\max(\dotprod{\gvector{\omega}}{\b{c}^-(\omega)}, 0) = \kn(w',w)$, so that 
\[
\max(\dotprod{\gvector{\omega}}{\b{c}^-(\omega')},0) + \max(\dotprod{\gvector{\omega}}{\b{c}^+(\omega')},0) = \kn(w,w') + \kn(w',w) = \KN(\omega,\omega')
\]
and therefore the maximum of~$\dotprod{\gvector{\omega}}{\b{x}}$ over~$\Zono$ is given by
\[
\sum_{\substack{\omega' \in \bendingWalks^\pm(\bar Q)}} \KN(\omega,\omega') = \KN(\omega).
\qedhere
\]
\end{proof}


\subsection{Coordinate sections and projections}
\label{subsec:projections}

We conclude the paper with a natural operation on quivers that illustrates a recent result of~\cite{PilaudPlamondonStella}.
Intuitively, we blind a strict subset of the vertices of a gentle bound quiver, meaning that we forbid peaks and deeps at these blinded vertices.
More formally, this corresponds to the following operation.

\begin{definition}
Consider a gentle bound quiver~$\bar Q = (Q,I)$ and a vertex~$v \in Q_0$.
Denote by~$K_v \eqdef \set{\alpha\beta}{\alpha, \beta \in Q_1 \text{ with } t(\alpha) = s(\beta)}$ the set of paths of length~$2$ with middle vertex~$v$ and let~$I_v \eqdef K_v \cap I$ and~$J_v \eqdef K_v \ssm I$.
We denote by~$\bar Q\blinkers{v} = (Q\blinkers{v}, I\blinkers{v})$ the bound quiver where
\begin{gather*}
(Q\blinkers{v})_0 = Q_0 \ssm \{v\},
\qquad\qquad
(Q\blinkers{v})_1 = \set{\alpha \in Q_1}{s(\alpha) \ne v \ne t(\alpha) } \cup J_v,
\\
I\blinkers{v} = \set{\sigma \in I \ssm I_v}{s(\sigma) \ne v \ne t(\sigma)} \cup \set{\alpha\beta\gamma}{\alpha\beta \in I \text{ and } \beta\gamma \in J_v \; \text{ or } \; \alpha\beta \in J_v \text{ and } \beta\gamma \in I}.
\end{gather*}
We say that~$\bar Q\blinkers{v}$ is obtained by \defn{blinding}~$v$ in~$\bar Q$.
Finally, for a subset~$V = \{v_1, \dots, v_\ell\}$ of~$Q_0$, we let~$\bar Q\blinkers{V} \eqdef ((\bar Q\blinkers{v_1})\blinkers{v_2} \dots )\blinkers{v_\ell}$.
\end{definition}

\begin{remark}
The reader is invited to check that
\begin{itemize}
\item the blinded quiver~$Q\blinkers{v}$ is still a gentle bound quiver.
\item blinding commutes with blossoming: $(Q\blinkers{v})\blossom = (Q\blossom)\blinkers{v}$.
\item blinding commutes with reversing: $\reversed{(Q\blinkers{v})} = (\reversed{Q})\blinkers{v}$.
\item $\bar Q\blinkers{V}$ is independent of the ordering of~$V$.
\end{itemize}
\end{remark}

\begin{proposition}
\label{prop:projection}
Consider a gentle bound quiver~$\bar Q$, a subset~$V \subsetneq Q_0$, and the quiver~$\bar Q\blinkers{V}$.~Then
\begin{enumerate}[(i)]
\item The path algebra~$A\blinkers{V}$ of the blinded quiver~$\bar Q\blinkers{V}$ is the subalgebra of the path algebra~$A$ of~$\bar Q$ given by~$A\blinkers{V} = (1-\varepsilon_V) \cdot A \cdot (1-\varepsilon_V)$, where~$\varepsilon_V \eqdef \sum_{v \in V} \varepsilon_v$.
\item The non-kissing complex~$\NKC[\bar Q\blinkers{V}]$ is isomorphic to the subcomplex of the non-kissing complex~$\NKC[\bar Q]$ induced by walks with no corner at a vertex of~$V$.
\item The $\b{g}$-vector fan~$\gvectorFan[\bar Q\blinkers{V}]$ is the section~$\set{C \in \gvectorFan}{C \subseteq V^\perp}$ of the $\b{g}$-vector fan~$\gvectorFan$ by the coordinate plane~$V^\perp \eqdef \set{\b{x} \in \R^{Q_0}}{\dotprod{\b{e}_v}{\b{x}} = 0 \text{ for all } v \in V}$.
\item When~$\NKC$ is finite, the normal fan of the projection of the non-kissing associahedron~$\Asso$ to the coordinate plane~$V^\perp$ is the $\b{g}$-vector fan~$\gvectorFan[\bar Q\blinkers{V}]$.
\end{enumerate}
\end{proposition}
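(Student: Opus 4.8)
\textbf{Proof plan for Proposition~\ref{prop:projection}.}
The plan is to prove the four statements in order, using the earlier statements as stepping stones, since each part feeds into the next. For part~(i), I would unwind the definition of the blinded quiver: a path in $Q\blinkers{v}$ is exactly a path in $Q$ that avoids having $v$ as an interior vertex \emph{except} through the ``merged'' arrows coming from $J_v$, and the relations $I\blinkers{v}$ are exactly what one gets by composing the length-two relations of $I$ across $v$. This matches precisely the multiplication rule in $(1-\varepsilon_V)\cdot A\cdot(1-\varepsilon_V)$: killing the idempotents $\varepsilon_v$ for $v\in V$ removes the vertices of $V$, and the surviving products $\alpha\beta$ with $t(\alpha)=v=s(\beta)$, $\alpha\beta\notin I$, become the new arrows in $J_v$. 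I would verify this is a gentle bound quiver (using the remark preceding the proposition, which we may assume) and that the algebra isomorphism is the obvious one sending the new arrows to the corresponding length-two paths.

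For part~(ii), the key observation is that walks in $\bar Q\blossom\blinkers{V} = (\bar Q\blinkers{V})\blossom$ are in bijection with walks in $\bar Q\blossom$ having no corner at any vertex of $V$: indeed a corner at $v\in V$ would be precisely a place where a walk enters and leaves $v$ by two arrows $\alpha^{-1}$, $\beta$ (or $\alpha$, $\beta^{-1}$) with $\alpha\beta\notin I$, i.e.\ the walk ``uses'' a $J_v$-junction in a non-straight way; blinding $v$ forces the walk to pass straight through, which removes exactly the walks with a corner at $v$. Under this bijection, top and bottom substrings are preserved (a substring avoiding $V$ keeps the same incident-arrow pattern), hence the kissing relation is preserved, giving the claimed isomorphism of $\NKC[\bar Q\blinkers{V}]$ with the induced subcomplex of $\NKC[\bar Q]$ on walks with no corner in $V$. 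I would also record that straight walks and the reduced complex behave well, using that blinding commutes with blossoming and reversing.

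For part~(iii), I would combine part~(ii) with the description of $\b{g}$-vectors in Definition~\ref{def: g-vectors for walks}: $\gvector{\omega}_v = 0$ precisely when $\omega$ has no corner at $v$ (more carefully, when $v$ appears equally often as a peak and a deep of $\omega$; but for walks in a facet the sign-coherence of Remark~\ref{rem:signCoherence} forces all $v$-coordinates of the $\b{g}$-vectors of a facet to have the same sign, so a facet lies in $V^\perp$ iff every walk in it has zero $v$-coordinate for all $v\in V$, iff every walk has no corner at $V$). Thus the cones of $\gvectorFan$ contained in $V^\perp$ are exactly the cones indexed by faces of the induced subcomplex from part~(ii), and under the natural identification $V^\perp\cong\R^{(Q\blinkers{V})_0}$ the generating rays are exactly the $\b{g}$-vectors of $\bar Q\blinkers{V}$ (since $\gvector[\bar Q\blinkers{V}]{\omega'}$ records the same corner multiset as $\gvector[\bar Q]{\omega}$ for the corresponding walk $\omega$, all of whose corners lie outside $V$). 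This gives $\gvectorFan[\bar Q\blinkers{V}]$ as the promised section, once one checks this section is itself a complete fan in $V^\perp$ — which follows from Theorem~\ref{thm:gvectorFan} applied to $\bar Q\blinkers{V}$, noting its non-kissing complex is finite when $\NKC[\bar Q]$ is.

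For part~(iv), the principle is the standard duality between taking a coordinate section of a fan and taking the coordinate \emph{projection} of a polytope realizing it: if $\gvectorFan$ is the normal fan of $\Asso$, then the normal fan of the image of $\Asso$ under orthogonal projection onto $V^\perp$ is obtained by intersecting $\gvectorFan$ with $V^\perp$. I would invoke this together with part~(iii) and Theorem~\ref{thm:associahedron} (so that $\Asso$ exists and has normal fan $\gvectorFan$). The main obstacle I anticipate is part~(ii): one must be careful that the bijection between walks is genuinely a bijection (not just an injection) — every walk in the blinded blossoming quiver must lift to a unique walk in $\bar Q\blossom$ passing straight through each blinded vertex — and that nothing goes wrong with self-kissing walks or with the blossom arrows created at vertices adjacent to $V$. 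Getting the correspondence of blossom structure exactly right, and hence the preservation of the kissing relation and of $\b{g}$-vectors, is where the real care is needed; parts~(i), (iii), (iv) are then essentially bookkeeping built on top of~(ii) and the cited results, together with the observation that all four claimed compatibilities (blinding with blossoming, reversing, order-independence) reduce to inspection of the local picture at a single vertex.
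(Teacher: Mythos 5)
Your proposal follows essentially the same route as the paper for all four parts: (i) by unwinding the definition of the blinded quiver and identifying $A\blinkers{V}$ with the idempotent subalgebra, (ii) by the bijection that groups the consecutive arrows $\alpha\beta$ through a blinded vertex into a single arrow (which preserves the top/bottom incidence pattern, hence kissing), (iii) by observing that this bijection preserves peaks and deeps so that $\gvector{\omega}\in V^\perp$ and $\pi_V(\gvector{\omega})=\gvector{\omega\blinkers{V}}$, and (iv) by the duality between projecting a polytope and sectioning its normal fan, i.e.\ \cite[Lem.~7.11]{Ziegler-polytopes}.

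The one place your reasoning wobbles is the parenthetical inside step (iii). Sign-coherence (Remark~\ref{rem:signCoherence}) says that the $v$-coordinates of the $\b{g}$-vectors of \emph{different} walks of a single facet agree in sign; it says nothing about whether a single walk's peaks and deeps at $v$ cancel, which is the scenario you flag. The fact you actually want is that a \emph{non-self-kissing} walk $\omega$ cannot have both a peak and a deep at the same vertex $v$: if it did, the length-zero string $\varepsilon_v$ would lie in $\Sigma_\top(\omega)\cap\Sigma_\bottom(\omega)$, so $\omega$ would self-kiss and could not appear in $\NKC[\bar Q]$. That observation — not sign-coherence — is what makes ``$\gvector{\omega}_v=0$ for all $v\in V$'' equivalent to ``$\omega$ has no corner at $V$'' on the walks that actually index the cones of $\gvectorFan$, and is what turns the inclusion of fans you construct into an equality. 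The paper's own proof is terse here too (it only records the forward direction $\gvector{\omega}\in V^\perp$ and then invokes completeness), so this is a point worth making explicit, but with the self-kissing argument rather than sign-coherence. Everything else in your plan is sound and matches the paper.
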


\begin{proof}
For~(i), observe that the paths in~$\bar Q\blinkers{V}$ correspond to the paths in~$\bar Q$ that do not start or end at a vertex of~$V$ (the correspondence consists in grouping blocks of consecutive letters~$\alpha\beta$ for which~$t(\alpha) = s(\beta) \in V$).

For~(ii), we just need to observe that a walk~$\omega$ of~$\bar Q$ with no corner at a vertex of~$V$ corresponds to a walk~$\omega\blinkers{V}$ of~$\bar Q\blinkers{V}$ (the correspondence consists again in grouping blocks of consecutive letters~$\alpha\beta$ for which~$t(\alpha) = s(\beta) \in V$).
This correspondence clearly sends non-kissing walks to non-kissing walks, and thus induces an isomorphism between the subcomplex of the non-kissing complex~$\NKC[\bar Q]$ induced by walks with no corner at a vertex of~$V$ and the non-kissing complex~$\NKC[\bar Q\blinkers{V}]$.

Moreover, this correspondence sends peaks (resp.~deeps) of~$\omega$ to peaks (resp.~deeps) of~$\omega\blinkers{V}$.
Since the $v$-th coordinate of the $\b{g}$-vector of a walk~$\omega$ is the number of peaks minus the number of deeps of~$\omega$ at~$v$, we obtain that
\[
\gvector{\omega} \in V^\perp
\qquad\text{and}\qquad
\pi_V \big( \gvector{\omega} \big) = \gvector{\omega\blinkers{V}}
\]
for a walk of~$\bar Q$ with no corner at a vertex of~$V$, where~$\pi_V$ denotes the coordinate projection on the hyperplane~$V^\perp$.
This shows~(iii).

Finally, (iv) immediately follows from~(iii) since the normal fan of a projection of a polytope~$P$ on a subspace~$V$ is the section of the normal fan of~$P$ by the subspace~$V$ \cite[Lemma~7.11]{Ziegler-polytopes}.
\end{proof}

\begin{example}
To illustrate Proposition~\ref{prop:projection}, we have represented in \fref{fig:exmKillVertices} all possible coordinate projections of the non-kissing associahedra~$\Asso$ for two specific gentle bound quivers.

\begin{figure}[t]
	\capstart
	\centerline{\includegraphics[width=1.2\textwidth]{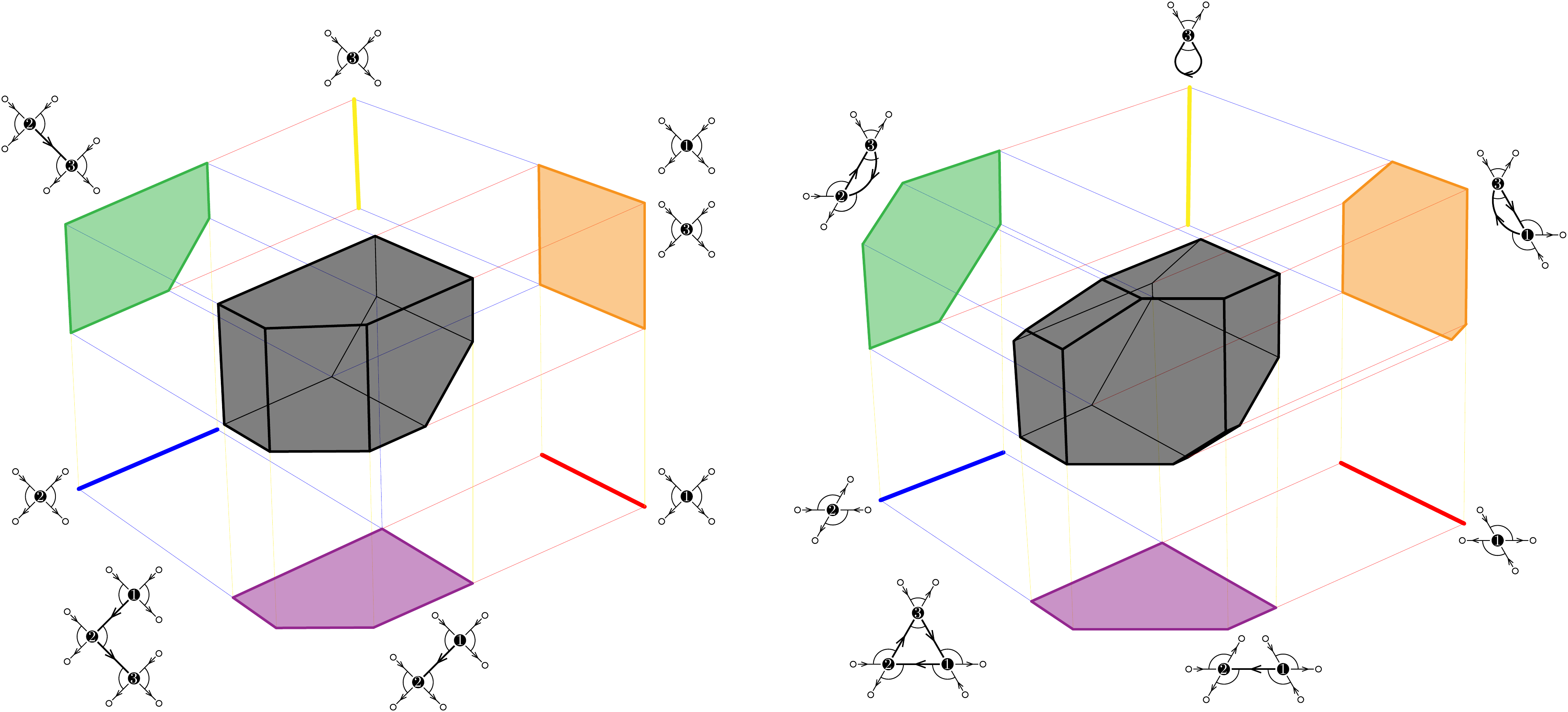}}
	\caption{All possible coordinate projections of the non-kissing associahedra~$\Asso$ for two specific gentle bound quivers.}
	\label{fig:exmKillVertices}
\end{figure}
\end{example}

\begin{example}
Consider a triangulation~$T$ of a convex polygon and a dissection~$D$ whose edges are included in~$T$.
Then the dissection bound quiver~$\bar Q(D)$ coincides with the blinded bound quiver~$\bar Q(T)\blinkers{V}$ where~$V$ are the vertices of~$\bar Q(T)$ corresponding to the diagonals of~$T$ not in~$D$.
In particular, the $\b{g}$-vector fan of any dissection bound quiver~$\bar Q(D)$ is a section of the $\b{g}$-vector fan of a type~$A$ quiver, and is realized by a projection of the associahedron of~\cite{HohlwegPilaudStella}.
See~\cite{PilaudPlamondonStella}.
\end{example}


\captionsetup{width=1.5\textwidth}
\hvFloat[floatPos=p, capWidth=h, capPos=right, capAngle=90, objectAngle=90, capVPos=c, objectPos=c]{figure}
{\begin{minipage}{23.2cm}\vspace*{-1cm}\includegraphics[scale=.25]{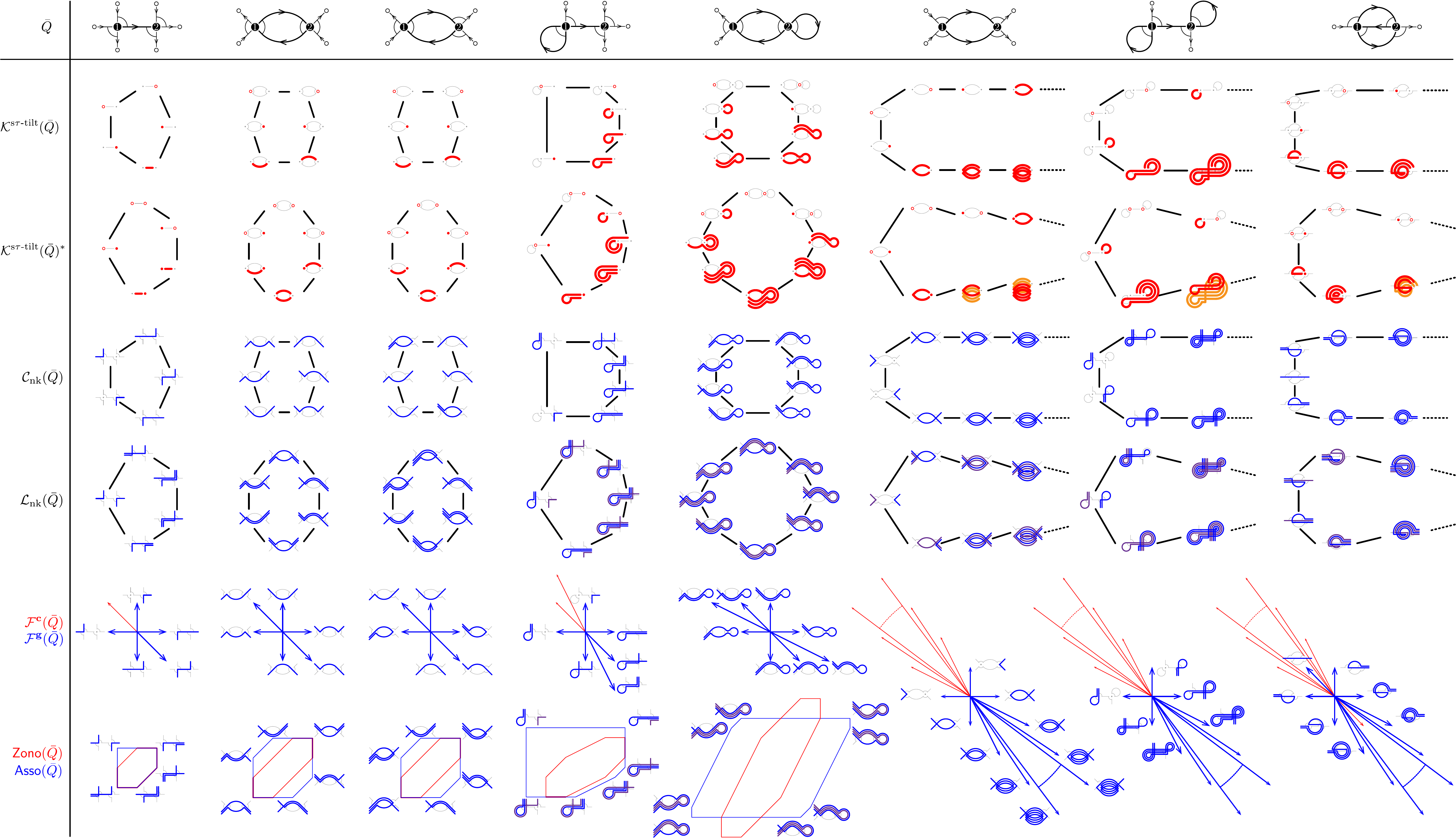}\end{minipage}}
{
For all connected gentle bound quivers~$\bar Q$ on $2$ vertices: the support $\tau$-tilting complex~$\tTC$, its dual graph~$\tTC^*$, the reduced non-kissing complex~$\RNKC$, the non-kissing oriented flip graph~$\NKG$, the $\b{c}$-vector fan~$\cvectorFan$ and the $\b{g}$-vector fan~$\gvectorFan$, and the $\bar Q$-zonotope~$\Zono$ and the $\bar Q$-associahedron~$\Asso$ (when~$\NKC$ is finite).
}
{fig:all2}
\captionsetup{width=\textwidth}

\captionsetup{width=1.5\textwidth}
\hvFloat[floatPos=p, capWidth=h, capPos=right, capAngle=90, objectAngle=90, capVPos=c, objectPos=c]{figure}
{\includegraphics[scale=.27]{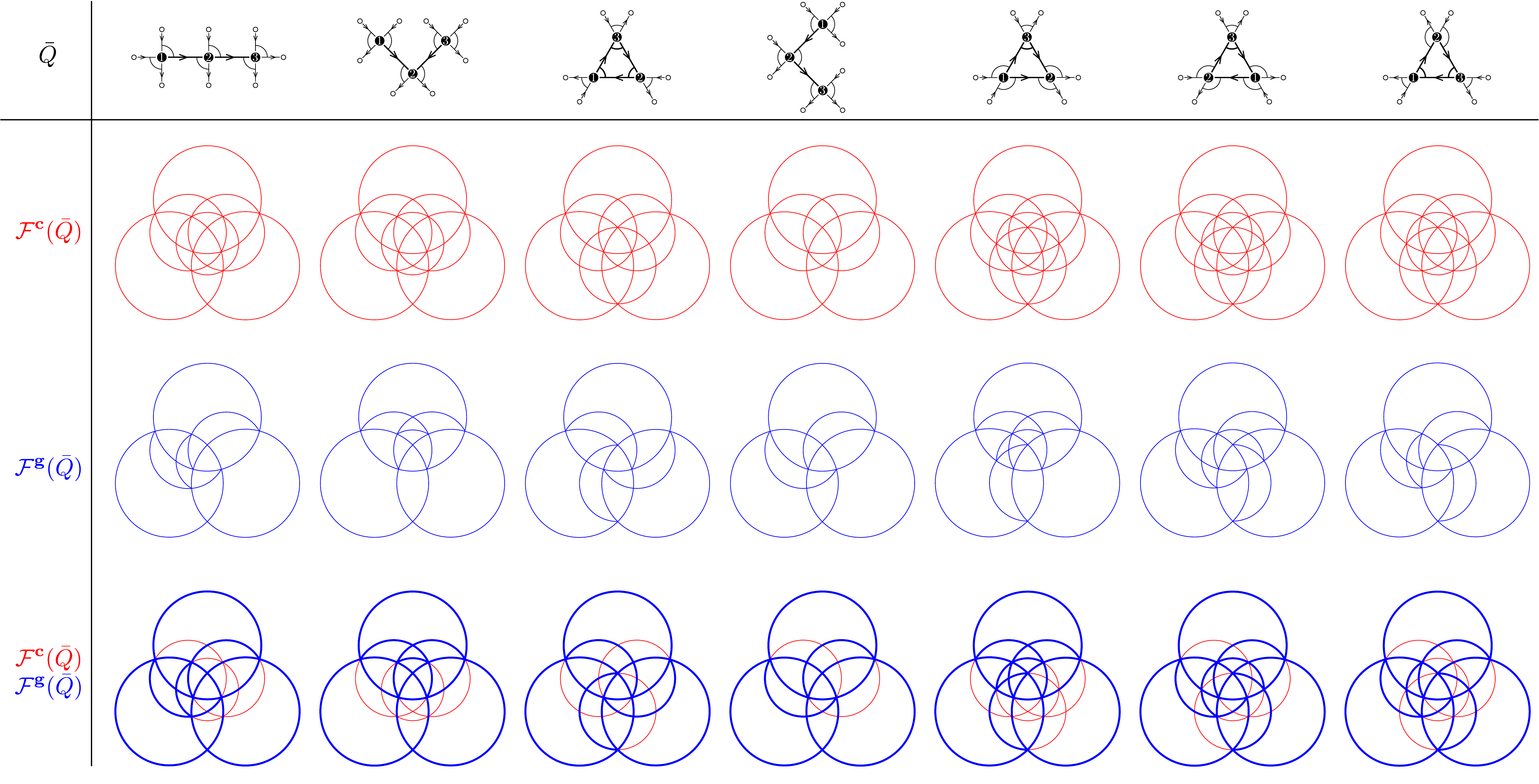}}
{Stereographic projections of the $\b{c}$-vector fan~$\cvectorFan$ (red, top) and the $\b{g}$-vector fan~$\gvectorFan$ (blue, middle) for all connected simple gentle bound quivers~$\bar Q$ on~$3$ vertices with finite non-kissing complex~$\RNKC$. Note that the $\b{g}$-vector fan is supported by the $\b{c}$-vector fan (bottom). The~$3$-dimensional fans are intersected with the unit sphere and stereographically projected to the plane from the pole in direction~$(1,1,1)$. The first three fans are $\b{g}$-vector fans of type~$A$ cluster algebras, the fourth is a fan realization of an accordion complex from~\cite{MannevillePilaud-accordion} or of a non-kissing grid complex from~\cite{GarverMcConville-grid}, the last three are new.}
{fig:allFans3}
\captionsetup{width=\textwidth}

\captionsetup{width=1.5\textwidth}
\hvFloat[floatPos=p, capWidth=h, capPos=right, capAngle=90, objectAngle=90, capVPos=c, objectPos=c]{figure}
{\includegraphics[scale=.27]{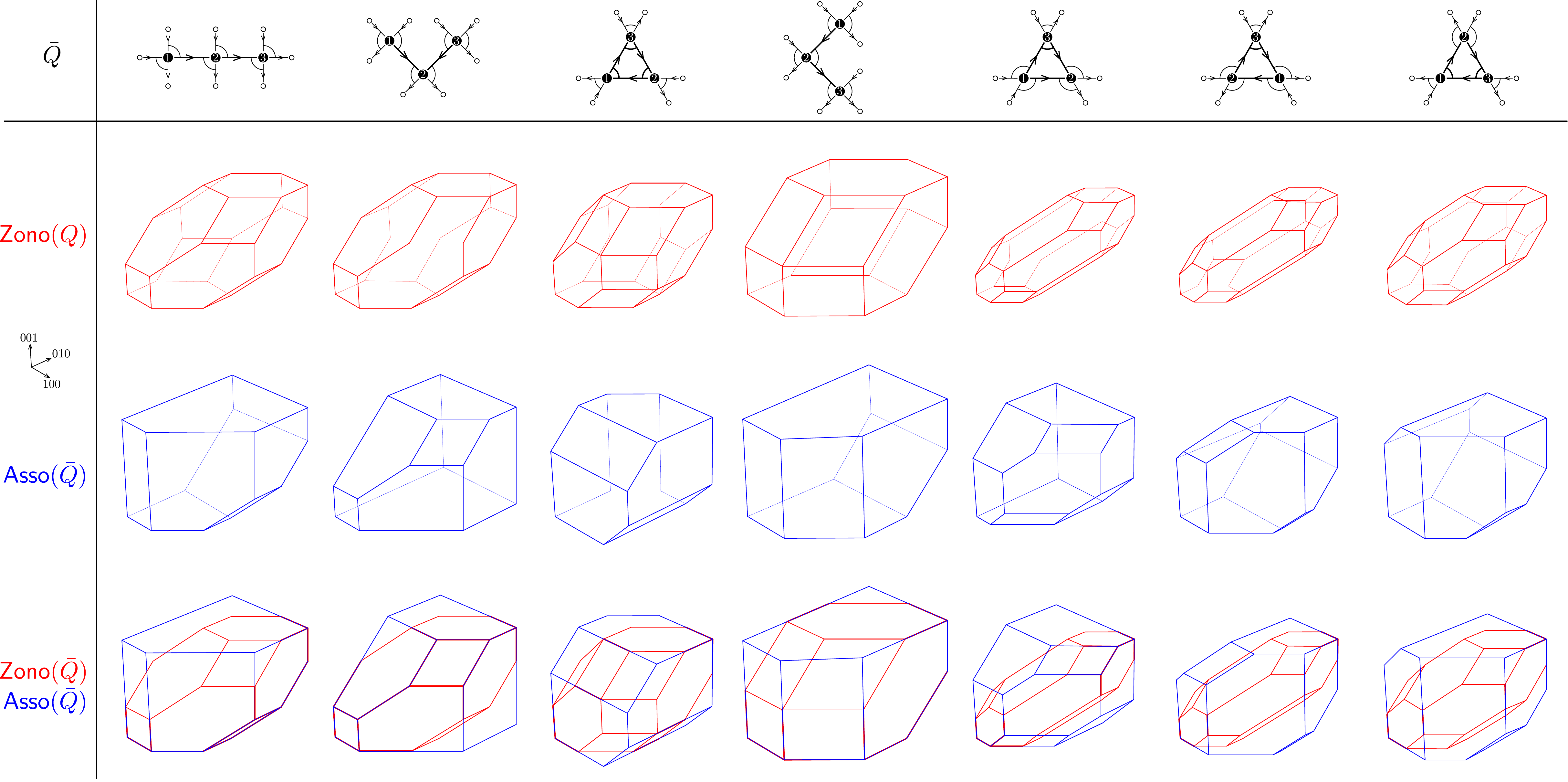}}
{The $\bar Q$-zonotope~$\Zono$ (red, top) and the $\bar Q$-associahedron~$\Asso$ (blue, middle) for all connected simple gentle bound quivers~$\bar Q$ on~$3$ vertices with finite non-kissing complex~$\RNKC$. Observe that the facet defining inequalities of~$\Asso$ are not necessarily facet defining inequalities of~$\Zono$ (bottom). Note that the labels on the quivers have been chosen so that all these polytopes are nicely oriented in the same basis, although the different polytopes are rescaled with independent scaling factors to fit a similar space in the figure. The first is J.-L.~Loday's associahedron~\cite{Loday}, the second is an associahedron of C.~Hohlweg and C.~Lange~\cite{HohlwegLange}, the third is an associahedron recently constructed in~\cite{HohlwegPilaudStella}, the fourth is an accordiohedron of~\cite{MannevillePilaud-accordion}, the last three are new.}
{fig:allAssociahedra3}
\captionsetup{width=\textwidth}


\addtocontents{toc}{\vspace{.3cm}}
\section*{Acknowledgements}

We benefited from various discussions and several useful comments on a preliminary version of this paper.
We are particularly grateful to F.~Chapoton for initially pointing us in the direction of this paper, to S.~Schroll for her valuable comments on Part~\ref{part:algebra}, to ChangJian Fu for his comments leading to a clarification of Definition~\ref{definition: dance and attract}(i)(b), to A.~Garver and T.~McConville for important inputs on Part~\ref{part:lattice} (including pointing out to us the need of Theorem~\ref{thm:characterizationCongruenceUniform2}), to T.~Manneville and S.~Stella for their help on Part~\ref{part:geometry}, and to L.~Demonet for the observation of Remarks~\ref{rem:linDep} and~\ref{rem:polytope}.
We also thank T.~Br\"ustle, O.~Iyama, N.~Reading, H.~Thomas for discussions on their ongoing projects mentioned in the introduction.
This work was carried out while the first author was visiting Paris 7. He would like to thank his colleagues there, and especially \mbox{P.~Le Meur}, for a stimulating atmosphere.
Finally, we thank an anonymous referee for useful suggestions on the presentation of this work.


\bibliographystyle{alpha}
\bibliography{nonKissingComplex}
\label{sec:biblio}

\end{document}